%lx qr-codes; time dvipdf qr-codes
%%%%%%%%%%%%%%%%%%%%%%%%%%%%%%%%%%%%%%%%%%%%%%%%%%%%%%%%%%%%%%%%%%%%%%%%%%%%%
%%%%%%%                                                               %%%%%%%
%%%%%%%  Data structures for real multiparameter persistence modules  %%%%%%%
%%%%%%%                                                               %%%%%%%
%%%%%%%                          Ezra Miller                          %%%%%%%
%%%%%%%                                                               %%%%%%%
%%%%%%%%%%%%%%%%%%%%%%%%%%%%%%%%%%%%%%%%%%%%%%%%%%%%%%%%%%%%%%%%%%%%%%%%%%%%%

\documentclass[12pt]{amsart}

\voffset=-1.4mm
\oddsidemargin=17pt \evensidemargin=17pt
\headheight=9pt     \topmargin=26pt
\textheight=576pt   \textwidth=440.8pt
\parskip=0pt plus 4pt

\usepackage{amssymb,latexsym}
\usepackage{bm}
%usepackage{mathabx}
%usepackage{amsfonts}%This is to have nice letters
%usepackage{amsthm}
\usepackage{graphicx}
\usepackage{psfrag}
\usepackage{wrapfig}
%usepackage{showkeys}
\usepackage{color}
\usepackage{enumitem}
\setlist[enumerate]{parsep=0pt plus 4pt,topsep=0pt plus 4pt}
\usepackage{url}
\usepackage{hyperref}
\usepackage{tikz}
\definecolor{darkblue}{RGB}{0,0,160}
\hypersetup{colorlinks,citecolor=black,filecolor=black,%
            linkcolor=darkblue,urlcolor=darkblue}
\allowdisplaybreaks

\newcommand{\excise}[1]{}%{$\star$\textsc{#1}$\star$}

% Theorem environments with italic font
\newtheorem{thm}{Theorem}[section]
\newtheorem{lemma}[thm]{Lemma}

\newtheorem{cor}[thm]{Corollary}
\newtheorem{prop}[thm]{Proposition}
\newtheorem{conj}[thm]{Conjecture}

\theoremstyle{definition}
\newtheorem{example}[thm]{Example}
\newtheorem{remark}[thm]{Remark}
\newtheorem{defn}[thm]{Definition}
\newtheorem{conv}[thm]{Convention}

\numberwithin{equation}{section}

%For numbered lists with arabic 1. 2. 3. numbering

%For second-level lists with roman (i) (ii) (iii) numbering

%For separated lists with consecutive numbering
\newcounter{separated}

\newcommand{\Ring}[1]{\ensuremath{\mathbb{#1}}}

%single characters, used in math mode
\renewcommand\>{\rangle}
\newcommand\<{\langle}
\newcommand\0{\mathbf{0}}
\newcommand\NN{\Ring{N}}
\newcommand\OO{\mathcal{O}}
\newcommand\RR{{\mathbb R}}
\newcommand\ZZ{{\mathbb Z}}
\newcommand\bb{{\mathbf b}}
\newcommand\cc{{\mathbf c}}
\newcommand\ee{{\mathbf e}}
\newcommand\ff{{\mathbf f}}
\newcommand\ii{{\mathbf i}}
\newcommand\jj{{\mathbf j}}
\newcommand\kk{\Bbbk}
\newcommand\mm{{\mathfrak m}}
\newcommand\pp{{\mathfrak p}}
\newcommand\qq{{\mathbf q}}
\newcommand\rr{{\mathbf r}}
\newcommand\vv{{\mathbf v}}

\newcommand\xx{{\mathbf x}}
\newcommand\zz{{\mathbf z}}
\newcommand\cA{{\mathcal A}}
\newcommand\cB{{\mathcal B}}
\newcommand\cC{{\mathcal C}}
\newcommand\cD{{\mathcal D}}
\newcommand\cF{{\mathcal F}}
\newcommand\cH{H}%{{\mathcal H}}

\newcommand\cM{M}%{{\mathcal M}}
\newcommand\cN{N}%{{\mathcal N}}
\newcommand\cP{P}%{{\mathcal P}}
\newcommand\cQ{Q}%{{\mathcal Q}}
\newcommand\cS{\mathcal{S}}
\newcommand\cZ{Z}%{{\mathcal Z}}

\newcommand\oD{\hspace{.3ex}\ol{\hspace{-.3ex}D\hspace{-.05ex}}\hspace{.05ex}}

\newcommand\oS{\hspace{.3ex}\ol{\hspace{-.3ex}\cS\hspace{-.05ex}}\hspace{.05ex}}

\newcommand\vC{\check{\mathcal C}}

\newcommand\del{\partial}
\newcommand\sss{\mathbf{s}}

\renewcommand\aa{{\mathbf a}}
\renewcommand\phi{\varphi}
%renewcommand\varphi{\phi}

%math symbols without arguments
%newcommand\QR{\mathit{QR}}
\newcommand\QR{\del}

\newcommand\op{\mathrm{op}}

\newcommand\bnq{\cB_{\!-\cQ}}
\newcommand\brp{\bb'\!\hspace{-.2ex}_{\rho'}}
\newcommand\cfq{\cF_\cQ}
\newcommand\cfr{\cF_\RR}
\newcommand\cmr{\cM^{\rho\hspace{-.2ex}}}

\newcommand\cmt{\cM\hspace{-1pt}/\tau}
\newcommand\cnr{\cN\hspace{-1pt}/\rho}
\newcommand\cnt{\cN\hspace{-1pt}/\tau}
\newcommand\dda{\Delta^{\!D}{\hspace{-1.1ex}}_\aa}
%newcommand\ddb{\Delta^{\!D}{\hspace{-1.05ex}}_\bb}
\newcommand\ddb{\Delta^{\!D}_\bb}

\newcommand\dst{\delta^{\sigma/\tau}}
\newcommand\dzt{D/\hspace{.3ex}\ZZ\tau}
\newcommand\fqo{\cfq^\op}
\newcommand\fro{\cfr^{\hspace{.2ex}\op}}
\newcommand\mtd{\textstyle \max_{\hspace{.3ex}\tau\hspace{-.3ex}} D}
\newcommand\mvt{\cM^\vee\hspace{-2pt}/\tau}
\newcommand\nda{\nabla_{\!D}^\aa}

\newcommand\oas{\OO_{\hspace{-.05ex}\aa}^{\hspace{.09ex}\sigma}}
\newcommand\qpc{\cQ_+^\circ}
\newcommand\qrs{\cQ/\hspace{.2ex}\RR\sigma}
\newcommand\qrr{\cQ/\hspace{.2ex}\RR\rho}
\newcommand\qrt{\cQ/\hspace{.2ex}\RR\tau}

\newcommand\qzr{\cQ/\hspace{.2ex}\ZZ\rho}
\newcommand\qzt{\cQ/\hspace{.2ex}\ZZ\tau}
\newcommand\qrsp{\cQ_+/\hspace{.2ex}\RR\sigma}
\newcommand\qrtp{\cQ_+/\hspace{.2ex}\RR\tau}
\newcommand\qztp{\cQ_+/\hspace{.2ex}\ZZ\tau}

\newcommand\into{\hookrightarrow}

\newcommand\kats{\kk_\sigma[\aa+\tau]}
\newcommand\kbrx{\kk_\xi[\bb+\rho]}
\newcommand\nabk{\nabla\hspace{-.2ex}\sigma_k}
\newcommand\nabr{\nabla\hspace{-.2ex}\rho}
\newcommand\nabs{\nabla\hspace{-.2ex}\sigma}
\newcommand\nabt{\nabla\hspace{-.2ex}\tau}
\newcommand\nabx{\nabla\hspace{-.2ex}\xi}
\newcommand\onto{\twoheadrightarrow}

%newcommand\spot{{\scriptscriptstyle\bullet}}
%newcommand\spot{{\hbox{\raisebox{1.7pt}{\large\bf .}}\hspace{-.5pt}}}
\newcommand\spot{{\hbox{\raisebox{1.5pt}{\large\bf .}}\hspace{-.5pt}}}

\newcommand\minus{\smallsetminus}
\newcommand\nabro{\Delta\rho}
\newcommand\nabto{\Delta\tau}
\newcommand\simto{\mathrel{\!\ooalign{$\fillrightmap$\cr\raisebox{.75ex}{$\,\sim\ \hspace{.2ex}$}}}}
\newcommand\cupdot{\ensuremath{\mathbin{\mathaccent\cdot\cup}}}
\newcommand\goesto{\rightsquigarrow}
\newcommand\dirlim{\varinjlim}
\newcommand\invlim{\varprojlim}

\newcommand\nothing{\varnothing}

\newcommand\bigcupdot{\makebox[0pt][l]{$\hspace{1.05ex}\cdot$}\textstyle\bigcup}

\newcommand\filleftmap{\mathord\leftarrow \mkern-6mu
	\cleaders\hbox{$\mkern-2mu \mathord- \mkern-2mu$}\hfill
	\mkern-6mu \mathord-}
\newcommand\fillrightmap{\mathord- \mkern-6mu
	\cleaders\hbox{$\mkern-2mu \mathord- \mkern-2mu$}\hfill
	\mkern-6mu \mathord\rightarrow}
\newcommand\fillonto{\mathord- \mkern-6mu
	\cleaders\hbox{$\mkern-2mu \mathord- \mkern-2mu$}\hfill
	\mkern-6mu \mathord\twoheadrightarrow}
\renewcommand\iff{\Leftrightarrow}
\renewcommand\epsilon{\varepsilon}
\renewcommand\implies{\Rightarrow}

%math symbols taking arguments

\newcommand\dd[2][\!D]{\Delta^{#1}_{#2}}
\newcommand\dr[1][]{\delta_{\hspace{-.2ex}\rho\hspace{.15ex}}^{\hspace{.15ex}#1}}
\newcommand\ds[1][\ ]{\delta^{\sigma\hspace{-1.1ex}}{}_{#1\hspace{.2ex}}}
\newcommand\dt[1][\ ]{\operatorname{\delta_{\tau\hspace{-1ex}}{}^{#1}\hspace{-.2ex}}}
\newcommand\dx[1][]{\delta^{\hspace{.1ex}\xi}}
\newcommand\lr[1][]{\del_{\hspace{-.2ex}\rho\hspace{.2ex}}^{\hspace{.15ex}#1}}

\newcommand\lx[1][\xi]{\del^{\hspace{.15ex}#1}}
\newcommand\nd[1][\ ]{\nabla_{\!D}^{#1}}
\newcommand\ol[1]{{\overline{#1}}}

\newcommand\wh[1]{{\widehat{#1}}}
\newcommand\wt[1]{{\widetilde{#1}}}
\newcommand\ats[1][\sigma]{\operatorname{\mathit{A}}_\tau^{#1}\hspace{-.2ex}}
\newcommand\brx[1][\xi]{(\bb_\rho, #1)}
\newcommand\brxp[1][\xi']{(\brp,#1)}
\newcommand\cbm[1][]{\cB_{\hspace{-.2ex}\cM}^{\hspace{.2ex}#1}}
\newcommand\cbq[1][]{\cB_\cQ^{\hspace{.2ex}#1}}
\newcommand\cbr[1][]{\cB_\RR^{\hspace{.2ex}#1}}
\newcommand\cde[1][]{\cD_{\hspace{-.2ex}\cM/\cM_{\prec\beta}}^{\hspace{.1ex}#1}}
\newcommand\cdm[1][]{\cD_{\hspace{-.2ex}\cM}^{\hspace{.1ex}#1}}
\newcommand\cdq[1][]{\cD_{\hspace{-.2ex}\cQ}^{\hspace{.1ex}#1}}
\newcommand\cdr[1][]{\cD_{\hspace{-.2ex}\RR}^{\hspace{.1ex}#1}}
\newcommand\cds[1][]{\cD_{\kk[S]}^{\hspace{.1ex}#1}}
\newcommand\cqb{\cQ_\beta}

\newcommand\csp[1][\tau']{\operatorname{\mathcal S}_{#1\!}}
\newcommand\cst[1][]{\operatorname{\mathcal S}_{\tau\!}^{#1\hspace{-.3ex}}}
\newcommand\mrx[1][\xi]{\cM^{\rho,\hspace{.1ex}#1\hspace{-.15ex}}}
\newcommand\qnk{\cQ_{\nabk}}
\newcommand\qnp{\cQ_{\nabs'}}
\newcommand\qns[1][]{\cQ_{\nabs#1}}
\newcommand\qnt{\cQ_{\nabt}}
\newcommand\qnx[1][]{\cQ_{\nabx#1}}
\newcommand\qny[1][]{\cQ_{\naby[#1]}}
\newcommand\naby[1][y]{\nabla\hspace{-.2ex}#1}

%newcommand\nabs[1][\sigma]{\nabla\hspace{-.2ex}#1}
\newcommand\socc[1][]{\ol{\mathrm{soc}}_{#1\,}} % closed socle
 % sigma-socle
\newcommand\socp[1][\tau']{\operatorname{soc}_{\hspace{.02ex}#1\hspace{-.15ex}}}
%newcommand\soct[1][\ ]{\mathrm{soc}_{\hspace{.2ex}\tau\!}{}^{\hspace{-.9ex}#1\,}}
%newcommand\soct[1][\ ]{\mathrm{soc}^{#1\!}{}_{\hspace{-.8ex}\tau\,}} % sigma-socle
\newcommand\socr[1][]{\operatorname{soc}_{\hspace{.02ex}\rho}^{#1\!\!}\hspace{-.15ex}}
\newcommand\soct[1][]{\operatorname{soc}_{\hspace{.02ex}\tau}^{#1\!\!}\hspace{-.15ex}}
\newcommand\topc[1][]{\ol{\mathrm{top}}_{#1\,}} % closed top

\newcommand\topr[1][]{\operatorname{top}_{\hspace{-.1ex}\rho}^{\hspace{.15ex}#1\!\!}}

%newcommand\socct[1][\ ]{\ol{\mathrm{soc}}_{\hspace{.2ex}\tau\,}{}^{\hspace{-.9ex}#1}}
\newcommand\socct[1][\tau]{\ol{\operatorname{soc}}_{\hspace{.02ex}#1}^{\,}\hspace{.1ex}}

\newcommand\topcr[1][\ ]{\ol{\mathrm{top}}{}_{\rho\!}^{\hspace{.15ex}#1}} %
\newcommand\topct[1][\ ]{\ol{\mathrm{top}}{}_{\,\tau\!}{}^{\hspace{-.7ex}#1\,}} %
\newcommand\cpsoc{\cP\hbox{\rm-}\socc}
\newcommand\cptop{\cP\hbox{\rm-}\topc}
\newcommand\fqsoc{\fqo\hspace{-.1ex}\hbox{\rm-}\hspace{.1ex}\socc}
\newcommand\nrsoc{\nabr\hspace{.25ex}\hbox{\rm-}\hspace{.1ex}\socc}
\newcommand\nrtop{\nabro\hspace{.25ex}\hbox{\rm-}\hspace{.1ex}\topc}
\newcommand\ntsoc{\nabt\hbox{\rm-}\hspace{.1ex}\socc}

\newcommand\qtsoc{(\qrt)\hspace{-.05ex}\hbox{\rm-}\hspace{.1ex}\socc}

\newcommand\rnsoc{\cQ\hspace{-.1ex}\hbox{\rm-}\hspace{.1ex}\socc}

\renewcommand\top{\operatorname{top}}

%words whose typography we haven't decided
\newcommand\qrcode{QR~code}

% Replaces \atop
\newcommand{\aoverb}[2]{{\genfrac{}{}{0pt}{1}{#1}{#2}}}
%0 = displaystyle               in the 4th argument
%1 = textstyle
%2 = scriptstyle
%3 = scriptscriptstyle
\def\twoline#1#2{\aoverb{\scriptstyle {#1}}{\scriptstyle {#2}}}

%math operators
\DeclareMathOperator\gr{gr} % associated graded object
\DeclareMathOperator\ass{Ass} % associated faces
\DeclareMathOperator\att{Att} % attached faces
\DeclareMathOperator\Gen{Birth} % Generator module
\DeclareMathOperator\Hom{Hom} % Hom
\DeclareMathOperator\life{Life} % life module
\DeclareMathOperator\soc{soc} % socle of a module
\DeclareMathOperator\Soc{Death} % total socle module
 % Tor
\DeclareMathOperator\Top{Top} % top of a module ("\top" already defined)
\DeclareMathOperator\hhom{
	\hspace{.6pt}{\underline{\hspace{-.6pt}{\rm Hom}\hspace{-.6pt}}\hspace{1pt}}}
\DeclareMathOperator\coker{coker} % cokernel
 % image

%declarations

% Standardizes writing monomial matrices in LaTeX.
% In the definition of monomialmatrix:
%	#1 = leftmost column (format as if inside one-column array)
%	#2 = top row plus interior of matrix (write an \array)
%	#3 = as many "\\" as there are rows in the interior of the matrix
\newcommand\monomialmatrix[3]{{
\begin{array}{@{}r@{\:}r@{}c@{}l@{}}
  \begin{array}{@{}c@{}}		%leftmost column
%	\begin{array}{@{}l@{}}\\
%	\end{array}
%	\\
	\begin{array}{@{}r@{}}
	\\
	#1
	\end{array}\!
  \end{array}						
&
  \begin{array}{@{}c@{}}		%big left parenthesis	%(
	\begin{array}{@{}l@{}}\\				%(
	\end{array}						%(
	\\							%(
	\left[\begin{array}{@{}l@{}}				%(
	#3							%(
	\end{array}\!						%(
	\right.							%(
  \end{array}							%(
&
  #2					%top border row
					%interior of matrix
					%use \array command
&
  \begin{array}{@{}c@{}}		%big right parenthesis	%)
	\begin{array}{@{}l@{}}\\				%)
	\end{array}						%)
	\\							%)
	\left.\!\begin{array}{@{}l@{}}				%)
	#3							%)
	\end{array}						%)
	\right]							%)
  \end{array}							%)
\end{array}
}}

%%%%%%%%%%%%%%%%%%%%%%%%%%%%%%%%%%%%%%%%%%%%%%%%%%%%%%%%%%%%%%%%%%%%%%%%
\begin{document}%%%%%%%%%%%%%%%%%%%%%%%%%%%%%%%%%%%%%%%%%%%%%%%%%%%%%%%%
%%%%%%%%%%%%%%%%%%%%%%%%%%%%%%%%%%%%%%%%%%%%%%%%%%%%%%%%%%%%%%%%%%%%%%%%

\mbox{}
\vspace{-4ex}
\title[Data structures for real multiparameter persistence modules]
      {Data structures for real multiparameter persistence modules}
%author{Justin Curry}
%address{Mathematics Department\\Duke University\\Durham, NC 27708}
\author{\vspace{-1ex}Ezra Miller}
\address{Mathematics Department\\Duke University\\Durham, NC 27708}
\urladdr{\url{http://math.duke.edu/people/ezra-miller}}
%author{Ashleigh Thomas}%\\\\(DRAFT---DO NOT DISTRIBUTE)}
%address{Mathematics Department\\Duke University\\Durham, NC 27708}
%urladdr{\url{https://fds.duke.edu/db/aas/math/grad/athomas}}

\makeatletter
  \@namedef{subjclassname@2010}{\textup{2010} Mathematics Subject Classification}
\makeatother
\subjclass[2010]{Primary: 13P25, 05E40, 55Nxx, 06F20, 13E99, 13D02,
14P10, 13P20, 13A02, 13D07, 06A07, 68W30, 92D15, 06A11, 06F05, 20M14;
Secondary: 05E15, 13F99, 13Cxx, 13D05, 20M25, 06B35, 22A26, 52B99,
22A25, 06B15, 62H35, 92C55, 92C15}

\date{21 September 2017}

\begin{abstract}
A theory of modules over posets is developed to define computationally
feasible, topologically interpretable data structures, in terms of
birth and death of homology classes, for persistent homology with
multiple real parameters.  To replace the noetherian hypothesis in the
general setting of modules over posets, for theoretical as well as
computational purposes, a \emph{finitely encoded} condition is defined
combinatorially and developed algebraically.  The finitely encoded
hypothesis captures topological tameness of persistent homology, and
poset-modules satisfying it can be specified by \emph{fringe
presentations} that reflect birth-and-death descriptions of persistent
homology.

The homological theory of modules over posets culminates in a syzygy
theorem characterizing finitely encoded modules as those that admit
finite presentations or resolutions by direct sums of upset modules or
downset modules, which are analogues over posets of flat and injective
modules over multigraded polynomial rings.

The geometric and algebraic theory of modules over posets focuses on
modules over real polyhedral groups (partially ordered real vector
spaces whose positive cones are polyhedral), with a parallel theory
over discrete polyhedral groups (partially ordered abelian groups
whose positive cones are finitely generated) that is simpler but still
largely new in the generality of finitely encoded modules.  Existence
of primary decomposition is proved over arbitrary polyhedral partially
ordered abelian groups, but the real and discrete cases carry enough
geometry and, crucially in the real case, topology to induce complete
theories of minimal primary and secondary decomposition, associated
and attached faces, minimal generators and cogenerators, socles and
tops, minimal upset covers and downset hulls, Matlis duality, and
minimal fringe presentation.  In particular, when the data are real
semialgebraic, that property is preserved by functorial constructions.
And when the modules in question are subquotients of the group itself,
minimal primary and secondary decompositions are~canonical.

Tops and socles play the roles of functorial birth and death spaces
for multiparameter persistence modules.  They yield functorial
\emph{\qrcode s} and \emph{elder morphisms} for modules over real and
discrete polyhedral groups that generalize and categorify the bar code
and elder rule for persistent homology in one parameter.  The
disparate ways that \qrcode s and elder morphisms model bar codes
coalesce, in ordinary persistence with one parameter, to a functorial
bar code.
\end{abstract}
\maketitle

\vspace{-2.9ex}
\setcounter{tocdepth}{2}
\tableofcontents

%mbox{}\vspace{-10ex}\mbox{}
%%%%%%%%%%%%%%%%%%%%%%%%%%%%%%%%%%%%%%%%%%%%%%%%%%%%%%%%%%%%%%%%%%%%%%%%%
\section{Introduction}\label{s:intro}%%%%%%%%%%%%%%%%%%%%%%%%%%%%%%%%%%%%
%addtocontents{toc}{\protect\setcounter{tocdepth}{1}}%%%%%%%%%%%%%%%%%%%%
%%%%%%%%%%%%%%%%%%%%%%%%%%%%%%%%%%%%%%%%%%%%%%%%%%%%%%%%%%%%%%%%%%%%%%%%%

\noindent
Families of topological spaces in data analysis often arise from
filtrations: collections of subspaces of a single topological
space. Inclusions of subspaces induce a partial order on such
collections.  Applying the homology functor then yields a commutative
diagram~$\cM$ of vector spaces indexed by the partially ordered
set~$\cQ$ of subspaces.  This \emph{$\cQ$-module}~$\cM$ is called the
\emph{persistent homology} of the filtration, referring to how classes
are born, persist for a while, and then die as the parameter moves up
in the poset~$\cQ$.

Ordinary persistent homology, in which $\cQ$ is totally
ordered---usually the real numbers~$\RR$, the integers~$\ZZ$, or a
subset $\{1,\ldots,m\}$---is well studied; see \cite{edels-harer2010},
for example.  Persistence with multiple parameters was introduced by
Carlsson and Zomorodian \cite{multiparamPH} for $\cQ = \NN^n$, and it
has been developed since then in various ways, assuming that the
module~$\cM$ is finitely generated.  In contrast, the application that
drives the developments here has real parameters, and it fails to be
finitely generated in other fundamental ways.  It is therefore the goal
here to define computationally feasible, topologically interpretable
data structures, in terms of birth and death of homology classes, for
persistent homology with multiple real parameters.

The data structures are made possible by a \emph{finitely encoded}
hypothesis that captures topological tameness of persistent homology.
A syzygy theorem characterizes finitely encoded modules as those that
admit appropriately finite presentations and resolutions, all amenable
to computation.  The technical heart of the paper is a development of
basic commutative algebra---especially socles and minimal primary
decomposition---for modules over the poset~$\RR^n$, or equivalently,
multigraded modules over rings of polynomials with real instead of
integer exponents.  This algebraic theory yields functorial
\emph{\qrcode s} and \emph{elder morphisms} for modules over real and
discrete polyhedral groups that generalize and categorify the bar code
and elder rule for ordinary single-parameter persistence.  The
disparate ways that \qrcode s and elder morphisms model bar codes
coalesce, in ordinary persistence with one parameter, to make ordinary
bar codes~\mbox{functorial}.

\vspace{-.5ex}
\addtocontents{toc}{\protect\setcounter{tocdepth}{2}}%%%%%%%%%%%%%%%%%%%%
\subsection{Acknowledgements}

First, special acknowledgements go to Ashleigh Thomas and Justin
Curry.  Both have been and continue to be long-term collaborators on
this project.  They were listed as authors on earlier drafts, but
their contributions lie more properly beyond these preliminaries, so
they declined in the end to be named as authors on this installment.
Early in the development of the ideas here, Thomas put her finger on
the continuous rather than discrete nature of multiparameter
persistence modules for fly wings.  She computed the first examples
explicitly, namely those in Example~\ref{e:toy-model-fly-wing}, and
produced the biparameter persistence diagrams there.  And she
suggested the term ``\qrcode'' (Remark~\ref{r:qr-etymology}).  Curry
contributed, among many other things, clarity and intuition regarding
the topology of endpoints---the limits defining upper and lower
boundary functors---as well as regarding the elder rule.  He also
pointed out connections from the combinatorial viewpoint taken here,
in terms of modules over posets, to higher notions in algebra and
category theory, particularly those involving constructible sheaves,
which are in the same vein as Curry's proposed uses of them in
persistence \cite{curry-thesis}; see Remarks~\ref{r:curry},
\ref{r:lurie}, \ref{r:indicator}, and~\ref{r:kan-extension}.

The author is indebted to David Houle, whose contribution to this
project was seminal and remains ongoing; in particular, he and his lab
produced the fruit fly wing images.  Paul Bendich and Joshua Cruz took
part in the genesis of this project, including early discussions
concerning ways to tweak persistent (intersection) homology for the
fly wing investigation.  Banff International Research Station provided
an opportunity for valuable feedback and suggestions at the workshop
there on Topological Data Analysis (August, 2017) as this research was
being completed; many participants, especially the organizers, Uli
Bauer and Anthea Monod, as well as Michael Lesnick, shared important
perspectives and insight.  Thomas Kahle requested that
\mbox{Proposition}~\ref{p:determined} be an equivalence instead of merely the
one implication it had stated.  Hal Schenck
% and Rachel Levanger
gave helpful comments on the Introduction.  Some passages in
Section~\ref{sub:biological} are taken verbatim, or nearly so, from
\cite{fruitFlyModuli}.

%%%%%%%%%%%%%%%%%%%%%%%%%%%%%%%%%%%%%%%%%%%%%%%%%%%%%%%%%%%%%%%%%%%%%%%%%
\subsection{Biological origins}\label{sub:biological}%%%%%%%%%%%%%%%%%%%

This investigation of data structures for real multiparameter
persistence modules intends both senses of the word ``real'':
actual---from genuine data, with a particular dataset in mind---and
with parameters taking continuous as opposed to discrete values.
Instead of reviewing the numerous possible reasons for considering
multiparameter persistence, many already having been present from the
outset \cite[Section~1.1]{multiparamPH}, what follows is an account of
how real multiparameter persistence arises in the biological problem
that the theory here is specifically designed to serve.

The normal \textsl{Drosophila melanogaster} fruit fly wing depicted on
the left%
\begin{figure}[ht]
% See
% http://www.sflorg.com/sciencenews/scn041906_02.html
% for images of different species' wings.
\vspace{-2ex}
$$%
\includegraphics[height=25mm]{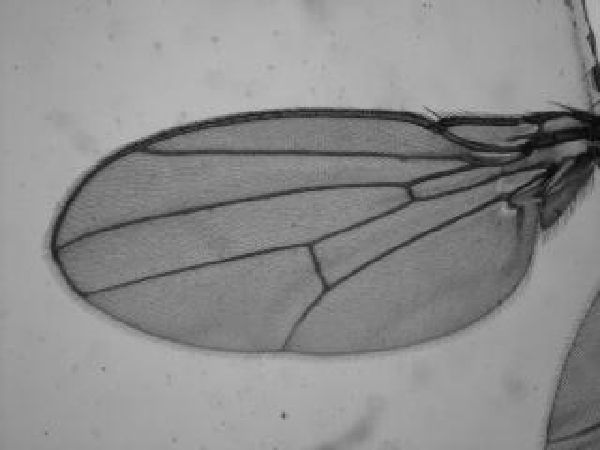}
\qquad
\includegraphics[height=25mm]{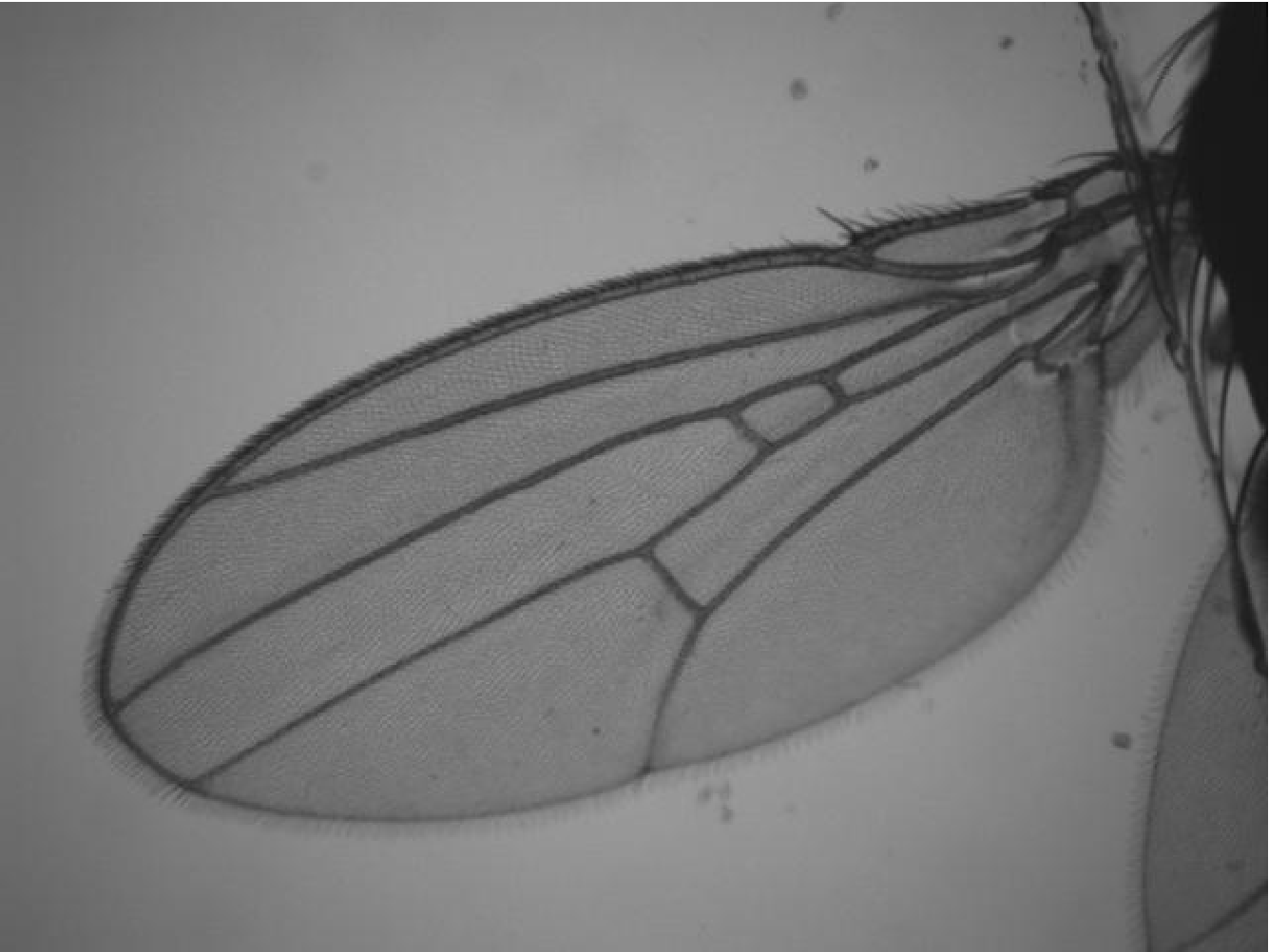}
\qquad
\includegraphics[height=25mm]{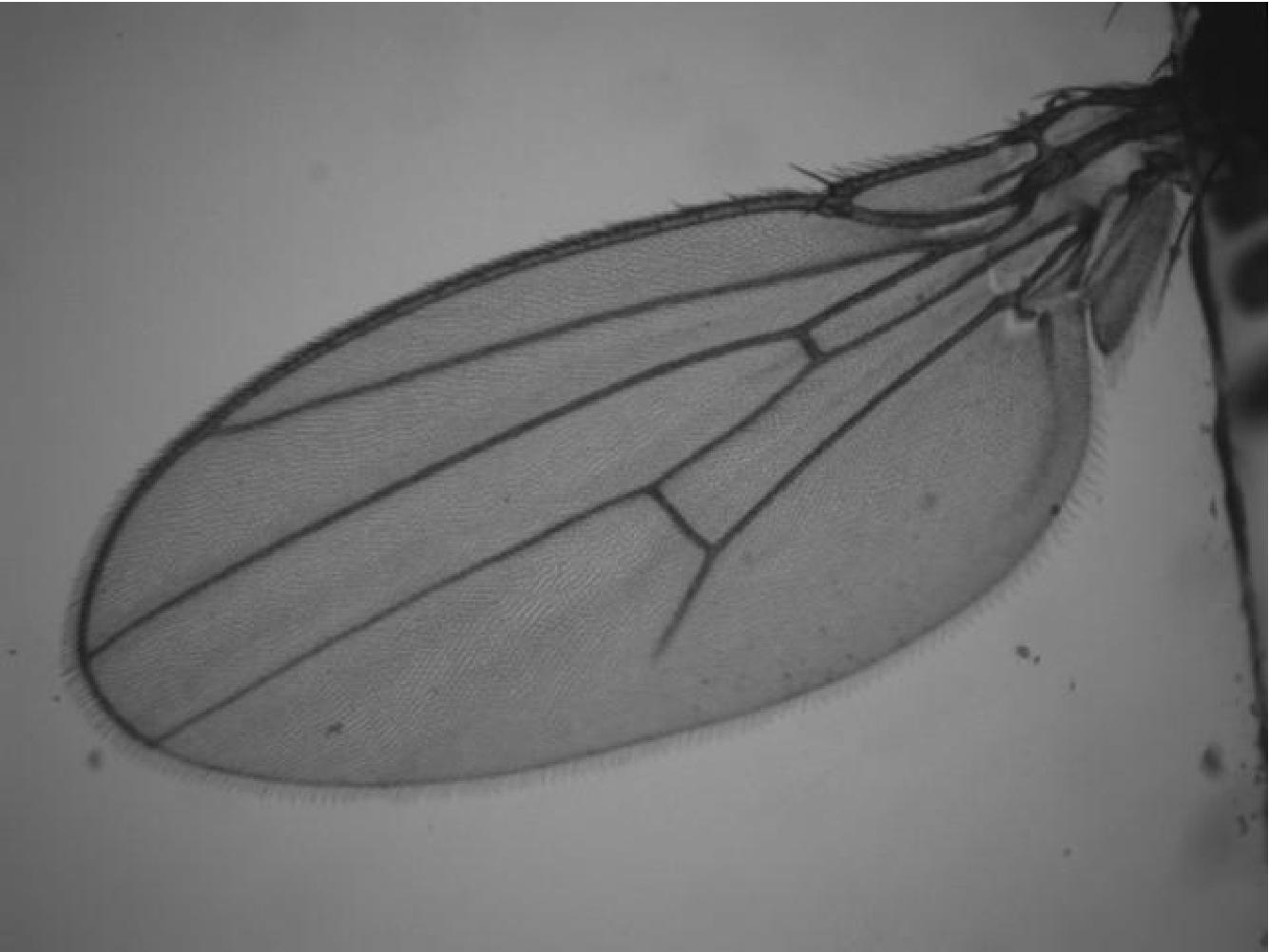}
\vspace{-2.5ex}
$$
\end{figure}
differs from the abnormal other two in topology as well as geometry.
The dataset, as provided by biologist David Houle \cite{houle03} as
part of ongoing work with his lab on the ideas here, presents the
veins in each wing as an embedded planar graph, with a location for
each vertex and an algebraic curve for each arc.  The graph in the
middle has an extra edge, and hence two extra vertices, while the
graph on the right is lacking an intersection.  These topological
variants, along with many others, occur in natural
\textsl{D.$\:$melanogaster} populations, but rarely.  On the other
hand, different species of \textsl{Drosophila} exhibit a range of wing
vein topologies.  How did that come to be?  Wing veins serve several
key purposes, as structural supports as well as conduits for airways,
% http://www.sdbonline.org/sites/fly/aimorph/trachia.htm#dafka
nerves, and blood cells~\cite{blair07}.  Is it possible that some
force causes aberrant vein topologies to occur more frequently than
would otherwise be expected in a natural population---frequently
enough for evolutionary processes to~take~over?

Waddington \cite{waddington1953} famously observed hereditary
topological changes in wing vein phenotype (loss of crossveins) after
breeding flies selected for crossveinless phenotype due to embryonic
heat shock.  Results generated by Weber, and later with more power by
Houle's lab, show that selecting for continuous wing deformations
results in skews toward deformed wings with normal vein topology
\cite{weber90,weber92,houle03}.  But this selection also unexpectedly
yields higher rates of topological novelty; this finding, as yet
unpublished and not yet precisely formulated, is what needs to be
tested statistically.

There are many options for statistical methods to test the hypothesis,
some of them elementary, such as a linear model taking into account a
weighted sum of (say) the number of vertices and the total edge
length.  Whatever the chosen method, it has to grapple with the
topological vein variation, giving appropriate weight to new or
deleted singular points in addition to varying shape.  Real
multiparameter persistence in its present form was conceived to serve
the biology in this way, but the problem has since turned around: fly
wings supply a testing ground for the feasibility and effectiveness of
multiparameter persistent homology as a statistical tool.

\begin{example}\label{e:fly-wing-filtration}
Let $\cQ = \RR_- \times \RR_+$ with the coordinatewise partial order,
so $(r,s) \in \cQ$ for any nonnegative real numbers~$-r$ and~$s$.  Let
$X = \RR^2$ be the plane in which the fly wing is embedded and define
$X_{rs} \subseteq X$ to be the set of points at distance at least~$-r$
from every vertex and within~$s$ of some edge.  Thus $X_{rs}$ is
obtained by removing the union of the balls of radius~$r$ around the
vertices from the union of $s$-neighborhoods of the edges.  In the
following portion of a fly wing, $-r$ is approximately twice~$s$:
$$%
\includegraphics[height=23mm]{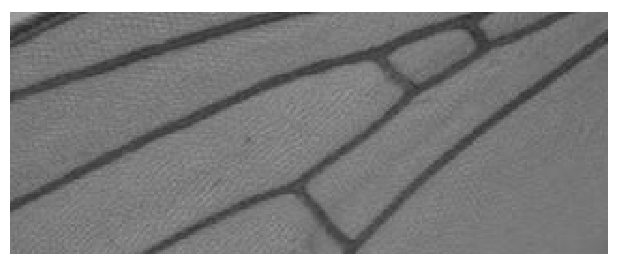}
\quad
\raisebox{10mm}{$\goesto$}
\quad
\includegraphics[height=23mm]{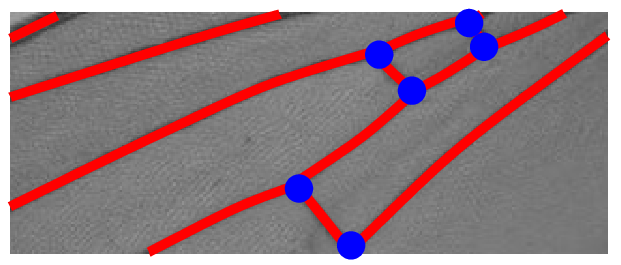}
\vspace{-1ex}
$$
The biparameter persistent homology module $\cM_{rs} = H_0(X_{rs})$
% (or~$H_1(X_r^s)$, which is basically Poincar\'e dual)
summarizes wing vein structure for our purposes.  And the nature of
wing vein formation from gene expression levels during embryonic
development (see \cite{blair07} for background) gives reason to
believe that this type of persistence models biological reality
reasonably faithfully.
\end{example}

\begin{remark}\label{r:biparameterPH}
The parameters in Example~\ref{e:fly-wing-filtration} govern
intersections between edges and vertices.  The biparameter persistence
here can therefore be viewed as a multiscale generalization of
persistent intersection homology \cite{bendich-harer2011} in which
interactions among strata are tuned by relations among the parameters.
% Why might I say this?  Because this generalization was genuinely
% inspired by the fly wing phenomics project.  I would in fact write
% the first sentence of this comment in the article if there was a
% source to cite (beyond the Bendich-Harer definition of PIH)
% concerning the relation between this multiparameter persistence and
% PIH.
\end{remark}

%%%%%%%%%%%%%%%%%%%%%%%%%%%%%%%%%%%%%%%%%%%%%%%%%%%%%%%%%%%%%%%%%%%%%%%%%
\subsection{Encoding modules over arbitrary posets}\label{sub:over-posets}

Biparameter persistence can only serve as an effective summary of a
fly wing for statistical purposes if
% (i)~it can be computed from the initial spline data and
% (ii)~statistically meaningful invariants can be isolated from it.
it can be computed from the initial spline data.  In general,
computation with persistent homology is only conceivable in the
presence of some finiteness condition on the $\cQ$-module or tameness
on the topology that gives rise~to~it.  The prior standard for
finiteness in multiparameter persistence has been the setting where
$\cQ = \NN^n$ and the $\cQ$-module is finitely generated.  Those
conditions quickly fail for fly wings.

\begin{example}\label{e:toy-model-fly-wing}
Using the setup from Example~\ref{e:fly-wing-filtration}, the zeroth
persistent homology for the toy-model ``fly wing'' at left in
Figure~\ref{f:toy-model-fly-wing} is the $\RR^2$-module $\cM$ shown at
center.  Each point of $\RR^2$ is colored according to the dimension
of its associated vector space in~$\cM$, namely $3$, $2$, or~$1$
proceeding up (increasing~$s$) and to the right (increasing~$r$).  The
structure homomorphisms $\cM_{rs} \to \cM_{r's'}$ are all surjective.
This $\RR^2$-module fails to be finitely presented for three
fundamental reasons: first, the three generators sit infinitely far
back along the $r$-axis.  (Fiddling with the minus sign on~$r$ does
not help: the natural maps on homology proceed from infinitely large
radius to~$0$ regardless of how the picture is drawn.)  Second, the
relations that specify the transition from vector spaces of
dimension~$3$ to those of dimension~$2$ or~$1$ lie along a real
algebraic curve, as do those specifying the transition from
dimension~$2$ to dimension~$1$.  These curves have uncountably many
points.  Third, even if the relations are discretized---restrict~$\cM$
to a lattice $\ZZ^2$ superimposed on~$\RR^2$, say---the relations
\begin{figure}[ht]
\vspace{-8ex}
$$%
\begin{array}[b]{@{}c@{}}
\mbox{}\\[28.5pt]
\includegraphics[height=30mm]{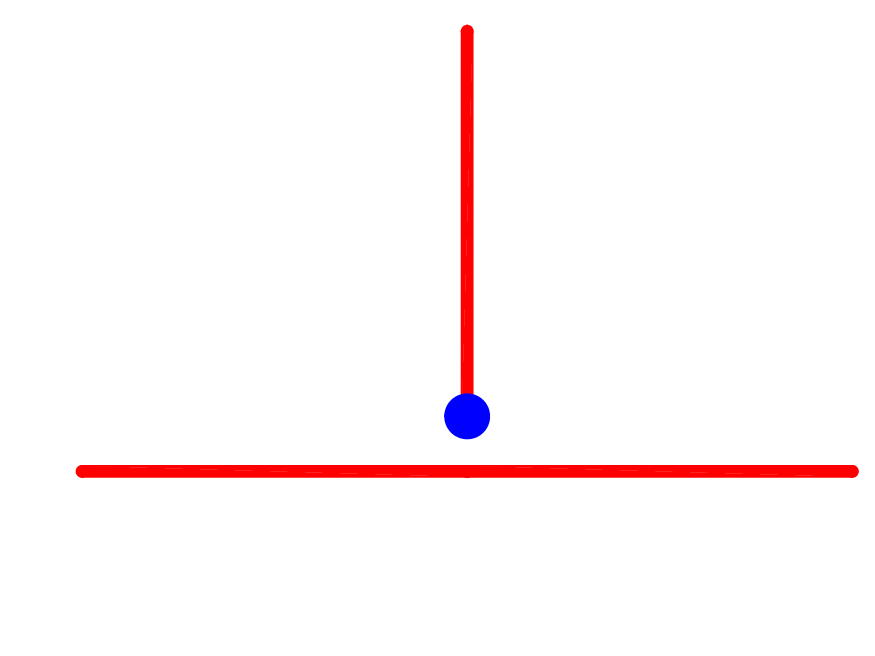}
\\[-22.5pt]
\end{array}
\begin{array}[b]{@{\ \ }c@{}}
\hspace{-3pt}\goesto\\[7mm]\mbox{}
\end{array}
\qquad
\begin{array}[b]{@{\hspace{-10pt}}r@{\hspace{-10pt}}|@{}l@{}}
\begin{array}{@{}c@{}}
\psfrag{r}{\tiny$r \to$}
\psfrag{s}{\tiny$\begin{array}{@{}c@{}}\uparrow\\[-.5ex]s\end{array}$}
\includegraphics[height=30mm]{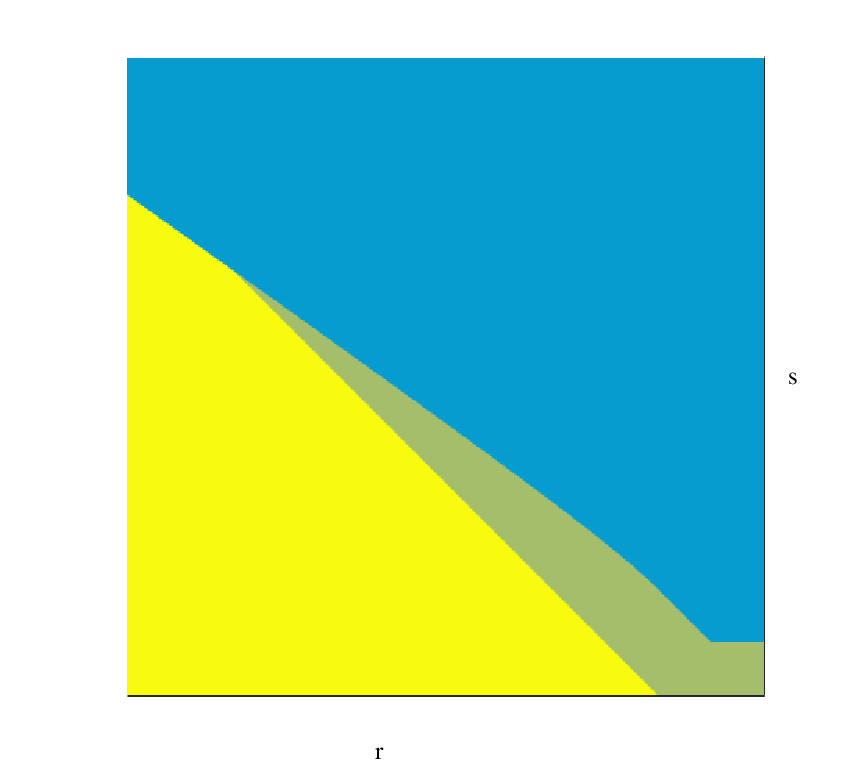}\\[-7.7pt]
\end{array}
&\,\,\,\\[-6pt]\hline
\end{array}
\qquad
\begin{array}[b]{@{\ \ }c@{}}
\hspace{-3pt}\goesto\\[7mm]\mbox{}
\end{array}
\qquad
\begin{array}[b]{@{\hspace{-10pt}}r@{\hspace{-10pt}}|@{}l@{}}
\begin{array}{@{}c@{}}
\psfrag{1}{\tiny$\kk$}
\psfrag{2}{\tiny$\kk^2$}
\psfrag{3}{\tiny$\kk^3$}
\includegraphics[height=30mm]{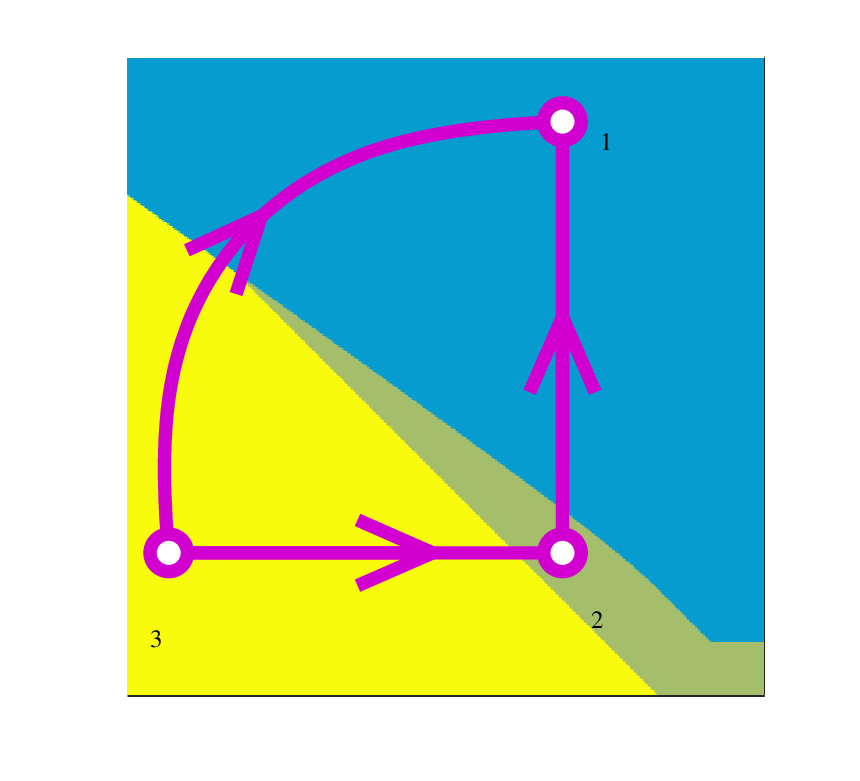}\\[-7.7pt]
\end{array}
&\,\,\,\\[-6pt]\hline
\end{array}
\vspace{-.5ex}
$$
\caption{Biparameter persistence module and finite encoding}
\label{f:toy-model-fly-wing}
\end{figure}
march off to infinity roughly diagonally away from the origin.  (The
remaining image is explained in Example~\ref{e:encoding}.)
\end{example}

To replace the noetherian hypothesis in the setting of modules over
arbitrary posets, for theoretical as well as computational purposes,
the \emph{finitely encoded} condition is introduced combinatorially
(Definition~\ref{d:encoding}).  It stipulates that the module~$\cM$
should be pulled back from a $\cP$-module along a poset morphism $\cQ
\to \cP$ in which $\cP$ is a finite poset and the $\cP$-module has
finite dimension as a vector space over the field~$\kk$.

\begin{example}\label{e:encoding}
The right-hand image in Example~\ref{e:toy-model-fly-wing} is a finite
encoding of~$\cM$ by a three-element poset $\cP$ and the $\cP$-module
$H = \kk^3 \oplus \kk^2 \oplus \kk$ with each arrow in the image
corresponding to a full-rank map between summands of~$H$.
Technically, this is only an encoding of~$\cM$ as a module over $\cQ =
\RR_- \times \RR_+$.  The poset morphism $\cQ \to \cP$ takes all of
the yellow rank~$3$ points to the bottom element of~$\cP$, the olive
rank~$2$ points to the middle element of~$\cP$, and the blue rank~$1$
points to the top element of~$\cP$.  (To make this work over all
of~$\RR^2$, the region with vector space dimension~$0$ would have to
be subdivided, for instance by introducing an antidiagonal extending
downward from the origin, thus yielding a morphism from~$\RR^2$ to a
five-element poset.)  This encoding is \emph{semialgebraic}
(Definition~\ref{d:alg-finite}): its fibers are real semialgebraic
sets.
\end{example}

The finitely encoded hypothesis captures topological tameness of
persistent homology in situations from data analysis, making precise
what it means for there to finitely many topological transitions as
the parameters vary.  But there is nuance: the \emph{isotypic regions}
(Definition~\ref{d:subdivide}), on which the homology remains
constant, need not be situated in a manner that makes them the fibers
of a poset morphism (Example~\ref{e:subdivide}).  Nonetheless, over
arbitrary posets, modules with finitely many isotypic regions always
admit finite encodings (Theorem~\ref{t:isotypic}), although the
isotypic regions are typically subdivided by the encoding poset
morphism.  In the case where the poset is a real vector space, if the
isotypic regions are semialgebraic then a semialgebraic encoding is
possible.

In ordinary totally ordered persistence, finitely encoded means simply
that the bar code should have finitely many bars: the poset being
finite precludes infinitely many non-overlapping bars (the bar code
can't be ``too long''), while the vector space having finite dimension
precludes a parameter value over which lie infinitely many bars (the
bar code can't be ``too thick'').

Finite encoding has its roots in combinatorial commutative algebra in
the form of sector partitions \cite{injAlg} (or
see~\cite[Chapter~13]{cca}).  Like sector partitions, finite encoding
is useful, theoretically, for its enumeration of all topologies
encountered as the parameters vary.  However, enumeration leads to
combinatorial explosion outside of the very lowest numbers of
parameters.  And beyond its inherent inefficiency, poset encoding
lacks many of the features that have come to be expected from
persistent homology, including the most salient: a description of
homology classes in terms of their persistence, meaning birth, death,
and lifetime.

%%%%%%%%%%%%%%%%%%%%%%%%%%%%%%%%%%%%%%%%%%%%%%%%%%%%%%%%%%%%%%%%%%%%%%%%%%
\subsection{Discrete persistent homology by birth and death}\label{sub:disc-PH}

The perspective on finitely generated $\ZZ^n$-modules arising from
their equivalence with multiparameter persistence is relatively new to
commutative algebra.  Initial steps have included descriptions of the
set of isomorphism classes \cite{multiparamPH}, presentations
\cite{csv14} and algorithms for computing \cite{computMultiPH,csv12}
or visualizing \cite{lesnick-wright2015} them, as well as interactions
with homological algebra of modules, such as persistence invariants
\cite{knudson2008} and certain notions of multiparameter noise
\cite{clrso15}.

Algebraically, viewing persistent homology as a module rather than
(say) a diagram or a bar code, a birth is a generator.  In ordinary
persistence, with one parameter, a death is more or less the same as a
relation.  However, in multiparameter persistence the notion of death
diverges from that of relation.  The issue is partly one of geometric
shape in the parameter poset, say~$\ZZ^n$ (the shaded regions indicate
where classes die):
\vspace{-.2ex}
$$%
\psfrag{birth}{\tiny birth}
\psfrag{death}{\tiny death}
\psfrag{relation}{\tiny relation}
\includegraphics[height=30mm]{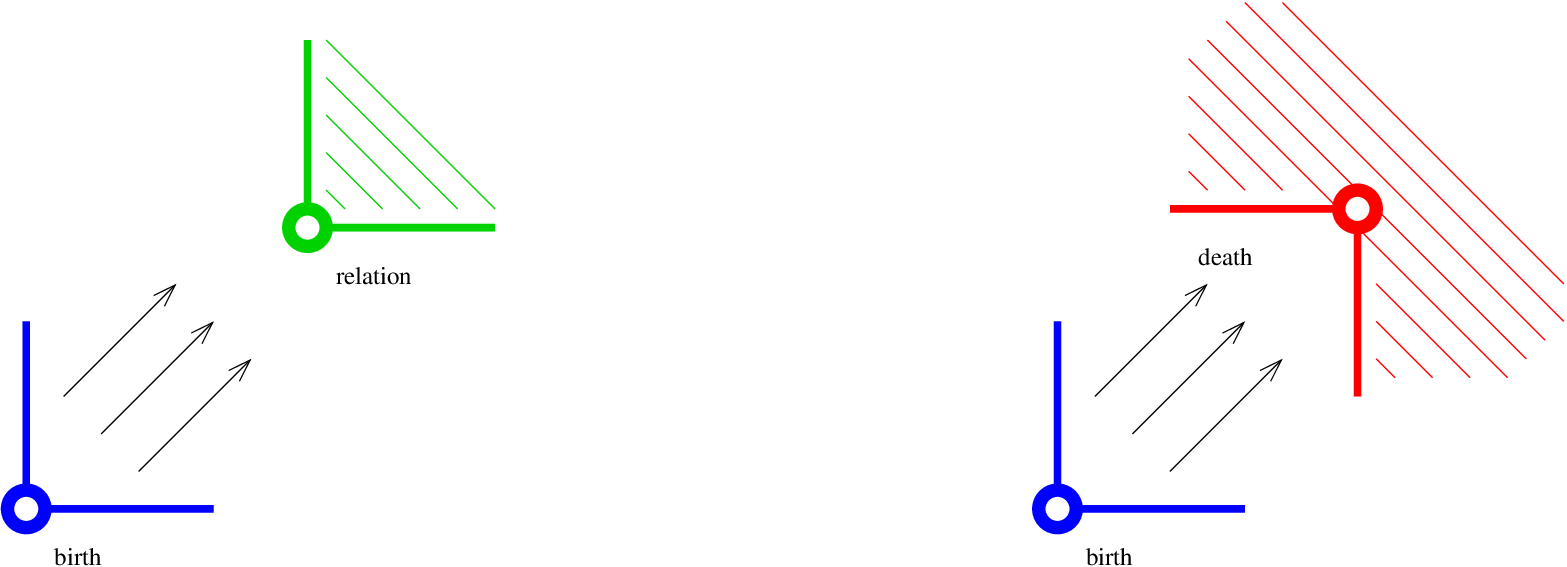}
$$
\vspace{-.25ex}
If death is to be dual to birth, then a nonzero homology class at some
parameter should die if it moves up along any direction in the poset.
Birth is not the bifurcation of a homology class into two independent
ones; it is the creation of a new class from zero.  Likewise, genuine
death is not the joining of two classes into one; it is annihilation.
And death should be stable, in the sense that wiggling the parameter
and then pushing up should still kill the homology class.

In algebraic language, death is a \emph{cogenerator} rather than a
relation.  For finitely generated $\NN^n$-modules, or slightly more
generally for \emph{finitely determined} modules
(Example~\ref{e:convex-projection} and Definition~\ref{d:determined}),
cogenerators are irreducible components, cf.~\cite[Section~5.2]{cca}.
Indeed, irreducible decomposition suffices as a dual theory of death
in the finitely generated case; this is more or less the content of
Theorem~\ref{t:finitely-determined}.  The idea there is that
surjection from a free module covers the module by sending basis
elements to births in the same (or better, dual) way that inclusion
into an injective module envelops the module by capturing deaths as
injective summands.  The geometry of this process in the parameter
poset on the injective side is as well understood as it is on the free
side \cite[Chapter~11]{cca}, and in finitely generated situations it
is carried out theoretically or algorithmically by finitely generated
truncations of injective modules~\cite{irredRes,injAlg}.

Combining birth by free cover and death by injective hull leads
naturally to \emph{flange presentation} (Definition~\ref{d:flange}),
which composes the augmentation map $F \onto \cM$ of a flat resolution
with the augmentation map $\cM \into E$ of an injective resolution to
get a homomorphism $F \to E$ whose image is~$\cM$.  The indecomposable
summands of~$F$ capture births and those of~$E$ deaths.  Flange
presentation splices a flat resolution to an injective one in the same
way that Tate resolutions (see \cite{coanda2003}, for example) segue
from a free resolution to an injective one over a Gorenstein local
ring of dimension~$0$.

Why a flat cover $F \onto \cM$ instead of a free one?  There are two
related reasons: first, flat modules are dual to injective ones
(Remark~\ref{r:matlis-pair}), so in the context of finitely determined
modules the entire theory is self-dual; and second, births can lie
infinitely far back along axes, as in the toy-model fly wing from
Example~\ref{e:toy-model-fly-wing}.

%%%%%%%%%%%%%%%%%%%%%%%%%%%%%%%%%%%%%%%%%%%%%%%%%%%%%%%%%%%%%%%%%%%%%%%%%
\subsection{Discrete to continuous: fringe presentation}\label{sub:disc-to-cont}

That multiparameter persistence modules can fail to be finitely
generated, like Example~\ref{e:toy-model-fly-wing} does, in situations
reflecting reasonably realistic data analysis was observed by
Patriarca, Scolamiero, and Vaccarino \cite[Section~2]{psv12}.  Their
``monomial module'' view of persistence covers births much more
efficiently, for discrete parameters, by keeping track of generators
not individually but gathered together as generators of monomial
ideals.  Huge numbers of predictable syzygies among generators are
swallowed: monomial ideals have known syzygies, and there are lots of
formulas for them, but nothing new is learned from them topologically,
in the persistent sense.

Translating to the setting of continuous parameters, and including the
dual view of deaths, which works just as well, suggests an uncountably
more efficient way to cover births and deaths than listing them
individually.  This urge to gather births or deaths comes
independently from the transition to continuous parameters from
discrete ones.  To wit, any $\RR^n$-module~$\cM$ can be approximated
by a~$\ZZ^n$-module, the result of restricting $\cM$ to, say, the
rescaled lattice~$\epsilon\ZZ^n$.  Suppose, for the sake of argument,
that~$\cM$ is bounded, in the sense of being zero at parameters
outside of a bounded subset of~$\RR^n$; think of
Example~\ref{e:toy-model-fly-wing}, ignoring those parts of the module
there that lie outside of the depicted square.
\begin{figure}[h]
\vspace{-2ex}
$$%
\begin{array}[b]{@{\hspace{-10pt}}r@{\hspace{-10pt}}|@{}l@{}}
\begin{array}{@{}c@{}}
\psfrag{r}{}
\psfrag{s}{}
\includegraphics[height=30mm]{toy-model}\\[-7.7pt]
\end{array}
&\,\,\,\\[-6pt]\hline
\end{array}
\quad\
\begin{array}[b]{@{\ \ }c@{}}
\hspace{-3pt}\goesto\\[7mm]\mbox{}
\end{array}
\qquad
\begin{array}[b]{@{\hspace{-10pt}}r@{\hspace{-10pt}}|@{}l@{}}
\begin{array}{@{}c@{}}
\includegraphics[height=30mm]{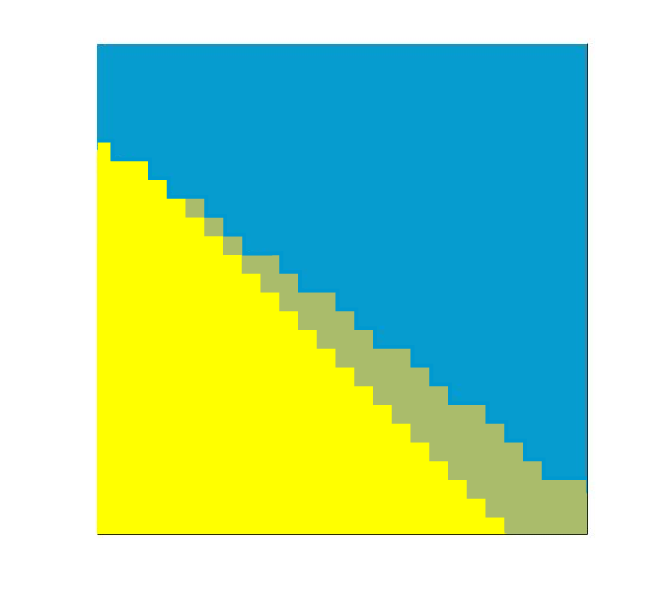}\\[-7.7pt]
\end{array}
&\,\,\,\\[-6pt]\hline
\end{array}
$$
\end{figure}
Ever better approximations, by smaller~$\epsilon \to 0$, yield sets of
lattice points ever more closely hugging an algebraic curve.
Neglecting the difficulty of computing where those lattice points lie,
how is a computer to store or manipulate such a set?  Listing the
points individually is an option, and perhaps efficient for
particularly coarse approximations, but in~$n$ parameters the
dimension of this storage problem is~$n-1$.  As the approximations
improve, the most efficient way to record such sets of points is
surely to describe them as the allowable ones on one side of an
algebraic curve.  And once the computer has the curve in memory, no
approximation is required: just use the (points on the) curve itself.
In this way, even in cases where the entire topological filtration
setup can be approximated by finite simplicial complexes,
understanding the continuous nature of the un-approximated setup is
both more transparent and more efficient.

Combining flange presentation with this monomial module view of births
and deaths yields \emph{fringe presentation}
(Definition~\ref{d:fringe}), the analogue for modules over an
arbitrary poset~$\cQ$ of flange presentation for finitely determined
modules over $\cQ = \ZZ^n$.  The role of indecomposable free or flat
modules is played by \emph{upset modules} (Example~\ref{e:indicator})
which have $\kk$ in degrees from an upset~$U$ and~$0$ elsewhere.  The
role of indecomposable injective modules is played similarly by
\emph{downset modules}.

Fringe presentation is expressed by \emph{monomial matrix}
(Definition~\ref{d:monomial-matrix-fr}), an array of scalars with rows
labeled by upsets and columns labeled by downsets.  For example,
\vspace{7ex}
$$%
\monomialmatrix
	{\begin{array}[t]{@{}r@{\hspace{-3.1pt}}|@{}l@{}}
	 \includegraphics[height=15mm]{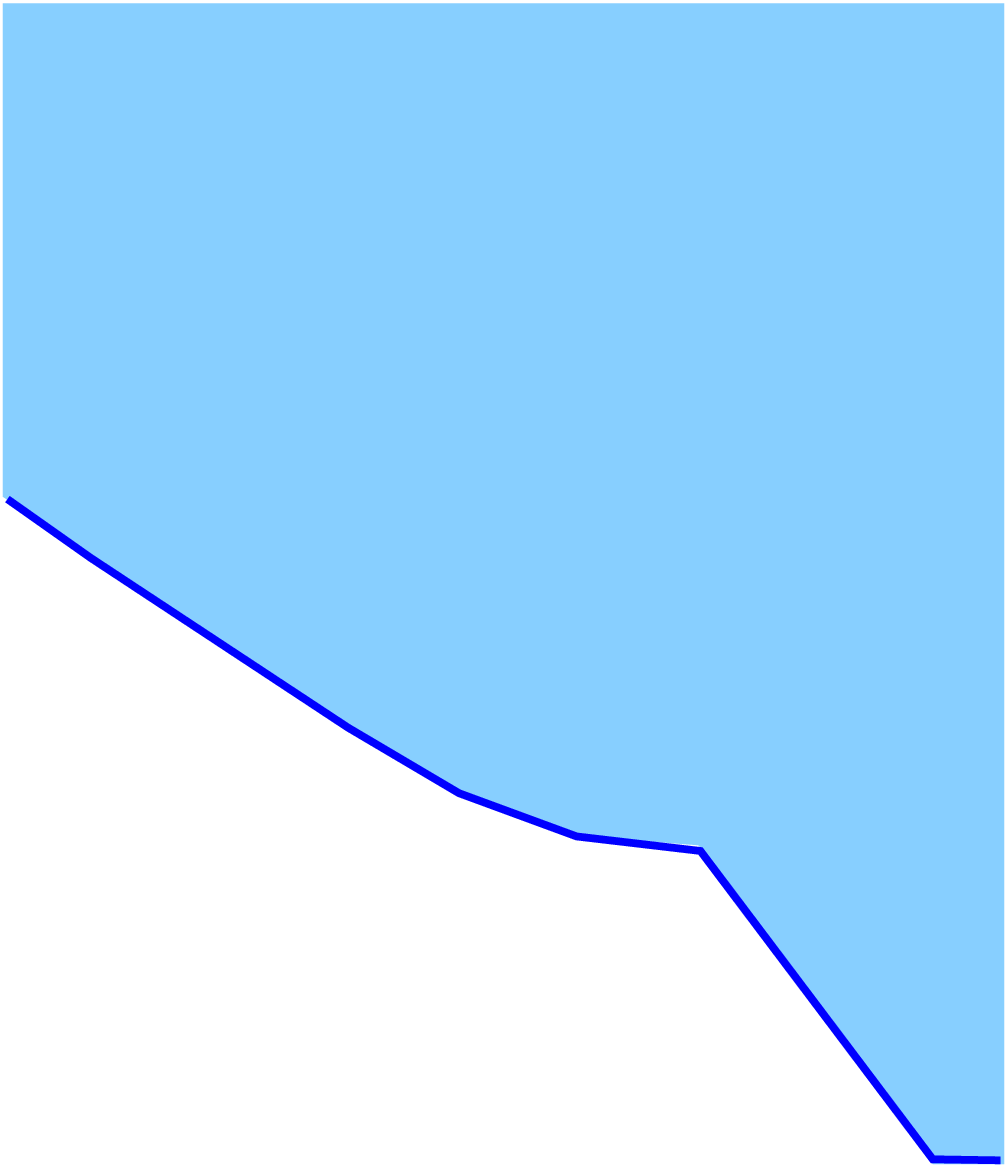}&\ \hspace{-.2pt}\\[-4.3pt]\hline
	\end{array}}
	{\!\!
	 \begin{array}{c}
	 \\[-10ex]
	 \begin{array}[b]{@{}r@{\hspace{-.2pt}}|@{}l@{}}
		\raisebox{-5mm}{\includegraphics[height=17mm]{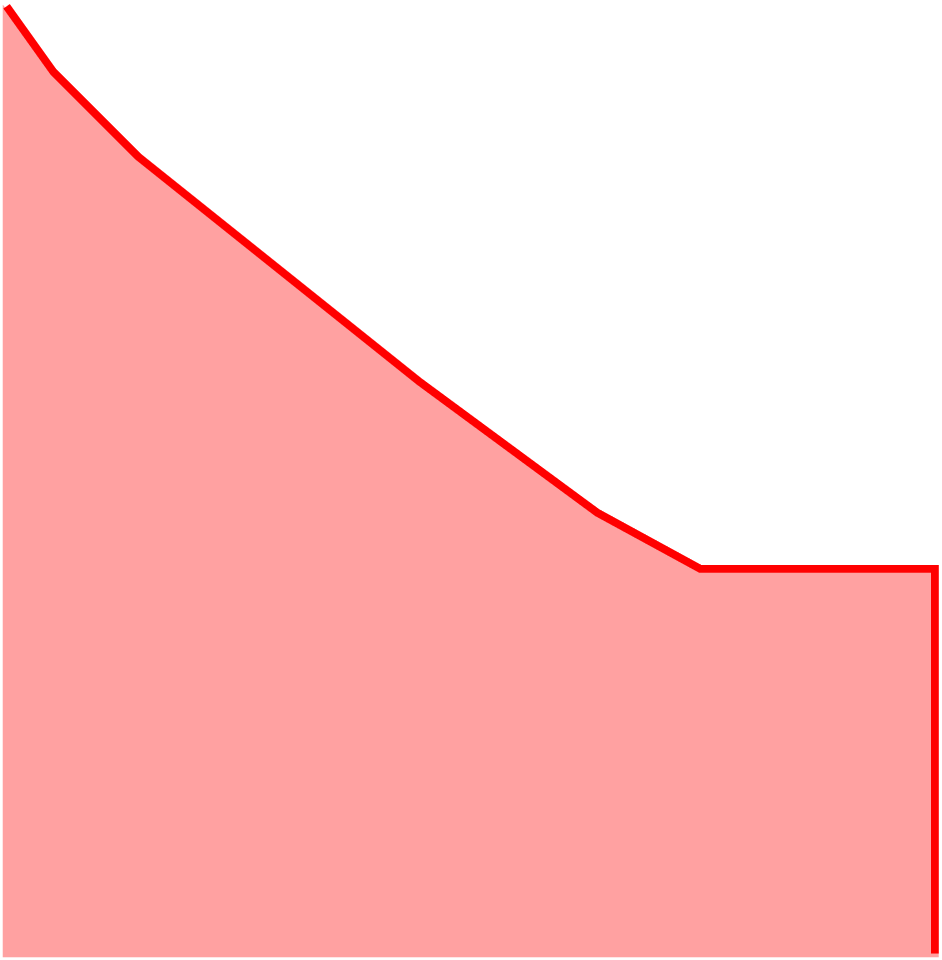}}
		&\ \,\\[-6.3pt]\hline
	 \end{array}
	 \!\!
	 \\[4ex]
	 \phi_{11}
	 \\[3ex]
	 \end{array}}
	{\\\\\\}
\begin{array}{@{}c@{}}
\hspace{.1pt}\ \text{represents a fringe presentation of}\ \hspace{.2pt}
\cM = \kk\!\!
\left[
\begin{array}{@{\ }c@{\,}}
\\[-2.2ex]
\begin{array}{@{}r@{\hspace{-.4pt}}|@{}l@{}}
\includegraphics[height=15mm]{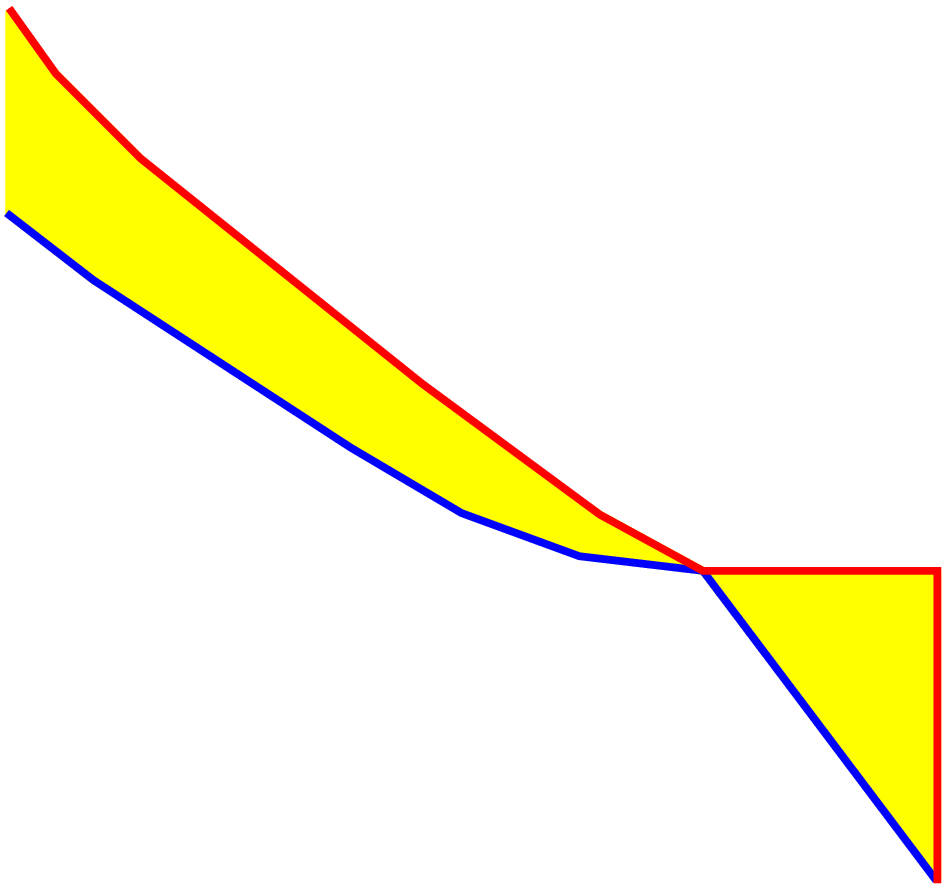}&\ \,\hspace{-.3pt}\\[-4.7pt]\hline
\end{array}
\\[-1ex]\mbox{}
\end{array}
\right]
\\[-3ex]
\end{array}
\vspace{.5ex}
$$
as long as $\phi_{11} \in \kk$ is nonzero.  The monomial matrix
notation specifies a homomorphism
\(
  \kk\bigl[\,
  \begin{array}{@{}c@{}}
  \\[-3ex]
  \begin{array}{@{}r@{\hspace{-1.4pt}}|@{}l@{}}
  \raisebox{-2.2pt}{\includegraphics[height=5mm]{blue-upset}}
  &\hspace{1.7pt}\\[-2pt]\hline
  \end{array}\,
  \end{array}
  \bigr]\!
\to
  \kk\bigl[\,
  \begin{array}{@{}c@{}}
  \\[-3ex]
  \begin{array}{@{}r@{\hspace{-.2pt}}|@{}l@{}}
  \raisebox{-6.2pt}{\includegraphics[height=5mm]{red-downset}}
  &\hspace{1.5pt}\\[-2pt]\hline
  \end{array}
  \end{array}
  \,\bigr]
\)
whose image is~$\cM$, which has $\cM_\aa = \kk$ over the yellow
parameters~$\aa$ and~$0$ elsewhere.  The blue upset specifies the
births at the lower boundary of~$\cM$; unchecked, the classes would
persist all the way up and to the right.  But the red downset
specifies the deaths along the upper boundary of~$\cM$.

When the birth upsets and death downsets are semialgebraic, or
otherwise manageable algorithmically, monomial matrices render fringe
presentations effective data structures for real multiparameter
persistence.  Fringe presentations have the added benefit of being
topologically interpretable in terms of birth and death.

Although the data structure of fringe presentation is aimed at
$\RR^n$-modules, it is new and lends insight already for finitely
generated $\NN^n$-modules (even when $n = 2$), where monomial matrices
have their origins \cite[Section~3]{alexdual}.  The context there is
more or less that of finitely determined modules; see
Definition~\ref{d:monomial-matrix-fl}, in particular, which is really
just the special case of fringe presentation in which the upsets are
localizations of~$\NN^n$ and the downsets are duals---that is,
negatives---of those.

%%%%%%%%%%%%%%%%%%%%%%%%%%%%%%%%%%%%%%%%%%%%%%%%%%%%%%%%%%%%%%%%%%%%%%%%%
\subsection{Homological algebra of modules over posets}\label{sub:homalg}

Even in the case of filtrations of finite simplicial complexes by
products of intervals---that is, \emph{multifiltrations}
(Example~\ref{e:RR+}) of finite simplicial complexes---persistent
homology is not naturally a module over a polynomial ring in $n$ (or
any finite number of) variables.  This is for the same reason that
single-parameter persistent homology is not naturally a module over a
polynomial ring in one variable: though there can only be finitely
many topological transitions, they can (and often do) occur at
incommensurable real numbers.  That said, observe that filtering a
finite simplicial complex automatically induces a finite encoding.
Indeed, the parameter space maps to the poset of simplicial
subcomplexes of the original simplicial complex by sending a parameter
to the simplicial subcomplex it represents.  That is not the smallest
poset, of course, but it leads to a fundamental point: one can and
should do homological algebra over the finite encoding poset rather
than (only) over the original parameter space.

This line of thinking culminates in a syzygy theorem
(Theorem~\ref{t:syzygy}) to the effect that finitely encoded modules
are characterized as those admitting, equivalently,
\begin{itemize}
\item%
finite fringe presentations,
\item%
finite resolutions by finite direct sums of upset modules, or
\item%
finite resolutions by finite direct sums of downset modules.
\end{itemize}
This result directly reflects the closer-to-usual syzygy theorem for
finitely determined $\ZZ^n$-modules
(Theorem~\ref{t:finitely-determined}), with upset and downset
resolutions being the arbitrary-poset analogues of free and injective
resolutions, respectively, and fringe presentation being the
arbitrary-poset analogue of flange presentation.

The moral is that the finitely encoded condition over arbitrary posets
appears to be the right notion to stand in lieu of the noetherian
hypothesis over~$\ZZ^n$: the finitely encoded condition is robust, has
separate combinatorial, algebraic, and homological characterizations,
and makes algorithmic computation possible, at least in principle.

The proof of the syzygy theorem works by reducing to the finitely
determined case over~$\ZZ^n$.  The main point is that given a finite
encoding of a module over an arbitrary poset~$\cQ$, the encoding poset
can be embedded in~$\ZZ^n$.  The proof is completed by pushing the
data forward to~$\ZZ^n$, applying the syzygy theorem there, and
pulling back to~$\cQ$.

%%%%%%%%%%%%%%%%%%%%%%%%%%%%%%%%%%%%%%%%%%%%%%%%%%%%%%%%%%%%%%%%%%%%%%%%%
\subsection{Geometric algebra over partially ordered abelian groups}\label{sub:pogroup}

One of the most prominent features of multiparameter persistence that
differs from the single-parameter theory is that elements in modules
over $\RR^n$ or~$\ZZ^n$ can die in many ways.  A hint of this
phenomenon occurs already over one parameter: an element can die after
persisting finitely---that is, its bar can have a right endpoint---or
it can persist indefinitely.  Over two parameters, a class could
persist infinitely along the $x$-axis but die upon moving up
sufficiently along the $y$-axis, or vice versa, or it could persist
infinitely in both directions, or die in both directions.

\begin{example}\label{e:hyperbola-GD}
Let $D$ be the downset in~$\RR^2$ consisting of all points beneath the
upper branch of the hyperbola $xy = 1$.  Then $D$ canonically
decomposes as the union
$$%
\psfrag{x}{\tiny$x$}
\psfrag{y}{\tiny$y$}
  \begin{array}{@{}c@{}}\includegraphics[height=25mm]{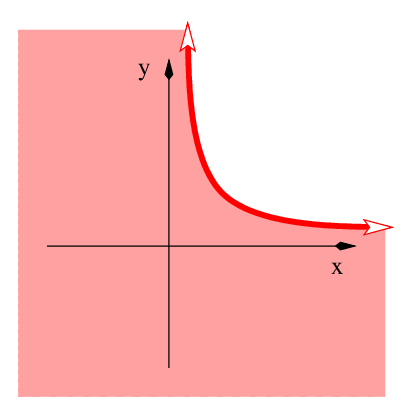}\end{array}
\ =\
  \begin{array}{@{}c@{}}\includegraphics[height=25mm]{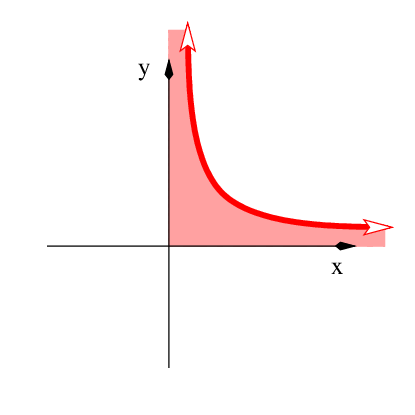}\end{array}
\cup\,
  \begin{array}{@{}c@{}}\includegraphics[height=25mm]{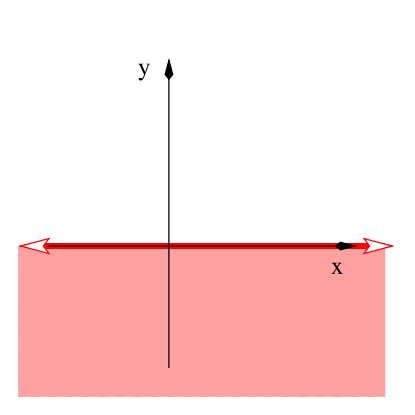}\end{array}
\cup\,
  \begin{array}{@{}c@{}}\includegraphics[height=25mm]{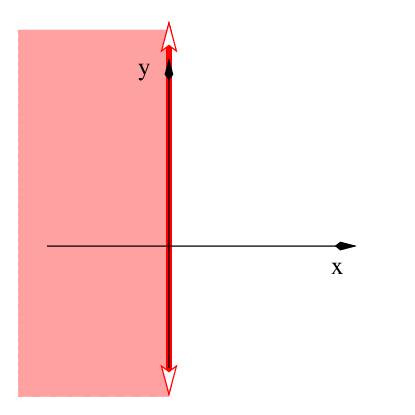}\end{array}
$$ %
of its subsets that die pure deaths of some type (Theorem~\ref{t:PF}):
every red point in the
\begin{itemize}
\item%
leftmost subset on the right dies when pushed over to the right or up far enough;
\item%
middle subset dies in the \emph{localization} of~$D$ along the
$x$-axis (Definition~\ref{d:PF} or Definition~\ref{d:support}) when
pushed up far enough; and
\item%
rightmost subset dies locally along the $y$-axis when pushed over far
enough.
\end{itemize}
\end{example}

\noindent
Generally, in multiparameter situations, elements can persist forever
along any face of the positive cone and die upon exiting far enough
from that face.  Worse, single elements can do combinations of these
things; take, for example, any nonzero element of degree~$\0$ in the
$\ZZ^2$-module $\kk[x,y]/\<xy\>$, which persists along the $x$-axis
but dies upon leaving it, while it also persists along the $y$-axis
but dies upon leaving it.

These phenomena have already been discussed implicitly in prior
sections: types of death in, say, the category of finitely determined
$\ZZ^n$-modules correspond to isomorphism classes of indecomposable
injective $\ZZ^n$-modules, which come in $2^n$ flavors, up to
translation along~$\ZZ^n$ (also known as $\ZZ^n$-graded shift).  The
injective half of a finite flange presentation places a module~$\cM$
inside of a finite direct sum of such modules, thereby decomposing
every element of~$\cM$ as a sum of components each of which dies a
pure death of one of these $2^n$ types.  Gathering injective summands
of the same type, every element of~$\cM$ is a sum of elements each of
which dies a pure death of some distinct type.  In algebraic language,
this is a \emph{primary decomposition} of~$\cM$
(Definition~\ref{d:primDecomp'}).

The natural setting in which to carry out primary decomposition is, in
the best tradition of classical mathematics
\cite{birkhoff42,clifford40,riesz40}, over partially ordered abelian
groups (Definition~\ref{d:pogroup}).  Those provide an optimally
general context in which posets have some notion of ``face'' along
which one can localize without altering the ambient poset.  That is, a
partially ordered group~$\cQ$ has an origin---namely its
identity~$\0$---and hence a positive cone~$\cQ_+$ of elements it
precedes.  A~\emph{face} of~$\cQ$ is a submonoid of the positive cone
that is also a downset therein (Definition~\ref{d:face}).  And as
everything takes place inside of the ambient group~$\cQ$,
% the positive cone is a submonoid of its Grothendieck group, which
% is~$\cQ$.  Therefore
every localization of a $\cQ$-module along a face
(Definition~\ref{d:support}) remains a $\cQ$-module; that much is
needed to functorially isolate all module elements that die in only
one way, which is accomplished by local support functors, as in
ordinary commutative algebra and algebraic geometry
(Definition~\ref{d:primDecomp'}).  Preoccupation with the potential
for algorithmic computation draws the focus to the case where $\cQ$ is
\emph{polyhedral}, meaning that it has only finitely many faces
(Definition~\ref{d:face}).  This notion is apparently new.  Its role
here is to guarantee finiteness of primary decomposition of finitely
encoded modules (Theorem~\ref{t:primDecomp}).

It bears mentioning that primary decomposition of downset modules, or
equivalently, expressions of downsets as unions of \emph{coprimary}
downsets (Definition~\ref{d:primDecomp}), is canonical
(Theorem~\ref{t:PF} and Corollary~\ref{c:PF}), generalizing the
canonical primary decomposition of monomial ideals in ordinary
polynomial rings.  Topologically speaking, coprimary downsets are
those cogenerated by poset elements that die pure deaths, so this
canonical expression as a union tells the fortune of every downset
element.

Notably lacking from primary decomposition theory over arbitrary
polyhedral partially ordered abelian groups is a notion of
minimality---alas, a lack that is intrinsic.

\begin{example}\label{e:hyperbola-PD}
The union in Example~\ref{e:hyperbola-GD} results in a canonical
primary decomposition
$$%
\psfrag{x}{\tiny$x$}
\psfrag{y}{\tiny$y$}
  \kk\!
  \left[
  \begin{array}{@{}c@{}}\includegraphics[height=25mm]{hyperbola}\end{array}
  \right]
\:\into\ 
  \kk\!
  \left[
  \begin{array}{@{}c@{}}\includegraphics[height=25mm]{hyperbola}\end{array}
  \right]
\oplus\,
  \kk\!
  \left[
  \begin{array}{@{}c@{}}\includegraphics[height=25mm]{x-component}\end{array}
  \right]
\oplus\,
  \kk\!
  \left[
  \begin{array}{@{}c@{}}\includegraphics[height=25mm]{y-component}\end{array}
  \right]
$$
of the downset module $\kk[D]$ over~$\RR^2$ (Corollary~\ref{c:PF}).
Although all three of the pure death types are required in the union
decomposing~$D$ (Example~\ref{e:hyperbola-GD}), the final two summands
in the decomposition of~$\kk[D]$ are redundant.  One can, of course,
simply omit the redundant summands, but for arbitrary polyhedral
partially ordered groups no criterion is known for detecting a~priori
which summands should be omitted.
\end{example}

The failure of minimality here stems from geometry that can only occur
in partially ordered groups more general than finitely generated free
ones.  More specifically, $D$ contains elements that die pure deaths
of type ``$x$-axis'' but the boundary of~$D$ fails to contain an
actual translate of the face of~$\RR^2_+$ that is the positive
$x$-axis.  This can be seen as a certain failure of localization to
commute with taking homomorphisms into~$\kk[D]$; this is the content
of the crucial Remark~\ref{r:soc-vs-supp}, which highlights the
difference between real-graded algebra and integer-graded algebra.  It
is the source of much of the subtlety in the theory developed in this
paper, particularly Sections~\ref{s:socle}--\ref{s:hulls}, which is a
development of substantial portions of basic commutative algebra of
finitely determined $\RR^n$-modules with one hand (the noetherian one)
tied behind the back.

The purpose of that theory is partly to rectify, for real
multiparameter persistence, the failure of minimality in
Example~\ref{e:hyperbola-PD} by pinpointing what it means for
$\RR^n$-modules to have minimal cogenerators (Section~\ref{s:socle}),
and what it means for a primary decomposition to be minimal
(Section~\ref{s:hulls}).  But more importantly, thinking of
cogenerators topologically as deaths, the minimal cogenerator theory
and its dual for minimal generators and births
(Section~\ref{s:gen-functors}) provide exactly the ingredients needed
for multiparameter functorial generalizations of bar codes and elder
rules (Sections~\ref{s:qr}, \ref{s:elder}, and~\ref{s:barcodes}).

%%%%%%%%%%%%%%%%%%%%%%%%%%%%%%%%%%%%%%%%%%%%%%%%%%%%%%%%%%%%%%%%%%%%%%%%%
\subsection{Minimal generators and cogenerators}\label{sub:min}

Even in ordinary, single-parameter persistence, the intervals in a bar
code need not be closed: they are usually half-open, being typically
closed on the left (at birth) and open on the right (at death).  This
subtlety becomes more delicate with multiple parameters.
Sections~\ref{s:socle}--\ref{s:tops} build theory to handle the
notions of generator and cogenerator for modules over \emph{real
polyhedral groups} (Definition~\ref{d:polyhedral}), which is~$\RR^n$
with an arbitrary polyhedral positive~cone.

Intuitively, a generator of a module (or upset) is an element that is
not present when approached from below but present when approached
from above.  Dually, a cogenerator of a module (or downset) is an
element that is present when approached from below but not present
when approached from above---think of the right endpoint of a bar in
ordinary persistence, be it a closed endpoint or an open one.  With
multiple parameters, (co)generators can also have positive dimension,
being parallel to any face of the positive cone.  For death, this is
what it means to persist along the face; for birth, the dual concept
is less familiar but occurs already in the toy-model fly wing
(Example~\ref{e:toy-model-fly-wing}).  The single-parameter case,
namely bars with infinite length, have in practice been handled in an
ad hoc manner, but that is not an option in multiparameter
persistence, where infinite bars come in distinct polyhedral flavors.

Phrased more geometrically, upsets and downsets in a real polyhedral
group~$\cQ$ need not be closed subsets of~$\cQ$.  Points along the
\emph{frontier}---in the topological closure but outside of the
% https://en.wikipedia.org/wiki/Boundary_(topology)
original set---feel like they are minimal or maximal, respectively,
but in reality they only are so in a limiting sense.  For this reason,
a $\cQ$-module need not be minimally generated or cogenerated, even if
it is $\cQ$-finite (Definition~\ref{d:Q-finite}), has finite isotypic
subdivision (Definition~\ref{d:subdivide}), and is bounded, in the
sense of having nonzero homogeneous elements only in degrees from a
set that is bounded in~$\cQ \cong \RR^n$.  The indicator module for
the interior of the unit cube in~$\RR^n_+$ provides a specific
example.

The main idea is to express frontier elements using limits and
colimits.  To get a feel for the theory, it is worth a leisurely tour
through the single-parameter case, where $\cQ$ is the real polyhedral
group~$\RR$ with positive cone~$\RR_+$.  To that end, fix a
module~$\cM$~over~$\RR$.
\begin{itemize}
\item%
Closed right endpoint at $a \in \RR$.  Let $\kk_a$ be the $\RR$-module
that is zero outside of degree~$a$ and has a copy of~$\kk$ in
degree~$a$.  Each nonzero element $\kk_a$ dies whenever it is pushed
to the right by any positive distance along~$\RR$.  A closed right
endpoint at~$a$ is a submodule of~$\cM$ isomorphic to~$\kk_a$.
Equivalently a closed right endpoint at~$a$ is a nonzero homomorphism
$\kk_a \to \cM$, so $1 \in \kk_a$ lands on an element of degree~$a$
in~$\cM$ that dies when pushed any distance to the right along~$\RR$.
Thus closed right endpoints of~$\cM$ at~$a$ are detected functorially
by $\Hom_\RR(\kk_a,\cM)$.

\item%
Open right endpoint at $a \in \RR$.  Consider $\dirlim_{a' < a}
\cM_a$.  This vector space sits at~$a$ but records only what happens
strictly to the left of~$a$.  This direct limit being nonzero means
only that $a$ is not a leftmost endpoint of~$\cM$.  More precisely,
this direct limit detects exactly those bars that extend strictly to
the left from~$a$.  Now compare the direct limit with the vector space
$\cM_a$ itself via the natural homomorphism $\phi_a: \dirlim_{a' < a}
\cM_a \to \cM_a$ induced by the universal property of direct limit.
Elements outside of the image of~$\phi_a$ are left endpoints at~$a$.
(Hence, without meaning to at this stage, we have stumbled upon what
it means to be a closed generator at~$a$; see
Section~\ref{s:generators}.)  Elements in the image of~$\phi_a$
persist from ``just before~$a$'' to~$a$ itself and hence could be
closed right endpoints at~$a$ but not open ones.  Elements in the
kernel of~$\phi_a$, on the other hand, persist until ``just
before~$a$'' and no further; these are the open right endpoints
sought.

\item%
Infinite right endpoint.  A class persists indefinitely if its bar
contains a translate of the ray~$\RR_+$.  Functorially, the goal is to
stick the upset module $\kk[a + \RR_+]$ into~$\cM$.  An element of
$\Hom\bigl(\kk[a + \RR_+],\cM\bigr)$ is nothing more or less than
simply an element of~$\cM_a$, since $\kk[a + \RR_+]$ is a free
$\kk[\RR_+]$-module of rank~$1$ generated in degree~$a$.  How is the
homomorphism ensured to be injective?  By localizing along the
face~$\RR_+$, which annihilates torsion and hence non-injective
homomorphisms.  Note that any other choice of~$a'$ on the ray $a +
\RR_+$ should morally select the very same bar, so these injective
homomorphisms should be taken modulo translation along the face~$\tau$
in question, in this case $\tau = \RR_+$ itsef.  Thus, functorially,
infinite right endpoints are detected by
$\hhom_\RR\bigl(\kk[\RR_+],\cM\bigr){}_{\RR_+}/\hspace{.2ex}\RR_+$,
where the underline on $\Hom$ means to try sticking all translates
(graded shifts) of~$\kk[\RR_+]$ into~$\cM$, the subscript~$\RR_+$
means localization, and the quotient means modulo the translation
action of~$\RR_+$ on the localization.
\end{itemize}

To unify the closed and open right endpoints, and hence to indicate
the generalization to multiple parameters, consider the \emph{upper
boundary module} $\delta\cM = \delta^{\{0\}}\cM \oplus
\delta^{\,\RR_+}\cM$, where $\delta^{\{0\}}\cM = \cM$ is viewed in
each degree $a \in \RR$ as the trivial direct limit of~$\cM_{a'}$ over
$a' \in a - \sigma^\circ$ for the relative interior~$\sigma^\circ$ of
the face $\sigma = \{0\}$ of the positive cone~$\RR_+$, and
$\delta^{\,\RR_+}\cM$ in degree~$a$ is the direct limit over $a -
\sigma^\circ$ for $\sigma = \RR_+$ itself
(Definition~\ref{d:upper-boundary}).  The universal property of direct
limits induces a homomorphism from the $\RR_+$-summand to the
$\{0\}$-summand.  Therefore $\delta\cM$ carries an action of $\RR
\times \cF$, where $\RR$ acts on each summand and the face poset $\cF$
of the cone~$\RR_+$ takes the $\sigma$-summand to the
$\sigma'$-summand for any $\sigma' \subseteq \sigma$, which in this
case means only the $\RR_+$-summand to the $\{0\}$-summand.
Functorially, a right endpoint---be it closed or open, without
specification---is detected at~$a$ by the \emph{cogenerator functor}
$\Hom_{\RR \times \fro}\bigl(\kk_a,\delta\cM\bigr)$, which computes
the degree~$a$ piece of the \emph{socle} of~$\cM$
(Definition~\ref{d:soc}).

The general real multiparameter case (Definition~\ref{d:soct}) is the
main contribution of Section~\ref{s:socle}.  It takes the join of the
finite and infinite endpoint cases, combining the upper boundary
functor with homomorphisms from~$\kk[\tau]$, localization, and
quotient modulo the span of the face~$\tau$, being careful to do these
operations in the correct order, because some of them fail to commute
(Remark~\ref{r:soc-vs-supp}).  The definition comes with a small
galaxy of useful foundations; see the opening of Section~\ref{s:socle}
for an overview of those.

The functorial treatment of left endpoints is dual to that of right
endpoints presented here, with inverse instead of direct limits, and
so on.  Armed with geometric intuition from this detailed
single-parameter discussion of right endpoints as a guide to
Section~\ref{s:socle}, reflecting the picture left-to-right should
suffice as intuition for its dual, Section~\ref{s:gen-functors}.
There are subtleties stemming from the asymmetry between the exactness
properties of direct and inverse limits---see the opening of
Section~\ref{s:gen-functors} for an overview of those---but they
affect only the outer confines of the theoretical development and
should cause no concern in practice, when all of the vector spaces in
sight have finite dimension.

The exposition remains concerned solely with cogenerator theory and
its consequences from Section~\ref{s:socle} through
Section~\ref{s:hulls}, as opposed to the dual theory of tops in
Section~\ref{s:gen-functors} and its consequences in
Section~\ref{s:tops}.  Generators are left until
Section~\ref{s:generators} because they require more, namely elder
morphisms (Section~\ref{s:elder}).

%%%%%%%%%%%%%%%%%%%%%%%%%%%%%%%%%%%%%%%%%%%%%%%%%%%%%%%%%%%%%%%%%%%%%%%%%
\subsection{Infinitesimal geometry of real polyhedral modules}\label{sub:inf}

Although primary decomposition and local support work over arbitrary
polyhedral partially ordered groups (Section~\ref{s:decomp}), the
generality must be restricted to the setting of real polyhedral groups
for much of the new theory of socles and cogenerators in
Sections~\ref{s:socle}--\ref{s:hulls}.  (The simpler parallel theory
over \emph{discrete polyhedral groups}---see
Definition~\ref{d:discrete-polyhedral}---holds and has value, but it
is barely new, being based on more elementary foundations; it is
recorded for posterity in Section~\ref{s:discrete}.)  The specific
technical reason for the restriction is detailed in
Remark~\ref{r:quantum}.  It is related to the omnipresent phenomenon
that drives the novelty, namely that boundaries of upsets and downsets
need not be closed or open, but can a~priori be anything in between.

The miracle, however, is that ``anything in between'' is hardly so: it
turns out to be rigidly constrained.  The distinct approaches to a
boundary point of a downset in a real polyhedral group~$\cQ$ are
indexed by the faces of~$\cQ$ (Proposition~\ref{p:shape}).  This
rigidity renders the cogenerator theory finite.  In particular,
viewing upper boundary modules as gathering all ways of taking direct
limits of vector spaces~$\cM_{\aa'}$ beneath a fixed degree~$\aa$,
this rigidity is what confines upper boundary modules to only
finitely~many~summands.

It might rightly be observed that in most current uses of ordinary
persistent homology with real parameters, the left endpoints are
closed,
% thinking of \cite{kashiwara-schapira2017} here
so there is no need to develop notions of closed and open generators.
However, for the same reason the right endpoints are usually open, so
the notion of open socle is critical.  In addition, allowing only
closed births and open deaths would obscure the natural self-duality
of the theory via Matlis duality for finitely encoded modules
(Section~\ref{sub:matlis}).

Infinitesimal geometry of real polyhedral groups prevents minimal
generating sets in the usual sense---or dually, minimal irreducible
decomposition (see Remark~\ref{r:irred-decomp})---from necessarily
existing in this setting.  That remains true even if one is willing to
accept uncountably many summands.  Most of the this trouble is
alleviated by the concept of \emph{shape} (Proposition~\ref{p:shape}),
which codifies how closed is the principal upset of a generator or
coprincipal downset of a cogenerator.

\begin{example}\label{e:open-generator}
Consider the following upsets and downsets in~$\RR^2$.  (The choice of
which of these to make an upset and which a downset was arbitrary: the
picture dualizes by reflection through the origin.)
$$%
\begin{array}{@{}*3{c@{\qquad}}c}
\\[-3.8ex]
\begin{array}{@{}c@{}}\includegraphics[height=20mm]{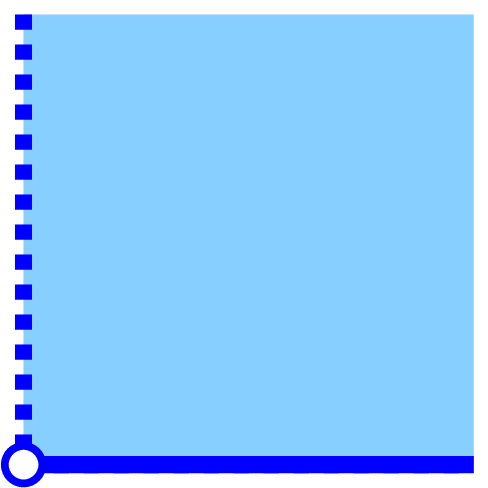}\end{array}
&
\begin{array}{@{}c@{}}\includegraphics[height=20mm]{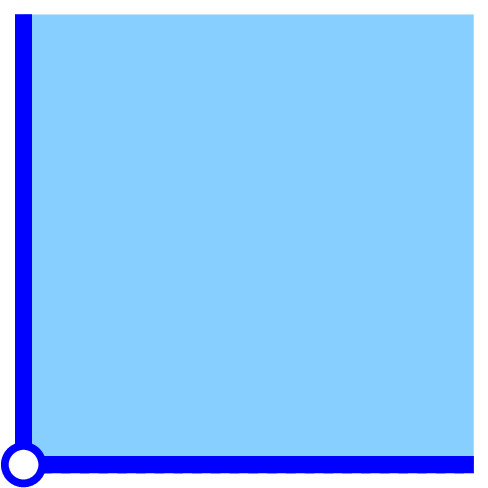}\end{array}
&
\begin{array}{@{}c@{}}\includegraphics[height=30mm]{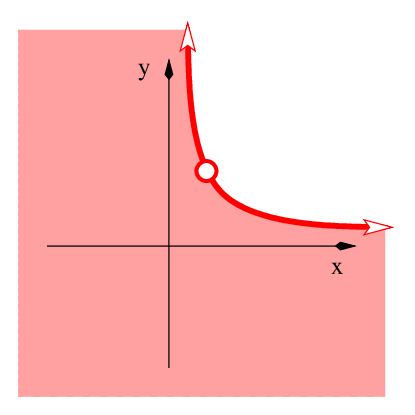}\end{array}
&
\begin{array}{@{}c@{}}\includegraphics[height=30mm]{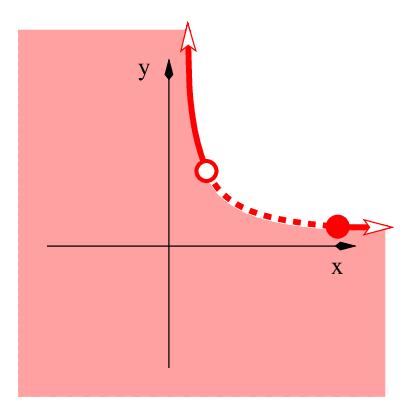}\end{array}
\\
1 & 2 & 3 & 4
\\[-.5ex]
\end{array}
$$
\begin{enumerate}
\item\label{i:along-y-open}%
The upset $U_{\hbox{\tiny\ref{i:along-y-open}}} = \RR^2_+ \minus
y$-axis is a ``half-open'' positive quadrant.  Writing it as a union
of (translated) closed positive quadrants
in~$U_{\hbox{\tiny\ref{i:along-y-open}}}$ requires infinitely many.
As long as the $x$-axis contains enough of their lower corners so
that~$\0$ is an accumulation point, that suffices.  However, the
theory of generators makes $U_{\hbox{\tiny\ref{i:along-y-open}}}$
principal: its sole open generator lies infinitesimally near the
origin on the~\mbox{$x$-axis}.

\item\label{i:open-generator}%
The upset $U_{\hbox{\tiny\ref{i:open-generator}}} = \RR^2_+ \minus
\{\0\}$ is a positive quadrant that is closed away from its missing
origin.  It is a union of closed positive quadrants as
$U_{\hbox{\tiny\ref{i:along-y-open}}}$ is, except the lower corners
must have~$\0$ as an accumulation point in each of the axes.
Generator theory detects two open generators, one
being~$U_{\hbox{\tiny\ref{i:along-y-open}}}$ and the other being its
reflection across the diagonal.  Note the similarity with the discrete
case: the maximal ideal in $\NN^2$ has two generators, one as close to
the origin as possible on each axis.  That description is made precise
in~$\RR^2_+$ by Theorem~\ref{t:downset=union}.

\item\label{i:plucked-hyperbola}%
Plucking out a single point from the hyperbolic boundary of the
downset in Example~\ref{e:hyperbola-GD} has an odd effect.  At the
frontier point, cogenerator theory (Definition~\ref{d:soct}) detects
two open cogenerators, dual to the generators of
of~$U_{\hbox{\tiny\ref{i:open-generator}}}$, but they are redundant:
$D_{\hbox{\tiny\ref{i:plucked-hyperbola}}}$ equals the union of the
closed negative orthants cogenerated by the points along the rest of
the hyperbola.

\item\label{i:half-open-hyperbola}%
The ability to omit (co)generators is even more striking upon deleting
an interval from the hyperbola, instead of a single point, to get the
downset $D_{\hbox{\tiny\ref{i:half-open-hyperbola}}}$.  Along the
deleted curve, cogenerator theory detects cogenerators of the same
shape as those for~$D_{\hbox{\tiny\ref{i:plucked-hyperbola}}}$.  Hence
$D_{\hbox{\tiny\ref{i:half-open-hyperbola}}}$ is the union of
coprincipal downsets of these shapes along the deleted curve together
with closed coprincipal downsets along the rest of the hyperbola.
However, any finite number of the coprincipal downsets along the
deleted curve can be omitted, as can be checked directly.  In fact,
any subset of them that is dense in the deleted curve can be omitted
(Theorem~\ref{t:downset=union}).  Note that a closed negative orthant
is required at the lower endpoint of the deleted curve, because the
endpoint has not been deleted, whereas the cogenerator at the upper
endpoint of the deleted curve can always be omitted because of open
negative orthants hanging from points along the hyperbola just below
it.
\end{enumerate}
\end{example}

%%%%%%%%%%%%%%%%%%%%%%%%%%%%%%%%%%%%%%%%%%%%%%%%%%%%%%%%%%%%%%%%%%%%%%%%%
\subsection{Topologies on generators and cogenerators}\label{sub:topologies}

What, in the end, does it take to express a downset in a real
polyhedral group as a union of coprincipal downsets?  This question
stems from practical considerations about birth and death in
multiparameter persistence: any generalization of bar code, functorial
or otherwise, requires knowing at least what are the analogues of the
set of left endpoints and the set of right endpoints.  The general
answer comes down to density in what is more or less a topology on the
set of cogenerators (Theorem~\ref{t:downset=union}) that arises by
viewing certain cones related to faces of the positive cone~$\cQ_+$ as
being open in~$\cQ$.  These ideas and their consequences are akin to
those in \cite{kashiwara-schapira2017}, but the main results lie
along~different~lines.

The same concept of density governs criteria for injectivity of
homomorphisms between finitely encoded modules.  In noetherian
commutative algebra, socles are essential submodules, meaning that any
nonzero submodule---the kernel of a homomorphism at hand, say---must
intersect the socle nontrivially.  Here, socles of~$\cM$ are no longer
submodules, having been constructed from auxiliary modules derived
from~$\cM$, namely upper boundary modules~$\delta\cM$.  Nonetheless,
socles are still functorial, so the question of whether they detect
injectivity remains valid, and the answer is positive
(\mbox{Theorem~\ref{t:injection}}).  Furthermore, in noetherian
commutative algebra, no proper submodule of the socle detects
injectivity.  But over real polyhedral groups, the same geometric
considerations just discussed for cogeneration of downsets have the
consequence that a subfunctor of~$\soc$ detects injectivity precisely
when it is dense in the same sense (Theorem~\ref{t:dense}).

The fact that socles are not submodules raises another relevant point:
in the closed cogenerator case there is a submodule containing the
socle element, but for open cogenerators there must still be a
submodule to witness the injectivity, because injectivity comes from
there being no actual submodule that goes to~$0$.  The cogenerator
merely indicates the presence of such a submodule, rather than being
an element of it.  Reconstructing an honest submodule in
Section~\ref{s:density} requires much of the theory in earlier
sections.  It has the consequence that a submodule is an essential
submodule of an ambient module precisely when the socle of the
submodule is dense in that of the ambient one
(Theorem~\ref{t:essential-submodule}), a result pivotal in the
characterization of coprimary modules via socles
(Theorem~\ref{t:coprimary}), the first step in minimal primary
decomposition.

%%%%%%%%%%%%%%%%%%%%%%%%%%%%%%%%%%%%%%%%%%%%%%%%%%%%%%%%%%%%%%%%%%%%%%%%%
\subsection{Primary decomposition over real polyhedral groups}\label{sub:RRprim-decomp}

Detecting injectivity of homomorphisms is vital to primary
decomposition because such decompositions can be expressed as
inclusions into direct sums of coprimary modules
(Definition~\ref{d:primDecomp'}), as already seen in
Example~\ref{e:hyperbola-PD}.  What makes such a decomposition
minimal?  Given that injectivity of homomorphisms is characterized by
arbitrary inclusion of \mbox{socles}, primary decompositions ought to
be considered minimal if their socle inclusions are dense.  However,
even though it is a~priori harder to achieve isomorphism on socles
than density of inclusion, the stronger conclusion nonetheless holds:
minimal primary decompositions exist for finitely determined modules
over real polyhedral groups with socle isomorphism as the minimality
criterion (Definition~\ref{d:minimal-primary} and
Theorem~\ref{t:minimal-primary}).  Moreover, these minimal
decompositions are canoncial for downset modules or, more generally,
subquotients of~$\kk[\cQ]$ (Theorem~\ref{t:hull-D} or
Corollary~\ref{c:hull-D}, respectively), just as they are for monomial
ideals in ordinary polynomial rings.

\begin{example}\label{e:hyperbola-min}
When $\RR^2$ is considered as a real polyhedral group, the two
redundant components in Example~\ref{e:hyperbola-PD} are not part of
the minimal primary decomposition in Theorem~\ref{t:hull-D} because
the downset~$D$ has no cogenerators, in the functorial sense of
Definition~\ref{d:soct}, along the $x$-axis or $y$-axis.  The
illustrations in Example~\ref{e:hyperbola-GD} show that $\kk[D]$ has
elements supported on the face of~$\RR^2$ that is the (positive)
$x$-axis, and it has elements supported on the face of~$\RR^2$ that is
the (positive) $y$-axis, but the socle of~$\kk[D]$ along each axis
is~$0$, as the entire boundary curve of~$\kk[D]$ is supported on the
origin.
\end{example}

In finitely generated situations, socle-minimality as in
Definition~\ref{d:minimal-primary} is equivalent, when applied to an
injective hull or irreducible decomposition, to there being a minimal
number of summands.  In contrast, minimal primary decompositions in
noetherian commutative algebra are not required to be socle-minimal in
any sense: they stipulate only minimal numbers of summands, with no
conditions on socles.  This has unfortunate consequences for
uniqueness: components for embedded (i.e., nonminimal) primes are far
from unique.  Requiring socle-minimality as in
Definition~\ref{d:minimal-primary} recovers a modicum of uniqueness
over arbitrary noetherian rings, as socles are functorial even if
primary components themselves need not be.  This tack is more commonly
taken in combinatorial commutative algebra, typically involving
objects such as monomial or binomial ideals.  In particular, the
``witnessed'' forms of minimality for mesoprimary decomposition
\cite[Definition~13.1 and Theorem~13.2]{mesoprimary} and irreducible
decomposition of binomial ideals \cite{soccular} serve as models for
the type of minimality in primary decompositions considered here.

That said, in ordinary noetherian commutative algebra a socle-minimal
primary decomposition is automatically produced by the usual existence
proof, which leverages the noetherian hypothesis to create an
irreducible decomposition.  Indeed, a primary decomposition is
socle-minimal if and only if each primary component is obtained by
gathering some of the components in a minimal irreducible
decomposition.  In lieu of truly minimal irreducible decompositions
over real polyhedral groups, whose impossibility stems from
infinitesimal geometry, one is forced to settle for the density
theorems reviewed in Section~\ref{sub:topologies}.  The existence of
primary decompositions at all, let alone ones with a mite of canonical
minimality, should therefore come as a relief.

The Matlis duals of (minimal) primary decomposition and associated
primes are (minimal) \emph{secondary decomposition} and \emph{attached
primes}, covered briefly over real and discrete polyhedral groups in
Section~\ref{s:gen-functors}.  Prior theory surrounding those is
lesser known, even to algebraists, but has existed for decades
\cite{kirby1973, macdonald-secondary-rep1973,northcott-gen-koszul1972}
(see \cite[Section~1]{sharp-secondary1976} for a brief summary of the
main concepts).  Not enough of secondary decomposition is required in
later sections to warrant going into much detail about it, especially
as Matlis duality provides such a clear dual picture that is
equivalent for finitely encoded modules over real or discrete
polyhedral groups, but some is needed.  The fly wing situation in
Example~\ref{e:toy-model-fly-wing} highlights the naturality of
allowing births to extend backward indefinitely in one or more
directions, for instance.  The unfamiliarity of secondary
decomposition and its related functors is another reason why the bulk
of the technical development over real polyhedral groups is carried
out in terms of cogenerators and socles instead of generators and
tops.

It would seem that birth and generators are in adamantine antisymmetry
with death and cogenerators, but when it comes to interactions between
the two, the symmetry of the theory is broken by the partial order
on~$\cQ$: elements in $\cQ$-modules move from birth inexorably toward
death.  It is possible to treat elements of modules over partially
ordered groups functorially, of course, since they are homomorphisms
from the monoid algebra of the positive cone, but the dual of an
element is not an element.  (It is instead a homomorphism to the
injective hull of the residue field.)  That makes generators more
complicated to deal with than cogenerators, cementing the choice to
develop the theory in terms of cogenerators.

%%%%%%%%%%%%%%%%%%%%%%%%%%%%%%%%%%%%%%%%%%%%%%%%%%%%%%%%%%%%%%%%%%%%%%%%%
\subsection{Minimal homological algebra over real polyhedral groups}\label{sub:minhom}

Minimal primary decomposition provides an opportunity to revisit the
syzygy theorem for modules over posets (see Section~\ref{sub:homalg})
in the context of real polyhedral groups
(Theorem~\ref{t:presentations-minimal}): fringe presentations, upset
presentations, and downset presentations of modules from data analysis
applications can be chosen minimal, and semialgebraic if the modules
start off that way.  This line of thought raises issues concerning
minimal upset and downset resolutions, especially termination of
minimal resolutions and length bounds on resolutions that are not
necessarily minimal, discussed more in Sections~\ref{s:min}
and~\ref{s:future}.

%%%%%%%%%%%%%%%%%%%%%%%%%%%%%%%%%%%%%%%%%%%%%%%%%%%%%%%%%%%%%%%%%%%%%%%%%
\subsection{Functorial invariants of multiparameter persistence}\label{sub:invariants}

Fringe presentation is a convenient, compact, computable
representation of multiparameter persistent homology, but it is not
canonical---not unique up to isomorphism, not necessarily minimal, not
overtly exhibiting invariants of the module.  For these things, in
ordinary persistence with one parameter, one turns to bar codes.

The bar code in single-parameter persistence has two interpretations
that diverge in the presence of multiple parameters.  One is that it
records birth and genuine death of persistent homology classes,
meaning parameters where classes go to~$0$
\cite{computingPH,crawley-boevey}.  The other is that it records birth
and elder-death, meaning parameters where homology classes join to
persisting classes born earlier.  These two interpretations yield the
same result up to isomorphism for modules over the poset~$\RR$
or~$\ZZ$.  The first interpretation generalizes functorially to
\qrcode s (Section~\ref{s:qr}), while the second generalizes
functorially to elder morphisms (Section~\ref{s:elder}).  Both have
rightful claims to be considered ``the'' functorial multiparameter
analogue of bar code, but neither exactly captures the essence of bar
code, which in single-parameter situations should functorially be
\begin{itemize}
\item%
a ``top'' vector space spanned by open, closed, and infinite left
endpoints;
\item%
a ``socle'' vector space spanned by open, closed, and infinite right
endpoints; and
\item%
a linear map from the top vector space to the socle vector space.
\end{itemize}
\qrcode s get the latter two items right but fail on the first, being
forced by functoriality to use birth spaces that are entire graded
pieces of~$\cM$ (technically:~$\mrx/\rho$ from Definition~\ref{d:mrx})
that merely surject onto the top spaces of~$\cM$.  Elder morphisms get
the first one right but fail on the second, being forced by the
filtered nature of birth spaces to use death spaces that come not
from~$\cM$ itself but rather from the elder quotients of~$\cM$.  This
divergence of \qrcode\ and elder morphism traces back to the divergent
notions of birth and death as opposed to generator and relation in
Section~\ref{sub:disc-PH}.

What is a \qrcode?  Bar codes for ordinary persistence specify births,
deaths, and matchings between them.  \qrcode s in multiparameter
situations categorify these notions as a partially ordered direct sum
of birth vector spaces (Definitions~\ref{d:birth-ZZ}
and~\ref{d:birth-RR} over discrete and real polyhedral groups,
respectively), a partially ordered direct product of death vector
spaces (Definitions~\ref{d:death-ZZ} and~\ref{d:death-RR} over
discrete and real polyhedral groups, respectively), with a functorial
linear map to relate them (Theorem~\ref{t:death-functor} and
Definition~\ref{d:qr-code}).  The death vector spaces in the product
are simply socles.  It would be desirable for the birth direct
summands to be tops, but alas there are no functorial homomorphisms
from tops to socles, so more or less entire graded pieces of~$\cM$ are
used instead; see Remark~\ref{r:qr-code} for why this is both
necessary and not so bad.  The \qrcode\ is functorial and faithful
enough that $\cM$ is easily recovered canonically from its \qrcode\
if~$\cM$ is finitely encoded (Theorem~\ref{t:recover}; see also
Remarks~\ref{r:unwind} and~\ref{r:life}).

The claim that there are no functorial homomorphisms from tops to
socles merits clarification: it means no functorial homomorphisms from
top spaces of~$\cM$ to socles of the same module~$\cM$.  The reason is
that given a generator of~$\cM$ in birth degree~$\beta$, it determines
an element of~$\top(\cM)$ in degree~$\beta$ that is only well defined
modulo elements born at parameters earlier than~$\beta$.  But elements
born earlier than~$\beta$ can die earlier, at the same parameter, or
after the given generator dies, so modifying the given generator by
adding an elder element can alter the generator's map to the death
module.  This analysis demonstrates why the birth summands of \qrcode
s are bigger than top spaces of~$\cM$, but it also points the way to
functorial elder morphisms.

The partial order on the birth poset is utilized to define, in the
previous paragraph, what it means for an element to be ``elder'' than
a degree~$\beta$ generator (Definition~\ref{d:elder-module}).  With
that precision in hand, the previous paragraph shows why there
\emph{is} a well defined linear map from the top of~$\cM$ in
degree~$\beta$ to the socle of the quotient $\cM/\cM_{\prec\beta}$
of~$\cM$ modulo its \emph{elder submodule} at~$\beta$
(Theorem~\ref{t:life-top}; see also Remark~\ref{r:elder-rule}).  That
is the \emph{elder morphism} (Definition~\ref{d:elder-morphism}).  It
functorializes the well known the elder rule from single-parameter
persistence, which says that a class dies when it ``joins with an
older class''.  The elder objects in Section~\ref{s:elder} make this
joining precise, particularly what happens the instant before an
extant class joins an elder one: the dying ember of an extant class is
a socle element of the elder quotient $\cM/\cM_{\prec\beta}$.  In
multiparameter persistence, a~single extant class can, when pushed up
along the parameter poset, give rise to dying embers over many death
parameters of the elder quotient~$\cM/\cM_{\prec\beta}$.

The failures of \qrcode s and elder morphisms to satisfy all three
desiderata are solely multiparameter phenomena; in one parameter,
functorial \qrcode s and elder morphisms can be combined to succeed on
all three items simultaneously.  The result is the functorial bar code
in Theorem~\ref{t:elder-projection}, whose minor cost is that the
socle in death degree~$\alpha$ must---by a combination of the two
forces already encountered, namely functoriality and the filtered
nature of birth spaces---be replaced by an associated graded vector
space.  In other words, the vector space spanned by all of the closed
right endpoints over $a \in \RR$ is functorial, as is the the vector
space spanned by all of the open right endpoints over~$a$, but their
subspaces spanned by right endpoints of intervals born
% (either openly or closedly)
over~$b$ is only functorial modulo the right endpoints of intervals
born strictly earlier than~$b$.

The idea of the proof is enlightening.  An extant element suffers
elder-death if pushing it infinitesimally forward along the real line
lands the extant element in the elder submodule.  Subtracting the
image elder element (``elder projection'') from the extant one yields
an extant element that suffers genuine death and not merely
elder-death.  This proof works over~$\ZZ$ but fails in the presence of
multiple parameters for a telling reason: when the extant element
suffers elder-death in independent directions, it can land on
different elder elements when pushed up in different directions, so
there is no way to modify the extant element to die genuinely in all
directions simultaneously.

\begin{example}\label{e:elder}
Let $\cM$ be the $\ZZ^2$-module generated by $\{e_x,e_y,e_{xy}\}$,
where $\deg e_m = \deg m \in \NN^2$, with relations $\<xy e_x - x
e_{xy}, xy e_y - y e_{xy}\>$.  If $\beta = \big[\twoline 11\big]$ then
$\cM_{\prec\beta} = \<e_x,e_y\>$,~so
$$%
  \cM/\cM_{\prec\beta}
  =
  \cM/\<e_x,e_y\>
  =
  \<e_{xy}\>/\<x e_{xy},y e_{xy}\>
  \cong
  \kk_{xy}
$$
is the indicator subquotient for the point $\big[\twoline 11\big] \in
\ZZ^2$.  Therefore $\cM/\cM_{\prec\beta}$ is its own socle, and it is
supported on the face~$\{\0\}$.  But $\cM$ itself has no socle
elements along~$\{\0\}$ of degree~$\big[\twoline 11\big]$.  (In fact,
$\cM$ has no elements supported on~$\{\0\}$, regardless of degree.)
Indeed, every element of degree~$\big[\twoline 11\big]$ is expressible
as $\alpha y e_x + \beta x e_y + \gamma e_{xy}$.  If this element lies
in $\socp[\{\0\}]\cM$ then multiplying it by~$xy$ forces $\alpha +
\beta = \gamma$, but multiplying it by~$x$ forces $\beta = 0$, while
multiplying it by~$y$ forces $\alpha = 0$.  The relations of~$\cM$ are
specifically designed so that any generator of~$\cM$ with degree
$\big[\twoline 11\big]$ maps to different elements of
$\cM_{\prec\beta} = \<e_x,e_y\>$ upon multiplication by~$x$ and
by~$y$, \mbox{because $xe_{xy} = xye_x$ whereas $ye_{xy} = xye_y$}.
\end{example}

This example brings up one final point: in the presence of multiple
parameters, generically it is impossible to peel off summands using an
elder rule.  This fuels an intuition that the Krull--Schmidt theorem
on existence of decompositions as direct sums of indecomposables
carries little power for persistence over posets not of finite
representation type.
% (e.g., ADE quivers).
Heuristically, if the filtered topological space is connected or has
otherwise trivial topology, then although lots of interesting things
may happen along the way to triviality, the infinitely persisting
class must lie in some summand.  That doesn't a~priori mean
anything---it is true in one real parameter, after all---but not being
able to peel off summands by elder rule obstructs nontrivial direct
sum decomposition.

\addtocontents{toc}{\protect\setcounter{tocdepth}{1}}%%%%%%%%%%%%%%%%%%%%
\subsection*{Advice for the reader}

The heart of this paper is pictorial algebra.  Nearly every statement
and proof was written with visions of upsets and downsets and cones
and faces at the forefront.  It is likely best understood with that
kind of geometry in mind.

\addtocontents{toc}{\protect\setcounter{tocdepth}{2}}%%%%%%%%%%%%%%%%%%%%

%%%%%%%%%%%%%%%%%%%%%%%%%%%%%%%%%%%%%%%%%%%%%%%%%%%%%%%%%%%%%%%%%%%%%%%%%
\section{Encoding poset modules}\label{s:encoding}%%%%%%%%%%%%%%%%%%%%%%%
%%%%%%%%%%%%%%%%%%%%%%%%%%%%%%%%%%%%%%%%%%%%%%%%%%%%%%%%%%%%%%%%%%%%%%%%%

%%%%%%%%%%%%%%%%%%%%%%%%%%%%%%%%%%%%%%%%%%%%%%%%%%%%%%%%%%%%%%%%%%%%%%%%%
\subsection{Multiparameter persistence}\label{sub:multiparam}\mbox{}%%%%%

\begin{defn}\label{d:poset-module}
Let $\cQ$ be a partially ordered set (\emph{poset}) and~$\preceq$ its
partial order.  A \emph{module over~$\cQ$} (or a \emph{$\cQ$-module})
is
\begin{itemize}
\item%
a $Q$-graded vector space $\cM = \bigoplus_{q\in Q} \cM_q$ with
\item%
a homomorphism $\cM_q \to \cM_{q'}$ whenever $q \preceq q'$ in~$Q$
such that
\item%
$\cM_q \to \cM_{q''}$ equals the composite $\cM_q \to \cM_{q'} \to
\cM_{q''}$ whenever $q \preceq q' \preceq q''$.
\end{itemize}
A \emph{homomorphism} $\cM \to \cN$ of $\cQ$-modules is a
degree-preserving linear map, or equivalently a collection of vector
space homomorphisms $\cM_q \to \cN_q$, that commute with the structure
homomorphisms $\cM_q \to \cM_{q'}$ and $\cN_q \to \cN_{q'}$.
\end{defn}

The last bulleted item is \emph{commutativity}: it reflects that
inclusions of subspaces induce functorial homology morphisms in the
motivating examples of $\cQ$-modules.

\begin{defn}\label{d:multifiltration}
Let $X$ be a topological space and $\cQ$ a poset.
\begin{enumerate}
\item\label{i:filtration}%
A \emph{filtration of~$X$ indexed by~$\cQ$} is a choice of subspace
$X_q \subseteq X$ for each $q \in \cQ$ such that $X_q \subseteq
X_{q'}$ whenever $q \preceq q'$.

\item\label{i:PH}%
The \emph{$i^\mathrm{th}$ persistent homology} of the \emph{filtered
space}~$X$ is the associated homology module, meaning the $\cQ$-module
$\bigoplus_{q \in \cQ} H_i X_q$.
\end{enumerate}
\end{defn}

\begin{conv}\label{c:kk}
The homology here could be taken over an arbitrary ring, but for
simplicity it is assumed throughout that homology is taken with
coefficients in a field~$\kk$.
\end{conv}

\begin{remark}\label{r:curry}
There are a number of abstract, equivalent ways to phrase
Definition~\ref{d:multifiltration}.  For example, a filtration is a
functor from $\cP$ to the category $\mathcal{S}$ of subspaces of~$X$,
or a natural transformation from the category~$\cP$ to~$\mathcal{S}$,
or an $\mathcal{S}$-valued sheaf on the topological space~$\cP$, where
a base for the topology is the set of principal dual order~ideals.
For background on and applications of many of these perspectives, see
Curry's dissertation \cite{curry-thesis}, particularly \S4.2 there.
\end{remark}

\begin{example}\label{e:RR+}
A \emph{real multifiltration} of~$X$ is a filtration indexed
by~$\RR^n$, with its partial order by coordinatewise comparison.
Example~\ref{e:fly-wing-filtration} is a real multifiltration of $X =
\RR^2$ with $n = 2$.  The monoid $\RR_+^n \subset \RR^n$ of
nonnegative real vectors under addition has monoid algebra
$\kk[\RR_+^n]$ over the field~$\kk$, a ``polynomial'' ring whose
elements are (finite) linear combinations of monomials $\xx^\aa$ with
real, nonnegative exponent vectors $\aa = (a_1,\dots,a_n) \in
\RR_+^n$.  It contains the usual polynomial ring $\kk[\NN^n]$ as a
$\kk$-subalgebra.  The persistent homology of a real $n$-filtered
space~$X$ is an $\RR^n$-graded module over\/~$\kk[\RR_+^n]$, which is
the same thing as an $\RR^n$-module.
\end{example}

%%%%%%%%%%%%%%%%%%%%%%%%%%%%%%%%%%%%%%%%%%%%%%%%%%%%%%%%%%%%%%%%%%%%%%%%%
\subsection{Finite encoding}\label{sub:encoding}\mbox{}%%%%%%%%%%%%%%%%%%

\noindent
For practical purposes, it is important to encode finiteness
properties of poset-graded vector spaces that at least provide hope
for effective computation.

\begin{defn}\label{d:encoding}
Fix a poset~$\cQ$.  An \emph{encoding} of a $\cQ$-module $\cM$ by a
poset~$\cP$ is a poset morphism $\pi: \cQ \to \cP$ together with a
$\cP$-module $\cH$ such that $\cM \cong \pi^*\cH =
\bigoplus_{q\in\cQ}H_{\pi(q)}$, the \emph{pullback of~$\cH$
along~$\pi$}, which is naturally a $\cQ$-module.  The encoding is
\emph{finite} if
\begin{enumerate}
\item%
the poset $\cP$ is finite, and
\item%
the vector space $H_p$ has finite dimension for all $p \in \cP$.
\end{enumerate}
\end{defn}

\begin{remark}\label{r:lurie}
Encoding of a $\cQ$-module~$\cM$ by a poset morphism to~$\cP$ is
equivalent to viewing~$\cM$ as a sheaf on~$\cP$ that is constructible
in the Alexandrov topology, in the sense defined independently by
Lurie \cite[Definitions~A.5.1 and~A.5.2]{lurie2017}.
% on page 1064
The focus here is on the characterizations and consequences of the
finitely encoded condition rather than on arbitrary poset encodings.
\end{remark}

\begin{example}\label{e:convex-projection}
Take $\cQ = \ZZ^n$ and $\cP = \NN^n$.  The \emph{convex projection}
$\ZZ^n \to \NN^n$ sets to~$0$ every negative coordinate.  The pullback
under convex projection is the \v Cech hull
\cite[Definition~2.7]{alexdual}.  More generally, suppose $\aa \preceq
\bb$ in~$\ZZ^n$.  The interval $[\aa,\bb] \subseteq \ZZ^n$ is a box
(rectangular parallelepiped) with lower corner at~$\aa$ and upper
corner at~$\bb$.  The \emph{convex projection} $\pi: \ZZ^n \to
[\aa,\bb]$ takes every point in~$\ZZ^n$ to its closest point in the
box.  A $\ZZ^n$-module is \emph{finitely determined} if it is finitely
encoded by~$\pi$.
\end{example}

Effectively computing a finite encoding of a real multifiltered space
requires keeping track of the fibers of the morphism $\RR^n \to \cP$
in addition to the data on~$\cP$.  The fact that applications of
persistent homology often arise from metric considerations, which are
semialgebraic in nature, suggests the following condition for
algorithmic developments.

\begin{defn}\label{d:alg-finite}
Fix a partially ordered real vector space~$\cQ$ of finite dimension
(see Definition~\ref{d:pogroup}, or just take $\cQ = \RR^n$ for now).
A finite encoding \mbox{$\pi: \cQ \to \cP$} is \emph{semialgebraic} if
its fibers are real semialgebraic varieties.  A module over~$\cQ$ is
\emph{semialgebraic} if it has a semialgebraic encoding.
\end{defn}

\begin{lemma}\label{l:abelian-category}
For any poset~$\cQ$, the category of finitely encoded $\cQ$-modules
is~abelian.  If $\cQ$ is a partially ordered real vector space of
finite dimension, then the category of semialgebraic modules
is~abelian.
\end{lemma}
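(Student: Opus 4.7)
The plan is to prove that finitely encoded $\cQ$-modules form a full abelian subcategory of all $\cQ$-modules, the latter being the functor category from $\cQ$ to vector spaces over $\kk$, which is abelian with kernels, cokernels, and direct sums computed pointwise. It therefore suffices to check that finitely encoded modules contain a zero object (trivially encoded) and are closed under finite direct sums, kernels, and cokernels.

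For direct sums, I would use a common refinement: given encodings $\pi_i \colon \cQ \to \cP_i$ of $\cM_i$ by $\cH_i$ for $i = 1, 2$, the product morphism $(\pi_1, \pi_2) \colon \cQ \to \cP_1 \times \cP_2$ encodes $\cM_1 \oplus \cM_2$ by the external direct sum of the pullbacks of $\cH_1$ and $\cH_2$ to the finite product poset. The same device reduces the study of a morphism $\phi \colon \cM \to \cN$ of finitely encoded modules to the case in which $\cM = \pi^* A$ and $\cN = \pi^* B$ for a single common encoding $\pi \colon \cQ \to \cP$ by finite-dimensional $\cP$-modules $A$ and $B$, so that $\phi$ is a $\cQ$-compatible family of linear maps $\phi_q \colon A_{\pi(q)} \to B_{\pi(q)}$.

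For kernels and cokernels, which are computed pointwise, it remains to exhibit finite encodings of $\ker \phi$ and $\coker \phi$. The key finiteness input is that $\operatorname{rank} \phi_q$ is bounded by $\min(\dim A_{\pi(q)}, \dim B_{\pi(q)})$ and takes only finitely many values as $q$ varies, since $\cP$ is finite and $A_p, B_p$ are finite-dimensional. I would refine $\pi$ to $\pi' \colon \cQ \to \cP'$, where $\cP' = \{(p, r) : p \in \cP,\ 0 \leq r \leq \min(\dim A_p, \dim B_p)\}$ is ordered by $(p, r) \leq (p', r')$ iff $p \leq p'$ and $\pi'(q) = (\pi(q), \operatorname{rank} \phi_q)$. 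On each fiber of $\pi'$ the dimensions of $\ker \phi_q$ and $\coker \phi_q$ are constant, giving candidate finite-dimensional $\cP'$-module data.

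The main obstacle is that this partition does not yet yield an encoding of the full $\cQ$-module structure, because the structure maps of $\ker \phi$ and $\coker \phi$ between distinct refined fibers may still depend on the particular representatives chosen: two pairs $q \leq q'$ and $q_1 \leq q_1'$ with the same $\pi'$-images can induce restrictions of the structure maps of $A$ and $B$ to the kernels or cokernels with differing ranks and images. I would handle this by iteratively enlarging $\cP'$ to track the ranks of these induced structure maps as additional discrete invariants, subdividing fibers whenever a rank fails to be constant within a fiber. The iteration terminates because each newly recorded invariant is a nonnegative integer uniformly bounded by the fixed dimensions of $A$ and $B$, so after finitely many rounds of refinement $\ker \phi$ and $\coker \phi$ descend consistently as pullbacks from a finite poset. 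For the semialgebraic refinement of the lemma, each step of the construction is cut out by rank conditions on linear maps that depend semialgebraically on the parameters, so all refined fibers remain semialgebraic and the entire argument yields semialgebraic encodings.
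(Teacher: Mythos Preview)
You correctly identify a gap that the paper's proof glosses over: after passing to the common encoding $\pi = \pi_1 \times \pi_2 \colon \cQ \to \cP_1 \times \cP_2$, the morphism $\phi \colon \pi^* A \to \pi^* B$ need not be of the form $\pi^* \psi$ for any $\psi \colon A \to B$, because fibers of $\pi$ can be disconnected as subposets of~$\cQ$ (compare Example~\ref{e:iso-uptight}). The paper's assertion that a common encoding ``suffices'' for kernels and cokernels tacitly uses such a descent.

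However, your proposed repair cannot succeed either, because the lemma is \emph{false} as stated. Take $\cQ = \RR^2$ (so even the semialgebraic claim fails). Let $\pi \colon \RR^2 \to \{0<1<2\}$ send the open half-plane below the antidiagonal line to~$0$, the antidiagonal itself to~$1$, and the open half-plane above to~$2$. Let $A$ be the $\{0<1<2\}$-module with $A_0 = 0$, $A_1 = \kk^2$, $A_2 = \kk^3$ and structure map $\kk^2 \hookrightarrow \kk^3$ given by $(x,y) \mapsto (x,y,0)$; let $B$ have $B_0 = 0$, $B_1 = \kk$, $B_2 = 0$. Then $\cM_1 = \pi^* A$ and $\cM_2 = \pi^* B$ are semialgebraically encoded. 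Define $\phi \colon \cM_1 \to \cM_2$ by $\phi_{(t,-t)}(x,y) = x + ty$ on the antidiagonal and $\phi_q = 0$ elsewhere; every compatibility square commutes because one side always factors through a zero piece of~$\cM_1$ or~$\cM_2$. Then $(\ker\phi)_{(t,-t)} = \mathrm{span}(-t,1) \subset \kk^2$, while $(\ker\phi)_r = \kk^3$ for $r$ above the antidiagonal, and the structure map $(t,-t) \to r$ has image the line $\mathrm{span}(-t,1,0) \subset \kk^3$. If $\ker\phi \cong (\pi')^* \cH'$ for some finite~$\cP'$, then some $\pi'$-fiber contains antidiagonal points $(t_1,-t_1)$ and $(t_2,-t_2)$ with $t_1 \neq t_2$. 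Choosing any $r$ above both and comparing the two compatibility squares against the single structure map $(\cH')_{\pi'(t_i,-t_i)} \to (\cH')_{\pi'(r)}$ forces the iso $\beta_r \colon \kk^3 \to (\cH')_{\pi'(r)}$ to send both $\mathrm{span}(-t_1,1,0)$ and $\mathrm{span}(-t_2,1,0)$ to the same line, which is impossible. Hence $\ker\phi$ is not finitely encoded.

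The moral is that finite encodability constrains the structure maps of a module but not the morphisms out of it; kernels can remember continuously varying subspace data that no refinement of~$\cP$ by rank invariants can capture. Your iterative rank-tracking scheme would never terminate on this example: at every stage all the ranks you record are constant (each $\phi_{(t,-t)}$ has rank~$1$, and each induced structure map on the kernel has rank~$1$), yet the kernel is not a pullback. (A minor separate issue: your order on $\cP'$, namely $(p,r) \leq (p',r')$ iff $p \leq p'$, is only a preorder.)
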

\begin{proof}
The category in question is a subcategory of the category of
$\cQ$-modules.  It suffices to prove that it is a full subcategory,
meaning that the kernel and cokernel of any homomorphism $\cM_1 \to \cM_2$
of finitely encoded $\cQ$-modules is finitely encoded, as is any finite
direct sum of finitely encoded $\cQ$-modules.  For all of these it
suffices to show that any two finitely encoded $\cQ$-modules can be
finitely encoded by a single poset morphism.  To that end, suppose
that $\cM_i$ is finitely encoded by $\pi_i: \cQ \to P_i$ for $i = 1,2$.
Then $\cM_1$ and~$\cM_2$ are both finitely encoded by $\pi_1 \times \pi_2:
\cQ \to P_1 \times P_2$.  The semialgebraic case has the same proof.
\end{proof}

%%%%%%%%%%%%%%%%%%%%%%%%%%%%%%%%%%%%%%%%%%%%%%%%%%%%%%%%%%%%%%%%%%%%%%%%%
\subsection{Upsets, downsets, and indicator modules}\label{sub:upsets}\mbox{}

\begin{example}\label{e:indicator}
Given a poset~$\cQ$, the vector space $\kk[\cQ] = \bigoplus_{q\in\cQ}
\kk$ that assigns $\kk$ to every point of~$\cQ$ is a $\cQ$-module.  It
is encoded by the morphism from $\cQ$ to the trivial poset~$\cP$ with
one point and vector space $\cH = \kk$.  This leads to less trivial
examples in a way that works for an arbitrary poset~$\cQ$, although
our main example is $\cQ =\RR^n$.
\begin{enumerate}
\item%
An \emph{upset} (also called a \emph{dual order ideal}) $U \subseteq
\cQ$, meaning a subset closed under going upward in~$\cQ$ (so $U +
\RR_+^n = U$, when $\cQ = \RR^n$) determines an \emph{indicator
submodule} or \emph{upset module} $\kk[U] \subseteq \kk[\cQ]$.
\item%
Dually, a \emph{downset} (also called an \emph{order ideal}) $D
\subseteq \cQ$, meaning a subset closed under going downward in~$\cQ$
(so $D - \RR_+^n = D$, when $\cQ = \RR^n$) determines an
\emph{indicator quotient module} or \emph{downset module} $\kk[\cQ]
\onto \kk[D]$.
\end{enumerate}
When $\cQ = \RR^n$, an indicator module of either sort is
semialgebraic if the corresponding upset or downset is a semialgebraic
subset of~$\RR^n$.
\end{example}

\begin{remark}\label{r:indicator}
Indicator submodules $\kk[U]$ and quotient modules $\kk[D]$ are
$\cQ$-modules, not merely $U$-modules or $D$-modules, by setting the
graded components indexed by elements outside of the ideals to~$0$.
It is only by viewing indicator modules as $\cQ$-modules that they are
forced to be submodules or quotients, respectively.  For relations
between these notions and those in Remark~\ref{r:curry}, again see
Curry's thesis \cite{curry-thesis}.  For example, upsets form the open
sets in the topology from Remark~\ref{r:curry}.
\end{remark}

\begin{example}\label{e:melting}
Ising crystals at zero temperature, with polygonal boundary conditions
and fixed mesh size, are semialgebraic upsets in~$\RR^n$.  That much
is by definition: fixing a mesh size means that the crystals in
question are (staircase surfaces of finitely generated) monomial
ideals in $n$ variables.  Remarkably, such crystals remain
semialgebraic in the limit of zero mesh size; see \cite{okounkov16}
for an exposition and references.
\end{example}

\begin{example}\label{e:asw}
Monomial ideals in polynomial rings with real exponents, which
correspond to upsets in $\RR^n_+$, are considered in
\cite{andersen--sather-wagstaff2015}, including aspects of primality,
irreducible decomposition, and Krull dimension.  Upsets in $\RR^n$ are
also considered in \cite{madden-mcguire2015}, where the combinatorics
of their lower boundaries, and topology of related simplicial
complexes, are investigated in cases with locally finite generating
sets.
\end{example}

For future reference, here are some basic facts about upset and
downset modules.

\begin{defn}\label{d:connected-poset}
A poset~$\cQ$ is
\begin{enumerate}
\item%
\emph{connected} if for every pair of elements $q,q' \in \cP$ there is
a sequence $q = q_0 \preceq q'_0 \succeq q_1 \preceq q'_1 \succeq
\dots \succeq q_k \preceq q'_k = q'$ in~$\cQ$;

\item%
\emph{upper-connected} if every pair of elements in~$\cQ$ has an
upper bound in~$\cQ$;

\item%
\emph{lower-connected} if every pair of elements in~$\cQ$ has a
lower bound in~$\cQ$; and

\item%
\emph{strongly connected} if $\cQ$ is upper-connected and
lower-connected.
\end{enumerate}
\end{defn}

\begin{example}\label{e:connected-poset}
$\RR^n$ is strongly connected.  The same is true of any partially
ordered abelian group (see Section~\ref{sub:polyhedral} for basic
theory of those posets).
\end{example}

\begin{example}\label{e:bounded-poset}
A poset~$\cQ$ is upper-connected if (but not only if) it has a maximum
element---one that is preceded by every element of~$\cQ$.  Similarly,
$\cQ$ is lower-connected if it has a minimum element---one that
precedes every element of~$\cQ$.
\end{example}

\begin{defn}\label{d:connected}
For an upset~$U$ and a downset~$D$ in a poset~$\cQ$, write $U \preceq
D$ if $U$ and~$D$ have nonempty intersection: $U \cap D \neq
\nothing$.
\end{defn}

\begin{lemma}\label{l:U->D}
Fix a poset~$\cQ$.
\begin{enumerate}
\item\label{i:U->D}%
For an upset~$U$ and a downset~$D$, a nonzero homomorphism $\kk[U] \to
\kk[D]$ of $\cQ$-modules exists if and only if $U \preceq D$.

\item\label{i:kk}%
$\Hom_\cQ(\kk[U], \kk[D]) = \kk$ if $U \preceq D$ and either $U$ is
lower-connected as a subposet of~$\cQ$ or $D$ is upper-connected as a
subposet of~$\cQ$.

\item\label{i:U}%
If $U$ and~$U'$ are upsets and $\cQ$ is upper-connected, then
$\Hom_\cQ(\kk[U'],\kk[U]) = \kk$ if $U' \subseteq U$ and~$0$
otherwise.

\item\label{i:D}%
If $D$ and~$D'$ are upsets and $\cQ$ is lower-connected, then
$\Hom_\cQ(\kk[D],\kk[D']) = \kk$ if $D \supseteq D'$ and~$0$
otherwise.
\end{enumerate}
\end{lemma}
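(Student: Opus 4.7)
The plan is to treat all four parts uniformly by writing down what a homomorphism $\phi : \kk[A] \to \kk[B]$ must be in coordinates, where $A$ and $B$ are each either an upset or a downset in~$\cQ$. Since each graded piece of an indicator module is $\kk$ or~$0$, and $\phi_q$ is forced to vanish whenever either $q \notin A$ or $q \notin B$, the homomorphism~$\phi$ will be specified by a collection of scalars $c_q \in \kk$ for $q \in A \cap B$. Naturality along each relation $q \preceq q'$ then reduces to a scalar equation whose form depends on whether the $\kk[A]$- and $\kk[B]$-structure maps from~$q$ to~$q'$ are identities on~$\kk$ or zero maps $\kk \to 0$ or $0 \to \kk$.

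The basic constraint, common to all four configurations, is $c_q = c_{q'}$ whenever $q \preceq q'$ with both $q,q' \in A \cap B$.  The rest of the analysis splits by the types of source and target.  When $A = U$ is an upset and $B = D$ a downset (parts~(\ref{i:U->D}) and~(\ref{i:kk})), every $q' \succeq q$ with $q \in U \cap D$ automatically has $q' \in U$, and if $q' \notin D$ then the target structure map is $\kk \to 0$ so naturality is automatic; no further constraint arises.  When $A = U'$ and $B = U$ are both upsets (part~(\ref{i:U})), a relation $q \preceq q'$ with $q \in U' \setminus U$ and $q' \in U' \cap U$ has source structure map equal to $\id_\kk$ but target structure map equal to $0 \to \kk$, and naturality forces $c_{q'} = 0$.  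The dual configuration with two downsets (part~(\ref{i:D})) analogously forces $c_q = 0$ whenever $q \preceq q'$ with $q \in D \cap D'$ and $q' \in D' \setminus D$.

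With the constraints in hand, the four conclusions fall out by small poset arguments.  Part~(\ref{i:U->D}) is immediate: a nonzero~$\phi$ requires some $c_q \neq 0$, hence $U \cap D \neq \nothing$, and conversely setting $c \equiv 1$ on a chosen connected component of $U \cap D$ (under the fence relation inherited from~$\cQ$) and $0$ elsewhere yields a valid~$\phi$.  For part~(\ref{i:kk}), lower-connectedness of~$U$ (or upper-connectedness of~$D$) supplies, for any $u, u' \in U \cap D$, a common lower (resp.\ upper) bound in~$U$ (resp.~$D$), which lies in $U \cap D$ because $D$ is a downset (resp.~$U$ is an upset), and the two comparisons force $c_u = c_{u'}$.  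For part~(\ref{i:U}), if $U' \subseteq U$ then upper-connectedness of~$\cQ$ places common upper bounds of pairs in~$U'$ back inside the upset~$U'$, so the scalars are constant and $\Hom_\cQ = \kk$; if instead $q_0 \in U' \setminus U$, then for any $q \in U' \cap U$ a common upper bound $q^* \in \cQ$ of $\{q_0,q\}$ lies in $U' \cap U$ (both are upsets), and the killing constraint forces $c_{q^*} = 0$, whence $c_q = 0$.  Part~(\ref{i:D}) is the exact Matlis-type dual, using lower-connectedness of~$\cQ$ and the downset property.

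The only real obstacle is the asymmetric case analysis in the second paragraph: one must track carefully which of the two structure maps at a given relation $q \preceq q'$ is the identity and which is zero, since in the upset-to-downset configuration the resulting equation is automatic while in the upset-to-upset (or downset-to-downset) configuration it kills a scalar.  Once that bookkeeping is straightened out, the deductions in the third paragraph are routine manipulations with connected components in the induced subposets.
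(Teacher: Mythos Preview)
Your proof is correct and follows essentially the same approach as the paper, which is very terse: the paper simply says part~(\ref{i:U->D}) is immediate from the definitions, handles part~(\ref{i:kk}) by the same lower-bound (or upper-bound) argument you give, and then says parts~(\ref{i:U}) and~(\ref{i:D}) are proved the same way or dually. Your version is more explicit, in particular spelling out the killing constraints in the upset-to-upset and downset-to-downset cases and using them to show vanishing when $U' \not\subseteq U$ (resp.\ $D \not\supseteq D'$), which the paper leaves entirely to the reader.
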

\begin{proof}
The first claim is immediate from the definitions.  For the second,
compare the action of $\phi: \kk[U] \to \kk[D]$ on the copy of~$\kk$
in any two degrees $q$ and~$q'$ with the action of~$\phi$ on the copy
of~$\kk$ in any degree that is a lower bound for~$q$ and~$q'$.  The
proof of the last claim is the same, after replacing $D$ with~$D'$ and
$U$ with~$D$.  The proof of the remaining claim is dual to that of the
last claim.
\end{proof}

\begin{example}\label{e:disconnected-homomorphism}
Consider the poset $\NN^2$, the upset $U = \NN^2 \minus \{\0\}$, and
the downset $D$ consisting of the origin and the two standard basis
vectors.  Then $\kk[U] = \mm = \<x,y\>$ is the graded maximal ideal of
$\kk[\NN^2] = \kk[x,y]$ and $\kk[D] = \kk[\NN^2]/\mm^2$.  Now
calculate
$$%
  \Hom_{\NN^2}(\kk[U],\kk[D])
  =
  \Hom_{\NN^2}(\mm,\kk[\NN^2]/\mm^2)
  =
  \kk^2,
$$
a vector space of dimension~$2$: one basis vector preserves the
monomial~$x$ while killing the monomial~$y$, and the other basis
vector preserves~$y$ while killing~$x$.  In general,
$\Hom_\cQ(\kk[U],\kk[D])$ is the vector space spanned by the set
$\pi_0(U \cap D)$ of connected components
(Definition~\ref{d:connected-poset}) of $U \cap D$ as a subposet
of~$\cQ$.  This proliferation of homomorphisms is undesirable for both
our computational and theoretical purposes.  Indeed, for an extreme
example, consider the case over~$\RR^2$ in which $U$ is the closed
half-plane above the antidiagonal line $y = -x$ and where $D = -U$, so
that $U \cap D$ is totally disconnected: $\pi_0(U \cap D) = U \cap D$.
This example motivates the following concept.
\end{example}

\begin{defn}\label{d:connected-homomorphism}
For an upset~$U$ and a downset~$D$ in a poset~$\cQ$ with $U \preceq
D$, a homomorphism $\phi: \kk[U] \to \kk[D]$ is \emph{connected} if
there is a scalar $\lambda \in \kk$ such that $\phi$ acts as
multiplication by~$\lambda$ on the copy of~$\kk$ in degree $q$ for all
$q \in U \cap D$.
\end{defn}

\begin{remark}\label{r:connected-homomorphism}
Equivalently, $\kk[U] \to \kk[D]$ is connected if it factors
through~$\kk[Q]$.
\end{remark}

%%%%%%%%%%%%%%%%%%%%%%%%%%%%%%%%%%%%%%%%%%%%%%%%%%%%%%%%%%%%%%%%%%%%%%%%%
\subsection{Isotypic subdivision}\label{sub:isotypic}\mbox{}%%%%%%%%%%%%%

\noindent
The main result of Section~\ref{s:encoding}, namely
Theorem~\ref{t:isotypic}, says that finite encodings always exist for
$\cQ$-modules with finitely many domains of constancy, defined as
follows.

\begin{defn}\label{d:subdivide}
Fix a $\cQ$-module~$\cM$.  The \emph{isotypic subdivision} of~$\cQ$
\emph{induced by~$\cM$} is the equivalence relation $\cQ/\cM$
generated by the relation that sets $\aa \sim \bb$ whenever $\aa
\preceq \bb$ in~$\cQ$ and the induced homomorphism $\cM_\aa \to
\cM_\bb$ is an isomorphism.  The equivalence classes of the isotypic
subdivision are called \emph{regions}, or \emph{$\cM$-regions} if the
context does not make~it~clear.
\end{defn}

\begin{example}\label{e:subdivide}
The quotient map $\cQ \to \cQ/\cM$ of sets need not be a morphism of
posets.  Indeed, there is no natural way to impose a poset structure
on the set of isotypic regions.  Take, for example, $\cQ = \RR^2$ and
$\cM = \kk_\0 \oplus \kk[\RR^2]$, where $\kk_\0$ is the $\RR^2$-module
whose only nonzero component is at the origin, where it is a vector
space of dimension~$1$.  This module~$\cM$ induces only two isotypic
regions, namely the origin and its complement.  Neither of the two
isotypic regions has a stronger claim to precede the other, but at the
same time it would be difficult to justify forcing the isotypic
regions to be incomparable.
\end{example}

\begin{example}\label{e:wing-subdivision}
Example~\ref{e:toy-model-fly-wing} shows an isotypic subdivision of
$\RR^2$ which happens to form a poset and therefore produces an
encoding.
\end{example}

%%%%%%%%%%%%%%%%%%%%%%%%%%%%%%%%%%%%%%%%%%%%%%%%%%%%%%%%%%%%%%%%%%%%%%%%%
\subsection{Uptight posets}\label{sub:uptight}\mbox{}%%%%%%%%%%%%%%%%%%%%

\noindent
Constructing finite encodings relies on combinatorics that works for
all posets.

\begin{defn}\label{d:uptight}
Fix a poset~$\cQ$ and a set $\Upsilon$ of upsets.  For each poset
element $\aa \in \cQ$, let $\Upsilon_\aa \subseteq \Upsilon$ be the
set of upsets from~$\Upsilon$ that contain~$\aa$.  Two poset elements
$\aa,\bb \in \cQ$ lie in the same \emph{uptight region} if
$\Upsilon_\aa = \Upsilon_\bb$.
\end{defn}

\begin{remark}\label{r:iso-uptight}
Every uptight region is the intersection of a single upset (not
necessarily one of the ones in~$\Upsilon$) with a single downset.
Indeed, the intersection of any family of upsets is an upset, the
complement of an upset is a downset, and the intersection of any
family of downsets is a downset.  Hence the uptight region
containing~$\aa$ equals $\bigl(\bigcap_{U \in \Upsilon_\aa} U\bigr) \cap
\bigl(\bigcap_{U \not\in \Upsilon_\aa} \ol U\bigr)$, with the first
intersection being an upset and the second being a~downset.
\end{remark}

\begin{prop}\label{p:posetQuotient}
In the situation of Definition~\ref{d:uptight}, the uptight regions
form a poset in which $A \preceq B$ whenever $\aa \preceq \bb$ for
some $\aa \in A$ and $\bb \in B$.
\end{prop}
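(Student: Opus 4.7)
The plan is to take the relation~$\preceq$ on uptight regions to be the reflexive transitive closure of the pointwise relation ``$A \preceq_0 B$ iff there exist $\aa \in A$ and $\bb \in B$ with $\aa \preceq \bb$ in~$\cQ$''. Reflexivity of~$\preceq_0$ is immediate (take $\aa = \bb$ in any nonempty region), so $\preceq$ is reflexive, and transitivity of~$\preceq$ holds by construction. The only remaining partial-order axiom is antisymmetry.

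The critical observation is that $\aa \mapsto \Upsilon_\aa$ is order-preserving from~$\cQ$ to $2^\Upsilon$ ordered by inclusion: if $\aa \preceq \bb$ in~$\cQ$ and $U \in \Upsilon_\aa$, then $U$ is an upset containing~$\aa$, so $U$ must contain~$\bb$ as well, placing $U \in \Upsilon_\bb$. Writing $\Upsilon_A$ for the common value of $\Upsilon_\aa$ as $\aa$ ranges over an uptight region~$A$, this shows that $A \preceq_0 B$ implies $\Upsilon_A \subseteq \Upsilon_B$, and since set-inclusion is already transitive, the same implication passes to the transitive closure: $A \preceq B$ still entails $\Upsilon_A \subseteq \Upsilon_B$.

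Antisymmetry then follows cleanly. If $A \preceq B$ and $B \preceq A$, then $\Upsilon_A \subseteq \Upsilon_B \subseteq \Upsilon_A$ forces $\Upsilon_A = \Upsilon_B$, and by Definition~\ref{d:uptight} uptight regions are precisely the fibers of $\aa \mapsto \Upsilon_\aa$, so $A = B$. The point to verify before invoking the proposition elsewhere is that the pointwise condition in the statement is indeed witnessed by~$\preceq$, which is built in since $\preceq_0$ is one step of the closure.

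The subtle point demanding care is that $\preceq_0$ need not be transitive on the nose: if $\bb_1 \in B$ witnesses $A \preceq_0 B$ and $\bb_2 \in B$ witnesses $B \preceq_0 C$, the two elements $\bb_1$ and~$\bb_2$ of~$B$ may be incomparable in~$\cQ$, so no direct pointwise comparison between elements of~$A$ and elements of~$C$ is forced. Passing to the transitive closure and routing antisymmetry through the order-preserving invariant~$\Upsilon_\bullet$ rather than through individual pointwise witnesses is precisely what bypasses this obstacle.
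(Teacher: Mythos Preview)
Your proof is correct. For antisymmetry it coincides with the paper's argument: both route through the order-preserving assignment $A \mapsto \Upsilon_A$ to force $\Upsilon_A = \Upsilon_B$ and hence $A = B$.

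Where you diverge is transitivity, and here you are actually more careful than the paper. The paper asserts that the pointwise relation~$\preceq_0$ itself is transitive ``because the relation on~$\cQ$ is transitive'', but your concern is legitimate: $\preceq_0$ need not be transitive. Take $\cQ = \{a, b_1, b_2, c\}$ with only the relations $a \prec b_1$ and $b_2 \prec c$, and let $\Upsilon = \{U, V\}$ with $U = \{b_1, b_2, c\}$ and $V = \{c\}$. The uptight regions are $A = \{a\}$, $B = \{b_1, b_2\}$, and $C = \{c\}$; then $A \preceq_0 B$ via $a \prec b_1$ and $B \preceq_0 C$ via $b_2 \prec c$, yet $a$ and~$c$ are incomparable, so $A \not\preceq_0 C$. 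Your passage to the transitive closure, together with the observation that the invariant~$\Upsilon_\bullet$ still certifies antisymmetry after closing up, is the clean repair. The resulting partial order still satisfies the ``whenever'' clause in the statement, which is all that is needed downstream to make the quotient map $\cQ \to \cP_\cM$ a poset morphism.
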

\begin{proof}
The stipulated relation on the set of uptight regions is
\begin{itemize}
\item%
reflexive because $\aa \preceq \aa$ for any element~$\aa$ in any
uptight region~$A$;
\item%
transitive because the relation on~$\cQ$ is transitive; and
\item%
antisymmetric for the following reason.  Suppose uptight regions $A$
and~$B$ satisfy $A \preceq B$ and $B \preceq A$, so $\aa \preceq \bb$
and $\bb' \preceq \aa'$ for some $\aa,\aa' \in A$ and $\bb,\bb' \in
B$.  Then $\Upsilon_\aa \subseteq \Upsilon_\bb = \Upsilon_{\bb'}
\subseteq \Upsilon_{\aa'} = \Upsilon_\aa$, with the containments
following from $\aa \preceq \bb$ and $\bb' \preceq \aa'$.  Therefore
$\Upsilon_\aa = \Upsilon_\bb$, so $A = B$.\qedhere
\end{itemize}
\end{proof}

\begin{defn}\label{d:iso-uptight}
Fix a $\cQ$-module~$\cM$.  An \emph{isotypic upset} of~$\cQ$
\emph{induced by~$\cM$} is either
\begin{enumerate}
\item%
an upset $U_I$ generated by an isotypic region~$I$ of~$\cM$, or
\item%
the complement of a downset $D_I$ cogenerated by an isotypic
region~$I$ of~$\cM$.
\end{enumerate}
Write $\cP_\cM$ for the poset of uptight regions determined by the
set~$\Upsilon_\cM$ of isotypic upsets.
\end{defn}

\begin{prop}\label{p:iso-uptight}
Each uptight region determined by the set~$\Upsilon_\cM$ of isotypic
upsets is contained a single $\cM$-isotypic region.
\end{prop}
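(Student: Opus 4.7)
The plan is to show that any two elements $\aa,\bb$ lying in the same uptight region, i.e., satisfying $\Upsilon_\aa=\Upsilon_\bb$, must lie in a common $\cM$-isotypic region. Let $I$ denote the isotypic region of $\aa$; the goal is to verify $\bb\in I$. The first step is to use the two isotypic upsets attached to $I$ to pinpoint the location of $\bb$. Since $\aa\in I\subseteq U_I$, the isotypic upset $U_I$ lies in $\Upsilon_\aa=\Upsilon_\bb$, whence $\bb\in U_I$ and so $i\preceq\bb$ for some $i\in I$; dually, $\aa\in I\subseteq D_I$ forces $\cQ\setminus D_I\notin\Upsilon_\aa=\Upsilon_\bb$, yielding $\bb\in D_I$ and $\bb\preceq i'$ for some $i'\in I$. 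Thus $\bb$ is sandwiched between two elements of~$I$.

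The rest of the plan proceeds by contradiction. Suppose $\bb$ lies in an isotypic region $J\neq I$. Running the first step with the roles of $\aa$ and $\bb$ swapped produces $j,j'\in J$ with $j\preceq\aa\preceq j'$. Because $\bb\notin I$ and $\aa\notin J$, none of the four structure maps $\cM_i\to\cM_\bb$, $\cM_\bb\to\cM_{i'}$, $\cM_j\to\cM_\aa$, $\cM_\aa\to\cM_{j'}$ can be an isomorphism: by the generating relation of Definition~\ref{d:subdivide}, any such direct isomorphism would immediately identify $\bb$ with an element of~$I$ or $\aa$ with an element of~$J$, collapsing the two regions. The plan is then to produce an isotypic region $K$ whose associated isotypic upset $U_K$ or complement $\cQ\setminus D_K$ contains exactly one of $\aa,\bb$, contradicting $\Upsilon_\aa=\Upsilon_\bb$. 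To find $K$, I will trace a zigzag of direct isomorphisms within $J$ connecting $j$ (below $\aa$) to $\bb$ (above $i\in I$) and argue that somewhere along this zigzag the element's isotypic upset profile must shift relative to $\aa$; the region of the transitional element supplies the sought $K$.

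The main obstacle, and the step requiring the most care, is carrying out this last transition argument rigorously. Concretely, it demands combining the commutativity axiom for $\cM$ in Definition~\ref{d:poset-module} with the observation that in the hypothesized double-sandwich configuration all four cross-region structure maps fail to be isomorphisms, so as to locate an intermediate element on the zigzag whose isotypic region genuinely separates $\aa$ from $\bb$ via one of the upsets in $\Upsilon_\cM$. Once that step is in place, the rest is a routine unpacking of the definitions of uptight region, isotypic upset, and isotypic subdivision.
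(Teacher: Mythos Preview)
Your proposal has a genuine gap, which you yourself acknowledge: the ``transition argument'' along a zigzag in~$J$ is never carried out, and it is not clear that it can be. You want to locate an isotypic region~$K$ whose upset~$U_K$ or complement~$\cQ\setminus D_K$ separates~$\aa$ from~$\bb$, but the zigzag connecting $j$ to~$\bb$ lies entirely inside~$J$, so every element on it generates the same pair~$U_J,D_J$; there is no evident mechanism by which an intermediate element of that zigzag produces a new region~$K$ distinguishing $\aa$ from~$\bb$. The four non-isomorphisms you record are correct observations, but they do not feed into the zigzag argument in any way you have made precise, and ``somewhere along this zigzag the element's isotypic upset profile must shift relative to~$\aa$'' is exactly the statement to be proved, not a step toward proving it.

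The paper's argument bypasses this entirely and works in the opposite direction: rather than assuming $I\neq J$ and hunting for a separating upset, it directly shows that an element of~$I$ is isotypically equivalent to an element of~$J$. After obtaining your sandwich $\ii\preceq\jj\preceq\ii'$ with $\ii,\ii'\in I$ and $\jj\in J\cap A$, it extends by symmetry to a single chain $\ii\preceq\jj\preceq\ii'\preceq\jj'$ with $\jj'\in J$, so that the structure map $\cM_\ii\to\cM_{\ii'}$ factors through~$\cM_\jj$ and $\cM_\jj\to\cM_{\jj'}$ factors through~$\cM_{\ii'}$. The point is then pure linear algebra: since the composites $\cM_\ii\to\cM_{\ii'}$ and $\cM_\jj\to\cM_{\jj'}$ are isomorphisms, the overlapping middle map $\cM_\jj\to\cM_{\ii'}$ is both a left factor of an isomorphism (hence surjective) and a right factor of an isomorphism (hence injective), so it is an isomorphism, and therefore so is $\cM_\ii\to\cM_\jj$. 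That forces $\ii\sim\jj$ in the generating relation of Definition~\ref{d:subdivide}, hence $I=J$. The ingredient you are missing is this short homological lemma, which replaces your open-ended search for a separating region with a two-line computation.
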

\begin{proof}
Suppose that $A$ is an uptight region that contains points from
isotypic regions $I$ and~$J$.  Any point in $I \cap A$ witnesses the
containments $A \subseteq D_I$ and $A \subseteq U_I$ of~$A$ inside the
isotypic upset and downset generated and cogenerated by~$I$.  Any
point $\jj \in J \cap A$ is therefore sandwiched between elements
$\ii, \ii' \in I$, so $\ii \preceq \jj \preceq \ii'$, because $\jj \in
U_I$ (for~$\ii$) and $\jj \in D_I$ (for~$\ii'$).  By symmetry,
switching $I$ and~$J$, there exists $\jj' \in J$ with $\ii' \preceq
\jj'$.  The sequence $\cM_\ii \to \cM_\jj \to \cM_{\ii'} \to
\cM_{\jj'}$ of $\cQ$-module structure homomorphisms induces
isomorphisms $\cM_\ii \to \cM_{\ii'}$ and $\cM_\jj \to \cM_{\jj'}$ by
definition of isotypic region.  Elementary homological algebra implies
that $\cM_\ii \to \cM_\jj$ is an isomorphism, so $I = J$, as desired.
\end{proof}

\begin{example}\label{e:iso-uptight}
Proposition~\ref{p:iso-uptight} does not claim that $I = U_I \cap
D_I$, and in fact that claim is often not true.  Consider $\cQ =
\RR^2$ and $\cM = \kk_\0 \oplus \kk[\RR^2]$, as in
Example~\ref{e:subdivide}, and take $I = \RR^2 \minus \{\0\}$.  Then
$U_I = D_I = \RR^2$, so $U_I \cap D_I$ contains the other isotypic
region $J = \{\0\}$.  The uptight poset $\cP_\cM$ has precisely four
elements:
\begin{enumerate}
\item%
the origin $\{\0\} = U_J \cap D_J$;
\item%
the complement $U_J \minus \{\0\}$ of the origin in $U_J$;
\item%
the complement $D_J \minus \{\0\}$ of the origin in $D_J$; and
\item%
the points $\RR^2 \minus (U_J \cup D_J)$ lying only in~$I$ and in
neither $U_J$ nor~$D_J$.
\end{enumerate}
Oddly, uptight region~4 has two connected components, the second and
fourth quadrants $A$ and~$B$, that are incomparable: any chain of
relations from Definition~\ref{d:subdivide} that realizes the
equivalence $\aa \sim \bb$ for $\aa \in A$ and~$\bb \in B$ must pass
through the positive quadrant or the negative quadrant, each of which
accidentally becomes comparable to the other isotypic region~$J$ and
hence lies in a different uptight region.
\end{example}

\begin{defn}\label{d:Q-finite}
For any poset~$\cQ$, a module $\cM$ is \emph{$\cQ$-finite} if the
vector spaces $\cM_\aa$ have finite dimension for all $\aa \in \cQ$.
\end{defn}

$\cQ$-finiteness is one of two requirements for the existence of a
finite encoding.  The other stipulates that $\cQ$ be decomposable in a
well behaved manner with respect to~$\cM$, and this is implied by
% but not equivalent to (see Example~\ref{e:antidiagonal})
having only finitely many isotypic regions.

\begin{thm}\label{t:isotypic}
A $\cQ$-finite module~$\cM$ over a poset~$\cQ$ admits a finite
encoding if the isotypic subdivision of~$\cQ$ induced by~$\cM$ is
finite.  Moreover, in that case $\cM$ has a finite encoding by its
uptight poset~$\cP_\cM$.  This finite encoding is semialgebraic if
$\cQ = \RR^n$ and every isotypic region is semialgebraic.
\end{thm}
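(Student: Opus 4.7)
The plan is to produce the advertised encoding by taking the quotient map $\pi\colon\cQ\to\cP_\cM$ that sends each $\aa\in\cQ$ to the uptight region containing it, and to build $\cH$ by choosing a representative $\aa_A\in A$ for every uptight region~$A$ and setting $H_A=\cM_{\aa_A}$.

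First I would dispense with finiteness of~$\cP_\cM$. Assuming the isotypic subdivision has only finitely many regions, the set $\Upsilon_\cM$ of isotypic upsets is finite (at most two per isotypic region: one generated by~$I$ and the complement of the one cogenerated by~$I$). Since every uptight region is determined by the subset of $\Upsilon_\cM$ that contains it (Definition~\ref{d:uptight}), there are at most $2^{|\Upsilon_\cM|}$ of them. That~$\pi$ is a poset morphism is immediate from the order on~$\cP_\cM$ given by Proposition~\ref{p:posetQuotient}, which was defined precisely so that this holds.

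Next I would construct $\cH$. For $A\preceq B$ in~$\cP_\cM$, pick witnesses $\aa\in A$ and $\bb\in B$ with $\aa\preceq\bb$, and define $H_A\to H_B$ as the composite $\cM_{\aa_A}\simto\cM_\aa\to\cM_\bb\simto\cM_{\aa_B}$, the two outer arrows being connecting isomorphisms inside the ambient isotypic region of~$A$ (resp.~$B$) provided by Proposition~\ref{p:iso-uptight}. Well-definedness and functoriality in $\cP_\cM$ require the following key lemma, which I see as the main obstacle: \emph{within a single uptight region, every direct structure homomorphism $\cM_\aa\to\cM_\bb$ between comparable elements is an isomorphism, and any two such composites between fixed endpoints agree}. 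This reduces, via the zig-zag defining the isotypic equivalence, to a diagram chase using commutativity of the $\cQ$-module structure (Definition~\ref{d:poset-module}): composing with the fixed isomorphism from the endpoint to any common upper or lower bound in the ambient isotypic region forces the candidate maps to coincide and to be isomorphisms. Granting this, $H_A\to H_B$ is independent of the choices of $\aa,\bb$ and respects composition along $A\preceq B\preceq C$, so $\cH$ is a genuine $\cP_\cM$-module; and the collection of connecting isomorphisms $H_{\pi(\aa)}=\cM_{\aa_{\pi(\aa)}}\simto\cM_\aa$ assembles into an isomorphism $\pi^*\cH\cong\cM$. Finite dimensionality of each $H_A=\cM_{\aa_A}$ is exactly $\cQ$-finiteness of~$\cM$, completing the proof that the encoding is finite.

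For the semialgebraic assertion, suppose $\cQ=\RR^n$ and every isotypic region is semialgebraic. Then each generating set $I$ is semialgebraic, the upset $U_I=I+\RR^n_+$ is semialgebraic (Minkowski sum of semialgebraic sets), and the downset $D_I=I-\RR^n_+$ and its complement are semialgebraic. Hence $\Upsilon_\cM$ is a finite collection of semialgebraic upsets, and each uptight region—being the intersection of those upsets containing it with the complements of the remaining ones (Remark~\ref{r:iso-uptight})—is semialgebraic. The fibers of $\pi$ are exactly the uptight regions, so the encoding is semialgebraic.
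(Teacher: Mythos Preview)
Your proposal is correct and follows the paper's proof essentially step for step: the paper likewise bounds $|\cP_\cM|$ by a power of~$2$, takes $\pi\colon\cQ\to\cP_\cM$, chooses representatives $\aa(A)\in A$ to set $H_A=\cM_{\aa(A)}$, invokes Proposition~\ref{p:iso-uptight} for the constancy of~$\cM$ on uptight regions, and argues the semialgebraic claim via Minkowski sums $I\pm\RR_+^n$ and Boolean operations. You are more explicit than the paper about the well-definedness of the $\cP_\cM$-module structure on~$\cH$---which the paper packages into the phrase ``naturally comprise a $\cP_\cM$-module by virtue of the structure maps for~$\cM$''---and your diagram-chase sketch via common bounds in the ambient isotypic region is in the same spirit and at the same level of detail as the paper's treatment.
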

\begin{proof}
The first sentence follows from the second.  Assume that the
$\cM$-isotypic subdivision is finite.  $\cP_\cM$ is a finite set
because the number of uptight regions is bounded above by
$2^{2|\cQ/\cM|}$: every element of~$\cQ$ lies inside or outside of
each isotypic upset and isotypic downset.  The set~$\cP_\cM$ is a
poset by Proposition~\ref{p:posetQuotient}, and hence the set map $\cQ
\to \cP_\cM$ is a poset morphism by definition of the partial order
on~$\cP_\cM$.  The module~$\cM$ is constant on the fibers of this
quotient morphism, in the sense that the vector spaces $\cM_\aa$ are
naturally isomorphic for all $\aa$ in any single uptight region $A \in
\cP_\cM$, by Proposition~\ref{p:iso-uptight}.  For each such
region~$A$, choose a representative $\aa(A) \in A$, and define $H_A =
\cM_{\aa(A)}$.  The vector spaces $H_A$ for $A \in \cP_\cM$ naturally
comprise a $\cP_\cM$-module by virtue of the structure maps for~$\cM$,
and the pullback of~$H_A$ to~$\cQ$ is isomorphic
to~$\cM$~by~construction.

For the final claim, the Minkowski sums $I + \RR_+^n$ and $I -
\RR_+^n$ are semialgebraic whenever $I$ is a semialgebraic isotypic
region, and finite intersections of these and their complements are
semialgebraic.
\end{proof}

By Theorem~\ref{t:isotypic}, the notion of finite encoding extends the
notion of finiteness encapsulated by finite isotypic subdivision.  The
next example demonstrates that this is a proper extension.

\begin{example}\label{e:antidiagonal}
Let $\cM$ be the $\RR^2$-module that has $\cM_\aa = 0$ for all $\aa
\in \RR^2$ except for those on the antidiagonal line spanned by
$\left[\twoline 1{-1}\right] \in \RR^2$, where $\cM_\aa = \kk$.  There
is only one such $\RR^2$-module because all of the degrees of nonzero
graded pieces of~$\cM$ are incomparable, so all of the structure
homomorphisms $\cM_\aa \to \cM_\bb$ are zero.  Thus every point on the
line is a singleton isotypic region.  However, these uncountably many
isotypic regions can all be gathered together: $\cM$ has a poset
encoding by the chain with three elements, where the fiber over the
middle element is the antidiagonal line, and the fibers over the top
and bottom elements are the open half-spaces above and below the line,
respectively.  (This is the uptight poset for the two upsets that are
the closed and open half-spaces bounded below by the antidiagonal.)
In contrast, using the diagonal line spanned by $\left[\twoline
11\right] \in \RR^2$ instead of the antidiagonal line yields a module
with no finite encoding; see Example~\ref{e:diagonal}.
\end{example}

The direction of the line in Example~\ref{e:antidiagonal} is
important: antidiagonal lines, whose points form an antichain
in~$\RR^2$, behave radically differently than diagonal lines.

\begin{example}\label{e:diagonal}
Let $\cM$ be an $\RR^2$-module with $\cM_\aa = \kk$ whenever $\aa$
lies in the closed diagonal strip between the lines of slope~$1$
passing through any pair of points.  The structure homomorphisms
$\cM_\aa \to \cM_\bb$ could all be zero, for instance, or some of them
could be nonzero.  But the length $|\aa - \bb|$ of any nonzero such
homomorphism must in any case be bounded above by the Manhattan (i.e.,
$\ell^\infty$) distance between the two points, since every longer
structure homomorphism factors through a sequence that exits and
re-enters the strip.  In particular, the structure homomorphism
between any pair of points on the upper boundary line of the strip is
zero because it factors through a homomorphism that points upward
first; therefore such pairs of points lie in distinct isotypic
regions.  The same conclusion holds for pairs of points on the lower
boundary line of the strip.  Consequently, in any encoding of~$\cM$,
the poset must be uncountable.
\end{example}

%%%%%%%%%%%%%%%%%%%%%%%%%%%%%%%%%%%%%%%%%%%%%%%%%%%%%%%%%%%%%%%%%%%%%%%%%
\section{Primary decomposition over partially ordered groups}\label{s:decomp}
%%%%%%%%%%%%%%%%%%%%%%%%%%%%%%%%%%%%%%%%%%%%%%%%%%%%%%%%%%%%%%%%%%%%%%%%%

%%%%%%%%%%%%%%%%%%%%%%%%%%%%%%%%%%%%%%%%%%%%%%%%%%%%%%%%%%%%%%%%%%%%%%%%%
\subsection{Polyhedral partially ordered groups}\label{sub:polyhedral}\mbox{}

\noindent
The next definition, along with elementary foundations surrounding it,
can be found in Goodearl's book \cite[Chapter~1]{goodearl86}.

\begin{defn}\label{d:pogroup}
An abelian group~$\cQ$ is \emph{partially ordered} if it is generated by
a submonoid~$\cQ_+$, called the \emph{positive cone}, that has trivial
unit group.  The partial order is: $q \preceq q' \iff q' - q \in \cQ_+$.
\end{defn}

\begin{example}\label{e:ZZn-pogroup}
The finitely generated free abelian group $\cQ = \ZZ^n$ can be
partially ordered with any positive cone~$\cQ_+$, polyhedral or
otherwise, although the free commutative monoid $\cQ_+ = \NN^n$ of
integer vectors with nonnegative coordinates is most common and serves
as a well behaved, well known foundational case to which substantial
parts of the general theory reduce.
\end{example}

\begin{example}\label{e:pogroup}
The group $\cQ = \RR^n$ can be partially ordered with any positive
cone~$\cQ_+$, polyhedral or otherwise, although the orthant $\cQ_+ =
\RR_+^n$ of vectors with nonnegative coordinates is most useful for
multiparameter persistence.
\end{example}

The following allows the free use of the language of either
$\cQ$-modules or $\cQ$-graded $\kk[\cQ_+]$-modules, as appropriate to
the context.

\begin{lemma}\label{l:Q-graded}
A module over a partially ordered abelian group~$\cQ$ is the same
thing as a $\cQ$-graded\/ module over~$\kk[\cQ_+]$.\hfill\qed
\end{lemma}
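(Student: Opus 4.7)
The plan is to unpack both notions and exhibit a bijection between them on objects and morphisms; this is essentially a translation exercise, with the positive cone~$\cQ_+$ serving as the index set for both the degree shifts of $\kk[\cQ_+]$-module multiplication and the pairs $(q,q')$ with $q \preceq q'$ occurring as structure homomorphisms.

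First, starting from a $\cQ$-module $\cM$ in the sense of Definition~\ref{d:poset-module}, I would build a $\cQ$-graded $\kk[\cQ_+]$-action as follows. For each monomial $x^a \in \kk[\cQ_+]$ with $a \in \cQ_+$ and each $q \in \cQ$, the relation $q \preceq q + a$ holds since $(q+a) - q = a \in \cQ_+$, so the data of~$\cM$ supplies a homomorphism $\cM_q \to \cM_{q+a}$; declare multiplication by~$x^a$ to act by this map, extended $\kk$-linearly in the module and linearly over $\kk[\cQ_+]$. Associativity $x^a(x^b m) = x^{a+b} m$ for $m \in \cM_q$ is precisely the composition axiom applied to $q \preceq q+b \preceq q+a+b$, and the identity axiom (that~$1 = x^0$ acts as the identity) is the functorial convention that $\cM_q \to \cM_q$ is the identity, which follows from the compositional axiom by taking $q = q' = q''$ together with the interpretation of~$\cM$ as a functor on~$\cQ$ viewed as a category.

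Conversely, given a $\cQ$-graded module $\cM$ over $\kk[\cQ_+]$, for $q \preceq q'$ in~$\cQ$ the element $q' - q$ lies in~$\cQ_+$ (by Definition~\ref{d:pogroup}), so multiplication by $x^{q'-q}$ gives a $\kk$-linear map $\cM_q \to \cM_{q'}$. Use this as the structure homomorphism. The compositional axiom for chains $q \preceq q' \preceq q''$ follows from $x^{q'-q} \cdot x^{q''-q'} = x^{q''-q}$ in~$\kk[\cQ_+]$; triviality of the unit group of~$\cQ_+$, which forces antisymmetry of~$\preceq$, is what guarantees the exponent~$q' - q$ is well defined.

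Next, I would verify that these two constructions are mutually inverse: starting with a $\cQ$-module, forming the $\kk[\cQ_+]$-action, and then reading back the structure homomorphism for $q \preceq q'$ just recovers multiplication by $x^{q'-q}$, which was defined to be the original structure map; the other direction is analogous, since a $\kk[\cQ_+]$-action is determined by multiplication by the monomials~$x^a$. Finally, a $\cQ$-graded $\kk$-linear map $\phi: \cM \to \cN$ commutes with all structure homomorphisms if and only if $\phi(x^a m) = x^a \phi(m)$ for every $a \in \cQ_+$ and every homogeneous~$m$, which by $\kk[\cQ_+]$-linear extension is the same as being a $\kk[\cQ_+]$-module homomorphism. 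I do not anticipate a genuine obstacle here; the only thing to be slightly careful about is the identity axiom for~$x^0$, which relies on interpreting the compositional requirement in Definition~\ref{d:poset-module} in the standard functorial way.
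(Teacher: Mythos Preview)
Your proposal is correct and takes the only natural approach; the paper itself provides no proof at all, marking the lemma with a bare \qed\ as a routine translation. Your careful note about the identity axiom for~$x^0$ is a reasonable caveat, since Definition~\ref{d:poset-module} does not explicitly require $\cM_q \to \cM_q$ to be the identity, but the functorial reading you invoke is standard and implicit throughout the paper.
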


\begin{example}\label{e:ZZn-graded}
When $\cQ = \ZZ^n$ and $\cQ_+ = \NN^n$, the relevant monoid
algebra is the polynomial ring $\kk[\NN^n] = \kk[\xx]$, where $\xx =
x_1,\dots,x_n$ is a sequence of $n$ commuting~variables.
\end{example}

Primary decomposition of $\cQ$-modules depends on certain finiteness
conditions.  In ordinary commutative algebra, where $\cQ = \ZZ^n$, the
finiteness comes from~$\cQ_+$, which is assumed to be finitely
generated (so it is an \emph{affine semigroup}).  This condition
implies that finitely generated $\cQ$-modules are noetherian: every
increasing chain of submodules stabilizes.  Primary decomposition is
then usually derived as a special case of the theory for finitely
generated modules over noetherian rings.  But the noetherian condition
is stronger than necessary: in the presence of a finitely encoded
hypothesis, it suffices for the positive cone to have finitely many
faces, in the following sense.  To our knowledge, the notion of
polyhedral partially ordered group is new, as there is no existing
literature on primary decomposition in this setting.

\begin{defn}\label{d:face}
A \emph{face} of the positive cone~$\cQ_+$ of a partially ordered
abelian group~$\cQ$ is a submonoid $\sigma \subseteq \cQ_+$ such that
$\cQ_+ \minus \sigma$ is an ideal of the monoid~$\cQ_+$.  Sometimes it
is simpler to say that~$\sigma$ is a \emph{face} of~$\cQ$.  Call~$\cQ$
\emph{polyhedral} if it has only finitely many~faces.
\end{defn}

\begin{example}\label{e:face}
The partially ordered groups in Example~\ref{e:pogroup} are polyhedral
so long as $\cQ_+$ has only finitely many extreme rays.  This case
arises so often in the sequel---it is crucial in
Section~\ref{s:socle}---that it is isolated in
Definition~\ref{d:polyhedral}.  A non-polyhedral partial order is
induced on $\cQ = \RR^3$ by taking $\cQ_+$ to be a cone over a circle.
% such as either half of the cone
% $x^2 + y^2 \leq z^2$.
\end{example}

\begin{defn}\label{d:polyhedral}
A \emph{real polyhedral group} is a group $\cQ \cong \RR^n$, for
some~$n$, partially ordered so that its positive cone~$\cQ_+$ is an
intersection of finitely many closed half-spaces.  For notational
clarity, $\RR_+^n$ always means the nonnegative orthant in~$\RR^n$, so
$\cQ_+ = \RR^n_+$ means the standard componentwise partial order
on~$\RR^n$.
\end{defn}

\begin{remark}\label{r:rational}
The positive cone~$\cQ_+$ in a real polyhedral group $\cQ = \RR^n$ is
the set of nonnegative real linear combinations of finitely many
vectors that all lie in a single open half-space in~$\cQ$.  The cone
need not be rational; that is, the vectors that generate it---or the
linear functions defining the closed half-spaces---need not have
rational entries.
\end{remark}

\begin{defn}\label{d:discrete-polyhedral}
A \emph{discrete polyhedral group} is a finitely generated free
abelian group partially ordered so that its positive cone is a
finitely generated submonoid.
\end{defn}

\begin{remark}\label{r:discrete-polyhedral}
A finitely generated free abelian group can be partially ordered by a
positive cone that is not a finitely generated submonoid, such as $\cQ
= \ZZ^2$ with $\cQ_+ = C \cap \ZZ^2$ for the cone $C \subseteq \RR^2$
generated by $\bigl[\twoline 10\bigr]$ and $\bigl[\twoline
1\pi\bigr]$.  Such objects are ruled out as discrete polyhedral groups
because the faces of the positive cone in $\cQ \otimes \RR$ need not
be in bijection with the faces of~$\cQ_+$ itself, and the image
of~$\cQ$ need not be discrete in the quotient of $\cQ \otimes \RR$
modulo the subgroup spanned by one of those real faces.
\end{remark}

%%%%%%%%%%%%%%%%%%%%%%%%%%%%%%%%%%%%%%%%%%%%%%%%%%%%%%%%%%%%%%%%%%%%%%%%%
\subsection{Primary decomposition of downsets}\label{sub:downsets}%%%%%%%

\begin{defn}\label{d:PF}
Fix a face~$\tau$ of the positive cone~$\cQ_+$ in a polyhedral
partially ordered group~$\cQ$.  Write $\ZZ \tau$ for the subgroup
of~$\cQ$ generated by~$\tau$.  Let $D \subseteq \cQ$ be a downset.
\begin{enumerate}
\item\label{i:localization}%
The \emph{localization} of~$D$ \emph{along~$\tau$} is the subset
$$%
  D_\tau = \{q \in D \mid q + \tau \subseteq D\}.
$$

\item\label{i:globally-supported}%
An element $q \in D$ is \emph{globally supported on~$\tau$} if $q
\not\in D_{\tau'}$ whenever $\tau' \not\subseteq \tau$.

\item%
The part of~$D$ \emph{globally supported on~$\tau$} is
$$%
  \Gamma_{\!\tau} D = \{q \in D \mid q \text{ is supported on }\tau\}.
$$

\item%
An element $q \in D$ is \emph{locally supported on~$\tau$} if $q$ is
globally supported on~$\tau$ in~$D_\tau$.

\item%
The \emph{local $\tau$-support} of~$D$ is the subset
$\Gamma_{\!\tau}(D_\tau) \subseteq D$ consisting of elements globally
supported on~$\tau$ in the localization~$D_\tau$.

\item\label{i:primary-component}%
The \emph{$\tau$-primary component} of~$D$ is the downset
$$%
  P_\tau(D) = \Gamma_{\!\tau}(D_\tau) - \cQ_+
$$
cogenerated by the local $\tau$-support of~$D$.
\end{enumerate}
\end{defn}

\begin{example}\label{e:coprincipal}
The \emph{coprincipal} downset $\aa + \tau - \cQ_+$ inside of $\cQ =
\ZZ^n$ \emph{cogenerated} by~$\aa$ \emph{along~$\tau$} is globally
supported along~$\tau$.  It also equals its own localization
along~$\tau$, so it equals its local $\tau$-support and is its own
$\tau$-primary component.  Note that when $\cQ_+ = \NN^n$, faces
of~$\cQ_+$ correspond to subsets of~$[n] = \{1,\dots,n\}$, the
correspondence being $\tau \leftrightarrow \chi(\tau)$, where
$\chi(\tau) = \{i \in [n] \mid \ee_i \in \tau\}$ is the
\emph{characteristic subset} of~$\tau$ in~$[n]$.  (The vector~$\ee_i$
is the standard basis vector whose only nonzero entry is $1$ in
slot~$i$.)
\end{example}

\begin{remark}\label{r:freely}
The localization of~$D$ along~$\tau$ is acted on freely by~$\tau$.
Indeed, $D_\tau$ is the union of those cosets of~$\ZZ \tau$ each of
which is already contained in~$D$.  The minor point being made here is
that the coset $q + \ZZ \tau$ is entirely contained in~$D$ as soon as
$q + \tau \subseteq D$ because $D$ is a downset: $q + \ZZ \tau = q +
\tau - \tau \subseteq q + \tau - \cQ_+ \subseteq D$ if~\mbox{$q + \tau
\subseteq D$}.
\end{remark}

\begin{remark}\label{r:monomial-localization}
The localization of~$D$ is defined to reflect localization at the
level of $\cQ$-modules: enforcing invertibility of structure
homomorphisms $\kk[D]_q \to \kk[D]_{q+f}$ for $f \in \tau$ results
in a localized indicator module $\kk[D][\ZZ \tau] = \kk[D_\tau]$.
\end{remark}

\begin{example}\label{e:support}
In Definition~\ref{d:PF}, assume that $\cQ$ is a real polyhedral
group.  Then an element $q \in D$ is globally supported on~$\tau$ if
and only if it lands outside of~$D$ when pushed far enough in any
direction outside of~$\tau$---that is, every element $f \in \cQ_{+\!}
\minus \tau$ has a nonnegative integer multiple~$\lambda f$
with~$\lambda f + q \not\in D$.

One implication is easy: if every $f \in \cQ_{+\!} \minus \tau$ has
$\lambda f + q \not\in D$ for some $\lambda \in \NN$, then any element
$f' \in \tau' \minus \tau$ has a multiple $\lambda f' \in \tau'$ such
that $\lambda f' + q \not\in D$, so $q \not\in D_{\tau'}$.  For the
other direction, use that $\cQ_{+\!} \minus \tau$ is generated as an
upset (i.e., an ideal) of~$\cQ_{+\!}$ by the nonzero vectors along the
rays of~$\cQ_{+\!}$ that are not contained in~$\tau$.  By hypothesis,
$q \in \Gamma_{\!\tau} D \implies q \not\in D_\rho$ for all such
rays~$\rho$, so along each~$\rho$ there is a vector~$v_\rho$ with
$v_\rho + q \not\in D$.  Given $f \in \cQ_{+\!} \minus \tau$, choose
$\lambda \in \NN$ big enough so that $\lambda f \succeq v_\rho$ for
some~$\rho$.
\end{example}

\begin{example}\label{e:PF}
The $\tau$-primary component of~$D$ in Definition~\ref{d:PF} need not
be supported on~$\tau$.  Take $D$ to be the ``under-hyperbola''
downset in Example~\ref{e:hyperbola-GD}.  Then $D = P_\0(D)$, where
$\0$ is the face of~$\cQ_+$ consisting of only the origin.  Points
outside of $\cQ_+$ are not supported at the origin, being instead
supported at either the $x$-axis (if the point is below the $x$-axis)
or the $y$-axis (if the point is behind the $y$-axis).  The subsets
depicted in Example~\ref{e:hyperbola-GD} are $D$ itself; the global
support on~$\0$, which equals the local support on~$\0$; the local
support on the $x$-axis; and the local support on the $y$-axis.  In
contrast, the global support on (say) the $y$-axis consists of the
part of the local support that sits strictly above the $x$-axis.
\end{example}

\begin{defn}\label{d:primDecomp}
Fix a downset~$D$ in a polyhedral partially ordered group~$\cQ$.
\begin{enumerate}
\item%
The downset~$D$ is \emph{coprimary} if $D = P_\tau(D)$ for some
face~$\tau$ of the positive cone~$\cQ_+$.  If $\tau$ needs to
specified then $D$ is called \emph{$\tau$-coprimary}.

\item%
A \emph{primary decomposition} of~$D$ is an expression $D =
\bigcup_{i=1}^r D_i$ of coprimary downsets~$D_i$, called
\emph{components} of the decomposition.
\end{enumerate}
\end{defn}

\begin{thm}\label{t:PF}
Every downset $D$ in a polyhedral partially ordered group~$\cQ$ is the
union $\bigcup_\tau \Gamma_{\!\tau}(D_\tau)$ of its local
$\tau$-supports for all faces $\tau$ of the positive cone.
\end{thm}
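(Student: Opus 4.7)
The inclusion $\bigcup_\tau \Gamma_{\!\tau}(D_\tau) \subseteq D$ is immediate, since $\Gamma_{\!\tau}(D_\tau) \subseteq D_\tau \subseteq D$ by definition for every face~$\tau$. The substantive direction $D \subseteq \bigcup_\tau \Gamma_{\!\tau}(D_\tau)$ is the plan's target: produce, for each $q \in D$, a face~$\tau$ on which $q$ is locally supported in~$D_\tau$.

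The face $\tau$ will be chosen by maximality. Consider the family $\cF(q) = \{\sigma : \sigma\text{ is a face of }\cQ_+ \text{ with } q + \sigma \subseteq D\}$. It is non-empty, as $\{\0\} \in \cF(q)$ by virtue of $q \in D$, and it is finite by the polyhedral hypothesis on~$\cQ$. Choose $\tau \in \cF(q)$ maximal under containment. I claim $q \in \Gamma_{\!\tau}(D_\tau)$. Since $q \in D_\tau$ is automatic, what remains is to show $q \notin (D_\tau)_{\tau'}$ for every face $\tau' \not\subseteq \tau$. Suppose, for contradiction, that $q \in (D_\tau)_{\tau'}$ for some such $\tau'$; unwinding the definition of localization yields $q + (\tau + \tau') \subseteq D$.

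Let $\wt\tau$ denote the smallest face of~$\cQ_+$ containing both $\tau$ and~$\tau'$, so $\wt\tau \supsetneq \tau$ because $\tau' \not\subseteq \tau$. To contradict maximality of~$\tau$ it suffices to show $\wt\tau \in \cF(q)$, i.e., $q + \wt\tau \subseteq D$. This reduction is completed by the face-theoretic lemma
\[
  \wt\tau \;\subseteq\; (\tau + \tau') - \cQ_+,
\]
for then every $\wt t \in \wt\tau$ admits some $v \in \cQ_+$ with $\wt t + v \in \tau + \tau'$, giving $q + \wt t + v \in D$ and then $q + \wt t \in D$ by the downset property of~$D$.

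The main obstacle is the lemma, which is where the convex-geometric content of polyhedrality enters. The plan is to first observe, directly from the face criterion of Definition~\ref{d:face}, that if $t$ and~$t'$ lie in the relative interiors of $\tau$ and~$\tau'$---meaning in no proper subface---then $t + t'$ sits in the relative interior of~$\wt\tau$: the smallest face containing $t + t'$ must contain $t$ and~$t'$, hence $\tau$ and~$\tau'$, hence all of~$\wt\tau$. An absorption-style argument then supplies, for any $c \in \wt\tau$, a positive integer~$n$ with $v := n(t + t') - c \in \cQ_+$, which delivers $c + v = n(t + t') \in \tau + \tau'$ as required. In the real polyhedral setting this absorption is standard convex geometry for the face~$\wt\tau$; for a general polyhedral partially ordered abelian group the same conclusion is obtained after restricting attention to the subgroup~$\ZZ\wt\tau$ generated by~$\wt\tau$, inside which $\wt\tau$ is the full positive cone.
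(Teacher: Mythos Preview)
Your approach is the paper's: choose $\tau$ maximal among faces with $q \in D_\tau$ and conclude $q \in \Gamma_{\!\tau}(D_\tau)$. The paper dispatches that implication with ``it follows immediately''; you correctly recognize that what is actually needed is the lemma $\wt\tau \subseteq (\tau + \tau') - \cQ_+$ for $\wt\tau = \tau \vee \tau'$ in the face lattice, since $q + (\tau+\tau') \subseteq D$ and $D$ is a downset then force $q + \wt\tau \subseteq D$, contradicting maximality.

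Your proof of that lemma via relative interiors and absorption is more elaborate than necessary and leaves two loose ends in the general polyhedral case. First, you invoke elements in the relative interiors of $\tau$ and $\tau'$ without arguing they exist; they do---a face cannot equal the union of its finitely many proper subfaces, because summing one element chosen outside each proper subface lands, by the face axiom, outside all of them---but this needs to be said. Second, your final sentence about restricting to $\ZZ\wt\tau$ does not actually justify the absorption step $n(t+t') - c \in \cQ_+$; noting that $\wt\tau$ is the positive cone of $\ZZ\wt\tau$ does not by itself explain why large multiples of an interior element dominate everything. Both gaps close easily, but a cleaner route avoids them entirely. Directly from Definition~\ref{d:face}, the set
\[
  \{\,c \in \cQ_+ : c \preceq t + t' \text{ for some } t \in \tau,\ t' \in \tau'\,\}
\]
is a face---its complement is an ideal of $\cQ_+$ because $c + d \preceq t + t'$ forces $c \preceq t + t'$---and it visibly contains $\tau \cup \tau'$ and is contained in every face that does; hence it equals $\wt\tau$, which is exactly your lemma. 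Equivalently, the smallest face containing any $s \in \cQ_+$ is $\{c \in \cQ_+ : c \preceq ns \text{ for some } n \geq 0\}$, and applying this to $s = t + t'$ delivers your absorption claim in one stroke once you have shown (as you did) that $t+t'$ generates~$\wt\tau$ as a face.
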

\begin{proof}
Given an element $q \in D$, finiteness of the number of faces implies
the existence of a face~$\tau$ that is maximal among those such that
$q \in D_\tau$; note that $q \in D = D_\0$ for the trivial face $\0$
consisting of only the identity of~$\cQ$.  It follows immediately that
$q$ is supported on~$\tau$ in~$D_\tau$.
\end{proof}

\begin{cor}\label{c:PF}
Every downset $D$ in a polyhedral partially ordered group~$\cQ$ has a
canonical primary decomposition $D = \bigcup_\tau P_\tau(D)$, the
union being over all faces~$\tau$ of the positive cone with nonempty
support $\Gamma_{\!\tau}(D_\tau)$.
\end{cor}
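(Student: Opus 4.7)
The plan is to upgrade Theorem~\ref{t:PF} by replacing each local support $\Gamma_{\!\tau}(D_\tau)$ with the downset $P_\tau(D)$ it cogenerates, and then to verify that each such downset is $\tau$-coprimary. Since $\Gamma_{\!\tau}(D_\tau) \subseteq P_\tau(D) \subseteq D$ (the second containment because $\Gamma_{\!\tau}(D_\tau) \subseteq D$ and $D$ is a downset), Theorem~\ref{t:PF} sandwiches $D$ between $\bigcup_\tau \Gamma_{\!\tau}(D_\tau)$ and $\bigcup_\tau P_\tau(D)$, forcing equality throughout. Faces with $\Gamma_{\!\tau}(D_\tau) = \nothing$ contribute empty components and can be omitted from the union.

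For coprimariness I must show $P_\tau(P_\tau(D)) = P_\tau(D)$. The key lever is that $\Gamma_{\!\tau}(D_\tau)$ is stable under translation by $\tau$: Remark~\ref{r:freely} notes that $D_\tau$ is $\ZZ\tau$-invariant, and the identical argument shows that $(D_\tau)_{\tau'}$ is $\ZZ\tau$-invariant for every face $\tau'$, so being globally supported on $\tau$ in $D_\tau$ is a $\ZZ\tau$-invariant condition. Write $E = P_\tau(D)$. For any $q \in E$ with $q \preceq q_0 \in \Gamma_{\!\tau}(D_\tau)$ and any $f \in \tau$, $\tau$-stability gives $q_0 + f \in \Gamma_{\!\tau}(D_\tau)$, so $q + f \preceq q_0 + f$ lies in $E$; hence $E_\tau = E$. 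Since $D_\tau$ is itself a downset containing $\Gamma_{\!\tau}(D_\tau)$, one also has $E \subseteq D_\tau$. Then for $q \in \Gamma_{\!\tau}(D_\tau)$ and any face $\tau' \not\subseteq \tau$, the inclusion $q + \tau' \subseteq E$ would force $q + \tau' \subseteq D_\tau$, contradicting $q \notin (D_\tau)_{\tau'}$. This gives $\Gamma_{\!\tau}(D_\tau) \subseteq \Gamma_{\!\tau}(E_\tau) \subseteq E$; taking downset hulls yields $E \subseteq P_\tau(E) \subseteq E$, so $P_\tau(E) = E$.

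Canonicity is then automatic, since each $P_\tau(D)$ is defined directly from $D$ and $\tau$ with no choices made, and the indexing set of faces is intrinsic to~$D$. The main obstacle, such as it is, lies not in any single step but in tracking precisely which objects are $\ZZ\tau$-invariant and which are merely downsets in~$\cQ$; once the $\tau$-stability of $\Gamma_{\!\tau}(D_\tau)$ is in hand, the rest is a careful chase through the definitions that upgrades the set-theoretic union of Theorem~\ref{t:PF} into a primary decomposition whose components are canonically specified.
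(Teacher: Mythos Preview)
Your proof is correct and follows the approach the paper intends: the corollary is stated without proof in the paper, as an immediate consequence of Theorem~\ref{t:PF}, and your argument spells out precisely the details left implicit there---the sandwiching $\Gamma_{\!\tau}(D_\tau) \subseteq P_\tau(D) \subseteq D$ to get the union equality, and the verification that $P_\tau(D)$ is $\tau$-coprimary via the $\ZZ\tau$-invariance of $\Gamma_{\!\tau}(D_\tau)$ and the containment $P_\tau(D) \subseteq D_\tau$.
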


\begin{remark}\label{r:disjoint}
The union in Theorem~\ref{t:PF} is not necessarily disjoint.  Nor,
consequently, is the union in Corollary~\ref{c:PF}.  There is a
related union, however, that is disjoint: the sets $(\Gamma_{\!\tau}
D) \cap D_\tau$ do not overlap.  Their union need not be all of~$D$,
however; try Example~\ref{e:PF}, where the negative quadrant
intersects none of the sets~$(\Gamma_{\!\tau} D) \cap D_\tau$.

Algebraically, $(\Gamma_{\!\tau} D) \cap D_\tau$ should be interpreted
as taking the elements of~$D$ globally supported on~$\tau$ and then
taking their images in the localization along~$\tau$, which deletes
the elements that aren't locally supported on~$\tau$.  That is,
$(\Gamma_{\!\tau} D) \cap D_\tau$ is the set of degrees where the
image of $\Gamma_{\!\tau}\kk[D] \to \kk[D]_\tau$ is nonzero.
\end{remark}

\begin{example}\label{e:PF'}
The decomposition in Theorem~\ref{t:PF}---and hence
Corollary~\ref{c:PF}---is not necessarily minimal: it might be that
some of the canonically defined components can be omitted.  This
occurs, for instance, in Example~\ref{e:hyperbola-PD}.  The general
phenomenon, as in this hyperbola example, stems from geometry of the
elements in~$D_\tau$ supported on~$\tau$, which need not be bounded in
any sense, even in the quotient $\cQ/\ZZ \tau$.  In contrast, for
(say) quotients by monomial ideals in the polynomial
ring~$\kk[\NN^n]$, only finitely many elements have support at the
origin, and the downset they cogenerate is consequently~artinian.
\end{example}

%%%%%%%%%%%%%%%%%%%%%%%%%%%%%%%%%%%%%%%%%%%%%%%%%%%%%%%%%%%%%%%%%%%%%%%%%
\subsection{Primary decomposition of finitely encoded modules}\label{sub:prim-decomp}

\mbox{}

\noindent
This section leverages the finitely encoded condition to produce
primary decompositions.  First, the support $\Gamma_{\!\tau}$ on a
face~$\tau$ needs to be defined as a functor on modules.

\begin{defn}\label{d:support}
Fix a face~$\tau$ of a polyhedral partially ordered group~$\cQ$.  The
\emph{localization} of a $\cQ$-module~$\cM$ \emph{along~$\tau$} is the
tensor product
$$%
  \cM_\tau = \cM \otimes_{\kk[\cQ_{+\!}]} \kk[\cQ_{+\!} + \ZZ \tau],
$$
viewing~$\cM$ as a $\cQ$-graded $\kk[\cQ_{+\!}]$-module.  The
submodule of~$\cM\hspace{-1.17pt}$ \emph{globally
supported~on~$\tau$}~is
$$%
  \Gamma_{\!\tau} \cM
  =
  \bigcap_{\tau' \not\subseteq \tau}\bigl(\ker(\cM \to \cM_{\tau'})\bigr)
  =
  \ker \bigl(\cM \to \bigoplus_{\tau' \not\subseteq \tau} \cM_{\tau'}\bigr).
$$
\end{defn}

\begin{example}\label{e:Gamma}
Definition~\ref{d:PF}.2 says that $1_q \in \kk[D]_q = \kk$ lies
in~$\Gamma_{\!\tau} \kk[D]$ if and only if $q \in \Gamma_{\!\tau} D$,
because $q \not\in D_{\tau'}$ if and only if $1_q \mapsto 0$ under
localization of~$\kk[D]$ along~$\tau'$.
\end{example}

\begin{lemma}\label{l:left-exact}
The kernel of any natural transformation between two exact covariant
% https://en.wikipedia.org/wiki/Functor_category
functors is left-exact.  In more detail, if $\alpha$ and $\beta$ are
two exact covariant functors $\cA \to \cB$ for abelian categories
$\cA$ and~$\cB$, and $\gamma_X: \alpha(X) \to \beta(X)$ naturally for
all objects~$X$ of~$\cA$, then the association $X \mapsto \ker
\gamma_X$ is a left-exact covariant functor~$\cA \to \cB$.
\end{lemma}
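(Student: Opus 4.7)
The plan is to verify functoriality first, then left-exactness by a standard diagram chase. For functoriality, given a morphism $f\colon X \to Y$ in~$\cA$, naturality of $\gamma$ gives $\gamma_Y \circ \alpha(f) = \beta(f) \circ \gamma_X$, so $\alpha(f)$ carries $\ker\gamma_X$ into $\ker\gamma_Y$; this defines the induced morphism $\ker\gamma_X \to \ker\gamma_Y$ in~$\cB$, and the functoriality axioms are inherited from those of~$\alpha$ together with uniqueness of the factorization through a kernel.

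For left-exactness, start with a short exact sequence $0 \to X' \to X \to X'' \to 0$ in~$\cA$. Applying the exact functors $\alpha$ and~$\beta$ and stacking the natural transformation~$\gamma$ between them yields a commutative diagram
\[
\begin{array}{ccccccccc}
0 & \to & \alpha(X') & \to & \alpha(X) & \to & \alpha(X'') & \to & 0 \\
  &     & \downarrow\gamma_{X'} & & \downarrow\gamma_{X} & & \downarrow\gamma_{X''} & & \\
0 & \to & \beta(X') & \to & \beta(X) & \to & \beta(X'') & \to & 0
\end{array}
\]
with exact rows. The desired conclusion that $0 \to \ker\gamma_{X'} \to \ker\gamma_X \to \ker\gamma_{X''}$ is exact is then precisely the kernel portion of the snake lemma applied to this morphism of short exact sequences; equivalently, it can be obtained by a direct element-style diagram chase valid in any abelian category (or by invoking that the kernel functor on the category of arrows in~$\cB$ is left-exact).

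The only mild subtlety is that the result truly is only left-exact, not exact: the map $\ker\gamma_X \to \ker\gamma_{X''}$ need not be surjective, since a lift of an element of $\ker\gamma_{X''}$ to $\alpha(X)$ need not lie in $\ker\gamma_X$ (one would need the connecting map in the snake lemma to vanish, which it generally does not). So no additional work is needed to upgrade left-exactness, and there is no obstacle beyond citing or reproducing the snake lemma; the argument is entirely formal and requires nothing beyond the stated hypotheses on $\alpha$, $\beta$, and~$\gamma$.
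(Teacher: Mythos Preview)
Your proof is correct and is precisely the diagram chase the paper alludes to; the paper's own proof simply says ``This can be checked by diagram chase or spectral sequence,'' and you have spelled out the former via the snake lemma.
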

\begin{proof}
This can be checked by diagram chase or spectral sequence.
\end{proof}

\begin{prop}\label{p:support-left-exact}
The global support functor\/ $\Gamma_{\!\tau\!}$ is left-exact.
\end{prop}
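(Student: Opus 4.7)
The plan is to apply Lemma~\ref{l:left-exact} directly to the expression of $\Gamma_{\!\tau}$ as a kernel given in Definition~\ref{d:support}, namely
\[
  \Gamma_{\!\tau}\cM
  \;=\;
  \ker\Bigl(\cM \;\longrightarrow\; \bigoplus_{\tau' \not\subseteq \tau} \cM_{\tau'}\Bigr).
\]
Thus I only need to exhibit the two ambient functors, check that each is exact, and check that the displayed arrow is natural in~$\cM$.

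First I would take the source functor to be the identity on the category of $\cQ$-modules, which is trivially exact. For the target functor, the key observation is that for each face~$\tau'$, localization $\cM \mapsto \cM_{\tau'} = \cM \otimes_{\kk[\cQ_+]} \kk[\cQ_+ + \ZZ\tau']$ is a tensor product with a localization of the monoid algebra~$\kk[\cQ_+]$, and localizations of a ring are flat, so this tensor product is exact. Because $\cQ$ is polyhedral (Definition~\ref{d:face}), the index set $\{\tau' \not\subseteq \tau\}$ is \emph{finite}, so
\[
  \cM \;\longmapsto\; \bigoplus_{\tau' \not\subseteq \tau} \cM_{\tau'}
\]
is a finite direct sum of exact functors and hence exact. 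The natural transformation between these two functors is the one assembled from the canonical localization maps $\cM \to \cM_{\tau'}$, which are natural in~$\cM$ since tensor product with a fixed bimodule is functorial; taking the direct sum preserves naturality.

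With those ingredients in place, Lemma~\ref{l:left-exact} immediately yields that $\cM \mapsto \Gamma_{\!\tau}\cM$ is a left-exact covariant functor, which is the claim. The main potential obstacle is the finiteness of the direct sum: without the polyhedral hypothesis, one would be taking the kernel of a map into an infinite product (not a direct sum, if one wanted the functor to have good exactness properties), and while kernels of natural transformations between arbitrary limits of exact functors remain left-exact, one would need to be careful about which variant of direct sum versus product is intended in Definition~\ref{d:support}. Since the polyhedral hypothesis is already in force throughout Section~\ref{s:decomp}, this is not an issue here and the argument goes through as above.
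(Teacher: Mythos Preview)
Your proof is correct and takes essentially the same approach as the paper, which also invokes Lemma~\ref{l:left-exact} with the identity as source functor and the direct sum of localizations as target. One small remark: your caution about finiteness of the index set is unnecessary here, since arbitrary (not just finite) direct sums of exact functors on module categories remain exact; the polyhedral hypothesis is not actually needed for this particular left-exactness statement.
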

\begin{proof}
Use Lemma~\ref{l:left-exact}: global support is the kernel of the
natural transformation from the identity to a direct sum of
localizations.
\end{proof}

\begin{prop}\label{p:support-localizes}
For modules over a polyhedral partially ordered group, localization
commutes with taking support: $(\Gamma_{\!\tau'} \cM)_\tau =
\Gamma_{\!\tau'}(\cM_\tau)$, and both sides are~$0$ unless~$\tau'
\supseteq \tau$.
\end{prop}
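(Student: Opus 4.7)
The plan is to apply the exact localization functor $(-)_\tau$ to the left-exact sequence defining $\Gamma_{\!\tau'}\cM$ and compare with the analogous sequence defining $\Gamma_{\!\tau'}(\cM_\tau)$, then exploit the symmetry of iterated localization. The polyhedrality of~$\cQ$ enters only by guaranteeing that the direct sum in Definition~\ref{d:support} is finite, so that localization commutes with it; the rest is formal.

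First I would recall that localization $(-)_\rho = (-)\otimes_{\kk[\cQ_+]}\kk[\cQ_+ + \ZZ\rho]$ along a face~$\rho$ inverts the monomials with exponents in~$\rho$ and is therefore a flat, hence exact, functor on $\cQ$-modules. Moreover iterated localization is symmetric and absorbing: both $(\cM_\sigma)_\tau$ and $(\cM_\tau)_\sigma$ are canonically equal to $\cM\otimes_{\kk[\cQ_+]}\kk[\cQ_+ + \ZZ\sigma + \ZZ\tau]$.

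For the vanishing, assume $\tau\not\subseteq\tau'$, so that $\tau$ itself is one of the indices $\sigma$ appearing in the direct sum of Definition~\ref{d:support}. Inside $\Gamma_{\!\tau'}(\cM_\tau)$, the $\sigma=\tau$ component of the defining map is $\cM_\tau\to(\cM_\tau)_\tau$, which is the identity, so $\Gamma_{\!\tau'}(\cM_\tau)\subseteq\ker\id=0$. For $(\Gamma_{\!\tau'}\cM)_\tau$: by flatness the localized inclusion $(\Gamma_{\!\tau'}\cM)_\tau\hookrightarrow\cM_\tau$ is injective; on the other hand the composite $\Gamma_{\!\tau'}\cM\hookrightarrow\cM\to\cM_\tau$ vanishes by definition of global support, and its localization factors as that injection followed by the identity on $(\cM_\tau)_\tau=\cM_\tau$, forcing $(\Gamma_{\!\tau'}\cM)_\tau=0$.

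For the equality when $\tau\subseteq\tau'$, I would apply $(-)_\tau$ to the left-exact sequence $0\to\Gamma_{\!\tau'}\cM\to\cM\to\bigoplus_{\sigma\not\subseteq\tau'}\cM_\sigma$; exactness together with the commutation of $(-)_\tau$ with finite direct sums yields $(\Gamma_{\!\tau'}\cM)_\tau=\ker\bigl(\cM_\tau\to\bigoplus_{\sigma\not\subseteq\tau'}(\cM_\sigma)_\tau\bigr)$, while by definition $\Gamma_{\!\tau'}(\cM_\tau)=\ker\bigl(\cM_\tau\to\bigoplus_{\sigma\not\subseteq\tau'}(\cM_\tau)_\sigma\bigr)$. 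The canonical identification $(\cM_\sigma)_\tau=(\cM_\tau)_\sigma$ from the first step makes the two ambient maps out of $\cM_\tau$ literally equal, so their kernels coincide as submodules of~$\cM_\tau$. The only nontrivial step, and the main (mild) obstacle, is this last bookkeeping: checking that under the canonical identification the two comparison maps agree on the nose, rather than merely being abstractly isomorphic; this follows from the universal property of the tensor products involved, or equivalently from a degree-by-degree inspection of $\cQ$-graded localization.
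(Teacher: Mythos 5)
Your proof is correct and follows essentially the same route as the paper's: exactness of localization, the canonical identification $(\cM_{\tau''})_\tau = (\cM_\tau)_{\tau''}$, and the observation that for $\tau''=\tau$ the map $\cM_\tau \to (\cM_\tau)_\tau$ is the identity, whose kernel is zero. The only cosmetic differences are that the paper phrases $\Gamma_{\!\tau'}$ as an intersection of kernels (rather than the kernel of a map to a direct sum) and deduces the vanishing directly from the already-established equality rather than arguing it separately; note also that tensoring commutes with arbitrary direct sums, so polyhedrality is not actually needed for that commutation.
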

\begin{proof}
Localization along~$\tau$ is exact, so
$$%
  \ker(\cM \to \cM_{\tau''})_\tau
  =
  \ker\bigl(\cM_\tau \to (\cM_{\tau''})_\tau\bigr)
  =
  \ker\bigl(\cM_\tau \to (\cM_\tau)_{\tau''}\bigr).
$$
Since localization along~$\tau$ commutes with intersections of
submodules, $(\Gamma_{\!\tau'} \cM)_\tau$ is the intersection of
the leftmost of these modules over $\tau'' \not\subseteq \tau'$.
But $\Gamma_{\!\tau'}(\cM_\tau)$ equals the same intersection of
the rightmost of these modules by definition.  And if $\tau'
\not\supseteq \tau$ then one of these $\tau''$ equals~$\tau$, so
$\cM_\tau \to (\cM_\tau)_{\tau''} = \cM_\tau$ is the identity
map, whose kernel~is~$0$.
\end{proof}

\begin{defn}\label{d:primDecomp'}
Fix a $\cQ$-module $\cM$ for a polyhedral partially ordered group~$\cQ$.
\begin{enumerate}
\item%
The \emph{local $\tau$-support} of~$\cM$ is the module
$\Gamma_{\!\tau} \cM_\tau$ of elements globally supported
on~$\tau$ in the localization~$\cM_\tau$, or equivalently (by
Proposition~\ref{p:support-localizes}) the localization along~$\tau$
of the submodule of~$\cM$ globally supported on~$\tau$.

\item\label{i:coprimary}%
The module~$\cM$ is \emph{coprimary} if for some face~$\tau$, the
localization map $\cM \into \cM_\tau$ is injective and
$\Gamma_{\!\tau} \cM_\tau$ is an essential submodule of~$\cM_\tau$.
If $\tau$ needs to specified then $\cM$ is called
\emph{$\tau$-coprimary}.

\item\label{i:primdecomp}%
A \emph{primary decomposition} of~$\cM$ is an injection $\cM \into
\bigoplus_{i=1}^r \cM/\cM_i$ into a direct sum of coprimary quotients
$\cM/\cM_i$, called \emph{components} of the decomposition.
\end{enumerate}
\end{defn}

\begin{remark}\label{r:primary}
Primary decomposition is usually phrased in terms of \emph{primary
submodules} $\cM_i \subseteq \cM$, which by definition have coprimary
quotients~$\cM/\cM_i$, satisfying $\bigcap_{i=1}^r \cM_i = 0$
in~$\cM$.  This is equivalent to
Definition~\ref{d:primDecomp'}.\ref{i:primdecomp}.
\end{remark}

\begin{example}\label{e:prim-decomp-downset}
A primary decomposition $D = \bigcup_{i=1}^r D_i$ of a downset~$D$
yields a primary decomposition of the corresponding indicator
quotient, namely the injection $ \kk[D] \into \bigoplus_{i=1}^r
\kk[D_i] $ induced by the surjections $\kk[D] \onto \kk[D_i]$.
\end{example}

The existence of primary decomposition in Theorem~\ref{t:primDecomp}
is intended for finitely encoded modules, but because it deals with
essential submodules and not generators, it only requires the downset
half of a fringe presentation.

\begin{defn}\label{d:downset-hull}
A \emph{downset hull} of a module~$\cM$ over an arbitrary poset is an
injection $\cM \into \bigoplus_{j \in J} E_j$ with each $E_j$ being a
downset module.  The hull is \emph{finite} if $J$ is~finite.  The
module~$\cM$ is \emph{downset-finite} if it admits a finite downset
hull.
\end{defn}

\begin{thm}\label{t:primDecomp}
Every downset-finite module over a polyhedral partially ordered group
admits a primary decomposition.
\end{thm}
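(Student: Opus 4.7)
The plan is to combine Corollary~\ref{c:PF} with the finite downset hull hypothesis, then convert the resulting inclusion of $\cM$ into a direct sum of coprimary downset modules into a primary decomposition of $\cM$ itself. First I would fix a finite downset hull $\iota: \cM \into \bigoplus_{j \in J} \kk[D_j]$. For each $j$, Corollary~\ref{c:PF} gives a finite union $D_j = \bigcup_\tau P_\tau(D_j)$ over faces $\tau$ of $\cQ_+$, which by Example~\ref{e:prim-decomp-downset} induces an injection $\kk[D_j] \into \bigoplus_\tau \kk[P_\tau(D_j)]$. Since $\cQ$ is polyhedral and $J$ is finite, composing with $\iota$ yields a finite injection $\cM \into \bigoplus_{j,\tau} \kk[P_\tau(D_j)]$.

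Next I would verify that each summand $\kk[P_\tau(D_j)]$ is $\tau$-coprimary in the module sense of Definition~\ref{d:primDecomp'}. The key technical point is that $\Gamma_{\!\tau}((D_j)_\tau)$ is $\ZZ\tau$-invariant: since $(D_j)_\tau$ is a union of $\ZZ\tau$-cosets by Remark~\ref{r:freely}, translating $q \in \Gamma_{\!\tau}((D_j)_\tau)$ by any $s \in \ZZ\tau$ keeps the element in $(D_j)_\tau$, and if $q + s$ landed in some $((D_j)_\tau)_{\tau'}$ with $\tau' \not\subseteq \tau$, then translating back by $-s$ would contradict the global $\tau$-support of~$q$. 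Consequently $P_\tau(D_j) = \Gamma_{\!\tau}((D_j)_\tau) - \cQ_+$ is $\tau$-closed, so the localization map $\kk[P_\tau(D_j)] \to \kk[P_\tau(D_j)]_\tau$ is an isomorphism. Essentiality of $\Gamma_{\!\tau}\kk[P_\tau(D_j)]$ then follows because any nonzero submodule contains some $1_q$ with $q \in P_\tau(D_j)$, and writing $q = q^* - c$ with $q^* \in \Gamma_{\!\tau}((D_j)_\tau)$ and $c \in \cQ_+$ produces $1_{q^*} = \xx^c \cdot 1_q$ inside that submodule; the containment $P_\tau(D_j) \subseteq (D_j)_\tau$ then upgrades $q^* \in \Gamma_{\!\tau}((D_j)_\tau)$ to $q^* \in \Gamma_{\!\tau} P_\tau(D_j)$.

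The final step produces a decomposition into coprimary quotients of $\cM$ itself, as required by Definition~\ref{d:primDecomp'}. For each pair $(j,\tau)$ I would set $\cM_{j,\tau} = \ker\bigl(\cM \to \kk[P_\tau(D_j)]\bigr)$, giving injections $\cM/\cM_{j,\tau} \into \kk[P_\tau(D_j)]$. A graded submodule of a $\tau$-coprimary module is itself $\tau$-coprimary: exactness of localization preserves injectivity of the localization map, and essentiality transfers by a standard diagram argument, namely that a nonzero element of the submodule's localization lies in $\Gamma_{\!\tau}$ of the ambient coprimary module by essentiality there, and then, by injectivity of $(\cM/\cM_{j,\tau})_{\tau'} \into \kk[P_\tau(D_j)]_{\tau'}$ for each $\tau' \not\subseteq \tau$, it also lies in $\Gamma_{\!\tau}$ of the submodule's localization. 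The resulting injection $\cM \into \bigoplus_{j,\tau} \cM/\cM_{j,\tau}$ is automatic because its post-composition with the projection to $\bigoplus_{j,\tau} \kk[P_\tau(D_j)]$ recovers the original embedding.

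The main obstacle will be the coprimary verification for $\kk[P_\tau(D_j)]$, relying on the subtle $\ZZ\tau$-invariance of $\Gamma_{\!\tau}((D_j)_\tau)$ and the downset structure of $P_\tau(D_j)$; the polyhedral hypothesis enters through Corollary~\ref{c:PF} to guarantee the face union is finite, and all the remaining steps are bookkeeping via exactness of localization.
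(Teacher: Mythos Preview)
Your proof is correct and follows essentially the same route as the paper: embed $\cM$ via a finite downset hull, apply Corollary~\ref{c:PF} and Example~\ref{e:prim-decomp-downset} to each $\kk[D_j]$, take kernels of the composite maps, and observe that submodules of coprimary modules are coprimary. The only organizational difference is that the paper gathers all $\tau$-coprimary summands across~$j$ into a single $E^\tau$ and forms one quotient $\cM/\cM^\tau$ per face, whereas you keep the components indexed by pairs $(j,\tau)$; both satisfy Definition~\ref{d:primDecomp'}.\ref{i:primdecomp}, and your extra verifications (that $\kk[P_\tau(D_j)]$ is $\tau$-coprimary in the module sense, and that coprimary is inherited by submodules) fill in details the paper leaves implicit.
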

\begin{proof}
If $\cM \into \bigoplus_{j=1}^k E_j$ is a downset hull of the
module~$\cM$, and $E_j \into \bigoplus_{i=1}^\ell E_{ij}$ is a primary
decomposition for each~$j$ afforded by Corollary~\ref{c:PF} and
Example~\ref{e:prim-decomp-downset}, then let $E^\tau$ be the direct
sum of the downset modules $E_{ij}$ that are $\tau$-coprimary.  Set
$M^\tau = \ker(\cM \to E^\tau)$.  Then $\cM/\cM^\tau$ is coprimary,
being a submodule of a coprimary module.  Moreover, $\cM \to
\bigoplus_{\tau} \cM/\cM^\tau$ is injective because its kernel is the
same as the kernel of $\cM \to \bigoplus_{ij} E_{ij}$, which is a
composite of two injections and hence injective by construction.
Therefore $\cM \to \bigoplus_{\tau} \cM/\cM^\tau$ is a primary
decomposition.
\end{proof}

%%%%%%%%%%%%%%%%%%%%%%%%%%%%%%%%%%%%%%%%%%%%%%%%%%%%%%%%%%%%%%%%%%%%%%%%%
\section{Finitely determined \texorpdfstring{$\ZZ^n$}{Zn}-modules}\label{s:ZZn}
%%%%%%%%%%%%%%%%%%%%%%%%%%%%%%%%%%%%%%%%%%%%%%%%%%%%%%%%%%%%%%%%%%%%%%%%%

Unless otherwise stated, this section is presented over the discrete
polyhedral group $\cQ = \ZZ^n$ with $\cQ_+ = \NN^n$.  It begins by
reviewing the structure of finitely determined $\ZZ^n$-mod\-ules,
including (minimal) injective and flat resolutions.  These serve as
models for the concepts of socle, cogenerator, and downset hull over
real polyhedral groups (Sections~\ref{s:socle}--\ref{s:hulls})---as
well as their dual notions of top, generator, and upset covers
(Section~\ref{s:gen-functors})---and is the foundation underlying the
syzygy theorem for finitely encoded modules
(Section~\ref{sub:syzygy}), including the existence of fringe
presentations (Section~\ref{sub:fringe}).

Our main references for $\ZZ^n$-modules are \cite{alexdual,cca}.  The
development of homological theory for injective and flat resolutions
in the context of finitely determined modules is functorially
equivalent to the development for finitely generated modules, by
\cite[Theorem~2.11]{alexdual}, but it is convenient to have on hand
the statements in the finitely determined case directly.  The
characterization of finitely determined modules in
Proposition~\ref{p:determined} and (hence)
Theorem~\ref{t:finitely-determined} is apparently new.

%%%%%%%%%%%%%%%%%%%%%%%%%%%%%%%%%%%%%%%%%%%%%%%%%%%%%%%%%%%%%%%%%%%%%%%%%
\subsection{Definitions}\label{sub:def-finitely-determined}\mbox{}%%%%%%%

\noindent
The essence of the finiteness here is that all of the relevant
information about the relevant modules should be recoverable from what
happens in a bounded box in~$\ZZ^n$.

\begin{defn}\label{d:determined}
A $\ZZ^n$-finite module~$\cN$ is \emph{finitely determined} if for
each $i = 1,\dots,n$ the multiplication map $\cdot x_i: N_\bb \to
N_{\bb+\ee_i}$ (see Example~\ref{e:ZZn-graded} for notation) is an
isomorphism whenever $b_i$ lies outside of some bounded interval.
\end{defn}

\begin{remark}\label{r:determined}
This notion of finitely determined is the same notion as in
Example~\ref{e:convex-projection}.  A module is finitely determined if
and only if, after perhaps translating its $\ZZ^n$-grading, it is
\emph{$\aa$-determined} for some $\aa \in \NN^n$, as defined in
\cite[Definition~2.1]{alexdual}.
\end{remark}

\begin{remark}\label{r:fg/ZZ^n}
For $\ZZ^n$-modules, the finitely determined condition is
weaker---that is, more inclusive---than finitely generated, but it is
much stronger than finitely encoded.  The reason is essentially
Example~\ref{e:convex-projection}, where the encoding has a very
special nature.  For a generic sort of example, the restriction
to~$\ZZ^n$ of any $\RR^n$-finite $\RR^n$-module with finitely many
isotypic regions of sufficient width is a finitely encoded
$\ZZ^n$-module, and there is simply no reason why the isotypic regions
should be commensurable with the coordinate directions in~$\ZZ^n$.
Already the toy-model fly wing modules in
Examples~\ref{e:toy-model-fly-wing} and~\ref{e:encoding} yield
infinitely generated but finitely encoded $\ZZ^n$-modules, and this
remains true when the discretization $\ZZ^n$ of~$\RR^n$ is rescaled by
any factor.
\end{remark}

\begin{example}\label{e:local-cohomology}
The local cohomology of an affine semigroup rings is finitely encoded
but usually not finitely determined; see \cite{injAlg} and
\cite[Chapter~13]{cca}, particularly Theorem~13.20, Example~13.17, and
Example~13.4 there.
\end{example}

%%%%%%%%%%%%%%%%%%%%%%%%%%%%%%%%%%%%%%%%%%%%%%%%%%%%%%%%%%%%%%%%%%%%%%%%%
\subsection{Injective hulls and resolutions}\label{sub:inj}\mbox{}%%%%%%%

\begin{remark}\label{r:injective}
Every $\ZZ^n$-finite module that is injective in the category of
$\ZZ^n$-modules is a direct sum of downset modules $\kk[D]$ for
downsets $D$ cogenerated by vectors along faces
(Example~\ref{e:coprincipal}).  This statement holds over any discrete
polyhedral group (Definition~\ref{d:discrete-polyhedral}) by
\cite[Theorem~11.30]{cca}.
\end{remark}

Minimal injective resolutions work for finitely determined modules
just as they do for finitely generated modules.  The standard
definitions are as follows.

\begin{defn}\label{d:inj}
Fix a $\ZZ^n$-module~$\cN$.
\begin{enumerate}
\item%
An \emph{injective hull} of~$\cN$ is an injective homomorphism $\cN
\to E$ in which $E$ is an injective $\ZZ^n$-module (see
Remark~\ref{r:injective}).  This injective hull is
\begin{itemize}
\item%
\emph{finite} if $E$ has finitely many indecomposable summands and
\item%
\emph{minimal} if the number of such summands is minimal.
\end{itemize}
\item%
An \emph{injective resolution} of~$\cN$ is a complex~$E^\spot$ of
injective $\ZZ^n$-modules whose differential $E^i \to E^{i+1}$ for $i
\geq 0$ has only one nonzero homology $H^0(E^\spot) \cong\nolinebreak
\cN$ (so $\cN \into E^0$ and $\coker(E^{i-1} \to E^i) \into E^{i+1}$
are injective hulls for all $i \geq 1$).  \nolinebreak$E^\spot$
\begin{itemize}
\item%
has \emph{length~$\ell$} if $E^i = 0$ for $i > \ell$ and $E^\ell \neq
0$;
\item%
is \emph{finite} if $E^\spot = \bigoplus_i E^i$ has finitely many
indecomposable summands; and
\item%
is \emph{minimal} if $\cN \into E^0$ and $\coker(E^{i-1} \to E^i)
\into E^{i+1}$ are minimal injective hulls for all $i \geq 1$.
\end{itemize}
\end{enumerate}
\end{defn}

\begin{prop}\label{p:determined}
The following are equivalent for a $\ZZ^n$-module~$\cN$.
\begin{enumerate}
\item%
$\cN$ is finitely determined.
\item%
$\cN$ admits a finite injective resolution.
\item%
$\cN$ admits a finite minimal injective resolution.
\end{enumerate}
Any finite minimal resolution is unique up to isomorphism and has
length~at~most~$n$.
\end{prop}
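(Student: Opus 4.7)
The plan is to prove the cycle $(3) \Rightarrow (2) \Rightarrow (1) \Rightarrow (3)$, with uniqueness and the length bound falling out of the final implication. The first arrow is tautological.

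For $(2) \Rightarrow (1)$, I invoke Remark~\ref{r:injective}, which identifies each indecomposable summand of a $\ZZ^n$-finite injective module as a downset module $\kk[D]$ with $D$ cogenerated by a vector along a face $\tau$ of $\NN^n$. Such a $\kk[D]$ is manifestly finitely determined: in each coordinate direction $i$ the structure map $\cdot x_i$ stabilizes outside a bounded interval to the identity if $\ee_i \in \tau$ and to zero otherwise. Finite direct sums preserve finite determination by merging the bounding intervals, and the category of finitely determined $\ZZ^n$-modules is abelian by the same reasoning as Lemma~\ref{l:abelian-category}. Therefore $\cN = \ker(E^0 \to E^1)$ inherits finite determination from $E^0$ and $E^1$.

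The substance lies in $(1) \Rightarrow (3)$. Here the plan is to reduce to classical injective-resolution theory over $\kk[\NN^n] = \kk[x_1,\ldots,x_n]$. By Remark~\ref{r:determined}, after a grading shift $\cN$ may be assumed $\aa$-determined for some $\aa \in \NN^n$. The equivalence of categories of \cite[Theorem~2.11]{alexdual} identifies $\aa$-determined $\ZZ^n$-modules with finitely generated $\NN^n$-graded modules over $\kk[\NN^n]$, and it matches the indecomposable $\ZZ^n$-injective summands $\kk[\bb+\tau-\NN^n]$ with the indecomposable graded injective hulls of the face primes $\pp_\tau$ of $\kk[\NN^n]$. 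Over a regular ring of Krull dimension $n$ the global dimension equals $n$, so every finitely generated graded module has graded injective dimension at most $n$; and graded minimal injective hulls exist and are unique up to isomorphism by standard Matlis theory. Iterating on cokernels produces a minimal injective resolution, unique at each step, and pulling back through the equivalence delivers the desired finite minimal injective resolution of $\cN$ of length at most $n$, unique up to isomorphism.

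The main obstacle I expect is verifying that the equivalence of \cite[Theorem~2.11]{alexdual} genuinely intertwines injective objects and minimal hulls on the two sides---namely, that it preserves essential extensions and indecomposability, and that a minimal graded injective resolution on the polynomial-ring side pulls back to a resolution that remains minimal in the $\ZZ^n$-graded sense. Once that bookkeeping is in hand, the theorem reduces to well-known facts about finitely generated modules over polynomial rings.
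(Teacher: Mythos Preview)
Your proof is correct and follows essentially the same approach as the paper's: both reduce the implication $(1)\Rightarrow(3)$ to the classical theory for finitely generated $\NN^n$-graded modules over the polynomial ring, and both handle $(2)\Rightarrow(1)$ via the abelianness of the finitely determined category together with the finite determination of indecomposable injectives. The only cosmetic difference is that the paper makes the passage between $\aa$-determined $\ZZ^n$-modules and finitely generated $\NN^n$-modules explicit via the \v Cech hull functor from \cite{alexdual} (truncate to the $\NN^n$-graded part, resolve there citing Goto--Watanabe \cite{GWii}, then apply $\vC$ to extend back), whereas you invoke the categorical equivalence \cite[Theorem~2.11]{alexdual} as a black box; the \v Cech hull is precisely the functor realizing that equivalence, and its exactness together with its behavior on indecomposable injectives is exactly the ``bookkeeping'' you flag as the main obstacle.
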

\begin{proof}
The proof is based on existence of finite minimal injective hulls and
resolutions for finitely generated $\ZZ^n$-modules, along with
uniqueness and length~$n$ given minimality, as proved by Goto and
Watanabe \cite{GWii}.

First assume $\cN$ is finitely determined.  Translating the
$\ZZ^n$-grading affects nothing about existence of a finite injective
resolution.  Therefore, using Remark~\ref{r:determined}, assume that
$\cN$ is $\aa$-determined.  Truncate by taking the $\NN^n$-graded part
of~$\cN$ to get a positively $\aa$-determined---and hence finitely
generated---module~$\cN_{\succeq\0}$; see
\cite[Definition~2.1]{alexdual}.  Take any minimal injective
resolution $\cN_{\succeq\0} \to E^\spot$.  Extend backward using the
\v Cech hull \cite[Definition~2.7]{alexdual}, which is exact
\cite[Lemma~2.9]{alexdual}, to get a finite minimal injective
resolution $\vC(\cN_{\succeq\0} \to E^\spot) = (\cN \to \vC E^\spot)$,
noting that $\vC$ fixes indecomposable injective modules whose
$\NN^n$-graded parts are nonzero and is zero on all other
indecomposable injective modules \cite[Lemma~4.25]{alexdual}.  This
proves 1~$\implies$~3.

That 3 $\implies$~2 is trivial.  The remaining implication,
2~$\implies$~1, follows because every indecomposable injective is
finitely determined and the category of finitely determined modules is
abelian.  (The category of $\ZZ^n$-modules each of which is nonzero
only in a bounded set of degrees is abelian, and constructions such as
kernels, cokernels, or direct sums in the category of finitely
determined modules are pulled back from there.)
\end{proof}

%%%%%%%%%%%%%%%%%%%%%%%%%%%%%%%%%%%%%%%%%%%%%%%%%%%%%%%%%%%%%%%%%%%%%%%%%
\subsection{Socles of finitely determined modules}\label{sub:ZZn-soc}\mbox{}

\noindent
The nature of minimality in Definition~\ref{d:inj} plays a decisive
role in the theory developed in later sections.  Its homological
manifestation via socles is particularly~essential.  Expressing that
manifestation requires interactions with localization and restriction.

\begin{defn}\label{d:collapse}
Fix a face $\tau$ of~$\NN^n$ and a $\ZZ^n$-module~$\cN$.
\begin{enumerate}
\item%
The \emph{restriction} of~$\cN$ to the subgroup $\ZZ\tau \subseteq
\ZZ^n$ is $\cN|_\tau = \bigoplus_{\aa \in \ZZ\tau} \cN_\aa$.
\item%
If $\cN = \cM_\tau$ is the localization of a $\ZZ^n$-module
along~$\tau$, then $\cN|_\ol\tau$ is the \emph{quotient-restriction}
$\cmt$ of~$\cM$ along~$\tau$, where $\ol\tau$ is the complement $[n]
\minus \tau$.
\end{enumerate}
\end{defn}

\begin{remark}\label{r:restriction}
The restriction that defines $\cmt$ can equivalently be viewed as a
quotient: $\ZZ\tau$ acts freely on~$\cM_\tau$, and $\cmt$ is the
quotient by this action; hence the nomenclature.  Another way of
viewing $\cmt$ as a quotient is via $\cmt = \cM/I_\tau\cM$, where
$I_\tau = \<x_i - 1 \mid i \in \chi(\tau)\>$.  Note that it is not
necessary to localize along~$\tau$ before restricting, if one is
willing to accept restriction along a translate of~$\ZZ^\ol\tau$.
More~pre\-cisely, for any $\bb$ with sufficiently large
$\tau$-coordinates, $\cmt =\nolinebreak \cM|_{\bb + \ZZ\ol\tau}
=\nolinebreak \bigoplus_{\aa\in\ZZ\ol\tau} \cM_{\bb+\aa}$.
% Aha!  QR = ``quotient-restriction'', the $\ol\sigma$-restriction of
% a module ``sufficiently far out in the $\sigma$-direction being the
% same as collapse along~$\sigma$.
This statement depends on the finitely determined condition; it is
false in general.
\end{remark}

$\ZZ^n$-finite injective modules have minimal cogenerators whose
degrees are canonical when taken in $\ZZ\ol\tau =
\ZZ^n\hspace{-.2ex}/\hspace{.2ex}\ZZ\tau$.  This generalizes the dual
notion of minimal generators for $\ZZ^n$-graded free modules (or, more
generally, minimal generators for finitely generated modules over
graded or local rings).  The notion of cogenerator extends to
arbitrary finitely determined $\ZZ^n$-modules by functoriality,
as~follows.

\begin{defn}\label{d:cogenerator}
Fix a $\ZZ^n$-module~$\cM$ and a face $\tau$ of~$\NN^n$.  A
\emph{cogenerator along~$\tau$} of \emph{degree} $\aa \in \ZZ^n$ is a
nonzero element $y \in \cM_\aa$ that generates a submodule isomorphic
to $\kk[\aa + \tau] = \xx^\aa\kk[\tau]$.  The \emph{cogenerator
functor along~$\tau$} takes~$\cM$ to its \emph{socle along~$\tau$}:
$$%
  \socct\cM
  =
  \bigoplus_{\aa\in\ZZ\ol\tau} \Hom_{\ZZ\ol\tau}(\kk_\aa,\cmt)
$$
where the \emph{skyscraper} module $\kk_\aa$ is the indicator
subquotient for the singleton $\{\aa\} \subseteq \ZZ\ol\tau$.
\end{defn}

The bar over $\soc$ is explained after Definition~\ref{d:socc} and in
Remark~\ref{r:notation-soc-bar}.

\begin{prop}\label{p:minimal-inj-hull}
An injective hull $\cM \to E$ is minimal as in Definition~\ref{d:inj}
if and only if\/ $\socct\cM \to \socct E$ is an isomorphism for all
faces~$\tau$ of\/~$\NN^n$.
\end{prop}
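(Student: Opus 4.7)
The plan is to reduce the statement to a count of indecomposable injective summands of $E$, via a direct computation of $\socct$ on each indecomposable type. By Remark~\ref{r:injective}, every $\ZZ^n$-finite injective module is a direct sum of indicator modules $E_{\bb,\sigma} = \kk[\bb + \sigma - \NN^n]$ indexed by a face~$\sigma$ of~$\NN^n$ and a degree $\bb \in \ZZ^n$. The key computation is that $\socct E_{\bb,\sigma}$ is one-dimensional, sitting in $\ZZ\ol\tau$-degree $\bb\bmod\ZZ\tau$ when $\sigma = \tau$, and zero when $\sigma \neq \tau$. This follows from a direct analysis of the downset $\bb + \sigma - \NN^n$ after localizing along~$\tau$ and taking the quotient-restriction. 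Combined with the fact that $\socct$ preserves direct sums on finitely determined modules---localization is a tensor product, quotient-restriction is a graded-piece functor via Remark~\ref{r:restriction}, and $\Hom_{\ZZ\ol\tau}(\kk_\aa,-)$ commutes with direct sums computed degreewise---this shows that for any decomposition $E = \bigoplus_j E_{\bb_j,\sigma_j}$ into indecomposables, the graded dimension of $\socct E$ equals the number of summands of~$E$ of each type $(\bb_j \bmod \ZZ\tau, \tau)$.

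For the forward implication, assume $\cM \into E$ is a minimal injective hull. Adapting the classical characterization of minimal injective hulls from the Noetherian setting---which holds for finitely generated $\NN^n$-modules by Goto--Watanabe and transfers to the finitely determined setting via the \v Cech hull correspondence used in the proof of Proposition~\ref{p:determined}---each indecomposable summand $E_j = E_{\bb_j,\tau_j}$ of~$E$ meets~$\cM$ in a nonzero essential submodule $M_j = \cM \cap E_j$. Essentiality of $M_j$ in~$E_j$ forces $M_j \cap \kk[\bb_j + \tau_j] \neq 0$, where $\kk[\bb_j + \tau_j] \subseteq E_j$ is the principal submodule generated by the cogenerator of~$E_j$. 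A quick check on degrees shows that any nonzero element of this intersection, viewed in~$\cM$, is annihilated by every variable indexed by $i \notin \chi(\tau_j)$, hence contributes a nonzero class to $\socct[\tau_j]\cM$ of $\ZZ\ol{\tau_j}$-degree $\bb_j\bmod\ZZ\tau_j$ that maps to a generator of the one-dimensional $\socct[\tau_j]E_j$. Summing over~$j$ yields surjectivity of $\socct\cM \to \socct E$ for every face~$\tau$, and injectivity of the same map follows from left-exactness of~$\socct$ on finitely determined modules applied to the injection $\cM \into E$.

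For the reverse implication, assume $\socct\cM \to \socct E$ is an isomorphism for every face~$\tau$. Proposition~\ref{p:determined} supplies a finite minimal injective hull $\cM \into E_\mathrm{min}$, to which the forward implication applies to give $\socct\cM \cong \socct E_\mathrm{min}$ for every~$\tau$. Thus $\socct E \cong \socct E_\mathrm{min}$ for every~$\tau$, and the count from the first paragraph forces $E$ and $E_\mathrm{min}$ to have the same number of indecomposable summands of each type $(\bb\bmod\ZZ\tau,\tau)$, so $E$ itself is a minimal injective hull.

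The main obstacle I anticipate is technical: establishing left-exactness of the composite functor $\socct$ on finitely determined modules. The definition assembles localization along~$\tau$ (exact), quotient-restriction $\cM \mapsto \cmt$ (a priori only right-exact, being a quotient by the $\ZZ\tau$-action), and $\Hom_{\ZZ\ol\tau}(\kk_\aa,-)$ (left-exact). The way around the middle step is Remark~\ref{r:restriction}: for finitely determined~$\cM$, the quotient-restriction $\cmt$ coincides with the graded-piece restriction $\cM|_{\bb + \ZZ\ol\tau}$ for any~$\bb$ with sufficiently large $\tau$-coordinates, and graded-piece restriction is exact. For an injection $\cM \into E$ of finitely determined modules, a single~$\bb$ can be chosen to work simultaneously for both, making the relevant composite left-exact on the injection in question.
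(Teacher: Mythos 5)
Your proof is correct. Note that the paper itself offers no argument for this proposition --- its entire proof is a citation to \cite[Section~11.5]{cca} --- so what you have written is a self-contained reconstruction of exactly the standard argument that citation points to: compute $\socct$ on each indecomposable injective $\kk[\bb + \ZZ\tau - \NN^n]$ (one-dimensional along $\tau$, zero along other faces), observe that the graded dimension of $\socct E$ therefore counts summands of each type, and convert ``minimal number of summands'' into ``essential extension'' via the uniqueness of minimal hulls already secured in Proposition~\ref{p:determined}. Two small remarks. First, the left-exactness worry you flag in your final paragraph is real but is also disposed of by the paper's own later machinery: Lemma~\ref{l:exact-qr} shows quotient-restriction is exact (not merely right-exact), and Proposition~\ref{p:left-exact-tau-closed} gives left-exactness of $\socct$ whenever $\qzt$ is partially ordered, which holds here since $\qzt = \ZZ\ol\tau$; your workaround via Remark~\ref{r:restriction} is an acceptable substitute in the finitely determined setting. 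Second, your surjectivity step can be streamlined: rather than producing one witness per summand, apply essentiality directly to an arbitrary representative $s \in E_\aa$ of a class in $\socct E$ --- since $\<s\> \cong \kk[\aa+\tau]$, essentiality yields $x^{\vv} s \in \cM$ for some $\vv \in \tau$, and this element represents the same class modulo $\ZZ\tau$. Your version does still work, because each witness lies inside a single summand $E_j$ and hence its socle class is supported on the $j$-th component alone, but it is worth making that support observation explicit.
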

\begin{proof}
See \cite[Section~11.5]{cca}.
\end{proof}

%%%%%%%%%%%%%%%%%%%%%%%%%%%%%%%%%%%%%%%%%%%%%%%%%%%%%%%%%%%%%%%%%%%%%%%%%
\subsection{Matlis duality}\label{sub:matlis}\mbox{}%%%%%%%%%%%%%%%%%%%%%

\noindent
Finitely determined flat modules and their properties are best
described and deduced using a duality that turns~$\ZZ^n$ upside down
and takes vector space duals.  However,~since this duality is needed
in greater generality later, the next definitions are more general.

\begin{defn}\label{d:self-dual}
A poset~$\cQ$ is \emph{self-dual} if it is given a poset automorphism
$q \mapsto -q$.
\end{defn}

\begin{example}\label{e:self-dual}
Inversion makes partially ordered abelian groups self-dual as posets.
\end{example}

\begin{defn}\label{d:matlis}
Fix a self-dual poset~$\cQ$.  The \emph{Matlis dual} of a
$\cQ$-module~$\cM$ is the $\cQ$-module~$\cM^\vee$ defined by
$$%
  (\cM^\vee)_q = \Hom_\kk(\cM_{-q},\kk),
$$
so the homomorphism $(\cM^\vee)_q \to (M^\vee)_{q'}$ is transpose to
$\cM_{-q'} \to \cM_{-q}$.
\end{defn}

\begin{example}\label{e:matlis}
The Matlis dual over a partially ordered group~$\cQ$ is equivalently
$$%
  \cM^\vee = \hhom_\cQ(\cM,\kk[\cQ_+]^\vee),
$$
where $\hhom_\cQ(\cM,\cN) = \bigoplus_{q\in\cQ}
\Hom\bigl(\cM,\cN(q)\bigr)$ is the direct sum of all degree-preserving
homomorphisms from~$\cM$ to $\cQ$-graded translates of~$\cN$.  This is
proved using the adjuntion between $\Hom$ and~$\otimes$; see
\cite[Lemma~11.16]{cca}, noting that the nature of the grading group
is immaterial.  And as in \cite[Lemma~11.16]{cca},
$$%
  \hhom_\cQ(\cM,\cN^\vee) = (\cM \otimes_\cQ \cN)^\vee.
$$
\end{example}

\begin{example}\label{e:dual-of-localization}
It is instructive to compute the Matlis dual of localization along a
face~$\tau$ over a partially ordered abelian group: the Matlis dual
of~$\cM_\tau$ is
$$%
  (\cM_\tau)^\vee
  =
  \hhom\bigl(\kk[\cQ_+]_\tau\otimes\cM,\kk\bigr)
  =
  \hhom\bigl(\kk[\cQ_+]_\tau,\hhom(\cM,\kk)\bigr)
  =
  \hhom\bigl(\kk[\cQ_+]_\tau,\cM^\vee\bigr),
$$
the module of homomorphisms from a localization of~$\kk[\cQ_+]$
into~$\cM$.  The unfamiliarity of this functor is one of the reasons
for developing most of the theory in this paper in terms of socles and
cogenerators instead of tops and generators.
\end{example}

\begin{lemma}\label{l:vee-vee}
$(\cM^\vee)^\vee\!$ is canonically isomorphic to~$\cM$ if $\dim_\kk
\cM_q < \infty$ for all~$q \in\nolinebreak \cQ$.~~\hspace{1ex}$\square$
\end{lemma}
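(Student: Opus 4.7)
The plan is to reduce to the classical finite-dimensional double-dual isomorphism for $\kk$-vector spaces, applied degree by degree, and then verify that the resulting map respects the $\cQ$-module structure.

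First, I would unwind Definition~\ref{d:matlis} twice. Since the self-duality $q \mapsto -q$ is an involution, for each $q \in \cQ$ one obtains the identification
\[
  \bigl((\cM^\vee)^\vee\bigr)_q
  = \Hom_\kk\bigl((\cM^\vee)_{-q},\,\kk\bigr)
  = \Hom_\kk\bigl(\Hom_\kk(\cM_q,\kk),\,\kk\bigr).
\]
The hypothesis $\dim_\kk \cM_q < \infty$ guarantees that the classical evaluation map $\varepsilon_q\colon \cM_q \to \cM_q^{**}$ sending $v$ to the functional $\phi \mapsto \phi(v)$ is an isomorphism. Assembling these across all $q \in \cQ$ yields a degree-preserving $\kk$-linear bijection $\varepsilon\colon \cM \to (\cM^\vee)^\vee$.

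Next, I would check that $\varepsilon$ commutes with structure homomorphisms. Given $q \preceq q'$ in $\cQ$, unfolding Definition~\ref{d:matlis} twice shows that the structure map $((\cM^\vee)^\vee)_q \to ((\cM^\vee)^\vee)_{q'}$ is the double transpose of $\cM_q \to \cM_{q'}$: each application of~$\vee$ reverses the partial order (Example~\ref{e:self-dual}) and transposes the linear map, so the two sign reversals cancel while the two transposes compose into the double transpose. Under the canonical double-dual identification, the double transpose of a linear map between finite-dimensional vector spaces is exactly the original map, so the square expressing compatibility of $\varepsilon_q$ and $\varepsilon_{q'}$ with the structure maps commutes. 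Naturality in~$\cM$, and hence canonicity of the isomorphism, follows from naturality of vector-space evaluation $V \to V^{**}$.

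The hard part will be nothing more than careful bookkeeping of signs and transposes across the two invocations of Definition~\ref{d:matlis}: the statement is essentially the classical double-dual isomorphism promoted to the $\cQ$-graded setting by the finite-dimensionality hypothesis, with no substantive obstacle beyond this bookkeeping.
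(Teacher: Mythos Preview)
Your proposal is correct and is precisely the standard argument the paper has in mind; the paper itself omits the proof entirely (the statement ends with a bare $\square$), treating the degreewise finite-dimensional double-dual isomorphism and its compatibility with structure maps as routine.
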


\begin{remark}\label{r:flat}
The Matlis dual of Remark~\ref{r:injective} says that every
$\cQ$-finite flat module over a discrete polyhedral group~$\cQ$ is
isomorphic to a direct sum of upset modules~$\kk[U]$ for upsets of the
form $U = \bb + \ZZ\tau + \cQ_+$.  These upset modules are the graded
translates of localizations of~$\kk[\cQ_+]$ along faces.
\end{remark}

\begin{lemma}\label{l:matlis-pair}
$\hhom(\kk[\cQ_+]_\tau,-)$ is exact for all faces~$\tau$ of any
partially ordered group~$\cQ$.
\end{lemma}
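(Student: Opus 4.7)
The plan is to reduce the exactness of $\hhom(\kk[\cQ_+]_\tau, -)$ to the exactness of Matlis duality and of localization along~$\tau$, using the identity established in Example~\ref{e:dual-of-localization}. That example provides the natural isomorphism $\hhom(\kk[\cQ_+]_\tau, \cM^\vee) \cong (\cM_\tau)^\vee$ for every $\cQ$-module~$\cM$. Substituting $\cM = \cN^\vee$ and invoking Lemma~\ref{l:vee-vee} to identify $(\cN^\vee)^\vee$ with~$\cN$ exhibits the functor in question as a composite,
$$
  \hhom(\kk[\cQ_+]_\tau, \cN) \;\cong\; \bigl((\cN^\vee)_\tau\bigr)^\vee,
$$
that is, Matlis duality, followed by localization along~$\tau$, followed by Matlis duality again.

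Once this factorization is in hand, exactness is immediate. Matlis duality $(-)^\vee$ acts as $\kk$-vector-space duality in each graded piece and is therefore exact, because $\kk$ is a field. Localization along~$\tau$ is the tensor product $(-)\otimes_{\kk[\cQ_+]} \kk[\cQ_+ + \ZZ\tau]$, and $\kk[\cQ_+ + \ZZ\tau]$ is a flat $\kk[\cQ_+]$-module, as it arises by inverting the multiplicative subset $\{\xx^t : t \in \tau\}$ of the commutative monoid algebra~$\kk[\cQ_+]$. No hypothesis on~$\cQ$ beyond being a partially ordered group and on~$\tau$ beyond being a face of~$\cQ_+$ is needed for this step, which is what allows the lemma to be stated in such generality. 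A composite of exact functors is exact, which proves the lemma.

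The only subtlety---more a matter of bookkeeping than an obstacle---is that the identification above uses Lemma~\ref{l:vee-vee}, whose hypothesis is that $\dim_\kk \cN_q < \infty$ for every $q \in \cQ$; this is the standing framework of the paper's subsequent applications, where modules are $\cQ$-finite or finitely encoded. For broader generality one may instead identify $\hhom(\kk[\cQ_+]_\tau, \cN)_q$ with the inverse limit $\varprojlim_{t \in \tau} \cN_{q - t}$ along the multiplication-by-$\tau$ transition maps and verify exactness directly; when the graded pieces are finite dimensional the decreasing images $\mathrm{im}(\cN_{q - t'} \to \cN_{q - t})$ stabilize as $t'$ grows in~$\tau$, supplying the Mittag-Leffler-type condition that forces vanishing of~$\varprojlim^1$ and hence exactness.
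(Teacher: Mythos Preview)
Your proof is correct and takes essentially the same approach as the paper: both invoke the adjunction identity (your Example~\ref{e:dual-of-localization} is precisely the specialization of the final line of Example~\ref{e:matlis} that the paper cites) together with the flatness of $\kk[\cQ_+]_\tau$ and the exactness of Matlis duality. You are more explicit than the paper about the $\cQ$-finiteness hypothesis needed to invoke Lemma~\ref{l:vee-vee}; your Mittag-Leffler aside is a valid alternative route in that same $\cQ$-finite setting but, despite its framing, does not actually broaden the scope.
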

\begin{proof}
Apply the final line of Example~\ref{e:matlis}, using that
$\kk[\cQ_+]_\tau$ is flat (it is a localization of~$\kk[\cQ_+]$) and
$(-)^\vee$ is exact and faithful.
\end{proof}

\begin{remark}\label{r:matlis-pair}
What really drives the lemma is the observation that while the
opposite notion to injective is projective (reverse all of the arrows
in the definition), the adjoint notion to injective is flat.  That is,
a module is flat if and only if its Matlis dual is injective.  This is
an instance of a rather general phenomenon that can be phrased in
terms of a monoidal abelian category~$\cC$ possessing a \emph{Matlis
object}~$E$ for a \emph{Matlis dual pair} of subcategories~$\cA$
and~$\cB$ such that $\Hom(-,E)$ restricts to exact contravariant
functors $\cA \to \cB$ and $\cB \to \cA$ that are inverse to one
another.  The idea is to set $\cM^\vee = \Hom(\cM,E)$, the
\emph{Matlis dual} of any object~$\cM$ of~$\cC$, and define an object
of~$\cC$ to be \emph{$\cB$-flat} if $F \otimes -$ is an exact functor
on~$\cB$.  Then an object $F$ of~$\cA$ is $\cB$-flat if and only if
$\Hom(F,-)$ is an exact functor on~$\cA$.  Examples of this situation
include artinian and noetherian modules over a complete local ring;
modules of finite length over any local ring (in both cases $E =
E(R/\mm)$ is the injective hull of the residue field); and of course
$\cQ$-finite modules over a partially ordered abelian group~$\cQ$.
The latter two examples feature a single Matlis self-dual subcategory.
\end{remark}

%%%%%%%%%%%%%%%%%%%%%%%%%%%%%%%%%%%%%%%%%%%%%%%%%%%%%%%%%%%%%%%%%%%%%%%%%
\subsection{Flat covers, flat resolutions, and generators}\label{sub:flat}\mbox{}

\noindent
Minimal flat resolutions are not commonplace, but the notion is Matlis
dual to that of minimal injective resolution.  In the context of
finitely determined modules, flat resolutions work as well as
injective resolutions.  The definitions are as follows.

\begin{defn}\label{d:flat}
Fix a $\ZZ^n$-module~$\cN$.
\begin{enumerate}
\item%
A \emph{flat cover} of~$\cN$ is a surjective homomorphism $F \to \cN$
in which $F$ is a flat $\ZZ^n$-module (see Remark~\ref{r:flat}).  This
flat cover is
\begin{itemize}
\item%
\emph{finite} if $F$ has finitely many indecomposable summands and
\item%
\emph{minimal} if the number of such summands is minimal.
\end{itemize}
\item%
A \emph{flat resolution} of~$\cN$ is a complex~$F_\spot$ of flat
$\ZZ^n$-modules whose differential $F_{i+1} \to F_i$ for $i \geq 0$
has only one nonzero homology $H_0(F_\spot) \cong \cN$ (so $F_0 \onto
\cN$ and $F_{i+1} \onto \ker(F_i \to F_{i-1})$ are flat covers for all
$i \geq 1$).  The flat resolution~$F_\spot$
\begin{itemize}
\item%
has \emph{length~$\ell$} if $F_i = 0$ for $i > \ell$ and $F_\ell \neq
0$;
\item%
is \emph{finite} if $F_\spot = \bigoplus_i F_i$ has finitely many
indecomposable summands; and
\item%
is \emph{minimal} if $F_0 \onto \cN$ and $F_{i+1} \onto \ker(F_i \to
F_{i-1})$ are minimal flat covers for all $i \geq 1$.
\end{itemize}
\end{enumerate}
\end{defn}

\begin{defn}\label{d:generator-ZZ}
Fix a $\ZZ^n$-module~$\cM$ and a face $\tau$ of~$\NN^n$.  A
\emph{generator} of~$\cM$ \emph{along~$\tau$} of \emph{degree} $\aa
\in \ZZ^n$ is a nonzero element $y \in \cM_\aa$ that maps to a socle
element in a quotient of~$\cM$ isomorphic to $\kk[\aa - \tau] =
\kk[-\aa + \tau]^\vee$.  The \emph{generator functor along~$\tau$}
takes~$\cM$ to its \emph{top along~$\tau$}:
$$%
  \topct\cM
  =
  \kk \otimes_{\ZZ\ol\tau} \bigl(\hhom_{\ZZ^n}(\kk[\NN^n + \ZZ\tau],\cM)/\tau\bigr),
$$
where $\kk = \kk_\0$ is the skyscraper $\ZZ\ol\tau$-module in
degree~$\0$.
\end{defn}

\begin{remark}\label{r:generator}
The definition of generator may feel unfamiliar.  Usually a minimal
generator of a module~$\cM$ over (say) a complete local ring~$R$ with
maximal ideal~$\mm$ is an element $y \in \cM$ whose image is part of a
basis for $\cM/\mm\cM$.  Equivalently, $y$ is nonzero in a quotient
of~$\cM$ that is isomorphic to~$R/\mm$.  What has been relaxed here is
that to consider generators along a prime ideal~$\pp$ of positive
dimension, the definition requires $y$ to map to the socle of a
quotient of~$\cM$ isomorphic to the Matlis dual of~$R/\pp$; the
generator~$y$ then ``extends backward in~$\cM$ along~$R/\pp$''.  The
generator functor $\topct$ encapsulates this by first attempting to
insert the localization $\kk[\NN^n]_\tau$ into~$\cM$---which only
works if there is an element of~$\cM$ that can be ``extended backward
along~$\tau$''---and then taking the ordinary generators of the
quotient-restriction along~$\tau$.
\end{remark}

The functorial relationship between top and socle is the main mode of
proof for tops.

\begin{prop}\label{p:top=socvee}
The generator functor over~$\ZZ^n$ along a face~$\tau$ of\/~$\NN^n$ is
Matlis dual to the cogenerator functor over~$\ZZ^n$ along~$\tau$ on
the Matlis dual:
$$%
  \topct\cM = \bigl(\socct(\cM^\vee)\bigr){}^\vee.
$$
\end{prop}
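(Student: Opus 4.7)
The plan is to identify both sides of the stated equality with ordinary top and socle functors over the subgroup~$\ZZ\ol\tau$, then to invoke the standard Matlis duality between those two functors for $\ZZ\ol\tau$-modules.

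First, applying Example~\ref{e:dual-of-localization} with~$\cM$ replaced by~$\cM^\vee$ and using involutivity of Matlis duality (Lemma~\ref{l:vee-vee}) identifies
\[
  X \,:=\, \hhom_{\ZZ^n}(\kk[\NN^n + \ZZ\tau], \cM)
  \,=\,
  \bigl((\cM^\vee)_\tau\bigr){}^\vee
\]
as $\ZZ^n$-graded modules.  Because $\ZZ\tau$ acts invertibly on the localization~$(\cM^\vee)_\tau$, the transposed action of~$\ZZ\tau$ on~$X$ is invertible as well, so passing to~$X/\tau$ amounts (as in Remark~\ref{r:restriction}) to restricting the $\ZZ^n$-grading of~$X$ to~$\ZZ\ol\tau$; in particular $(X/\tau)_\aa = X_\aa = ((\cM^\vee)_\tau)_{-\aa}^*$ for $\aa \in \ZZ\ol\tau$.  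Matlis-dualizing this $\ZZ\ol\tau$-graded vector space then recovers $((\cM^\vee)_\tau)_\aa$ in each degree~$\aa$, with structure maps matching because the transpose of a transpose is the original map; this yields the key intermediate identity
\[
  (X/\tau){}^\vee \,=\, \cM^\vee/\tau
\]
as $\ZZ\ol\tau$-modules, the Matlis dual on the left now being taken over~$\ZZ\ol\tau$.

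Next, the outer operation $\kk \otimes_{\ZZ\ol\tau}(-)$ in the definition of~$\topct\cM$ is by construction the ordinary top functor~$\topc$ for $\ZZ\ol\tau$-graded modules over~$\kk[\NN\ol\tau]$, while $\socct(\cM^\vee) = \socc(\cM^\vee/\tau)$ is the ordinary socle on the same ambient $\ZZ\ol\tau$-module.  Matlis-dualizing the defining right exact sequence $\bigoplus_{i \in \ol\tau} Y(-\ee_i) \to Y \to \topc Y \to 0$ of any $\ZZ\ol\tau$-module~$Y$ with finite-dimensional graded pieces delivers the standard identity $(\topc Y)^\vee = \socc(Y^\vee)$.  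Applying this with $Y = X/\tau$ and substituting the intermediate identity above yields $(\topct\cM)^\vee = \socct(\cM^\vee)$; one more application of Lemma~\ref{l:vee-vee} gives the equality claimed.

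The main obstacle will be the intermediate identity $(X/\tau){}^\vee = \cM^\vee/\tau$: one must carefully track that the $\ZZ^n$-graded Matlis dual of a $\tau$-localized module, followed by the $\ZZ\ol\tau$-restriction~$/\tau$, agrees with first restricting and then Matlis-dualizing over the smaller grading group~$\ZZ\ol\tau$.  The remaining steps reduce to routine bookkeeping with the tensor--hom adjunction over~$\kk[\NN\ol\tau]$.
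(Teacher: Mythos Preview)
Your proof is correct and rests on the same two ingredients as the paper's: Example~\ref{e:dual-of-localization} to identify $X = \hhom_{\ZZ^n}(\kk[\NN^n]_\tau,\cM)$ with $\bigl((\cM^\vee)_\tau\bigr)^\vee$, and the Hom--tensor adjunction to pass between top and socle. The paper runs these as a single chain of nine equalities, manipulating $\hhom_{\ZZ^n}(\kk[\ZZ\tau],-)$ and its dual directly over~$\ZZ^n$ before quotienting by~$\tau$ at the end; you instead pass to~$\ZZ\ol\tau$ early, isolate the intermediate identity $(X/\tau)^\vee = \cM^\vee/\tau$, and then invoke the elementary duality $(\topc Y)^\vee = \socc(Y^\vee)$ over the smaller ring. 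Your organization is arguably cleaner, since it makes explicit that the only nontrivial content beyond ordinary top--socle duality is the compatibility of Matlis duality with~$/\tau$, but the underlying computation is the same.
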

\begin{proof}
This is an exercise in the adjointness of $\hhom$ and~$\otimes$, using
Example~\ref{e:dual-of-localization} (applied to~$\cM^\vee$).  To
simplify notation in the argument, set $\cN =
\hhom\bigl(\kk[\NN^n]_\tau,\cM\bigr)$.  Then
\begin{align*}
\bigl(\socct(\cM^\vee)\bigr){}^\vee
  &=
  \hhom_{\ZZ\ol\tau}(\kk,\mvt)^\vee
\\*&=
  \hhom_{\ZZ^n}\bigl(\kk[\ZZ\tau],(\cM^\vee)_\tau\bigr){}^{\vee\!}\big/\tau
\\*&=
  \hhom_{\ZZ^n}\bigl(\kk[\ZZ\tau],\hhom(\kk[\NN^n]_\tau,\cM)^\vee\bigr){}^{\vee\!}\big/\tau
\\&=
  \hhom_{\ZZ^n}(\kk[\ZZ\tau],\cN^\vee)^{\vee\!}\big/\tau
\\&=
  \hhom_{\ZZ^n}\bigl(\kk[\ZZ\tau],\hhom(\cN,\kk)\bigr){}^{\vee\!}\big/\tau
\\&=
  \hhom_{\ZZ^n}\bigl(\kk[\ZZ\tau] \otimes_{\ZZ^n} \cN,\kk\bigr){}^{\vee\!}\big/\tau
\\&=
  \bigl((\kk[\ZZ\tau] \otimes_{\ZZ^n} \cN)^\vee\bigr){}^{\vee\!}\big/\tau
\\&=
  \bigl(\kk[\ZZ\tau] \otimes_{\ZZ^n} \cN\bigr)\big/\tau
\\*&=
  \kk \otimes_{\ZZ\ol\tau} (\cnt).\qedhere
\end{align*}
\end{proof}

Here is a prototypical example of how to use this dual-to-socle
characterization.

\begin{cor}\label{c:minimal-flat-cover}
A flat cover $F \to \cM$ is minimal as in Definition~\ref{d:flat} if
and only if\/ $\topct F \to \topct\cM$ is an isomorphism for all
faces~$\tau$ of\/~$\NN^n$.
\end{cor}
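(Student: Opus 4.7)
The approach is to deduce the statement from its Matlis dual version, Proposition~\ref{p:minimal-inj-hull}, by dualizing everything in sight. First I would apply the Matlis dual functor to the surjection $F \onto \cM$ to obtain an injection $\cM^\vee \into F^\vee$. By Remark~\ref{r:flat}, every $\ZZ^n$-finite flat module is a direct sum of indecomposable summands of the form $\kk[\bb + \ZZ\tau + \NN^n]$, and these are precisely the Matlis duals of the indecomposable injective $\ZZ^n$-modules catalogued in Remark~\ref{r:injective}. Thus $F^\vee$ is $\ZZ^n$-finite and injective, with a canonical bijection between indecomposable summands of $F$ and those of $F^\vee$. Consequently $F \onto \cM$ is a minimal flat cover in the sense of Definition~\ref{d:flat} if and only if $\cM^\vee \into F^\vee$ is a minimal injective hull in the sense of Definition~\ref{d:inj}.

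By Proposition~\ref{p:minimal-inj-hull}, the latter minimality is in turn equivalent to the assertion that $\socct\cM^\vee \to \socct F^\vee$ is an isomorphism for every face~$\tau$ of~$\NN^n$. Now I would invoke Proposition~\ref{p:top=socvee}, which identifies $(\socct\cM^\vee)^\vee$ with $\topct((\cM^\vee)^\vee)$, canonically isomorphic to $\topct\cM$ via Lemma~\ref{l:vee-vee}; and similarly for $F$ in place of~$\cM$, using that $F$ is also $\ZZ^n$-finite. Since $(-)^\vee$ is exact and faithful on $\ZZ^n$-finite modules, a map between $\ZZ^n$-finite modules is an isomorphism if and only if its Matlis dual is. Applying this to $\socct\cM^\vee \to \socct F^\vee$ and using the naturality built into Proposition~\ref{p:top=socvee} produces the morphism $\topct F \to \topct\cM$, yielding the desired equivalence.

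The main obstacle I anticipate is bookkeeping at the interface between Matlis duality and the counting of indecomposable summands: one needs that an indecomposable flat summand of~$F$ stays indecomposable after dualization and lands on an indecomposable injective, so that ``minimal number of summands'' on one side translates faithfully to the same condition on the other. This is handled by the explicit parallel classifications in Remarks~\ref{r:injective} and~\ref{r:flat}. A minor side check is that all modules in play are $\ZZ^n$-finite, hence reflexive under~$(-)^\vee$; this is automatic for~$\cM$ finitely determined, and for a finite flat cover~$F$ it follows since a finite direct sum of indecomposable flats is finitely determined (Proposition~\ref{p:determined} and the bijection with indecomposable injectives). With these verifications in hand, the corollary reduces to a two-step dualization of Proposition~\ref{p:minimal-inj-hull}.
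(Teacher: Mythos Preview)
Your proposal is correct and follows exactly the approach the paper takes: the paper's proof is the one-liner ``This is Matlis dual to Proposition~\ref{p:minimal-inj-hull}, by Proposition~\ref{p:top=socvee},'' and your argument simply unpacks that sentence with the bookkeeping about indecomposable summands and reflexivity made explicit.
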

\begin{proof}
This is Matlis dual to Proposition~\ref{p:minimal-inj-hull}, by
Proposition~\ref{p:top=socvee}.
\end{proof}

%%%%%%%%%%%%%%%%%%%%%%%%%%%%%%%%%of \texorpdfstring{$\ZZ^n$}{Zn}-modules%
\subsection{Flange presentations}\label{sub:flange}\mbox{}%%%%%%%%%%%%%%%

\begin{defn}\label{d:flange}
Fix a $\ZZ^n$-module~$\cN$.
\begin{enumerate}
\item%
A \emph{flange presentation} of~$\cN$ is a $\ZZ^n$-module morphism
$\phi: F \to E$, with image isomorphic to~$\cN$, where $F$ is flat and
$E$ is injective in the category of \mbox{$\ZZ^n$-modules}.
\item%
If $F$ and~$E$ are expressed as direct sums of indecomposables, then
$\phi$ is \emph{based}.
\item%
If $F$ and~$E$ are finite direct sums of indecomposables, then $\phi$
is \emph{finite}.
\item%
If the number of indecomposable summands of~$F$ and~$E$ are
simultaneously minimized then $\phi$ is \emph{minimal}.
\end{enumerate}
\end{defn}

\begin{remark}\label{r:portmanteau-fl}
The term \emph{flange} is a portmanteau of \emph{flat} and
\emph{injective} (i.e., ``flainj'') because a flange presentation is
the composite of a flat cover and an injective hull.
\end{remark}

Flange presentations are effective data structures via the following
notational trick.  Topologically, it highlights that births occur
along generators of the flat summands and deaths occur along
cogenerators of the injective summands, with a linear map, over the
ground field, to relate them.

\begin{defn}\label{d:monomial-matrix-fl}
Fix a based finite flange presentation $\phi:
\bigoplus_p\hspace{-.2pt} F_p = F \to E =
\nolinebreak\bigoplus_q\hspace{-.2pt} E_q$.  A \emph{monomial matrix}
for $\phi$ is an array of \emph{scalar entries}~$\phi_{qp}$ whose
columns are labeled by the indecomposable flat summands~$F_p$ and
whose rows are labeled by the indecomposable injective summands~$E_q$:
$$%
\begin{array}{ccc}
  &
  \monomialmatrix
	{F_1\\\vdots\ \\F_k\\}
	{\begin{array}{ccc}
		   E_1    & \cdots &    E_\ell   \\
		\phi_{11} & \cdots & \phi_{1\ell}\\
		\vdots    & \ddots &   \vdots    \\
		\phi_{k1} & \cdots & \phi_{k\ell}\\
	 \end{array}}
	{\\\\\\}
\\
  F_1 \oplus \dots \oplus F_k = F
  & \fillrightmap
  & E = E_1 \oplus \dots \oplus E_\ell.
\end{array}
$$
\end{defn}

The entries of the matrix $\phi_{\spot\spot}$ correspond to
homomorphisms $F_p \to E_q$.

\begin{lemma}\label{l:F->E}
If $F = \kk[\aa + \ZZ\tau' + \NN^n]$ is an indecomposable flat
$\ZZ^n$-module and $E = \kk[\bb + \ZZ\tau - \NN^n]$ is an
indecomposable injective $\ZZ^n$-module, then $\Hom_{\ZZ^n}(F, E) = 0$
unless $(\aa + \ZZ\tau' + \NN^n) \cap (\bb + \ZZ\tau - \NN^n) \neq
\nothing$, in which case $\Hom_{\ZZ^n}(F, E) = \kk$.
\end{lemma}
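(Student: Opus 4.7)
The plan is to identify $F$ and~$E$ as upset and downset modules and appeal to Lemma~\ref{l:U->D}. Write $U = \aa + \ZZ\tau' + \NN^n$ and $D = \bb + \ZZ\tau - \NN^n$, so that $F = \kk[U]$ is an upset module and $E = \kk[D]$ is a downset module in the sense of Example~\ref{e:indicator}; note that $\Hom_{\ZZ^n}$ coincides with $\Hom_\cQ$ for $\cQ = \ZZ^n$. Lemma~\ref{l:U->D}(\ref{i:U->D}) then supplies the vanishing half: a nonzero homomorphism $\kk[U] \to \kk[D]$ exists if and only if $U \cap D \neq \nothing$, so $\Hom_{\ZZ^n}(F,E) = 0$ whenever $U \cap D = \nothing$.

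For the nonvanishing half, assume $U \cap D \neq \nothing$. I will apply Lemma~\ref{l:U->D}(\ref{i:kk}), which gives $\Hom_{\ZZ^n}(\kk[U],\kk[D]) = \kk$ provided $U$ is lower-connected as a subposet of~$\ZZ^n$. Using the characteristic-subset correspondence from Example~\ref{e:coprincipal}, the subgroup $\ZZ\tau'$ consists of those vectors in~$\ZZ^n$ supported on $\chi(\tau')$, so
\[
  U = \{\cc \in \ZZ^n \mid c_i \geq a_i \text{ for all } i \notin \chi(\tau')\},
\]
with the coordinates indexed by $\chi(\tau')$ unconstrained. Given $\cc, \cc' \in U$, define $\mathbf{d} \in \ZZ^n$ coordinatewise by $d_i = a_i$ for $i \notin \chi(\tau')$ and $d_i = \min(c_i, c_i')$ for $i \in \chi(\tau')$. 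Then $\mathbf{d} \in U$, and $\mathbf{d} \preceq \cc$ and $\mathbf{d} \preceq \cc'$ hold coordinatewise by construction, so $U$ is lower-connected and Lemma~\ref{l:U->D}(\ref{i:kk}) yields $\Hom_{\ZZ^n}(F,E) = \kk$.

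The construction of~$\mathbf{d}$ is the only substantive step; everything else is bookkeeping with the face--subset correspondence. There is no real obstacle: one could alternatively verify upper-connectedness of~$D$ by the symmetric construction, with either verification sufficing for the appeal to Lemma~\ref{l:U->D}(\ref{i:kk}).
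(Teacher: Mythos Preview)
Your proof is correct and takes the same approach as the paper, which simply cites Lemma~\ref{l:U->D} as a special case. You have merely filled in the verification that $U$ is lower-connected, which the paper leaves implicit.
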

\begin{proof}
This is a special case of Lemma~\ref{l:U->D}.
\end{proof}

\begin{defn}\label{d:F<E}
In the situation of Lemma~\ref{l:F->E}, write $F \preceq E$ if their
degree sets have nonempty intersection: $(\aa + \ZZ\tau' + \NN^n) \cap
(\bb + \ZZ\tau - \NN^n) \neq \nothing$.
\end{defn}

\begin{prop}\label{p:scalars-fl}
With notation as in Definition~\ref{d:monomial-matrix-fl}, $\phi_{pq} =
0$ unless $F_p \preceq E_q$.  Conversely, if an array of scalars
$\phi_{qp} \in \kk$ with rows labeled by indecomposable flat modules
and columns labeled by indecomposable injectives has $\phi_{pq} = 0$
unless $F_q \preceq E_q$, then it represents a flange presentation.
\end{prop}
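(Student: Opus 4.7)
The plan is to prove both directions directly from Lemma~\ref{l:F->E}, which pins down $\Hom_{\ZZ^n}(F_p,E_q)$ as either zero (if $F_p \not\preceq E_q$) or one-dimensional over~$\kk$ (if $F_p \preceq E_q$). The forward direction is then essentially tautological: decompose $\phi$ via its block components $\phi_{qp}: F_p \to E_q$, each of which lies in $\Hom_{\ZZ^n}(F_p,E_q)$, and Lemma~\ref{l:F->E} forces $\phi_{qp}$ to vanish whenever $F_p \not\preceq E_q$. To identify a scalar with a homomorphism when $F_p \preceq E_q$, I would fix, once and for all, a canonical generator of $\Hom_{\ZZ^n}(F_p,E_q) \cong \kk$, namely the connected homomorphism (in the sense of Definition~\ref{d:connected-homomorphism}) that acts as multiplication by~$1$ on the copy of~$\kk$ in every degree of the overlap $(\aa + \ZZ\tau' + \NN^n) \cap (\bb + \ZZ\tau - \NN^n)$.

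For the converse, given any array of scalars $\phi_{qp} \in \kk$ satisfying the stated vanishing condition, I would assemble a homomorphism $\phi: F \to E$ blockwise: define its $(q,p)$-component to be $\phi_{qp}$ times the canonical generator from the preceding paragraph when $F_p \preceq E_q$, and to be zero otherwise (which is forced, but also matches $\Hom_{\ZZ^n}(F_p,E_q)=0$ in that case). Since a homomorphism out of a finite direct sum is determined by its components on each summand, and likewise into a finite direct sum, no compatibility constraints across distinct pairs $(p,q)$ arise; every choice of scalars produces an honest $\ZZ^n$-module homomorphism $\phi: F \to E$. By Definition~\ref{d:flange}, the map $\phi$ is then a flange presentation of its image $\image \phi$.

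No real obstacle lies in the way; the substance is pure bookkeeping atop the $\Hom$-computation already supplied by Lemma~\ref{l:F->E}. The only point worth isolating is the well-definedness of the canonical generator of $\Hom_{\ZZ^n}(F_p,E_q)$ when it is nonzero. This is unambiguous because each indecomposable flat summand $\kk[\aa + \ZZ\tau' + \NN^n]$ is cyclic as a $\ZZ^n$-graded $\kk[\NN^n]$-module and each indecomposable injective $\kk[\bb + \ZZ\tau - \NN^n]$ is cocyclic, so any $\ZZ^n$-module homomorphism between them is determined by its effect on the copy of~$\kk$ in a single degree chosen from the overlap; the choice of overlap degree affects the normalization by a nonzero scalar, which can be pinned down by declaring the generator to act as the identity on every such copy of~$\kk$.
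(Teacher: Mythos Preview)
Your argument is correct and matches the paper's own proof, which simply cites Lemma~\ref{l:F->E} and Definition~\ref{d:F<E}; you have just spelled out the bookkeeping that those citations encode. One minor slip: the indecomposable flat $\kk[\aa + \ZZ\tau' + \NN^n]$ is not cyclic over $\kk[\NN^n]$ when $\tau' \neq \{\0\}$, but this is harmless since Lemma~\ref{l:F->E} already delivers $\Hom_{\ZZ^n}(F_p,E_q) \cong \kk$ directly, so your canonical generator is well defined without appeal to cyclicity.
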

\begin{proof}
Lemma~\ref{l:F->E} and Definition~\ref{d:F<E}.
\end{proof}

The unnatural hypothesis that a persistence module be finitely
generated results in data types and structure theory that are
asymmetric regarding births as opposed to deaths.  In contrast, the
notion of flange presentation is self-dual: their duality interchanges
the roles of births~($F$) and deaths~($E$).

\begin{prop}\label{p:duality}
A $\ZZ^n$-module $\cN$ has a finite flange presentation $F \to E$ if
and only if the Matlis dual $E^\vee \to F^\vee$ is a finite flange
presentation of the Matlis dual $\cN^\vee$.
\end{prop}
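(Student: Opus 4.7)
The plan is to show that Matlis duality sets up a bijection between finite flange presentations of $\cN$ and finite flange presentations of $\cN^\vee$, essentially by applying $(-)^\vee$ term by term. The key inputs are (i)~exactness and contravariance of $(-)^\vee$; (ii)~the classification in Remark~\ref{r:injective} and Remark~\ref{r:flat}, which says that Matlis duality sends indecomposable $\ZZ^n$-finite injectives to indecomposable $\ZZ^n$-finite flats and vice versa; and (iii)~Lemma~\ref{l:vee-vee}, which says $(-)^{\vee\vee} \cong \id$ on $\ZZ^n$-finite modules.

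First I would note that if $F \to E$ is a finite flange presentation then $F$ and $E$ are $\ZZ^n$-finite: each indecomposable summand of~$F$, being a graded translate of a localization $\kk[\NN^n]_\tau$, has each graded piece equal to~$0$ or~$\kk$, and dually for summands of~$E$. Hence the image~$\cN$ is also $\ZZ^n$-finite, so Matlis duality behaves well on everything in sight.

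Next I would factor $\phi: F \to E$ as $F \onto \cN \into E$ and apply the exact contravariant functor $(-)^\vee$ to obtain $E^\vee \onto \cN^\vee \into F^\vee$, whose composite $\phi^\vee: E^\vee \to F^\vee$ has image canonically isomorphic to~$\cN^\vee$. By Remarks~\ref{r:injective} and~\ref{r:flat}, the Matlis dual of an indecomposable injective $\kk[\bb + \ZZ\tau - \NN^n]$ is an indecomposable flat $\kk[-\bb + \ZZ\tau + \NN^n]$, and conversely. Therefore $E^\vee$ is a finite direct sum of indecomposable flats and $F^\vee$ is a finite direct sum of indecomposable injectives, with the same numbers of summands as $E$ and~$F$ respectively. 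This establishes the forward implication and shows that finite flange presentations map under $(-)^\vee$ to finite flange presentations.

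For the converse, suppose $\psi: E^\vee \to F^\vee$ is a finite flange presentation of~$\cN^\vee$. Applying the forward implication just proved to~$\psi$ produces a finite flange presentation $\psi^\vee: (F^\vee)^\vee \to (E^\vee)^\vee$ of~$(\cN^\vee)^\vee$. By Lemma~\ref{l:vee-vee} applied to the $\ZZ^n$-finite modules $F$, $E$, and~$\cN$, the double duals are canonically $F$, $E$, and~$\cN$, so $\psi^\vee$ is a finite flange presentation of~$\cN$, as required. The only mild obstacle is bookkeeping the exactness and the correspondence of indecomposable summands; no deeper input is needed beyond the cited results.
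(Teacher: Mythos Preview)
Your proof is correct and follows essentially the same approach as the paper: both invoke exactness and contravariance of Matlis duality, the interchange of indecomposable flats and injectives under~$(-)^\vee$, and the double-dual isomorphism from Lemma~\ref{l:vee-vee} to handle the converse. Your version is simply more explicit about each step than the paper's one-sentence sketch.
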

\begin{proof}
Matlis duality is an exact, contravariant functor on~$\ZZ^n$-modules
that takes the subcategory of finitely determined $\ZZ^n$-modules to
itself (these properties are immediate from the definitions),
interchanges flat and injective objects therein, and has the property
that the natural homomorphism $(\cN^\vee)^\vee \to \cN$ is an
isomorphism for finitely determined~$\cN$; see
\cite[Section~1.2]{alexdual} for a discussion of these properties.
\end{proof}

%%%%%%%%%%%%%%%%%%%%%%%%%%%%%%%%%%%%%%%%%%%%%%%%%%%%%%%%%%%%%%%%%%%%%%%%%
\subsection{Syzygy theorem for \texorpdfstring{$\ZZ^n$}{Zn}-modules}\label{sub:Zsyzygy}

\begin{thm}\label{t:finitely-determined}
A $\ZZ^n$-module is finitely determined if and only if it admits one,
and hence all, of the following:
\begin{enumerate}
\item\label{i:flange}%
a finite flange presentation; or
\item\label{i:flat-presentation}%
a finite flat presentation; or
\item\label{i:injective-copresentation}%
a finite injective copresentation; or
\item\label{i:flat-res}%
a finite flat resolution; or
\item\label{i:injective-res}%
a finite injective resolution; or
\item\label{i:minimal}%
a minimal one of any of the above.
\end{enumerate}
Any minimal one of these objects is unique up to noncanonical
isomorphism, and the resolutions have length at most~$n$.
\end{thm}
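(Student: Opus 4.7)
The plan is to bootstrap from Proposition~\ref{p:determined}, which already establishes the equivalence of finitely determined with admitting a finite (or finite minimal) injective resolution, together with uniqueness of the minimal one and the length bound~$n$. Everything else will follow by Matlis duality plus the fact, used in that proposition's proof, that the finitely determined $\ZZ^n$-modules form an abelian subcategory containing all indecomposable flats and all indecomposable injectives.

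First I would handle the injective side. Proposition~\ref{p:determined} gives (finitely determined) $\iff$ \ref{i:injective-res} $\iff$ ``minimal \ref{i:injective-res}''. The copresentation item \ref{i:injective-copresentation} is trivially implied by \ref{i:injective-res}; conversely, a finite injective copresentation $\cN \into E^0 \to E^1$ realizes $\cN$ as a kernel in the abelian category of finitely determined modules, so $\cN$ is finitely determined. Next I would pass to the flat side via Matlis duality, exactly as in Proposition~\ref{p:duality}: the functor $(-)^\vee$ is exact, interchanges indecomposable flats and indecomposable injectives (Remarks~\ref{r:injective} and~\ref{r:flat}), preserves the finitely determined subcategory, and is involutive there. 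Applying $(-)^\vee$ converts a finite (minimal) injective resolution of~$\cN^\vee$ into a finite (minimal) flat resolution of~$\cN$, and vice versa; this yields \ref{i:flat-res} $\iff$ (finitely determined) together with uniqueness and length bound~$n$ for the minimal object. The flat presentation item~\ref{i:flat-presentation} is treated exactly like~\ref{i:injective-copresentation}: it is implied by~\ref{i:flat-res}, and conversely a finite flat presentation $F_1 \to F_0 \onto \cN$ exhibits $\cN$ as a cokernel in the abelian category of finitely determined modules.

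For the flange item~\ref{i:flange}, given $\cN$ finitely determined I would take a finite flat cover $F \onto \cN$ (from the flat side above) and a finite injective hull $\cN \into E$ (from the injective side) and form the composite $\phi\colon F \to E$; its image is~$\cN$, so $\phi$ is a finite flange presentation. Choosing both the flat cover and the injective hull minimal gives a minimal flange presentation, where minimality of~$F$ is detected by $\topct$ (Corollary~\ref{c:minimal-flat-cover}) and minimality of~$E$ by $\socct$ (Proposition~\ref{p:minimal-inj-hull})---functors of~$\cN$ alone, so the two minimizations do not interfere. Conversely, a finite flange presentation realizes $\cN$ as the image of a morphism between finitely determined modules, hence as a subquotient in the abelian category, hence finitely determined.

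Uniqueness of each minimal object and the length bound would then be collected as follows: Proposition~\ref{p:determined} supplies uniqueness and length bound~$n$ for minimal injective resolutions; Matlis duality transports both to minimal flat resolutions; minimal injective copresentations and minimal flat presentations are truncations of these and inherit uniqueness; and a minimal flange presentation is determined up to isomorphism by its minimal flat cover $F \onto \image\phi = \cN$ and its minimal injective hull $\cN \into E$, each unique up to isomorphism. The main obstacle I anticipate is of a bookkeeping rather than conceptual nature, namely keeping the two independent minimality criteria (top vs.\ socle) properly separated in the flange step so that the simultaneous minimization argument is airtight; the mildly subtle point, flagged by Matlis duality, is that tops of~$\cN$ and socles of~$\cN$ do not see each other, which is precisely what permits minimal $F$ and minimal $E$ to be chosen independently and then spliced.
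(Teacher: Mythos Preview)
Your proposal is correct and follows essentially the same route as the paper: bootstrap from Proposition~\ref{p:determined}, transport to the flat side via Matlis duality, and use that finitely determined modules form an abelian category containing all indecomposable flats and injectives to close the remaining implications. The paper's proof is terser---it simply observes that possessing both a minimal flat and a minimal injective resolution dominates items~\ref{i:flange}--\ref{i:injective-copresentation}, then handles the converse in one stroke via the abelian-category argument---whereas you spell out the flange construction and the independence of the top and socle minimality criteria explicitly; but the underlying logic is the same.
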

\begin{proof}
The hard work is done by Proposition~\ref{p:determined}.  It implies
that $\cN$ is finitely determined $\iff \cN^\vee$ has a minimal
injective resolution $\iff \cN$ has a minimal flat resolution of
length at most~$n$, since the Matlis dual of any finitely determined
module~$\cN$ is finitely determined.  Having both a minimal injective
resolution and a minimal flat resolution is stronger than having any
of items~\ref{i:flange}--\ref{i:injective-copresentation}, minimal or
otherwise, so it suffices to show that $\cN$ is finitely determined if
$\cN$ has any of
items~\ref{i:flange}--\ref{i:injective-copresentation}.  This follows,
using that the category of finitely determined modules~is~abelian as
in the proof of Proposition~\ref{p:determined}, from the fact that
every indecomposable injective or flat $\ZZ^n$-module is finitely
determined.
\end{proof}

\begin{remark}\label{r:finitely-determined}
Conditions~\ref{i:flange}--\ref{i:minimal} in
Theorem~\ref{t:finitely-determined} remain equivalent for
$\RR^n$-modules, with the standard positive cone $\RR^n_+$, assuming
that the finite flat and injective modules in question are finite
direct sums of localizations of~$\RR^n$ along faces and their Matlis
duals.  (The equivalence, including minimality, is a consequence of
the generator and cogenerator theory over real polyhedral groups,
particularly Theorem~\ref{t:presentations-minimal}.)  The equivalent
conditions do not characterize $\RR^n$-modules that are pulled back
under convex projection from arbitrary modules over an interval
in~$\RR^n$, though, because all sorts of infinite things can can
happen inside of a box, such as having generators~along~a~curve.
\end{remark}

%%%%%%%%%%%%%%%%%%%%%%%%%%%%%%%%%%%%%%%%%%%%%%%%%%%%%%%%%%%%%%%%%%%%%%%%%
\section{Fringe presentations and syzygy theorem for poset modules}\label{s:fringe}
%%%%%%%%%%%%%%%%%%%%%%%%%%%%%%%%%%%%%%%%%%%%%%%%%%%%%%%%%%%%%%%%%%%%%%%%%

For modules over arbitrary posets, the conditions of flatness and
injectivity still make sense, but it is too restrictive to ask for
resolutions or presentations that are finite direct sums of
indecomposables in such generality, as demonstrated by the formidable
infinitude of such objects in the case of $\RR^n$-modules like fly
wing modules.  The idea here, both for theoretical and computational
purposes, is to allow arbitrary upset and downset modules instead of
only flat and injective ones.  The data structures in the title of
this paper refer, in practical terms, to monomial matrices constructed
from fringe presentations and indicator resolutions.

%%%%%%%%%%%%%%%%%%%%%%%%%%%%%%%%%%%%%%%%%%%%%%%%%%%%%%%%%%%%%%%%%%%%%%%%%
\subsection{Fringe presentations}\label{sub:fringe}\mbox{}%%%%%%%%%%%%%%%

\vbox{
\begin{defn}\label{d:fringe}
Fix any poset~$\cQ$.  A \emph{fringe presentation} of a
$\cQ$-module~$\cM$ is
\begin{itemize}
\item%
a direct sum~$F$ of upset modules~$\kk[U]$,
\item%
a direct sum~$E$ of downset modules~$\kk[D]$, and
\item%
a homomorphism $F \to E$ of $\cQ$-modules with
\begin{itemize}
  \item%
  image isomorphic to~$\cM$ and
  \item%
  components $\kk[U] \to \kk[D]$ that are connected
  (Definition~\ref{d:connected-homomorphism}).
\end{itemize}
\end{itemize}
A~fringe presentation~is
\begin{enumerate}
\item%
\emph{finite} if the direct sums are finite;
\item\label{i:subordinate-fringe}%
\emph{subordinate} to an encoding of~$\cM$
(Definition~\ref{d:encoding}) if each summand $\kk[U]$ of~$F$
and~$\kk[D]$ of~$E$ is constant on every fiber of the encoding poset
morphism; and\nopagebreak
\item\label{i:semialgebraic-fringe}%
\emph{semialgebraic} if $\cQ$ is a partially ordered real vector space
of finite dimension and the fringe presentation is subordinate to a
semialgebraic encoding
\hspace{-.3pt}(\hspace{-.3pt}Definition~\hspace{-2pt}\ref{d:alg-finite}).
\end{enumerate}
\end{defn}
}

\begin{lemma}\label{l:constant}
An indicator module is constant on every fiber of a poset morphism
$\pi: \cQ \to \cP$ if and only if it is the pullback along~$\pi$ of an
indicator $\cP$-module.
\end{lemma}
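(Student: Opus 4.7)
The plan is to prove both directions of the biconditional by direct unpacking of the pullback $\pi^*$ and the definition of indicator module.

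For the \emph{if} direction, I will take $\cH = \kk[V]$ with $V \subseteq \cP$ an upset (the downset case is dual). From the defining formula $(\pi^*\cH)_q = \cH_{\pi(q)}$, I observe that $\pi^*\cH = \kk[\pi^{-1}(V)]$ as $\cQ$-graded vector spaces; the order-preservation of $\pi$ makes $\pi^{-1}(V)$ an upset in $\cQ$, so the pullback is itself an indicator $\cQ$-module. Constancy on each fiber $\pi^{-1}(p)$ is immediate, since the value $\cH_{\pi(q)}$ depends only on $\pi(q) = p$.

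For the \emph{only if} direction, suppose $\kk[U]$ is an indicator module (say, with $U \subseteq \cQ$ an upset; the downset case is symmetric) that is constant on every fiber of $\pi$. Constancy forces each fiber to lie entirely in $U$ or entirely outside it, so $U$ is a union of fibers; taking $V = \pi(U) \subseteq \cP$ then gives $U = \pi^{-1}(V)$. It therefore suffices to show that $V$ is an upset in $\cP$, since $\kk[V]$ is then an indicator $\cP$-module and $\pi^*\kk[V] = \kk[U]$ by the preceding calculation.

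The main obstacle is verifying that $V$ is an upset. Given $p \in V$ and $p' \succeq p$ in $\cP$, I will pick $q \in U$ with $\pi(q) = p$, lift the relation $p \preceq p'$ to a comparison $q \preceq q'$ in $\cQ$ with $\pi(q') = p'$, and conclude $q' \in U$ from the upset property of $U$, whence $p' = \pi(q') \in V$. Producing the lift of $p \preceq p'$ is the crucial step; it is built into the encodings arising from Theorem~\ref{t:isotypic}, where the partial order on the uptight poset $\cP_\cM$ is defined exactly so that comparisons in $\cP_\cM$ reflect comparisons between representatives in $\cQ$. The downset case runs symmetrically with downward lifts.
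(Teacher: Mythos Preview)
Your argument mirrors the paper's: the ``if'' direction is immediate, and for ``only if'' you set $V = \pi(U)$ and aim to show that $V$ is an upset. The paper simply asserts this without justification; you are more careful and correctly isolate the crux as the need to lift a relation $p \preceq p'$ in~$\cP$ to a relation $q \preceq q'$ in~$\cQ$ with $\pi(q) = p$ and $\pi(q') = p'$.

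Your caution is justified: the lemma as stated is false for arbitrary poset morphisms. Take $\cQ = \{a,b,c\}$ with the discrete order, $\cP = \{1 \prec 2 \prec 3\}$, and $\pi$ the bijection $a\mapsto 1$, $b\mapsto 2$, $c\mapsto 3$. Then $\{b\}$ is both an upset and a downset in~$\cQ$ and is trivially constant on (singleton) fibers, but $\pi(\{b\}) = \{2\}$ is neither an upset nor a downset in~$\cP$, so $\kk[\{b\}]$ is not the pullback of any indicator $\cP$-module. The missing hypothesis is precisely the one you identify: every relation in~$\cP$ should lift to one in~$\cQ$, as happens for the uptight posets of Proposition~\ref{p:posetQuotient}. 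In the paper's applications (notably Theorem~\ref{t:syzygy} and Propositions~\ref{p:pullback-monomial-matrix}--\ref{p:syzygy-monomial-matrix}) only the ``if'' direction is invoked for general~$\pi$, so the gap is harmless there; your restricted argument covers exactly the settings in which the ``only if'' direction is actually needed.
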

\begin{proof}
The ``if'' direction is by definition.  For the ``only if'' direction,
observe that if $U \subseteq \cQ$ is an upset that is a union of
fibers of~$\cP$, then the image $\pi(U) \subseteq \cP$ is an upset
whose preimage equals~$U$.  The same argument works for downsets.
\end{proof}

\begin{example}[Pullbacks of flat and injective modules]\label{e:pullback}
An indecomposable flat $\ZZ^n$-module $\kk[\bb + \ZZ\tau + \NN^n]$ is
a downset module for the poset~$\ZZ^n$.  Pulling back to any poset
under a poset map to~$\ZZ^n$ therefore yields a downset module for the
given poset.  The dual statement holds for any indecomposable
injective module $\kk[\bb + \ZZ\tau - \NN^n]$: its pullback is a
downset module.
\end{example}

\begin{defn}\label{d:monomial-matrix-fr}
Fix a finite fringe presentation $\phi: \bigoplus_p \kk[U_p] = F \to E
= \bigoplus_q \kk[D_q]$.  A \emph{monomial matrix} for $\phi$ is an
array of \emph{scalar entries}~$\phi_{pq}$ whose columns are labeled
by the \emph{birth upsets}~$U_p$ and whose rows are labeled by the
\emph{death downsets}~$D_q$:
$$%
\begin{array}{ccc}
  &
  \monomialmatrix
	{U_1\\\vdots\ \\U_k\\}
	{\begin{array}{ccc}
		   D_1    & \cdots &    D_\ell   \\
		\phi_{11} & \cdots & \phi_{1\ell}\\
		\vdots    & \ddots &   \vdots    \\
		\phi_{k1} & \cdots & \phi_{k\ell}\\
	 \end{array}}
	{\\\\\\}
\\
  \kk[U_1] \oplus \dots \oplus \kk[U_k] = F
  & \fillrightmap
  & E = \kk[D_1] \oplus \dots \oplus \kk[D_\ell].
\end{array}
$$
\end{defn}

\begin{prop}\label{p:scalars}
With notation as in Definition~\ref{d:monomial-matrix-fr}, $\phi_{pq} =
0$ unless $U_p \preceq D_q$.  Conversely, if an array of scalars
$\phi_{pq} \in \kk$ with rows labeled by upsets and columns labeled by
downsets has $\phi_{pq} = 0$ unless $U_p \preceq D_q$, then it
represents a fringe presentation.
\end{prop}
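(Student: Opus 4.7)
The plan is to read off both directions from Lemma~\ref{l:U->D} together with the definition of connected homomorphism (Definition~\ref{d:connected-homomorphism}), since a monomial matrix is by construction a record of the scalars attached to connected component maps $\kk[U_p] \to \kk[D_q]$.

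For the forward direction, each component $\kk[U_p] \to \kk[D_q]$ of a fringe presentation is required to be connected by Definition~\ref{d:fringe}. A connected homomorphism acts as multiplication by a single scalar $\lambda$ on the copy of $\kk$ in every degree of $U_p \cap D_q$, and this scalar is exactly the matrix entry $\phi_{pq}$. If $U_p \cap D_q = \nothing$, that is if $U_p \not\preceq D_q$, then by Lemma~\ref{l:U->D}\ref{i:U->D} the only homomorphism $\kk[U_p] \to \kk[D_q]$ is zero, forcing $\phi_{pq} = 0$.

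For the converse, given any array of scalars supported on pairs with $U_p \preceq D_q$, I would define a candidate component map $\phi_{pq}: \kk[U_p] \to \kk[D_q]$ in each degree $a \in \cQ$ by multiplication by $\phi_{pq}$ when $a \in U_p \cap D_q$ and by the zero map otherwise (the latter case is forced: one of $\kk[U_p]_a$ or $\kk[D_q]_a$ vanishes). The only thing to check is that this collection commutes with the structure maps, i.e.\ defines a $\cQ$-module homomorphism. This is a small case analysis driven by the upset/downset properties: for $a \preceq b$, the structure map of $\kk[U_p]$ is the identity when $a \in U_p$ (since then $b \in U_p$) and zero otherwise, while the structure map of $\kk[D_q]$ is the identity when $b \in D_q$ (since then $a \in D_q$) and zero otherwise. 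In every sub-case the two composites agree, either both equal to multiplication by $\phi_{pq}$ or both equal to zero. Since each component is connected by construction, direct-summing over $p,q$ produces a homomorphism $F \to E$ of the required form for a fringe presentation of its image.

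There is no real obstacle here; the only mild subtlety is resisting the temptation to invoke Lemma~\ref{l:U->D}\ref{i:kk}, which would require a connectedness hypothesis on the upsets or downsets that is not assumed. The correct tool is Definition~\ref{d:connected-homomorphism}, which already restricts attention to the one-dimensional space of connected maps regardless of how disconnected $U_p \cap D_q$ may be as a subposet of~$\cQ$.
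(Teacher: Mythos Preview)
Your proof is correct and follows exactly the paper's approach: the paper's own proof is simply the citation ``Lemma~\ref{l:U->D}.\ref{i:U->D} and Definition~\ref{d:connected-homomorphism},'' and you have unpacked precisely those two ingredients. Your remark about avoiding Lemma~\ref{l:U->D}.\ref{i:kk} is apt and shows you understand why the connectedness condition on component maps, rather than any connectedness of the poset, is what makes the scalar well defined.
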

\begin{proof}
Lemma~\ref{l:U->D}.\ref{i:U->D} and
Definition~\ref{d:connected-homomorphism}.
\end{proof}

Pullbacks have particularly transparent monomial matrix
interpretations.

\begin{prop}\label{p:pullback-monomial-matrix}
Fix a poset~$\cQ$ and an encoding of a $\cQ$-module~$\cM$ via a poset
morphism $\pi: \cQ \to \cP$ and $\cP$-module~$\cH$.  Any monomial
matrix for a fringe presentation of~$\cH$ pulls back to a monomial
matrix for a fringe presentation subordinate to the encoding by
replacing the row labels $U_1,\dots,U_k$ and column labels
$D_1,\dots,D_\ell$ with their preimages, namely $\pi^{-1}(U_1),
\dots, \pi^{-1}(U_k)$ and $\pi^{-1}(D_1), \dots, \pi^{-1}(D_\ell)$.
\hfill$\square$
\end{prop}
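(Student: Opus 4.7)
The plan is to verify that pullback along $\pi$ is a well-behaved functor from $\cP$-modules to $\cQ$-modules that respects all the structure appearing in Definition~\ref{d:fringe}, and then simply read off the monomial matrix from its action on a fringe presentation of $\cH$.

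First I would observe the elementary fact that $\pi^{-1}$ carries upsets of $\cP$ to upsets of $\cQ$ and downsets to downsets (since $\pi$ is a poset morphism, $q \preceq q'$ in $\cQ$ implies $\pi(q) \preceq \pi(q')$ in $\cP$), and that for any upset $U \subseteq \cP$ one has $\pi^*\kk[U] = \kk[\pi^{-1}(U)]$ as $\cQ$-modules, with the analogous statement for downsets. Both identities are immediate from Definition~\ref{d:encoding}: the degree $q$ component of the pullback equals $\kk$ or~$0$ according to whether $\pi(q)$ is in $U$. In particular, each pulled-back indicator summand is constant on the fibers of~$\pi$, so by Lemma~\ref{l:constant} the resulting fringe presentation is subordinate to the encoding, as required by Definition~\ref{d:fringe}.\ref{i:subordinate-fringe}.

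Next I would note that pullback is exact (it is just regrading along a function of indexing sets) and commutes with arbitrary direct sums, so a fringe presentation $\phi: F = \bigoplus_p \kk[U_p] \to E = \bigoplus_q \kk[D_q]$ of $\cH$ pulls back to a $\cQ$-module homomorphism $\pi^*\phi: \bigoplus_p \kk[\pi^{-1}(U_p)] \to \bigoplus_q \kk[\pi^{-1}(D_q)]$ whose image is $\pi^*(\image \phi) = \pi^*\cH \cong \cM$. The component $\kk[U_p] \to \kk[D_q]$ of $\phi$ is connected (Definition~\ref{d:connected-homomorphism}), acting as multiplication by some scalar $\phi_{pq} \in \kk$ on each copy of $\kk$ in degrees from $U_p \cap D_q$. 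Since $\pi^{-1}(U_p) \cap \pi^{-1}(D_q) = \pi^{-1}(U_p \cap D_q)$, the pulled-back component $\kk[\pi^{-1}(U_p)] \to \kk[\pi^{-1}(D_q)]$ likewise acts as multiplication by $\phi_{pq}$ on each corresponding copy of $\kk$, and hence is again connected. Thus the same scalar array $(\phi_{pq})$, now with row labels $\pi^{-1}(U_p)$ and column labels $\pi^{-1}(D_q)$, is a valid monomial matrix by Proposition~\ref{p:scalars}.

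There is no real obstacle here; the only subtle point is verifying that connectedness is preserved under pullback, which rests on the equality $\pi^{-1}(U_p \cap D_q) = \pi^{-1}(U_p) \cap \pi^{-1}(D_q)$ together with the fact that each fiber of $\pi$ contributes the \emph{same} scalar $\phi_{pq}$ already dictated by the original component over the corresponding point of $\cP$. Everything else is formal bookkeeping about pullback commuting with kernels, images, and direct sums.
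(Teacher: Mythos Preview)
Your proposal is correct and is exactly the argument the paper has in mind; the paper itself gives no proof (the $\square$ immediately follows the statement), treating the result as an immediate consequence of the functoriality of pullback and Lemma~\ref{l:constant}. You have simply spelled out the details the paper leaves implicit.
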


\begin{remark}\label{r:portmanteau-fr}
The term ``fringe'' is a portmanteau of ``free'' and ``injective''
(i.e., ``frinj''), the point being that it combines aspects of free
and injective resolutions while also conveying that the data structure
captures trailing topological features at both the birth and death
ends.
\end{remark}

%%%%%%%%%%%%%%%%%%%%%%%%%%%%%%%%%%%%%%%%%%%%%%%%%%%%%%%%%%%%%%%%%%%%%%%%%
\subsection{Indicator resolutions}\label{sub:indicator}\mbox{}%%%%%%%%%%%

\begin{defn}\label{d:resolutions}
Fix any poset~$\cQ$ and a $\cQ$-module~$\cM$.
\begin{enumerate}
\item%
An \emph{upset resolution} of~$\cM$ is a complex~$F_\spot$ of
$\cQ$-modules, each a direct sum of upset submodules of~$\kk[\cQ]$,
whose differential $F_i \to F_{i-1}$ decreases homological degrees and
has only one nonzero homology $H_0(F_\spot) \cong \cM$.
\item%
A \emph{downset resolution} of~$\cM$ is a complex~$E^\spot$ of
$\cQ$-modules, each a direct sum of downset quotient modules
of~$\kk[\cQ]$, whose differential $E^i \to E^{i+1}$ increases
cohomological degrees and has only one nonzero homology
$H^0(E^\spot) \cong\nolinebreak \cM$.
\end{enumerate}\setcounter{separated}{\value{enumi}}%saves \enumi
An upset or downset resolution is called an \emph{indicator
resolution} if the up- or down- nature is unspecified.  The
\emph{length} of an indicator resolution is the largest
(co)homological degree in which the complex is nonzero.  An indicator
resolution is
\begin{enumerate}\setcounter{enumi}{\value{separated}}%restores \enumi
\item%
\emph{finite} if the number of indicator module summands is finite,
\item%
\emph{subordinate} to an encoding of~$\cM$
(Definition~\ref{d:encoding}) if each indicator summand is constant on
every fiber of the encoding poset morphism, and
\item%
\emph{semialgebraic} if $\cQ$ is a partially ordered real vector space
of finite dimension and the resolution is subordinate to a
semialgebraic encoding (Definition~\ref{d:alg-finite}).
\end{enumerate}
\end{defn}

\begin{defn}\label{d:resolution-monomial-matrix}
Monomial matrices for indicator resolutions are defined similarly to
how they are for fringe presentations in
Definition~\ref{d:monomial-matrix-fr}, except that for the
cohomological case the row and column labels are source and target
downsets, respectively, while in the homological case the row and
column labels are target and source upsets, respectively:
$$%
\begin{array}{ccc}
  &
  \monomialmatrix
	{\vdots\ \\D^i_p\\\vdots\ }
	{\begin{array}{ccc}
		 \cdots & D^{i+1}_q & \cdots \\
		        &           &        \\
		        & \phi_{pq} &        \\
		        &           &        \\
	 \end{array}}
	{\\\\\\}
\\
    E^i
  & \fillrightmap
  & E^{i+1}
\end{array}
\qquad\text{and}\qquad
\begin{array}{ccc}
  &
  \monomialmatrix
	{\vdots\ \\U_i^p\\\vdots\ }
	{\begin{array}{ccc}
		 \cdots & U_{i+1}^q & \cdots \\
		        &           &        \\
		        & \phi_{pq} &        \\
		        &           &        \\
	 \end{array}}
	{\\\\\\}
\\
    F_i
  & \filleftmap
  & F_{i+1}.
\end{array}
$$
(Note the switch of source and target from cohomological to
homological, so the map goes from right to left in the homological
case, with decreasing homological indices.)
\end{defn}

As in Proposition~\ref{p:pullback-monomial-matrix}, pullbacks have
transparent of monomial matrix interpretation.

\begin{prop}\label{p:syzygy-monomial-matrix}
Fix a poset~$\cQ$ and an encoding of a $\cQ$-module~$\cM$ via a poset
morphism $\pi: \cQ \to \cP$ and $\cP$-module~$\cH$.  Monomial matrices
for any indicator resolution of~$\cH$ pull back to monomial matrices
for an indicator resolution of~$\cM$ subordinate to the encoding by
replacing the row and column labels with their preimages under~$\pi$.
\hfill$\square$
\end{prop}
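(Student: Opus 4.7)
The plan is to mimic the argument for Proposition~\ref{p:pullback-monomial-matrix} and reduce the statement to three ingredients: exactness of the pullback functor $\pi^*$ along the encoding morphism, good behavior of $\pi^*$ on indicator modules, and invariance of scalar entries in monomial matrices under pullback. Throughout I write $\pi^*$ for the pullback, so $\pi^*\cH = \bigoplus_{q\in\cQ} \cH_{\pi(q)}$ as in Definition~\ref{d:encoding}.

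First I would observe that $\pi^*$ is an exact functor from $\cP$-modules to $\cQ$-modules: it acts on each graded piece simply by relabeling, $(\pi^*\cH)_q = \cH_{\pi(q)}$, so any termwise exact sequence of $\cP$-modules remains exact after pulling back. Next I would check that $\pi^*$ sends indicator modules to indicator modules of the expected shape. If $U\subseteq\cP$ is an upset, then $\pi^{-1}(U)\subseteq\cQ$ is an upset because $\pi$ is a poset morphism; moreover $\pi^*\kk[U] = \kk[\pi^{-1}(U)]$ directly from the definition of $\pi^*$. The dual statement for downsets is identical. In particular, each such pulled-back indicator module is constant on the fibers of~$\pi$ by construction, so it satisfies the subordination condition of Definition~\ref{d:resolutions}.

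With these two ingredients in hand, I would apply $\pi^*$ termwise to any indicator resolution $F_\spot \to \cH$ or $\cH \to E^\spot$ of~$\cH$. Exactness of $\pi^*$ gives $H_0(\pi^*F_\spot) \cong \pi^*H_0(F_\spot) \cong \pi^*\cH \cong \cM$ in the upset case, and dually $H^0(\pi^*E^\spot) \cong \cM$ in the downset case, so the pulled-back complex is an indicator resolution of~$\cM$. Subordination holds summand by summand by the previous paragraph, and the resolution remains finite (resp.\ semialgebraic, when that makes sense) since the number of summands, and their algebraic descriptions, are unchanged.

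Finally, for the monomial-matrix statement itself, I would verify that the scalar entries transfer verbatim. By Lemma~\ref{l:U->D}\eqref{i:U->D} and the connectedness discussion preceding Definition~\ref{d:connected-homomorphism}, a map $\kk[U]\to\kk[D]$ between indicator $\cP$-modules with $U\cap D\ne\nothing$ is determined by a single scalar $\phi\in\kk$ (acting as multiplication by~$\phi$ on each one-dimensional degree of $U\cap D$). Applying $\pi^*$ to such a map yields the homomorphism $\kk[\pi^{-1}(U)]\to\kk[\pi^{-1}(D)]$ that acts as multiplication by the same scalar~$\phi$ on each degree of $\pi^{-1}(U\cap D) = \pi^{-1}(U)\cap\pi^{-1}(D)$, again connected. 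Hence the same scalar array $\phi_{pq}$ represents the pulled-back differential with the relabeled rows and columns, which is precisely the monomial matrix asserted in the statement. The only mild subtlety I anticipate is bookkeeping around the cohomological versus homological conventions of Definition~\ref{d:resolution-monomial-matrix}, but since $\pi^*$ is exact and covariant, the direction of arrows and placement of row/column labels as sources or targets carries over unchanged.
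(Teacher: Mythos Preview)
Your argument is correct and supplies the routine verification that the paper omits (the proposition is stated with a~$\square$ and no proof). One small slip: in your final paragraph you treat the component maps as going from an upset module to a downset module, but in an indicator resolution the differentials run between modules of the \emph{same} type---$\kk[U_{i+1}^q]\to\kk[U_i^p]$ in the homological case and $\kk[D^i_p]\to\kk[D^{i+1}_q]$ in the cohomological case, per Definition~\ref{d:resolution-monomial-matrix}. The fix is immediate: the canonical map (inclusion for upsets with $U_{i+1}^q\subseteq U_i^p$, or quotient for downsets with $D^i_p\supseteq D^{i+1}_q$) times a scalar pulls back under~$\pi^*$ to the corresponding canonical map between the preimage indicators times the same scalar, so the array $(\phi_{pq})$ survives intact with only the labels replaced.
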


\begin{defn}\label{d:indicator-(co)presentation}
Fix any poset~$\cQ$ and a $\cQ$-module~$\cM$.
\begin{enumerate}
\item%
An \emph{upset presentation} of~$\cM$ is an expression of~$\cM$ as the
cokernel of a homomorphism $F_1 \to F_0$ each of which is a direct sum
of upset modules.
\item%
A \emph{downset copresentation} of~$\cM$ is an expression of~$\cM$ as
the kernel of a homomorphism $E^0 \to E^1$ each of which is a direct
sum of downset modules.
\end{enumerate}\setcounter{separated}{\value{enumi}}%saves \enumi
These \emph{indicator presentations} are \emph{finite}, or
\emph{subordinate} to an encoding of~$\cM$, or \emph{semialgebraic} as
in Definition~\ref{d:resolutions}.
\end{defn}

\begin{remark}\label{r:augmentation}
It is tempting to think that a fringe presentation is nothing more
than the concatenation of the augmentation map of an upset resolution
(that is, the surjection at the end) with the augmentation map of a
downset resolution (that is, the injection at the beginning), but
there is no guarantee that the components $F_i \to E_j$ of the
homomorphism thus produced are connected
(Definition~\ref{d:connected-homomorphism}).
\end{remark}

%%%%%%%%%%%%%%%%%%%%%%%%%%%%%%%%%%%%%%%%%%%%%%%%%%%%%%%%%%%%%%%%%%%%%%%%%
\subsection{Syzygy theorem for modules over posets}\label{sub:syzygy}%%%%

\begin{prop}\label{p:pushforward}
For any inclusion $\iota: \cP \to \cZ$ of posets and
$\cP$-module~$\cH$ there is a $\cZ$-module $\iota_*\cH$, the
\emph{pushforward to~$\cZ$}, whose restriction to~$\iota(\cP)$
is~$\cH$ and is universally repelling: $\iota_*\cH$ has a canonical
map to every $\cZ$-module whose restriction to~$\iota(\cP)$ is~$\cH$.
\end{prop}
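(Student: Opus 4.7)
The plan is to build $\iota_*\cH$ as a pointwise colimit, so that it is, in abstract terms, the left Kan extension of~$\cH$ along the inclusion~$\iota$. The property of being \emph{universally repelling}---initial in the category of $\cZ$-modules whose restriction to~$\iota(\cP)$ is~$\cH$---is exactly the universal property of a left Kan extension along a fully faithful functor, which an inclusion of posets is. Concretely, for each $z \in \cZ$ I would set
\[
  (\iota_*\cH)_z = \dirlim \cH|_{\cP_{\preceq z}},
  \qquad \cP_{\preceq z} = \{p \in \cP : \iota(p) \preceq z\},
\]
where the colimit is taken in $\kk$-vector spaces, with $\cP_{\preceq z}$ regarded as a subposet of~$\cP$ in the induced order (equivalently, the order inherited from~$\cZ$ via~$\iota$).

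For $z \preceq z'$ in~$\cZ$, the containment $\cP_{\preceq z} \subseteq \cP_{\preceq z'}$ of diagrams induces a canonical linear map $(\iota_*\cH)_z \to (\iota_*\cH)_{z'}$ from the universal property of colimits, and composability of structure maps as required by Definition~\ref{d:poset-module} follows formally. To check restriction, fix $p_0 \in \cP$ and note that $\cP_{\preceq \iota(p_0)} = \{p \in \cP : p \preceq p_0\}$---here the order-reflecting property of the inclusion is used---so this diagram has $p_0$ as its maximum element. A colimit over a diagram with a terminal object equals the value there, so $(\iota_*\cH)_{\iota(p_0)} = \cH_{p_0}$, naturally in~$p_0$, yielding the claimed identification on~$\iota(\cP)$.

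For the universal property, let $\cN$ be a $\cZ$-module with $\cN|_{\iota(\cP)} = \cH$. For each $z \in \cZ$ and each $p \in \cP_{\preceq z}$, the structure map of~$\cN$ associated with $\iota(p) \preceq z$ gives $\cH_p = \cN_{\iota(p)} \to \cN_z$, and these maps assemble into a cocone over $\cH|_{\cP_{\preceq z}}$ because the structure maps of~$\cN$ restrict to those of~$\cH$ on~$\iota(\cP)$. The universal property of colimits then produces a unique linear map $(\iota_*\cH)_z \to \cN_z$, and naturality in~$z$ promotes these into the desired canonical $\cZ$-module homomorphism $\iota_*\cH \to \cN$. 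The only real subtlety is the reading of ``inclusion of posets'' as an order-reflecting embedding, so that $\cP_{\preceq \iota(p_0)}$ has $p_0$ as a top element; granting this, nothing beyond standard colimit bookkeeping is required, and the construction presents no serious obstacle.
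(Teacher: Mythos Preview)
Your proof is correct and takes essentially the same approach as the paper: both construct $(\iota_*\cH)_z$ as the colimit of $\cH$ over $\{p \in \cP : \iota(p) \preceq z\}$ and invoke the universal property of colimits. You have simply spelled out in full the details the paper compresses into two sentences (and your identification of this as a left Kan extension is exactly what the paper notes in Remark~\ref{r:kan-extension}).
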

\begin{proof}
At $z \in \cZ$ the pushforward $\iota_*\cH$ places the colimit
$\dirlim\cH_{\preceq z}$ of the diagram of vector spaces indexed by
the elements of~$\cP$ whose images precede~$z$.  The universal
pro\-perty of colimits implies that $\iota_*\cH$ is a $\cZ$-module
with the desired universal property.
\end{proof}

\begin{remark}\label{r:kan-extension}
With certain perspectives as in Remark~\ref{r:curry}, the pushforward
is a \emph{left Kan extension} \cite[Remark~4.2.9]{curry-thesis}.
\end{remark}

\begin{defn}\label{d:dominates}
An encoding of a $\cQ$-module \emph{dominates} a fringe presentation,
indicator resolution, or indicator presentation if any of these is
subordinate to the encoding.
\end{defn}

\begin{thm}\label{t:syzygy}
A module~$\cM$ over a poset~$\cQ$ is finitely encoded if and only if
it admits one, and hence all, of the following:
\begin{enumerate}
\item\label{i:fringe}%
a finite fringe presentation; or

\item\label{i:upset-presentation}%
a finite upset presentation; or

\item\label{i:downset-copresentation}%
a finite downset copresentation; or

\item\label{i:upset-res}%
a finite upset resolution; or

\item\label{i:downset-res}%
a finite downset resolution; or

\item\label{i:subordinate}%
any of the above subordinate to any given finite encoding; or

\item\label{i:dominating}%
a finite encoding dominating any given one of
items~\ref{i:fringe}--\ref{i:downset-res}.
\end{enumerate}
The equivalences remain true over any partially ordered real vector
space of finite~dimen\-sion if ``finitely encoded'' and all
occurrences of~\mbox{``finite''} are replaced by ``semialgebraic''.
\end{thm}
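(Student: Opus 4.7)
The plan is to reduce to the finitely determined $\ZZ^n$-case (Theorem~\ref{t:finitely-determined}) in one direction and to apply the uptight poset construction (Theorem~\ref{t:isotypic}) in the other, dealing with the subordination and semialgebraic refinements simultaneously along the way.

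For the forward direction, suppose $\cM$ is finitely encoded via a poset morphism $\pi : \cQ \to \cP$ and a $\cP$-module~$\cH$. Since $\cP$ is finite, I first embed it as a subposet of a box $[\0,\bb] \subseteq \ZZ^n$ for some~$n$ (e.g., $n = |\cP|$, ordering by coordinates indexed by~$\cP$). Let $\iota : \cP \hookrightarrow \ZZ^n$ denote this embedding and $\pi' = \iota \circ \pi$ the composite $\cQ \to \ZZ^n$. Push $\cH$ forward under~$\iota$ via Proposition~\ref{p:pushforward} to obtain a $\ZZ^n$-module $\wt\cH = \iota_*\cH$; extension by the convex-projection pullback from $[\0,\bb]$ to $\ZZ^n$ (Example~\ref{e:convex-projection}) makes~$\wt\cH$ \emph{finitely determined}. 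By construction $\pi'^*\wt\cH \cong \pi^*\cH \cong \cM$, since the colimit in the pushforward stabilizes at points of the image of~$\iota$. Theorem~\ref{t:finitely-determined} supplies a finite flange presentation, a finite flat presentation, a finite injective copresentation, and finite flat and injective resolutions of~$\wt\cH$, together with minimal versions. Pulling each of these back along~$\pi'$ yields the required finite fringe presentation, finite upset presentation, finite downset copresentation, and finite upset and downset resolutions of~$\cM$: pullback is exact, pullbacks of flat (resp.\ injective) $\ZZ^n$-modules are upset (resp.\ downset) $\cQ$-modules by Example~\ref{e:pullback}, and the pulled-back maps are connected (Definition~\ref{d:connected-homomorphism}) because the $\ZZ^n$-maps are scalar multiples on intersections by Lemma~\ref{l:F->E}. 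Monomial matrices pull back termwise by Propositions~\ref{p:pullback-monomial-matrix} and~\ref{p:syzygy-monomial-matrix}. Since every fiber of~$\pi'$ is a union of fibers of~$\pi$, and $\iota$ is injective so fibers of~$\pi'$ coincide with fibers of~$\pi$, every pulled-back summand is constant on fibers of~$\pi$; hence each of the resulting structures is subordinate to the original encoding, proving both item~\ref{i:subordinate} and items~\ref{i:fringe}--\ref{i:downset-res}.

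For the reverse direction, suppose $\cM$ admits any one of items~\ref{i:fringe}--\ref{i:downset-res}; in each case only finitely many upsets $U_1,\dots,U_k$ and downsets $D_1,\dots,D_\ell$ appear. Let $\Upsilon$ be the family consisting of the $U_i$ together with the complements $\cQ \setminus D_j$, and form the corresponding uptight poset $\cP_\Upsilon$ of Definition~\ref{d:uptight} and Proposition~\ref{p:posetQuotient}; this is a finite poset. On every uptight region each $\kk[U_i]$ and each $\kk[D_j]$ is constant, and the connectedness of the components of $F \to E$ (or of the differentials in an indicator resolution, which have connected component maps by Remark~\ref{r:connected-homomorphism} and Proposition~\ref{p:scalars}) forces the image or (co)kernel to be constant on uptight regions as well. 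Thus $\cM$ is pulled back from a $\cP_\Upsilon$-module~$\cH$, which is finite-dimensional in each degree because the presentation is finite, yielding a finite encoding. This proves item~\ref{i:dominating} at the same time: the uptight encoding dominates the given finite structure by construction.

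For the semialgebraic version, assume $\cQ$ is a finite-dimensional partially ordered real vector space. In the forward direction, the finite encoding is semialgebraic by hypothesis, and every pulled-back upset $\pi'^{-1}(\bb + \NN^n)$ is a finite union of fibers of~$\pi$, each semialgebraic, hence semialgebraic; so the fringe presentation and indicator resolutions produced by the above construction are semialgebraic. In the reverse direction, finite Boolean combinations of semialgebraic sets remain semialgebraic, so the uptight regions above are semialgebraic, and the final clause of Theorem~\ref{t:isotypic} upgrades the uptight encoding to a semialgebraic one. The main obstacle in the whole argument is keeping the bookkeeping straight at the reduction step: I must verify that pushing $\cH$ forward to $[\0,\bb]$ and then extending by convex projection really does produce a finitely determined $\ZZ^n$-module whose pullback to~$\cQ$ is canonically~$\cM$, and that the pulled-back flat and injective resolutions satisfy the connectedness condition needed to qualify as upset and downset resolutions (respectively, fringe presentations) in the sense of Definitions~\ref{d:resolutions} and~\ref{d:fringe}. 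Once that compatibility is established, the remaining implications follow routinely from the preceding propositions.
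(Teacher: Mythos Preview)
Your approach is essentially the same as the paper's: embed the encoding poset in~$\ZZ^n$, push forward, apply Theorem~\ref{t:finitely-determined}, and pull back for the forward direction; build the uptight poset from the upsets (and complements of downsets) appearing in the given presentation for the reverse direction.

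There is one misstep in your reverse direction. You assert that the differentials in an indicator resolution ``have connected component maps by Remark~\ref{r:connected-homomorphism} and Proposition~\ref{p:scalars}''. Neither reference says this. Proposition~\ref{p:scalars} concerns fringe presentations, where connectedness of the components $\kk[U]\to\kk[D]$ is \emph{part of the definition} (Definition~\ref{d:fringe}); Remark~\ref{r:connected-homomorphism} merely characterizes connectedness as factoring through~$\kk[\cQ]$. Indicator resolutions (Definition~\ref{d:resolutions}) and indicator presentations (Definition~\ref{d:indicator-(co)presentation}) impose no connectedness on their differentials, and over a general poset a component $\kk[U']\to\kk[U]$ need not act by a single scalar---see Example~\ref{e:disconnected-homomorphism} for the analogous phenomenon, and Remark~\ref{r:augmentation} for the explicit warning that concatenating augmentations need not produce connected components. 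Over strongly connected posets (e.g.\ real or discrete polyhedral groups) the claim does hold, by Lemma~\ref{l:U->D}.\ref{i:U}--\ref{i:D}, which is what you should cite in that setting; but your statement is made for arbitrary~$\cQ$. The paper avoids this by not asserting connectedness at all in the reverse direction: it simply declares ``the proof is similar'' to the fringe case and builds the uptight poset from the indicator summands.
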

\begin{proof}
The plan is first to show that a finitely encoded $\cQ$-module~$\cM$
has finite upset and downset resolutions and (co)presentations, as
well as a finite fringe presentation.  Next a dominating finite
encoding is constructed from a given presentation or resolution.  The
semialgebraic case uses the same arguments, mutatis mutandis, and will
not be mentioned further.

Let $\cM$ be a $\cQ$-module finitely encoded by a poset morphism $\pi:
\cQ \to \cP$ and $\cP$-module~$\cH$.  The finite poset~$\cP$ has order
dimension~$n$ for some positive integer~$n$; as such $\cP$ has an
embedding $\iota: \cP \into \ZZ^n$.  The pushforward $\iota_*\cH$ is
finitely determined because it is pulled from any box
containing~$\iota(\cP)$.  The desired presentation or resolution is
pulled back to~$\cQ$ (via $\iota \circ \pi: \cQ \to \ZZ^n$) from the
corresponding flange, flat, or injective presentation or resolution
of~$\iota_*\cH$ afforded by Theorem~\ref{t:finitely-determined}; these
pullbacks to~$\cQ$ are finite indicator resolutions of~$\cM$
subordinate to~$\pi$ by Example~\ref{e:pullback} and
Lemma~\ref{l:constant}.  In the fringe case, the component
homomorphisms are connected because, by Lemma~\ref{l:U->D}.\ref{i:kk},
the components of flange presentations are automatically connected.

To produce a finite encoding~$\pi$ given a fringe presentation~$\phi$,
let $\cP$ be the poset of uptight regions
(Proposition~\ref{p:posetQuotient}) for the set~$\Upsilon(\phi)$ of
upsets comprising the birth upsets and the complements of the death
downsets of~$\phi$.  The proof is similar if indicator presentations
or resolutions are given: each upset in the set $\Upsilon$ is either
the degree set of one of the summands (in the case of upset
presentation or resolution) or is the complement of the degree set of
one of the summands (in the case of downset copresentation or
resolution).
\end{proof}

\begin{example}\label{e:wing-fringe}
The persistent homology of a fly wing
(Example~\ref{e:fly-wing-filtration}) admits a finite isotypic
subdivision of $\RR^2$ into semialgebraic sets.  Such a module
therefore has a finite fringe presentation subordinate to a
semialgebraic encoding by Theorems~\ref{t:isotypic}
and~\ref{t:syzygy}.
\end{example}

\begin{remark}
Comparing Theorems~\ref{t:syzygy} and~\ref{t:finitely-determined},
what happened to minimality?  It is not clear in what generality
minimality can be characterized.  Much of this paper can be seen as a
case study for posets arising from abelian groups that are either
finitely generated and free (the discrete polyhedral case) or real
vector spaces of finite dimension (the real polyhedral case); see in
particular Theorem~\ref{t:presentations-minimal}.  The answer is
rather more nuanced in the real case, obscuring how minimality might
generalize beyond discrete or real polyhedral groups.  For discrete
polyhedral groups, the result is not far beyond what is in the
literature, although it highlights that minimality ought to be phrased
in terms of isomorphisms on tops and socles rather than by counting
summands.  (That point that is especially pertinent for primary
decomposition; see~Section~\ref{sub:RRprim-decomp}.)
\end{remark}

\begin{remark}\label{r:pullback-twice}
In the situation of the proof of Theorem~\ref{t:syzygy}, composing two
applications of Proposition~\ref{p:pullback-monomial-matrix}---one for
the encoding $\pi: \cQ \to \cP$ and one for the embedding $\iota: \cP
\into \ZZ^n$---yields a monomial matrix for a fringe presentation
of~$\cM$ directly from a monomial matrix for a flange presentation.
\end{remark}

\begin{remark}\label{r:RRn-mod}
Lesnick and Wright consider \mbox{$\RR^n$-modules}
\cite[Section~2]{lesnick-wright2015} in finitely presented cases.  As
they indicate,
% by their Proposition~2.9,
homological algebra of such $\RR^n$-modules is no different than
finitely generated \mbox{$\ZZ^n$-modules}.  This can be seen by finite
encoding: any finite poset in~$\RR^n$ is embeddable in~$\ZZ^n$,
because a product of finite chains is all that~is~needed.
\end{remark}

%%%%%%%%%%%%%%%%%%%%%%%%%%%%%%%%%%%%%%%%%%%%%%%%%%%%%%%%%%%%%%%%%%%%%%%%%
\section{Socles and cogenerators}\label{s:socle}%%%%%%%%%%%%%%%%%%%%%%%%%
%%%%%%%%%%%%%%%%%%%%%%%%%%%%%%%%%%%\texorpdfstring{$\RR^n$}{Rn}-modules}%

The main result in this section is Definition~\ref{d:soct}, which
introduces the notions of cogenerator functor and socle along a face
with a given nadir.  Its concomitant foundations include ways to
decompose the cogenerator functors into continuous and discrete parts
(Proposition~\ref{p:either-order}), interactions with localization
(Proposition~\ref{p:local-vs-global}), left-exactness
(Proposition~\ref{p:left-exact-tau}), and preservation of finitely
encoded or semialgebraic conditions (Theorem~\ref{t:soct-encoding}),
along with the crucial calculation of socles in the simplest case,
namely the indicator function of a single face
(Example~\ref{e:soct-k[tau]}).  The theory is built step by step,
starting with local geometry of downsets near their boundaries
(Section~\ref{sub:tangent}) and the functorial view of the geometry
(Section~\ref{sub:upper-boundary}), and then proceeding to cogenerator
functors and socles with increasing levels of complexity
(Sections~\mbox{\ref{sub:socc}--\ref{sub:soc-along}}).  Each stage
includes preliminary versions of the foundations whose take-away
versions are those in Section~\ref{sub:soc-along}, although the
foundations in earlier stages are developed in increasing generality,
over arbitrary posets instead of real polyhedral groups, for
\mbox{example}.

%%%%%%%%%%%%%%%%%%%%%%%%%%%%%%%%%%%%%%%%%%%%%%%%%%%%%%%%%%%%%%%%%%%%%%%%%
\subsection{Tangent cones of downsets}\label{sub:tangent}\mbox{}%%%%%%%%%

\begin{defn}\label{d:tangent-cone}
The \emph{tangent cone} $T_\aa D$ of a downset $D$ in a real
polyhedral group~$\cQ$ (Definition~\ref{d:polyhedral}) at a point $\aa
\in \cQ$ is the set of vectors $\vv \in -\cQ_+$ such that $\aa +
\epsilon\vv \in D$ for all sufficiently small (hence all)~$\epsilon > 0$.
\end{defn}

\begin{remark}\label{r:tangent-cone}
Since the real number $\epsilon$ in Definition~\ref{d:tangent-cone} is
strictly positive, the vector $\vv = \0$ lies in $T_\aa D$ if and only
if $\aa$ itself lies in~$D$, and in that case $T_\aa D = -\cQ_+$.
\end{remark}

\begin{example}\label{e:tangent-cone}
The tangent cone defined here is not the tangent cone of~$D$ as a
stratified space, because the cone here only considers vectors
in~$-\cQ_+$.  A specific simple example to see the difference is the
closed half-plane $D$ beneath the line $y = -x$ in~$\RR^2$, where the
usual tangent cone at any point along the boundary line is the
half-plane, whereas $T_\aa D = -\RR^2_+$.  Furthermore, $T_\aa D$ can
be nonempty for a point~$\aa$ in the boundary of~$D$ even if $\aa$
does not lie in~$D$ itself.  For an example of that, take $D^\circ$ to
be the interior of~$D$; then $T_\aa D^\circ = -\RR^2_+ \minus \{\0\}$
for any $\aa$ on the boundary line.
\end{example}

The most important conclusion concerning tangent cones at points of
downsets, Proposition~\ref{p:shape}, says that such cones are certain
unions of relative interiors of faces.  Some definitions and
preliminary results are required.

\begin{defn}\label{d:cocomplex}
Fix a real polyhedral group $\cQ$.
\begin{enumerate}
\item%
For any face $\sigma$ of the positive cone~$\cQ_+$, write $\sigma^\circ$
for the relative interior of~$\sigma$.
\item%
For any set~$\nabla$ of faces of~$\cQ_+$, write $\cQ_\nabla =
\bigcup_{\sigma \in \nabla} \sigma^\circ$, the \emph{cone of
shape~$\nabla$}.
\item%
A \emph{cocomplex} in~$\cQ_+$ is an upset in the face poset $\cfq$
of~$\cQ_+$, where $\sigma \preceq \tau$ if~$\sigma \subseteq \tau$.
\end{enumerate}
\end{defn}

\begin{example}\label{e:nabla}
The cocomplex $\nabs = \{\text{faces }\sigma'\text{ of }\cQ_+ \mid
\sigma' \supseteq \sigma\}$ is the \emph{open star} of the
face~$\sigma$.  It determines the cone $\qns$ of shape~$\nabs$, which
plays an important role.
\end{example}

\begin{remark}\label{r:quantum}
The next proposition is the reason for specializing this section to
real polyhedral groups instead of arbitrary polyhedral partially
ordered groups, where limits might not be meaningful.  For example,
although limits make formal sense in the integer lattice~$\ZZ^n$ with
the usual discrete topology, it is impossible for a sequence of points
in the relative interior of a face to converge to the origin of the
face.  This quantum separation has genuine finiteness consequences for
the algebra of $\ZZ^n$-modules that usually do not hold for
$\RR^n$-modules.
\end{remark}

\begin{prop}\label{p:<<}
If $\{\aa_k\}_{k\in\NN}$ is any sequence converging to~$\aa$, then	
$\bigcup_{k=0}^\infty (\aa_k - \cQ_+) \supseteq \aa - \qpc$.  If the
sequence is contained in $\aa - \sigma^\circ$, then the union equals
$\aa - \qns$.
\end{prop}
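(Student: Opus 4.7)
The plan is to translate both containments into questions about the vectors $\vv_k := \aa - \aa_k$: the membership $\aa - \vv \in \aa_k - \cQ_+$ is equivalent to $\vv - \vv_k \in \cQ_+$. The task reduces to deciding, for each fixed~$\vv$, whether $\vv - \vv_k$ eventually (or ever) lies in~$\cQ_+$, knowing that $\vv_k \to \0$ in both parts and that in the second part $\vv_k \in \sigma^\circ$ as well.

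For the first claim take $\vv \in \qpc$. Since $\cQ_+$ generates~$\cQ$ as a group by Definition~\ref{d:pogroup}, its real-linear span is all of~$\cQ$, so the relative interior~$\qpc$ coincides with the topological interior. Thus some Euclidean ball about~$\vv$ sits in~$\cQ_+$, and for all large~$k$ the vector $\vv - \vv_k$ falls in this ball and hence in~$\cQ_+$. The inclusion $\aa - \qns \subseteq \bigcup_k(\aa_k - \cQ_+)$ in the second claim has the same flavor: if $\vv$ lies in the relative interior of a face $\sigma' \supseteq \sigma$, then both $\vv$ and $\vv_k \in \sigma \subseteq \sigma'$ lie in the real span of~$\sigma'$, and $\vv_k \to \0$ forces $\vv - \vv_k$ eventually into $\sigma' \subseteq \cQ_+$.

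The reverse inclusion $\bigcup_k(\aa_k - \cQ_+) \subseteq \aa - \qns$ actually holds for every~$k$, without any convergence. Writing $\vv = \vv_k + \cc$ with $\cc \in \cQ_+$, let $\tau$ be the unique face of~$\cQ_+$ with $\cc \in \tau^\circ$ (allowing $\tau = \{\0\}$ when $\cc = \0$), and let $\sigma \vee \tau$ denote the smallest face of~$\cQ_+$ containing both~$\sigma$ and~$\tau$. The key polyhedral fact is that $\sigma^\circ + \tau^\circ \subseteq (\sigma \vee \tau)^\circ$: if $\rho$ is the unique face with $\vv \in \rho^\circ$, then the face property from Definition~\ref{d:face} (that $\cQ_+ \minus \rho$ is an ideal of~$\cQ_+$) forces both summands $\vv_k, \cc \in \rho$, whence $\rho \supseteq \sigma \vee \tau$; conversely $\sigma \vee \tau$ already contains $\vv_k + \cc = \vv$, so $\rho \subseteq \sigma \vee \tau$. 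Since $\sigma \vee \tau \supseteq \sigma$ belongs to~$\nabs$, this gives $\vv \in \qns$. The one step not entirely formal is this face-lattice fact, and I expect it to be the main obstacle only in the sense that it deserves a one-line citation or a brief in-line verification.
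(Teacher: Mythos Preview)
Your proof is correct. The organization differs from the paper's in instructive ways.

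For the first containment, the paper argues via the dual description of~$\cQ_+$ as an intersection of half-spaces: for $\bb \in \aa - \qpc$, each defining linear functional~$\ell$ satisfies $\ell(\aa) > \ell(\bb)$ strictly, and $\ell(\aa_k) \to \ell(\aa)$ forces $\ell(\aa_k) \geq \ell(\bb)$ eventually. Your open-ball argument using that $\qpc$ is the topological interior is an equivalent primal version.

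For the second claim, the paper factors the identity $\sigma^\circ + \cQ_+ = \qns$ out as a separate lemma (Lemma~\ref{l:<<}) and then argues indirectly: the union is contained in $\aa - \sigma^\circ - \cQ_+$ trivially, contains $\aa - \sigma^\circ$ by the first claim applied inside~$\RR\sigma$, and is itself a downset, so it equals $\aa - \sigma^\circ - \cQ_+$. You instead prove both inclusions directly, and your reverse inclusion---the verification that $\sigma^\circ + \tau^\circ \subseteq (\sigma\vee\tau)^\circ$ via the ideal property of $\cQ_+ \minus \rho$---is precisely an inline proof of one direction of Lemma~\ref{l:<<}. So your instinct that this fact ``deserves a one-line citation'' is exactly right: the paper isolates it as its own lemma because it is reused several times later (e.g.\ in Theorem~\ref{t:downset=union} and Remark~\ref{r:nabt}). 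The paper's downset trick is a touch slicker; your direct approach is more self-contained.
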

\begin{proof}
For each point $\bb \in \aa - \qpc$, every linear function $\ell:
\RR^n \to \RR$ that is nonnegative on~$\cQ_+$ eventually takes values
on the sequence that are bigger than~$\ell(\bb)$; thus $\bb$ lies in
the union as claimed.  When the sequence is contained in $\aa -
\sigma^\circ$, the union is contained in $\aa - \sigma^\circ - \cQ_+$
by hypothesis, but the union contains $\aa - \sigma^\circ$ by the
first claim applied with $\sigma$ in place of~$\cQ_+$.  The union
therefore equals $\aa - \sigma^\circ - \cQ_+$ because it is a downset.
The next lemma completes the proof.
\end{proof}

\begin{lemma}\label{l:<<}
If $\sigma$ is any face of the positive cone~$\cQ_+$ then $\sigma^\circ
+ \cQ_+ = \qns$.
\end{lemma}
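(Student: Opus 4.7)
The plan is to prove both inclusions of $\sigma^\circ + \cQ_+ = \qns$ directly, using the basic face-decomposition property of the pointed polyhedral cone~$\cQ_+$: if $x + y$ lies in a face $F$ of $\cQ_+$ with $x, y \in \cQ_+$, then $x, y \in F$.

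For the containment $\sigma^\circ + \cQ_+ \subseteq \qns$, fix $v \in \sigma^\circ$ and $w \in \cQ_+$, and let $\tau$ be the unique face of $\cQ_+$ whose relative interior contains $v + w$. Since $\tau$ is a face and $v + w$ decomposes as $v + w$ with $v, w \in \cQ_+$, the face-decomposition property forces $v \in \tau$. But $v \in \sigma^\circ$, and $\sigma$ is the unique face with $v$ in its relative interior, so any face containing $v$---in particular $\tau$---must contain $\sigma$. Hence $\tau \supseteq \sigma$, which places $v + w \in \tau^\circ \subseteq \qns$ by the definition of $\qns$ as the union of relative interiors of faces in the open star $\nabs$.

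For the reverse containment $\qns \subseteq \sigma^\circ + \cQ_+$, fix $p \in \tau^\circ$ with $\tau \supseteq \sigma$ and pick any $v \in \sigma^\circ$. Because $v \in \sigma \subseteq \tau$ and $p$ lies in the relative interior of $\tau$, pushing~$p$ a short distance in the direction~$-v$ stays in~$\tau$: explicitly, $p - \epsilon v \in \tau$ for all sufficiently small $\epsilon > 0$. Fixing such an~$\epsilon$, the point $\epsilon v$ remains in $\sigma^\circ$ (relative interiors of faces of a cone are closed under positive scaling) while $p - \epsilon v \in \tau \subseteq \cQ_+$, so the equation $p = \epsilon v + (p - \epsilon v)$ exhibits $p$ as an element of $\sigma^\circ + \cQ_+$.

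The main potential obstacle is verifying the face-decomposition property, but this is the standard consequence of the fact that every face of~$\cQ_+$ is exposed by a linear functional that is nonnegative on~$\cQ_+$: if~$\ell$ exposes $\tau$ and $\ell(v + w) = 0$ while $\ell(v), \ell(w) \geq 0$, then $\ell(v) = \ell(w) = 0$, placing both~$v$ and~$w$ in~$\tau$. Everything else reduces to elementary manipulations with relative interiors of faces, so no further technical machinery beyond the polyhedral structure of~$\cQ_+$ is required.
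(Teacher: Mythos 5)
Your proof is correct and follows essentially the same route as the paper's: the forward inclusion rests on the fact that a supporting functional vanishing on $v+w$ must vanish on $v$ (your ``face-decomposition property'' is literally the paper's definition of a face, Definition~\ref{d:face}, so it needs no separate verification), and the reverse inclusion subtracts a small multiple of a vector in $\sigma^\circ$ and checks that the difference stays in the cone. The only cosmetic difference is that the paper verifies $\bb - \epsilon\ff \in \cQ_+$ by checking the finitely many defining inequalities, whereas you land $p - \epsilon v$ in $\tau^\circ$ itself by relative openness of the face's interior; both come to the same thing.
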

\begin{proof}
Fix $\ff + \bb \in \sigma^\circ + \cQ_+$.  If $\ell(\ff + \bb) = 0$
for some linear function $\ell: \RR^n \to \RR$ that is nonnegative
on~$\cQ_+$, then $\ell(\ff) = 0$, too.  Therefore the support face
of~\mbox{$\ff + \bb$} (the smallest face in which it lies)
contains~$\sigma$.  On the other hand, suppose $\bb$ lies interior to
some face of~$\cQ_+$ that contains~$\sigma$.  Then pick any $\ff \in
\sigma^\circ$.  If~$\ell(\bb) = 0$ then also $\ell(\ff) = 0$, because
the support face of~$\bb$ contains~$\sigma$.  But if $\ell(\bb) > 0$,
then $\ell(\bb) > \ell(\epsilon \ff)$ for any sufficiently small
positive~$\epsilon$.  As~$\cQ_+$ is an intersection of only finitely many
closed half-spaces, a single $\epsilon$ works for all relevant~$\ell$, and
then $\bb = \epsilon \ff + (\bb - \epsilon \ff) \in \sigma^\circ + \cQ_+$.
\end{proof}

For $\cQ_+ = \RR_+^n$ the following is essentially
\cite[Lemma~5.1]{madden-mcguire2015}.

\begin{cor}\label{c:<<}
If $D \subseteq \cQ$ is a downset in a real polyhedral group, then
$\aa$ lies in the closure $\oD$ if and only if $D$ contains the
interior $\aa - \qpc$ of the negative cone with~origin~$\aa$.
\end{cor}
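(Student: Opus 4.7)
The plan is to deduce this directly from Proposition~\ref{p:<<}, plus a short observation that the open positive cone $\qpc$ is nonempty and invariant under positive scaling in a real polyhedral group.

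For the forward direction, I would start from $\aa \in \oD$ and pick any sequence $\{\aa_k\}_{k \in \NN} \subseteq D$ with $\aa_k \to \aa$. Since $D$ is a downset, $\aa_k - \cQ_+ \subseteq D$ for every~$k$, so
\[
  \aa - \qpc \,\subseteq\, \bigcup_{k=0}^\infty (\aa_k - \cQ_+) \,\subseteq\, D,
\]
where the first containment is exactly Proposition~\ref{p:<<}. That handles one implication with essentially no work beyond citing the proposition.

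For the reverse direction, I would first note that because $\cQ \cong \RR^n$ is generated as a group by~$\cQ_+$ (Definition~\ref{d:pogroup}), the cone $\cQ_+$ spans $\cQ$ as a real vector space and hence is full-dimensional; so $\qpc$ is nonempty. Pick any $\vv \in \qpc$. Since $\qpc$ is the interior of a cone, $\frac{1}{k}\vv \in \qpc$ for every $k \geq 1$, and therefore $\aa - \frac{1}{k}\vv \in \aa - \qpc \subseteq D$. As $\frac{1}{k}\vv \to \0$ in $\RR^n$, the sequence $\aa - \frac{1}{k}\vv$ converges to $\aa$, witnessing $\aa \in \oD$.

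There is no real obstacle here; the only mildly delicate point is making sure the open cone $\qpc$ is nonempty, which is why the statement is restricted to real polyhedral groups (cf.\ Remark~\ref{r:quantum}, where the analogous statement over a discrete group would fail because one cannot converge to $\0$ through interior points of a face). The bulk of the geometric content has already been absorbed into Proposition~\ref{p:<<} and Lemma~\ref{l:<<}, so the corollary amounts to unpacking what those say when the sequence is chosen to lie in~$D$.
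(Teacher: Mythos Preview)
Your proof is correct and matches the paper's approach: the corollary is stated there without proof as an immediate consequence of Proposition~\ref{p:<<}, and your two directions unpack exactly that dependence. The only extra ingredient you supply---that $\qpc$ is nonempty because $\cQ_+$ generates~$\cQ$---is the right justification for the reverse implication and is implicit in the paper's setup.
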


\begin{prop}\label{p:shape}
If $\aa \in \oD$ for a downset $D$, then $T_\aa D = -\cQ_\nabla$ is
the negative cone of shape~$\nabla$ for some cocomplex~$\nabla$
in~$\cQ_+$.  In this case $\nabla = \nda$ is the \emph{shape} of~$D$
at\/~$\aa$.
\end{prop}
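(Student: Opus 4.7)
The plan is to define the candidate cocomplex explicitly and verify that it does the job by feeding a single tangent direction into Proposition~\ref{p:<<}. Set
\[
  \nabla = \{\sigma \in \cfq \mid -\sigma^\circ \cap T_\aa D \neq \nothing\}.
\]
Since the relative interiors $\{-\sigma^\circ\}_{\sigma \in \cfq}$ partition $-\cQ_+ \supseteq T_\aa D$, establishing the proposition reduces to two claims: (a)~$-\sigma^\circ \subseteq T_\aa D$ for every $\sigma \in \nabla$, which then gives $T_\aa D = \bigcup_{\sigma \in \nabla} -\sigma^\circ = -\cQ_\nabla$; and (b)~$\nabla$ is an upset in $\cfq$.

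Both claims will follow from a single key intermediate step: for each $\sigma \in \nabla$, the full shape $-\qns$ (with $\qns = \sigma^\circ + \cQ_+$ by Lemma~\ref{l:<<}) sits inside $T_\aa D$. To prove this, I would pick any witness $\vv \in -\sigma^\circ \cap T_\aa D$ and form the sequence $\aa_k = \aa + \tfrac{1}{k}\vv$. By the definition of $T_\aa D$, for all sufficiently large $k$ the point $\aa_k$ lies in~$D$; by construction $\aa_k \to \aa$, with each $\aa_k \in \aa - \sigma^\circ$. Since $D$ is a downset, $\aa_k - \cQ_+ \subseteq D$ for every such $k$, so the second assertion of Proposition~\ref{p:<<} identifies
\[
  \aa - \qns \;=\; \bigcup_k (\aa_k - \cQ_+) \;\subseteq\; D.
\]
Because $\qns$ is a cone (closed under multiplication by positive scalars), this translates back into a tangent-cone statement: for any $\vv' \in -\qns$ and any $\epsilon > 0$, the scaled vector $\epsilon\vv'$ still lies in $-\qns$, so $\aa + \epsilon\vv' \in \aa - \qns \subseteq D$, whence $\vv' \in T_\aa D$.

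The inclusion $-\qns \subseteq T_\aa D$ now handles both claims at once. For (a): $-\sigma^\circ \subseteq -\qns \subseteq T_\aa D$. For (b): by Lemma~\ref{l:<<}, $\qns$ is the union of $\tau^\circ$ over all faces $\tau \supseteq \sigma$, so for every such $\tau$ we have $-\tau^\circ \subseteq -\qns \subseteq T_\aa D$, i.e.\ $\tau \in \nabla$.

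No step looks genuinely difficult: the geometric content is already packaged into Proposition~\ref{p:<<} and Lemma~\ref{l:<<}, and the partition of $-\cQ_+$ into relative interiors of faces gives the bookkeeping for free. The one conceptual move to highlight is the trick of passing from a single direction $\vv \in T_\aa D$ to the sequence $\aa + \tfrac{1}{k}\vv$: this upgrades the pointwise small-$\epsilon$ hypothesis defining $T_\aa D$ into a statement about an entire downset $\aa - \qns$, after which homogeneity of $\qns$ pushes the conclusion back out to every direction in $-\qns$ simultaneously. This is also the only spot in the argument where the real-polyhedral hypothesis is used essentially, via Proposition~\ref{p:<<}, consistent with Remark~\ref{r:quantum}.
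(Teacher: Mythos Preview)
Your proof is correct and takes a genuinely different route from the paper's argument. The paper proceeds by induction on the dimension~$n$ of~$\cQ$: it handles $n=1$ by enumerating the three possibilities for~$a\in\RR$, and for the inductive step it slices $D$ by the affine span $\aa+\RR\sigma$ of each facet~$\sigma$ of~$\cQ_+$, invoking the inductive hypothesis on those slices to cover every face of~$\cQ_+$ except the open cone~$\qpc$, which is handled separately by Corollary~\ref{c:<<}.

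Your argument avoids induction entirely by defining $\nabla$ explicitly and then using a single application of the second assertion of Proposition~\ref{p:<<} to a sequence $\aa+\tfrac{1}{k}\vv$ along one witness direction~$\vv\in-\sigma^\circ\cap T_\aa D$. This immediately produces the containment $\aa-\qns\subseteq D$, and the scale-invariance of~$\qns$ then feeds that back into~$T_\aa D$. The upshot is that you extract both conclusions~(a) and~(b) from the one inclusion $-\qns\subseteq T_\aa D$, which is efficient. Your approach exploits Proposition~\ref{p:<<} more fully (the paper's proof of Proposition~\ref{p:shape} only uses Corollary~\ref{c:<<}, not the stronger $\sigma$-specific assertion), while the paper's inductive proof makes the dimensional structure of the face poset more visible. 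Both are short; yours is arguably more direct.
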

\begin{proof}
The result is true when $n = 1$ because there are only three
possibilities for $a \in \RR$: either $a \in D$, in which case $T_a D
= -\RR_+ = \cQ_\nabla$ for $\nabla = \cfq$
(Remark~\ref{r:tangent-cone}); or $a \not\in D$ but $a$ lies in the
closure of~$D$, in which case $T_a D = \cQ_\nabla$ for $\nabla =
\{\qpc\} \subseteq \cfq$; or $a$ is separated from~$D$ by a nonzero
distance, in which case $T_a D = \cQ_{\{\}}$ is empty.

Write $D_\sigma$ for the intersection of~$D$ with the $\aa$-translate
of the linear span of~$\sigma$:
$$%
  D_\sigma = D \cap (\aa + \RR \sigma).
$$
If $\sigma \subsetneqq \cQ_+$, then $T_\aa D_\sigma = \sigma_\nabla$
for some upset $\nabla \subseteq \cF_\sigma$ by induction on the
dimension of~$\sigma$.  In actuality, only the case $\dim \sigma =
n-1$ is needed, as the face posets $\cF_\sigma$ for $\dim \sigma =
n-1$ almost cover~$\cfq$: only the open maximal face $\qpc$ itself
lies outside of their union, and that case is dealt with by
Corollary~\ref{c:<<}.
\end{proof}

%%%%%%%%%%%%%%%%%%%%%%%%%%%%%%%%%%%%%%%%%%%%%%%%%%%%%%%%%%%%%%%%%%%%%%%%%
\subsection{Upper boundary functors}\label{sub:upper-boundary}\mbox{}%%%%

\begin{defn}\label{d:atop-sigma}
For a module~$\cM$ over a real polyhedral group~$\cQ$, a face~$\sigma$
of~$\cQ_+$, and a degree $\aa \in \cQ$, the \emph{upper boundary
atop~$\sigma$} at~$\aa$ in~$\cM$ is the vector space
$$%
  (\ds\cM)_\aa
  =
  \cM_{\aa-\sigma}
  =
  \dirlim_{\aa' \in \aa - \sigma^\circ} \cM_{\aa'}.
$$
\end{defn}

\begin{lemma}\label{l:exact-delta}
The functor $\cM \mapsto \ds\cM = \bigoplus_{\aa\in\cQ} (\ds\cM)_\aa$
is exact.
\end{lemma}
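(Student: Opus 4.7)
The plan is to identify $(\ds\cM)_\aa$, for each fixed $\aa$, as a filtered colimit of vector spaces indexed by the poset $\aa - \sigma^\circ$ with the partial order inherited from $\cQ$, whose transition maps are the $\cQ$-module structure maps of $\cM$. Once the indexing poset is shown to be directed, exactness on each degree follows from the standard fact that filtered colimits of vector spaces are exact, and exactness of $\cM \mapsto \ds\cM$ then follows from the exactness of direct sums in vector spaces.

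The substantive step is the directedness of $\aa - \sigma^\circ$. Given $\aa - f'$ and $\aa - f''$ in $\aa - \sigma^\circ$ with $f', f'' \in \sigma^\circ$, I would fix any $h \in \sigma^\circ$ and choose $t > 0$ small enough so that both $f' - th$ and $f'' - th$ lie in $\sigma^\circ$; this is possible because $\sigma^\circ$ is open inside the linear span $\RR\sigma$ and contains both $f'$ and $f''$, and because $th \to 0$ as $t \to 0^+$. Setting $g = th$, one has $g \in \sigma^\circ$ (as $\sigma^\circ$ is closed under multiplication by positive scalars), and both $f' - g$ and $f'' - g$ lie in $\sigma \subseteq \cQ_+$ since $\sigma = \RR\sigma \cap \cQ_+$ by the face axiom. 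Hence $\aa - g \in \aa - \sigma^\circ$ is a common upper bound of $\aa - f'$ and $\aa - f''$ in the inherited partial order. This is the only step where the real polyhedral hypothesis enters, through nonemptiness of $\sigma^\circ$ and its cone structure (cf.\ Remark~\ref{r:quantum}).

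Given directedness, take a short exact sequence $0 \to \cM' \to \cM \to \cM'' \to 0$ of $\cQ$-modules. Pointwise at each $\aa' \in \aa - \sigma^\circ$ the sequence $0 \to \cM'_{\aa'} \to \cM_{\aa'} \to \cM''_{\aa'} \to 0$ is exact, and passing to the filtered colimit over the directed set $\aa - \sigma^\circ$ preserves exactness; summing over $\aa \in \cQ$ then yields the desired exact sequence $0 \to \ds\cM' \to \ds\cM \to \ds\cM'' \to 0$. The only real obstacle is the directedness check, and that ultimately reduces to the openness of $\sigma^\circ$ in $\RR\sigma$ together with the fact that a face is cut out of its linear span by the positive cone.
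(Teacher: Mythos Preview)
Your proof is correct and follows the same approach as the paper, which simply invokes exactness of direct limits in categories of vector spaces. You have usefully made explicit the directedness of the indexing set $\aa - \sigma^\circ$, which the paper leaves implicit; otherwise the arguments coincide.
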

\begin{proof}
Direct limits are exact in categories of vector spaces (or modules
over rings).
\end{proof}

\begin{lemma}\label{l:natural}
The structure homomorphisms of~$\cM$ as a $\cQ$-module induce natural
homomorphisms $\cM_{\aa-\sigma} \to \cM_{\bb-\tau}$ for $\aa \preceq
\bb$ in~$\cQ$ and faces $\sigma \supseteq \tau$ of~$\cQ_+$.
\end{lemma}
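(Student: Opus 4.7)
The plan is to invoke the universal property of the direct limit defining $\cM_{\aa-\sigma} = \dirlim_{\aa' \in \aa - \sigma^\circ} \cM_{\aa'}$: it suffices to produce, for each $\aa' \in \aa - \sigma^\circ$, a homomorphism $\phi_{\aa'}: \cM_{\aa'} \to \cM_{\bb-\tau}$, compatible with the structure maps of the source direct system as $\aa'$ varies.

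The geometric heart of the argument is the claim that each $\aa' \in \aa - \sigma^\circ$ admits some $\bb' \in \bb - \tau^\circ$ with $\aa' \preceq \bb'$. Writing $\aa' = \aa - \ff$ with $\ff \in \sigma^\circ$ and fixing any $\mathbf{g}_0 \in \tau^\circ$ (or $\mathbf{g}_0 = \0$ if $\tau = \{\0\}$), the inclusion $\tau \subseteq \sigma$ places $\mathbf{g}_0$ in $\sigma$, so openness of $\sigma^\circ$ in $\RR\sigma$ yields $\ff - \epsilon\mathbf{g}_0 \in \sigma^\circ$ for all sufficiently small $\epsilon > 0$. Setting $\bb' = \bb - \epsilon\mathbf{g}_0 \in \bb - \tau^\circ$, one computes
$$
  \bb' - \aa' = (\bb - \aa) + (\ff - \epsilon\mathbf{g}_0) \in \cQ_+ + \sigma^\circ \subseteq \cQ_+,
$$
so $\aa' \preceq \bb'$; I then define $\phi_{\aa'}$ as the composite of the structure map $\cM_{\aa'} \to \cM_{\bb'}$ with the canonical map $\cM_{\bb'} \to \cM_{\bb-\tau}$ into the direct limit.

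Independence of the chosen $\bb'$, compatibility with the source structure maps $\cM_{\aa'} \to \cM_{\aa''}$, and well-posedness of the direct limits themselves all rest on the observation that $\sigma^\circ$ and $\tau^\circ$ each contain common lower bounds for any finite subset: rescaling a fixed interior point to $\epsilon\ff_0$ or $\epsilon\mathbf{g}_0$ for small $\epsilon > 0$ produces an element preceding any prescribed finite collection, by the same openness argument. Consequently any two candidates $\bb'_1, \bb'_2$ admit a common upper bound $\bb'_3 \in \bb - \tau^\circ$ through which both composites factor, and a single~$\bb'$ can serve simultaneously for both ends of $\aa' \preceq \aa''$. The universal property then assembles the $\phi_{\aa'}$ into the desired homomorphism $\cM_{\aa-\sigma} \to \cM_{\bb-\tau}$, and naturality in~$\cM$ is automatic from term-wise functoriality of direct limits.

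The only real obstacle is the geometric claim in the second paragraph: the hypothesis $\sigma \supseteq \tau$ is precisely what keeps $\ff - \epsilon\mathbf{g}_0$ inside $\sigma^\circ$, a step that would fail outright if $\mathbf{g}_0$ were not in $\sigma$. Every other part of the argument is a formal manipulation of filtered colimits of vector spaces.
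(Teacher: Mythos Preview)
Your proof is correct and follows essentially the same strategy as the paper: invoke the universal property of the colimit defining $\cM_{\aa-\sigma}$ by exhibiting, for each $\aa' \in \aa - \sigma^\circ$, a compatible map into $\cM_{\bb-\tau}$ via some $\bb' \in \bb - \tau^\circ$ with $\aa' \preceq \bb'$.

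The difference lies in how the geometric fact is organized and justified. The paper factors the map as $\cM_{\aa-\sigma} \to \cM_{\bb-\sigma} \to \cM_{\bb-\tau}$: the first arrow is pure translation by $\bb - \aa$ (which carries $\aa - \sigma^\circ$ to $\bb - \sigma^\circ$), and the second arrow---the face-change step---appeals to Proposition~\ref{p:<<} and Lemma~\ref{l:<<} to conclude that every $\cc \in \bb - \sigma^\circ$ lies below some element of $\bb - \tau^\circ$ (since $\sigma^\circ \subseteq \cQ_{\nabt}$ when $\sigma \supseteq \tau$). Your argument instead handles both steps at once and replaces the appeal to those results with the direct observation that $\ff - \epsilon\mathbf{g}_0 \in \sigma^\circ$ for small $\epsilon$, using only that $\tau \subseteq \sigma$ and that $\sigma^\circ$ is open in~$\RR\sigma$. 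Your route is more self-contained and slightly more elementary; the paper's route reuses machinery (Proposition~\ref{p:<<} and Lemma~\ref{l:<<}) that is needed elsewhere anyway and makes the two conceptually distinct contributions---translation and face-shrinking---visible as separate steps.
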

\begin{proof}
The natural homomorphisms come from the universal property of
colimits.  First a natural homomorphism $\cM_{\aa-\sigma} \to
\cM_{\bb-\sigma}$ is induced by the composite homomorphisms $\cM_\cc
\to \cM_{\cc+\bb-\aa} \to \cM_{\bb - \sigma}$ for $\cc \in \aa -
\sigma^\circ$ because adding $\bb - \aa$ takes $\aa - \sigma^\circ$ to
\mbox{$\bb - \sigma^\circ$}.  For $\cM_{\bb-\sigma} \to
\cM_{\bb-\tau}$ the argument is similar, except that existence of
natural homomorphisms $\cM_\cc \to \cM_{\bb-\tau}$ for $\cc \in \bb -
\sigma^\circ$ requires Proposition~\ref{p:<<} and Lemma~\ref{l:<<}.
\end{proof}

\begin{remark}\label{r:semilattice=monoid}
The face poset~$\cfq$ of the positive cone~$\cQ_+$ can be made into a
commutative monoid in which faces $\sigma$ and~$\tau$ of~$\cQ_+$ have
sum
$$%
  \sigma + \tau = \sigma \cap \tau.
$$
Indeed, these monoid axioms use only that $\bigl(\cfq,\cap\bigr)$ is a
bounded meet semilattice, the monoid unit element being the maximal
semilattice element---in this case, $\cQ_+$ itself.  When $\cfq$ is
considered as a monoid in this way, the partial order on it has
$\sigma \preceq \tau$ if $\sigma \supseteq \tau$, which is the
opposite of the default partial order on the faces of a polyhedral
cone.  For utmost clarity, $\fqo$ is written when this monoid partial
order is intended.
\end{remark}

\begin{defn}\label{d:upper-boundary}
Fix a module~$\cM$ over a real polyhedral group~$\cQ$ and a degree
$\aa \in \cQ$.  The \emph{upper boundary functor} takes~$\cM$ to the
$\cQ \times \fqo$-module $\delta\cM$ whose fiber over $\aa \in \cQ$ is
the $\fqo$-module
$$%
  (\delta\cM)_\aa
  =
  \bigoplus_{\sigma \in \cfq} \cM_{\aa-\sigma}
  =
  \bigoplus_{\sigma \in \cfq} (\ds\cM)_\aa.
$$
The fiber of $\delta\cM$ over $\sigma \in \fqo$ is the \emph{upper
boundary~$\ds\cM\!$ of~$\cM$ atop~$\sigma$}.
\end{defn}

\begin{remark}\label{r:upper-frontier}
The face of~$\cQ_+$ that contains only the origin~$\0$ is an absorbing
element: it acts like infinity, in the sense that $\sigma + \{\0\} =
\{\0\}$ in the monoid~$\fqo$ for all faces~$\sigma$.  Adding the
absorbing element~$\0$ in the $\fqo$ component therefore induces a
natural\/ $\RR^n \times \fqo$-module projection from the upper
boundary $\delta\cM$ to~$\cM$.  At a degree $\aa \in \RR^n$, this
projection is $\cM_{\aa-\sigma} \to \cM_{\aa - \0} = \cM_\aa$.
Interestingly, the \emph{frontier} of a downset~$D$---those points in
the topological closure but outside of~$D$---is the set of nonzero
degrees of a functor, namely $\ker(\ds\cM \to \cM)$ for $\sigma =
\qpc$.  The proof is by Corollary~\ref{c:<<}.
\end{remark}

There is no natural map $\cM \to \ds\cM$ when $\sigma \neq \{\0\}$ has
positive dimension, because an element of degree~$\aa$ in~$\cM$ comes
from elements of~$\ds\cM$ in degrees less than~$\aa$.  However, that
leaves a way for Lemma~\ref{l:natural} to afford a notion of
divisibility of upper boundary elements by elements of~$\cM$.

\begin{defn}\label{d:divides}
An element $y \in \cM_\bb$ \emph{divides} $x \in (\ds\cM)_\aa$ if $\bb
\in \aa - \qns = \aa - \sigma^\circ - \cQ_+\!$ (Lemma~\ref{l:<<}) and
$y \mapsto x$ under the natural map $\cM_\bb \to \cM_{\aa-\sigma}$
(Lemma~\ref{l:natural}).  The element $y$ is said to
\emph{$\sigma$-divide}~$x$ if, more restrictively, $\bb \in \aa -
\sigma^\circ$.
\end{defn}

\begin{lemma}\label{l:downset-upper-boundary}
If $\sigma \in \cfq$ and $D$ is a downset in~$\cQ$ then $\ds\,\kk[D] =
\kk[\ds D]$,~where
$$%
  \ds D
  =
  \bigcup_{\xx\in\cQ} \ol{D \cap (\xx + \RR\sigma)}
  =
  D \cup \bigcup_{\xx\in\del D} \ol{D \cap (\xx + \RR\sigma)}
$$
It suffices to take the middle union over $\xx$ in any subspace
complement to~$\RR\sigma$.
\end{lemma}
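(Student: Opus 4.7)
The plan is to compute $\ds\kk[D]$ degree by degree as a colimit, identify the graded support with $\ds D$ in both descriptions, and verify that the structure maps of $\ds\kk[D]$ make it isomorphic to $\kk[\ds D]$ as a $\cQ$-module.

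First I would compute each homogeneous piece $(\ds\kk[D])_\aa = \dirlim_{\aa' \in \aa - \sigma^\circ} \kk[D]_{\aa'}$. Every summand is either $\kk$ (at $\aa' \in D$) or $0$ (otherwise), with transition maps between nonzero pieces being identities. A class $[1_{\aa'}]$ dies in the colimit precisely when some $\aa'' \in \aa - \sigma^\circ$ with $\aa'' \succeq \aa'$ has $\aa'' \notin D$, since then the transition $\kk[D]_{\aa'} \to \kk[D]_{\aa''} = 0$ annihilates it. Hence $(\ds\kk[D])_\aa \neq 0$ if and only if there exists $\ff_1 \in \sigma^\circ$ with $\aa - \ff \in D$ for every $\ff \in \sigma^\circ$ satisfying $\ff \preceq \ff_1$.

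I would then show this condition is equivalent to $\aa \in \ol{D \cap (\aa + \RR\sigma)}$. The reverse direction is routine: $\aa_t = \aa - t\ff_1$ for $t \in (0, 1]$ lies in $D \cap (\aa + \RR\sigma)$ and converges to~$\aa$. The forward direction is the crux: given a sequence $\aa_k \in D \cap (\aa + \RR\sigma)$ with $\aa_k \to \aa$, fix any $\ee \in \sigma^\circ$; then $\aa_k - \aa + \ee \to \ee \in \sigma^\circ$, and openness of $\sigma^\circ$ inside $\RR\sigma$ forces $\aa_k - \aa + \ee \in \sigma^\circ$ for large~$k$, so $\aa_k - (\aa - \ee) \in \sigma$. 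Since $D \cap (\aa + \RR\sigma)$ is a downset in $\aa + \RR\sigma$ with respect to the $\sigma$-order, this yields $\aa - \ee \in D$. Varying $\ee$ gives $\aa - \sigma^\circ \subseteq D$, so any $\ff_1 \in \sigma^\circ$ witnesses the colimit condition. Because $\ol{D \cap (\xx + \RR\sigma)} \subseteq \xx + \RR\sigma$, any $\aa$ in the latter forces $\xx + \RR\sigma = \aa + \RR\sigma$, identifying the graded support of $\ds\kk[D]$ with $\bigcup_{\xx \in \cQ} \ol{D \cap (\xx + \RR\sigma)}$.

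The equality $\ds\kk[D] = \kk[\ds D]$ of $\cQ$-modules then follows by checking that for $\aa \preceq \bb$ with both in $\ds D$ the induced structure map is the identity: the previous paragraph gives $\aa - \sigma^\circ \subseteq D$ and $\bb - \sigma^\circ \subseteq D$, and Lemma~\ref{l:natural} computes the map via structure maps $\kk[D]_{\aa - \ff_1} \to \kk[D]_{\bb - \ff_1}$ between copies of~$\kk$ both sitting in~$D$. The alternative description $D \cup \bigcup_{\xx \in \del D} \ol{D \cap (\xx + \RR\sigma)}$ comes from three observations: $D \subseteq \ds D$ because $\aa \in D \cap (\aa + \RR\sigma) \subseteq \ol{D \cap (\aa + \RR\sigma)}$; every term of the middle union lies in $\ds D$ by the first description; and any $\aa \in \ds D \minus D$ lies in $\ol D \minus D = \del D$, so the $\xx = \aa$ term contains~$\aa$. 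Finally, $\ol{D \cap (\xx + \RR\sigma)}$ depends only on the coset $\xx + \RR\sigma$, so restricting $\xx$ to any subspace complement of $\RR\sigma$ suffices, each such complement meeting every coset exactly once. The main obstacle is the forward direction in the second paragraph, which requires combining openness of $\sigma^\circ$ inside $\RR\sigma$ with the $\sigma$-downset structure of $D \cap (\aa + \RR\sigma)$ to promote a topological approximation of~$\aa$ into the strong inclusion $\aa - \sigma^\circ \subseteq D$; this is where the real polyhedral hypothesis is essential, echoing Remark~\ref{r:quantum}.
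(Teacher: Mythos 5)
Your proof is correct and follows the paper's own route: compute $(\ds\kk[D])_\aa$ as a directed colimit of copies of $\kk$ and $0$, characterize nonvanishing by $\aa - \sigma^\circ \subseteq D$, and identify that condition with $\aa \in \ol{D \cap (\aa + \RR\sigma)}$ by viewing the slice as a downset in the real polyhedral group $\RR\sigma$. The only difference is cosmetic: where the paper cites Corollary~\ref{c:<<} for the closure criterion, you reprove it inline with the sequence-plus-openness argument, and you spell out the module structure and the second displayed equality slightly more explicitly than the paper does.
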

\begin{proof}
For the second displayed equality, observe that the middle union
contains the right-hand union because the middle one contains~$D$.
For the other containment, if $\xx + \RR\sigma$ contains no boundary
point of~$D$, then $D \cap (\xx + \RR\sigma) = \oD \cap (\xx +
\RR\sigma)$ is already closed, so the contribution of $D \cap (\xx +
\RR\sigma)$ to the middle union is contained in~$D$.

For the other equality, $\ds\kk[D]$ is nonzero in degree $\aa$ if and
only if $\aa - \sigma^\circ \subseteq D$.  That condition is
equivalent to $\aa - \sigma^\circ \subseteq D \cap (\aa + \RR\sigma)$
because $\aa - \sigma^\circ \subseteq \aa + \RR\sigma$.  Translating
$D \cap (\aa + \RR\sigma)$ back by~$\aa$ yields a downset in the real
polyhedral group~$\RR\sigma$, with $(\RR\sigma)_+ = \sigma$, thereby
reducing to Corollary~\ref{c:<<}.
\end{proof}

\begin{prop}\label{p:downset-upper-boundary}
If $\sigma \in \cfq$ and $D$ is a downset in a real polyhedral group
then $\ds\,\kk[D] = \kk[\ds D]$ is the indicator quotient for a
downset $\ds D$ satisfying
\begin{enumerate}
\item\label{i:containment}%
$D \subseteq \ds D \subseteq \oD$;
\item\label{i:nabla}%
$\ds D = \{\aa\in \oD \mid \sigma \in \nda\}$; and
\item\label{i:semialgebraic}%
if $D$ is semialgebraic then so is~$\ds\,\kk[D]$.
\end{enumerate}
\end{prop}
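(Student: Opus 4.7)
The plan is to work from the concise formula $\ds D = \{\aa \in \cQ \mid \aa - \sigma^\circ \subseteq D\}$ distilled in the proof of Lemma~\ref{l:downset-upper-boundary}, and to verify the three claims in turn. That $\ds D$ is itself a downset is essentially free: if $\bb \preceq \aa$ with $\aa \in \ds D$, then the downset property of~$D$ gives $\bb - \sigma^\circ \subseteq \aa - \cQ_+ - \sigma^\circ \subseteq D$ immediately.

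For part~\ref{i:containment}, the containment $D \subseteq \ds D$ is an immediate consequence of $\sigma^\circ \subseteq \cQ_+$ and the downset property of~$D$. For $\ds D \subseteq \oD$, the case $\sigma = \{\0\}$ is degenerate (there $\sigma^\circ = \{\0\}$, so $\ds D = D \subseteq \oD$ directly); otherwise any $\vv \in \sigma^\circ$ is nonzero, so $\aa - \epsilon\vv \in \aa - \sigma^\circ \subseteq D$ for all $\epsilon > 0$ by scaling invariance of~$\sigma^\circ$, and letting $\epsilon \to 0^+$ exhibits $\aa$ as a limit of points of~$D$.

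The core technical work lies in part~\ref{i:nabla}. My plan is to prove the chain of equivalences
\[
  \aa \in \ds D
  \iff \aa - \sigma^\circ \subseteq D
  \iff \sigma^\circ \subseteq -T_\aa D
  \iff \sigma \in \nda,
\]
assuming $\aa \in \oD$ (forced by part~\ref{i:containment}). The first equivalence is the Lemma~\ref{l:downset-upper-boundary} formula. The last equivalence unpacks Proposition~\ref{p:shape}: since $-T_\aa D = \cQ_{\nda} = \bigcup_{\tau \in \nda} \tau^\circ$ and distinct face interiors are disjoint, $\sigma^\circ \subseteq \cQ_{\nda}$ is equivalent to $\sigma \in \nda$. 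The middle equivalence is the delicate step; its nontrivial direction upgrades ``$\aa - \epsilon\vv \in D$ for all small $\epsilon > 0$'' to ``$\aa - \vv \in D$'' using downsetness of~$D$ together with relative openness of~$\sigma^\circ$ in~$\RR\sigma$. Concretely, given $\vv \in \sigma^\circ$ and a reference $\vv' \in \sigma^\circ$ for which $\aa - \epsilon\vv' \in D$ eventually, relative openness of $\sigma^\circ$ in $\RR\sigma$ produces a small $\epsilon > 0$ with $\vv - \epsilon\vv' \in \sigma^\circ \subseteq \cQ_+$, so $\aa - \vv \preceq \aa - \epsilon\vv' \in D$ forces $\aa - \vv \in D$.

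Part~\ref{i:semialgebraic} should then follow by Tarski--Seidenberg quantifier elimination: the condition $\aa - \sigma^\circ \subseteq D$ expands to $\forall \vv\,(\vv \in \sigma^\circ \Rightarrow \aa - \vv \in D)$, a first-order formula whose parameters come from the semialgebraic sets $\sigma^\circ$ (defined by finitely many linear (in)equalities) and~$D$; hence $\ds D$ is semialgebraic, as is the indicator quotient $\ds\kk[D] = \kk[\ds D]$. The main obstacle is the middle equivalence in part~\ref{i:nabla}, where one must bridge the global statement $\aa - \sigma^\circ \subseteq D$ and the infinitesimal tangent-cone statement; the remaining steps are essentially mechanical once the formula from Lemma~\ref{l:downset-upper-boundary} is in hand.
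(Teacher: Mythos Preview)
Your proof is correct and, for items~\ref{i:containment} and~\ref{i:nabla}, follows essentially the same route as the paper: both arguments reduce to the characterization $\ds D = \{\aa \mid \aa - \sigma^\circ \subseteq D\}$ established inside the proof of Lemma~\ref{l:downset-upper-boundary} and then identify this with the shape condition $\sigma \in \nda$ via the tangent-cone definition.  (Your argument for the ``middle equivalence'' is slightly more elaborate than necessary---taking $\vv' = \vv$ and any $\epsilon < 1$ already gives $\aa - \vv \preceq \aa - \epsilon\vv \in D$---but it is correct.)  The paper's organization differs only cosmetically: it deduces item~\ref{i:containment} from item~\ref{i:nabla} and proves the downset property via Proposition~\ref{p:<<} rather than by direct comparison of translates.

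For item~\ref{i:semialgebraic} the approaches genuinely diverge.  You invoke Tarski--Seidenberg quantifier elimination on the formula $\forall \vv\,(\vv \in \sigma^\circ \Rightarrow \aa - \vv \in D)$, which is entirely valid and arguably more elementary.  The paper instead appeals to Lemma~\ref{l:semialg-closure} (a consequence of Hardt's triviality theorem), using the description of $\ds D$ as a fibrewise closure from Lemma~\ref{l:downset-upper-boundary} with $Y = \qrs$.  Your route avoids the extra lemma at the cost of importing quantifier elimination; the paper's route has the advantage of isolating a reusable statement about semialgebraic families, but relies on the heavier Hardt theorem.
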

\begin{proof}
Item~\ref{i:containment} follows from item~\ref{i:nabla}.  What
remains to show is that $\ds D$ is a downset in~$\oD$ characterized by
item~\ref{i:nabla} and that it is semialgebraic if $D$~is.

First, $\sigma \in \nda$ means that $\aa - \sigma^\circ \subseteq D$,
which immediately implies that $\aa \in \ds D$ by
Lemma~\ref{l:downset-upper-boundary}.  Conversely, suppose $\aa \in
\ds D$.  Lemma~\ref{l:downset-upper-boundary} and
Corollary~\ref{c:<<}, the latter applied to the downset $-\aa + D \cap
(\aa + \RR\sigma)$ in~$\RR\sigma$, imply that $\aa - \sigma^\circ
\subseteq D$, and hence $\sigma \in \nda$ by definition, proving
item~\ref{i:nabla}.  Given that $\aa - \sigma^\circ \subseteq D$,
Proposition~\ref{p:<<} yields $D \cap (\aa - \cQ_+) \supseteq \aa -
\qns$.  Consequently, if $\bb \in \aa - \cQ_+$ then $D \cap (\bb -
\sigma) \supseteq \bb - \sigma^\circ$, whence $\bb \in \ds D$.  Thus
$\ds D$ is a downset.

The semialgebraic claim follows from a general result,
Lemma~\ref{l:semialg-closure}, the case here being $Y = \qrs$ and $X =
D$ by Lemma~\ref{l:downset-upper-boundary}.
\end{proof}

\begin{lemma}\label{l:semialg-closure}
If $X \subseteq \RR^n$ and $X \to Y$ is a morphism of semialgebraic
varieties, then the family $\ol X_Y$ obtained by taking the closure
in~$\RR^n$ of every fiber of~$X$ is semialgebraic.
\end{lemma}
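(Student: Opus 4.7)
The plan is to reduce the statement to an application of Tarski--Seidenberg quantifier elimination. First I would observe that since $X \to Y$ is a morphism of semialgebraic varieties, its graph
\[
  \Gamma = \{(x,y) \in \RR^n \times Y \mid x \in X \text{ and } x \mapsto y\}
\]
is semialgebraic (this is essentially the definition of a semialgebraic morphism, together with the fact that $X$ and $Y$ themselves are semialgebraic). The fibers of $X \to Y$ are then, by definition, the slices $\Gamma_y = \{x \in \RR^n \mid (x,y) \in \Gamma\}$ for $y \in Y$, and the family $\ol X_Y$ asked about in the lemma is precisely
\[
  \ol X_Y = \{(x,y) \in \RR^n \times Y \mid x \in \ol{\Gamma_y}\},
\]
where the closure is taken in~$\RR^n$.

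Next I would write down the first-order formula that defines membership in $\ol X_Y$: a point $(x,y)$ lies in $\ol X_Y$ if and only if
\[
  \forall \epsilon > 0 \,\; \exists x' \in \RR^n : (x',y) \in \Gamma \ \text{ and }\ \|x - x'\|^2 < \epsilon^2.
\]
This is a first-order formula in the language of ordered fields, whose only parameters are the polynomial (in)equalities defining~$\Gamma$ and~$Y$. Tarski--Seidenberg quantifier elimination guarantees that the subset of $\RR^n \times Y$ cut out by such a formula is semialgebraic, which is exactly the conclusion.

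There is essentially no obstacle beyond ensuring that the formula above truly captures fiberwise closure in $\RR^n$ rather than closure of $\Gamma$ in the ambient product; the quantifier $\exists x'$ ranges over $\RR^n$ while $y$ is fixed, so only points in the same fiber over $y$ contribute, which is the whole content of the distinction. Once the formula is in place, the lemma is immediate, and the semialgebraic structure of $\ol X_Y$ is inherited uniformly across $Y$ without any further hypothesis on the fibers of $X \to Y$. In the application to Proposition~\ref{p:downset-upper-boundary}, one takes $Y = \qrs$ and $X = D$, so that the fibers of $X \to Y$ are precisely the intersections $D \cap (\xx + \RR\sigma)$ appearing in Lemma~\ref{l:downset-upper-boundary}, and the lemma supplies the required semialgebraicity of~$\ds D$.
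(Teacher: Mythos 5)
Your proof is correct, and it takes a genuinely different route from the paper's. The paper deduces the lemma from Hardt's semialgebraic triviality theorem, which says the family $X \to Y$ is trivial over the complement of a lower-dimensional subset of~$Y$; making that argument complete really requires iterating over the strata and checking that the trivializing homeomorphisms are compatible with taking closures, none of which the paper spells out. You instead observe that fiberwise closure is defined by the uniform first-order formula
$$
  (x,y) \in \ol X_Y
  \iff
  \forall \epsilon > 0\ \exists x' \ \bigl((x',y) \in \Gamma \text{ and } \|x - x'\|^2 < \epsilon^2\bigr),
$$
and invoke Tarski--Seidenberg quantifier elimination. This is the standard proof that the closure of a semialgebraic set is semialgebraic, run with $y$ as a parameter, and it is both more elementary (quantifier elimination is the foundational tool on which results like Hardt's theorem are themselves built) and more self-contained, since no genericity or stratification is needed. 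What the paper's citation of Hardt buys is a stronger structural statement about the family (local triviality), but for the purpose of this lemma your argument is cleaner and complete as stated; your identification of the intended application, with $Y = \qrs$ and the fibers $D \cap (\xx + \RR\sigma)$, also matches the paper's.
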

\begin{proof}
This is a consequence of Hardt's theorem \cite[Theorem~4]{hardt80}
(see also \cite[Remark~II.3.13]{shiota97}), which says that over a
subset of~$Y$ whose complement in~$Y$ has dimension less than $\dim
Y$, the family $X \to Y$ is trivial.
\end{proof}

\begin{prop}\label{p:frontier-encoding}
If $\cM$ is finitely encoded over a real polyhedral group then so is
its upper boundary~$\delta \cM$.  If $\cM$ is semialgebraic then so
is~$\delta \cM$.  The same are true with $\ds$ in place of~$\delta$
for a fixed~face~$\sigma$.
\end{prop}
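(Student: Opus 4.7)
The plan is to construct a finite encoding of $\ds\cM$ directly from a finite encoding of~$\cM$. Since $\cQ$ is polyhedral, $\cfq$ is finite and $\delta\cM = \bigoplus_{\sigma \in \cfq} \ds\cM$ is a finite direct sum, so Lemma~\ref{l:abelian-category} reduces the statement for $\delta\cM$ to the statement for each $\ds\cM$ with $\sigma$ fixed; the same reduction handles the semialgebraic part. I fix~$\sigma$, a finite encoding $\pi: \cQ \to \cP$, and a $\cP$-module~$\cH$ with $\cM = \pi^*\cH$.

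The key observation is that $\aa - \sigma^\circ$ is upward-directed in~$\cQ$ for every $\aa \in \cQ$: to produce a common upper bound of two of its elements, I would take a sufficiently small positive multiple of any fixed vector in $\sigma^\circ$, which remains in $\sigma^\circ$ and is $\preceq$ each of the two given elements in the $\cQ_+$-order. Consequently the image $\pi(\aa - \sigma^\circ)$ is an upward-directed subset of the finite poset~$\cP$ and has a unique maximum $\tilde\pi(\aa) \in \cP$. Because the direct system factors through this finite directed set with a top, the limit collapses to
\[
  (\ds\cM)_\aa
  = \dirlim_{\aa' \in \aa - \sigma^\circ} \cH_{\pi(\aa')}
  = \cH_{\tilde\pi(\aa)}.
\]
The map $\tilde\pi: \cQ \to \cP$ is a poset morphism: if $\aa \preceq \bb$ and $\aa_0 \in \aa - \sigma^\circ$ realizes $\tilde\pi(\aa)$, then $\aa_0 + (\bb - \aa) \in \bb - \sigma^\circ$ has $\pi$-image $\succeq \tilde\pi(\aa)$, forcing $\tilde\pi(\bb) \succeq \tilde\pi(\aa)$ by maximality. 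A direct-limit diagram chase using the representatives~$\aa_0$ then identifies the structure maps of $\ds\cM$ provided by Lemma~\ref{l:natural} with those pulled back from $\cH$ along~$\tilde\pi$, showing $\ds\cM \cong \tilde\pi^*\cH$ as $\cQ$-modules. This exhibits a finite encoding of~$\ds\cM$.

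For the semialgebraic claim, I would describe each fiber of $\tilde\pi$ explicitly. The condition $\tilde\pi(\aa) = p$ says that $\pi$ attains~$p$ on $\aa - \sigma^\circ$ but attains no $p' \not\preceq p$ there, which translates into
\[
  \tilde\pi^{-1}(p)
  = \bigl(\pi^{-1}(p) + \sigma^\circ\bigr) \minus \bigcup_{p' \not\preceq p} \bigl(\pi^{-1}(p') + \sigma^\circ\bigr).
\]
Since $\sigma^\circ$ is a relatively open polyhedral cone, and Minkowski sums, finite unions, and complements of semialgebraic subsets of~$\cQ$ remain semialgebraic, the fibers of~$\tilde\pi$ are semialgebraic whenever those of~$\pi$ are. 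The main obstacle I anticipate is the structure-map compatibility in the second paragraph: the underlying direct-limit computation and the monotonicity check are essentially formal, but confirming that the natural maps of Lemma~\ref{l:natural} agree with the pullback maps of $\tilde\pi^*\cH$ requires tracking several universal properties of direct limits in~parallel.
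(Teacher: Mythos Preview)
Your proof is correct and takes a genuinely different route from the paper. The paper reduces to the~$\ds$ case as you do, but then invokes the syzygy theorem (Theorem~\ref{t:syzygy}.\ref{i:downset-copresentation}) to write $\cM = \ker(E^0 \to E^1)$ as a kernel of a map between finite direct sums of downset modules, applies exactness of~$\ds$ (Lemma~\ref{l:exact-delta}), and then uses Proposition~\ref{p:downset-upper-boundary} to compute each $\ds\kk[D]$ as $\kk[\ds D]$ for a downset~$\ds D$ that is semialgebraic when $D$ is.

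Your approach is more elementary and more explicit: you stay with the given encoding $(\pi,\cH)$ and build a new encoding morphism~$\tilde\pi$ for $\ds\cM$ with the \emph{same} target poset~$\cP$ and the \emph{same} $\cP$-module~$\cH$, exploiting only the directedness of $\aa-\sigma^\circ$ and the finiteness of~$\cP$. This avoids both the syzygy theorem and the detailed boundary computation in Proposition~\ref{p:downset-upper-boundary}, and it yields a concrete semialgebraic description of the fibers. The paper's route, on the other hand, fits into a pattern reused repeatedly later (e.g.\ Propositions~\ref{p:socc-encoding} and~\ref{p:socct-encoding}, Theorem~\ref{t:soct-encoding}): reduce to downset modules via a copresentation and left-exactness, then compute explicitly on~$\kk[D]$. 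Your argument would not transfer as directly to those settings because $\socc$ and $\soct$ are not pullbacks along a single modified morphism in the way $\ds$ is. The structure-map check you flag is indeed routine once the cofinality of $\pi^{-1}(\tilde\pi(\aa))\cap(\aa-\sigma^\circ)$ in $\aa-\sigma^\circ$ is noted, which follows from directedness.
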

\begin{proof}
Since the relevant categories are abelian by
Lemma~\ref{l:abelian-category}, it suffices to treat the $\ds$ case.
As the direct sums that form a $\cQ$-module from its graded pieces are
exact, as is the formation of $\delta\cM$ from~$\cM$
(Lemma~\ref{l:exact-delta}) the claim reduces to
Proposition~\ref{p:downset-upper-boundary} by
Theorem~\ref{t:syzygy}.\ref{i:downset-copresentation}.
\end{proof}

%%%%%%%%%%%%%%%%%%%%%%%%%%%%%%%%%%%%%%%%%%%%%%%%%%%%%%%%%%%%%%%%%%%%%%%%%
\subsection{Closed socles and closed cogenerator functors}\label{sub:socc}\mbox{}

\noindent
In commutative algebra, the socle of a module over a local ring is the
set of elements annihilated by the maximal ideal.  These elements form
a vector space over the residue field~$\kk$ that can alternately be
characterized by taking homomorphisms from~$\kk$ into the module.
Either characterization works for modules over partially ordered
groups, but only the latter generalizes readily to modules over
arbitrary posets.

\begin{defn}\label{d:socc}
Fix an arbitrary poset~$\cP$.  The \emph{skyscraper} $\cP$-module
$\kk_p$ at $p \in \cP$ has $\kk$ in degree~$p$ and $0$ in all other
degrees.  The \emph{closed cogenerator functor} $\hhom_\cP(\kk,-)$ takes
each $\cP$-module $\cM$ to its \emph{closed socle}: the $\cP$-submodule
$$%
  \socc\cM = \hhom_\cP(\kk,\cM) = \bigoplus_{p\in\cP} \Hom_\cP(\kk_p,\cM).
$$
When it is important to specify the poset, the notation $\cpsoc$ is
used instead of~$\socc\!$.  A \emph{closed cogenerator} of
\emph{degree}~$p \in P$ is a nonzero element in $(\socc\cM)_p$.
\end{defn}

The bar over ``$\soc$'' is meant to evoke the notion of closure or
``closed''.  The bar under $\Hom$ is the usual one in multigraded
commutative algebra for the direct sum of homogeneous homomorphisms of
all degrees (see \cite[Section~I.2]{GWii} or
\cite[Definition~11.14]{cca}, for example).

\begin{example}\label{e:socc}
The closed socle of~$\cM$ consists of the elements that are
annihilated by moving up in any direction, or that have maximal
degree.  In particular, the indicator quotient module $\kk[D]$ for any
downset $D \subseteq \cP$ has closed socle
$$%
  \socc \kk[D] = \kk[\max D],
$$
the indicator subquotient supported on the set of elements of~$D$ that
are maximal in~$D$.
\end{example}

\begin{lemma}\label{l:left-exact-socc}
The closed cogenerator functor over any poset is left-exact.
\end{lemma}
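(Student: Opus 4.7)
The plan is to reduce the claim to two standard facts: that covariant $\Hom$-functors are always left-exact in their second argument, and that direct sums are exact in the category of $\cP$-modules. Since $\socc\cM$ is by definition $\bigoplus_{p \in \cP} \Hom_\cP(\kk_p, \cM)$, assembling these two ingredients yields the result immediately.

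First I would fix a short exact sequence $0 \to \cM' \to \cM \to \cM'' \to 0$ of $\cP$-modules and a poset element $p \in \cP$, and verify that the induced sequence
$$0 \to \Hom_\cP(\kk_p,\cM') \to \Hom_\cP(\kk_p,\cM) \to \Hom_\cP(\kk_p,\cM'')$$
is exact. This is just the general statement that $\Hom_\cP(\kk_p,-)$ is left-exact, which holds in any abelian category; concretely, a $\cP$-module homomorphism $\kk_p \to \cM$ is the same data as an element $x \in \cM_p$ whose image in $\cM_{p'}$ under every structure homomorphism with $p \prec p'$ is zero, so injectivity and exactness at the middle term are immediate by diagram chase on the degree-$p$ components.

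Next I would observe that the functor $\cM \mapsto \bigoplus_{p \in \cP} \Hom_\cP(\kk_p,\cM)$ is a direct sum of left-exact functors indexed by~$\cP$. Since the category of $\cP$-modules is a category of $\cP$-graded vector spaces equipped with structure homomorphisms (in particular direct sums are formed degreewise and are exact), taking an arbitrary direct sum of left-exact functors preserves kernels, and therefore the total functor $\socc$ is left-exact.

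There is no real obstacle here: the argument is purely formal and does not use any property special to posets beyond the existence of skyscraper modules~$\kk_p$. The only step deserving a line in the actual write-up is pointing out that $\socc\cM$ is naturally a submodule of~$\cM$ (via evaluation at~$1 \in \kk$) so that the induced maps on socles are the restrictions of those on~$\cM$, which makes the left-exactness at the level of $\cP$-modules rather than merely of graded vector spaces.
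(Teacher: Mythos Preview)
Your proof is correct. The paper takes a slightly different route: rather than arguing directly that each $\Hom_\cP(\kk_p,-)$ is left-exact and then invoking exactness of direct sums, it observes that the category of $\cP$-modules is equivalent to the category of graded modules over a genuine ring---namely the path algebra of the Hasse diagram of~$\cP$ modulo the commutativity relations---and then appeals to the standard fact that $\Hom$ from a fixed source is left-exact in any module category. Your approach is more self-contained and elementary, sidestepping the path-algebra reformulation; the paper's approach buys a one-line reduction to a textbook fact at the cost of invoking a less familiar equivalence of categories. Both arguments are purely formal, and your remark that the socle sits inside~$\cM$ via $\phi\mapsto\phi(1)$, so that left-exactness as $\cP$-modules follows from left-exactness as graded vector spaces, is a point the paper leaves implicit.
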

\begin{proof}
A $\cP$-module is the same thing as a module over the path algebra of
(the Hasse diagram of)~$\cP$ with relations to impose commutativity,
namely equality of the morphism induced by $p < p''$ and the composite
morphism induced by $p < p' < p''$ (see \cite{yuzvinsky1981}, for
example).  The purpose of vieweing things this way is merely to
demonstrate that the category of $\cP$-modules is a category of
modules (graded by~$\cP$) over a ring, where $\Hom$ from a fixed
source is automatically~left-exact.
\end{proof}

\begin{lemma}\label{l:semialgebraic}
If $D$ is a semialgebraic downset in a real polyhedral group then
$\max D$ is semialgebraic, as well.
\end{lemma}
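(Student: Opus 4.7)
The plan is to express $\max D$ by a first-order formula in the language of ordered fields over the semialgebraic data $D$ and $\cQ_+$, and then appeal to the Tarski--Seidenberg theorem (semialgebraic sets are closed under projection, equivalently under first-order definability). Recall that $\max D$ is by definition the set of $\aa \in D$ for which no element of $D$ strictly dominates~$\aa$ in the partial order coming from~$\cQ_+$.

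First I would record that $\cQ_+$ itself is semialgebraic: by Definition~\ref{d:polyhedral}, it is a finite intersection of closed half-spaces in~$\cQ \cong \RR^n$, so it is a (closed, convex) semialgebraic cone, as is $\cQ_+ \minus \{\0\}$. Second, I would form the semialgebraic set
\[
  S = \{(\aa,\bb) \in \RR^n \times \RR^n \mid \aa \in D,\ \bb \in \cQ_+\minus\{\0\},\ \aa+\bb \in D\},
\]
which is semialgebraic because $D$ is semialgebraic by hypothesis, $\cQ_+ \minus \{\0\}$ is semialgebraic, and addition is a polynomial map (so preimages of semialgebraic sets are semialgebraic). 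Projecting $S$ onto its first factor yields, by Tarski--Seidenberg, the semialgebraic set
\[
  \pi(S) = \{\aa \in D \mid \text{there exists } \bb \in \cQ_+\minus\{\0\} \text{ with } \aa+\bb \in D\}.
\]
Then $\max D = D \minus \pi(S)$, a difference of two semialgebraic sets, hence semialgebraic.

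The only step that requires any actual thought is the verification that ``$\aa$ is maximal in~$D$'' really is captured by the negation of the existential formula above, which is immediate from the definition of the partial order: $\aa \prec \aa'$ in~$\cQ$ means precisely $\aa' - \aa \in \cQ_+\minus\{\0\}$. There is no genuine obstacle here; the lemma is essentially a one-line consequence of Tarski--Seidenberg once the partial order has been rewritten in terms of membership in the semialgebraic cone~$\cQ_+$. (As an alternative, one could instead invoke Lemma~\ref{l:semialg-closure} applied to a suitable family, but the direct projection argument is more economical and does not require Hardt triviality.)
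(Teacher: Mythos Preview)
Your proof is correct and is essentially the same as the paper's: both form a semialgebraic subset of $\RR^n \times \RR^n$ encoding the relation ``$\aa \in D$ has a strict successor in~$D$'', project to the first factor via Tarski--Seidenberg to obtain $D \minus \max D$, and complement inside~$D$. The only cosmetic difference is that the paper parametrizes the second coordinate by $\aa + q$ rather than by~$q$ and then intersects with $D \times D$ before projecting, which amounts to the same thing.
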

\begin{proof}
The proof uses standard operations on semialgebraic subsets that
preserve the semialgebraic property; see \cite[Chapter~II]{shiota97},
for instance.

Inside of $\RR^n \times \RR^n$, consider the subset~$X$ whose fiber
over each point $\aa \in D$ is $\aa + \mm$, where $\mm = \cQ \minus
\{\0\}$ is the maximal monoid ideal of~$\cQ_+$.  Note that $\mm$ is
semialgebraic because it is defined by linear inequalities and a
single linear inequation.  The subset~$X \subseteq \RR^n \times \RR^n$
is semialgebraic because it is the image of the algebraic morphism $D
\times \mm \to D \times \RR^n$ sending $(\aa,q) \mapsto (\aa,\aa+q)$.
The intersection of~$X$ with the semialgebraic subset $D \times D$
remains semialgebraic, as does the projection of this intersection
to~$D$.  The image of the projection is $D \minus \max D$ because
$(\aa + \mm) \cap D = \nothing$ precisely when $\aa \in \max D$.
Therefore $\max D = D \minus (D \minus \max D)$ is semialgebraic.
\end{proof}

\begin{prop}\label{p:socc-encoding}
If a module $\cM$ over any poset is finitely encoded then so is its
closed socle $\socc\cM$.  If $\cM$ is semialgebraic over a real
polyhedral group then so is~$\socc\cM$.
\end{prop}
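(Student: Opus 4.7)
The plan is to construct a finite encoding of $\socc\cM$ by refining the given encoding $\pi: \cQ \to \cP$ of~$\cM$ by a $\cP$-module~$\cH$, using the uptight construction from Section~\ref{sub:uptight}. For $q \in \cQ$, write $S(q) = \{\pi(q') : q' > q \text{ strictly in } \cQ\} \subseteq \cP$. The key observation is that an element of $(\socc\cM)_q = \Hom_\cQ(\kk_q, \cM)$ is an $x \in \cM_q = \cH_{\pi(q)}$ killed by every composite $\cM_q \to \cM_{q'}$ with $q' > q$ in~$\cQ$; since these composites factor through~$\cH$, one has
\[
(\socc\cM)_q = \bigcap_{p' \in S(q)} \ker\bigl(\cH_{\pi(q)} \to \cH_{p'}\bigr).
\]
Thus $(\socc\cM)_q$ depends on~$q$ only through the pair $(\pi(q), S(q))$, and it collapses to~$0$ when $\pi(q) \in S(q)$ because the identity map on~$\cH_{\pi(q)}$ then appears in the intersection.

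To realize this pair-dependence as a poset encoding, I introduce the finite collection of upsets $\Upsilon = \{V_p, U_p : p \in \cP\}$, where $V_p = \pi^{-1}(\{p' \in \cP : p' \geq p\})$ and $U_p = \{q \in \cQ : p \notin S(q)\}$. The set~$U_p$ is an upset because $q_1 \preceq q_2$ forces $S(q_2) \subseteq S(q_1)$ (any strict successor of~$q_2$ is also a strict successor of~$q_1$). Applying Proposition~\ref{p:posetQuotient} yields a finite uptight poset~$\cP'$ with a poset morphism $\pi': \cQ \to \cP'$ whose fibers, by construction, are determined by the pair $(\pi(q), S(q))$. Defining a $\cP'$-module~$\cH'$ that assigns to the class of $(p, S)$ the vector space $\bigcap_{p' \in S} \ker(\cH_p \to \cH_{p'})$, with all structure maps between distinct classes set to zero, I then check that $\socc\cM = (\pi')^*\cH'$ as $\cQ$-modules. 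Since $\cP'$ is finite and each $\cH'_{(p,S)}$ is a subspace of the finite-dimensional~$\cH_p$, this is a finite encoding.

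For the semialgebraic case over a real polyhedral group, each element of~$\Upsilon$ is semialgebraic: $V_p$ is a finite union of semialgebraic fibers of~$\pi$, and $T_p = \cQ \setminus U_p = \{q : \exists\, q' \in \pi^{-1}(p) \text{ with } q' - q \in \cQ_+ \setminus \{\0\}\}$ is the first-factor projection of the semialgebraic subset $\{(q, q') \in \cQ \times \pi^{-1}(p) : q' - q \in \cQ_+ \setminus \{\0\}\}$, hence semialgebraic by Tarski--Seidenberg. Finite intersections and complements preserve semialgebraicity, so every uptight region is semialgebraic, yielding a semialgebraic encoding of~$\socc\cM$.

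The main obstacle is the compatibility check that $(\pi')^*\cH' = \socc\cM$ as $\cQ$-modules, not merely degree by degree but with matching structure maps. The delicate case is $q_1 < q_2$ strictly in~$\cQ$ with $\pi'(q_1) = \pi'(q_2)$, where the pullback structure map would be the identity while the socle transition is zero; in that situation $\pi(q_2) \in S(q_1) = S(q_2)$, forcing both $(\socc\cM)_{q_1}$ and $\cH'_{\pi'(q_1)}$ to vanish, so the two maps agree (both being~$0$). This reconciliation, together with the verification that~$U_p$ is an upset, is the pivot of the argument.
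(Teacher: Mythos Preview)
Your proof is correct and takes a genuinely different route from the paper's. The paper argues via the syzygy theorem (Theorem~\ref{t:syzygy}): any finitely encoded $\cM$ admits a finite downset copresentation $\cM = \ker(E^0 \to E^1)$, so by left-exactness (Lemma~\ref{l:left-exact-socc}) $\socc\cM = \ker(\socc E^0 \to \socc E^1)$, reducing to the explicit computation $\socc\kk[D] = \kk[\max D]$ from Example~\ref{e:socc} and the observation that $D \setminus \max D$ is again a downset; the semialgebraic claim then rests on Lemma~\ref{l:semialgebraic}, that $\max D$ is semialgebraic when $D$ is. Your approach instead stays at the level of the given encoding $\pi: \cQ \to \cP$ and refines it directly via the uptight construction, tracking the pair $(\pi(q), S(q))$; this is more elementary in that it avoids the syzygy theorem entirely and handles the semialgebraic claim by a single Tarski--Seidenberg projection rather than the more hands-on argument of Lemma~\ref{l:semialgebraic}. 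The paper's route, on the other hand, establishes a template that it reuses essentially verbatim for the harder socle variants later (Propositions~\ref{p:socct-encoding} and~\ref{p:soc-encoding}, Theorem~\ref{t:soct-encoding}), where reducing to downset modules and computing there is the natural move; your direct refinement would become progressively more cumbersome to adapt to those settings.
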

\begin{proof}
If $\cM$ is finitely encoded then $\cM = \ker(E^0 \to E^1)$ with $E^i$
a finite direct sum of indicator quotients by
Theorem~\ref{t:syzygy}.\ref{i:downset-copresentation}.  Thus $\socc\cM
= \ker(\socc E^0 \to \socc E^1)$ by Lemma~\ref{l:left-exact-socc}.
For the finitely encoded claim it therefore suffices, by
Lemma~\ref{l:abelian-category}, to prove that $\socc\kk[D]$ is
finitely encoded if~$\kk[D]$ is.  But Example~\ref{e:socc} implies
that $\socc\kk[D] = \kk[\max D] = \ker\bigl(\kk[D] \to \kk[D \minus
\max D]\bigr)$ is a downset copresentation, where $D \minus \max D$ is
a downset by definition.

The same argument shows the semialgebraic claim by
Lemma~\ref{l:semialgebraic}.
\end{proof}

%%%%%%%%%%%%%%%%%%%%%%%%%%%%%%%%%%%%%%%%%%%%%%%%%%%%%%%%%%%%%%%%%%%%%%%%%
\subsection{Socles and cogenerator functors}\label{sub:soc}\mbox{}%%%%%%%

\begin{defn}\label{d:soc}
The \emph{cogenerator functor} takes a module over a real polyhedral
group to its \emph{socle}:
$$%
  \soc\cM = \socc\delta\cM,
$$
the closed socle, computed over the poset $\cQ \times \fqo$, of the
upper boundary module~of~$\cM$.
\end{defn}

\begin{remark}\label{r:notation-soc-bar}
Notationally, the lack of a bar over ``$\soc$'' serves as a visual cue
that the functor is over a real polyhedral group, as the upper
boundary $\delta$ is not defined in more generality.  This visual cue
persists throughout the more general notions of socle.
\end{remark}

\begin{lemma}\label{l:soc-left-exact}
The cogenerator functor $\cM \mapsto \soc\cM$ is left-exact.
\end{lemma}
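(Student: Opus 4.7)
The plan is to observe that $\soc = \socc \circ \delta$ by Definition~\ref{d:soc}, so it suffices to show that left-exactness is preserved under this composition. Concretely, given a short exact sequence $0 \to \cM' \to \cM \to \cM'' \to 0$ of modules over the real polyhedral group~$\cQ$, I would first apply the upper boundary functor to obtain an exact sequence of $\cQ \times \fqo$-modules, and then apply the closed cogenerator functor over $\cQ \times \fqo$ to extract left-exactness of the composition.

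The first step invokes Lemma~\ref{l:exact-delta}, which says that for each face $\sigma \in \cfq$ the functor $\ds$ is exact, since it is built from direct limits of vector spaces. Because $\delta\cM = \bigoplus_{\sigma \in \cfq}\ds\cM$ as a $\cQ \times \fqo$-module (Definition~\ref{d:upper-boundary}), and a direct sum of exact functors is exact, the upper boundary functor $\delta$ is itself exact. Thus $0 \to \delta\cM' \to \delta\cM \to \delta\cM'' \to 0$ is exact.

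The second step applies Lemma~\ref{l:left-exact-socc} with the poset $\cP = \cQ \times \fqo$: the closed cogenerator functor $\socc = \hhom_{\cQ \times \fqo}(\kk,-)$ is left-exact. Applying it to the short exact sequence from the previous step yields the left-exact sequence
\[
  0 \to \socc\delta\cM' \to \socc\delta\cM \to \socc\delta\cM'',
\]
which by definition is
\[
  0 \to \soc\cM' \to \soc\cM \to \soc\cM'',
\]
as desired.

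There is no real obstacle here: the only content is bookkeeping to ensure that $\delta\cM$ is genuinely being treated as a module over the product poset $\cQ \times \fqo$ (so that Lemma~\ref{l:left-exact-socc} applies to the correct poset), and that the direct sum over faces $\sigma$ indeed yields an exact functor. Both of these are immediate from Definition~\ref{d:upper-boundary} and Lemma~\ref{l:exact-delta}, so the proof is essentially a one-line composition of two previously established results.
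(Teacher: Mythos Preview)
Your proof is correct and takes essentially the same approach as the paper: compose the exactness of $\delta$ (via Lemma~\ref{l:exact-delta} and exactness of direct sums) with the left-exactness of $\socc$ over $\cQ \times \fqo$ (Lemma~\ref{l:left-exact-socc}). The paper's proof is a one-sentence version of exactly this argument.
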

\begin{proof}
Use exactness of upper boundaries atop~$\sigma$
(Lemma~\ref{l:exact-delta}), exactness of the direct sums forming
$\delta\cM$ from $\ds\cM$, and left-exactness of closed socles
(Lemma~\ref{l:left-exact-socc}).
\end{proof}
  
Sometimes is it useful to apply the closed socle functor
to~$\delta\cM$ over $\cQ \times \fqo$ in two steps, first over one
poset and then over the other.  These yield the same result.

\begin{lemma}\label{l:either-order}
The functors $\rnsoc\!$ and $\fqsoc$ commute.  In particular,
$$%displaystyle
  \fqsoc(\rnsoc\delta\cM)
  \cong
  \soc\cM\!
  \cong
  \rnsoc(\fqsoc\delta\cM).
$$
\end{lemma}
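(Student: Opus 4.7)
The plan is to reduce the claim to a purely formal fact about closed socles over a product poset. Namely, for any modules $N$ over $\cP_1 \times \cP_2$, I expect
$$
  \socc_{\cP_1 \times \cP_2} N \;\cong\; \socc_{\cP_1}(\socc_{\cP_2} N) \;\cong\; \socc_{\cP_2}(\socc_{\cP_1} N),
$$
and then I would specialize to $\cP_1 = \cQ$, $\cP_2 = \fqo$, and $N = \delta\cM$; the middle term of the displayed chain becomes $\soc\cM$ by Definition~\ref{d:soc}.

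To establish the formal fact, I would first unwind Definition~\ref{d:socc}: a homogeneous element $x \in N_{(p_1,p_2)}$ lies in $\socc_{\cP_1 \times \cP_2} N$ exactly when every strict upward structure homomorphism out of $(p_1,p_2)$ sends $x$ to zero. I would then observe that any strict upward move $(p_1,p_2) \preceq (p_1', p_2')$ in the product factors through either $(p_1', p_2)$ or $(p_1, p_2')$, so the vanishing condition is equivalent to the conjunction of vanishing along all strict $\cP_1$-moves (at fixed $p_2$) and along all strict $\cP_2$-moves (at fixed $p_1$). This is exactly the description of $\socc_{\cP_1}(\socc_{\cP_2} N)$, provided $\socc_{\cP_2} N$ is a $\cP_1 \times \cP_2$-submodule (and symmetrically). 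The submodule property is a routine diagram chase in the commutative square joining $(p_1,p_2), (p_1',p_2), (p_1,p_2'), (p_1',p_2')$: if the image of $x$ in $N_{(p_1', p_2)}$ vanishes, then so does the image of the $\cP_2$-pushforward $y \in N_{(p_1, p_2')}$ in $N_{(p_1', p_2')}$, via the other corner of the square.

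With the formal lemma in hand, I would apply it to $N = \delta\cM$, which is a genuine $\cQ \times \fqo$-module by construction and Lemma~\ref{l:natural} (the mixed structure squares commute). This yields the chain
$$
  \fqsoc(\rnsoc \delta\cM)
  \;\cong\;
  \socc_{\cQ \times \fqo}(\delta\cM)
  \;=\;
  \soc\cM
  \;\cong\;
  \rnsoc(\fqsoc \delta\cM),
$$
which is exactly the content of the lemma.

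No step looks genuinely hard: the argument is almost entirely formal, relying only on the universal property of skyscrapers and commutativity of the product-poset structure maps. The one spot that warrants care is verifying that each partial socle functor produces a submodule of $\delta\cM$ compatible with the remaining action, since without this compatibility the iterated closed socles would not even be defined. But commutativity of the mixed $(\cQ, \fqo)$-structure squares, already recorded in Lemma~\ref{l:natural}, settles this cleanly.
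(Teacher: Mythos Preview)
Your proof is correct and follows essentially the same approach as the paper. The paper compresses your element-level argument into the single adjunction line
\[
\Hom_{\fqo}\bigl(\kk_\sigma,\Hom_{\cQ}(\kk_\aa,-)\bigr)
\cong
\Hom_{\cQ\times\fqo}(\kk_{\aa,\sigma},-)
\cong
\Hom_\cQ\bigl(\kk_\aa,\Hom_{\fqo}(\kk_\sigma,-)\bigr),
\]
then takes direct sums over $\aa$ and~$\sigma$; your factorization of strict product moves through the intermediate corners and your check that each partial socle is a submodule are precisely what this adjunction encodes when unwound on elements.
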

\begin{proof}
By taking direct sums over $\aa$ and~$\sigma$, this follows from the
natural isomorphisms
$
  \Hom_{\fqo}\bigl(\kk_\sigma,\Hom_{\cQ}(\kk_\aa,-)\bigr)
  \cong
  \Hom_{\cQ\times\fqo}(\kk_{\aa,\sigma},-)
  \cong
  \Hom_\cQ\bigl(\kk_\aa,\Hom_{\fqo}(\kk_\sigma,-)\bigr).
$
\end{proof}

The fundamental examples---indicator quotients for downsets---require
a no(ta)tion.

\begin{defn}\label{d:del-nabla}
In the situation of Definition~\ref{d:cocomplex}, write $\del\nabla$
for the antichain of faces of~$\cQ_+$ that are minimal under inclusion
in~$\nabla$.
\end{defn}

\pagebreak
\begin{remark}\label{r:del-nda}
The reason for writing $\del\nabla$ instead of $\max$ or~$\min$ is
that it would be ambiguous either way, since both $\cfq$ and~$\fqo$
are natural here.  Taking the ``op'' perpsective, the $\fqo$-module
$\kk[\del\nabla]$ with basis~$\del\nabla$ resulting from
Definition~\ref{d:del-nabla} is really just a $\fqo$-graded vector
space: the antichain condition ensures that every non-identity element
of~$\fqo$ acts by~$0$, unless $\del\nabla = \{\0\}$, in which case all
of~$\fqo$ acts~by~$1$.
\end{remark}

The case of most interest here is $\nabla = \nda$, the shape of~$D$
at~$\aa$ (Proposition~\ref{p:shape}).

\begin{example}\label{e:soc-Rn-downset}
For a downset $D$ in a real polyhedral group, $\fqsoc\delta\kk[D]$ has
$\kk[\del\nda]$ in each degree~$\aa$, because $\delta\kk[D]$ itself
has $\kk[\nda]$ in each degree~$\aa$ by
Definition~\ref{d:upper-boundary} and
Proposition~\ref{p:downset-upper-boundary}.  What $\rnsoc$ then does
is find the degrees~$\aa \in \cQ$ maximal among those where $\sigma
\in \del\nda$, by
Proposition~\ref{p:downset-upper-boundary}.\ref{i:nabla} and
Example~\ref{e:socc}.

Taking socles in the other order, first $\rnsoc\kk[\ds D]$ asks
whether $\sigma \in \nda$ but $\sigma \not\in \nd[\bb]$ for any $\bb
\succ \aa$ in~$\cQ$.  That can happen even if $\sigma$ contains a
smaller face where it still happens.  What $\fqsoc$ then does is
return the smallest faces at~$\aa$ where it happens.
\end{example}

\begin{cor}\label{c:soc-Rn-downset}
The socle of the indicator quotient $\kk[D]$ for any downset $D$ in a
real polyhedral group~$\cQ$ is nonzero only in degrees lying in the
topological boundary~$\del D$.
\end{cor}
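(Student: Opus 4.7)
The plan is to split the complement of the topological boundary as $\cQ \minus \del D = (\cQ \minus \oD) \cup D^\circ$ and show $(\soc \kk[D])_\aa = 0$ in each region. Throughout I would exploit Lemma~\ref{l:either-order} to rewrite $\soc \kk[D] = \rnsoc(\fqsoc \delta \kk[D])$, and use Example~\ref{e:soc-Rn-downset}, which identifies $(\fqsoc \delta \kk[D])_\aa$ with the vector space $\kk[\del \nda]$ supported on the minimal faces of the shape of $D$ at~$\aa$.

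For $\aa \notin \oD$, Proposition~\ref{p:downset-upper-boundary}.\ref{i:containment} says $\ds D \subseteq \oD$ for every face~$\sigma$, so $\aa \notin \ds D$ for any~$\sigma$ and the entire upper boundary $(\delta \kk[D])_\aa$ vanishes. Hence the socle is zero at $\aa$ with no further work.

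For $\aa \in D^\circ$, I would first compute the shape: since $\aa \in D$, Remark~\ref{r:tangent-cone} gives $T_\aa D = -\cQ_+$, so $\nda = \cfq$ and thus $\del \nda = \{\{\0\}\}$ consists of just the origin face. Example~\ref{e:soc-Rn-downset} then makes $(\fqsoc \delta \kk[D])_\aa$ one-dimensional, spanned by a class in $\delta^{\{\0\}} \kk[D]_\aa = \kk[D]_\aa$ (using $\{\0\}^\circ = \{\0\}$ in Definition~\ref{d:atop-sigma}). To see this class fails the $\rnsoc$ condition, pick $v$ in the open cone $\qpc$ small enough that $\aa + v \in D^\circ \subseteq D$, which exists by openness of~$D^\circ$; then the $\cQ$-structure map of $\kk[D]$ from $\aa$ to $\aa+v$ is the identity on~$\kk$, hence nonzero, so the class is not annihilated upon strictly moving up and does not contribute to $\rnsoc$.

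The main obstacle is bookkeeping the opposite-order structure of~$\fqo$: confirming that the $\{\0\}$-component of $\fqsoc \delta \kk[D]$ really coincides with $\kk[D]$ in every degree of~$D$ (there are simply no proper subfaces of~$\{\0\}$ available to impose vanishing conditions in the $\fqo$-socle), and that the $\cQ$-action inherited by $\fqsoc \delta \kk[D]$ is just the component-wise restriction of the $\cQ$-action on $\delta \kk[D]$. Once Example~\ref{e:soc-Rn-downset} is invoked, these points reduce to routine checks, and the geometric content collapses to the observation that a point in~$D^\circ$ admits strictly upper neighbors inside~$D$.
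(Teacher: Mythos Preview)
Your proof is correct, but it runs the two closed-socle functors in the opposite order from the paper. The paper applies $\rnsoc$ first: since $\soc\kk[D]$ in each $\fqo$-degree~$\sigma$ is contained in $\rnsoc\kk[\ds D] = \kk[\max\ds D]$ (Example~\ref{e:socc}), and since Proposition~\ref{p:downset-upper-boundary}.\ref{i:containment} sandwiches $D \subseteq \ds D \subseteq \oD$, one concludes $\max\ds D \subseteq \oD \minus D^\circ = \del D$ in one stroke, for every~$\sigma$ at once. You instead split on~$\aa$ and apply $\fqsoc$ first via Example~\ref{e:soc-Rn-downset}: outside~$\oD$ the whole upper boundary vanishes, and on~$D^\circ$ the shape $\nda = \cfq$ collapses $\fqsoc$ to the single component $\delta^{\{\0\}}\kk[D] = \kk[D]$, which then visibly fails $\rnsoc$ because interior points have strictly greater neighbors in~$D$. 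Your approach trades brevity for explicitness---it makes transparent exactly which face survives in the interior and why it dies---while the paper's approach avoids invoking Example~\ref{e:soc-Rn-downset} at all and handles all faces~$\sigma$ uniformly without a case split. Both are valid; your bookkeeping worries (that $\{\0\}$ is maximal in~$\fqo$ so the $\fqo$-socle imposes no condition there, and that the $\cQ$-action restricts componentwise) are routine, exactly as you say.
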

\begin{proof}
By Proposition~\ref{p:downset-upper-boundary}.\ref{i:containment},
$\delta\kk[D]$ is a direct sum of indicator quotients.
Example~\ref{e:socc} and Proposition~\ref{p:shape} show that the socle
of an indicator quotient over a real polyhedral group lies along the
boundary of the downset in question.
\end{proof}

\begin{prop}\label{p:soc-encoding}
If a module~$\cM$ over a real polyhedral group is finitely encoded
then so is its socle $\soc\cM$.  If~$\cM$ is semialgebraic then so
is~$\soc\cM$.
\end{prop}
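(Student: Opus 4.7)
The plan is to compose the two previously established results that together handle exactly this composite construction. Recall that by Definition~\ref{d:soc}, the socle is $\soc\cM = \socc\delta\cM$, where the closed socle is taken over the poset $\cQ\times\fqo$. So the proof is a two-step reduction.

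First I would invoke Proposition~\ref{p:frontier-encoding}: since $\cM$ is finitely encoded over $\cQ$, its upper boundary module $\delta\cM$ is finitely encoded over $\cQ\times\fqo$ (because $\fqo$ is finite and $\ds\cM$ is finitely encoded for each face~$\sigma$, by Proposition~\ref{p:frontier-encoding}, and $\delta\cM$ is the finite direct sum of these over $\sigma\in\cfq$). Likewise, in the semialgebraic setting, Proposition~\ref{p:frontier-encoding} yields that $\delta\cM$ is semialgebraic.

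Next I would apply Proposition~\ref{p:socc-encoding} to the finitely encoded (respectively semialgebraic) module $\delta\cM$ over the poset $\cQ\times\fqo$. That proposition was stated for modules over an arbitrary poset, and for semialgebraic modules over a real polyhedral group; in the semialgebraic case one notes that adjoining the finite factor $\fqo$ does not disturb the semialgebraic property, since fibers of the encoding decompose as finite disjoint unions of products of semialgebraic fibers over~$\cQ$ with single points in~$\fqo$. The conclusion is that $\socc\delta\cM$ is finitely encoded (respectively semialgebraic) as a $\cQ\times\fqo$-module, hence also as a $\cQ$-module by composing with the forgetful poset morphism $\cQ\to\cQ$ and bundling the finitely many $\fqo$-coordinates into a finite encoding poset.

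I do not anticipate a serious obstacle: the statement is essentially a corollary obtained by chaining Propositions~\ref{p:frontier-encoding} and~\ref{p:socc-encoding}. The only minor subtlety worth making explicit is the shift between base posets $\cQ$ and $\cQ\times\fqo$ when passing through $\delta$, which is harmless because $\fqo$ is finite (so finite encoding and the semialgebraic property pass transparently between the two settings).
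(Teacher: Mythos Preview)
Your proposal is correct: the result is indeed obtained by chaining Proposition~\ref{p:frontier-encoding} and Proposition~\ref{p:socc-encoding}, and your handling of the base-poset shift to $\cQ\times\fqo$ is adequate.

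The paper organizes the same two ingredients slightly differently. Rather than applying Proposition~\ref{p:socc-encoding} over the product poset $\cQ\times\fqo$, it invokes Lemma~\ref{l:either-order} to write $\soc\cM = \rnsoc(\fqsoc\delta\cM)$: the inner $\fqsoc$ is computed componentwise as an intersection of kernels of the $\cQ$-module maps $\ds\cM\to\ds[\sigma']\cM$, so Proposition~\ref{p:frontier-encoding} and Lemma~\ref{l:abelian-category} keep each component finitely encoded (resp.\ semialgebraic) purely as a $\cQ$-module; the outer closed socle is then over~$\cQ$ itself. This reordering buys exactly the avoidance of the ``minor subtlety'' you flag---semialgebraicity is only defined over a real vector space, so the paper never has to extend the notion to $\cQ\times\fqo$ or argue that adjoining the finite factor $\fqo$ is harmless. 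Your direct route is shorter conceptually; the paper's route is cleaner technically in the semialgebraic case.
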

\begin{proof}
By Lemma~\ref{l:either-order}, $\soc\cM$ is the closed socle of the
$\cQ$-module $\fqsoc\delta\cM$.  The $\fqo$-graded component of this
$\cQ$-module in degree~$\sigma$ is the intersection of the kernels of
the $\cQ$-module homomorphisms $\ds\cM \to
\delta^{\sigma\hspace{-1.1ex}}{}_{\sigma'\hspace{.1ex}}\cM$ for
$\sigma \supseteq \sigma'$.  Now apply
Proposition~\ref{p:frontier-encoding} and
Lemma~\ref{l:abelian-category}.
\end{proof}

%%%%%%%%%%%%%%%%%%%%%%%%%%%%%%%%%%%%%%%%%%%%%%%%%%%%%%%%%%%%%%%%%%%%%%%%%
\subsection{Closed socles along faces of positive dimension}\label{sub:socc-along}

\begin{defn}\label{d:qrt}
For a partially ordered abelian group~$\cQ$ and a face~$\tau$
of~$\cQ_+$, write $\qzt$ for the quotient of~$\cQ$ modulo the subgroup
generated by~$\tau$.  If $\cQ$ is a real polyhedral group then write
$\qrt = \qzt$.
\end{defn}

\begin{remark}\label{r:qrt}
The image $\qztp$ of~$\cQ_+\!$ in~$\qzt$ is a submonoid that generates
$\qzt$, but $\qztp$ can have units, so $\qzt$ need not be partially
ordered in a natural way.  However, if $\cQ$ is a real polyhedral
group then the group of units (lineality space) of the cone $\cQ_+ \!+
\RR\tau$ is just~$\RR\tau$ itself, because $\cQ_+$ is pointed, so
$\qrt$ is a real polyhedral group whose positive cone $(\qrt)_+ =
\qrtp$ is the image of~$\cQ_+$.  Similar reasoning applies to the
intersection of the real polyhedral situation with any subgroup
of~$\cQ$; this includes the case of normal affine semigroups, where
the subgroup of~$\cQ$ is discrete.
\end{remark}

\pagebreak
\begin{lemma}\label{l:quotient-restriction}
The subgroup $\ZZ\tau \subseteq \cQ$ of a partially ordered
group~$\cQ$ acts freely on~the localization $\cM_\tau$ of any
$\cQ$-module~$\cM$ along a face~$\tau$.  Consequently, if $I_\tau
\subseteq \kk[\cQ_+]$ is the augmentation ideal $\<m - 1 \mid m \in
\kk[\tau]$ is a monomial\/$\>$, then the $\qzt$-graded module $\cmt =
\cM/I_\tau\cM$ over the monoid algebra $\kk[\qztp]$ satisfies
$$%
  \cM_\tau \cong \bigoplus_{\aa \mapsto \wt\aa} (\cM/\tau)_\wt\aa.
$$
\end{lemma}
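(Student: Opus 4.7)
The plan is to prove the two assertions in sequence, with the free action establishing the bookkeeping needed for the direct-sum decomposition.

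First I would verify that $\ZZ\tau$ acts on the $\cQ$-graded vector space $\cM_\tau$ by grading-shifting automorphisms. By construction of the localization at~$\tau$, multiplication by the monomial $x^f$ for any $f \in \tau$ is an isomorphism $(\cM_\tau)_\aa \simto (\cM_\tau)_{\aa+f}$. Writing an arbitrary element $t = t_1 - t_2 \in \ZZ\tau$ with $t_1, t_2 \in \tau$ and composing multiplication by $x^{t_1}$ with the inverse of multiplication by $x^{t_2}$ yields a well-defined isomorphism $(\cM_\tau)_\aa \simto (\cM_\tau)_{\aa+t}$ independent of the chosen expression $t = t_1 - t_2$. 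These compose in $t$ to give an action of~$\ZZ\tau$, which is free because the induced action on the grading set~$\cQ$ is just translation by $\ZZ\tau \subseteq \cQ$, and translation of a group by a subgroup is free.

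Next I would identify $\cmt = \cM_\tau / I_\tau \cM_\tau$ as the coinvariants of this $\ZZ\tau$-action. The augmentation ideal $I_\tau$ is generated by elements of the form $x^f - 1$ with $x^f \in \kk[\tau]$ a monomial, so modding out by $I_\tau$ forces the relation $x^f \cdot y = y$ in $\cmt$ for every $y \in \cM_\tau$ and every $f \in \tau$. At the graded level this identifies $(\cM_\tau)_\aa$ with $(\cM_\tau)_{\aa+f}$ via the multiplication-by-$x^f$ isomorphism; extending to negative elements via the $\ZZ\tau$-action established in the previous paragraph, every pair of graded pieces lying in a common $\ZZ\tau$-orbit of~$\cQ$ gets identified. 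Because each identification is already an isomorphism of vector spaces (freeness), the $\wt\aa$-graded component $(\cmt)_\wt\aa$ of the quotient is canonically isomorphic to $(\cM_\tau)_\aa$ for any single representative $\aa$ mapping to~$\wt\aa$.

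The displayed isomorphism then follows by assembling the canonical isomorphisms $(\cM_\tau)_\aa \cong (\cmt)_\wt\aa$ as $\aa$ ranges over all of~$\cQ$, the right-hand sum being indexed by $\aa \in \cQ$ with each summand labeled by its image $\wt\aa \in \qzt$. The main obstacle is purely bookkeeping: being scrupulous about distinguishing the $\cQ$-grading on $\cM_\tau$ from the $\qzt$-grading on $\cmt$, and confirming that the action is well-defined independent of how one writes $t \in \ZZ\tau$ as a difference $t_1 - t_2$, which follows from commutativity of $\kk[\tau]$ and the fact that each $x^f$ for $f \in \tau$ acts invertibly after localization.
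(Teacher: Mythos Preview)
Your proposal is correct and follows essentially the same approach as the paper's (very terse) proof: monomials from~$\tau$ become units on~$\cM_\tau$, giving the free translation action of~$\ZZ\tau$, and quotienting by~$I_\tau$ collapses each $\ZZ\tau$-orbit to a single graded piece. The one point you glide over is that the lemma defines $\cmt = \cM/I_\tau\cM$ rather than $\cM_\tau/I_\tau\cM_\tau$; the paper handles this by noting that the quotient map $\cM \to \cM/I_\tau\cM$ factors through~$\cM_\tau$ (since any $y$ killed by some $x^f$ with $f\in\tau$ satisfies $y = (1-x^f)y \in I_\tau\cM$), which is equivalent to your implicit identification but deserves a sentence.
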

\begin{proof}
The monomials of $\kk[\cQ_+]$ corresponding to elements of~$\tau$ are
units on~$\cM_\tau$ acting as translations along~$\tau$.  Since the
augmentation ideal sets every monomial equal to~$1$, the quotient $\cM
\to \cmt$ factors through~$\cM_\tau$.
\end{proof}

\begin{defn}\label{d:quotient-restriction}
The $\kk[\qzt]$-module $\cM_\tau$ in
Lemma~\ref{l:quotient-restriction} is the \emph{quotient-restriction}
$\cmt$ of~$\cM$ along~$\tau$.
\end{defn}

\begin{remark}\label{r:quotient-functor}
Over (any subgroup of) a real polyhedral group~$\cQ$, the functor
$\cM_\tau \mapsto \cmt$ has a ``section'' $\cmt \mapsto
\cM_\tau|_\tau{}_{{}^{\!\perp}}$, where $\cN|_{\tau^\perp} =
\bigoplus_{\aa\in\tau^\perp} \cN_\aa$ is the \emph{restriction}
of~$\cN$ to any linear subspace~$\tau^\perp$ complementary
to~$\RR\tau$.  (When $\cQ_+ = \RR^n_+$, a complement is canonically
spanned by the face orthogonal to~$\tau$, or equivalently, the unique
maximal face of~$\RR^n_+$ intersecting~$\tau$ trivially.)  The
restriction is a module over the real polyhedral group $\tau^\perp$
with positive cone $(\cQ_+ \!+ \RR\tau) \cap \RR\tau^\perp$, which
projects isomorphically to the positive cone of~$\qrt$.  Thus the
quotient-restriction is both a quotient and a restriction
of~$\cM_\tau$.  While a section can exist over polyhedral partially
ordered groups that are not real, it need not.  For example, when $\cQ
= \ZZ^2$ and the columns of $\big[\twoline {2\ 1\ 0}{0\ 1\ 1}\big]$
generate~$\cQ_+$, taking $\tau = \big\<\big[\twoline 20\big]\big\>$ to
be the face along the $x$-axis yields a quotient monoid $\cQ_+/\ZZ\tau
\cong \ZZ/2\ZZ \oplus \NN$ with torsion, preventing $\kk[\cQ_+]_\tau
\mapsto \kk[\cQ_+]/\tau$ from having a section to any category of
modules over~a~subgroup~of~$\cQ$.
\end{remark}

\begin{lemma}\label{l:exact-qr}
The quotient-restriction functors $\cM \mapsto \cM/\tau$ are exact.
\end{lemma}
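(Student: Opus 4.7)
The plan is to factor the quotient-restriction functor $\cM \mapsto \cmt$ as a composite of two manifestly exact functors: localization $\cM \mapsto \cM_\tau$ along $\tau$, followed by the passage from $\cM_\tau$ to its quotient-restriction $\cmt$.

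The first factor is exact because $\cM_\tau = \cM \otimes_{\kk[\cQ_+]} \kk[\cQ_+ + \ZZ\tau]$ and the monoid algebra $\kk[\cQ_+ + \ZZ\tau]$ is a flat $\kk[\cQ_+]$-module, being the localization of $\kk[\cQ_+]$ obtained by inverting the monomials corresponding to elements of~$\tau$. Tensoring with a flat module is exact, so short exact sequences of $\cQ$-modules localize to short exact sequences of $\kk[\cQ_+ + \ZZ\tau]$-modules.

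For the second factor, I would invoke the natural $\kk$-vector space decomposition
\[
  \cM_\tau \cong \bigoplus_{\aa \mapsto \wt\aa} (\cmt)_\wt\aa
\]
supplied by Lemma~\ref{l:quotient-restriction}. Reading this across a fixed coset $\wt\aa \in \qzt$ says that any single graded piece $(\cM_\tau)_\aa$ with $\aa \mapsto \wt\aa$ maps isomorphically onto $(\cmt)_\wt\aa$, because $\ZZ\tau$ acts freely on $\cM_\tau$ by translation and all of those graded pieces get identified by the quotient $\cM_\tau \onto \cmt$. Concretely, choosing any set $R \subseteq \cQ$ of coset representatives for $\cQ/\ZZ\tau$ gives an isomorphism of $\kk$-vector spaces $\cmt \cong \bigoplus_{\aa \in R}(\cM_\tau)_\aa$, which is a direct summand of $\cM_\tau$ in each $\qzt$-degree. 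Taking direct summands degree by degree is exact, so $\cM_\tau \mapsto \cmt$ is exact, and the composite $\cM \mapsto \cmt$ is exact as well.

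The main obstacle, such as it is, is verifying that this is functorial and not merely a vector space identification in each case: one must check that the $\kk[\qztp]$-module structure on $\cmt$ is the one induced from $\cM_\tau$, so that a morphism $\cM_1 \to \cM_2$ of $\cQ$-modules induces a morphism of $\qzt$-graded $\kk[\qztp]$-modules compatible with the above decomposition. This is already implicit in Lemma~\ref{l:quotient-restriction}, since the isomorphism there is natural: it arises from the fact that the augmentation ideal $I_\tau$ acts on $\cM_\tau$ by elements of the form $u - 1$ where $u \in \kk[\tau]$ is a unit, and such elements cannot mix distinct cosets of~$\ZZ\tau$. Once that naturality is in hand, exactness of both factors is immediate.
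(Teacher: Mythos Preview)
Your proof is correct and follows essentially the same approach as the paper: factor as localization (exact by flatness of $\kk[\cQ_+ + \ZZ\tau]$) followed by the passage $\cM_\tau \mapsto \cmt$, whose exactness is checked in each $\qzt$-degree using the coset decomposition from Lemma~\ref{l:quotient-restriction}. The paper merely says the second step ``can be checked on each $\qzt$-degree individually,'' which you have spelled out in more detail.
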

\begin{proof}
Localizing along~$\tau$ (Definition~\ref{d:support}) is exact because
the localization $\kk[\cQ_+\! +\nolinebreak \ZZ\tau]$ of~$\kk[\cQ_+]$
is flat as a $\kk[\cQ_+]$-module.  The exactness of the functor that
takes each $\kk[\qztp]$-module $\cM_\tau$ to~$\cmt$ can be checked on
each $\qzt$-degree individually.
\end{proof}

The definition of closed socle and closed cogenerator are expressed in
terms of Hom functors analogous to those in Definition~\ref{d:socc}.
They are more general in that they occur along faces of~$\cQ$, but
more restrictive in that $\cQ$ needs to be a partially ordered group
instead of an arbitrary poset for the notion of face to make sense.

\begin{defn}\label{d:hhom}
Fix a face~$\tau$ of a partially ordered group~$\cQ$.  The
\emph{skyscraper} $\cQ$-module at $\aa \in \cQ$ along~$\tau$ is
$\kk[\aa+\tau]$, the subquotient $\kk[\aa + \cQ_+]/\kk[\aa +
\mm_\tau]$ of~$\kk[\cQ]$, where $\mm_\tau = \cQ_+\! \minus \tau$.  Set
$$%
  \hhom_\cQ\bigl(\kk[\tau],-\bigr)
  =
  \bigoplus_{\aa \in \cQ} \Hom_\cQ\bigl(\kk[\aa+\tau],-\bigr).
$$
\end{defn}

\pagebreak
\begin{defn}\label{d:socct}
Fix a partially ordered group~$\cQ$, a face~$\tau$, and a
$\cQ$-module~$\cM$.
\begin{enumerate}
\item\label{i:global-socc-tau}%
The \emph{(global) closed cogenerator functor along~$\tau$} takes
$\cM$ to its \emph{(global) closed socle along~$\tau$}: the
$\kk[\qztp]$-module
$$%
  \socct\cM = \hhom_\cQ\bigl(\kk[\tau],\cM\bigr)/\tau.
$$

\item\label{i:local-socc-tau}%
If $\qzt$ is partially ordered, then the \emph{local closed
cogenerator functor along~$\tau$} takes $\cM$ to its \emph{local
closed socle along~$\tau$}: the $\qzt$-module
$$%
  \socc(\cmt) = \hhom_{\qzt}(\kk,\cM/\tau).
$$
Elements of $\socc(\cmt)$ are identified with elements of~$\cmt$ via
$\phi \mapsto \phi(1)$.

\item\label{i:global-closed-cogen}%
Regard the $\cQ$-module $\hhom_\cQ\bigl(\kk[\tau],\cM\bigr)$ naturally
as contained in~$\cM$ via \mbox{$\phi \mapsto \phi(1)$}.  A
homogeneous element in this $\cQ$-submodule that maps to a nonzero
element of~$\socct\cM$ is a \emph{(global) closed cogenerator}
of~$\cM$ along~$\tau$.  If $D \subseteq \cQ$ is a downset, then a
\emph{closed cogenerator} of~$D$ is the degree in~$\cQ$ of a closed
cogenerator of~$\kk[D]$.

\item\label{i:local-closed-cogen}%
Regard $\socc(\cmt)$ naturally as contained in~$\cmt$ via $\phi
\mapsto \phi(1)$.  A nonzero homogeneous element in $\socc(\cmt)$ is a
\emph{local closed cogenerator} of~$\cM$ along~$\tau$.
\end{enumerate}
\end{defn}

\begin{remark}\label{r:notation-soc-tau}
Notationally, a subscript on ``$\soc$'' serves as a visual cue that
the functor is over a partially ordered group, as faces of posets are
not defined in more generality.  This visual cue persists throughout
the more general notions of~socle.
\end{remark}

\begin{remark}\label{r:socc0}
The closed cogenerator functor over a partially ordered group is the
global closed cogenerator functor along the trivial face: $\socc =
\socc[\{\0\}]$ and it equals the local cogenerator functor
along~$\{\0\}$.
\end{remark}

\begin{remark}\label{r:witness}
In looser language, a closed cogenerator of~$\cM$ along~$\tau$ is an
element
\begin{itemize}
\item%
annihilated by moving up in any direction outside of~$\tau$ but that
\item%
remains nonzero when pushed up arbitrarily along~$\tau$.
\end{itemize}
Equivalently, a closed cogenerator along~$\tau$ is an element whose
annihilator under the action of~$\cQ_+$ on~$\cM$ equals the prime
ideal $\mm_\tau = \cQ_+ \minus \tau$ of the positive cone~$\cQ_+$.
\end{remark}

\begin{example}\label{e:socct}
The closed socle along a face~$\tau$ of the indicator quotient
$\kk[D]$ for any downset~$D$ in a partially ordered group~$\cQ$ with
partially ordered quotient $\qzt$ is
$$%
  \socct\, \kk[D] = \kk[\mtd],
$$
where $\mtd$ is the image in~$\qzt$ of the set of closed cogenerators
of~$D$ along~$\tau$:
$$%
  \mtd =
  \big\{\aa \in D \mid (\aa+\cQ_+) \cap D = \aa + \tau\big\}/\,\ZZ\tau.
$$
The set of closed cogenerators of~$D$ along~$\tau$ can also be
characterized as the elements of~$D$ that become maximal in the
localization~$D_\tau$ of~$D$
(Definition~\ref{d:PF}.\ref{i:localization}).
\end{example}

Every global closed cogenerator yields a local one.

\begin{prop}\label{p:local-vs-global-closed}
Fix a partially ordered group~$\cQ$.  There is a natural injection
$$%
  \socct\cM \into \socc(\cmt)
$$
for any $\cQ$-module~$\cM$ if $\tau$ is a face with partially ordered
quotient~$\qzt$.
\end{prop}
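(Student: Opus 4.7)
The plan is to construct the injection as the natural map induced by the quotient-restriction functor $(-)/\tau$ applied to the inclusion of the annihilator submodule $\hhom_\cQ(\kk[\tau],\cM) \hookrightarrow \cM$, and then verify that the image lies in~$\socc(\cmt)$ and that the map is injective. First I would identify $\hhom_\cQ(\kk[\tau],\cM)$ with the $\cQ$-graded $\kk[\cQ_+]$-submodule $(0:_\cM \mm_\tau)$ of~$\cM$, where $\mm_\tau = \cQ_+ \minus \tau$: any $\phi \in \Hom_\cQ(\kk[\aa+\tau],\cM)$ is determined by $\phi(1_\aa) \in \cM_\aa$, and the defining relations of $\kk[\aa+\tau]$ force this image to be annihilated by $\mm_\tau$. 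Applying $(-)/\tau$ to the inclusion $(0:_\cM \mm_\tau) \hookrightarrow \cM$ then produces a $\qzt$-graded homomorphism $\Phi\colon \socct\cM = (0:_\cM \mm_\tau)/\tau \too \cmt$ that sends the class of $x \in (0:_\cM \mm_\tau) \cap \cM_\aa$ to its image in $(\cmt)_{\wt{\aa}} \cong (\cM_\tau)_\aa$, the isomorphism coming from Lemma~\ref{l:quotient-restriction}. Naturality of~$\Phi$ in~$\cM$ is automatic from the functoriality of all constituents.

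To check that the image of $\Phi$ lands in $\socc(\cmt)$, fix a nonzero $\wt{q} \in \qztp$ and a lift $q \in \cQ_+$. Because $\qzt$ is partially ordered, its positive cone $\qztp$ has trivial unit group, so $q \in \tau$ would force $\wt{q} = \0$; hence $q \in \mm_\tau$. Then $qx = 0$ in~$\cM$ for every $x \in (0:_\cM \mm_\tau)$, so $\wt{q} \cdot \Phi(\ol x) = 0$ in~$\cmt$, placing $\Phi(\ol x)$ in $\socc(\cmt)_{\wt{\aa}}$.

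For injectivity, suppose $\Phi(\ol x) = 0$ for some $x \in (0:_\cM \mm_\tau) \cap \cM_\aa$. Then $x$ already vanishes in $(\cM_\tau)_\aa$, so there exists $t \in \tau$ with $tx = 0$ in~$\cM$. Both $x$ and $tx$ lie in the annihilator submodule $(0:_\cM \mm_\tau)$, so the identity $x = x - tx = (1 - t)x$ exhibits $x$ as belonging to $I_\tau \cdot (0:_\cM \mm_\tau)$; reducing modulo~$I_\tau$ to form $\socct\cM$ therefore forces $\ol x = 0$.

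The main subtlety is that quotient-restriction $(-)/\tau$ is only right-exact, so the induced map from a submodule into the quotient-restriction of the ambient module is not injective \emph{a priori}. The injectivity argument resolves this by producing an explicit representative $(1-t)x$ inside $I_\tau \cdot (0:_\cM \mm_\tau)$; the algebraic input that enables the argument is that $(0:_\cM \mm_\tau)$ is closed under multiplication by elements of~$\tau$, so $tx$ stays inside the annihilator submodule and the relation $x = (1-t)x$ holds intrinsically there rather than only in the ambient~$\cM$.
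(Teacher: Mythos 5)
Your proof is correct and follows the same underlying route as the paper's: both regard $\hhom_\cQ\bigl(\kk[\tau],\cM\bigr)$ as the annihilator submodule $(0:_\cM \mm_\tau)$ inside~$\cM$ and produce the map by applying quotient-restriction along~$\tau$ to that inclusion. The only difference is in the verification: where the paper factors the map through the localization and invokes exactness of $(-)/\tau$ (Lemma~\ref{l:exact-qr}) together with the identification in Lemma~\ref{l:socct}, you substitute direct element computations---the relation $x = (1-t)x \in I_\tau\cdot(0:_\cM \mm_\tau)$ for injectivity, and annihilation by lifts to~$\mm_\tau$ of nonzero elements of~$\qztp$ for the socle condition---which makes the argument self-contained at the cost of reproving, in this special case, what those two lemmas supply.
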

\begin{proof}
Localizing any homomorphism $\kk[\aa+\tau] \to \cM$ along~$\tau$
yields a homomorphism $\kk[\aa+\ZZ\tau] \to \cM_\tau$, so
$\hhom_\cQ\bigl(\kk[\tau],\cM\bigr){}_\tau$ is naturally a submodule
of $\hhom_\cQ\bigl(\kk[\ZZ\tau],\cM_\tau\bigr)$.  The claim now
follows from Lemma~\ref{l:exact-qr} and the next result.
\end{proof}

\begin{lemma}\label{l:socct}
If $\cQ$ and $\qzt$ are partially ordered, there is a canonical
\mbox{isomorphism}
$$%
  \hhom_\cQ\bigl(\kk[\ZZ\tau],\cM_\tau\bigr)/\tau
  \cong
  \hhom_{\qzt}(\kk,\cM/\tau).
$$
\end{lemma}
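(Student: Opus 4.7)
The plan is to exhibit mutually inverse natural maps between the two sides, working one homogeneous component at a time. On the left, because $\ZZ\tau$ acts invertibly on~$\cM_\tau$, any $\cQ$-graded homomorphism $\phi \colon \kk[\aa + \ZZ\tau] \to \cM_\tau$ is determined by $x := \phi(1_\aa) \in (\cM_\tau)_\aa$, and the data of such a~$\phi$ is equivalent to that of an~$x$ with $\mm_\tau \cdot x = 0$, where $\mm_\tau = \cQ_+ \minus \tau$. Passing to $/\tau$ identifies $\phi$ with its $\tau$-translates, so each coset $\wt\aa \in \qzt$ contributes a single vector space in the direct sum comprising the LHS. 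On the right, Lemma~\ref{l:quotient-restriction} identifies $(\cmt)_{\wt\aa}$ with $(\cM_\tau)_\aa$ for any lift $\aa$ of~$\wt\aa$, and a homomorphism $\kk_{\wt\aa} \to \cmt$ is exactly an element $\bar x \in (\cmt)_{\wt\aa}$ killed by every nonzero element of~$(\qzt)_+$.

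The candidate map sends the class of~$\phi$ to the homomorphism corresponding to the image~$\bar x$ of~$x$ under $(\cM_\tau)_\aa \onto (\cmt)_{\wt\aa}$. Different representatives $\aa$ of~$\wt\aa$ differ by the $\tau$-action on~$\phi$ (replacing $x$ by $t \cdot x$ for some $t \in \tau$), yielding the same~$\bar x$, so the assignment factors through $/\tau$. Conversely, given such a~$\bar x$, fix any lift $\aa$ of~$\wt\aa$ and the unique preimage $x \in (\cM_\tau)_\aa$; this defines a homomorphism $\kk[\aa+\ZZ\tau] \to \cM_\tau$, well-defined modulo the $\tau$-action. Naturality in~$\cM$ is immediate from both constructions.

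The main point to verify---and the chief potential obstacle---is that the two annihilation conditions coincide. Because $\tau$ is a face, $\cQ_+ \cap \ZZ\tau = \tau$, so under the partial-order hypothesis on~$\qzt$ (ensuring $(\qzt)_+$ has trivial unit group and is the image of~$\cQ_+$), the set $(\qzt)_+ \minus \{\0\}$ equals the image of~$\mm_\tau$. For any $q \in \mm_\tau$ with image $\wt q$, the composite $(\cM_\tau)_\aa \to (\cM_\tau)_{\aa + q} \onto (\cmt)_{\wt\aa + \wt q}$, where the first map is multiplication by~$q$, agrees under the representative-based identifications with $\wt q \cdot \colon (\cmt)_{\wt\aa} \to (\cmt)_{\wt\aa + \wt q}$. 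Since those identifications are isomorphisms, $q \cdot x = 0$ in~$\cM_\tau$ if and only if $\wt q \cdot \bar x = 0$ in~$\cmt$; as every $\wt q \in (\qzt)_+ \minus \{\0\}$ arises in this way, the bijection between the two sides is proved.
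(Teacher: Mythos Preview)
Your proof is correct and amounts to the same unwinding of definitions that the paper performs; the paper simply compresses the whole argument into one sentence by observing that $\kk[\ZZ\tau]/\tau = \kk$ in $(\qzt)$-degree~$\0$, which encapsulates exactly the identification you carry out explicitly degree by degree.
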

\begin{proof}
Follows from the definitions, using that $\kk[\ZZ\tau]/\tau = \kk$ in
$(\qzt)$-degree~$\0$.
\end{proof}

The following crucial remark highlights the difference between
real-graded algebra and integer-graded algebra.  It is the source of
much of the subtlety in the theory developed in this paper,
particularly Sections~\ref{s:socle}--\ref{s:hulls}.

\begin{remark}\label{r:soc-vs-supp}
In contrast with taking support on a face
(Proposition~\ref{p:support-localizes}) and also with socles in
commutative algebra over noetherian local or graded rings (e.g.,
Definition~\ref{d:cogenerator}), localization need not commute with
taking closed socles along faces of positive dimension in real
polyhedral groups.  In other words, the injection in
Proposition~\ref{p:local-vs-global-closed} need not be surjective:
there can be local closed cogenerators that do not lift to global
ones.  The problem comes down to the homogeneous prime ideals of the
monoid algebra $\kk[\cQ_+]$ not being finitely generated, so the
quotient $\kk[\tau]$ fails to be finitely presented; it means that
$\Hom_{\kk[\cQ_+]}\bigl(\kk[\tau],-\bigr)$ need not commute with $A
\otimes_{\kk[\cQ_+]}-$, even when $A$ is a flat $\kk[\cQ_+]$-algebra
such as a localization of~$\kk[\cQ_+]$.  The context of
$\RR^n$-modules complicates the relation between support on~$\tau$ and
closed cogenerators along~$\tau$ because the ``thickness'' of the
support can approach~$0$ without ever quantum jumping all the way
there and, importantly, remaining there along an entire translate
of~$\tau$, as it would be forced to for a discrete group like~$\ZZ^n$.
See Examples~\ref{e:PF} and~\ref{e:PF'}, for instance, where the
support on the $x$-axis has no closed socle along the $x$-axis.  This
issue is independent of the density phenomenon explored in
Section~\ref{s:minimality}; indeed, the downset in Example~\ref{e:PF}
is closed, so its socle equals its closed socle and is closed.
\end{remark}

\begin{prop}\label{p:left-exact-tau-closed}
The global closed cogenerator functor $\socct\!$ along any face~$\tau$
of a partially ordered group is left-exact, as is the local version if
$\qzt$ is partially ordered.
\end{prop}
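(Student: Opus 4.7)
The plan is to exhibit each cogenerator functor as a composition of a left-exact functor with an exact one, and invoke the fact that such a composition is left-exact.

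For the global version $\socct\cM = \hhom_\cQ(\kk[\tau],\cM)/\tau$, I would first observe that the functor $\cM \mapsto \hhom_\cQ(\kk[\tau],\cM) = \bigoplus_{\aa\in\cQ} \Hom_\cQ(\kk[\aa+\tau],\cM)$ is left-exact, for the same reason as in Lemma~\ref{l:left-exact-socc}: viewing $\cQ$-modules as graded modules over the path algebra with commutativity relations (or directly, as $\cQ$-graded $\kk[\cQ_+]$-modules via Lemma~\ref{l:Q-graded}), the functor $\Hom$ from any fixed object is left-exact, and taking the direct sum over $\aa$ preserves left-exactness. Next, I would apply Lemma~\ref{l:exact-qr}, which says that the quotient-restriction functor $(-)/\tau$ is exact. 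Composing the left-exact functor $\hhom_\cQ(\kk[\tau],-)$ with the exact functor $(-)/\tau$ yields the left-exact functor $\socct$.

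For the local version $\socc(\cmt) = \hhom_{\qzt}(\kk,\cmt)$, under the assumption that $\qzt$ is partially ordered (so that $\socc$ over $\qzt$ is defined as in Definition~\ref{d:socc}), the same strategy applies in reverse order: first take the quotient-restriction $\cM \mapsto \cmt$, which is exact by Lemma~\ref{l:exact-qr}, and then apply the closed cogenerator functor $\socc = \hhom_{\qzt}(\kk,-)$ over the poset $\qzt$, which is left-exact by Lemma~\ref{l:left-exact-socc}. The composite is again left-exact.

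There is no real obstacle here, as both assertions follow formally from lemmas already in hand; the only thing to be careful about is the order of composition in the two cases and the need to invoke the hypothesis that $\qzt$ is partially ordered in the local case (so that Definition~\ref{d:socc} and Lemma~\ref{l:left-exact-socc} apply to~$\qzt$). The proof itself can be written in a few lines simply by pointing to Lemmas~\ref{l:left-exact-socc} and~\ref{l:exact-qr} and noting that left-exactness is preserved under composition with exact functors on either side.
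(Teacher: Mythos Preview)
Your proposal is correct and matches the paper's proof essentially line for line: both argue that $\socct$ is a composite of the left-exact $\hhom_\cQ(\kk[\tau],-)$ with the exact quotient-restriction (Lemma~\ref{l:exact-qr}), and that the local version is the composite of the exact $\cM \mapsto \cmt$ with the left-exact $\socc$ over~$\qzt$ (Lemma~\ref{l:left-exact-socc}). Your write-up is slightly more explicit about why the hypothesis on~$\qzt$ is needed, but otherwise there is nothing to add.
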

\begin{proof}
For the global case, $\hhom_Q(\kk[\tau],-)$ is exact because it occurs
in the category of graded modules over the monoid
algebra~$\kk[\cQ_+]$, and quotient-restriction is exact by
Lemma~\ref{l:exact-qr}.  For the local case, use exactness of $\cM
\mapsto \cmt$ again (Lemma~\ref{l:exact-qr}) and left-exactness of
closed socles (Lemma~\ref{l:left-exact-socc}), the latter applied
over~$\qzt$.
\end{proof}

\begin{lemma}\label{l:semialgebraic'}
If $D$ is a semialgebraic downset in a real polyhedral group~$\cQ$
then $\mtd$ is semialgebraic in~$\qrt$ for any face~$\tau$
of~$\cQ_+$.
\end{lemma}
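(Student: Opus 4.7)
The plan is to mirror the proof of Lemma~\ref{l:semialgebraic}, using the explicit description in Example~\ref{e:socct}: the set $\mtd$ is the image in $\qrt$, under the linear quotient map $\pi : \cQ \to \qrt$, of the set
$$
  S = \big\{\aa \in \cQ \mid (\aa+\cQ_+) \cap D = \aa + \tau\big\}
$$
of closed cogenerators of~$D$ along~$\tau$. Since $\pi$ is linear (hence algebraic) and projections preserve the semialgebraic property by Tarski--Seidenberg, it suffices to show that $S$ is semialgebraic in~$\cQ \cong \RR^n$.

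First I would record that the relevant polyhedral pieces are all semialgebraic. Because $\cQ$ is a real polyhedral group and $\tau$ is a face of its positive cone, both $\tau$ and $\mm_\tau = \cQ_+ \minus \tau$ are defined by finite Boolean combinations of linear (in)equalities; in particular they are semialgebraic subsets of~$\cQ$. The addition map $\mu: \cQ \times \cQ \to \cQ$, $(\aa,q) \mapsto \aa+q$, is algebraic, so $Y = \mu^{-1}(D) = \{(\aa,q) \mid \aa+q \in D\}$ is semialgebraic since $D$ is.

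Next I would decompose the condition defining~$S$ into two clauses and express each as a projection. The equality $(\aa+\cQ_+) \cap D = \aa + \tau$ is equivalent to the conjunction of (a)~$\aa + \tau \subseteq D$ and (b)~$(\aa + \mm_\tau) \cap D = \nothing$. Letting $\pi_1$ denote projection onto the $\cQ$-factor, clause~(a) picks out the set
$$
  A_2 = \cQ \minus \pi_1\!\bigl((\cQ \times \tau) \minus Y\bigr),
$$
and clause~(b) picks out the complement of
$$
  A_1 = \pi_1\!\bigl(Y \cap (\cQ \times \mm_\tau)\bigr).
$$
Both $A_1$ and $A_2$ are semialgebraic by Tarski--Seidenberg applied to the semialgebraic inputs $Y$, $\tau$, and $\mm_\tau$. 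Then $S = A_2 \minus A_1$ is semialgebraic, and applying $\pi$ gives that $\mtd$ is semialgebraic in~$\qrt$.

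There is no real obstacle beyond bookkeeping: the crucial point is simply that the face structure of a polyhedral cone gives semialgebraic $\tau$ and $\mm_\tau$, after which the quantifier ``for all $q \in \tau$, $\aa+q \in D$'' and ``there exists $q \in \mm_\tau$ with $\aa+q \in D$'' are eliminated by Tarski--Seidenberg. The only mildly subtle point is checking that the conjunction of (a) and~(b) is literally the defining condition of~$S$ (in particular, (a) forces $\aa \in D$ because $\0 \in \tau$), but this is immediate.
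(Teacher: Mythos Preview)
Your proof is correct and takes essentially the same approach as the paper: both split the defining condition $(\aa+\cQ_+)\cap D = \aa+\tau$ into the two containments $\aa+\tau\subseteq D$ and $(\aa+\mm_\tau)\cap D=\nothing$, verify each cuts out a semialgebraic set via Tarski--Seidenberg (the paper phrases this by replaying Lemma~\ref{l:semialgebraic} with $\mm_\tau$ and $\tau$ in place of~$\mm$), and then project to~$\qrt$. The only cosmetic difference is that you work with the preimage $Y=\mu^{-1}(D)$ while the paper works with the image~$X$ of $(\aa,q)\mapsto(\aa,\aa+q)$, but these are equivalent bookkeeping devices.
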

\begin{proof}
The projection of a semialgebraic set is semialgebraic, so by
Example~\ref{e:socct} it suffices to prove that the set of degrees of
closed cogenerators of~$\kk[D]$ along~$\tau$ is semialgebraic.  The
argument comes in two halves, both following the framework of the
proof of Lemma~\ref{l:semialgebraic}.  For the first half, simply
replace $\mm$ by $\mm_\tau = \cQ_+ \minus \tau$ to find that
$\big\{\aa \in D \mid (\aa + \cQ_+) \cap D \subseteq \aa + \tau\big\}$
is semialgebraic.  The second half uses $\tau$ instead of~$\mm$, and
this time it intersects the subset~$X$ with $D \times (\cQ \minus D)$
to find that $\big\{\aa \in D \mid (\aa + \cQ_+) \cap D \supseteq \aa
+ \tau\big\}$ is semialgebraic.  The desired set of degrees is the
intersection of these two semialgebraic sets.
\end{proof}

\begin{prop}\label{p:socct-encoding}
If a module $\cM$ over a partially ordered group is finitely encoded
then so is its closed socle $\socct\cM$ along any face~$\tau$.  If
$\cM$ is semialgebraic over a real polyhedral group then so
is~$\socct\cM$.
\end{prop}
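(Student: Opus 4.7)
The plan is to mimic the proof of Proposition~\ref{p:socc-encoding}, now with the cogenerator functor $\socct$ in place of $\socc$, reducing to finitely encoding the socle of a single downset module. By Theorem~\ref{t:syzygy}.\ref{i:downset-copresentation}, any finitely encoded $\cQ$-module $\cM$ admits a presentation $\cM = \ker(E^0 \to E^1)$ in which each $E^i$ is a finite direct sum of downset modules $\kk[D_{ij}]$. Left-exactness of $\socct$ (Proposition~\ref{p:left-exact-tau-closed}) then gives $\socct\cM = \ker(\socct E^0 \to \socct E^1)$, and by Lemma~\ref{l:abelian-category}, applied now to the category of $\qzt$-modules, it suffices to show that $\socct\kk[D]$ is finitely encoded (respectively, semialgebraic) for a single downset $D \subseteq \cQ$.

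For that step I would invoke Example~\ref{e:socct}, which identifies $\socct\kk[D] = \kk[\mtd]$ as a $\qzt$-module, where $\mtd$ is the image in $\qzt$ of the elements of $D$ that become maximal in the localization $D_\tau$. Writing $D'$ for the image of $D_\tau$ in $\qzt$, one checks that $D_\tau$ is a $\ZZ\tau$-invariant downset in $\cQ$ (Remark~\ref{r:freely}), so that $D'$ is a downset in $\qzt$, and that $\mtd$ is exactly $\max D'$; hence $D' \minus \mtd$ is again a downset in $\qzt$. This yields the finite downset copresentation
$$\kk[\mtd] = \ker\bigl(\kk[D'] \to \kk[D' \minus \mtd]\bigr)$$
as a $\qzt$-module, making $\kk[\mtd]$ finitely encoded by Theorem~\ref{t:syzygy} applied over $\qzt$.

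The semialgebraic claim proceeds in parallel. Lemma~\ref{l:semialgebraic'} establishes that $\mtd$ is semialgebraic when $D$ is; the set $D_\tau$ is semialgebraic because $\aa + \tau \subseteq D$ is a first-order condition on semialgebraic data and thus survives Tarski--Seidenberg; consequently $D'$ is semialgebraic as a linear projection, and so is $D' \minus \mtd$, so the copresentation above is semialgebraic. The main obstacle is the bookkeeping around $\qzt$: one must verify that $\qzt$ is a well behaved poset, so that ``downset in $\qzt$'' and ``finitely encoded $\qzt$-module'' make sense and Theorem~\ref{t:syzygy} is available there, and that $D'$ inherits the downset structure from $D_\tau$. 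Both are guaranteed under the polyhedral hypothesis via Remark~\ref{r:qrt}; over more general partially ordered groups $\qzt$ can acquire units, and the statement would need reinterpretation.
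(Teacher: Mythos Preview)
Your proof is correct and follows essentially the same route as the paper's: reduce via left-exactness and a finite downset copresentation of~$\cM$ to the case of a single downset module, then exhibit $\socct\kk[D] = \kk[\mtd]$ as the kernel of a map between two downset modules over~$\qzt$, invoking Lemma~\ref{l:semialgebraic'} for the semialgebraic claim. Your choice of $D'$ as the image of~$D_\tau$ (rather than of~$D$ itself, as the paper writes) is in fact the more careful formulation, since it guarantees $\mtd = \max D'$ and hence that $D' \minus \mtd$ is genuinely a downset in~$\qzt$.
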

\begin{proof}
If $\cM$ is finitely encoded then $\cM = \ker(E^0 \to E^1)$ with $E^i$
a finite direct sum of indicator quotients.  Thus $\socct\cM =
\ker(\socct E^0 \to \socct E^1)$ by
Proposition~\ref{p:left-exact-tau-closed}.  For the finitely encoded
claim it therefore suffices, by Lemma~\ref{l:abelian-category}, to
prove that $\socct\,\kk[D]$ is finitely encoded if~$\kk[D]$ is.
Writing $\dzt$ for the image of~$D$ in $\qzt$, Example~\ref{e:socct}
implies that $\socct\,\kk[D] = \ker\bigl(\kk[\dzt] \to \kk[\dzt \minus
\mtd]\bigr)$ is an indicator copresentation, where the set $\dzt \minus
\mtd$ is downset because $\mtd$ is contained in the set $\max(\dzt)$
of maximal elements of~$\dzt$.

The same argument shows the semialgebraic claim by
Lemma~\ref{l:semialgebraic'}.
\end{proof}

\begin{remark}\label{r:along}
Closed socles, without reference to faces, work over arbitrary posets
and are actually used that way in this work (over $\fqo$, for
instance, in Section~\ref{sub:soc}).  That explains why this separate
section on closed socles along faces of positive dimension is
required, instead of simply doing Section~\ref{sub:socc} in this
specificity in the first~place.
\end{remark}

%%%%%%%%%%%%%%%%%%%%%%%%%%%%%%%%%%%%%%%%%%%%%%%%%%%%%%%%%%%%%%%%%%%%%%%%%
\subsection{Socles along faces of positive dimension}\label{sub:soc-along}

\begin{lemma}\label{l:nabt}
If $\tau$ is a face of a real polyhedral group~$\cQ$ then the face
poset of the quotient real polyhedral group $\qrt$ is isomorphic to
the open star $\nabt$ from Example~\ref{e:nabla} by the map $\nabt \to
(\qrt)_+$ sending $\sigma \in \nabt$ to its image $\sigma/\tau$
in~$\qrt$.
\end{lemma}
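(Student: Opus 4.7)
The plan is to exhibit an explicit inverse to the stated map and verify it respects the face structure on both sides. Write $\pi\colon\cQ\to\qrt$ for the quotient by~$\RR\tau$, so that $(\qrt)_+=\pi(\cQ_+)$ (Remark~\ref{r:qrt}) and the stated map is $\sigma\mapsto\pi(\sigma)$ for $\sigma\in\nabt$. The candidate inverse sends a face~$\sigma'$ of $(\qrt)_+$ to $\pi^{-1}(\sigma')\cap\cQ_+$. I~would first establish the algebraic key fact and then check both directions produce faces and are mutually inverse; order-preservation will follow automatically.

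The key fact to prove is the identity $(\sigma+\RR\tau)\cap\cQ_+=\sigma$ for every face~$\sigma$ of~$\cQ_+$ containing~$\tau$. One containment is trivial. For the other, suppose $q=s+t\in\cQ_+$ with $s\in\sigma$ and $t\in\RR\tau$; writing $t=t_+-t_-$ with $t_\pm\in\tau$ gives $q+t_-=s+t_+\in\sigma$, because $\sigma\supseteq\tau$ is a submonoid. Since $\sigma$ is a face of~$\cQ_+$ and $q,t_-\in\cQ_+$, both summands of~$q+t_-$ lie in~$\sigma$; in particular $q\in\sigma$.

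With this identity in hand, both directions fall out. Given $\sigma\in\nabt$, the set $\pi(\sigma)$ is a submonoid of~$(\qrt)_+$; to see it is a face, take $\bar q_1,\bar q_2\in(\qrt)_+$ with $\bar q_1+\bar q_2\in\pi(\sigma)$, lift to $q_i\in\cQ_+$, and observe $q_1+q_2\in(\sigma+\RR\tau)\cap\cQ_+=\sigma$, so the face property of~$\sigma$ in~$\cQ_+$ yields $q_i\in\sigma$ and hence $\bar q_i\in\pi(\sigma)$. Conversely, given a face~$\sigma'$ of~$(\qrt)_+$, the set $\widehat\sigma=\pi^{-1}(\sigma')\cap\cQ_+$ is obviously a submonoid of~$\cQ_+$ containing~$\tau$ (since $\pi(\tau)=\{\0\}\subseteq\sigma'$), and it is a face because a sum in~$\widehat\sigma$ pushes forward to a sum in~$\sigma'$. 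That the two constructions are mutually inverse amounts to $\pi^{-1}(\pi(\sigma))\cap\cQ_+=\sigma$, which is precisely the key identity, together with $\pi(\widehat\sigma)=\sigma'$, which follows from surjectivity of $\pi|_{\cQ_+}\colon\cQ_+\twoheadrightarrow(\qrt)_+$.

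Order-preservation in both directions is then automatic: $\sigma\subseteq\sigma'$ implies $\pi(\sigma)\subseteq\pi(\sigma')$, and the inverse map preserves inclusions by construction, so the bijection is an isomorphism of posets. The only real obstacle is the key identity $(\sigma+\RR\tau)\cap\cQ_+=\sigma$; everything else is formal bookkeeping with the definitions of face and quotient cone. Note that pointedness of~$(\qrt)_+$, needed to talk about faces in the sense of Definition~\ref{d:face}, is exactly what Remark~\ref{r:qrt} guarantees in the real polyhedral setting.
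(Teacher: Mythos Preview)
Your proof is correct. The paper's own proof is a single sentence, ``See Remark~\ref{r:qrt},'' treating the lemma as an immediate consequence of standard polyhedral geometry once that remark has established that $\qrt$ is again a real polyhedral group with positive cone $\pi(\cQ_+)$. You have supplied the explicit verification the paper leaves implicit: the inverse map $\sigma'\mapsto\pi^{-1}(\sigma')\cap\cQ_+$, the key identity $(\sigma+\RR\tau)\cap\cQ_+=\sigma$ for faces $\sigma\supseteq\tau$, and the check that both constructions land in the right place and are mutually inverse. Your argument for the key identity via the face property (writing $t=t_+-t_-$ and using that $\sigma$ swallows summands from~$\cQ_+$) is the standard one and is correct. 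The two approaches are not really different in substance; yours simply spells out what the paper takes for granted.
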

\begin{proof}
See Remark~\ref{r:qrt}.
\end{proof}

\begin{defn}\label{d:upper-boundary-tau}
In the situation of Lemma~\ref{l:nabt}, endow $\nabt$ with the monoid
and poset structures from Remark~\ref{r:semilattice=monoid}, so
$\sigma \preceq \sigma'$ in~$\nabt$ if $\sigma \supseteq \sigma'$.
The \emph{upper boundary functor along~$\tau$} takes~$\cM$ to the $\cQ
\times \nabt$-module $\dt\cM = \bigoplus_{\sigma\in\nabt}\ds[\tau]\cM
= \delta\cM/\bigoplus_{\sigma\not\in\nabt}\ds\cM$.
\end{defn}

The notation is such that $\ds[\tau] \neq 0 \iff \sigma \supseteq
\tau$.

\begin{defn}\label{d:kats}
Fix a partially ordered group~$\cQ$, a face~$\tau$, and an arbitrary
commutative monoid~$\cP$.  The \emph{skyscraper} $(\cQ \times
\cP)$-module at $(\aa,\sigma) \in \cQ \times \cP$ along~$\tau$ is
$$%
  \kats = \kk[\aa+\tau] \otimes_\kk \kk_\sigma,
$$
the right-hand side being a module over the ring $\kk[\cQ_+]
\otimes_\kk \kk[\cP] = \kk[\cQ_+ \times \cP]$ with tensor factors as
in Definitions~\ref{d:socc} and~\ref{d:hhom}.
Set
$$%
  \hhom_{\cQ\times\cP}\bigl(\kk[\tau],-\bigr) =
  \bigoplus_{(\aa,\sigma) \in
  \cQ\times\cP} \Hom_{\cQ\times\cP}\bigl(\kats,-\bigr).
$$
\end{defn}

\begin{remark}\label{r:kats}
When $\cP$ is trivial, this notation agrees with
Definition~\ref{d:hhom}, because $\cQ\times\{\0\} \cong \cQ$
canonically, so $\hhom_{\cQ\times\{\0\}}\bigl(\kk[\tau],-\bigr) =
\hhom_\cQ\bigl(\kk[\tau],-\bigr)$.
\end{remark}

%pagebreak
\begin{defn}\label{d:soct}
Fix a real polyhedral group~$\cQ$, a face~$\tau$, and a
$\cQ$-module~$\cM$.
\begin{enumerate}
\item\label{i:global-soc-tau}%
The \emph{(global) cogenerator functor along~$\tau$} takes $\cM$ to
its \emph{(global) socle along~$\tau$}:
$$%
  \soct\cM = \hhom_{\cQ\times\nabt}\bigl(\kk[\tau],\dt\cM\bigr)/\tau.
$$
The $\nabt$-graded components of $\soct\cM$ are denoted by
$\soct[\sigma]\cM$ for $\sigma \in \nabt$.

\item\label{i:local-soc-tau}%
The \emph{local cogenerator functor along~$\tau$} takes $\cM$ to its
\emph{local socle along~$\tau$}:
$$%
  \soc(\cmt)
  =
  \socc\delta(\cmt)
  =
  \hhom_{\qrt\times\nabt}\bigl(\kk,\delta(\cM/\tau)\bigr),
$$
where the upper boundary is over~$\qrt$ and the closed socle is over
$\qrt \times \nabt$.  Elements of $\soc(\cmt)$ are identified with
elements of~$\delta(\cmt)$ via $\phi \mapsto \phi(1)$.

\item\label{i:global-cogen}%
Regard $\hhom_{\cQ\times\nabt}\bigl(\kk[\tau],\dt\cM\bigr)$ as a
$(\cQ\times\nabt)$-submodule of~$\dt\cM$ via $\phi \mapsto \phi(1)$.
A homogeneous element $s$ in this submodule that maps to a nonzero
element of $\soct\cM$ is a \emph{(global) cogenerator} of~$\cM$
along~$\tau$, and if $s \in \ds[\tau]\cM$ then it has
\emph{nadir}~$\sigma$.  If $D \subseteq \cQ$ is a downset, then a
\emph{cogenerator} of~$D$ along~$\tau$ with nadir~$\sigma$ is the
degree in~$\cQ$ of a cogenerator of~$\kk[D]$ with nadir~$\sigma$
along~$\tau$.

\item\label{i:local-cogen}%
Regard $\soc(\cmt)$ as contained in~$\delta(\cmt)$ via $\phi \mapsto
\phi(1)$.  A nonzero homogeneous element in $\soc(\cmt)$ is a
\emph{local cogenerator} of~$\cM$ along~$\tau$.
\end{enumerate}
\end{defn}

\begin{remark}\label{r:soct/tau}
The reason to quotient by~$\tau$ in
Definition~\ref{d:soct}.\ref{i:global-soc-tau} is to lump together all
cogenerators with nadir~$\sigma$ along the same translate
of~$\RR\tau$.  This lumping makes it possible for a socle basis to
produces a downset hull that is (i)~as minimal as possible and
(ii)~finite.  The lumping also creates a difference between the notion
of socle element and that of cogenerator: a socle element is a class
of cogenerators, these classes being indexed by elements in the
quotient-restriction.  In contrast, a local cogenerator is a
cogenerator of the quotient-restriction itself, so a local cogenerator
is already an element in the socle of the quotient-restriction.  This
difference between socle element and cogenerator already arises for
closed socles along faces (Definition~\ref{d:socct}) but disappears in
the context of socles not along faces (see Remark~\ref{r:socc0}), be
they over real polyhedral groups (Definition~\ref{d:soc}) or closed
over posets (Definition~\ref{d:socc}).
\end{remark}

\begin{remark}\label{r:soct-nabt}
If localization commuted with cogenerator functors, then the
restriction from $\fqo$ to~$\nabt$ in
Definition~\ref{d:soct}.\ref{i:global-soc-tau} would happen
automatically, because localizing $\cM$ along~$\tau$ would yield a
module over $\cQ_+\! + \RR\tau$, whose face poset is
naturally~$\nabt$.  But in this real polyhedral setting, the
restriction from $\fqo$ to~$\nabt$ must be imposed manually because
the $\Hom$ must be taken before localizing
(Remark~\ref{r:soc-vs-supp}), when the default face poset is
still~$\fqo$.
% This is crucial: the socle of~$\cM$ along~$\tau$ does not witness
% injections from the localization $\cM_\tau$ into anything; it
% witnesses the nonvanishing of maps to downset modules (defn: direct
% sums of indicator quotients) directly from subsets of~$\cM$, namely
% the \emph{basins} of the cogenerators in question.
\end{remark}

\begin{remark}\label{r:nabt}
If $\aa$ is a cogenerator of~$D$ along~$\tau$, then the topology
of~$D$ at~$\aa$ is induced by downsets of the form $\aa' -
\sigma^\circ$ for faces $\sigma \in \nabt$ and elements $\aa' \in \aa
+ \tau^\circ$.  This subtle issue regarding shapes of cogenerators
along~$\tau$ is a vital reason for using $\nabt$ instead of~$\fqo$.
It is tempting to expect that if a face~$\sigma$ is minimal in the
shape~$\nda$, then any expression of~$D$ as an intersection of
downsets must induce the topology of~$D$ at~$\aa$ by explicitly taking
$\aa - \sigma^\circ$ into account in one of the intersectands.  One
way to accomplish that would be for an intersectand to be a union of
downsets of the form $\bb - \cQ_{\nda}$ (see
Definition~\ref{d:cocomplex}) in which one of the elements~$\bb$
is~$\aa$.  But if $\sigma \in \nd[\aa']$ for all $\aa' \in \aa +
\tau$, or even merely for a single element $\aa' \in \aa +
\tau^\circ$, then
$$%
  \aa - \sigma^\circ
  =
  \aa' - (\aa'-\aa - \sigma^\circ)
  \in
  \aa' - (\tau^\circ + \sigma^\circ)
  \subseteq
  \aa' - (\tau \vee \sigma)^\circ.
$$
As the purpose of cogenerators is to construct downset
decompositions as minimally as possible, it is counterproductive to
think of~$\sigma$ as being a valid $\fqo$-socle degree unless $\sigma
\in \nabt$, because otherwise it fails to give rise to an essential
cogenerator.  See Theorem~\ref{t:downset=union} for the most general
possible view of considerations in this Remark.
\end{remark}

\begin{remark}\label{r:soc-as-k-vect}
Although $\soct\cM$ is a module over $\qrt \times \nabt$ by
construction, the actions of $\qrt$ and $\nabt$ on it are trivial, in
the sense that attempting to move a nonzero homogeneous element up in
one of the posets either takes the element to~$0$ or leaves it
unchanged.  (The latter only happens if the degree is unchanged, which
occurs only when acting by the identity~$\0 \in \qrt$ or when acting
by $\sigma \in \nabt$ on an element of $\nabt$-degree $\sigma'
\subseteq \sigma$.)  That is what it means to be a direct sum of
skyscraper modules.  It implies that any direct sum decomposition of
$\soct\cM$ as a vector space graded by $\qrt \times \nabt$ is also a
decomposition of $\soct\cM$ as a $\qrt$-module or as a $\nabt$-module.
\end{remark}

\begin{lemma}\label{l:intersection-of-kernels}
If~$\tau$ is a face of a is a real polyhedral group~$\cQ$ and $\cN =
\bigoplus_{\sigma\in\nabt} \cN_\sigma$ is a module over $\cQ \times
\nabt$, then $\Hom_{\nabt}(\kk_\sigma,\cN)/\tau \cong
\Hom_{\nabt}(\kk_\sigma,\cnt)$, and hence
$$%
  (\ntsoc\cN)/\tau \cong \ntsoc(\cnt).
$$
\end{lemma}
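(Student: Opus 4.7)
The plan is to reduce the second displayed isomorphism to the first by summing over $\sigma \in \nabt$. Both the functor $\cN \mapsto \cnt$ (exact by Lemma~\ref{l:exact-qr}) and $\cN \mapsto \bigoplus_\sigma \Hom_\nabt(\kk_\sigma, \cN)$ commute with finite direct sums, and $\nabt$ is finite because $\cQ$ is polyhedral (so $\cfq$ is finite). So the real work is to establish the first isomorphism.

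First I will rewrite $\Hom_\nabt(\kk_\sigma, \cN)$ explicitly as a $\cQ$-submodule of $\cN_\sigma$. Because $\kk_\sigma$ is the skyscraper $\nabt$-module concentrated in degree~$\sigma$, a $\nabt$-module map $\phi\colon \kk_\sigma \to \cN$ amounts to the element $\phi(1) \in \cN_\sigma$, subject to the requirement that $\phi(1)$ vanishes under every $\nabt$-structure map $\cN_\sigma \to \cN_{\sigma'}$ indexed by $\sigma' \subsetneq \sigma$ in~$\nabt$; these structure maps are $\cQ$-linear because $\cN$ is a $\cQ \times \nabt$-module. Using finiteness of~$\nabt$, I will bundle these conditions into a single $\cQ$-linear map
$$
  \Phi_\sigma \colon \cN_\sigma \too \bigoplus_{\sigma' \subsetneq \sigma,\ \sigma' \in \nabt} \cN_{\sigma'},
$$
so that $\Hom_\nabt(\kk_\sigma, \cN) = \ker \Phi_\sigma$ as $\cQ$-modules.

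Next I will apply Lemma~\ref{l:exact-qr}: quotient-restriction $/\tau$ is exact, so it preserves kernels and commutes with finite direct sums. Therefore
$$
  \Hom_\nabt(\kk_\sigma, \cN)/\tau
  \ =\
  (\ker \Phi_\sigma)/\tau
  \ =\
  \ker\bigl(\Phi_\sigma/\tau\bigr),
$$
where $\Phi_\sigma/\tau$ is the $\qrt$-linear map $(\cN_\sigma)/\tau \to \bigoplus_{\sigma' \subsetneq \sigma} (\cN_{\sigma'})/\tau$. The identification that closes the argument is that the $\nabt$-structure on~$\cnt$ has $\nabt$-degree~$\sigma$ piece equal to $(\cN_\sigma)/\tau$, with structure maps $(\cN_\sigma)/\tau \to (\cN_{\sigma'})/\tau$ given by the quotient-restrictions of those of~$\cN$; this holds precisely because the $\cQ$- and $\nabt$-actions on~$\cN$ commute. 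Under this identification $\ker(\Phi_\sigma/\tau) = \Hom_\nabt(\kk_\sigma, \cnt)$, yielding the first isomorphism, and the second then follows by direct-summing as above.

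The argument is largely formal; there is no substantive obstacle. The lone delicacy is that one needs $/\tau$ to commute not merely with kernels of single maps but with the intersection of kernels implicit in the definition of $\Hom_\nabt$. Finiteness of~$\nabt$, which follows from $\cQ$ being polyhedral, converts that intersection into a single kernel and lets exactness of $/\tau$ conclude; without the polyhedral hypothesis one would have to argue separately that $/\tau$ preserves the relevant infinite intersection of $\cQ$-submodules, which is the sort of subtlety flagged in Remark~\ref{r:soc-vs-supp}.
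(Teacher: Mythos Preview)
Your proof is correct and follows essentially the same line as the paper's: identify $\Hom_{\nabt}(\kk_\sigma,\cN)$ as the intersection of the kernels of the $\cQ$-module maps $\cN_\sigma \to \cN_{\sigma'}$ for $\sigma' \subsetneq \sigma$, apply exactness of quotient-restriction (Lemma~\ref{l:exact-qr}), and then sum over~$\sigma$. Your extra care in using finiteness of~$\nabt$ to bundle the intersection into a single kernel is a nice touch that the paper leaves implicit.
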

\begin{proof}
$\Hom_{\nabt}(\kk_\sigma,\cN)$ is the intersection of the kernels of
the $\cQ$-module homomorphisms $\cN_\sigma \to \cN_{\sigma'}$ for
faces $\sigma \supset \sigma'$, so the isomorphism of $\Hom$ modules
follows from Lemma~\ref{l:exact-qr}.  The socle isomorphism follows by
taking the direct sum over $\sigma \in \nabt$.
\end{proof}

\begin{prop}\label{p:either-order}
The functors $\socct\!$ and $\ntsoc$ commute.  In particular,
$$%displaystyle
  \ntsoc(\socct\dt\cM)
  \cong
  \soct\cM
  \cong
  \socct(\ntsoc\dt\cM).
$$
\end{prop}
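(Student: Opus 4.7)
The plan is to deduce both isomorphisms from a single adjunction of Hom-functors for the two commuting actions of $\kk[\cQ_+]$ and $\kk[\nabt]$ on the $\cQ \times \nabt$-module~$\dt\cM$.

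First I establish the key natural identity at the level of skyscraper inputs. For any $\aa \in \cQ$, any $\sigma \in \nabt$, and any $\cQ \times \nabt$-module~$\cN$, there are canonical isomorphisms
\begin{align*}
\Hom_{\cQ\times\nabt}\bigl(\kk[\aa+\tau] \otimes_\kk \kk_\sigma,\, \cN\bigr)
&\cong
\Hom_\cQ\bigl(\kk[\aa+\tau],\, \Hom_\nabt(\kk_\sigma, \cN)\bigr)
\\
&\cong
\Hom_\nabt\bigl(\kk_\sigma,\, \Hom_\cQ(\kk[\aa+\tau], \cN)\bigr).
\end{align*}
This amounts to the observation that a $\cQ\times\nabt$-linear map out of $\kats = \kk[\aa+\tau] \otimes \kk_\sigma$ is determined by its value on $1 \otimes 1$, which must simultaneously be annihilated by $\mm_\tau$ (from the $\cQ$-side) and by every face of~$\nabt$ strictly containing~$\sigma$ (from the $\nabt$-side); the two annihilation conditions commute, and either order of imposing them produces the same $(\aa,\sigma)$-graded piece of the bigraded module in question.

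Second, summing over $\aa \in \cQ$ and $\sigma \in \nabt$ with $\cN = \dt\cM$ produces the chain
\[
  \hhom_{\cQ\times\nabt}\bigl(\kk[\tau],\dt\cM\bigr)
  \cong
  \hhom_\cQ\bigl(\kk[\tau],\ntsoc\dt\cM\bigr)
  \cong
  \ntsoc\,\hhom_\cQ\bigl(\kk[\tau],\dt\cM\bigr),
\]
using Definition~\ref{d:socc} applied to the poset~$\nabt$ to identify $\hhom_\nabt(\kk,-)$ with $\ntsoc$. Applying the quotient-restriction functor $-/\tau$ to all three expressions then completes the argument: on the left this gives $\soct\cM$ by Definition~\ref{d:soct}.\ref{i:global-soc-tau}; on the middle term it gives $\socct(\ntsoc\dt\cM)$ by Definition~\ref{d:socct}.\ref{i:global-socc-tau}; and on the right it gives $\ntsoc(\socct\dt\cM)$ once we commute $-/\tau$ past $\ntsoc$, which is exactly the content of Lemma~\ref{l:intersection-of-kernels} (the quotient-restriction is exact by Lemma~\ref{l:exact-qr} and only touches the $\cQ$-grading, whereas $\ntsoc$ is an intersection of kernels of $\cQ$-linear maps, whence the two commute). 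Since $\dt\cM$ was arbitrary among $\cQ\times\nabt$-modules, the same argument applied to a general such module yields the ``in particular'' commutation $\socct \circ \ntsoc \cong \ntsoc \circ \socct$ as functors.

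The main obstacle is a bookkeeping one: $\socct\!$ and $\ntsoc$ act on different gradings, and the input module~$\kats$ is a tensor product rather than a pure skyscraper in a single grading; one must therefore check carefully that each passage, including the final commutation of $-/\tau$ with $\ntsoc$, respects the bigraded structure. This verification is precisely what Lemma~\ref{l:intersection-of-kernels} delivers, so no new difficulty arises beyond organizing the adjunction correctly.
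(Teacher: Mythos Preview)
Your proof is correct and follows essentially the same approach as the paper: both use the Hom--tensor adjunction to obtain the natural isomorphisms
\[
\Hom_{\nabt}\bigl(\kk_\sigma,\Hom_\cQ(\kk[\aa+\tau],-)\bigr)
\cong \Hom_{\cQ\times\nabt}(\kats,-)
\cong \Hom_\cQ\bigl(\kk[\aa+\tau],\Hom_{\nabt}(\kk_\sigma,-)\bigr),
\]
sum over $\aa$ and $\sigma$, and then apply the quotient-restriction $-/\tau$, invoking Lemma~\ref{l:intersection-of-kernels} to commute $-/\tau$ past $\ntsoc$ on the one side where this is not automatic. Your write-up is somewhat more explicit about the bookkeeping, but the argument is the same.
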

\begin{proof}
By taking direct sums over $\aa$ and~$\sigma$, this is mostly the
natural isomorphisms
\begin{align*}
\Hom_{\nabt}\bigl(\kk_\sigma,\Hom_\cQ\bigl(\kk[\aa+\tau],-\bigr)\bigr)
  &\cong\Hom_{\cQ\times\nabt}\bigl(\kats,-\bigr)
\\
  &\cong\Hom_\cQ\bigl(\kk[\aa+\tau],\Hom_{\nabt}(\kk_\sigma,-)\bigr)
\end{align*}
that result from the adjunction between Hom and~$\otimes$.  Taking the
quotient-restriction along~$\tau$
(Definition~\ref{d:quotient-restriction}) almost yields the desired
result; the only issue is that the left-hand side requires
Lemma~\ref{l:intersection-of-kernels}.
\end{proof}

\begin{example}\label{e:soct}
If $\aa$ is a cogenerator of a downset $D \subseteq \cQ$ along~$\tau$
with nadir~$\sigma$, then reasoning as in
Example~\ref{e:soc-Rn-downset} and using Definition~\ref{d:del-nabla},
computing $\ntsoc$ first in Proposition~\ref{p:either-order} shows
that $\sigma \in \del(\nda \cap \nabt)$.  What $\socct$ then does is
verify that the image $\wt\aa$ of~$\aa$ in~$\qrt$ is maximal with this
property, by Example~\ref{e:socct}.
\end{example}

\begin{prop}\label{p:local-vs-global}
There is a natural injection
$$%
  \soct\cM \into \soc(\cmt)
$$
for any module~$\cM$ over a real polyhedral group~$\cQ$ and any face
$\tau$ of~$\cQ$.
\end{prop}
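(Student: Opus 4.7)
The plan is to bootstrap from Proposition~\ref{p:local-vs-global-closed}, the closed cogenerator version, by threading the upper boundary functor through its argument. First, Proposition~\ref{p:either-order} rewrites the left-hand side as
$$
  \soct\cM \;\cong\; \socct(\ntsoc\dt\cM),
$$
so one may regard $\ntsoc\dt\cM$ as a $\cQ$-module (carrying an extra $\nabt$-grading preserved throughout) and apply Proposition~\ref{p:local-vs-global-closed} to it, producing a natural injection
$$
  \socct(\ntsoc\dt\cM) \;\into\; \socc\bigl((\ntsoc\dt\cM)/\tau\bigr);
$$
the hypothesis that $\qrt$ be partially ordered is supplied by Remark~\ref{r:qrt}.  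Lemma~\ref{l:intersection-of-kernels} then commutes quotient-restriction past the $\nabt$-socle, giving $(\ntsoc\dt\cM)/\tau \cong \ntsoc(\dt\cM/\tau)$.

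The crux is to identify $\dt\cM/\tau \cong \delta(\cmt)$ as $\qrt\times\nabt$-modules, where $\nabt$ is identified with the face poset of~$\qrt$ via Lemma~\ref{l:nabt}.  For each $\sigma \in \nabt$ and each $\wt\aa \in \qrt$, both $((\ds\cM)/\tau)_{\wt\aa}$ and $\delta^{\sigma/\tau}(\cmt)_{\wt\aa}$ unfold as direct limits of vector spaces $\cM_{\aa''}$ with $\aa''$ ranging over two subsets of $\cQ$ that share the same image $\wt\aa - (\sigma/\tau)^\circ$ in $\qrt$: respectively $\aa+\tau-\sigma^\circ$ and $\aa-\sigma^\circ+\RR\tau$.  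The first sits inside the second and is cofinal there because $\RR\tau = \tau-\tau$ (the face~$\tau$ generates $\RR\tau$ as an abelian group): given $\aa'' = \aa+\vv-\ff$ with $\vv \in \RR\tau$ and $\ff \in \sigma^\circ$, write $\vv = t_1 - t_2$ with $t_i \in \tau$, and then $\aa + t_1 - \ff \in \aa+\tau-\sigma^\circ$ dominates $\aa''$ by $t_2 \in \tau \subseteq \cQ_+$.  So the two direct limits agree naturally, and summing over $\sigma \in \nabt$ yields the stated isomorphism.

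Assembling the chain produces
$$
  \soct\cM \;\into\; \socc\bigl(\ntsoc\delta(\cmt)\bigr) \;=\; \soc(\cmt),
$$
the final equality coming from Definition~\ref{d:soc} together with Lemma~\ref{l:either-order} applied over~$\qrt$ to commute $\socc$ past $\ntsoc$.  The main obstacle is the identification $\dt\cM/\tau \cong \delta(\cmt)$ in the middle paragraph: it requires interchanging two direct limits and verifying cofinality, and relies essentially on the real polyhedral setting (cf.\ Remark~\ref{r:quantum})---this is where the geometry of the positive cone and its faces really enters the argument, since the cofinality step uses that $\tau$ generates $\RR\tau$ as a group.
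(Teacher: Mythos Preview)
Your proof is correct and follows essentially the same route as the paper's: both rewrite $\soct\cM$ via Proposition~\ref{p:either-order}, apply Proposition~\ref{p:local-vs-global-closed} to $\ntsoc\dt\cM$, commute quotient-restriction past $\ntsoc$ via Lemma~\ref{l:intersection-of-kernels}, and finish with Lemma~\ref{l:either-order} over~$\qrt$. The identification $\dt\cM/\tau \cong \delta(\cmt)$ that you prove inline via the cofinality argument is precisely the content of Lemma~\ref{l:ds-vs-qr}, whose proof the paper sketches in the same double-colimit terms; your version makes the cofinality step (using $\RR\tau = \tau - \tau$) slightly more explicit than the paper does.
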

\begin{proof}
By Proposition~\ref{p:local-vs-global-closed} $\socct\cN \into
\socc(\cnt)$ for $\cN = \ntsoc\dt\cM$ viewed as a $\qrt$-module.
Proposition~\ref{p:either-order} yields $\socct\cN = \soct\cM$.  It
remains to show that $\qtsoc(\cnt) = \soc(\cmt)$.  To that end, first
note that
$$%
  (\ntsoc\dt\cM)/\tau
  \cong
  \ntsoc\bigl((\dt\cM)/\tau\bigr)
  \cong
  \ntsoc\delta(\cM/\tau),
$$
the first isomorphism by Lemma~\ref{l:intersection-of-kernels} and the
second by Lemma~\ref{l:ds-vs-qr}, which shows that the modules acted
on by $\ntsoc$ are isomorphic.  Now apply the last isomorphism in
Lemma~\ref{l:either-order}, with $\cQ$ replaced by~$\qrt$ so that
automatically $\fqo$ must be replaced by~$\nabt$ via
Lemma~\ref{l:nabt}.
\end{proof}

\begin{lemma}\label{l:ds-vs-qr}
If $\sigma \supseteq \tau$ then $(\ds\cM)/\tau \cong \dst(\cmt)$.
\end{lemma}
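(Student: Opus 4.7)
The plan is to identify both sides in each degree $\wt\aa\in\qrt$ with a common filtered colimit, and then check naturality in~$\wt\aa$. Fix a lift $\aa\in\cQ$ of~$\wt\aa$.

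For the left-hand side, applying Lemma~\ref{l:quotient-restriction} to the $\cQ$-module $\ds\cM$ identifies $((\ds\cM)/\tau)_{\wt\aa}$ with $((\ds\cM)_\tau)_\aa\cong\dirlim_{t\in\tau}(\ds\cM)_{\aa+t}$, and then Definition~\ref{d:atop-sigma} yields the double colimit
\[
((\ds\cM)/\tau)_{\wt\aa}
\;\cong\;
\dirlim_{t\in\tau}\;\dirlim_{v\in\sigma^\circ}\cM_{\aa+t-v}.
\]
For the right-hand side, I would use the hypothesis $\tau\subseteq\sigma$ together with Lemma~\ref{l:nabt}: the face $\sigma/\tau$ of $(\qrt)_+$ is the image of~$\sigma$ under $\cQ\to\qrt$, and the projection carries $\sigma^\circ$ onto $(\sigma/\tau)^\circ$. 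Choosing lifts in~$\sigma^\circ$ and then applying Lemma~\ref{l:quotient-restriction} to~$\cmt$ gives
\[
(\dst(\cmt))_{\wt\aa}
\;=\;
\dirlim_{\wt{\aa'}\in\wt\aa-(\sigma/\tau)^\circ}(\cmt)_{\wt{\aa'}}
\;\cong\;
\dirlim_{v\in\sigma^\circ}\;\dirlim_{t\in\tau}\cM_{\aa-v+t}.
\]

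Both double colimits index the same functor $(t,v)\mapsto\cM_{\aa+t-v}$ on $\tau\times\sigma^\circ$. The index set $\sigma^\circ$ is filtered in the order inherited from~$\cQ$: given $v_1,v_2\in\sigma^\circ$, pick any $v\in\sigma^\circ$ and a small enough $\epsilon>0$ so that $\epsilon v\preceq v_1,v_2$ in~$\cQ$ (possible because $v_i-\epsilon v\in\sigma\subseteq\cQ_+$ for small $\epsilon$, as $v_i\in\sigma^\circ$). Hence by Fubini for filtered colimits the two double colimits are canonically isomorphic, and both equal the single filtered colimit
\[
\dirlim_{\aa''\in\aa+\tau-\sigma^\circ}\cM_{\aa''}
\]
(the set $\aa+\tau-\sigma^\circ$ is directed: given $\aa+t_i-v_i$, take $t_3=t_1+t_2$ and $v_3\in\sigma^\circ$ a common lower bound of $v_1,v_2$ as above).

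It then remains to observe that this common presentation is natural in~$\wt\aa$: the $\qrt$-module structure map from degree~$\wt\aa$ to degree $\wt\aa+\wt q$ is induced on both sides by translating the colimit diagram by any lift $q\in\cQ_+$ of~$\wt q$, so the degreewise isomorphisms assemble into an isomorphism of $\qrt$-modules. The main obstacle is purely bookkeeping---verifying that $(\sigma/\tau)^\circ$ equals the image of~$\sigma^\circ$ (this uses $\tau$ being a face of~$\sigma$ and is where $\sigma\supseteq\tau$ enters essentially), confirming that $\sigma^\circ$ is directed in~$\cQ$, and justifying the interchange of filtered colimits. Once those are in place the result is formal.
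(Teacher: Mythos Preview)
Your proof is correct and follows essentially the same approach as the paper: both sides are identified degreewise with the double filtered colimit $\dirlim_{t\in\tau}\dirlim_{v\in\sigma^\circ}\cM_{\aa+t-v}$ taken in opposite orders, with the hypothesis $\sigma\supseteq\tau$ used to match the index set $\wt\aa-(\sigma/\tau)^\circ$ with lifts in $\aa-\sigma^\circ$. You have spelled out the filteredness of~$\sigma^\circ$, the surjectivity $\sigma^\circ\to(\sigma/\tau)^\circ$, and the naturality check more explicitly than the paper does, but the argument is the same.
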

\begin{proof}
\hspace{-1.205pt}Explicit calculations from the definitions show that in
degree~$\aa/\hspace{-1pt}\tau$ both sides~equal
$$%displaystyle
  \dirlim_{\substack{\aa'\in\aa-\sigma^\circ\\\vv\in\tau}}\cM_{\aa'+\vv},
$$
although they take the colimits in different orders: $\vv$ first or
$\aa'$ first.  The hypothesis that $\sigma \supseteq \tau$ enters to
show that any direct limit over $\{\aa' \in \cQ \mid
\aa'/\tau\in\aa/\tau-(\sigma/\tau)^\circ\}$ can equivalently be
expressed as a direct limit over $\aa'\in\aa-\sigma^\circ$.
\begin{excise}{%
  And here are the gory details:
  % Ugh: what is each side in degree $\aa/\tau$?
  $$%
    \bigl((\ds\cM)/\tau\bigr){}_{\aa/\tau}
    =
    \bigl((\ds\cM)_\tau\bigr){}_\aa
    =
    \dirlim_{\vv\in\tau}\bigl(\dirlim_{\aa'\in\aa+\vv-\sigma^\circ}\cM_{\aa'}\bigr)
    =
    \dirlim_{\vv\in\tau}\bigl(\dirlim_{\aa'\in\aa-\sigma^\circ}\cM_{\aa'+\vv}\bigr).
  $$
  and
  \begin{align*}
  \bigl(\dst(\cM/\tau)\bigr){}_{\aa/\tau}
  & =
    \dirlim_{\aa'/\tau\in\aa/\tau-(\sigma/\tau)^\circ}(\cmt)_{\aa'/\tau}
  \\
  & =
    \dirlim_{\aa'/\tau\in\aa/\tau-(\sigma/\tau)^\circ}(\cM_\tau)_{\aa'}
  \\
  & =
    \dirlim_{\aa'/\tau\in\aa/\tau-(\sigma/\tau)^\circ}\bigl(
    \dirlim_{\vv\in\tau}\cM_{\aa'+\vv}\bigr)
  \\
  & =
    \dirlim_{\aa'\in\aa-\sigma^\circ}\bigl(
    \dirlim_{\vv\in\tau}\cM_{\aa'+\vv}\bigr),
  \end{align*}
  the final line being the only one not completely obviously directly
  from the definitions, since it uses that $\sigma \supseteq \tau$.
}\end{excise}%
\end{proof}

\begin{cor}\label{c:at-most-one}
An indicator quotient for a downset in a real polyhedral group has at
most one linearly independent socle element along each face with given
nadir and~degree.  In fact, the degrees of independent socle elements
along~$\tau$ with fixed nadir are incomparable in~$\qrt$, and nadirs
of socle elements with fixed degree are
\mbox{incomparable}~in~$\nabt$.
\end{cor}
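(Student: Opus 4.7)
The plan is to combine the explicit characterization of the socle support from Example~\ref{e:soct} with a direct dimension count that leverages the indicator structure of the upper boundary module $\ds\kk[D]$. Throughout, write $\wt\aa$ for the image in~$\qrt$ of $\aa \in \cQ$.

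For one-dimensionality of each piece $(\soct\kk[D])_{(\wt\aa,\sigma)}$, I would use Proposition~\ref{p:either-order} together with Lemma~\ref{l:intersection-of-kernels} and Proposition~\ref{p:local-vs-global} to express this piece as a subquotient---first a subspace cut out by the $\nabt$-socle conditions (intersecting kernels of structure maps into $\delta^{\sigma'}\kk[D]$ for $\sigma' \subsetneq \sigma$ with $\sigma' \supseteq \tau$), then a quotient by the $\tau$-action---of the Hom space $\Hom_\cQ\bigl(\kk[\aa+\tau], \ds\kk[D]\bigr)$ for any lift~$\aa$ of~$\wt\aa$. By Proposition~\ref{p:downset-upper-boundary}, $\ds\kk[D] = \kk[\ds D]$ is an indicator quotient, hence has $\cQ$-graded pieces of dimension at most one. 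Any homomorphism $\kk[\aa+\tau] \to \kk[\ds D]$ is determined by the image of the degree-$\aa$ generator, which lies in the $\mm_\tau$-annihilator subspace of the at-most-one-dimensional vector space $(\kk[\ds D])_\aa$. Hence the Hom space has dimension at most one, and so does the socle piece.

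For the two incomparability statements, recall from Example~\ref{e:soct} that $(\soct\kk[D])_{(\wt\aa,\sigma)}$ is nonzero precisely when $\sigma$ is minimal in $\nabt \cap \nda$ (for an appropriate lift~$\aa$) and $\wt\aa$ is maximal in~$\qrt$ among $\qrt$-elements satisfying this minimality. If two socle elements share a $\qrt$-degree $\wt\aa$ but have comparable nadirs $\sigma \supsetneq \sigma'$ (both in $\nabt$), then both $\sigma$ and $\sigma'$ lie in $\nabt \cap \nda$, directly contradicting the minimality of~$\sigma$. If two socle elements share a nadir~$\sigma$ but have comparable $\qrt$-degrees $\wt\aa \prec \wt\bb$, then $\wt\bb$ itself is a witness to the minimality condition strictly above $\wt\aa$ in~$\qrt$, directly contradicting the $\qrt$-maximality of~$\wt\aa$.

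The main delicacy is translating the abstract definition of $\soct$ in Definition~\ref{d:soct} (in terms of $\hhom$ and quotient-restriction) into the concrete support-and-minimality characterization at the $\qrt$-level used above, given that the upper boundary and quotient-restriction operations interact subtly (cf.~Remark~\ref{r:soc-vs-supp}); this translation is precisely the content of Example~\ref{e:soct}, together with Propositions~\ref{p:either-order} and~\ref{p:local-vs-global}. Once that identification is in hand, all three conclusions of the corollary reduce to the elementary observations above.
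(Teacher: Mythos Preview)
Your argument is essentially correct, but it takes a longer route than the paper and leaves one step unjustified.

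The paper's proof is three lines: it uses Proposition~\ref{p:local-vs-global} to inject $\soct\kk[D]$ into the local socle $\soc(\kk[D]/\tau)$, observes that $\kk[D]/\tau$ is again an indicator quotient for a downset in the real polyhedral group~$\qrt$, and then reads off all three conclusions simultaneously from Example~\ref{e:soc-Rn-downset} applied over~$\qrt$ (that is, in the special case where the face is~$\{\0\}$). Your separate dimension count via $\Hom_\cQ\bigl(\kk[\aa+\tau],\kk[\ds D]\bigr)$ is valid, though the phrase ``subquotient'' glosses over the direct limit in the quotient--restriction; the simpler route is just to note that the injection into the local socle already bounds the dimension by~$1$, since Example~\ref{e:soc-Rn-downset} shows the local socle pieces are indicator.

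The genuine gap is in your incomparability-of-nadirs argument. When you assert that ``both $\sigma$ and~$\sigma'$ lie in $\nabt\cap\nda$,'' you are using a \emph{single} lift~$\aa$ of~$\wt\aa$ for both nadirs, but Example~\ref{e:soct} only hands you a lift~$\aa$ at which $\sigma$ is minimal and a possibly different lift~$\aa'$ at which $\sigma'$ is minimal, since the shape~$\nda$ can vary along the coset~$\aa+\RR\tau$. The fix is short: the cogenerator condition at~$(\wt\aa,\sigma')$ forces $\aa'+\tau\subseteq\delta^{\sigma'}\!D$, and since $\delta^{\sigma'}\!D$ is a downset (Proposition~\ref{p:downset-upper-boundary}) this gives $\aa'+\RR\tau\subseteq\delta^{\sigma'}\!D$, whence $\sigma'\in\nda$ for your original lift~$\aa$ as well, and the contradiction with minimality of~$\sigma$ goes through. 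The paper's passage to~$\qrt$ sidesteps this entirely, because over~$\qrt$ each socle degree~$\wt\aa$ is a single point carrying a single shape, with no lift to choose.
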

\begin{proof}
A socle element of an indicator quotient~$E$ along a face~$\tau$
of~$\cQ$ is a local socle element of~$E$ along~$\tau$ by
Proposition~\ref{p:local-vs-global}.  Local socle elements
along~$\tau$ are socle elements (along the minimal face~$\{\0\}$) of
the quotient-restriction along~$\tau$ by
Definition~\ref{d:soct}.\ref{i:local-soc-tau}.  But $E/\tau$ is an
indicator quotient of~$\kk[\qrt]$, so its socle degrees with fixed
nadir~$\sigma$ are incomparable, as are its nadirs with fixed socle
degrees, by~Example~\ref{e:soc-Rn-downset}.
\end{proof}

\begin{example}\label{e:soct-k[tau]}
Propositions~\ref{p:either-order} and~\ref{p:local-vs-global} ease
some socle computations.  To see how, consider the indicator
$\cQ$-module~$\kk[\rho]$ for a face~$\rho$ of~$\cQ$.
Proposition~\ref{p:local-vs-global} immediately implies that
$\soct\kk[\rho] = 0$ unless $\rho \supseteq \tau$, because localizing
along~$\tau$ yields $\kk[\rho]_\tau = 0$ unless $\rho \supseteq \tau$.

Next compute $\ds\kk[\rho]$.  When either $\aa \not\in \rho$ or
$\sigma \not\subseteq \rho$, the direct limit in
Definition~\ref{d:atop-sigma} is over a set $\aa - \sigma^\circ$ of
degrees in which $\kk[\rho] = 0$ in a neighborhood of~$\aa$.  Hence
the only faces that can appear in $\dt\kk[\rho]$ lie in the interval
between~$\tau$ and~$\rho$, so assume $\tau \subseteq \sigma \subseteq
\rho$.  If $\bigl(\ds\kk[\rho]\bigr){}_\aa \neq 0$ then it
equals~$\kk$ because $\kk[\rho]$ is an indicator module for a subset
of~$\cQ$.  Moreover, if $(\ds\kk[\rho])_\aa = \kk$ then the same is
true in any degree $\bb \in \aa + \rho$ because $(\bb -\nolinebreak
\aa) +\nolinebreak (\aa - \sigma^\circ) \cap \rho \subseteq (\bb -
\sigma^\circ) \cap \rho$.  Thus $\ds\kk[\rho]$ is torsion-free as a
$\kk[\rho]$-module.

The $\socct\!$ on the left side of Proposition~\ref{p:either-order},
which by Definition~\ref{d:socct}.\ref{i:global-socc-tau} is a
quotient-restriction of a module
$\hhom_\cQ\bigl(\kk[\tau],\dt\kk[\rho]\bigr)^{\!}$, can only be
nonzero if $\tau = \rho$, as any nonzero image of~$\kk[\tau]$ is a
torsion $\kk[\rho]$-module.  Hence the socle of~$\kk[\rho]$
along~$\tau$ equals the closed socle along $\tau = \rho$, which is
computed directly from Definition~\ref{d:soct}.\ref{i:global-soc-tau}
and Definition~\ref{d:quotient-restriction} to be
$\hhom_\cQ\bigl(\kk[\tau],\kk[\tau]\bigr)/\tau = \kk[\tau]/\tau$.  In
summary,
$$%
  \soct\kk[\rho] =
  \begin{cases}
  \kk_\0\text{ for } \0 \in \qrt & \text{if } \tau = \rho
\\               0               & \text{otherwise.}
\end{cases}
$$
\end{example}

\begin{prop}\label{p:left-exact-tau}
The global cogenerator functor $\soct$ along any face~$\tau$ of a
real polyhedral group is left-exact, as is the local cogenerator
functor along~$\tau$.
\end{prop}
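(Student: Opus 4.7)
The plan is to express each cogenerator functor as a composition of one left-exact functor with several exact functors, so that left-exactness propagates through the composition.

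For the global case, I read off from Definition~\ref{d:soct}.\ref{i:global-soc-tau} that $\soct$ factors as
\[
  \cM \;\longmapsto\; \dt\cM \;\longmapsto\; \hhom_{\cQ\times\nabt}\bigl(\kk[\tau],\dt\cM\bigr) \;\longmapsto\; \hhom_{\cQ\times\nabt}\bigl(\kk[\tau],\dt\cM\bigr)\big/\tau.
\]
The first arrow is exact: by Definition~\ref{d:upper-boundary-tau}, $\dt\cM$ is the summand $\bigoplus_{\sigma\in\nabt}\ds[\tau]\cM$ of~$\delta\cM$, and each $\ds[\tau]$ is exact by Lemma~\ref{l:exact-delta}, while direct sums of exact functors are exact. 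The middle arrow is left-exact because it is $\hhom$ from a fixed module, computed in the category of $\cQ\times\nabt$-modules, which (as in the proof of Lemma~\ref{l:left-exact-socc}) is a category of graded modules over a ring. The last arrow, quotient-restriction along~$\tau$, is exact by Lemma~\ref{l:exact-qr}. A composition of exact functors with a single left-exact functor is left-exact, so $\soct$ is left-exact.

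For the local case, Definition~\ref{d:soct}.\ref{i:local-soc-tau} writes $\soc(\cmt)$ as the composition
\[
  \cM \;\longmapsto\; \cmt \;\longmapsto\; \delta(\cmt) \;\longmapsto\; \socc\bigl(\delta(\cmt)\bigr),
\]
taken respectively over the posets $\cQ$, $\qrt$, and $\qrt\times\nabt$. The first functor is exact by Lemma~\ref{l:exact-qr}; the second is exact by Lemma~\ref{l:exact-delta} together with the exactness of direct sums; and the third is left-exact by Lemma~\ref{l:left-exact-socc} applied over the poset $\qrt\times\nabt$. Hence $\soc(\cmt)$ is left-exact in~$\cM$.

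There is no substantive obstacle here: all the component functors are already isolated and their exactness properties have been recorded in earlier lemmas. The only mild subtlety is bookkeeping---making sure the middle step in the global case is $\hhom$ over $\cQ\times\nabt$ rather than $\cQ\times\fqo$, which is fine because $\dt\cM$ is nonzero only in $\nabt$-degrees and so $\hhom_{\cQ\times\nabt}$ computes the same kernels as $\hhom_{\cQ\times\fqo}$ would on this submodule---and verifying that quotient-restriction by~$\tau$ genuinely commutes with the exact structure, which is exactly the content of Lemma~\ref{l:exact-qr}.
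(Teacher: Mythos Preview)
Your proof is correct and takes essentially the same approach as the paper. The paper's proof is the terse two-citation ``Proposition~\ref{p:left-exact-tau-closed} and Lemma~\ref{l:exact-delta}'', which encodes precisely the decomposition you spell out: the exact upper-boundary functor $\dt$ (or $\delta$ over $\qrt$) followed by the left-exact closed socle along~$\tau$, whose own left-exactness (proved in Proposition~\ref{p:left-exact-tau-closed}) unpacks into exactly the $\hhom$-plus-quotient-restriction steps you cite individually.
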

\begin{proof}
Proposition~\ref{p:left-exact-tau-closed} and
Lemma~\ref{l:exact-delta}.
\end{proof}

Here is the final version of the statement that a module in the
category of finitely encoded modules or semialgebraic algebraic
modules remains there upon taking socles.  Previous versions are used
in the proof, but for modules over real polyhedral group this is the
only statement worth remembering, as all of the others are special
cases.  That said, it is also worth noting
Proposition~\ref{p:socc-encoding}, which treats the finitely encoded
category over an arbitrary poset.

\begin{thm}\label{t:soct-encoding}
If a module $\cM$ over a real polyhedral group is finitely encoded
then so is its socle $\soct\cM$ along any face~$\tau$.  If $\cM$ is
semialgebraic then so is~$\soct\cM$.  For any face~$\sigma \supseteq
\tau$, these statements remain true for the socle along~$\tau$ with
nadir~$\sigma$.
\end{thm}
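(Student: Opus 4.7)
The plan is to factor $\soct$ as a composition of three functors, each already known to preserve the finitely encoded and semialgebraic conditions, by invoking Proposition~\ref{p:either-order} to rewrite $\soct\cM \cong \socct(\ntsoc\dt\cM)$. This reduces the theorem to verifying that each of the intermediate steps---forming the upper boundary $\dt\cM$, taking the closed socle $\ntsoc$ in the face direction, and applying $\socct$ in the $\cQ$-direction---sends finitely encoded (resp. semialgebraic) modules to finitely encoded (resp. semialgebraic) modules.

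First I would apply Proposition~\ref{p:frontier-encoding} to each $\sigma \in \nabt$ to conclude that $\ds\cM$ is finitely encoded (resp. semialgebraic). Since $\cQ$ is polyhedral, the set $\nabt \subseteq \cfq$ is finite, so taking a common refinement of the finitely many encodings via iterated application of Lemma~\ref{l:abelian-category} yields a finite encoding of $\dt\cM = \bigoplus_{\sigma\in\nabt}\ds\cM$ as a $(\cQ \times \nabt)$-module, with the $\nabt$-action recorded via the structure morphisms in the encoded quotient. Next, $\ntsoc\dt\cM$ is the intersection of the kernels of the finitely many natural maps $\ds\cM \to \ds[\sigma']\cM$ indexed by covering relations $\sigma \supsetneq \sigma'$ in~$\nabt$, and it is therefore finitely encoded (resp. semialgebraic) by Lemma~\ref{l:abelian-category}. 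Finally, applying $\socct$ in the $\cQ$-direction separately to each $\nabt$-graded piece preserves the relevant finiteness by Proposition~\ref{p:socct-encoding}; bundling the finitely many resulting $\qrt$-modules gives a finitely encoded (resp. semialgebraic) $(\qrt \times \nabt)$-module that is identified with $\soct\cM$ by Proposition~\ref{p:either-order}.

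For the nadir-$\sigma$ statement, $\soct[\sigma]\cM$ is by construction the $\sigma$-graded component of $\soct\cM$ regarded as a $\nabt$-graded $\qrt$-module, so the encoding produced above restricts on each $\sigma$-slice to a finite (resp. semialgebraic) encoding of $\soct[\sigma]\cM$ as a $\qrt$-module. The main obstacle is less technical than conceptual: one must scrupulously track which poset each intermediate module is graded by, and in particular apply the quotient-restriction along~$\tau$ only at the final $\socct$ stage, since per Remark~\ref{r:soc-vs-supp} it cannot in general be commuted past either localization or $\hhom_\cQ\bigl(\kk[\tau],-\bigr)$. Once the order of operations dictated by Proposition~\ref{p:either-order} is respected, all three preservation steps reduce cleanly to results already in hand.
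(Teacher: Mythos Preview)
Your proposal is correct and follows essentially the same approach as the paper: factor $\soct$ via Proposition~\ref{p:either-order} into the upper boundary along~$\tau$, the $\ntsoc$ functor, and $\socct$, then invoke Proposition~\ref{p:frontier-encoding}, Lemma~\ref{l:abelian-category} (for the intersection of kernels), and Proposition~\ref{p:socct-encoding} respectively. Your treatment of the fixed-nadir case as a graded component likewise matches the paper's, which notes that the kernel-intersection argument for $\ntsoc$ already handles it.
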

\begin{proof}
By Proposition~\ref{p:either-order}, $\soct\cM$ is a composite of the
functors $\delta$, $\socct$, and~$\ntsoc$ in some order.  For $\delta$
use Proposition~\ref{p:frontier-encoding}.  For $\socct$ use
Proposition~\ref{p:socct-encoding}.  For $\ntsoc$ use
Lemma~\ref{l:abelian-category}: the $\nabt$-graded component of
$\ntsoc\cN$ is the intersection of the kernels of the homomorphisms
$\cN_\sigma \to \cN_{\sigma'}$ for $\sigma \supset \sigma'$.  This
argument for~$\ntsoc$ already proves the claim concerning a fixed
nadir~$\sigma$ (see also Remark~\ref{r:soc-as-k-vect}).
\end{proof}

%%%%%%%%%%%%%%%%%%%%%%%%%%%%%%%%%%%%%%%%%%%%%%%%%%%%%%%%%%%%%%%%%%%%%%%%%
\section{Essential property of socles}\label{s:essential}%%%%%%%%%%%%%%%%
%%%%%%%%%%%%%%%%%%%%%%%%%%%%%%%%%%%%%%%%%%%%%%%%%%%%%%%%%%%%%%%%%%%%%%%%%

In this section, $\cQ$ is a real polyhedral group unless otherwise
stated.

The culmination of the foundations developed in Section~\ref{s:socle}
says that socles and cogenerators detect injectivity of homomorphisms
between finitely encoded modules over real polyhedral groups
(Theorem~\ref{t:injection}), as they do for noetherian rings in
ordinary commutative algebra.  The theory is complicated by there
being no actual submodule containing a given non-closed socle element;
that is why socles are functors that yield submodules of localizations
of auxiliary modules rather than submodules of localizations of the
given module itself.  Nonetheless, it comes down to the fact that,
when $D \subseteq \cQ$ is a downset, every element can be pushed up to
a cogenerator.  Theorem~\ref{t:divides} contains a precise statement
that suffices for the purpose of Theorem~\ref{t:injection}, although
the definitive version of Theorem~\ref{t:divides} occurs in
Section~\ref{s:minimality}, namely Theorem~\ref{t:downset=union}.

The proof of Theorem~\ref{t:divides} requires a
definition---essentially the notion dual to that of shape
(Proposition~\ref{p:shape}).  Informally, it is the set of faces
$\sigma$ such that a neighborhood of~$\aa$ in $\aa + \sigma^\circ$ is
contained in the downset~$D$.  The formal definition reduces by
negation to the discussion surrounding tangent cones of downsets
(Section~\ref{sub:tangent}), noting that the negative of an upset is a
downset.

\begin{defn}\label{d:upshape}
The \emph{upshape} of a downset $D$ in a real polyhedral group~$\cQ$
at~$\aa$ is
$$%
  \dda = \cfq \minus \nabla_{\!-U}^{-\aa\,},
$$
where $U = \cQ \minus D$ is the upset complementary to~$D$.
\end{defn}

\begin{lemma}\label{l:upshape}
The upshape $\dda$ is a polyhedral complex (a downset) in~$\cfq$.  As
a function of\/~$\aa$, for fixed~$D$ the upshape $\dda$ is decreasing,
meaning $\aa \preceq \bb \implies \dda \supseteq \ddb$.
\end{lemma}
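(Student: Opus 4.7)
The plan is to unwind Definition~\ref{d:upshape} and apply the structural results of Section~\ref{sub:tangent} to the downset~$-U$, where $U = \cQ \minus D$. By Proposition~\ref{p:downset-upper-boundary}.\ref{i:nabla} applied to~$-U$ at the point~$-\aa$, one has $\sigma \in \nabla_{-U}^{-\aa}$ iff $-\aa - \sigma^\circ \subseteq -U$ iff $\aa + \sigma^\circ \subseteq U$; equivalently, $\sigma \in \dda$ iff $(\aa + \sigma^\circ) \cap D \neq \nothing$. The boundary hypothesis ``$\aa \in \oD$'' in Proposition~\ref{p:shape} causes no trouble here, since if $\aa$ lies in the topological interior of~$U$ then $\nabla_{-U}^{-\aa} = \cfq$, and if $\aa$ lies strictly away from~$\ol U$ then $\nabla_{-U}^{-\aa} = \nothing$; both extremes are upsets in~$\cfq$.

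For the polyhedral complex claim, Proposition~\ref{p:shape} says that $\nabla_{-U}^{-\aa}$ is a cocomplex, i.e., an upset in the face poset~$\cfq$. Its complement $\dda = \cfq \minus \nabla_{-U}^{-\aa}$ is therefore a downset in~$\cfq$, which is precisely the polyhedral complex condition (closure under passage to subfaces of~$\cQ_+$).

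For monotonicity, suppose $\aa \preceq \bb$ in~$\cQ$, so $\bb - \aa \in \cQ_+$. Because $U$ is an upset,
\[
  \aa + \sigma^\circ \subseteq U
  \ \implies\
  \bb + \sigma^\circ = (\aa + \sigma^\circ) + (\bb - \aa) \subseteq U + \cQ_+ \subseteq U,
\]
so $\nabla_{-U}^{-\aa} \subseteq \nabla_{-U}^{-\bb}$; taking complements in~$\cfq$ yields $\dda \supseteq \ddb$.

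Neither step presents a real obstacle. The only conceptual content is recognizing that the upshape is the complement in~$\cfq$ of the shape functor applied to the opposite downset~$-U$ at the opposite point~$-\aa$, at which point both claims reduce to the results of Section~\ref{sub:tangent} together with the elementary absorbing property of an upset under translation by~$\cQ_+$.
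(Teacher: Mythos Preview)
Your proof is correct and is precisely the unpacking of the paper's one-line proof, which simply states that the claims are immediate from the discussion in Section~\ref{sub:tangent}. You have correctly identified that the shape $\nabla_{-U}^{-\aa}$ is a cocomplex by Proposition~\ref{p:shape} (with the interior and exterior cases handled separately, as you do), so its complement is a downset in~$\cfq$, and that monotonicity follows from the upset property $U + \cQ_+ \subseteq U$.
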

\begin{proof}
These claims are immediate from the discussion in
Section~\ref{sub:tangent}.
\end{proof}

\begin{remark}\label{r:upshape}
The upshape $\dda$ is a rather tight analogue of the Stanley--Reisner
complex of a simplicial complex, or more generally the lower Koszul
simplicial complex \cite[Definition~5.9]{cca} of a monomial ideal in a
degree from~$\ZZ^n$.  (The complex $K_\bb(I)$ would need to be indexed
by $\bb - \mathrm{supp}(\bb)$ to make the analogy even tighter.)
Similarly, the shape of a downset at an element of~$\cQ$ is analogous
to the upper Koszul simplicial complex of a monomial ideal
\cite[Definition~1.33]{cca}.
\end{remark}

The general statement about pushing up to cogenerators relies on the
special case of closed cogenerators for closed downsets.

\begin{lemma}\label{l:cogenerator}
If $D \subseteq \cQ$ is a downset and the part of~$D$ above $\bb \in
D$ is closed, so $(\bb + \cQ_+) \cap D = (\bb + \cQ_+) \cap \oD$, then
$\bb \preceq \aa$ for some closed cogenerator~$\aa$ of~$D$.
\end{lemma}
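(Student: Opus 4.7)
The plan is to apply Zorn's lemma to $S = (\bb + \cQ_+) \cap D$, which is closed in~$\cQ$ by the hypothesis (an intersection of the closed sets $\bb + \cQ_+$ and~$\oD$). Since a maximal element of~$S$ under~$\preceq$ is precisely a closed cogenerator of~$D$ along the trivial face~$\{\0\}$ (Example~\ref{e:socct}), it suffices to show that every chain in~$S$ has an upper bound. The key tool is a linear functional $\ell\colon\cQ \to \RR$ strictly positive on $\cQ_+ \setminus \{\0\}$, which exists because~$\cQ_+$ is a pointed polyhedral cone.

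Given a chain $C \subseteq S$, I would extract a chain-increasing sequence $c_k \in C$ with $\ell(c_k) \to \sup_C \ell$. In the case $\sup_C \ell < \infty$, strict positivity of~$\ell$ on the intersection of~$\cQ_+$ with a unit sphere converts the $\ell$-bound into a norm bound on~$c_k - \bb$, so a convergent subsequence produces a limit $c^* \in \ol{S} = S$. To see that $c^*$ bounds~$C$, fix $c \in C$: the chain property forces either $c \preceq c_k$ for some~$k$ (whence $c \preceq c^*$) or $c \succeq c_k$ for all~$k$, in which case $\ell(c) = \sup_C \ell = \ell(c^*)$ combined with strict positivity of~$\ell$ on $\cQ_+ \setminus \{\0\}$ forces $c = c^*$. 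When every chain is bounded this way, Zorn's lemma yields the desired closed cogenerator $\aa \in S$ along~$\{\0\}$.

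The remaining case $\sup_C \ell = \infty$ is treated by induction on~$\dim\cQ$, the base case $\dim\cQ = 0$ being trivial. Extract $c_k \in C$ with $\|c_k - \bb\| \to \infty$, normalize to $\hat c_k = (c_k - \bb)/\|c_k - \bb\|$, and pass to a subsequential limit $v \in \cQ_+$ of unit norm lying in the relative interior of a unique face~$\tau$ of~$\cQ_+$ with $\dim\tau > 0$. For each $t \geq 0$ and large~$k$, $\bb + t\hat c_k \preceq c_k \in D$ shows $\bb + t\hat c_k \in D$, and taking $k \to \infty$ with closedness of~$S$ gives $\bb + tv \in D$; using that $v$ lies in~$\tau^\circ$, any $t \in \tau$ satisfies $kv - t \in \tau \subseteq \cQ_+$ for sufficiently large~$k$, so $\bb + t \preceq \bb + kv \in D$ forces $\bb + \tau \subseteq D$, i.e., $\bb \in D_\tau$. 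The $\RR\tau$-invariant enlargement $D_\tau + \RR\tau$ is a downset in~$\cQ$, descending via the projection to a downset $\bar D$ in the quotient real polyhedral group~$\qrt$ of strictly smaller dimension (Remark~\ref{r:qrt}); applying the induction hypothesis to $\bar\bb = \bb + \RR\tau \in \bar D$ produces a closed cogenerator $\bar\aa \succeq \bar\bb$ along some face $\bar\tau'$ of~$\qrtp$. By Lemma~\ref{l:nabt}, $\bar\tau' = \tau'/\tau$ for a unique face $\tau' \supseteq \tau$ of~$\cQ_+$, and a lift $\aa$ of~$\bar\aa$ with $\aa \succeq \bb$ furnishes the desired closed cogenerator of~$D$ along~$\tau'$.

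The main obstacle will be verifying the closedness hypothesis in the quotient so that the inductive hypothesis applies: one must show that $(\bar\bb + \qrtp) \cap \bar D$ equals its closure in~$\qrt$, which reduces to closedness of $(\bb + \cQ_+ + \RR\tau) \cap (D_\tau + \RR\tau)$ in~$\cQ$. Since $D_\tau$ itself need not be closed even though $S$ is, this requires leveraging the closedness of~$S$ near~$\bb$ to control $\RR\tau$-slices of~$D_\tau$. A secondary technical point is verifying the defining condition $(\aa + \cQ_+) \cap D = \aa + \tau'$ for the lift~$\aa$: obstruction directions $q \in \cQ_+ \setminus \RR\tau'$ with $\aa + q \in D$ would descend to witness strictly bigger elements of~$\bar D$ above~$\bar\aa$, contradicting maximality, while directions in $\RR\tau' \cap \cQ_+$ are forced to lie in~$\tau'$ itself by the identity $\tau' = \cQ_+ \cap \RR\tau'$ for faces of pointed polyhedral cones.
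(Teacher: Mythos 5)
Your strategy---Zorn's lemma on $S = (\bb + \cQ_+) \cap D$, with an induction on dimension through the quotient $\qrt$ when a chain escapes to infinity along a face $\tau$---is genuinely different from the paper's proof, which instead chooses a point $\aa \in (\bb+\cQ_+) \cap \del D$ whose upshape $\dda$ is minimal and argues that minimality forces $\dda$ to be the face poset of a single facet $\tau$, whence $\aa$ is a closed cogenerator along~$\tau$. Your bounded-chain case is correct (a maximal element of~$S$ is maximal in~$D$, hence a closed cogenerator along~$\{\0\}$). The fatal gap is in the unbounded case, and it is precisely the point you dismissed as ``secondary,'' not the quotient-closedness you flagged as the main obstacle (that one is comparatively benign, since membership of a coset in $\bar D$ above $\bar\bb$ is detected inside $\bb + \cQ_+$, where closedness holds).

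The problem is that passing to $\bar D \subseteq \qrt$ (the image of~$D_\tau$) erases every element of~$D$ that does not lie in~$D_\tau$, yet those elements still obstruct the cogenerator condition upstairs: if $q \in \cQ_+ \setminus \RR\tau'$ and $\aa + q \in D$, the image of $\aa+q$ in $\qrt$ lies in~$\bar D$ only when $\aa + q + \tau \subseteq D$, so maximality of~$\bar\aa$ gives no contradiction. Take $D$ to be the closed hyperbola downset of Example~\ref{e:hyperbola-GD} and $\bb = (1,-5)$. The hypothesis holds because $D = \oD$. The chain $\{(x,-5) : x \geq 1\}$ is unbounded in the direction of the $x$-axis $\tau$, and $\bb + \tau \subseteq D$, so your recursion fires with $D_\tau = \{y \leq 0\}$ and $\bar D = (-\infty,0]$, producing $\bar\aa = 0$ and $\tau' = \tau$. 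But every lift $(x_0,0)$ with $x_0 \geq 1$ has $(x_0, 1/(2x_0))$ strictly above it inside~$D$ and outside $(x_0,0)+\tau'$; indeed $\socct\,\kk[D] = 0$ for $\tau$ the $x$-axis (Remark~\ref{r:soc-vs-supp}, Examples~\ref{e:PF} and~\ref{e:PF'}), so $D$ has no closed cogenerator along that face at all. The lemma is nonetheless true here---the hyperbola points are maximal in~$D$ and dominate~$\bb$---but your case analysis, which branches on the existence of an unbounded chain rather than on the geometry of the boundary, never locates them. Any repair must confront the fact that escape to infinity along $\tau^\circ$ does not imply cogeneration along a face containing~$\tau$ over a real polyhedral group; this is exactly the real-versus-discrete subtlety the paper isolates, and it is why its proof works with minimal upshapes of boundary points (and Lemma~\ref{l:upshape}) rather than with a quotient recursion.
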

\begin{proof}
It is possible that $\bb + \cQ_+ \subseteq D$, in which case $D = \cQ$
and $\bb$ is by definition a closed cogenerator along $\tau = \cQ_+$.
Barring that case, the intersection $(\bb + \cQ_+) \cap \del D$ of the
principal upset at~$\bb$ with the boundary of~$D$ is nonempty.  Among
the points in this intersection, choose~$\aa$ with minimal upshape.
Observe that $\{\0\} \in \dda$ because $\aa \in D$, so~$\dda$ is
nonempty.

Let $\tau \in \dda$ be a facet.  The goal is to conclude that $\dda =
\cF_\tau$ has no facet other than~$\tau$, for then $\dd{\aa'} =
\cF_\tau$ for all $\aa' \succeq \aa$ in~$D$ by upshape minimality and
Lemma~\ref{l:upshape},
% %
% \begin{excise}{%
% %
%   \comment{it might be kind to the reader to include a slight bit more
%   justification here:} any ray contained in~$\tau$---not necessarily an
%   extreme ray---that exits~$D$ has a final point in~$D$, because the
%   relevant part of the boundary of~$D$ is contained in~$D$.  That
%   final point has visibly smaller upshape, because no vector along the
%   ray itself lies in the upshape there.
% %
% }\end{excise}%
% %
and hence $\aa$ is a cogenerator of~$D$ along~$\tau$ by
Definition~\ref{d:socct} (see also Remark~\ref{r:witness}).

Suppose that $\rho \in \cfq$ is a ray that lies outside of~$\tau$.  If
$\rho \in \dda$ then upshape minimality implies $\rho \in \dd{\aa'}$
for any $\aa' \in (\aa + \tau^\circ) \cap D$, and such an $\aa'$
exists by definition of upshape.  Consequently, some face containing
both~$\rho$ and~$\tau$ lies in~$\dda$: if $\vv$ is any sufficiently
small vector along~$\rho$, then $\aa' + \vv = \aa + (\aa' - \aa) + \vv
\in D$, and the smallest face containing $(\aa' - \aa) + \vv$ contains
both the interior of~$\tau$ (because it contains $\aa' - \aa$)
and~$\rho$ (because it contains~$\vv$).  But this is impossible, so in
fact $\dda = \cF_\tau$.
\end{proof}

\begin{thm}\label{t:divides}
If $D$ is a downset in a real polyhedral group~$\cQ$, then there are
faces $\tau \subseteq \sigma$ of~$\cQ_+\!$ and a cogenerator~$\aa$
of~$D$ along~$\tau$ with nadir~$\sigma$ such that $\bb \preceq \aa$.
\end{thm}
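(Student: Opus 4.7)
The plan is to reduce to Lemma~\ref{l:cogenerator} (the closed case $\sigma = \tau$) by passing to an upper boundary downset~$\ds D$ for a carefully chosen face~$\sigma$. Example~\ref{e:soct} characterizes cogenerators: $\aa$ is a cogenerator of~$D$ along~$\tau$ with nadir~$\sigma$ iff $\sigma \in \del(\nda \cap \nabt)$ and $\wt\aa$ is maximal in~$\qrt$ with this property. Since $\sigma \in \nda$ iff $\aa \in \ds D$ by Proposition~\ref{p:downset-upper-boundary}.\ref{i:nabla}, producing a closed cogenerator of~$\ds D$ along some $\tau \subseteq \sigma$ will translate into the desired cogenerator of~$D$.

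First, I would select the nadir face~$\sigma$: among pairs $(\aa, \sigma')$ with $\bb \preceq \aa \in \oD$ and $\sigma' \in \nabla_{\!D}^\aa$, which are nonempty (take $\aa = \bb$, $\sigma' = \{\0\}$), finiteness of the face lattice~$\cfq$ permits choosing one in which $\sigma$ is maximal under inclusion, so that no strictly larger face lies in~$\nabla_{\!D}^{\aa'}$ for any $\aa' \succeq \bb$ in~$\oD$.

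Next I would verify the hypothesis of Lemma~\ref{l:cogenerator} inside~$\ds D$, namely that the part of~$\ds D$ above~$\aa$ is closed: if $\cc \in (\aa + \cQ_+) \cap \ol{\ds D}$, then $\cc \in \oD$ and tangent-cone rigidity (Proposition~\ref{p:shape}) applied to limits of sequences in~$\ds D$ furnishes a face $\sigma' \supseteq \sigma$ with $\sigma' \in \nabla_{\!D}^\cc$; maximality of~$\sigma$ forces $\sigma' = \sigma$, whence $\cc \in \ds D$. Lemma~\ref{l:cogenerator} then produces a face~$\tau$ and a closed cogenerator $\aa' \succeq \aa$ of~$\ds D$ along~$\tau$. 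The inclusion $\tau \subseteq \sigma$ follows from maximality of~$\sigma$: any ray~$v$ of~$\tau$ has $\aa' + \RR_+ v \subseteq \ds D$, which otherwise would produce a face strictly larger than~$\sigma$ in~$\nabla_{\!D}^{\aa' + \epsilon v}$ for small $\epsilon > 0$, contradicting the maximality choice (since $\aa' + \epsilon v \succeq \bb$). The remaining conditions from Example~\ref{e:soct}---the minimality $\sigma \in \del(\nabla_{\!D}^{\aa'} \cap \nabt)$ and $\qrt$-maximality of~$\wt{\aa'}$---follow, respectively, from how Lemma~\ref{l:cogenerator} constructs~$\tau$ (excluding any face of the shape strictly between~$\tau$ and~$\sigma$) and from~$\aa'$ being a closed cogenerator of~$\ds D$ along~$\tau$.

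The main obstacle is the closure argument together with the minimality verification: both hinge on the rigidity of tangent-cone shapes in real polyhedral groups (Proposition~\ref{p:shape}) combined with the maximality of~$\sigma$, and relating the shape of~$D$ to structural properties of~$\ds D$ demands delicate bookkeeping because downsets in a real polyhedral group need not be topologically closed or open. If the direct maximality argument proves awkward, an iterative alternative---recursively replacing $D$ by $\delta^{\sigma_i} D$ along a strictly ascending chain $\sigma_1 \subsetneq \sigma_2 \subsetneq \cdots$ of faces, whose length is bounded by $\dim \cQ$---would achieve the same conclusion via finiteness of~$\cfq$.
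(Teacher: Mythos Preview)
Your proposal has a fundamental gap stemming from a confusion about the shape cocomplex. The shape $\nda$ is an \emph{upset} in the face poset~$\cfq$ (Proposition~\ref{p:shape}), and for every $\aa\in\oD$ it contains the top face~$\cQ_+$ (this is exactly Corollary~\ref{c:<<}). Hence your ``maximal $\sigma$'' over all pairs $(\aa,\sigma')$ with $\sigma'\in\nda$ is always $\sigma=\cQ_+$, regardless of~$\aa$. Your argument then collapses to: apply Lemma~\ref{l:cogenerator} directly to $\delta^{\cQ_+}D=\oD$. The inclusion $\tau\subseteq\sigma$ in your step~4 becomes trivially $\tau\subseteq\cQ_+$, and the reasoning you give (about producing a strictly larger face) is vacuous. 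More seriously, your step~5 fails: with $\sigma=\cQ_+$, the minimality condition $\sigma\in\del(\nabla_D^{\aa'}\cap\nabt)$ asks that $\nabla_D^{\aa'}\cap\nabt=\{\cQ_+\}$, which is false whenever $\nabla_D^{\aa'}$ contains any proper face $\supseteq\tau$.

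Here is a concrete failure. Take $\cQ=\RR^2$, $D=\{y<0\}\cup\{(x,0):x\le 0\}$, and $\bb=(0,0)\in D$. Then $\oD=\{y\le 0\}$, and $(0,0)$ is a closed cogenerator of~$\oD$ along $\tau=x$-axis (so Lemma~\ref{l:cogenerator} may return $\aa'=(0,0)$, $\tau=x$-axis). But $\nabla_D^{(0,0)}$ contains all faces, so $\nabla_D^{(0,0)}\cap\nabt=\{x\text{-axis},\RR^2_+\}$ has minimal element $x$-axis, not $\cQ_+$; and $(0,0)$ is \emph{not} a closed cogenerator of $\delta^{x\text{-axis}}D=D$ along the $x$-axis either, since $((0,0)+\cQ_+)\cap D=\{(0,0)\}$. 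Thus $(0,0)$ is not a cogenerator of~$D$ along the $x$-axis with any nadir.

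The paper's proof avoids this by minimizing the \emph{entire} shape cocomplex $\nabla_D^{\bb'}$ over points $\bb'\succeq\bb$ in~$\partial D$, rather than maximizing a single face. Minimality forces the shape to be constant above~$\bb'$ in~$\oD$, so that $(\bb'+\cQ_+)\cap\ds D=(\bb'+\cQ_+)\cap\oD$ for \emph{every} $\sigma$ in that constant shape. Lemma~\ref{l:cogenerator} then produces $\aa,\tau$, and only \emph{afterwards} is the nadir chosen: $\sigma$ is taken minimal in $\nda\cap\nabt$. Constancy of shape guarantees that $\aa$ remains a closed cogenerator of~$\ds D$ along~$\tau$ for this post-hoc~$\sigma$. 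Your iterative alternative suffers the same defect: the chain jumps immediately to~$\cQ_+$.
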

\begin{proof}
It is possible that $\bb + \cQ_+ \subseteq D$, in which case $D = \cQ$
and $\bb$ is by definition a closed cogenerator along $\tau =
\cQ_+\!$, which is the same as a cogenerator along~$\cQ_+\!$ with
nadir~$\cQ_+\!$.  Barring that case, the intersection $(\bb + \cQ_+)
\cap \del D$ of the principal upset at~$\bb$ with the boundary of~$D$
is nonempty.  Among the points in this intersection, there is one with
minimal shape, and it suffices to treat the case where this point
is~$\bb$ itself.

Minimality of~$\nd[\bb]$ implies that the shape does not change upon
going up from~$\bb$ while staying in the closure~$\oD$.
% (Note: it is crucial that what's written here is $\oD$, not~$D$ or
% $\ds D$ for some~$\sigma$.)
Consequently, given any face $\sigma \in \nda$, the shape of~$D$ at
every point in $\bb + \cQ_+\!$ that lies in~$\oD$ also
contains~$\sigma$.  Equivalently by
Proposition~\ref{p:downset-upper-boundary}.\ref{i:nabla}, $(\bb +
\cQ_+) \cap \ds D = (\bb + \cQ_+) \cap \oD$.
Lemma~\ref{l:cogenerator} produces a closed cogenerator~$\aa$ of~$\ds
D$, along some face~$\tau$, satisfying $\bb \preceq \aa$.  Since
$\nda$ is a nonempty cocomplex, its intersection with $\nabt$ is
nonempty, so assume $\sigma \in \nda \cap \nabt$.  The closed
cogenerator~$\aa$ of~$\ds D$ need not be a cogenerator of~$D$, but if
$\sigma$ is minimal under inclusion in~$\nda \cap \nabt$, then $\aa$
is indeed a cogenerator of~$D$ along~$\tau$ with nadir~$\sigma$ by
Proposition~\ref{p:either-order}---specifically the first displayed
isomorphism---applied to Example~\ref{e:socc}.
\end{proof}

\begin{remark}\label{r:essential}
The arguments in the preceding two proofs are essential to the whole
theory of socles, which hinges upon them.  The structure of the
arguments dictate the forms of all of the notions of socle,
particularly those involving cogenerators along~faces.
\end{remark}

Theorem~\ref{t:injection} is intended for finitely encoded modules,
but because it has no cause to deal with generators, in actuality it
only requires half of a fringe presentation (or a little less; see
Remark~\ref{r:augmentation}).  The statement uses divisibility
(Definition~\ref{d:divides}), which works verbatim for~$\dt\cM$, by
Definition~\ref{d:upper-boundary-tau}, because it refers only to upper
boundaries atop a single~face~$\sigma$.

\begin{thm}[Essentiality of socles]\label{t:injection}
Fix a homomorphism $\phi: \cM \to \cN$ of modules over a real
polyhedral group~$\cQ$.
\begin{enumerate}
\item\label{i:phi=>soct}%
If $\phi$ is injective then $\soct\phi: \soct\cM \to \soct\cN$ is
injective for all faces~$\tau$ of~$\cQ_+$.
\item\label{i:soct=>phi}%
If $\soct\phi: \soct\cM \to \soct\cN$ is injective for all
faces~$\tau$ of~$\cQ_+\!$ and $\cM$ is downset-finite, then $\phi$
is~injective.
\end{enumerate}
If $\cM$ is downset-finite then each homogeneous element of~$\cM\!$
divides a cogenerator of~$\cM$.
\end{thm}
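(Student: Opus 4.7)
The plan is to handle the three assertions in order: part~(\ref{i:phi=>soct}) abstractly from left-exactness, the divisibility claim (the final sentence) as the geometric core, and part~(\ref{i:soct=>phi}) from both by contradiction.

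Part~(\ref{i:phi=>soct}) is immediate: Proposition~\ref{p:left-exact-tau} says $\soct$ is left-exact for every face~$\tau$, and a left-exact covariant functor preserves injections, so $\phi$ injective forces $\soct\phi$ injective.

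For the divisibility claim, I would fix a nonzero homogeneous $y \in \cM_\bb$ and a finite downset hull $\iota\colon \cM \into E = \bigoplus_{j=1}^k \kk[D_j]$ afforded by downset-finiteness. Some component $y_{j_0} \in \kk[D_{j_0}]_\bb$ of $\iota(y)$ is nonzero, which forces $\bb \in D_{j_0}$. Apply Theorem~\ref{t:divides} to~$D_{j_0}$ and~$\bb$ to obtain faces $\tau \subseteq \sigma$ of~$\cQ_+$ and a cogenerator~$\aa$ of~$D_{j_0}$ along~$\tau$ with nadir~$\sigma$ satisfying $\bb \preceq \aa$. A careful reading of that theorem's proof---in which~$\aa$ arises from Lemma~\ref{l:cogenerator} applied to the $\sigma$-thickening $\ds D_{j_0}$ after replacing~$\bb$ by its shape-minimal upward translate, and~$\sigma$ is chosen minimal in~$\nd[\bb] \cap \nabt$---constrains the support face of $\aa - \bb$ to contain~$\sigma$, which delivers the divisibility condition $\bb \in \aa - \qns$ (in the degenerate case $\bb + \cQ_+ \subseteq D_{j_0}$, use $\aa = \bb + \vv$ for any $\vv \in \sigma^\circ$ as a representative of the same class in~$\qrt$). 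Pick $\aa' \in (\aa - \sigma^\circ) \cap (\bb + \cQ_+)$, which is nonempty by this condition. Then $\xx^{\aa'-\bb}\,y \in \cM_{\aa'}$ is nonzero because its image in $\kk[D_{j_0}]_{\aa'} = \kk$ is the unit monomial at $\aa' \in D_{j_0}$, and so its image~$x$ in the colimit $(\ds\cM)_\aa$ is nonzero. To verify that~$[x]$ represents a cogenerator of~$\cM$, apply part~(\ref{i:phi=>soct}) to~$\iota$: the embedding $\soct\cM \into \soct E = \bigoplus_j \soct \kk[D_j]$ sends~$[x]$ to an element whose $j_0$-component is the cogenerator of~$\kk[D_{j_0}]$ at~$(\aa,\sigma)$, which is nonzero by Example~\ref{e:soct}. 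By construction $y$ divides~$x$.

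Part~(\ref{i:soct=>phi}) follows by contradiction: assume $\soct\phi$ is injective for every~$\tau$ while~$\phi$ is not. Pick a nonzero homogeneous $y \in \ker\phi$. The divisibility claim yields a cogenerator $x \in (\ds\cM)_\aa$ of~$\cM$ (along some~$\tau$ with nadir~$\sigma$) that~$y$ divides. Functoriality of~$\ds$ (as a direct limit of structure maps, with which~$\phi$ commutes) combined with $\phi(y) = 0$ forces $\ds\phi(x) = 0$, so $\soct\phi$ sends the nonzero class $[x] \in \soct\cM$ to zero, contradicting injectivity. The main obstacle throughout is strengthening the conclusion of Theorem~\ref{t:divides} from the stated $\bb \preceq \aa$ to the stricter $\bb \in \aa - \qns$ required by Definition~\ref{d:divides}: without this refinement the natural map $\cM_\bb \to (\ds\cM)_\aa$ is not even defined, so this step is the technical crux. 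The refinement is extracted from the proof of Theorem~\ref{t:divides}, where the shape is stable as one goes up from the shape-minimal~$\bb$ through $(\bb + \cQ_+) \cap \oD$, and this stability combined with~$\sigma$'s minimality in $\nd[\bb] \cap \nabt$ and with Lemma~\ref{l:cogenerator}'s construction of~$\aa$ as a closed cogenerator of~$\ds D_{j_0}$ forces the support face of $\aa - \bb$ to contain~$\sigma$.
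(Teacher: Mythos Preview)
Your three-step plan and the reduction of part~(\ref{i:soct=>phi}) to the divisibility claim match the paper's proof. The substantive gap is in your verification that $x\in(\ds\cM)_\aa$ is a cogenerator of~$\cM$. You check only that the $j_0$-component of $\ds\iota(x)$ is the cogenerator of~$\kk[D_{j_0}]$ at~$(\aa,\sigma)$, hence nonzero. But that does not show $x$ lies in $\hhom_{\cQ\times\nabt}\bigl(\kk[\tau],\dt\cM\bigr)$: for that, $x$ must be annihilated by every $\qq\in\mm_\tau$ and must map to~$0$ in the $\sigma'$-component of $\dt\cM$ for every face $\sigma'$ with $\tau\subseteq\sigma'\subsetneq\sigma$. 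If some $y_i$ with $i\neq j_0$ is nonzero and $\aa\in\ds D_i$ yet $\aa$ is \emph{not} a cogenerator of~$D_i$ along~$\tau$ with nadir~$\sigma$---for instance because $\aa+\qq\in\ds D_i$ for some $\qq\in\mm_\tau$, or because $\sigma'\in\nabla_{\!D_i}^\aa\cap\nabt$ for some $\sigma'\subsetneq\sigma$---then the $i$-component of $\ds\iota(x)$ survives that operation, so $x$ is not a cogenerator at all. The paper handles this by working the other way round: it first assembles an element $s=\sum_i s_i\in\ds E$ by summing, over exactly those~$i$ with $(\soct[\sigma]E_i)_{\wt\aa}\neq 0$ and $y_i\neq 0$, the unique cogenerator~$s_i$ of~$E_i$ there (Corollary~\ref{c:at-most-one}); each~$s_i$ already satisfies the annihilation conditions, so $s$ is genuinely a cogenerator of~$E$, and then the claim is that $y$ divides~$s$, forcing $s$ into the image of $\ds\cM\into\ds E$.

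A secondary point: you are right that Theorem~\ref{t:divides} literally asserts only $\bb\preceq\aa$, whereas Definition~\ref{d:divides} requires $\bb\in\aa-\qns$, and you deserve credit for flagging this. But your extraction of the stronger conclusion from that theorem's proof is not correct. Take $\cQ=\RR^2$, $D=\{x<0\}$, original $\bb=(-1,0)$: the shape-minimal upward translate is $\bb'=(0,0)$, and $\aa=(0,0)$ with $\tau$ the $y$-axis is a valid output of Lemma~\ref{l:cogenerator} (its upshape in~$\ds D$ is $\cF_\tau$); then $\nd[\bb']\cap\nabt=\{\cQ_+\}$ forces $\sigma=\cQ_+$, yet $\aa-\bb=(1,0)$ has support face the $x$-axis, not containing~$\sigma$. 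The actual repair is that any translate of~$\aa$ in $\aa+\RR\tau$ is also a cogenerator mapping to the same socle element, and for $\tt\in\tau$ sufficiently far along one gets $\aa+\tt-\bb\in\qns$; in the example, $\aa'=(0,1)$ works. This replacement is exactly what the opening sentence of the proof of Theorem~\ref{t:downset=union} packages, identifying Theorem~\ref{t:divides} with the union formula $D=\bigcup\aa+\tau-\qns$.
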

\begin{proof}
Item~\ref{i:phi=>soct} is a special case of
Proposition~\ref{p:left-exact-tau}.  Item~\ref{i:soct=>phi} follows
from the divisibility claim, for if $y$ divides a cogenerator~$s$
along~$\tau$ then $\phi(y) \neq 0$ whenever $\soct\phi(\wt s) \neq 0$,
where $\wt s$ is the image of~$s$ in~$\soct\cM$.

For the divisibility claim, fix a downset hull $\cM \into
\bigoplus_{j=1}^k E_j$ and a nonzero $y \in \cM_\bb$.  For some~$j$
the projection $y_j \in E_j$ of~$y$ divides a cogenerator of~$E_j$
along some face~$\tau$ with some nadir~$\sigma$ by
Theorem~\ref{t:divides}.  Choose one such cogenerator~$s_j$, and
suppose it has degree $\aa \in \cQ$.  There can be other indices~$i$
such that $(\soct[\sigma]E_i)_\wt\aa \neq 0$, where $\wt\aa$ is the
image of~$\aa$ in~$\qrt$.  For any such index~$i$, as long as $y_i
\neq 0$ it divides a unique cogenerator in $s_i \in \ds[\tau]E_i$ by
Corollary~\ref{c:at-most-one}.  Therefore the image of~$y$ in $E =
\bigoplus_{j=1}^k E_j$ divides the sum of these cogenerators~$s_j$.
But that sum is itself another cogenerator of~$E$ along~$\tau$ with
nadir~$\sigma$ in degree~$\aa$, and the fact that $y$ divides it
places the sum in the image of the injection
(Lemma~\ref{l:exact-delta}) $\ds[\tau]\cM \into \ds[\tau]E$.
\begin{excise}{%
  To wit: $y$ divides some element $x \in \dt\cM$ that maps to $\soct
  E$ by the injection (Lemma~\ref{l:exact-delta}) $\dt\cM \into \dt
  E$.  The image of this element is a closed cogenerator of~$\dt E$,
  meaning that it is annihilated by the right stuff.  If $x$ itself
  wasn't annihilated by the same stuff, then multiplying~$x$ by the
  stuff would yield nonzero elements of~$\dt\cM$ that go to zero in
  $\dt E$.
}\end{excise}%
\end{proof}

\begin{cor}\label{c:essential-submodule}
Fix a downset-finite module $\cM$ over a real polyhedral~group.
\begin{enumerate}
\item\label{i:0}%
$\cM = 0$ if and only if $\soct\cM = 0$ for all faces~$\tau$.

\item\label{i:cap}%
$\soct\cM' \cap \soct\cM'' = \soct(\cM' \cap \cM'')$ in~$\soct\cM$ for
submodules $\cM'$ and~$\cM''$~of~$\cM$.
\end{enumerate}
\end{cor}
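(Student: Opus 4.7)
The plan is to deduce both statements directly from the essentiality theorem and from left-exactness of $\soct$ (Proposition~\ref{p:left-exact-tau}), with Theorem~\ref{t:injection} doing the real work.

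For item~\ref{i:0}, the forward direction is immediate: every $\soct$ is a functor with $\soct(0) = 0$. For the reverse, I would argue the contrapositive: if $\cM \neq 0$ then $\cM$ contains a nonzero homogeneous element~$y$, and since $\cM$ is downset-finite, the divisibility conclusion of Theorem~\ref{t:injection} produces a cogenerator of~$\cM$ (along some face~$\tau$, with some nadir~$\sigma$) divisible by~$y$. That cogenerator is a nonzero element of~$\soct\cM$.

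For item~\ref{i:cap}, the containment $\soct(\cM' \cap \cM'') \subseteq \soct\cM' \cap \soct\cM''$ is the easy half: left-exactness of $\soct$ applied to the inclusions $\cM' \cap \cM'' \into \cM' \into \cM$ and $\cM' \cap \cM'' \into \cM'' \into \cM$ factors $\soct(\cM' \cap \cM'')$ through both $\soct\cM'$ and $\soct\cM''$ compatibly inside~$\soct\cM$. For the reverse containment, I would apply left-exactness to the short exact sequence $0 \to \cM'' \to \cM \to \cM/\cM'' \to 0$, which identifies $\soct\cM''$ with $\ker(\soct\cM \to \soct(\cM/\cM''))$. An element in the image of $\soct\cM' \into \soct\cM$ lies in $\soct\cM''$ exactly when its further image in $\soct(\cM/\cM'')$ is zero, so $\soct\cM' \cap \soct\cM''$ equals the kernel of the composite $\soct\cM' \to \soct\cM \to \soct(\cM/\cM'')$. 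Applying left-exactness once more to $\cM' \cap \cM'' = \ker(\cM' \to \cM/\cM'')$ identifies that same kernel with $\soct(\cM' \cap \cM'')$.

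I expect the only delicate point to be verifying that all of these socles sit compatibly as submodules of~$\soct\cM$, i.e., that every relevant inclusion $\cN \into \cM$ really induces an injection $\soct\cN \into \soct\cM$; but this is exactly the content of Theorem~\ref{t:injection}.\ref{i:phi=>soct}, so no obstruction arises. No downset-finiteness is needed for item~\ref{i:cap} beyond what is already assumed, because the argument uses only left-exactness of~$\soct$, which holds without the downset-finite hypothesis.
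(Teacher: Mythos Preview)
Your proof is correct and follows essentially the same route as the paper: item~\ref{i:0} via Theorem~\ref{t:injection} (the paper applies part~\ref{i:soct=>phi} to the map $\cM \to 0$, which amounts to the same divisibility argument you give in contrapositive form), and item~\ref{i:cap} via left-exactness applied to $\cM' \cap \cM'' = \ker(\cM' \to \cM/\cM'')$ together with the identification $\soct\cM'' = \ker\bigl(\soct\cM \to \soct(\cM/\cM'')\bigr)$. Your observation that item~\ref{i:cap} needs only left-exactness and not downset-finiteness matches the paper's citation of Proposition~\ref{p:left-exact-tau} alone for that part.
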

\begin{proof}
That $\cM = 0 \implies \cM = 0$ is trivial.  On the other hand, if
$\soct\cM = 0$ for all~$\tau$ then $\cM$ is a submodule of~$0$ by
Theorem~\ref{t:injection}.\ref{i:soct=>phi}.

The second item is basically left-exactness
(Proposition~\ref{p:left-exact-tau}):
\begin{align*}
  \soct(\cM' \cap \cM'')
& =
  \soct\ker(\cM' \to \cM/\cM'')
\\*
& =
  \ker\bigl(\soct\cM' \to \soct(\cM/\cM'')\bigr)
\\*
& =
  \ker(\soct\cM' \to \soct\cM/\soct\cM'')
\\*
& =
  \soct\cM' \cap \soct\cM'',
\end{align*}
where the penultimate equality is because $\soct\cM''$ is the kernel
of the homomorphism $\soct\cM \to \soct(\cM/\cM'')$, so that
$\soct\cM/\soct\cM'' \into \soct(\cM/\cM'')$.
\end{proof}

There is a much stronger statement connecting socles to essential
submodules (Theorem~\ref{t:essential-submodule}), but it requires
language to speak of density in socles as well as tools to produce
submodules from socle elements, which are the main themes of
Section~\ref{s:minimality}.

%%%%%%%%%%%%%%%%%%%%%%%%%%%%%%%%%%%%%%%%%%%%%%%%%%%%%%%%%%%%%%%%%%%%%%%%%
\section{Minimality of socle functors}\label{s:minimality}%%%%%%%%%%%%%%%
%%%%%%%%%%%%%%%%%%%%%%%%%%%%%%%%%%%%%%%%%%%%%%%%%%%%%%%%%%%%%%%%%%%%%%%%%

Socles capture the entirety of a downset by maximal elements in
closures along faces; that is in some sense the main content of socle
essentiality (Theorem~\ref{t:injection}), or more precisely
Theorem~\ref{t:divides}.  But since closures are involved, it is
reasonable to ask if anything smaller still captures the entirety of
every downset.  Algebraically, for arbitrary modules, this asks for
subfunctors of cogenerator functors.  The particular subfunctors here
concern the graded degrees of socle elements, for which
notation~is~needed.

\begin{defn}\label{d:degree}
The \emph{degree set} of any module~$\cN$ over a poset~$\cP$ is
$$%
  \deg\cN  = \{\aa \in \cP \mid \cN_\aa \neq 0\}.
$$
Write $\deg_\cP = \deg$ if more than one poset could be intended.
\end{defn}

%%%%%%%%%%%%%%%%%%%%%%%%%%%%%%%%%%%%%%%%%%%%%%%%%%%%%%%%%%%%%%%%%%%%%%%%%
\subsection{Neighborhoods of group elements}\label{sub:nbds}\mbox{}%%%%%%

\noindent
The topological condition characterizing when enough cogenerators are
present is a sort of density in the set of all cogenerators.
Lemma~\ref{l:downset-upper-boundary} has a related closure notion.

\begin{defn}\label{d:sigma-neighborhood}
Fix faces~$\sigma \supseteq \tau$ of a real polyhedral group~$\cQ$.
\begin{enumerate}
\item\label{i:neighborhood}%
A \emph{$\sigma$-neighborhood} of a point $\wt\aa \in \qrt$ in a
subset $X \subseteq \qrt$ is the intersection of~$X$ with a subset
of the form $(\aa - \vv + \cQ_+)/\hspace{.2ex}\RR\tau$ with $\vv \in
\sigma^\circ$ and $\wt\aa = \aa + \RR\tau$.

\item\label{i:limit-point}%
A \emph{$\sigma$-limit point} of a subset $X \subseteq \qrt$ is a
point $\wt\aa \in \qrt$ that is a limit (in the usual topology) of
points in~$X$ each of which lies in a $\sigma$-neighborhood
of~$\wt\aa$.

\item\label{i:closure}%
The \emph{$\sigma$-closure} of $X \subseteq \qrt$ is the set of points
$\wt\aa \in \qrt$ such that $X$ has at least one point in every
$\sigma$-neighborhood of~$\wt\aa$.
\end{enumerate}
\end{defn}

\begin{remark}\label{r:usual-closure}
The sets $X$ to which Definition~\ref{d:sigma-neighborhood} is applied
are typically decomposed as finite unions of antichains (but see
Proposition~\ref{p:sigma-nbd-cogen} for an instance where this is not
the case).  Such sets ``cut across'' subsets of the form $(\aa - \vv +
\cQ_+)/\RR\tau$, rather than being swallowed by them, so
$\sigma$-neighborhoods have a fighting chance of reflecting some
concept of closeness in antichains.  If $\sigma = \cQ_+$ and $\tau =
\{\0\}$, for example, and $X$ is an antichain in~$\cQ$, then a
$\sigma$-neighborhood of a point $\aa \in X$ is the same thing as a
usual open neighborhood of~$\aa$ in~$X$, so $\sigma$-closure is the
usual topological closure.  If, at the other extreme, $\sigma = \tau$,
then every antichain in~$\qrt$ is $\tau$-closed.
\end{remark}

\begin{example}\label{e:sigma-limit}
Let $\cQ = \RR^2$ and $\tau = \{\0\}$.  Take for $X \subset \RR^2$ the
convex hull of $\0,\ee_1,\ee_2$ but with the first standard basis
vector $\ee_1$ removed.  If $\sigma$ is the $x$-axis of~$\RR^2_+$,
then in
$$%
  X =
  \begin{array}{@{}c@{}}\includegraphics[height=20mm]{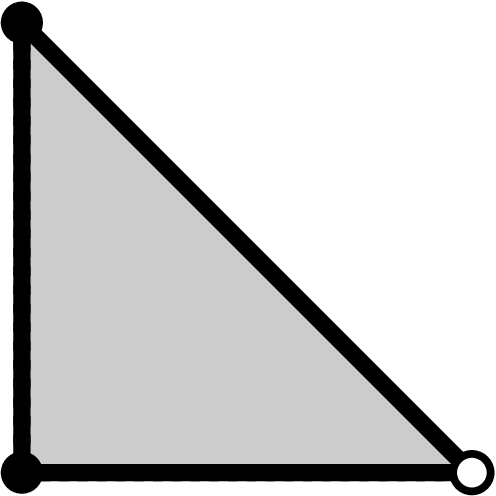}\end{array}
  \text{the blue points of}\quad
  \begin{array}{@{}c@{}}\includegraphics[height=20mm]{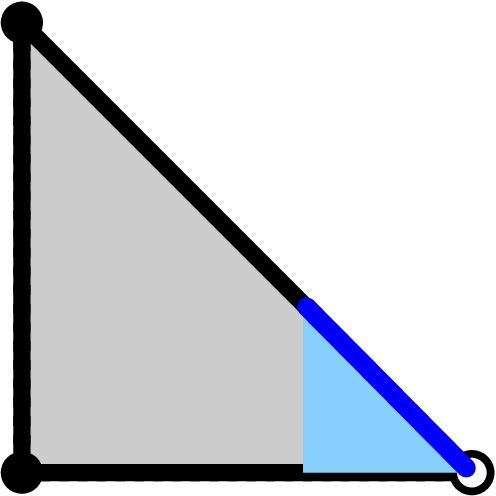}\end{array}
$$
constitute $\sigma$-neighborhoods of~$\ee_1$: in $X$ take all of the
blue points and in the half-open hypotenuse take only the bold blue
segment.  The point~$\ee_1$ is a $\sigma$-limit point of the half-open
hypotenuse.
\end{example}

The next result is applied in the proof of Theorem~\ref{t:dense}.  It
is close to---but not quite---what is needed for the
$\sigma$-neighborhoods to form a base for a topology on~$X$; to be a
base, the intersections would have to allow $\sigma$-neighborhoods of
distinct points.
% https://math.stackexchange.com/questions/241210/base-of-a-topology

\begin{lemma}\label{l:sigma-neighborhood}
Any finite intersection of $\sigma$-neighborhoods of a point $\wt\aa
\in \qrt$ in a subset $X \subseteq \qrt$ contains a
$\sigma$-neighborhood of\/ $\wt\aa \in \qrt$ in~$X$.
\end{lemma}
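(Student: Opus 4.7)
The plan is to reduce everything to the quotient real polyhedral group $\qrt$ and then produce the desired $\sigma$-neighborhood by scaling. Fix finitely many $\sigma$-neighborhoods $N_i = X \cap (\aa_i - \vv_i + \cQ_+)/\RR\tau$ for $i = 1,\dots,k$, with $\vv_i \in \sigma^\circ$ and $\aa_i + \RR\tau = \wt\aa$. Since $\aa_i - \aa \in \RR\tau$ for each~$i$, these translations die in the quotient, so without loss of generality all $\aa_i = \aa$. Passing to $\qrt$, which is a real polyhedral group with positive cone $(\qrt)_+$ (Remark~\ref{r:qrt}), and writing $\ol\vv_i$ for the image of~$\vv_i$, each $\ol\vv_i$ lies in the relative interior $(\sigma/\tau)^\circ$ of the face $\sigma/\tau$ of $(\qrt)_+$ by Lemma~\ref{l:nabt}.

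The problem then becomes: find $\vv \in \sigma^\circ$ whose image $\ol\vv \in (\sigma/\tau)^\circ$ satisfies $\ol\vv_i - \ol\vv \in (\qrt)_+$ for every~$i$, since that inequality is precisely what it means for $(\aa - \vv + \cQ_+)/\RR\tau$ to be contained in $(\aa - \vv_i + \cQ_+)/\RR\tau$. The natural candidate is $\vv = \epsilon \vv_1$ for a small positive scalar~$\epsilon$; by conicity of~$\sigma^\circ$, this lies in $\sigma^\circ$, and its image is $\epsilon \ol\vv_1 \in (\sigma/\tau)^\circ$. The case $i = 1$ is trivial because $(1 - \epsilon)\ol\vv_1 \in (\sigma/\tau)^\circ \subseteq (\qrt)_+$ for $\epsilon \in [0,1)$. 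For $i \neq 1$, exploit that $\ol\vv_i$ lies in the relative interior of $\sigma/\tau$ within its linear span $\RR\sigma/\RR\tau$: a small enough perturbation within this span remains inside $\sigma/\tau$, and $\epsilon \ol\vv_1$ is exactly such a perturbation for small~$\epsilon$. Since $\sigma/\tau$ is a face of $(\qrt)_+$, the difference then lies in $(\qrt)_+$, as required. Taking $\epsilon$ small enough to work simultaneously for the finitely many indices~$i$ completes the construction.

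The only delicate point is the last one, namely that $\ol\vv_i - \epsilon \ol\vv_1 \in (\qrt)_+$ for small $\epsilon > 0$. This is where the hypothesis $\sigma \supseteq \tau$ is quietly used, via Lemma~\ref{l:nabt}: it guarantees that $\sigma/\tau$ is an actual face of $(\qrt)_+$, rather than collapsing or failing to sit nicely inside the positive cone, so that the relative-interior argument in $\RR\sigma/\RR\tau$ lands back in $(\qrt)_+$. Everything else is bookkeeping in the quotient group.
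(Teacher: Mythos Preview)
Your proof is correct and follows essentially the same approach as the paper: both arguments reduce modulo~$\RR\tau$ and then use the fact that the relative interior~$\sigma^\circ$ (or $(\sigma/\tau)^\circ$) is open in its linear span, so that a small element of that span can be subtracted from each~$\vv_i$ while remaining in the cone. The only cosmetic difference is that the paper phrases this as ``$\aa-\vv_i+\sigma$ contains an open neighborhood of~$\aa$ in $\aa-\sigma$, hence so does the finite intersection, and any point of that intersection in $\aa-\sigma^\circ$ works,'' whereas you make the explicit choice $\vv=\epsilon\vv_1$ and shrink~$\epsilon$.
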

\begin{proof}
If $\vv \in \sigma^\circ$ then $\aa - \vv + \sigma$ contains an open
neighborhood of~$\aa$ in $\aa - \sigma$.  Therefore so does a finite
intersection $U$ of sets of the form $\aa - \vv + \sigma$.  Any point
$\vv' \in U \cap (\aa - \sigma^\circ)$ yields the desired
$\sigma$-neighborhood $\aa - \vv' + \cQ_+$.  That proves the case
$\tau = \{\0\}$.  Reducing modulo~$\RR\tau$ proves the general case.
\end{proof}

The concept of $\sigma$-neighborhood provides a means to connect
socles (Definition~\ref{d:soct}) with support
(Definition~\ref{d:support}) and primary decomposition
(Definition~\ref{d:primDecomp'}).

\begin{prop}\label{p:sigma-nbd-cogen}
In a real polyhedral group~$\cQ$, every cogenerator of a downset~$D$
along~$\tau$ with nadir~$\sigma$ has a $\sigma$-neighborhood $\OO$
in~$D \subseteq \cQ$ (so $\sigma \supseteq \{\0\}$ are the faces in
Definition~\ref{d:sigma-neighborhood}) such that $\kk[\OO] \subseteq
\kk[D]$ is $\tau$-coprimary and globally supported~on~$\tau$.
\end{prop}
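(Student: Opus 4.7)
By Example~\ref{e:soct} together with Example~\ref{e:socct}, the hypothesis that $\aa$ is a cogenerator of~$D$ along~$\tau$ with nadir~$\sigma$ is equivalent to the two conditions: $\sigma$ is the unique minimum of the cocomplex $\nda \cap \nabt$ (so in particular $\aa - \sigma^\circ \subseteq D$), and $\aa$ is a closed cogenerator of the enlarged downset $\ds[\sigma] D$ along~$\tau$, meaning
\[
  (\aa + \cQ_+) \cap \ds[\sigma] D \;=\; \aa + \tau.
\]
These are the only inputs the proof uses.

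The plan is to choose any $\vv \in \sigma^\circ$ and set $\OO := D \cap (\aa - \vv + \cQ_+)$. Because $\aa - \vv \in D$ (as $\vv \in \sigma^\circ \subseteq -\nda$ shifted), $\OO$ is a nonempty $\sigma$-neighborhood of~$\aa$ in~$D$ in the sense of Definition~\ref{d:sigma-neighborhood} (taking the face named ``$\tau$'' there to be $\{\0\}$). Since $\aa - \vv + \cQ_+$ is an upset of~$\cQ$, the set~$\OO$ is upward-closed in~$D$, so $\kk[\OO]$ is a $\cQ$-submodule of~$\kk[D]$.

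The heart of the argument is the global support statement: for every $q \in \OO$ and every face $\tau' \not\subseteq \tau$, one must show $q + \tau' \not\subseteq \OO$, and since $q + \tau' \succeq q \succeq \aa - \vv$ the upper-bound condition is automatic, so this reduces to $q + \tau' \not\subseteq D$. I will argue by contradiction: assume $q + \tau' \subseteq D$. The downset property gives $q + \tau' - \cQ_+ \subseteq D$, and for any $f \in \tau'^\circ$, $\lambda \geq 0$, the inclusion $\lambda f - \sigma^\circ \subseteq \tau' - \cQ_+$ shows that $\aa'' := q + \lambda f$ satisfies $\aa'' - \sigma^\circ \subseteq D$, hence $\aa'' \in \ds[\sigma] D$. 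Choosing $\lambda$ large and projecting to~$\qrt$, I push $\aa''$ sufficiently far past $\aa + \tau$ in~$\qrt$ to contradict the closed-cogenerator characterization $(\aa + \cQ_+) \cap \ds[\sigma] D = \aa + \tau$ inherited from Example~\ref{e:socct}. The $\tau$-coprimarity of~$\kk[\OO]$ then follows immediately: global support gives $\Gamma_\tau \kk[\OO] = \kk[\OO]$, and Proposition~\ref{p:support-localizes} yields $\Gamma_\tau \kk[\OO]_\tau = \kk[\OO]_\tau$, essential in itself; injectivity of the localization map reduces to $\OO + \tau \subseteq \OO$, which uses that $\aa + \tau \subseteq \ds[\sigma] D$ extends to $\tau$-invariance of~$\OO$ near~$\aa - \vv$ via the downset property.

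The main obstacle is producing a contradictory $\aa''$ that is genuinely in $\aa + \cQ_+$, not merely dominant modulo~$\RR\tau$: the quantity $\aa'' - \aa = -\vv + w + \lambda f$ must be coaxed into $\cQ_+ \setminus \tau$. When $\sigma \subseteq \tau'$ this is direct, since then $\lambda f - \vv \in \tau' \subseteq \cQ_+$ for large~$\lambda$. When $\sigma \not\subseteq \tau'$, however, linear functionals vanishing on~$\tau'$ but positive on~$\vv$ obstruct the naive attempt, and here one must exploit the minimality of~$\sigma$ in $\nda \cap \nabt$ together with the freedom in choosing~$\vv$ inside~$\sigma^\circ$: the obstruction produced by such functionals forces a smaller face than~$\sigma$ to lie in~$\nda$, contradicting minimality. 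This delicate geometric bookkeeping is the technical core.
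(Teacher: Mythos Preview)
Your plan has a genuine gap: the claim that \emph{any} $\vv \in \sigma^\circ$ yields a $\sigma$-neighborhood $\OO = D \cap (\aa - \vv + \cQ_+)$ that is globally supported on~$\tau$ is false, and your proposed rescue in the case $\sigma \not\subseteq \tau'$ via minimality of~$\sigma$ in~$\nda$ does not go through.

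Here is a counterexample. Take $\cQ = \RR^2$ with $\cQ_+ = \RR^2_+$, $\tau = \{\0\}$, and
\[
  D = \{(x,y) : x<0 \text{ and } y<0\} \;\cup\; \{(x,y) : x < -5\}.
\]
At $\aa = (0,0)$ one has $\nda = \{\RR^2_+\}$, so the nadir is $\sigma = \RR^2_+$, and $(\aa + \cQ_+) \cap \delta^\sigma D = \{\aa\}$, so $\aa$ is a cogenerator along $\tau = \{\0\}$ with nadir~$\sigma$. Now choose $\vv = (10,1) \in \sigma^\circ$. Then $q = (-6,-\tfrac12) \in \OO$, and with $\tau' = \sigma_y$ the $y$-axis one has $q + \tau' \subseteq \{x < -5\} \subseteq D$. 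So $\kk[\OO]$ is \emph{not} globally supported on~$\{\0\}$. Your minimality argument cannot save this: $\sigma = \RR^2_+$ is genuinely minimal in~$\nda$, and for $f \in \tau'^\circ$ no choice of $\lambda$ makes $\aa'' = q + \lambda f = (-6, -\tfrac12 + \lambda)$ lie in $\aa + \cQ_+$, since the first coordinate is stuck at~$-6$. The ``freedom in choosing~$\vv$'' cannot help after the fact, because $\OO$ is already fixed once $\vv$ is.

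The paper's argument avoids this by never claiming that a single arbitrary $\vv$ works. Instead it takes a sequence $\aa_k \to \aa$ inside $\aa - \sigma^\circ$ and argues by contradiction: if no $\aa_k$ is globally supported on~$\tau$, then by finiteness of faces a subsequence is supported on a fixed $\tau' \supsetneq \tau$, meaning $\aa_k + \tau' \subseteq D$ for all~$k$; since the $\aa_k$ converge to~$\aa$ from within $\aa - \sigma^\circ$, Proposition~\ref{p:<<} forces $\aa + \tau' \subseteq \delta^\sigma D$, contradicting the closed-cogenerator condition $(\aa + \cQ_+) \cap \delta^\sigma D = \aa + \tau$. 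The limiting step is exactly what lets the argument land back in $\aa + \cQ_+$; your fixed-$\vv$ approach has no analogue of this, which is why it fails in the example above. The upshot is that the proposition only guarantees a \emph{sufficiently small} $\sigma$-neighborhood, and the proof must reflect that.
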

\begin{proof}
Let $\aa$ be such a cogenerator of~$D$.  Suppose $\{\aa_k\}_{k\in\NN}
\subseteq \aa - \sigma^\circ \subseteq D$ is any sequence converging
to~$\aa$.  If $\aa_k$ is supported on a face~$\tau'$, then $\tau'
\supseteq \tau$ because $\aa \succeq \aa_k$ and $\aa$ remains a
cogenerator of the localization of~$D$ along~$\tau$ by
Proposition~\ref{p:local-vs-global}.  The same argument shows that the
$\sigma$-neighborhood $\OO = \aa_k + \cQ_+$ yields a submodule
$\kk[\OO] \subseteq \kk[D]$ such that $\kk[\OO] \into \kk[\OO]_\tau$.
The goal is therefore to show that some $\aa_k$ is supported
on~$\tau$, for then all of~$\OO$ is supported on~$\tau$, as support
can only decrease~upon~going up~in~$\cQ$.

If each $\aa_k$ is supported on a face properly containing~$\tau$,
then, restricting to a subsequence if necessary, assume that it is the
same face~$\tau'$ for all~$k$.  (This uses the finiteness of the
number of faces.)  But then $\aa + \tau' = \lim_k(\aa_k + \tau')$ is
contained in $\ds D$, contradicting the fact that $\aa$ is supported
on~$\tau$ in~$\ds D$.
\end{proof}

\begin{example}\label{e:sigma-nbd-cogen}
All three of the downsets
$$%
\begin{array}{@{}*3{c@{\qquad\qquad}}c}
 \begin{array}{@{}c@{}}\includegraphics[height=30mm]{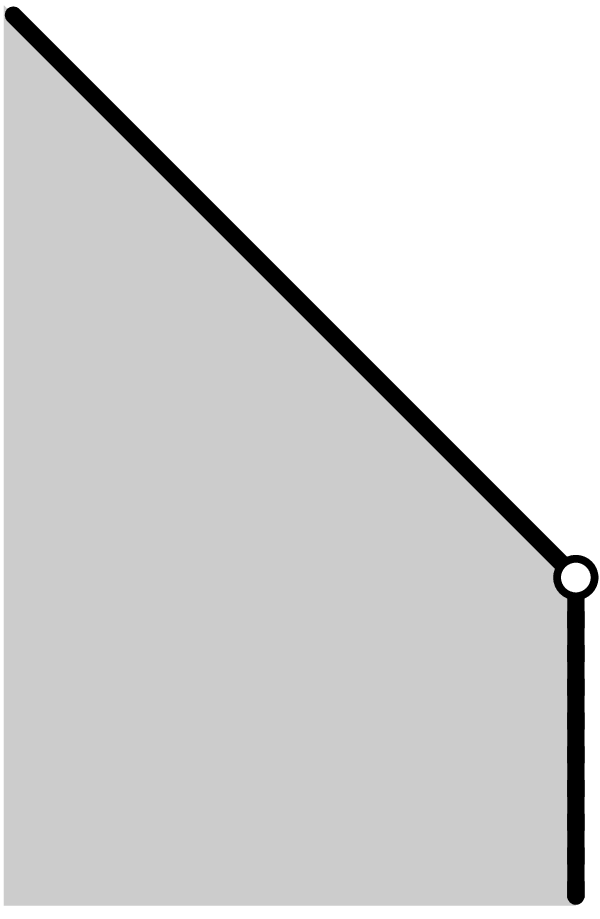}\end{array}
&\begin{array}{@{}c@{}}\includegraphics[height=30mm]{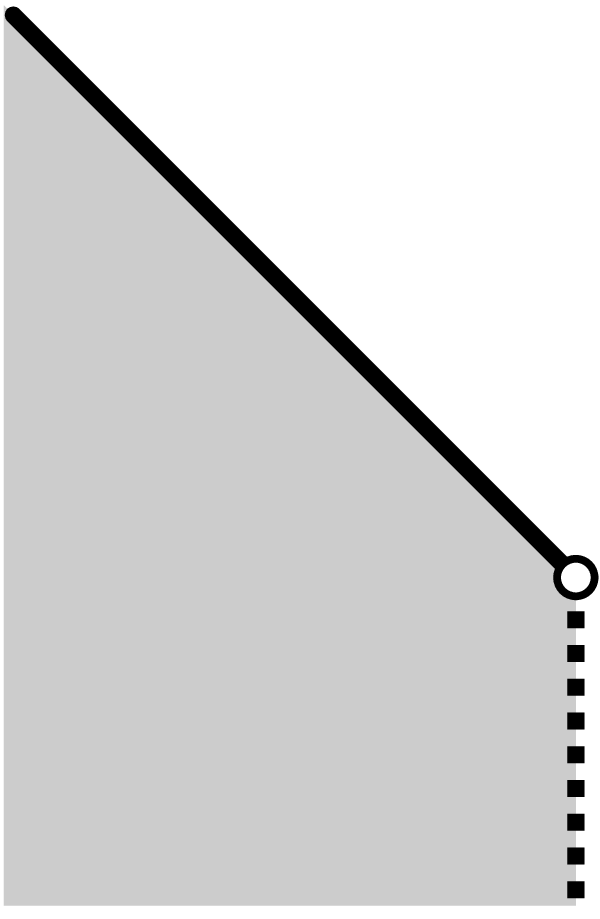}\end{array}
%  = \bigcup
%  \begin{array}{@{}c@{}}\includegraphics[height=30mm]{union2}\end{array}
&\begin{array}{@{}c@{}}\includegraphics[height=30mm]{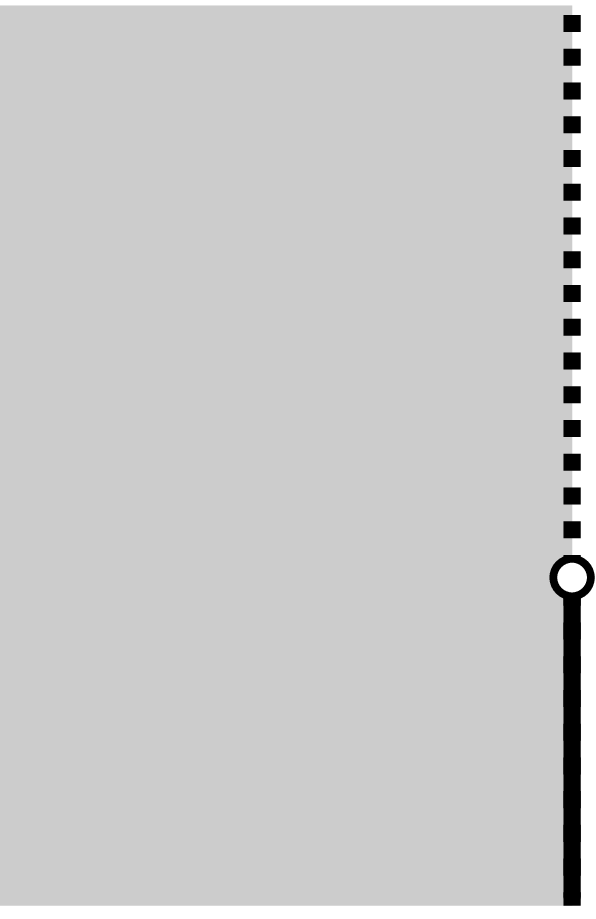}\end{array}
\\
D_1 & D_2 & D_3
\\[-.5ex]
\end{array}
$$
in~$\RR^2$ have a cogenerator at the open corner~$\aa$ along the face
$\tau = \{\0\}$, but their behaviors near~$\aa$ differ in character.
Write $\sigma_x$ and~$\sigma_y$ for the faces of~$\RR^2_+$ that are
its horizontal and vertical axes, respectively.
\begin{enumerate}
\item%
Here $\aa$ has two nadirs: it is a cogenerator along~$\tau = \{\0\}$
for both~$\sigma_x$ and~$\sigma_y$ by
Proposition~\ref{p:downset-upper-boundary} and Example~\ref{e:socc}.
The blue set in Example~\ref{e:sigma-limit} constitutes a
$\tau$-coprimary $\sigma_x$-neighborhood of~$\aa$ globally supported
on~$\tau$, as in Proposition~\ref{p:sigma-nbd-cogen}, if the open
point there is also~$\aa$.

\item%
Here $\aa$ has only the nadir~$\sigma_x$, because the downset has no
points in $\aa + \RR\sigma_y$ to take the closure of in
Lemma~\ref{l:downset-upper-boundary}.  Again, the blue set in
Example~\ref{e:sigma-limit} constitutes the desired
$\sigma_x$-neighborhood of~$\aa$.

\item%
Here $\aa$ has only the nadir~$\sigma_y$.  It is possible to compute
this directly, but it is more apropos to note that
Proposition~\ref{p:sigma-nbd-cogen} rules out $\sigma_x$ as a nadir.
Indeed, every $\sigma_x$-neighbor\-hood of~$\aa$ in~$D_3$ is an
infinite vertical strip.  None of these $\sigma_x$-neighbor\-hoods are
supported on $\tau = \{\0\}$, since elements therein persist forever
along~$\sigma_y$.  In contrast, every choice of $\vv \in
\sigma_y^\circ$ yields a $\sigma_y$-neighborhood $-\vv +\nolinebreak
\RR^2_+ \cap\nolinebreak D_3$ supported on~$\{\0\}$; that is, the
entire negative $y$-axis is supported on~$\{\0\}$.
\end{enumerate}
\end{example}

Compare the following with Example~\ref{e:soct-k[tau]}; it is the
decisive more or less explicit calculation that justifies the general
theory of socles and provides its foundation.  Note that being
$\tau$-coprimary only requires an essential submodule to be globally
supported on~$\tau$; see the under-hyperbola in Example~\ref{e:PF}.
Dually, being globally supported on~$\tau$ allows for elements with
support strictly contained in~$\tau$.

\begin{cor}\label{c:soc(coprimary)}
Fix a face~$\tau$ of a real polyhedral group~$\cQ$ and a subquotient
$\cM$ of\/~$\kk[\cQ]$ that is $\tau$-coprimary and globally supported
on~$\tau$.  Then $\socp\cM = 0$ unless $\tau' = \tau$.
\end{cor}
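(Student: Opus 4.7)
The plan hinges on the observation that under the hypotheses, every nonzero homogeneous element of~$\cM$ has support ``concentrated exactly on~$\tau$'' in the following sense: writing $S = \deg\cM$, we have $\bb + \tau \subseteq S$ for every $\bb \in S$, while for each $w \in \cQ_+ \minus \tau$ there exists $\lambda > 0$ with $\bb + \lambda w \notin S$. Since $\cM$ is a subquotient of~$\kk[\cQ]$, each graded piece is either $\kk$ or~$0$, and structure maps between nonzero pieces are identities, while structure maps whose target is zero are forced to be zero. The hypothesis $\Gamma_\tau \cM = \cM$ yields the second half: for $w \in \cQ_+ \minus \tau$, letting $\tau''$ be the smallest face containing~$w$, one has $\tau'' \not\subseteq \tau$, so $\cM \to \cM_{\tau''}$ is the zero map, and composing structure maps forces $\bb + \lambda w \notin S$ for some~$\lambda > 0$. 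Conversely, injectivity of $\cM \into \cM_\tau$ combined with invertibility of~$\tau$ in $\cM_\tau$ gives $\bb + \tau \subseteq S$ for every $\bb \in S$, since otherwise some $v \cdot 1_\bb = 0$ with $v \in \tau$ would make $1_\bb$ vanish in $\cM_\tau$.

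With this in hand, split on how $\tau'$ compares to~$\tau$. If $\tau' \not\subseteq \tau$, pick $v \in \tau' \minus \tau$; the observation shows some multiple $\lambda v \in \tau'$ annihilates every element of~$\cM$, so $\cM_{\tau'} = 0$ and hence $\cM/\tau' = 0$. Proposition~\ref{p:local-vs-global} then gives $\socp\cM \into \soc(\cM/\tau') = 0$. This case subsumes $\tau' \supsetneq \tau$, since $\tau' \neq \tau$ with $\tau' \supseteq \tau$ implies $\tau' \not\subseteq \tau$.

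The main case is $\tau' \subsetneq \tau$. Suppose for contradiction that some nadir $\sigma' \in \nabla\tau'$ yields a nonzero global cogenerator~$s$ of~$\cM$ along~$\tau'$ at degree~$\aa$. By Definition~\ref{d:soct}.\ref{i:global-cogen}, $s \in \delta^{\sigma'}\cM_\aa$ is the image of~$1$ under a $\cQ$-module homomorphism $\kk[\aa + \tau'] \to \delta^{\sigma'}\cM$. Since $\kk[\aa + \tau']$ vanishes in degrees~$\aa + v$ for $v \in \cQ_+ \minus \tau'$, this forces $v \cdot s = 0$ in $\delta^{\sigma'}\cM_{\aa + v}$ for each such~$v$. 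Because $\tau' \subsetneq \tau$, one can choose $w \in \tau \minus \tau' \subseteq \cQ_+ \minus \tau'$, and the contradiction will come from showing $w \cdot s \neq 0$.

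To that end, $\delta^{\sigma'}\cM_\aa$ is the colimit $\dirlim_{\aa' \in \aa - (\sigma')^\circ} \cM_{\aa'}$, and nonvanishing of~$s$ means it is represented by some $t \in \cM_{\aa'}$ whose forward cone $X := (\aa' + \cQ_+) \cap (\aa - (\sigma')^\circ)$ is contained in~$S$, since otherwise a structure map from~$t$ to a zero piece would kill~$t$ in the colimit. The shifted element $w \cdot s \in \delta^{\sigma'}\cM_{\aa + w}$ is represented by $w \cdot t \in \cM_{\aa' + w}$, and the same colimit analysis yields $w \cdot s \neq 0$ provided $w + X \subseteq S$. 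But every $x \in X \subseteq S$ satisfies $x + \tau \subseteq S$ by the key observation, and $w \in \tau$ yields $x + w \in S$, so $w + X \subseteq S$ as required. This produces the contradiction completing the proof. The main obstacle is justifying the colimit description of ``represented by'', which relies on the two-valued (identity or zero) behavior of structure maps for subquotients of~$\kk[\cQ]$ to translate nonvanishing in a direct limit into the combinatorial condition $X \subseteq S$.
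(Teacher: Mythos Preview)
Your proof is correct, and the key case $\tau' \subsetneq \tau$ takes a genuinely different route from the paper.  The paper embeds $\cM \subseteq \kk[D]$ for a downset~$D$, lifts the hypothetical cogenerator to~$\kk[D]$ by left-exactness, and then invokes Proposition~\ref{p:sigma-nbd-cogen} to produce a $\sigma$-neighborhood in~$D$ that is a $\tau'$-coprimary submodule globally supported on~$\tau'$; such a submodule cannot sit inside a $\tau$-coprimary module.  Your argument instead works directly with the colimit $\delta^{\sigma'}\cM_\aa = \dirlim_{\aa'\in\aa-(\sigma')^\circ}\cM_{\aa'}$, exploiting that $\cM = \kk[S]$ has two-valued structure maps to translate nonvanishing in the colimit into the purely combinatorial condition $X \subseteq S$, and then uses $S + \tau \subseteq S$ (from $\cM \into \cM_\tau$) to show the cogenerator survives translation by~$w \in \tau \minus \tau'$.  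Your approach is more elementary here and sidesteps Proposition~\ref{p:sigma-nbd-cogen} entirely; the paper's approach, by contrast, sets up exactly the machinery reused in Corollary~\ref{c:soc(coprimary)'} for arbitrary downset-finite modules.

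One minor point: the second half of your preliminary observation---that for every $w \in \cQ_+ \minus \tau$ some $\lambda > 0$ has $\bb + \lambda w \notin S$---is underjustified.  Knowing $\cM \to \cM_{\tau''}$ is zero for the smallest face~$\tau''$ containing~$w$ only says $1_\bb$ is killed by \emph{some} element of~$\tau''$, not necessarily a multiple of~$w$; the full claim needs the downset structure of~$D$ in $S = U \cap D$ (as in Example~\ref{e:support}).  But you only invoke this half to conclude $\cM_{\tau'} = 0$ in the case $\tau' \not\subseteq \tau$, and that follows immediately from $\Gamma_\tau\cM = \cM$ without the elementwise statement, so the gap is harmless.
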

\begin{proof}
Proposition~\ref{p:local-vs-global} implies that $\socp\cM = 0$ unless
$\tau' \supseteq \tau$ by definition of global support: localizing
along~$\tau'$ yields $\cM_{\tau'} = 0$ unless $\tau' \subseteq \tau$.
On the other hand, $\cM$ being a subquotient of~$\kk[\cQ]$ means that
$\cM \subseteq \kk[D]$ for some downset~$D$.  By left-exactness of
socles (Proposition~\ref{p:left-exact-tau}), every cogenerator
of~$\cM$ is a cogenerator of~$\kk[D]$.  Applying
Proposition~\ref{p:sigma-nbd-cogen} to any such cogenerator
along~$\tau'$ implies that $\tau = \tau'$, because no $\tau$-coprimary
module has a submodule supported on a face strictly contained
in~$\tau$.
\end{proof}

Definition~\ref{d:sigma-neighborhood}.\ref{i:limit-point} stipulates
no condition the generators of the relevant
$\sigma$-neighbor\-hoods---the vectors $\aa - \vv$ in
Definition~\ref{d:sigma-neighborhood}.\ref{i:neighborhood}.  The
difference between being a $\sigma$-limit point and lying in the
$\sigma$-closure is hence that for $\sigma$-closure, the convergence
is stipulated on the generators of the $\sigma$-neighborhoods rather
than on the points of~$X$.  That said, the a~priori weaker (that is,
more inclusive) notion of $\sigma$-limit point is equivalent: the
generators can be forced to converge.

\begin{prop}\label{p:aa_k}
If a sequence $\{\aa'_k\}_{k\in\NN}^{\,}$ in a real polyhedral
group~$\cQ$ has $\aa'_k \to \aa$ and $\aa'_k \in \aa_k + \cQ_+$ for
some $\aa_k \in \aa - \sigma^\circ$, where $\sigma$ is a fixed face,
then it is possible to choose the elements~$\aa_k$ so that $\aa_k \to
\aa$.  Consequently, if~$\sigma \supseteq \tau$ then the
$\sigma$-closure of any set $X \subseteq \qrt$ equals the set of its
$\sigma$-limit points.
\end{prop}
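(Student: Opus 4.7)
The plan is to establish the first claim by an explicit perturbation of the witnesses $\aa_k$, and then to deduce the consequence by applying the first claim inside the quotient $\qrt$, which is itself a real polyhedral group by Remark~\ref{r:qrt}.

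\textit{First claim.} Fix any $\ff \in \sigma^\circ$ and seek replacements of the form $\aa - t_k\ff$ with $t_k > 0$ and $t_k \to 0$. Membership in $\aa - \sigma^\circ$ (since $\sigma^\circ$ is a cone) and convergence to $\aa$ are automatic once $t_k \to 0$; the work is to arrange $(\aa'_k - \aa) + t_k\ff \in \cQ_+$. The cone $\cQ_+$ is cut out by finitely many linear functionals $\ell_i \geq 0$, and because $\ff$ lies interior to the face $\sigma$, these partition into those vanishing on $\sigma$---and hence on $\ff$---and those strictly positive on $\ff$. The identity $\aa'_k - \aa = (\aa'_k - \aa_k) - (\aa - \aa_k)$ writes $\aa'_k - \aa$ as a difference of an element of $\cQ_+$ and one of $\sigma^\circ$, so $\ell_i$ vanishing on $\sigma$ yields $\ell_i(\aa'_k - \aa) = \ell_i(\aa'_k - \aa_k) \geq 0$ automatically and imposes no constraint on $t_k$. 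The remaining constraints read $t_k \geq -\ell_i(\aa'_k - \aa)/\ell_i(\ff)$, so
$$
  t^*_k := \max_{\ell_i(\ff) > 0}\, \max\bigl(0,\,-\ell_i(\aa'_k - \aa)/\ell_i(\ff)\bigr)
$$
depends continuously on $\aa'_k - \aa$ and vanishes at $\aa'_k = \aa$, whence $t^*_k \to 0$. Choosing $t_k := t^*_k + 1/k$ gives $t_k > 0$, $t_k \to 0$, and $(\aa'_k - \aa) + t_k\ff \in \cQ_+$, completing the first claim.

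\textit{Consequence.} Since $\qrt$ is a real polyhedral group with positive cone $\cQ_+/\tau$ and with face $\sigma/\tau$ the image of $\sigma$, every $\sigma$-neighborhood of a point $\wt\aa \in \qrt$ is a set $\wt\aa - \wt\vv + \cQ_+/\tau$ for some $\wt\vv \in (\sigma/\tau)^\circ$, so the first claim transfers verbatim to $\qrt$ with the face $\sigma/\tau$. For a $\sigma$-limit point $\wt\aa$, the witnessing sequence $\wt\bb_k \in X$ converging to $\wt\aa$ in the usual topology with $\wt\bb_k \succeq \wt\aa - \wt\vv_k$ for some $\wt\vv_k \in (\sigma/\tau)^\circ$ fulfills the hypotheses of the first claim in $\qrt$ and produces replacements $\wt\ww_k \in (\sigma/\tau)^\circ$ with $\wt\ww_k \to 0$ and $\wt\bb_k \succeq \wt\aa - \wt\ww_k$. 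Given an arbitrary prescribed $\vv \in \sigma^\circ$ with image $\wt\vv \in (\sigma/\tau)^\circ$, the difference $\wt\vv - \wt\ww_k$ lies in $\RR(\sigma/\tau)$ and converges to $\wt\vv$; openness of $(\sigma/\tau)^\circ$ in its linear span forces $\wt\vv - \wt\ww_k \in (\sigma/\tau)^\circ \subseteq \cQ_+/\tau$ for large $k$, placing $\wt\bb_k$ in the prescribed $\sigma$-neighborhood and therefore $\wt\aa$ in the $\sigma$-closure. The reverse inclusion---that every point in the $\sigma$-closure is a $\sigma$-limit point---proceeds by choosing representatives $\wt\bb_k \in X$ from $\sigma$-neighborhoods specified by any null sequence $\wt\vv_k \to 0$ in $(\sigma/\tau)^\circ$ and reapplying the first claim to conclude that suitable representatives converge to $\wt\aa$ in the usual topology.

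\textit{The main obstacle.} The core content lies in the first claim, where the challenge is that the provided witnesses $\aa_k$ can recede along $-\sigma^\circ$ to infinity even while the $\aa'_k$ converge. The resolution relies on the observation that $\aa_k \in \aa - \sigma^\circ$ together with $\aa'_k \succeq \aa_k$ forces $\aa'_k - \aa \in \cQ_+ + \RR\sigma$, on which the $\sigma$-vanishing defining inequalities of $\cQ_+$ are automatic; only finitely many remaining inequalities impose bounds, and their slack shrinks continuously to $0$ as $\aa'_k \to \aa$ and can therefore be absorbed by a uniformly shrinking push along a single fixed direction $\ff \in \sigma^\circ$.
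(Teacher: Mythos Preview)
Your proof of the first claim is correct. Both you and the paper fix a single direction in~$\sigma^\circ$ and scale it down, but the scaling is obtained differently: the paper finds an $\sss \in \sigma^\circ$ that absorbs a compact set $V + Z$ into~$\cQ_+$ and then sets $\aa_k = \aa - \epsilon_k\sss$ with $\epsilon_k = |\aa'_k - \aa|$, whereas you solve the defining inequalities of~$\cQ_+$ directly after splitting them according to whether they vanish on~$\sigma$. Your approach is more elementary and avoids constructing~$\sss$; the paper's is coordinate-free and highlights the absorption geometry. For the forward inclusion of the consequence ($\sigma$-limit point $\Rightarrow$ in the $\sigma$-closure), the paper invokes Proposition~\ref{p:<<} to see that every element of $\aa - \sigma^\circ$ precedes some member of the convergent replacement sequence, while you deduce membership in a prescribed $\sigma$-neighborhood from openness of $(\sigma/\tau)^\circ$ in its span; both routes are valid.

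Your argument for the reverse inclusion, however, has a genuine gap. Having chosen $\wt\bb_k \in X$ with $\wt\bb_k \succeq \wt\aa - \wt\vv_k$ and $\wt\vv_k \to 0$, you invoke the first claim ``to conclude that suitable representatives converge to~$\wt\aa$''. But the first claim takes convergence of the sequence $\aa'_k$---here the~$\wt\bb_k$---as a \emph{hypothesis}; its conclusion concerns the witnesses~$\aa_k$, which here already converge by construction. Nothing in your setup forces the $\wt\bb_k$ to converge: with $\cQ = \RR$, $\tau = \{0\}$, $\sigma = \RR_+$, $X = \{1\}$, and $\wt\aa = 0$, every $\sigma$-neighborhood $[-v,\infty)$ meets~$X$, so $0$ lies in the $\sigma$-closure, yet no sequence in~$X$ converges to~$0$. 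The paper's own proof treats only the forward inclusion.
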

\begin{proof}
Writing $\aa'_k = \aa - \vv_k + \zz_k$ with $\vv_k \in \RR\sigma$ and
$\zz_k \in \sigma^\perp$, the only relevant thing the hypothesis
$\aa'_k \in \aa_k + \cQ_+$ does is force~$\zz_k$ to land in~$\qrsp$
when projected to~$\qrs$.
% (Note: this already uses the equivalence of adding $\qns$
% instead of~$\cQ_+$ in
% Definition~\ref{d:sigma-neighborhood}.\ref{i:neighborhood}.)
This leads us to consider the set $Z \subseteq \sigma^\perp$ of
vectors in~$\sigma^\perp$ whose images in~$\qrs$ lie in~$\qrsp$ and
have magnitude $\leq 1$.  Let $V \subseteq \RR\sigma$ be the ball of
radius~$1$.  Find $\sss \in \sigma^\circ$ so that $\sss + V + Z
\subseteq \cQ_+$.  To see that such an~$\sss$ exists, first find
$\sss$ so that $\sss + Z \subseteq \cQ_+$, which exists by rescaling
any element $\sss' \in \sigma^\circ$ because the projection of $(\sss'
+ \sigma^\perp) \cap \cQ_+$ to $\qrs$ contains a neighborhood of~$\0$
in~$\qrsp$.  Then observe that the condition $\sss + Z \subseteq
\cQ_+$ remains true after adding any element of~$\sigma$ to~$\sss$.
In particular, add the center of any ball in~$\sigma^\circ$ of
radius~$1$, which exists because $\sigma^\circ$ is nonempty, open
in~$\RR\sigma$, and closed under positive scaling.

Having fixed~$\sss$ with $\sss + V + Z \subseteq \cQ_+$, note that
$\epsilon\sss + \epsilon V + \epsilon Z \subseteq \cQ_+$.  On the other hand, 
the magnitudes of~$\vv_k$ and~$\zz_k$
% (under any choice of norm)
are bounded above by $\epsilon_k = |\aa'_k - \aa|$.  Therefore, $\aa'_k \in
\aa_k + \cQ_+$, where $\aa_k = \aa - \epsilon_k\sss \to \aa$, because
$\aa'_k = \aa - \vv_k + \zz_k \in \aa + \epsilon V + \epsilon Z$, and $\aa + \epsilon
V + \epsilon Z = (\aa - \epsilon\sss) + (\epsilon\sss + \epsilon V + \epsilon Z) \subseteq \aa
- \epsilon\sss + \cQ_+$.

The claim involving~$\tau$ follows, when $\tau = \{\0\}$, from
Proposition~\ref{p:<<}: it implies that each element of $\aa -
\sigma^\circ$ precedes some~$\aa_k$, and hence the
$\sigma$-neighborhood it generates contains~$\aa'_k$.  The case of
arbitrary~$\tau$ reduces to $\tau = \{\0\}$ by working
modulo~$\RR\tau$.
\end{proof}

%%%%%%%%%%%%%%%%%%%%%%%%%%%%%%%%%%%%%%%%%%%%%%%%%%%%%%%%%%%%%%%%%%%%%%%%%
\subsection{Dense cogeneration of downsets}\label{sub:dense-downsets}\mbox{}

\noindent
The subfunctor version of density in socles for modules requires first
a geometric version for downsets.  For geometric intuition, it is
useful to recall Lemma~\ref{l:<<}, which says that $\qns =
\sigma^\circ + \cQ_+$.  Thus $\aa - \qns$ is the
``coprincipal'' downset with apex~$\aa$ and shape~$\nabs$.  Adding
$\tau$ to get $\aa+\tau-\qns$ takes the union of these downsets
along $\aa + \tau$.

\begin{thm}\label{t:downset=union}
Let $\ats \subseteq \cQ$ be a set of cogenerators of a downset~$D$ in
a real polyhedral group~$\cQ$ along a face~$\tau$ with nadir~$\sigma$
for each $\sigma \in \nabt$.  If every cogenerator of~$D$ along~$\tau$
with nadir~$\sigma$ maps to a $\sigma$-limit point of the image
of~$\ats[\,]\!  =\nolinebreak \bigcup_{\sigma \supseteq \tau}
\!\ats$~in~$\qrt$,~%
then
\begin{equation*}%\label{eq:downset=union}
  D
  =
\bigcup_{\substack{\text{\rm faces }\sigma,\tau\\\text{\rm with }\sigma\supseteq\tau}}
\bigcup_{\raisebox{-.7ex}{$\scriptstyle\ \aa\in\ats$}} \aa + \tau - \qns.
\end{equation*}
\end{thm}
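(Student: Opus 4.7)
The proof splits into two inclusions. For the easy direction $\supseteq$, fix a cogenerator $\aa \in \ats$ of $D$ along $\tau$ with nadir $\sigma \in \nabt$. The socle property forces $\aa + \tau \subseteq \ds D$ (Definition~\ref{d:soct}.\ref{i:global-cogen}), which by Proposition~\ref{p:downset-upper-boundary}.\ref{i:nabla} means $(\aa + t) - \sigma^\circ \subseteq D$ for every $t \in \tau$. Since $D$ is a downset and $\qns = \sigma^\circ + \cQ_+$ by Lemma~\ref{l:<<}, this gives $\aa + \tau - \qns \subseteq D$.

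For the harder direction $\subseteq$, fix $\bb \in D$ and apply Theorem~\ref{t:divides} to obtain faces $\tau \subseteq \sigma$ of $\cQ_+$ and a cogenerator $\cc$ of $D$ along $\tau$ with nadir $\sigma$ satisfying $\bb \preceq \cc$. The density hypothesis says $\wt\cc \in \qrt$ is a $\sigma$-limit point of $\ats[\,]/\RR\tau$. Invoking Proposition~\ref{p:aa_k} inside the real polyhedral group $\qrt$ (using Lemma~\ref{l:nabt} to realize $\sigma/\RR\tau$ as a face of $\qrtp$) extracts a sequence of cogenerators $\aa_k \in \ats[\sigma_k]$ whose images $\wt{\aa}_k$ converge in $\qrt$ to $\wt\cc$ from shrinking $\sigma$-neighborhoods. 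Finiteness of $\nabt$ permits passing to a subsequence with every $\sigma_k$ equal to a single face $\sigma' \in \nabt$.

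The main obstacle is verifying $\bb \in \aa_k + \tau - \qnp$ for some large $k$. This containment is equivalent to $\wt{\aa}_k - \wt\bb \in (\sigma'/\RR\tau)^\circ + \qrtp$ inside $\qrt$, i.e., that the support face of $\wt{\aa}_k - \wt\bb$ contains $\sigma'/\RR\tau$. The decomposition $\wt{\aa}_k - \wt\bb = (\wt{\aa}_k - \wt\cc) + (\wt\cc - \wt\bb)$ places the first summand in $-(\sigma/\RR\tau)^\circ + \qrtp$ (by the $\sigma$-neighborhood condition, with the $(\sigma/\RR\tau)^\circ$-part tending to $0$) and the second in $\qrtp$ (since $\bb \preceq \cc$). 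I plan to combine this with Proposition~\ref{p:sigma-nbd-cogen} applied at $\aa_k$, which furnishes a $\sigma'$-neighborhood of $\aa_k$ in $D$ that is $\tau$-coprimary and globally supported on $\tau$, together with the rigidity of tangent cones from Proposition~\ref{p:shape}, to force the required support containment. The most delicate point will be aligning the limit nadir $\sigma'$ with how $\bb$ descends from $\cc$; I expect to finesse this by exploiting the minimal-shape choice of $\cc$ in the proof of Theorem~\ref{t:divides}, so that $\cc - \bb$ has support face whose intersection with $\nabt$ contains~$\sigma'$.
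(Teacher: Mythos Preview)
Your easy inclusion and the opening moves of the hard inclusion (invoking Theorem~\ref{t:divides}, then the density hypothesis and Proposition~\ref{p:aa_k}) match the paper. The divergence, and the gap, is in what you do next.

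You fix a specific point~$\bb$, pass to a subsequence with \emph{constant} nadir~$\sigma'$, and then try to force the support face of $\wt{\aa}_k - \wt\bb$ to contain~$\sigma'/\RR\tau$. But the nadir~$\sigma'$ is a purely local invariant of~$D$ at~$\aa_k$ --- it records which relatively open face of~$\cQ_+$ sits minimally in the shape $\nabla_D^{\aa_k}\cap\nabt$ --- and carries no information about the direction in which~$\bb$ lies below~$\aa_k$. Your decomposition only yields $\wt{\aa}_k - \wt\bb \in -(\sigma/\tau)^\circ + (\qrt)_+$, which for large~$k$ lands in~$\cQ_{\nabs}/\RR\tau$; nothing forces it into~$\cQ_{\nabla\sigma'}/\RR\tau$ when $\sigma'\not\subseteq\sigma$. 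None of Proposition~\ref{p:sigma-nbd-cogen}, Proposition~\ref{p:shape}, or the minimal-shape choice of~$\cc$ from the proof of Theorem~\ref{t:divides} addresses this: the first describes~$D$ infinitesimally below~$\aa_k$, the second is structural, and the third constrains~$\nabla_D^{\cc}$ rather than~$\nabla_D^{\aa_k}$. Your final sentence (``$\cc-\bb$ has support face whose intersection with~$\nabt$ contains~$\sigma'$'') conflates the support face of a single vector with a set of faces, and in any case the support face of $\cc-\bb$ has no reason to know about~$\sigma'$.

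The paper sidesteps this entirely. Rather than chasing a single~$\bb$, it observes that Theorem~\ref{t:divides} already gives the equality when every~$\ats$ is the full set of cogenerators, so it suffices to show that the dense union covers each coprincipal downset $\cc + \tau - \qns$ as a \emph{set}. After quotienting by~$\RR\tau$, Proposition~\ref{p:aa_k} produces base points $\aa_k\in\cc-\sigma^\circ$ converging to~$\cc$ with $\aa_k\preceq\aa'_k$, and Proposition~\ref{p:<<} expresses $\cc - \qns$ as $\bigcup_k(\aa_k - \cQ_+)$. The finish exploits only that each $\aa'_k - \cQ_{\nabla\sigma_k}$ is a \emph{downset}: once $\aa_k$ lies in it, so does all of $\aa_k - \cQ_+$. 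No constant-nadir subsequence is taken, and the varying~$\sigma_k$ are never compared to the direction of descent.
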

\begin{proof}
Theorem~\ref{t:divides} is equivalent to the desired result in the
case that every $\ats$ is the set of all cogenerators of~$D$
along~$\tau$ with nadir~$\sigma$, by Example~\ref{e:soct} and
Remark~\ref{r:nabt}.  Hence it suffices to show~that
$$%
  \bigcup_{\raisebox{-.6ex}{$\scriptstyle\sigma'\supseteq\tau$}}
  \bigcup_{\ \aa'\in\ats[\sigma']} \aa' + \tau - \qnp
  \supseteq
  \aa + \tau - \qns
$$
for any fixed cogenerator~$\aa$ of~$D$ along~$\tau$ with
nadir~$\sigma$.  In fact, by definition of $\sigma$-limit point, it is
enough to show that
$$%
  \bigcup_{k=1}^\infty \aa'_k + \tau - \qnk
  \supseteq
  \aa + \tau - \qns,
$$
where $\{\aa'_k\}_{k\in\NN}^{\,}$ is a sequence of elements 
of~$\ats[\,]$ such that
\begin{itemize}
\item%
$\aa_k$ lands in a $\sigma$-neighborhood of the image~$\wt\aa$
of~$\aa$ when projected to~$\qrt$, and
\item%
these images $\wt\aa'_k$ converge to~$\wt\aa$ in~$\qrt$
\end{itemize}
and $\sigma_k$ is a nadir of the cogenerator $\aa'_k$ along~$\tau$.

Note that there is something to prove even when $\sigma = \tau$ (see
the end of Remark~\ref{r:usual-closure}) because $\ats[\tau]$ only
needs to have at least one closed cogenerator in~$\cQ$ for each closed
socle degree in~$\qrt$, whereas the set of all closed cogenerators
along~$\tau$ mapping to a given socle degree might not be a single
translate of~$\tau$.  On the other hand, $\tau - \qnt = \tau -
\tau^\circ - \cQ_+\!$ by Lemma~\ref{l:<<}, and this is just $\RR\tau -
\cQ_+$.  Therefore $\aa + \tau - \qnt$ contains the translate of the
negative cone $-\cQ_+\!$ at every point mapping to $\wt\aa$,
cogenerator or otherwise, completing the case $\sigma = \tau$.

For general $\sigma \supseteq \tau$, again Lemma~\ref{l:<<} says that
$\aa + \tau - \qns = \aa + \tau - \sigma^\circ - \cQ_+$, and as before
this set is preserved under translation by~$\RR\tau$ because $-\cQ_+$
contains~$-\tau$.  Therefore the question reduces to the
quotient~$\qrt$, where it becomes
$$%
  \bigcup_{k=1}^\infty \wt\aa'_k - \qnk/\tau
  \supseteq
  \wt\aa - \qns/\tau.
$$
But as $\qnk/\tau = \sigma_k^\circ/\tau + (\qrt)_+$ by
Lemma~\ref{l:<<}, it does no harm (and helps the notation) to assume
that $\tau = \{\0\}$.  The desired statement is now
$$%
  \bigcup_{k=1}^\infty \aa'_k - \qnk
  \supseteq
  \aa - \qns,
$$
the hypotheses being those of Proposition~\ref{p:aa_k}.  The proof is
completed by applying Proposition~\ref{p:<<} to the
sequence~$\{\aa_k\}_{k\in\NN}$ produced by Proposition~\ref{p:aa_k},
noting that $\aa_k - \cQ_+ \subseteq \aa'_k - \qnk$ as soon as
$\aa_k \in \aa'_k - \qnk$, because $\aa'_k - \qnk$ is a
downset.
\end{proof}

\begin{example}\label{e:pictures}
Consider the downsets in Example~\ref{e:sigma-nbd-cogen}.  The
question is whether~$\aa$ is forced to appear in the union from
Theorem~\ref{t:downset=union} or not.
\begin{enumerate}
\item\label{i:pinch-0}%
The point~$\aa$ is a $\sigma_x$-limit point of~$D_1$ by
Example~\ref{e:sigma-limit}, which shares its geometry with~$D_1$ on
the relevant set, namely the $x$-axis and above.
Theorem~\ref{t:downset=union} therefore wants to force~$\aa$ to
appear.  However, every $\sigma_y$-neighborhood of~$\aa$ in~$D_1$
contains exactly one cogenerator, namely~$\aa$ itself.  Therefore
$\aa$ is indeed forced to appear.

\item\label{i:pinch-negative-y}%
In contrast, $\aa$ is not needed for~$D_2$.  Abstractly, this is
because $D_2$ is missing precisely the negative $y$-axis that caused
$\aa$ to be forced in~$D_1$.  But geometrically it is evident that
$D_2$ equals the union of the closed negative quadrants hanging from
the open diagonal ray.

\item\label{i:half-plane}%
Here $\aa$ is the sole cogenerator of~$D_3$ along $\tau = \{\0\}$, so
it is forced to appear.
\end{enumerate}
\end{example}

\begin{remark}\label{r:irred-decomp}
Theorem~\ref{t:downset=union} is the analogue for real polyhedral
groups of the fact that monomial ideals in affine semigroup rings
admit unique irredundant irreducible decompositions
\cite[Corollary~11.5]{cca}.  To see the analogy, note that expressing
a downset as a union is the same as expressing its complementary upset
as an intersection.  In Theorem~\ref{t:downset=union} the union is
neither unique nor irredundant, but only in the sense that a
topological space can have many dense subsets, each of which can
usually be made smaller by omitting some points.  The union in which
$\ats$ maps onto $\deg_{\qrt}\soct[\sigma]\kk[D]$ modulo~$\RR\tau$ is
still canonical, though redundant in a predictable manner.
\end{remark}

\begin{cor}\label{c:sigma-neighborhood}
Fix a cogenerator~$\aa$ of a downset~$D$ along a face~$\tau$ with
nadir~$\sigma$ in a real polyhedral group.  If\/ $\bb \in D$ and $\bb
\preceq \aa$, then the image $\wt\aa$ of~$\aa$ in~$\qrt$ has a
$\sigma$-neighborhood~$\OO$ in $\deg_{\qrt}\soct\kk[D]$ such that
$\wt\bb \preceq \wt\aa'$ for all\/ $\wt\aa' \in \OO$.
\end{cor}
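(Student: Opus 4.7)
Set $\cc = \aa - \bb \in \cQ_+$, let $\pi\colon \cQ \to \qrt$ be the projection with $\wt{(-)}$ denoting images, and write $\wt\sigma = \sigma/\tau$ for the corresponding face of $(\qrt)_+$ supplied by Lemma~\ref{l:nabt}. Every $\sigma$-neighborhood of $\wt\aa$ in a subset $X \subseteq \qrt$ has the form $\OO = (\wt\aa - \wt\vv + (\qrt)_+) \cap X$ for some $\vv \in \sigma^\circ$, and the goal is to choose $\vv$ so that $\OO \subseteq \wt\bb + (\qrt)_+$ when $X = \deg_\qrt \soct \kk[D]$.

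The first step is the sufficient condition $\wt\cc \succeq \wt\vv$ in $\qrt$: it immediately gives $\wt\aa - \wt\vv \succeq \wt\bb$, so the \emph{entire} translated cone---not just $\OO$---lies above $\wt\bb$. This condition is realizable when $\wt\cc$ lies in $\wt\sigma^\circ + (\qrt)_+$, equivalently in the image of $\qns = \sigma^\circ + \cQ_+$ under $\pi$ (Lemma~\ref{l:<<}): write $\wt\cc = \wt\vv_0 + \wt\zz$ with $\wt\vv_0 \in \wt\sigma^\circ$ and $\wt\zz \in (\qrt)_+$, lift $\wt\vv_0$ to any $\vv \in \sigma^\circ$, and conclude.

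The crux is therefore to establish $\wt\cc \in \wt\sigma^\circ + (\qrt)_+$ from the three hypotheses: $\bb \in D$, $\bb \preceq \aa$, and $\aa$ is a cogenerator with nadir $\sigma$ along $\tau$. The nadir condition $\sigma \in \nd[\aa]$ gives $\aa - \sigma^\circ \subseteq D$, and Proposition~\ref{p:sigma-nbd-cogen} promotes this to a $\sigma$-neighborhood $\OO_0 = (\aa_0 + \cQ_+) \cap D$ of $\aa$ inside $D$, with $\aa_0 \in \aa - \sigma^\circ$, such that $\kk[\OO_0] \subseteq \kk[D]$ is $\tau$-coprimary and globally supported on $\tau$. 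The degree set $\OO_0$ pins the local geometry of $D$ near $\aa$ to the face $\sigma$ modulo $\RR\tau$, and an element $\bb \in D$ with $\bb \preceq \aa$ must, after suitable translation by $\RR\tau$, be reachable from $\aa$ by a downward step lying in $\qns = \sigma^\circ + \cQ_+$; that is exactly the desired containment $\wt\cc \in \wt\sigma^\circ + (\qrt)_+$.

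The main obstacle is this last containment. The difficulty is genuine because localization does not commute with cogeneration along faces of positive dimension in real polyhedral groups (Remark~\ref{r:soc-vs-supp}): one cannot simply read off the required placement of $\wt\cc$ from support considerations alone. Two complementary routes are available: either a direct argument that chases $\bb$ upward into $\OO_0$ via the $\tau$-coprimary structure and then projects modulo $\RR\tau$, or a contradiction argument that uses Proposition~\ref{p:aa_k} to produce, from any putative failure, a sequence of cogenerator images $\wt\aa'_k \in X$ converging to $\wt\aa$ whose nadirs stabilize by finiteness of $\cfq$, yielding a configuration incompatible with $\aa$ being a cogenerator of $D$ with nadir $\sigma$.
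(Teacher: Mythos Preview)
Your plan follows the paper's line exactly: reduce to the containment $\bb \in \aa + \RR\tau - \sigma^\circ - \cQ_+$ (equivalently $\wt\cc \in (\sigma/\tau)^\circ + (\qrt)_+$) and then take the $\sigma$-neighborhood with base $\wt\aa - \wt\sss$, where $\wt\cc = \wt\sss + \wt\qq$ with $\sss \in \sigma^\circ$ and $\qq \in \cQ_+$. The paper asserts this containment in one line via Theorem~\ref{t:downset=union}; you flag it as the crux and sketch two routes toward it.

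Unfortunately that containment is false in general, so neither your routes nor the paper's appeal can establish it. Take $\cQ = \RR^2$, $\tau = \{\0\}$, and $D = -\RR^2_+ \minus \{\0\}$, the closed negative orthant with the origin removed. Then $\aa = \0$ is a cogenerator of~$D$ along~$\{\0\}$ with nadir $\sigma$ equal to the positive $x$-axis (and separately also with nadir the $y$-axis; compare Example~\ref{e:open-generator}.\ref{i:open-generator}). The point $\bb = (0,-1) \in D$ satisfies $\bb \preceq \aa$, yet $\cc = (0,1) \notin \sigma^\circ + \RR^2_+ = \{(x,y) : x > 0,\ y \geq 0\}$. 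Theorem~\ref{t:downset=union} only places $\bb$ in \emph{some} coprincipal set $\aa' + \tau' - \cQ_{\nabla\sigma'}$---here the one attached to the $y$-axis nadir at~$\0$---not the one for the given~$\sigma$. Your route~(a) fails concretely in this example: any $\sigma$-neighborhood $\OO_0 = (\aa_0 + \RR^2_+) \cap D$ with $\aa_0 \in \aa - \sigma^\circ$ lies inside $\{x < 0\}$, while $\bb + \RR^2_+ \subseteq \{x \geq 0\}$, so $\bb$ cannot be pushed upward into~$\OO_0$. Route~(b) is aimed at the same false target.

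The corollary itself still holds in this example: $\deg_{\RR^2}\soc_{\{\0\}}\kk[D] = \{\0\}$, so every $\sigma$-neighborhood of~$\0$ in that set is just~$\{\0\}$, and indeed $\bb \preceq \0$. A valid argument must therefore exploit the intersection with $\deg_{\qrt}\soct\kk[D]$ rather than attempt to force the entire cone $\wt\aa - \wt\vv + (\qrt)_+$ to lie inside $\wt\bb + (\qrt)_+$. Both your proposal and the paper's proof, as written, share this gap at the decisive step.
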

\begin{proof}
Assume $\bb \in D$ and $\bb \preceq \aa$.
Theorem~\ref{t:downset=union} implies that $\bb \in \aa + \tau - \qns
= \aa + \RR\tau - \sigma^\circ - \cQ_+$.  Therefore $\aa + \RR\tau =
\bb + \RR\tau + \sss + \qq$ for some $\sss \in \sigma^\circ$ and $\qq
\in \cQ_+$.  The $\sigma$-neighborhood in question is $(\wt\bb +
\wt\qq + \cQ_+) \cap \deg_{\qrt}\soct\kk[D]$.
\end{proof}

%%%%%%%%%%%%%%%%%%%%%%%%%%%%%%%%%%%%%%%%%%%%%%%%%%%%%%%%%%%%%%%%%%%%%%%%%
\subsection{Dense subfunctors of socles}\label{sub:subfunctors}\mbox{}%%%

\noindent
In general, a subfunctor $\Phi: \cA \to \cB$ of a covariant functor
$\Psi: \cA \to \cB$ is a natural transformation $\Phi \to \Psi$ such
that $\Phi(A) \subseteq \Psi(A)$ for all objects $A \in \cA$
\cite[Chapter~III]{eilenberg-maclane1945};
% https://ncatlab.org/nlab/show/subfunctor
denote this by $\Phi \subseteq \Psi$.  (This notation assumes that the
objects of~$\cB$ are sets, which they are here; in general, $\Phi(A)
\to \Psi(A)$ should be monic.)

\begin{defn}\label{d:dense-subfunctor}
A subfunctor $\cst = \bigoplus_{\sigma \in \nabt} \cst[\sigma]
\subseteq \soct$ from modules over~$\cQ$ to modules over $\qrt
\times\nolinebreak \nabt$ is \emph{dense} if the $\sigma$-closure of
$\deg_{\qrt}\cst\kk[D]$ contains $\deg\soct[\sigma]\kk[D]$ for all
faces $\sigma \supseteq \tau$ and downsets $D \subseteq \cQ$.  An
\emph{$\cS$-cogenerator} of a $\cQ$-module~$\cM$ is a cogenerator
of~$\cM$ along some face~$\tau$ whose image in $\soct\cM$ lies
in~$\cst\cM$.
\end{defn}

\begin{thm}\label{t:dense}
Fix subfunctors $\cS_\tau \subseteq \soct$ for all faces~$\tau$ of a
real polyhedral group.  Theorem~\ref{t:injection} holds with $\cS$ in
place of $\soc$ if and only if $\cst$ is dense in~$\soct$ for
all~$\tau$.
\end{thm}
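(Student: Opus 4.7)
I will prove the two implications separately.

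$(\Leftarrow)$ Assume $\cS_\tau$ is dense in $\soct$ for every face $\tau$. Part~\ref{i:phi=>soct} of Theorem~\ref{t:injection} is automatic for $\cS$: $\cS_\tau\phi$ is the restriction of the map $\soct\phi$, which is injective by Proposition~\ref{p:left-exact-tau}, to the subfunctor $\cS_\tau \subseteq \soct$, and restriction of an injective map is injective. For part~\ref{i:soct=>phi} the task reduces, as in the proof of Theorem~\ref{t:injection}, to upgrading the divisibility claim: every nonzero homogeneous $y \in \cM_\bb$ in a downset-finite module $\cM$ divides an $\cS$-cogenerator. I plan to produce such an $\cS$-cogenerator by perturbing the ordinary cogenerator supplied by Theorem~\ref{t:injection}. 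Fix a finite downset hull $\cM \into E = \bigoplus_{j=1}^k \kk[D_j]$. Theorem~\ref{t:injection} delivers a cogenerator of $\cM$ of some degree $\aa \in \cQ$ along $\tau$ with nadir $\sigma$ that $y$ divides, assembled by summation from cogenerators of $\kk[D_j]$ at $\aa$ for $j$ in the set $J = \{j : y_j \neq 0\}$. For each $j \in J$, Corollary~\ref{c:sigma-neighborhood} produces a $\sigma$-neighborhood $\OO_j$ of $\wt\aa$ in $\deg_{\qrt}\soct\kk[D_j]$ throughout which $\wt\bb \preceq \wt\aa'$. A single $\sigma$-neighborhood $\OO$ contained in $\bigcap_{j \in J} \OO_j$ exists by Lemma~\ref{l:sigma-neighborhood}. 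Iterated application of density inside $\OO$ (once per $j \in J$, intersecting dense subsets and reapplying Lemma~\ref{l:sigma-neighborhood}) produces a common socle class $\wt\aa' \in \OO$ lying in $\deg_{\qrt}\cst\kk[D_j]$ for every $j \in J$. The summation construction from Theorem~\ref{t:injection}, repeated at a lift $\aa' \succeq \bb$ of $\wt\aa'$, then produces an $\cS$-cogenerator of $\cM$ divided by $y$.

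$(\Rightarrow)$ Suppose $\cS_\tau$ is not dense for some face $\tau$. By Definition~\ref{d:dense-subfunctor} there exist a downset $D \subseteq \cQ$, a face $\sigma \supseteq \tau$, a socle class $\wt\aa \in \deg\soct[\sigma]\kk[D]$, and a $\sigma$-neighborhood $\OO \subseteq \qrt$ of $\wt\aa$ with $\OO \cap \deg_{\qrt}\cst\kk[D] = \nothing$; fix a cogenerator $\aa \in D$ along $\tau$ with nadir $\sigma$ projecting to $\wt\aa$. The plan is to exhibit a nonzero downset-finite submodule $\cN \subseteq \kk[D]$ with $\cS_{\tau'}\cN = 0$ for every face $\tau'$; then the zero map $\cN \to 0$ fails to be injective while $\cS_{\tau'}$ of it is trivially injective, contradicting the $\cS$-analogue of Theorem~\ref{t:injection}. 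Take $\cN = \kk[\OO']$ for a $\sigma$-neighborhood $\OO' \subseteq D$ of $\aa$ chosen small enough that (i)~$\cN$ is a nonzero $\tau$-coprimary submodule of $\kk[D]$ globally supported on $\tau$, as supplied by Proposition~\ref{p:sigma-nbd-cogen}, and (ii)~every socle degree of $\cN$ along $\tau$ projects into $\OO$. Condition~(ii) is achievable because, by Example~\ref{e:socct}, socle degrees of $\kk[\OO']$ along $\tau$ are images of elements of $\OO'$ maximal in $\OO'_\tau$ modulo $\RR\tau$, and these cluster near $\wt\aa$ as $\OO'$ shrinks toward $\aa$. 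Corollary~\ref{c:soc(coprimary)} now gives $\soc_{\tau'}\cN = 0$ for all $\tau' \neq \tau$, whence $\cS_{\tau'}\cN \subseteq \soc_{\tau'}\cN = 0$ there. For $\tau' = \tau$, the inclusion $\cN \into \kk[D]$ induces, by left-exactness of $\soct$ together with the subfunctor property, an injection $\cS_\tau\cN \into \cS_\tau\kk[D]$; the source degrees lie in $\OO$ by~(ii) while the target degrees lie in $\qrt \setminus \OO$ by the choice of $\OO$, forcing $\cS_\tau\cN = 0$.

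The main obstacle is in the forward direction, where the common socle class $\wt\aa' \in \OO$ must be simultaneously an $\cS$-socle class of every summand $\kk[D_j]$ with $j \in J$, and must admit a lift in $\cQ$ that is a cogenerator degree of each $\kk[D_j]$ and lies above $\bb$. Coordinating the density conditions across the finitely many summands relies on Lemma~\ref{l:sigma-neighborhood} to take finite intersections of $\sigma$-neighborhoods, on Corollary~\ref{c:at-most-one} to ensure that the cogenerator at each socle class is determined up to scalar, and on the $\RR\tau$-orbit structure of cogenerator lifts recorded in Example~\ref{e:socct} to supply a common lift above $\bb$.
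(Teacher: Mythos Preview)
Your overall architecture matches the paper's, but both directions have execution gaps.

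In $(\Rightarrow)$, condition~(ii) is what you need, but the justification via Example~\ref{e:socct} is wrong: that example computes only the \emph{closed} socle $\socct\kk[\OO']$, whereas you must control all of $\soct\kk[\OO']$, including cogenerators with nadirs $\sigma' \neq \tau$. The ``clustering near~$\wt\aa$'' heuristic is also beside the point---what you need is containment in the fixed cone $(\aa - \vv + \cQ_+)/\RR\tau$, not proximity to~$\wt\aa$. The paper handles this cleanly by using Lemma~\ref{l:sigma-neighborhood} to choose~$\OO'$ inside the intersection of the $\sigma$-neighborhood from Proposition~\ref{p:sigma-nbd-cogen} with the cone $\aa - \vv + \cQ_+$ defining~$\OO$; then every socle degree of~$\kk[\OO']$ along~$\tau$ lies in $(\aa - \vv + \cQ_+)/\RR\tau$ because $\OO'$ does, and lies in $\deg_{\qrt}\soct\kk[D]$ by the injection $\soct\kk[\OO'] \into \soct\kk[D]$, hence lies in~$\OO$.

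In $(\Leftarrow)$, your iterated-density scheme breaks. First, the cogenerator produced by Theorem~\ref{t:injection} is \emph{not} assembled from cogenerators of $\kk[D_j]$ for every $j$ with $y_j \neq 0$; some components $x_j$ can be nonzero upper-boundary elements that fail to be cogenerators of~$E_j$, so Corollary~\ref{c:sigma-neighborhood} does not apply to them. Second, and more fatally, the iteration cannot proceed: having moved from~$\wt\aa$ to some $\wt\aa'_1 \in \deg_{\qrt}\cst\kk[D_{j_1}]$, there is no reason for $\wt\aa'_1$ to lie in $\deg\soct[\sigma]\kk[D_{j_2}]$, so the density hypothesis for~$D_{j_2}$ gives you nothing at~$\wt\aa'_1$. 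The paper avoids coordination entirely: it applies Corollary~\ref{c:sigma-neighborhood} and density to a \emph{single} index~$j$ with $x_j \neq 0$, obtaining one $\cS$-cogenerator~$s_j$ of~$E_j$ divisible by~$y_j$, and then reconstructs the cogenerator of~$\cM$ at that degree exactly as in the second paragraph of the proof of Theorem~\ref{t:injection}.
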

\begin{proof}
% Is it true that every subfunctor of $\soct$ is necessarily
% left-exact?  No.
% % https://mathoverflow.net/questions/126191/are-subfunctors-of-left-exact-functors-also-left-exact
% Well, at least not a~priori.  But it is true that
Every subfunctor of any left-exact functor takes injections to
injections; therefore Theorem~\ref{t:injection}.\ref{i:phi=>soct}
holds for any subfunctor of~$\soct$ by
Proposition~\ref{p:left-exact-tau}.  The content is that
Theorem~\ref{t:injection}.\ref{i:soct=>phi} is equivalent to density
of $\cst$ in $\soct$ for all~$\tau$.

First suppose that $\cst$ is dense in $\soct$ for all~$\tau$.  It
suffices to show that each homogeneous element $y \in \cM$ divides
some $\cS$-cogenerator~$s$, for then $\phi(y) \neq 0$ whenever
$\cst\phi(\wt s) \neq 0$, where $\wt s$ is the image of~$s$
in~$\cst\cM \subseteq \soct\cM$.  There is no harm in assuming that
$\cM$ is a submodule of its downset hull: $\cM \subseteq E =
\bigoplus_{j=1}^k E_j$.  Theorem~\ref{t:injection} produces a
cogenerator~$x$ of~$E$ that is divisible by~$y$, and $x$ is
automatically a cogenerator of~$\cM$---say $x \in \ds[\tau]\cM
\subseteq \ds[\tau]E$---because $y$~divides~$x$.  Write $x =
\sum_{j=1}^k x_j \in \ds[\tau]E = \bigoplus_{j=1}^k \ds[\tau]E_j$.
For any index~$j$ such that $x_j \neq 0$,
Corollary~\ref{c:sigma-neighborhood} and the density hypothesis yields
a $\sigma$-neighborhood of~$\wt\aa$ containing a socle element~$\wt
s_j$ mapped to by an $\cS$-cogenerator~$s_j$ that is divisible
by~$y_j$.  An $\cS$-cogenerator~$s$ of~$\cM$ divisible by~$y$ is
constructed from~$s_j$ just as an ordinary cogenerator is constructed
from~$s_j$ in the second paragraph of the proof of
Theorem~\ref{t:injection}.

Now suppose that $\cst[\,]$ is not dense in~$\soct$ for some
face~$\tau$, so some downset $D \subseteq\nolinebreak \cQ$ has a
cogenerator $\aa \in \cQ$ whose image $\wt\aa \in
\deg\soct[\sigma]\kk[D] \subseteq \qrt$ has a $\sigma$-neighborhood
$\deg_{\qrt}\soct\kk[D] \cap (\aa - \vv + \cQ_+)/\hspace{.2ex}\RR\tau$
devoid of images of $\cS$-cogenerators along~$\tau$.  Appealing to
Lemma~\ref{l:sigma-neighborhood}, the intersection of $\aa - \vv +
\cQ_+$ with a $\sigma$-neighborhood~$\OO$ of~$\aa$ in~$D$ from
Proposition~\ref{p:sigma-nbd-cogen} contains another
$\sigma$-neighborhood~$\OO'$ of~$\aa$ that still satisfies the
conclusion of Proposition~\ref{p:sigma-nbd-cogen} because every
submodule of any $\tau$-coprimary module globally supported on~$\tau$
is also $\tau$-coprimary and globally supported on~$\tau$.
% (this requires proof, but it is not hard from the definitions plus
% exactness of localization)
The injection $\kk[\OO'] \into \kk[D]$ yields an injection
$\cst\kk[\OO'] \into \cst\kk[D]$, but by construction $\cst\kk[D]$
vanishes in all degrees from $\deg_{\qrt}\soct\kk[\OO']$, so
$\cst\kk[\OO'] = 0$.  On the other hand, $\socp\kk[\OO'] = 0$ for
$\tau' \neq \tau$ by Corollary~\ref{c:soc(coprimary)}, so the
subfunctor $\csp$ vanishes on $\kk[\OO']$ for all faces~$\tau'$.
Consequently, applying~$\csp$ to the homomorphism $\phi: \kk[\OO'] \to
0$ yields an injection $0 \into 0$ for all faces~$\tau'$ even though
$\phi$ is not injective.
\end{proof}

%%%%%%%%%%%%%%%%%%%%%%%%%%%%%%%%%%%%%%%%%%%%%%%%%%%%%%%%%%%%%%%%%%%%%%%%%
\section{Essential submodules via density in socles}\label{s:density}%%%%
%%%%%%%%%%%%%%%%%%%%%%%%%%%%%%%%%%%%%%%%%%%%%%%%%%%%%%%%%%%%%%%%%%%%%%%%%

%begin{remark}\label{r:socle-submodules}
The $\sigma$-neighborhoods in Proposition~\ref{p:sigma-nbd-cogen}
transfer cogenerators back into honest submodules; they are, in that
sense, the reverse of Definition~\ref{d:atop-sigma}.  In fact this
transference of cogenerators into submodules works not merely for
indicator quotients but for arbitrary modules with finite downset
hulls, as in Theorem~\ref{t:essential-submodule}.
%end{remark}
The key is the generalization of $\sigma$-neighborhoods to arbitrary
downset-finite modules.

\begin{defn}\label{d:nearby}
Fix a module~$\cM$ over a real polyhedral group~$\cQ$ and a face
$\tau$.
\begin{enumerate}
\item\label{i:nearby}%
A $\sigma$-divisor (Definition~\ref{d:divides}) $y \in \cM$ of a
cogenerator of~$\cM$ along~$\tau$ with nadir~$\sigma$
(Definition~\ref{d:soct}) is \emph{nearby} if $y$ is globally
supported on~$\tau$ (Definition~\ref{d:support}).

\item\label{i:neighborhood-in-M}%
A \emph{$\sigma$-neighborhood in~$\cM$} of a cogenerator $s \in
\ds[\tau]\cM$ is a submodule of~$\cM$ generated by a nearby
$\sigma$-divisor of~$s$.

\item\label{i:neighborhood-in-soc}%
A \emph{neighborhood in~$\soct\cM$} of a homogeneous socle element
$\wt s \in \soct[\sigma]\cM$ is $\soct\cN$ for a
$\sigma$-neigh\-borhood $\cN$ in~$\cM$ of a cogenerator in
$\ds[\tau]\cM$ that maps to~$\wt s$.

\item\label{i:dense}%
A submodule $\cst \subseteq \soct\cM$ is \emph{dense} if for all
$\sigma \supseteq \tau$, every neighborhood of every homogeneous
element of~$\soct[\sigma]\cM$ contains a nonzero element of~$\cst$.
\end{enumerate}
\end{defn}

\begin{lemma}\label{l:nearby}
Every neighborhood in~$\cM$ of every homogeneous element
in~$\soct[\sigma]\cM$ is a $\tau$-coprimary submodule of~$\cM\!$
globally supported on~$\tau$.
\end{lemma}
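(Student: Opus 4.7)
Unpacking Definition~\ref{d:nearby}, the neighborhood $\langle y\rangle$ is the cyclic submodule generated by a nearby $\sigma$-divisor $y \in \cM_\bb$ of a cogenerator $s \in (\ds[\tau]\cM)_\aa$ along~$\tau$ with nadir~$\sigma$: so $\bb \in \aa-\sigma^\circ$, the direct-limit map $\cM_\bb \to (\ds[\tau]\cM)_\aa$ sends $y \mapsto s$, and $y \in \Gamma_{\!\tau}\cM$. My plan is to verify global support directly, deduce essentiality from it via Proposition~\ref{p:support-localizes}, and reduce the remaining half of $\tau$-coprimary to the absence of $\tau$-torsion that the cogenerator property will force.

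\textbf{Global support and essentiality.} Global support is immediate: $\Gamma_{\!\tau}\cM$ is a $\cQ$-submodule of~$\cM$ by Proposition~\ref{p:support-left-exact} and contains $y$, so the cyclic module $\langle y\rangle \subseteq \Gamma_{\!\tau}\cM$. For essentiality in the $\tau$-coprimary condition, I apply Proposition~\ref{p:support-localizes} to $\langle y\rangle$ to obtain $(\Gamma_{\!\tau}\langle y\rangle)_\tau = \Gamma_{\!\tau}(\langle y\rangle_\tau)$; global support collapses the left side to $\langle y\rangle_\tau$, whence $\langle y\rangle_\tau = \Gamma_{\!\tau}(\langle y\rangle_\tau)$ is trivially essential in itself.

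\textbf{Injectivity, and the main obstacle.} The remaining content of $\tau$-coprimary is the injectivity of $\langle y\rangle \to \langle y\rangle_\tau$, equivalent to the absence of $\tau$-torsion in $\langle y\rangle$. Supposing $my \neq 0$ and $tmy = 0$ for some $t \in \tau$, I would push through the direct-limit map sending $y \mapsto s$. The cogenerator property from Definition~\ref{d:soct}.\ref{i:global-cogen} makes $\operatorname{Ann}_{\cQ_+}(s) = \mm_\tau$ exactly, so $tms = 0$ forces $\deg(tm) \in \mm_\tau$ and hence $\deg(m) \in \mm_\tau$ (as $t \in \tau$); applying the annihilator identification once more gives $ms = 0$, i.e., the images $y_{\aa' + \deg(m)}$ vanish for $\aa'$ cofinal in $\aa - \sigma^\circ$. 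The hard part will then be concluding that $y_{\bb + \deg(m)}$ itself vanishes. That is the main obstacle: it combines the $\sigma$-divisor positioning $\bb \in \aa - \sigma^\circ$ with global support of~$y$ (which, via the real-polyhedral analogue of Example~\ref{e:support}, gives uniform persistence bounds along rays outside~$\tau$) and the polyhedral shape rigidity of Proposition~\ref{p:shape}, mirroring the proof of Proposition~\ref{p:sigma-nbd-cogen} in the cyclic-module setting. Without the nearby hypothesis $y$ could in principle persist at $\bb + \deg(m)$ while its cofinal images die, so it is global support that collapses this potential gap and yields the required contradiction.
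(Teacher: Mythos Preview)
Your setup, global-support, and essentiality arguments are correct and track the paper's reasoning closely. The genuine gap is in the injectivity step, which you yourself flag as ``the main obstacle'' and do not resolve.

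The paper's route differs from yours at precisely this point. It does not attempt to deduce $my = 0$ from $m\cdot s = 0$ in the upper boundary. Instead it passes to the localization $\cM_\tau$ and invokes Proposition~\ref{p:local-vs-global}: since $s$ remains a cogenerator after localizing along~$\tau$, the paper concludes in one line that $x$ ``still divides $s$ after localizing,'' whence $x_\tau \neq 0$. Your approach, by contrast, stays in~$\cM$ and tries to pull the vanishing $m\cdot s = 0$ back through the direct-limit map $\cM_{\bb+\deg m} \to (\ds[\tau]\cM)_{\aa+\deg m}$ to force $my = 0$. But that map can have a nontrivial kernel, and nothing in the hypotheses prevents $my$ from lying in it: ``nearby'' imposes no metric closeness on~$\bb$ relative to~$\aa$---it means only that $y$ is globally supported on~$\tau$---so $\bb$ can sit arbitrarily far down $\aa - \sigma^\circ$. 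The tools you invoke (persistence bounds from global support as in Example~\ref{e:support}, shape rigidity from Proposition~\ref{p:shape}) control behavior along rays \emph{outside}~$\tau$; they do not rule out $\tau$-torsion appearing in~$\langle y\rangle$ after a move by some $\deg(m) \in \mm_\tau$, which is exactly what you must exclude.

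You have correctly isolated the crux (the case $\deg(m) \in \mm_\tau$), but the mechanism you sketch does not close it. The paper sidesteps the difficulty by working in~$\cM_\tau$ via Proposition~\ref{p:local-vs-global}, rather than attempting to lift vanishing from $\ds[\tau]\cM$ back to~$\cM$.
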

\begin{proof}
Let $y$ be a nearby $\sigma$-divisor of a cogenerator $s \in
\ds[\tau]\cM$.  Let $x$ be a homogeneous multiple of~$y$.  That~$x$ is
supported on~$\tau$ is automatic from the hypothesis that $y$ is
supported on~$\tau$.  To say that~$\<y\>$ is $\tau$-coprimary means,
given that it is supported on~$\tau$, that $\<y\>$ is a submodule of
its localization along~$\tau$.  But $s$ remains a cogenerator after
localizing along~$\tau$ by Proposition~\ref{p:local-vs-global}, so~$x$
must remain nonzero because it still divides~$s$ after localizing.
\end{proof}

\begin{prop}\label{p:nearby}
Fix a downset-finite module $\cM$ over a real polyhedral group with
faces $\sigma \supseteq \tau$.  Every cogenerator in $\ds[\tau]\cM$
has a $\sigma$-neighborhood in~$\cM$.
\end{prop}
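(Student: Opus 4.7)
Let $\aa \in \cQ$ denote the degree of the cogenerator $s \in \ds[\tau]\cM$. Fix a finite downset hull $\iota : \cM \hookrightarrow E = \bigoplus_{j=1}^k \kk[D_j]$; this exists by the downset-finite hypothesis. Since $\ds[\tau]$ is exact and $\soct$ is left-exact (Proposition~\ref{p:left-exact-tau}), the image $s' = \iota(s) \in \ds[\tau]E$ projects to a nonzero element of $\soct[\sigma]E = \bigoplus_j \soct[\sigma]\kk[D_j]$, the equality because $\soct[\sigma]$ commutes with finite direct sums. Writing $s' = \sum_j s'_j$ with $s'_j \in \ds[\tau]\kk[D_j]$, the set $J^* = \{j : s'_j \text{ projects to a nonzero element of } \soct[\sigma]\kk[D_j]\}$ is therefore nonempty, and for each $j \in J^*$ the degree $\aa$ is a cogenerator of $D_j$ along~$\tau$ with nadir~$\sigma$ in the indicator-quotient sense of Example~\ref{e:soct}.

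Next I would invoke Proposition~\ref{p:sigma-nbd-cogen} individually for each $j \in J^*$ to produce a point $\aa_j \in (\aa - \sigma^\circ) \cap D_j$ whose support face in $D_j$ equals~$\tau$, giving a $\sigma$-neighborhood $\OO_j = (\aa_j + \cQ_+) \cap D_j$ whose indicator module $\kk[\OO_j]$ is $\tau$-coprimary and globally supported on~$\tau$. By Lemma~\ref{l:sigma-neighborhood} the finite intersection of these $\sigma$-neighborhoods contains a $\sigma$-neighborhood $\bb + \cQ_+$ of $\aa$, for some $\bb \in \aa - \sigma^\circ$ preceded by no $\aa_j$; the direct-limit definition of $\ds[\tau]\cM$ then produces at least one representative $y \in \cM_\bb$ with $y \mapsto s$, i.e.\ a $\sigma$-divisor.

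The main obstacle is promoting $y$ to a \emph{nearby} $\sigma$-divisor, i.e.\ forcing its global support to lie in~$\tau$. A generic choice of representative need not be supported on~$\tau$: components of $\iota(y)$ in indices $j \notin J^*$ are unconstrained by the socle argument above, and their supports in $D_j$ might strictly exceed~$\tau$, inflating the support of $y$ itself (which equals the support of $\iota(y)$ in~$E$, by exactness of localization along the inclusion $\cM \hookrightarrow E$). To overcome this I would argue by contradiction, following the template of the proof of Proposition~\ref{p:sigma-nbd-cogen}: if no $\sigma$-divisor of $s$ in any $\cM_{\bb'}$ with $\bb'$ sufficiently close to $\aa$ in $\aa - \sigma^\circ$ is globally supported on~$\tau$, then finiteness of the face poset~$\cfq$ produces a sequence of representatives $\{y_k \in \cM_{\bb_k}\}$ with $\bb_k \to \aa$ whose support faces all coincide with some fixed $\tau' \not\subseteq \tau$ (after passage to a subsequence).

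The sequence $\{y_k\}$ remains nonzero after localization along~$\tau'$, and pushing each $y_k$ into $\cM_{\bb_k + f}$ for $f \in \tau'$ yields nonzero elements whose limiting behavior, tracked through the finite downset hull, forces $\aa + \tau'$ to lie in $\ds D_j$ for some $j \in J^*$. This in turn contradicts that $\aa$ is a closed cogenerator of $\ds[\tau] D_j$ along~$\tau$ (Example~\ref{e:soct} together with Proposition~\ref{p:either-order}, applied to the indicator-quotient component), whose support face in $\ds[\tau] D_j$ is exactly~$\tau$ by Proposition~\ref{p:local-vs-global} and Example~\ref{e:socct}. The passage from the indicator quotient to the module $\cM$ is the crux: it requires checking that support is preserved under $\cM \hookrightarrow E$ and that the nadir condition $s \in \ds[\tau]\cM$ restricts the limiting behavior of the $y_k$ in exactly the way that the indicator-quotient argument of Proposition~\ref{p:sigma-nbd-cogen} restricts the behavior of the points $\aa_k$. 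Once this contradiction is established, some $\bb$ near $\aa$ in $\aa - \sigma^\circ$ admits a representative $y$ with support in~$\tau$; this $y$ generates the desired $\sigma$-neighborhood in~$\cM$ by Definition~\ref{d:nearby}.
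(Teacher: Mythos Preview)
Your overall strategy is sound and your argument can be made to work, but you have overcomplicated matters compared with the paper, and your contradiction step contains a gap.

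The paper does not restrict to your set $J^*$; it produces a $\sigma$-neighborhood $\OO_j$ for \emph{every} index $j$, intersects them via Lemma~\ref{l:sigma-neighborhood} to a single $\OO$, picks $\bb \in \OO \cap (\aa - \sigma^\circ)$ with a $\sigma$-divisor $y \in \cM_\bb$ of~$s$, and concludes that $y$ is nearby because each component $y_j$ lies in $\Gamma_\tau E_j$. The point you are missing is that Proposition~\ref{p:sigma-nbd-cogen} (or rather its proof) applies to every $j$ with $s_j \neq 0$, not merely to $j \in J^*$. The only hypothesis used for the ``globally supported on~$\tau$'' conclusion there is that $\aa \in \Gamma_\tau(\ds D_j)$, and this follows because $s_j$ lies in $\hhom_\cQ\bigl(\kk[\tau],\ds[\tau]E_j\bigr)$---a kernel condition inherited from the corresponding condition on~$s$ via the map $\dt\iota_j$. (The full cogenerator hypothesis $s_j/\tau \neq 0$ is needed only for the $\tau$-coprimary conclusion in Proposition~\ref{p:sigma-nbd-cogen}, which is irrelevant here.) For indices $j$ with $s_j = 0$ there is a one-line argument: either $\sigma \notin \nda^{D_j}$, in which case some $\bb^* \in \aa - \sigma^\circ$ lies outside the downset $D_j$ and $\OO_j = \bb^* + \cQ_+$ forces $y_j = 0$; or $\sigma \in \nda^{D_j}$, in which case $(E_j)_\bb \to (\ds[\tau]E_j)_\aa$ is an isomorphism of one-dimensional spaces for every $\bb \in \aa - \sigma^\circ$, so $y_j \mapsto s_j = 0$ already forces $y_j = 0$.

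Your contradiction route can also be completed, but not as written: the pigeonhole over indices yields a fixed $j$ with $(y_k)_j$ nonzero in $(E_j)_{\tau'}$, hence $s_j \neq 0$, but you have no reason to conclude $j \in J^*$. The contradiction you need---$\aa + \tau' \subseteq \ds D_j$ versus $\aa \in \Gamma_\tau(\ds D_j)$---comes precisely from the weaker property $s_j \in \hhom_\cQ\bigl(\kk[\tau],\ds[\tau]E_j\bigr)$ noted above, not from membership in~$J^*$. Once you recognize this, the detour through the contradiction becomes unnecessary: that same observation lets you produce $\OO_j$ directly for all~$j$, as the paper does.
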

\begin{proof}
Let $s \in \ds[\tau]\cM$ be the cogenerator, and let its degree be
$\deg_\cQ(s) = \aa \in \cQ$.  Choose a downset hull $\cM \into E =
\bigoplus_{j=1}^k E_j$, so $E_j = \kk[D_j]$ for a downset~$D_j$.
Express $s = s_1 + \dots + s_k \in \ds[\tau]E = \bigoplus_{j=1}^k
\ds[\tau]E_j$.  \mbox{Proposition}~\ref{p:sigma-nbd-cogen} produces~a
$\sigma$-neighborhood $\OO_j$ of~$\aa$ in~$\cQ$, for each index~$j$,
such that $\kk[\OO_j \cap D_j]$ is a $\sigma$-neighborhood in~$E_j$ of
the image $\wt s_j \in \soct[\sigma]E_j$.
Lemma~\ref{l:sigma-neighborhood} then yields a single
$\sigma$-neighborhood $\OO = \aa - \vv + \cQ_+$ of~$\aa$ in~$\cQ$ that
lies in the intersection $\bigcap_{j=1}^k \OO_j$.  The cogenerator $s
\in \ds[\tau]$ is a direct limit over $\aa - \sigma^\circ$; since
$\OO$ contains a neighborhood (in the usual topology) of~$\aa$
in~$\sigma^\circ$, some element $y \in \cM$ with degree in~$\OO$ is a
$\sigma$-divisor of~$s$.  This element~$y$ is
nearby~$s$~by~construction.
\end{proof}

The following generalization of Corollary~\ref{c:soc(coprimary)} to
modules with finite downset hulls is again the decisive computation.

\begin{cor}\label{c:soc(coprimary)'}
Fix a downset-finite $\tau$-coprimary $\cQ$-module~$\cM$ globally
supported on a face~$\tau$ of a real polyhedral group~$\cQ$.  Then
$\socp\cM = 0$ unless~$\tau' = \tau$.
\end{cor}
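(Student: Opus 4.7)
The plan is to split the case analysis according to whether $\tau' \subseteq \tau$ or $\tau' \not\subseteq \tau$. For $\tau' \not\subseteq \tau$, the vanishing of~$\socp\cM$ is a direct consequence of global support on~$\tau$ together with Proposition~\ref{p:local-vs-global}. For $\tau' \subsetneq \tau$, the argument plays the $\tau$-coprimary hypothesis off against a $\tau'$-coprimary submodule constructed from a hypothetical cogenerator via Proposition~\ref{p:nearby}; this is the main obstacle, since downset-finiteness enters the argument only here, and it is needed precisely to upgrade Corollary~\ref{c:soc(coprimary)} from subquotients of~$\kk[\cQ]$ to arbitrary downset-finite modules.

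For $\tau' \not\subseteq \tau$, the hypothesis $\cM = \Gamma_{\!\tau}\cM$ says that the natural map $\cM \to \cM_{\tau'}$ is zero, by Definition~\ref{d:support}. Since $\cM_{\tau'}$ is generated as a $\kk[\cQ_+ + \ZZ\tau']$-module by the image of this map, it follows that $\cM_{\tau'} = 0$, and hence the quotient-restriction $\cM/\tau'$ vanishes too. Proposition~\ref{p:local-vs-global} then yields an injection $\socp\cM \into \soc(\cM/\tau') = 0$, forcing $\socp\cM = 0$.

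For $\tau' \subsetneq \tau$, I argue by contradiction: assume $\socp\cM$ has a nonzero homogeneous element, represented by a cogenerator~$s \in \ds[\tau']\cM$ of~$\cM$ along~$\tau'$ with some nadir $\sigma \supseteq \tau'$. Because $\cM$ is downset-finite, Proposition~\ref{p:nearby} produces a $\sigma$-neighborhood of~$s$ in~$\cM$: a submodule $\langle y\rangle \subseteq \cM$ generated by a nearby $\sigma$-divisor~$y$ of~$s$. Lemma~\ref{l:nearby} then shows that $\langle y\rangle$ is $\tau'$-coprimary and globally supported on~$\tau'$. Now exploit the strict containment $\tau' \subsetneq \tau$: since $\tau \not\subseteq \tau'$, global support of $\langle y\rangle$ on~$\tau'$ forces $\langle y\rangle_\tau = 0$, so there is a monomial $\xx^f$ with $f \in \tau$ such that $\xx^f\cdot y = 0$ in~$\cM$. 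But $\tau$-coprimariness of~$\cM$ gives an injection $\cM \into \cM_\tau$, in which $\xx^f$ is a unit; hence multiplication by~$\xx^f$ is injective on~$\cM$, contradicting $\xx^f\cdot y = 0$ with $y \neq 0$.
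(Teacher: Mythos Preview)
Your proof is correct and follows essentially the same approach as the paper's: the case $\tau' \not\subseteq \tau$ is handled via global support and Proposition~\ref{p:local-vs-global}, while the case $\tau' \subsetneq \tau$ is handled by producing a $\tau'$-supported submodule via Proposition~\ref{p:nearby} and Lemma~\ref{l:nearby}, then deriving a contradiction from $\tau$-coprimariness. The paper compresses the second step into the one-line observation that ``no $\tau$-coprimary module has a submodule supported on a face strictly contained in~$\tau$,'' whereas you spell out the mechanism (the injection $\cM \into \cM_\tau$ makes multiplication by $\xx^f$ for $f \in \tau$ injective on~$\cM$), but the underlying argument is the same.
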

\begin{proof}
Proposition~\ref{p:local-vs-global} implies that $\socp\cM = 0$ unless
$\tau' \supseteq \tau$ by definition of global support: localizing
along~$\tau'$ yields $\cM_{\tau'} = 0$ unless $\tau' \subseteq \tau$.
On the other hand, applying Proposition~\ref{p:nearby} to any
cogenerator of~$\cM$ along a face~$\tau'$ implies that $\tau = \tau'$,
because no $\tau$-coprimary module has a submodule supported on a face
strictly contained in~$\tau$.
\end{proof}

\begin{thm}\label{t:essential-submodule}
In a downset-finite module $\cM$ over a real polyhedral~group, $\cM'$
is an essential submodule if and only if $\soct\cM'$ is dense
in~$\soct\cM$ for all faces~$\tau$.
\end{thm}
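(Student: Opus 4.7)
The plan is to prove both directions of the biconditional by exploiting two features established earlier: first, that $\sigma$-neighborhoods in $\cM$ are $\tau$-coprimary submodules globally supported on $\tau$ (Lemma~\ref{l:nearby}); and second, that on such submodules the cogenerator functors along faces other than $\tau$ vanish (Corollary~\ref{c:soc(coprimary)'}), while $\soct$ commutes with intersection in the sense of Corollary~\ref{c:essential-submodule}.\ref{i:cap}. Throughout, I will use that submodules of downset-finite modules are downset-finite, since any downset hull of the ambient module is a downset hull of the submodule.

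For the forward direction, assume $\cM'$ is essential. Fix a face~$\tau$, a homogeneous element $\wt s \in \soct[\sigma]\cM$, and a $\sigma$-neighborhood $\cN = \<y\>$ in~$\cM$ of a cogenerator $s \in \ds[\tau]\cM$ mapping to~$\wt s$. Since $\wt s \neq 0$ forces $s \neq 0$ and hence $y \neq 0$, the submodule $\cN$ is nonzero, so $\cN \cap \cM' \neq 0$ by essentiality. Lemma~\ref{l:nearby} makes $\cN$ a $\tau$-coprimary submodule globally supported on~$\tau$, properties inherited by~$\cN \cap \cM'$ (localization is exact and global support passes to submodules). Then Corollary~\ref{c:essential-submodule}.\ref{i:0} yields $\soc_{\tau'}(\cN \cap \cM') \neq 0$ for some face~$\tau'$, while Corollary~\ref{c:soc(coprimary)'} forces $\tau' = \tau$. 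Applying Corollary~\ref{c:essential-submodule}.\ref{i:cap} inside~$\soct\cM$ (justified by left-exactness of~$\soct$ in Proposition~\ref{p:left-exact-tau}) gives $\soct\cN \cap \soct\cM' = \soct(\cN \cap \cM') \neq 0$, which is the density condition of Definition~\ref{d:nearby}.\ref{i:dense}.

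For the reverse direction, I argue by contrapositive. Suppose $\cM'$ is not essential, so there is a nonzero submodule $\cN \subseteq \cM$ with $\cN \cap \cM' = 0$. Corollary~\ref{c:essential-submodule}.\ref{i:0} produces a face~$\tau$ with $\soct\cN \neq 0$, hence a cogenerator $s \in \ds[\tau]\cN$ with some nadir~$\sigma$. Proposition~\ref{p:nearby} applied to the downset-finite module~$\cN$ yields a nearby $\sigma$-divisor $y \in \cN$ of~$s$, generating a $\sigma$-neighborhood $\<y\>$ in~$\cN$. The key verification is that $\<y\>$ is also a $\sigma$-neighborhood in~$\cM$ of the image of~$s$ in~$\ds[\tau]\cM$: exactness of $\ds[\tau]$ (Lemma~\ref{l:exact-delta}) gives $\ds[\tau]\cN \hookrightarrow \ds[\tau]\cM$, so $y$ still $\sigma$-divides the image cogenerator; and exactness of localization shows that global support on~$\tau$ transfers from~$\cN$ to~$\cM$. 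Then $\<y\> \cap \cM' \subseteq \cN \cap \cM' = 0$, so Corollary~\ref{c:essential-submodule}.\ref{i:cap} gives $\soct\<y\> \cap \soct\cM' = 0$, exhibiting a neighborhood of the image of~$\wt s$ in $\soct\cM$ that contains no nonzero element of~$\soct\cM'$, contradicting density.

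The main obstacle is conceptual rather than technical: relating neighborhoods inside a subobject to neighborhoods inside the ambient module in the reverse direction. The definition of $\sigma$-neighborhood in~$\cM$ refers intrinsically to global support and divisibility of cogenerators as computed in~$\cM$, whereas Proposition~\ref{p:nearby} naturally produces the neighborhood inside the smaller module~$\cN$. Pushing this through requires careful use of the exactness of both $\delta$ and localization along faces, together with the identification of cogenerators of~$\cN$ with their images in~$\ds[\tau]\cM$. Once this translation is made, the argument reduces to the essentiality machinery already in place, with Corollaries~\ref{c:essential-submodule} and~\ref{c:soc(coprimary)'} doing the bookkeeping.
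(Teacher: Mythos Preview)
Your proof is correct and follows essentially the same approach as the paper's: both directions hinge on the trio of Lemma~\ref{l:nearby}, Corollary~\ref{c:soc(coprimary)'}, and Corollary~\ref{c:essential-submodule}, with Proposition~\ref{p:nearby} supplying the $\sigma$-neighborhood in the non-essential case. The only cosmetic differences are that you prove the forward direction directly rather than by contrapositive, and you spell out more explicitly why a $\sigma$-neighborhood in~$\cN$ remains one in~$\cM$---details the paper leaves implicit.
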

\begin{proof}
First assume that $\cM'$ is not an essential submodule, so $\cN \cap
\cM' = 0$ for some nonzero submodule $\cN \subseteq \cM$.  Let $s \in
\ds[\tau]\cN$ be a cogenerator.  Any $\sigma$-neighborhood of~$s$
in~$\cN$, afforded by Proposition~\ref{p:nearby}, has a socle
along~$\tau$ that is a neighborhood of~$\wt s$ in $\soct\cM$ whose
intersection with $\soct\cM'$ is~$0$.  Therefore $\soct\cM'$ is not
dense in~$\soct\cM$.

Now assume that $\soct\cM'$ is not dense in~$\soct\cM$ for
some~$\tau$.  That means $\soct[\sigma]\cM$ for some nadir~$\sigma$
has an element $\wt s$ with a neighborhood $\soct\cN$ that intersects
$\soct\cM'$ in~$0$.  But $\soct\cN \cap \soct\cM' = \soct(\cN \cap
\cM')$ by Corollary~\ref{c:essential-submodule}.\ref{i:cap}.  The
vanishing of this socle along~$\tau$ means that $\socp(\cN \cap \cM')
= 0$ for all faces~$\tau'$ by Corollary~\ref{c:soc(coprimary)'}, and
thus $\cN \cap \cM' = 0$ by
Corollary~\ref{c:essential-submodule}.\ref{i:0}.  Therefore $\cM'$ is
not an essential submodule of~$\cM$.
\end{proof}

\begin{example}\label{e:essential-submodule}
The convex hull of $\0,\ee_1,\ee_2$ in $\RR^2$ but with the first
standard basis vector $\ee_1$ removed defines a subquotient~$\cM$
of~$\kk[\RR^2]$.  It has submodule~$\cM'$ that is the indicator
function for the same triangle but with the entire $x$-axis removed.
All of the cogenerators of both modules occur along the face $\tau =
\{\0\}$ because both modules are globally supported on~$\{\0\}$.
However the ambient module---but not the submodule---has a cogenerator
$y \in \ds[\tau]\cM$ with nadir $\sigma = x$-axis of degree~$\ee_1$:
$$%
\begin{array}{@{}*6{c@{}}c}
\\[-3.8ex]
 \begin{array}{@{}c@{}}\includegraphics[height=20mm]{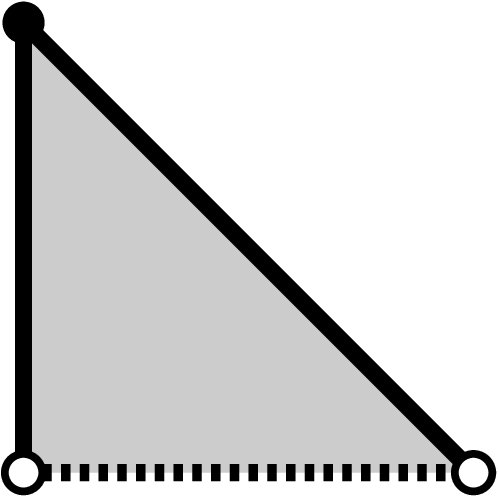}\end{array}
&\quad \subseteq \quad\ \
&\begin{array}{@{}c@{}}\includegraphics[height=20mm]{essential-ambient}\end{array}
&\quad \text{with $\sigma$-neighborhoods} \quad\ \
&\begin{array}{@{}c@{}}\includegraphics[height=20mm]{sigma-nbds}\end{array}
&\quad\ \ \text{of~$\ee_1$.}
\end{array}
$$
A typical $\sigma$-neighborhood of~$y$ in~$\cM$ is shaded in light
blue (in fact, $\cM$ itself is also a $\sigma$-neighborhood of~$y$),
with the corresponding neighborhood in $\soct[\sigma]\cM$ in bold
blue.  Every such neighborhood contains socle elements
in~$\soct[\sigma]\cM'$, so $\cM' \subseteq \cM$ is an essential
submodule by Theorem~\ref{t:essential-submodule}.  Trying to mimic
this example in a finitely generated context is instructive: pixelated
rastering of the horizontal lines either isolates the socle element at
the right-hand endpoint of the bottom edge or prevents it from
existing in the first place by aligning with the right-hand end of the
line above it.
\end{example}

%%%%%%%%%%%%%%%%%%%%%%%%%%%%%%%%%%%%%%%%%%%%%%%%%%%%%%%%%%%%%%%%%%%%%%%%%
\section{Primary decomposition over real polyhedral groups}\label{s:hulls}
%%%%%%%%%%%%%%%%%%%%%%%%%%%%%%%%%%%%%%%%%%%%%%%%%%%%%%%%%%%%%%%%%%%%%%%%%

This section takes the join of Section~\ref{s:decomp}, which develops
primary decomposition as far as possible over arbitrary polyhedral
partially ordered groups, and Section~\ref{s:socle}, which develops
socles over real polyhedral groups.  That is, it investigates how
socles interact with primary decomposition in real polyhedral groups.

%%%%%%%%%%%%%%%%%%%%%%%%%%%%%%%%%%%%%%%%%%%%%%%%%%%%%%%%%%%%%%%%%%%%%%%%%
\subsection{Associated faces}\label{sub:ass}\mbox{}%%%%%%%%%%%%%%%%%%%%%%

\noindent
What makes the theory for real polyhedral groups stronger than for
arbitrary polyhedral partially ordered groups is the following notion
familiar from commutative algebra, except that (as noted in
Section~\ref{s:density}) socle elements do not lie in the original
module.

\begin{defn}\label{d:associated}
A face $\tau$ of a real polyhedral group~$\cQ$ is \emph{associated} to
a downset-finite $\cQ$-module~$\cM$ if $\soct\cM \neq 0$.  If $\cM =
\kk[D]$ for a downset~$D$ then $\tau$ is \emph{associated}~to~$D$.
The set of associated faces of~$\cM$ or~$D$ is denoted by $\ass\cM$
or~$\ass D$.
\end{defn}

\begin{thm}\label{t:coprimary}
A downset-finite module~$\cM$ over a real polyhedral group is
$\tau$-coprimary if and only if $\socp\cM = 0$ whenever $\tau' \neq
\tau$, or equivalently, $\ass(\cM) = \{\tau\}$.
\end{thm}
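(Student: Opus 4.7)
The plan is to prove the three-way equivalence by leveraging Theorem~\ref{t:essential-submodule} (essential submodules are characterized by dense socles), Corollary~\ref{c:soc(coprimary)'} (globally-supported coprimary modules have vanishing socles away from their face), and Corollary~\ref{c:essential-submodule} (a downset-finite module is zero iff all its socles vanish). The equivalence between the middle condition $\socp\cM=0$ for all $\tau'\neq\tau$ and $\ass(\cM)=\{\tau\}$ is immediate for $\cM\neq 0$: Corollary~\ref{c:essential-submodule}.\ref{i:0} forces some $\socp\cM$ to be nonzero, and the only candidate is $\tau'=\tau$. I therefore focus on the iff between $\tau$-coprimary and the socle vanishing statement.

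For the forward direction, the key is that $\Gamma_\tau\cM_\tau$ is tautologically $\tau$-coprimary and globally supported on~$\tau$, because it equals its own $\tau$-localization and its own global $\tau$-support. Corollary~\ref{c:soc(coprimary)'} then gives $\socp(\Gamma_\tau\cM_\tau)=0$ for every $\tau'\neq\tau$. Since $\Gamma_\tau\cM_\tau$ is essential in $\cM_\tau$, Theorem~\ref{t:essential-submodule} declares this zero subfunctor dense in $\socp\cM_\tau$; but Proposition~\ref{p:nearby} furnishes a nonzero neighborhood for every nonzero homogeneous socle element, so density by zero is possible only when $\socp\cM_\tau=0$. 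Left-exactness (Proposition~\ref{p:left-exact-tau}) applied to $\cM\into\cM_\tau$ then passes this vanishing down to~$\cM$.

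For the backward direction I would verify the two defining properties of coprimary separately. First, the kernel $K=\ker(\cM\to\cM_\tau)$ satisfies $K_\tau=0$ by exactness of localization, so Proposition~\ref{p:local-vs-global} yields $\soct K\into\soc(K/\tau)=0$, while $\socp K\subseteq\socp\cM=0$ for $\tau'\neq\tau$ by left-exactness; Corollary~\ref{c:essential-submodule}.\ref{i:0} forces $K=0$. Second, to show $\Gamma_\tau\cM_\tau$ is essential in $\cM_\tau$, Theorem~\ref{t:essential-submodule} requires socle-density for every face~$\tau'$. When $\tau'\not\supseteq\tau$, pick $t\in\tau\minus\tau'$: its image in $\cQ/\ZZ\tau'$ is a nonzero positive element (face closure rules out cancellation) that acts invertibly on $\cM_\tau$ yet would annihilate any socle element, so via Proposition~\ref{p:local-vs-global} we get $\socp\cM_\tau=0$. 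When $\tau'\supsetneq\tau$, apply $\socp$ to $0\to\cM\to\cM_\tau\to\cM_\tau/\cM\to 0$: left-exactness plus $\socp\cM=0$ gives $\socp\cM_\tau\into\socp(\cM_\tau/\cM)$, and $\tau$-torsion of $\cM_\tau/\cM$ combined with $\tau\subseteq\tau'$ makes $(\cM_\tau/\cM)_{\tau'}=0$, killing the socle. When $\tau'=\tau$, density is the substantive step: Proposition~\ref{p:nearby} produces a $\sigma$-neighborhood for every homogeneous element of $\soct\cM_\tau$, and Lemma~\ref{l:nearby} identifies every such neighborhood as a $\tau$-coprimary submodule globally supported on~$\tau$, hence contained in~$\Gamma_\tau\cM_\tau$; its socle lies inside $\soct(\Gamma_\tau\cM_\tau)$ and is nonzero because the originating cogenerator lifts there.

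The main obstacle is the density claim at $\tau'=\tau$, where one needs not merely that $\Gamma_\tau\cM_\tau$ has nonzero socle but that its socle meets every topological neighborhood of every socle element of~$\cM_\tau$. This is exactly what the apparatus of Section~\ref{s:density} was built to provide: Lemma~\ref{l:nearby} turns non-closed socle elements into honest submodules, circumventing the noetherian tactic (unavailable here by Remark~\ref{r:soc-vs-supp}) of simply intersecting with the socle. A secondary subtlety, technically routine but conceptually instructive, is propagating the vanishing of $\socp\cM$ to $\socp\cM_\tau$ when $\tau'\supsetneq\tau$: this cannot be done by direct localization and instead relies on the $\tau$-torsion nature of~$\cM_\tau/\cM$ together with left-exactness of the socle functor.
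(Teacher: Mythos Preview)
Your proof is correct.  The forward direction is essentially the paper's argument, only relocated from~$\cM$ to~$\cM_\tau$: the paper shows directly that $\Gamma_{\!\tau}\cM$ is essential in~$\cM$ (using $\cM \cap \Gamma_{\!\tau}\cM_\tau = \Gamma_{\!\tau}\cM$), then invokes Corollary~\ref{c:soc(coprimary)'} and Theorem~\ref{t:essential-submodule} exactly as you do; your extra step of transferring the vanishing from~$\cM_\tau$ back to~$\cM$ via left-exactness is a harmless detour.

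The backward direction, however, takes a genuinely different route.  The paper argues by contrapositive: if~$\cM$ is not $\tau$-coprimary then either $\ker(\cM \to \cM_\tau)$ is nonzero, or $\cM_\tau$ contains a nonzero submodule~$\cN_\tau$ disjoint from~$\Gamma_{\!\tau}\cM_\tau$, whose intersection $\cN = \cN_\tau \cap \cM$ with~$\cM$ is nonzero; in either case~$\cN$ has a cogenerator along some $\tau' \neq \tau$, and left-exactness embeds it into~$\socp\cM$.  Your direct verification of both defining conditions of coprimary is longer but more constructive, and it makes the density apparatus of Section~\ref{s:density} do visibly more work---in particular your observation that every $\sigma$-neighborhood of a cogenerator of~$\cM_\tau$ already sits inside~$\Gamma_{\!\tau}\cM_\tau$ (via Lemma~\ref{l:nearby}) is a nice way to see essentiality without ever leaving the socle formalism.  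The paper's contrapositive buys brevity; your approach buys an explicit mechanism and avoids having to analyze what ``supported on a face strictly containing~$\tau$'' means for the offending submodule~$\cN_\tau$.
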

\begin{proof}
If $\cM$ is not $\tau$-coprimary then either $\cM \to \cM_\tau$ has
nonzero kernel~$\cN$, or $\cM \to\nolinebreak \cM_\tau$ is injective
while $\cM_\tau$ has a submodule~$\cN_\tau$ supported on a face
strictly containing~$\tau$.  In the latter case, moving up by an
element of~$\tau$ shows that $\cN = \cN_\tau \cap \cM$ is nonzero.  In
either case, any cogenerator of~$\cN$ lies along a face $\tau' \neq
\tau$, so $0 \neq\nolinebreak \socp\cN \subseteq\nolinebreak \socp
\cM$.

On the other hand, if $\cM$ is $\tau$-coprimary then
$\Gamma_{\!\tau}\cM$ is an essential submodule of~$\cM$ because every
nonzero submodule of $\cM \subseteq \cM_\tau$ has nonzero intersection
with $\Gamma_{\!\tau}\cM_\tau$, and hence with $M \cap
\Gamma_{\!\tau}\cM_\tau = \Gamma_{\!\tau}\cM$, inside of the ambient
module~$\cM_\tau$ by Definition~\ref{d:primDecomp'}.\ref{i:coprimary}.
Theorem~\ref{t:essential-submodule} says that
$\socp\Gamma_{\!\tau}\cM$ is dense in $\socp\cM$ for all~$\tau'$.  But
$\socp\Gamma_{\!\tau}\cM = 0$ for $\tau' \neq \tau$ by
Corollary~\ref{c:soc(coprimary)'}, so density implies $\socp\cM = 0$
for $\tau' \neq \tau$.
\end{proof}

%%%%%%%%%%%%%%%%%%%%%%%%%%%%%%%%%%%%%%%%%%%%%%%%%%%%%%%%%%%%%%%%%%%%%%%%%
\subsection{Canonical primary decompositions of downsets}\label{sub:canonical}\mbox{}

\begin{lemma}\label{l:antichain}
A downset $D$ in a real polyhedral group is $\tau$-coprimary if and
only if
$$%
  D
  =
  \bigcup_{\substack{\text{\rm faces }\sigma\\\text{\rm with }\sigma\supseteq\tau}}
  \bigcup_{\raisebox{-.7ex}{$\scriptstyle\ \aa\in\ats$}} \aa + \tau - \qns
$$
for sets $\ats \subseteq \cQ$ such that the image in $\qrt \times
\nabt$ of\/ $\bigcup_{\sigma\supseteq\tau} \ats \times \{\sigma\}$ is
an antichain, and in that case $\ats$ projects to a subset of
$\deg_{\qrt}\soct[\sigma]\kk[D] \subseteq \qrt$ for each~$\sigma$.
\end{lemma}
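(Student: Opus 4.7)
My plan is to leverage Theorem~\ref{t:coprimary}, which characterizes $\tau$-coprimariness by $\socp\kk[D]=0$ for all $\tau'\neq\tau$, together with Theorem~\ref{t:downset=union}, which decomposes every downset as the displayed union indexed by its cogenerators.

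For the forward direction, suppose $D$ is $\tau$-coprimary. Theorem~\ref{t:coprimary} gives $\ass(D) = \{\tau\}$, so in Theorem~\ref{t:downset=union} only the $\tau'=\tau$ summand contributes and the inner union runs over nadirs $\sigma \supseteq \tau$. Choose $\ats$ to consist of one lift in $\cQ$ of each element of $\deg_{\qrt}\soct[\sigma]\kk[D]$ for each such $\sigma$. Corollary~\ref{c:at-most-one} then yields the antichain condition in $\qrt \times \nabt$: socle degrees with fixed nadir are pairwise incomparable in $\qrt$, and nadirs with fixed degree are pairwise incomparable in $\nabt$, so together they exclude every comparability on the image of $\bigcup_\sigma \ats \times \{\sigma\}$.

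For the reverse direction, assume the union decomposition holds with antichain $\ats$. By Theorem~\ref{t:coprimary}, it suffices to show $\socp\kk[D] = 0$ whenever $\tau' \neq \tau$. The key structural observation is that each piece $\aa + \tau - \qns$ is $\RR\tau$-invariant (since $-\cQ_+\supseteq -\tau$ absorbs $+\tau-\tau = \RR\tau$), so the entire union $D$ is $\tau$-invariant. Suppose toward contradiction that $\bb$ is a cogenerator of $D$ along some $\tau' \neq \tau$ with nadir $\sigma'$, and let $\OO \subseteq D$ be a $\sigma'$-neighborhood furnished by Proposition~\ref{p:sigma-nbd-cogen} for which $\kk[\OO]$ is $\tau'$-coprimary and globally supported on $\tau'$. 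When $\tau' \subsetneq \tau$, the $\tau$-invariance of $D$ provides arbitrary $\tau$-translates of $\bb$ inside $D$, contradicting the death of $\bb$ along $\tau \setminus \tau'$ demanded of a $\tau'$-cogenerator. When $\tau' \not\subseteq \tau$, each coprincipal piece is bounded in $\qrt$-directions coming from $\tau'$, so the $\tau'$-invariance of $\OO$ forces apexes $\wt\aa$ of covering pieces to drift upward along the image of $\tau'$ in $\qrt$; the antichain condition on the image of $\ats$ in $\qrt \times \nabt$ prevents such a continuous family. Once $\tau$-coprimariness of $D$ is established, the projection claim $\ats \subseteq \deg_{\qrt}\soct[\sigma]\kk[D]$ follows by contrasting the given antichain with the canonical one produced by the forward direction: any element of $\ats$ outside the socle degrees would see its piece absorbed by pieces from the canonical antichain indexed by strictly larger elements in $\qrt \times \nabt$, contradicting the antichain property of $\ats$.

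The hardest part will be the case $\tau' \not\subseteq \tau$ in the reverse direction, where the antichain hypothesis must rule out a drifting family of piece-apexes conspiring to cover the $\tau'$-invariant neighborhood $\OO$. Rigorously executing this argument requires tracking how the translates of $\bb$ along $\tau'$ interact with the union of coprincipal pieces near $\bb$ and exploiting the bounded behavior of the shape cones $\qns$ in non-$\tau$ directions.
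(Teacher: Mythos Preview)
Your forward direction matches the paper's: Theorem~\ref{t:coprimary} restricts Theorem~\ref{t:downset=union} to the single face~$\tau$, and the antichain property comes from Example~\ref{e:soc-Rn-downset} (which is what underlies your Corollary~\ref{c:at-most-one}).

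The reverse direction has a genuine gap. Your case $\tau' \not\subseteq \tau$ rests on the claim that ``the antichain condition prevents such a continuous family'' of apexes drifting along the image of~$\tau'$ in~$\qrt$, but antichains in $\qrt \times \nabt$ can be uncountable curves whose apexes run to $+\infty$ in a face direction. The hyperbola of Example~\ref{e:hyperbola-GD} (with $\tau = \{\0\}$) does exactly this: its cogenerators form an antichain along the curve, the downset contains entire positive $x$-axis translates, yet $D$ is $\{\0\}$-coprimary and its socle along the $x$-axis vanishes (Example~\ref{e:hyperbola-min}). The subtlety that makes a direct contradiction elusive is precisely the local/global distinction of Remark~\ref{r:soc-vs-supp}, which your argument does not engage. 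Your separate argument for the projection claim is also not sound: absorption of a piece by the \emph{canonical} antichain produces a comparability between an element of~$\ats$ and an element of the canonical antichain, not a comparable pair inside~$\ats$ itself.

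The paper's reverse direction is shorter and avoids the case split. After reducing to $\tau = \{\0\}$ by $\RR\tau$-invariance, it uses the antichain hypothesis via Example~\ref{e:soc-Rn-downset} to show directly that every $\aa \in \ats$ is a genuine cogenerator of~$D$ with nadir~$\sigma$; this already is the projection claim. Proposition~\ref{p:sigma-nbd-cogen} then produces a $\{\0\}$-supported $\sigma$-neighborhood~$\oas$ of each such~$\aa$. Since every $\bb \in D$ precedes some~$\aa$ and hence meets~$\oas$, the union of the~$\oas$ cogenerates~$D$, whence $D = P_{\{\0\}}(D)$ is $\{\0\}$-coprimary by Definition~\ref{d:primDecomp}. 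No hypothetical cogenerator along $\tau' \neq \tau$ is ever confronted directly.
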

\begin{proof}
If $D$ is $\tau$-coprimary, then it is such a union by
Theorem~\ref{t:coprimary} and Theorem~\ref{t:downset=union}, keeping
in mind the antichain consequences of Example~\ref{e:soc-Rn-downset}.

On the other hand, if $D$ is such a union, then first of all it is
stable under translation by~$\RR\tau$ because every member of the
union is.  Working in~$\qrt$, therefore, assume that $\tau = \{\0\}$.
Example~\ref{e:soc-Rn-downset} implies that every element of~$\ats$ is
a cogenerator of~$D$ with nadir~$\sigma$.
Proposition~\ref{p:sigma-nbd-cogen} produces a
$\sigma$-neighborhood~$\oas$ of~$\aa$ in~$D$ that is globally
supported on~$\{\0\}$ (and hence $\{\0\}$-coprimary).  But every
element $\bb \in D$ that precedes~$\aa$ also precedes some element
in~$\oas$; that is, $\bb \preceq \aa \implies (\bb + \cQ_+) \cap \oas
\neq \nothing$.  The union of the $\sigma$-neighborhoods $\oas$ over
all faces~$\sigma$ and elements $\aa \in \ats$ therefore
cogenerates~$D$, so $D$ is coprimary by Definition~\ref{d:primDecomp}.
\end{proof}

\begin{remark}\label{r:antichain}
The antichain condition in Lemma~\ref{l:antichain} is necessary: $\cQ$
itself is the union of all translates of~$-\cQ$, but $\cQ$ is
$\cQ_+$-coprimary, whereas $-\cQ$ is $\{\0\}$-coprimary.  Moreover,
the $\nabt$ component of the antichain condition is important; that
is, the nadirs also come into play.  For a specific example, take $D
\subseteq \RR^2$ to be the union of the open
\end{remark}\vskip -1.6ex
\begin{wrapfigure}{R}{0.15\textwidth}
  \vspace{-1.5ex}
  \includegraphics[height=11ex]{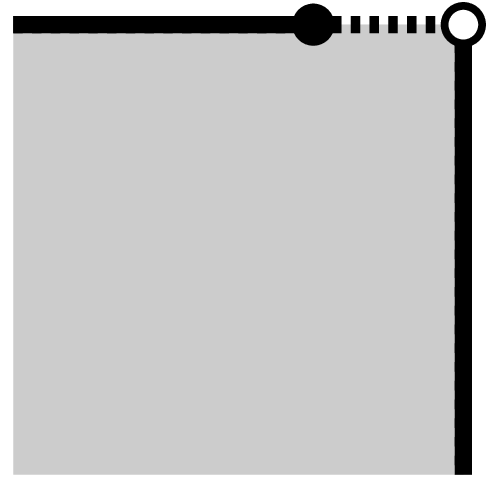}
  \vspace{-2ex}
\end{wrapfigure}
\noindent
negative quadrant cogenerated by~$\0$ and the closed negative quadrant
cogenerated by any point on the strictly negative $x$-axis.  The
$\cQ$-components of the two cogenerators are comparable in~$\cQ$, but
the nadirs are comparable the other way (it is crucial to remember
that the ordering on the nadirs is by~$\fqo$, not~$\cfq$, so smaller
faces are higher in the poset).  Of course, no claim can be made that
$\deg_{\qrt}\soct[\sigma]\kk[D]$ equals the image in~$\qrt$ of~$\ats$;
only the density claim in Theorem~\ref{t:downset=union} can be made.
%end{remark}

\begin{defn}\label{d:minimal-primDecomp-downset}
A primary decomposition (Definition~\ref{d:primDecomp}) $D =
\bigcup_{j=1\!}^k D_j$ of a downset in a real polyhedral group is
\emph{minimal} if
\begin{enumerate}
\item%
each face associated to~$D$ is associated to precisely one of the
downsets~$D_j$, and
\item%
the natural map $\soct\kk[D] \to \soct\bigoplus_{j=1}^k \kk[D_j]$ is
an isomorphism for all~faces~$\tau$.
\end{enumerate}
\end{defn}

\begin{thm}\label{t:hull-D}
Every downset $D$ in a real polyhedral group has a canonical minimal
primary decomposition
$$%
  D
  =
  \bigcup_{\raisebox{-.7ex}{$\scriptstyle\tau\in\ass D$}}
  \bigcup_{\substack{\sigma\supseteq\tau\\\,\aa\in\ats}}
  \aa + \tau - \qns,
$$
where $\ats \subseteq \cQ$ is the set of cogenerators of~$D$
along~$\tau$ with nadir~$\sigma$.
\end{thm}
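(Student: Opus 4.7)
The plan follows Definitions~\ref{d:primDecomp} and~\ref{d:minimal-primDecomp-downset} clause by clause. First I would apply Theorem~\ref{t:downset=union} taking $\ats$ itself to be the \emph{full} set of cogenerators of~$D$ along~$\tau$ with nadir~$\sigma$; the density hypothesis there is vacuous since every cogenerator is trivially contained in the set. Restricting the outer union over all faces~$\tau$ to $\tau \in \ass D$ costs nothing, because $\ats = \nothing$ for every~$\sigma$ when $\tau \notin \ass D$ by Definition~\ref{d:associated}.

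Writing $D_\tau = \bigcup_{\sigma \supseteq \tau,\, \aa \in \ats}(\aa + \tau - \qns)$ for each $\tau \in \ass D$, I would next invoke Lemma~\ref{l:antichain} to conclude that each~$D_\tau$ is $\tau$-coprimary, after verifying the antichain hypothesis on the image of $\bigcup_{\sigma\supseteq\tau} \ats \times \{\sigma\}$ in $\qrt \times \nabt$. This antichain property is precisely the content of Corollary~\ref{c:at-most-one}: socle degrees of $\kk[D]$ with fixed nadir are incomparable in $\qrt$, and nadirs at a fixed socle degree are incomparable in~$\nabt$. Theorem~\ref{t:coprimary} then supplies $\ass(D_\tau) = \{\tau\}$, yielding condition~(1) of Definition~\ref{d:minimal-primDecomp-downset}: each face associated to~$D$ is associated to exactly one component.

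For condition~(2), Example~\ref{e:prim-decomp-downset} converts the union $D = \bigcup_{\tau \in \ass D} D_\tau$ into an injection $\kk[D] \into \bigoplus_{\tau \in \ass D} \kk[D_\tau]$. Applying the left-exact socle functor $\soct$ (Proposition~\ref{p:left-exact-tau}) and killing the summands $\soct\kk[D_{\tau'}]$ for $\tau' \neq \tau$ using the previous paragraph, one obtains an injection $\soct\kk[D] \into \soct\kk[D_\tau]$ for every $\tau \in \ass D$. Using Example~\ref{e:soc-Rn-downset} to identify socle degrees with cogenerators in each nadir, proving this is an isomorphism reduces to the equality $\ats/\RR\tau = \deg_{\qrt}\soct[\sigma]\kk[D_\tau]$ for all $\sigma \supseteq \tau$; the inclusion $\subseteq$ is the last assertion of Lemma~\ref{l:antichain}. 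Canonicity is then automatic, since the sets $\ats$ are defined intrinsically from~$D$ by Definition~\ref{d:soct}.\ref{i:global-cogen}.

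The main obstacle is the reverse containment $\deg_{\qrt}\soct[\sigma]\kk[D_\tau] \subseteq \ats/\RR\tau$, i.e., that $D_\tau$ has no spurious cogenerators beyond those dictated by~$\ats$. A~priori, $D_\tau$ could acquire a cogenerator at a $\sigma$-limit point of~$\ats/\RR\tau$ not lying in~$\ats/\RR\tau$ itself, since $D_\tau \subsetneq D$ in general. The resolution I envision attaches to any putative cogenerator $\bb$ of~$D_\tau$ with nadir~$\sigma$ a $\sigma$-neighborhood in~$D_\tau$ globally supported on~$\tau$ (Proposition~\ref{p:sigma-nbd-cogen}), and then uses the union structure $D_\tau = \bigcup(\aa + \tau - \qns)$, together with the antichain maximality of~$\ats$ in~$\qrt \times \nabt$ and the shape rigidity of Proposition~\ref{p:shape}, to force the class of~$\bb$ in~$\qrt$ to equal~$\wt\aa$ for some~$\aa \in \ats$ with matching nadir. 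This is in the same spirit as the essential-submodule analysis of Theorem~\ref{t:essential-submodule} and is where the real-polyhedral hypothesis contributes decisively.
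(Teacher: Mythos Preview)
Your proposal follows essentially the paper's route: Theorem~\ref{t:downset=union} gives the union, Lemma~\ref{l:antichain} (with the antichain coming from Corollary~\ref{c:at-most-one}) gives $\tau$-coprimariness of the inner union, and Theorem~\ref{t:coprimary} together with Lemma~\ref{l:antichain} handles the socle isomorphisms. Where the paper tersely asserts that last step, you correctly isolate the reverse containment $\deg_{\qrt}\soct[\sigma]\kk[D_\tau] \subseteq \ats/\RR\tau$ as the crux; your $\sigma$-neighborhood plan for it is heavier machinery than the paper cites but is a legitimate way to fill in what the paper leaves implicit.
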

\begin{proof}
That $D$ equals the union is a special case of
Theorem~\ref{t:downset=union}.  The new content is that for
fixed~$\tau$, the inner union is $\tau$-coprimary (\hspace{-.2pt}which
follows from Lemma~\ref{l:antichain})~and~the socle maps are
isomorphisms (which follows from Theorem~\ref{t:coprimary} and
Lemma~\ref{l:antichain}).%
\end{proof}

\begin{remark}\label{r:hull-D}
The $\tau$-primary component in Theorem~\ref{t:hull-D} is canonical
even though the set of cogenerators used to express the inner union
need not be; see Theorem~\ref{t:downset=union}.
\end{remark}

\begin{example}\label{e:not-Gamma_tau}
The canonical $\tau$-primary component in Theorem~\ref{t:hull-D} can
differ from the $\tau$-primary component $P_\tau(D)$ in
Definition~\ref{d:PF}.\ref{i:primary-component} and especially
Corollary~\ref{c:PF}, although it takes dimension at least~$3$ to do
it.  For a specific case, let $\tau$ be the $z$-axis in~$\RR^3$, and
let~$D_1$ be the $\{\0\}$-coprimary (Lemma~\ref{l:antichain}) downset
in~$\RR^3$ cogenerated by the nonnegative points on the surface $z =
1/(x^2 + y^2)$.  Then every point on the positive $z$-axis is
supported on~$\tau$ in~$D_1$.  That would suffice, for the present
purpose, but for the fact that $\tau$ fails to be associated to~$D_1$.
The remedy is to force $\tau$ to be associated by taking the union
of~$D_1$ with any downset $D_2 = \aa + \tau - \RR^3_+$ with $\aa =
(x,y,z)$ satisfying $xy < 0$, the point being for $D_2 \not\subseteq
D_1$ to be $\tau$-coprimary but not contain the $z$-axis itself.  The
canonical $\tau$-primary component of $D = D_1 \cup D_2$ is just $D_2$
itself, but by construction $\Gamma_{\!\tau} D$ also contains the
positive $z$-axis.  (Note: $D = D_1 \cup D_2$ is not the canonical
primary decomposition of~$D$ because $D_2$ swallows an open set of
cogenerators of~$D_1$, so these cogenerators must be omitted from the
$\{\0\}$-primary component to induce an isomorphism on socles.)  The
reason why three dimensions are needed is that $\tau$ must have
positive dimension, because elements supported on~$\tau$ must be
cloaked by those supported on a smaller face; but $\tau$ must have
codimension more than~$1$, because there must be enough room
modulo~$\RR\tau$ to have incomparable~elements.
\end{example}

%%%%%%%%%%%%%%%%%%%%%%%%%%%%%%%%%%%%%%%%%%%%%%%%%%%%%%%%%%%%%%%%%%%%%%%%%
\subsection{Minimal downset hulls of modules}\label{sub:minimal-hull}%%%%

\begin{defn}\label{d:minimal-hull}
A downset hull $\cM \to E = \bigoplus_{j=1}^k E_j$
(Definition~\ref{d:downset-hull}) of a module over a real polyhedral
group is
\begin{enumerate}
\item%
\emph{coprimary} if $E_j = \kk[D_j]$ is coprimary for all~$j$, so
$D_j$ is a coprimary downset,~and
\item%
\emph{minimal} if the induced map $\soct\cM \to \soct E$ is an
isomorphism for all faces~$\tau$.
\end{enumerate}
\end{defn}

\begin{thm}\label{t:hull-M}
Every downset-finite module~$\cM$ over a real polyhedral group admits
a minimal coprimary downset hull.
\end{thm}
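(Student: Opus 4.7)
The plan is to construct the minimal coprimary downset hull in two stages: first produce a (non-minimal) coprimary downset hull via Theorem~\ref{t:hull-D}, then assemble a minimal one by reading off socle data of~$\cM$.

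For the first stage, since $\cM$ is downset-finite, fix a finite downset hull $\cM \hookrightarrow \bigoplus_{j=1}^k \kk[D_j]$. Apply Theorem~\ref{t:hull-D} to each $D_j$ to obtain its canonical minimal primary decomposition $D_j = \bigcup_\tau D_j^\tau$, with each $D_j^\tau$ being $\tau$-coprimary. By Example~\ref{e:prim-decomp-downset}, this induces an injection $\kk[D_j] \hookrightarrow \bigoplus_\tau \kk[D_j^\tau]$. Composing yields a coprimary downset hull
$$
  \cM \hookrightarrow E := \bigoplus_{j,\tau} \kk[D_j^\tau].
$$

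For the minimality stage, construct a candidate minimal hull $F$ directly from $\soct\cM$. For each associated face~$\tau$, each nadir $\sigma \supseteq \tau$, and each element of a homogeneous $\kk$-basis of $\soct[\sigma]\cM$ at some degree $\wt\aa \in \qrt$ (possible by Remark~\ref{r:soc-as-k-vect}), include one coprincipal summand $\kk[\aa + \tau - \qns]$ for a chosen lift $\aa \in \cQ$ of~$\wt\aa$. Let $F$ be the direct sum of all these coprincipal indicator modules. Each summand is coprimary by Lemma~\ref{l:antichain}, and $\soct F$ is visibly isomorphic to $\soct\cM$: combining Example~\ref{e:soct-k[tau]} and Example~\ref{e:soc-Rn-downset} shows that each coprincipal summand contributes exactly one socle element in the prescribed degree with the prescribed nadir, matching the chosen basis.

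The main obstacle is constructing an injection $\cM \hookrightarrow F$ realizing this isomorphism on socles. Since socle elements live in auxiliary modules $\dt\cM$ rather than in~$\cM$ itself, there is no direct way to ``send a cogenerator to the generator'' of $\kk[\aa + \tau - \qns]$. The approach is to use the Stage~1 hull $E$ as scaffolding. The injection $\soct\cM \hookrightarrow \soct E$ (Theorem~\ref{t:injection}.\ref{i:phi=>soct}) realizes each chosen cogenerator inside some summand $\kk[D_j^\tau]$ of~$E$, and Propositions~\ref{p:sigma-nbd-cogen} and~\ref{p:nearby} identify a coprincipal sub-indicator-module of that summand---congruent to $\kk[\aa + \tau - \qns]$---into which the corresponding submodule of~$\cM$ maps. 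Assembling these local maps over all chosen cogenerators yields $\cM \to F$. Injectivity then follows from Theorem~\ref{t:injection}.\ref{i:soct=>phi}, since the induced map on every $\soct$ is an isomorphism (injectivity carried over from the Stage~1 hull via left-exactness of $\soct$, surjectivity by construction of~$F$).

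The delicate point I expect to be the true obstacle is verifying that the assembled map $\cM \to F$ is well-defined independently of the auxiliary choices (lifts $\aa$, nearby $\sigma$-divisors, scaffolding summands of~$E$) and genuinely realizes the chosen socle isomorphism rather than merely a subfunctor inclusion. This is where the essential-submodule machinery of Section~\ref{s:density}---in particular Theorem~\ref{t:essential-submodule} and Corollary~\ref{c:essential-submodule}---enters: density of the image of $\soct\cM$ in $\soct F$ combined with the isomorphism on socle bases forces the map to have the correct image and kernel, completing the construction.
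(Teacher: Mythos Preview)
Your Stage~1 matches the paper. Your Stage~2 diverges significantly and has genuine gaps.

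The paper does not build $F$ as a direct sum of coprincipal modules indexed by a basis of $\soct\cM$. That direct sum would typically have uncountably many summands---see Remark~\ref{r:analogue-of-injres}, whose whole point is that cogenerators must be gathered into finitely many bunches. Instead, after reducing to the $\tau$-coprimary case, the paper proceeds by induction on the number~$k$ of summands in the coprimary hull. The base case $k=1$ takes the \emph{union} $D' \subseteq D$ of all coprincipal downsets $\aa + \tau - \qns$ coming from $\soct\cM$, yielding a single downset module~$\kk[D']$; the hull map is the composite $\cM \hookrightarrow \kk[D] \twoheadrightarrow \kk[D']$, and Lemma~\ref{l:antichain} plus Theorem~\ref{t:downset=union} give the socle isomorphism. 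The inductive step is a filtration: with $\cM' = \ker(\cM \to E_k)$ and $\cM'' = \cM/\cM'$, the crux is the socle splitting $\soct\cM \cong \soct\cM' \oplus \soct\cM''$ (see Remark~\ref{r:filtration}); the minimal hull is assembled from those of~$\cM'$ and~$\cM''$, each factor map $\cM \to E_j'$ arising as a quotient of the original $\cM \to E_j$.

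Your proposed construction of $\cM \to F$ also conflates submodules with quotients. Propositions~\ref{p:sigma-nbd-cogen} and~\ref{p:nearby} produce $\sigma$-neighborhoods, which are \emph{submodules} of~$\cM$ or of~$\kk[D_j^\tau]$; they do not yield maps from~$\cM$ to coprincipal downset modules. A map $\cM \to \kk[\aa + \tau - \qns]$ must instead come from the quotient $\kk[D_j^\tau] \twoheadrightarrow \kk[\aa + \tau - \qns]$. But then the question of which summand~$E_j$ to route each socle basis element through---so that the assembled socle map is an isomorphism and not degenerate when several independent socle elements share a degree and nadir---is exactly what the paper's filtration resolves. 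Your appeal to Theorem~\ref{t:essential-submodule} does not fill this gap: that theorem characterizes essential submodules of a fixed module, not the construction of maps out of~$\cM$.
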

\begin{proof}
Suppose that $\cM \to \bigoplus_{j=1}^k E_j$ is any finite downset
hull.  Replacing each $E_j$ by a primary decomposition of~$E_j$, using
Theorem~\ref{t:hull-D}, assume that this downset hull is coprimary.
Let $E^\tau$ be the direct sum of the $\tau$-coprimary summands
of~$E$.  Then $\soct E = \soct E^\tau$ by Theorem~\ref{t:coprimary}.
Replacing~$\cM$ with its image in~$E^\tau$, it therefore suffices to
treat the case where $\cM$ is $\tau$-coprimary and $E = E^\tau$.

The proof is by induction on the number~$k$ of summands of~$E$.  If $k
= 1$ then $M \subseteq \kk[D]$ is a submodule of a $\tau$-coprimary
downset module.  Let $D'$ be the union of the coprincipal downsets
$\aa + \tau - \qns$ over all $\aa \in \cQ$ and faces~$\sigma$ such
that the projection of~$\aa$ to~$\qrt$ is the degree of an element in
the image of the natural map $\soct[\sigma]\cM \to
\soct[\sigma]\kk[D]$.  Since that natural map is injective for
all~$\sigma$ by Theorem~\ref{t:injection}.\ref{i:phi=>soct}, it is a
consequence of Theorem~\ref{t:downset=union} that there is a
surjection $\kk[D] \onto \kk[D']$.  But Lemma~\ref{l:antichain}
implies that $\soct\kk[D'] \subseteq \soct\cM$, so $\soct\cM \to
\soct\kk[D']$ is an isomorphism.

When $k > 1$, let $\cM' = \ker(\cM \to E_k)$.  Then $\cM' \into
\bigoplus_{j=1}^{k-1} E_j$, so it has a minimal coprimary hull $\cM'
\into E'$ by induction.  The $k = 1$ case proves that $\cM'' =
\cM/\cM'$ has a minimal coprimary hull $\cM'' \into E''$.  The exact
sequence $0 \to \cM' \to \cM \to \cM'' \to 0$ yields an exact sequence
$$%
  0 \to \soct\cM' \to \soct\cM \to \soct\cM''
$$
which, if exact, automatically splits by Remark~\ref{r:soc-as-k-vect}.
Hence it suffices to prove that $\soct\cM \to \soct\cM''$ is
surjective.  For that, note that the image of~$\soct\cM$ in~$\soct E$
surjects onto its projection to~$\soct E_k$, but the image of
$\soct\cM \to \soct E_k$ is the image of the injection $\soct\cM''
\into \soct$ by construction.
\end{proof}

\begin{remark}\label{r:filtration}
The proof of the theorem shows more than the statement: any coprimary
downset hull $\cM \into E = E_1 \oplus \dots \oplus E_k$ of a
coprimary module~$\cM$ induces a filtration $0 = \cM_0 \subset \cM_1
\subset \dots \subset \cM_k = \cM$ such that $\soct\cM =
\bigoplus_{j=1}^k \soct(\cM_j/\cM_{j-1})$, and furthermore $\cM \into
E$ can be ``minimalized'', in the sense that a minimal hull~$E'$ can
be constructed from~$E$ so that $\soct\cM \cong \soct E'$ decomposes
as direct sum of factors $\soct(\cM_j/\cM_{j-1}) \cong \soct E'_j$.
Reordering the summands~$E_j$ yields another filtration of~$\cM$ with
the same property.  That $\soct\cM$ breaks up as a direct sum in so
many ways should not be shocking, in view of
Remark~\ref{r:soc-as-k-vect}.  The main content is that all of the
socle elements of $\cM/\cM_{k-1}$ are inherited from~$\cM$,
essentially because $\cM_{k-1}$ is the kernel of a homomorphism to a
direct sum of downset modules, so $\cM_{k-1}$ has no generators that
are not inherited from~$\cM$.
\end{remark}

\begin{remark}\label{r:analogue-of-injres}
Theorem~\ref{t:hull-M} is the analogue of existence of minimal
injective hulls for finitely generated modules over noetherian rings
\cite[Section~3.2]{bruns-herzog} (see also
Proposition~\ref{p:determined}).  The difference here is that a direct
sum---as opposed to direct product---can only be attained by gathering
cogenerators into finitely many~bunches.
\end{remark}

\begin{example}\label{e:downset-indecomposable}
The indicator module for the disjoint union of the strictly negative
axes in the plane injects in an appropriate way into one downset
module (the punctured negative quadrant) or a direct sum of two
(negative quadrants missing one boundary axis each).  Thus the
``required number'' of downsets for a downset hull of a given module
is not necessarily obvious and might not be a functorial invariant.
This may sound bad, but it should not be unexpected: the quotient by
an artinian monomial ideal in an ordinary polynomial ring can have
socle of arbitrary finite dimension, so the number of coprincipal
downsets required is well defined, but if downsets that are not
necessarily coprincipal are desired, then any number between $1$ and
the socle dimension would suffice.  This phenomenon is related to
Remark~\ref{r:soc-as-k-vect}: breaking the socle of a downset into two
reasonable pieces expresses the original downset as a union of the two
downsets cogenerated by the pieces.
\end{example}

%%%%%%%%%%%%%%%%%%%%%%%%%%%%%%%%%%%%%%%%%%%%%%%%%%%%%%%%%%%%%%%%%%%%%%%%%
\subsection{Minimal primary decomposition of modules}\label{sub:minimal-primary}

\begin{defn}\label{d:minimal-primary}
A primary decomposition $\cM \into \bigoplus_{i=1}^r \cM/\cM_i$
(Definition~\ref{d:primDecomp'}.\ref{i:primdecomp}) of a module over a
real polyhedral group is \emph{minimal} if $\soct\cM \to
\soct\bigoplus_{i=1}^r \cM/\cM_i$ is an isomorphism for all
faces~$\tau$.
\end{defn}

\begin{defn}\label{d:primary-component}
Given a coprimary downset hull $\cM \into E$ of an arbitrary
downset-finite module~$\cM$ over a real polyhedral group, write
$E^\tau$ for the direct sum of all summands of~$E$ that are
$\tau$-coprimary.  The kernel $\cM^\tau$ of the composite homomorphism
$\cM \to E \to E^\tau$ is the \emph{$\tau$-primary component of~$0$}
for this particular downset hull of~$\cM$.
\end{defn}

\begin{thm}\label{t:minimal-primary}
Every downset-finite module~$\cM$ over a real polyhedral group admits
a minimal primary decomposition.  In fact, if $\cM \into E$ is a
coprimary downset hull then $\cM \into \bigoplus_\tau \cM/\cM^\tau$ is
a primary decomposition that is minimal if $\cM \into E$ is minimal.
\end{thm}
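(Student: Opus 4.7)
The plan is to leverage Theorem~\ref{t:hull-M} to produce a coprimary downset hull and then assemble the primary decomposition from it. First I would invoke Theorem~\ref{t:hull-M} to obtain a coprimary downset hull $\cM \into E = \bigoplus_{j=1}^k E_j$ (minimal if desired for the second claim), each $E_j$ a coprimary downset module. Since $\cQ$ has only finitely many faces, I would group the summands by associated face: for each face~$\tau$, let $E^\tau$ denote the direct sum of those~$E_j$ that are $\tau$-coprimary, and set $\cM^\tau = \ker(\cM \to E^\tau)$. The quotient $\cM/\cM^\tau$ embeds as a submodule of the $\tau$-coprimary module~$E^\tau$, so $\ass(\cM/\cM^\tau) \subseteq \{\tau\}$; discarding zero quotients, $\cM/\cM^\tau$ is $\tau$-coprimary by Theorem~\ref{t:coprimary}.

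Next I would verify that the induced map $\cM \to \bigoplus_\tau \cM/\cM^\tau$ is injective, so that it qualifies as a primary decomposition. Its kernel equals $\bigcap_\tau \cM^\tau = \ker\bigl(\cM \to \bigoplus_\tau E^\tau\bigr) = \ker(\cM \into E) = 0$. This settles the first claim, that any coprimary downset hull yields a primary decomposition.

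For minimality when $\cM \into E$ is a minimal coprimary hull, I would fix a face~$\tau$ and factor $\cM \to E^\tau$ as $\cM \onto \cM/\cM^\tau \into E^\tau$, yielding after applying $\soct$ the composite $\soct\cM \to \soct(\cM/\cM^\tau) \to \soct E^\tau$. The second arrow is injective by left-exactness of $\soct$ (Proposition~\ref{p:left-exact-tau}, or equivalently Theorem~\ref{t:injection}.\ref{i:phi=>soct}). The composite itself is an isomorphism: minimality of the hull supplies $\soct\cM \cong \soct E$, and Theorem~\ref{t:coprimary} kills $\soct E^{\tau'}$ for every $\tau' \neq \tau$, so $\soct E = \soct E^\tau$. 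An elementary diagram chase --- a composite $A \to B \to C$ that is an isomorphism with the second map injective forces both maps to be isomorphisms --- then yields $\soct\cM \cong \soct(\cM/\cM^\tau)$. Summing over~$\tau$ and invoking Theorem~\ref{t:coprimary} again to annihilate $\soct(\cM/\cM^{\tau'})$ for $\tau' \neq \tau$, the primary decomposition map induces an isomorphism on $\soct$ for every face~$\tau$, which is precisely the minimality criterion of Definition~\ref{d:minimal-primary}.

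I anticipate no substantial obstacle: the proof is a face-by-face reorganization of the coprimary hull furnished by Theorem~\ref{t:hull-M}, with Theorem~\ref{t:coprimary} (the characterization of coprimary modules by their associated face) doing the load-bearing work of isolating each face's contribution and vanishing on the cross terms. The only conceptual point worth double-checking is the passage from ``the composite $\soct\cM \to \soct E^\tau$ is an isomorphism'' to ``both factors are,'' which relies precisely on the left-exactness of~$\soct$ supplied by the essential-submodule theory of Section~\ref{s:essential}.
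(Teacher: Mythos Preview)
Your proposal is correct and follows essentially the same route as the paper's proof: group the coprimary hull by face into~$E^\tau$, observe that $\cM/\cM^\tau \hookrightarrow E^\tau$ is $\tau$-coprimary and that injectivity of $\cM \to \bigoplus_\tau \cM/\cM^\tau$ follows from that of $\cM \into E$, then for minimality factor $\soct\cM \to \soct E^\tau$ through $\soct(\cM/\cM^\tau)$ and use left-exactness plus Theorem~\ref{t:coprimary} to force both factors to be isomorphisms. The paper's argument is the same, only more tersely stated.
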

\begin{proof}
Fix a coprimary downset hull $\cM \into E$.  The quotient
$\cM/\cM^\tau$ is $\tau$-coprimary since it is a submodule of the
coprimary module $E^\tau$ by construction, and $\cM \to \bigoplus_\tau
\cM/\cM^\tau$ is injective because the injection $\cM \into
\bigoplus_\tau E^\tau = E$ factors through $\bigoplus_\tau
\cM/\cM^\tau \subseteq E$.

\pagebreak
Theorem~\ref{t:coprimary} implies that $\socp(\cM/\cM^\tau) = 0$
unless $\tau = \tau'$, regardless of whether $\cM \into E$ is minimal.
And if the hull is minimal, then $\soct\cM \to \soct E^\tau$ is an
isomorphism (by hypothesis) that factors through the injection
$\soct(\cM/\cM^\tau) \into \soct E^\tau$ (by construction), forcing
$\soct\cM \cong \soct(\cM/\cM^\tau)$ to be an isomorphism for
all~$\tau$.
\end{proof}

\begin{cor}\label{c:hull-D}
Every indicator subquotient\/ $\kk[S]$ of\/~$\kk[\cQ]$ over a real
polyhedral group $\cQ$ has a canonical minimal primary decomposition.
(That $\kk[S]$ is a subquotient means that $S \subseteq \cQ$ is the
intersection of an upset and a downset in~$\cQ$.)
\end{cor}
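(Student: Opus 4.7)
The plan is to realize $\kk[S]$ as a submodule of the downset module $\kk[D]$ and transport the canonical minimal primary decomposition of~$D$ from Theorem~\ref{t:hull-D} down to~$\kk[S]$. Writing $S = U \cap D$ for an upset~$U$ and a downset~$D$ (canonically, one may take $D = S - \cQ_+$ and $U = S + \cQ_+$, which reconstruct~$S$ since $S$ is interval-closed), the module $\kk[S]$ is the image of $\kk[U] \into \kk[\cQ] \onto \kk[D]$ and is thus a submodule of~$\kk[D]$.

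First I would apply Theorem~\ref{t:hull-D} to get the canonical equality $D = \bigcup_{\tau \in \ass D} P_\tau(D)$ and the associated minimal primary decomposition $\kk[D] \into \bigoplus_\tau \kk[P_\tau(D)]$. Composing with $\kk[S] \into \kk[D]$ produces a homomorphism whose $\tau$-component has image $\kk[U \cap P_\tau(D)] = \kk[S \cap P_\tau(D)]$ (these agree because $P_\tau(D) \subseteq D$, a crucial observation making the description depend only on~$S$). Discarding zero components yields an injection $\kk[S] \into \bigoplus_\tau \kk[S \cap P_\tau(D)]$, which is injective because $\bigcup_\tau P_\tau(D) = D$. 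Each nonzero summand $\kk[S \cap P_\tau(D)]$ is $\tau$-coprimary, being a nonzero submodule of the $\tau$-coprimary module $\kk[P_\tau(D)]$ (associated faces only shrink on submodules). Declare these images to be the canonical $\tau$-primary components of~$\kk[S]$; this delivers a primary decomposition in the sense of Definition~\ref{d:primDecomp'}.\ref{i:primdecomp}.

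The main step—and the central obstacle—is verifying minimality in the sense of Definition~\ref{d:minimal-primary}: the induced map $\soct\kk[S] \to \bigoplus_{\tau'} \soct\kk[S \cap P_{\tau'}(D)]$ must be an isomorphism for each face~$\tau$. Theorem~\ref{t:coprimary} collapses the right-hand side to the single summand $\soct\kk[S \cap P_\tau(D)]$, and injectivity is automatic from left-exactness of~$\soct$ (Proposition~\ref{p:left-exact-tau}). For surjectivity I would work inside the commutative square whose rows are the inclusions $\kk[S] \into \kk[D]$ and $\kk[S \cap P_\tau(D)] \into \kk[P_\tau(D)]$ and whose vertical arrows are the $\tau$-primary projections; applying $\soct$, the right vertical becomes an isomorphism by the minimality half of Theorem~\ref{t:hull-D}, while the horizontal arrows induce injections on socles by the essentiality of socles in Theorem~\ref{t:injection}. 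A diagram chase will identify the image of $\soct\kk[S]$ in $\soct\kk[P_\tau(D)]$ with $\soct\kk[S \cap P_\tau(D)]$ provided every cogenerator of $\kk[S \cap P_\tau(D)]$ along~$\tau$ lifts to a cogenerator of~$\kk[S]$ along~$\tau$. This lift is supplied by Theorem~\ref{t:divides} applied inside $\kk[S]$ together with the density machinery of Section~\ref{s:density}: every $\sigma$-neighborhood of such a cogenerator contains a nearby divisor already present in~$\kk[S]$, because the cogenerator arose from an element in the image of~$\kk[U]$.

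Canonicity of the decomposition is then inherited from the canonicity of $P_\tau(D)$ (Theorem~\ref{t:hull-D}) and from the fact that $\kk[S \cap P_\tau(D)]$ depends only on~$S$ and not on the particular choice of $U$ and~$D$ with $S = U \cap D$. Alternatively, the argument can be recast as a direct application of Theorem~\ref{t:minimal-primary}: the construction produces a coprimary downset hull of $\kk[S]$, and the socle identification just established is exactly what certifies it as minimal, so the induced primary decomposition is also minimal and canonically determined by~$S$.
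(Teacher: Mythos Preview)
Your approach is the paper's: embed $\kk[S]$ in $\kk[D]$ for $D = S - \cQ_+$, transport the canonical coprimary hull of Theorem~\ref{t:hull-D} through this inclusion, and invoke Theorem~\ref{t:minimal-primary}. Your final paragraph \emph{is} the paper's three-sentence proof.

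Where you diverge is in attempting to justify the minimality that the paper simply asserts. Your appeal to ``Theorem~\ref{t:divides} applied inside~$\kk[S]$ together with the density machinery of Section~\ref{s:density}'' does not close the gap as written: Theorem~\ref{t:divides} is stated for downsets, not for the interval~$S$, and Theorem~\ref{t:essential-submodule} delivers only \emph{density} of $\soct\kk[S]$ in $\soct\kk[D]$ from the (easy) essentiality of $\kk[S] \subseteq \kk[D]$---density is strictly weaker than the equality $\soct\kk[S] = \soct\kk[D]$ required to make the composite hull minimal. That equality does hold, and a direct argument (which the paper also omits) runs as follows. A closed cogenerator $\aa$ of~$D$ along~$\tau$ satisfies $\aa \preceq s$ for some $s \in S$, since $D = S - \cQ_+$; then $s \in (\aa + \cQ_+) \cap D = \aa + \tau$, so $s$ lies in the coset $\aa + \RR\tau$. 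Because $U$ is an upset containing~$s$, one has $s + \tau \subseteq U \cap (\aa + \tau) \subseteq S$, while $(s + \mm_\tau) \cap S \subseteq (\aa + \mm_\tau) \cap D = \nothing$; hence $s$ is a closed cogenerator of~$\kk[S]$ along~$\tau$ in the same $\qrt$-degree as~$\aa$. For a general nadir~$\sigma$, apply the exact functor~$\delta^\sigma$ first and repeat the argument with $\delta^\sigma D$ in place of~$D$.
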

\begin{proof}
Let $D$ be the downset cogenerated by~$S$.  Let $\kk[D] \into E$ be
the minimal coprimary downset hull of~$\kk[D]$ resulting from the
canonical minimal primary decomposition of~$D$ in
Theorem~\ref{t:hull-D}.  Then the composite map $\kk[S] \into \kk[D]
\into E$ is a minimal downset hull of~$\kk[S]$ to which 
Theorem~\ref{t:minimal-primary} applies.
\end{proof}

%%%%%%%%%%%%%%%%%%%%%%%%%%%%%%%%%%%%%%%%%%%%%%%%%%%%%%%%%%%%%%%%%%%%%%%%%
\section{Socles and essentiality over discrete polyhedral groups}\label{s:discrete}
%%%%%%%%%%%%%%%%%%%%%%%%%%%%%%%%%%%%%%%%%%%%%%%%%%%%%%%%%%%%%%%%%%%%%%%%%

The theory developed for real polyhedral groups in
Sections~\ref{s:socle}--\ref{s:gen-functors} applies as well to
discrete polyhedral groups (Definition~\ref{d:discrete-polyhedral}).
The theory is easier in the discrete case, in the sense that only
closed cogenerator functors are needed, and none of the density
considerations in Sections~\ref{s:minimality}--\ref{s:density} are
relevant.  The deduction of the discrete case is elementary, but it is
worthwhile to record the results, both because they are useful and for
comparison with the real polyhedral case.

For the analogue of Theorem~\ref{t:injection}, the notion of
divisibility in Definition~\ref{d:divides} makes sense, when $\sigma =
\{\0\}$, verbatim in the discrete polyhedral setting: an element $y
\in \cM_\bb$ \emph{divides} $x \in \cM_\aa$ if $\bb \in \aa - \cQ_+$
and $y \mapsto x$ under the natural map $\cM_\bb \to \cM_\aa$.

\begin{thm}[Discrete essentiality of socles]\label{t:discrete-injection}
Fix a homomorphism $\phi: \cM \to \cN$ of modules over a discrete
polyhedral group~$\cQ$.
\begin{enumerate}
\item\label{i:discrete-phi=>soct}%
If $\phi$ is injective then $\socct\phi: \socct\cM \to \socct\cN$ is
injective for all faces~$\tau$ of~$\cQ_+$.
\item\label{i:discrete-soct=>phi}%
If $\socct\phi: \socct\cM \to \socct\cN$ is injective for all
faces~$\tau$ of~$\cQ_+\!$ and $\cM$ is downset-finite, then $\phi$
is~injective.
\end{enumerate}
In fact, each homogeneous element of~$\cM\!$ divides some closed
cogenerator of~$\cM$.
\end{thm}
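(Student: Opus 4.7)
The plan is to mirror the proof of Theorem~\ref{t:injection}, streamlined by two simplifications that the discrete polyhedral setting affords. First, closed cogenerator functors $\socct$ suffice: upper boundaries, nadirs, and density phenomena all disappear, so every cogenerator is an honest homogeneous element of $\cM$ rather than an element of an auxiliary module. Second, because $\cQ_+$ is finitely generated (Definition~\ref{d:discrete-polyhedral}), the monoid algebra $\kk[\cQ_+]$ is noetherian; its monomial ideals satisfy the ascending chain condition, reducing the cogenerator-producing step---the analogue of Lemma~\ref{l:cogenerator} and Theorem~\ref{t:divides}---to a standard associated-prime argument that bypasses all geometric closure considerations from the real polyhedral case.

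Part~(\ref{i:discrete-phi=>soct}) is immediate from left-exactness of $\socct$ (Proposition~\ref{p:left-exact-tau-closed}). Part~(\ref{i:discrete-soct=>phi}) reduces to the divisibility claim in the standard way: if a nonzero $y \in \cM_\bb$ divides a closed cogenerator $s \in \cM$ along a face~$\tau$, so $s$ is the image of $y$ under the structure map $\cM_\bb \to \cM_\aa$ for some $\aa \succeq \bb$, then $\phi(y) = 0$ forces $\phi(s) = 0$ in~$\cN$, so the image $\wt s \in \socct\cM$---which is nonzero by Definition~\ref{d:socct}.\ref{i:global-closed-cogen}---would map to $0 \in \socct\cN$, contradicting injectivity of~$\socct\phi$.

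For the divisibility claim, fix a nonzero homogeneous $y \in \cM_\bb$. Since $\cM$ is $\cQ$-graded and $y$ is homogeneous, distinct monomials in $\kk[\cQ_+]$ act on $y$ into distinct graded pieces of~$\cM$, so the annihilator $\mathrm{ann}(y) \subseteq \kk[\cQ_+]$ is a monomial ideal, corresponding to a monoid ideal $I \subseteq \cQ_+$ with $f \in I \iff x^f \cdot y = 0$. By Remark~\ref{r:witness}, it suffices to produce $f_0 \in \cQ_+ \minus I$ such that $I : f_0 = \{g \in \cQ_+ : f_0 + g \in I\}$ equals $\mm_\tau = \cQ_+ \minus \tau$ for some face~$\tau$, because then $y \cdot x^{f_0}$ is a closed cogenerator along~$\tau$ divisible by~$y$. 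Noetherianity of~$\kk[\cQ_+]$ furnishes a maximal element $J = I : f_0$ in the family $\{I : f \mid f \in \cQ_+ \minus I\}$ of proper monoid ideals.

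The main obstacle, such as it is, is the primeness of~$J$, handled by the standard monoid-associated-prime argument: if $f + g \in J$ with $f \notin J$, then $f_0 + f \notin I$ while $f_0 + f + g \in I$, so $I : (f_0 + f)$ belongs to the above family and contains both $J$ (absorption $I + \cQ_+ \subseteq I$ gives $h \in J \implies f_0 + h \in I \implies f_0 + f + h \in I \implies h \in I : (f_0 + f)$) and~$g$; maximality of $J$ forces $I : (f_0 + f) = J$, hence $g \in J$. Thus $J = \mm_\tau$ for some face~$\tau$, and $\aa := \bb + f_0$ yields the desired closed cogenerator $y \cdot x^{f_0} \in \cM_\aa$ along~$\tau$ that $y$ divides.
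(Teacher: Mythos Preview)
Your proof is correct and takes essentially the same approach as the paper: left-exactness of $\socct$ for Part~\ref{i:discrete-phi=>soct}, reduction of Part~\ref{i:discrete-soct=>phi} to the divisibility claim, and the associated-prime argument via noetherianity of $\kk[\cQ_+]$ for the divisibility claim itself. You spell out the maximal-colon-ideal primeness argument explicitly in monoid terms, whereas the paper simply observes that the cyclic module $\<y\>$ is noetherian and hence has an associated prime, which (citing \cite[Section~7.2]{cca}) must be the kernel of $\kk[\cQ_+] \onto \kk[\tau]$ for some face~$\tau$; the underlying mathematics is identical.
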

\begin{proof}
Item~\ref{i:discrete-phi=>soct} is a special case of
Proposition~\ref{p:left-exact-tau-closed}.  Item~\ref{i:soct=>phi}
follows from the divisibility claim, for if $y$ divides a closed
cogenerator~$s$ along~$\tau$ then $\phi(y) \neq 0$ whenever
$\socct\phi(\wt s) \neq\nolinebreak 0$, where $\wt s$ is the image
of~$s$ in~$\socct\cM$.  The divisibility claim follows from the case
where $\cM$ is generated by $y \in \cM_\bb$.  But $\<y\>$ is a
noetherian $\kk[\cQ_+]$-module and hence has an associated prime.
This prime equals the annihilator of some homogeneous element
of~$\<y\>$, and the quotient of~$\kk[\cQ_+]$ modulo this prime
is~$\kk[\tau]$ for some face~$\tau$ \cite[Section~7.2]{cca}.  That
means, by definition, that the homogeneous element is a closed
cogenerator along~$\tau$ divisible by~$y$.
\end{proof}

\pagebreak
The discrete analogue of Theorem~\ref{t:dense} is simpler in both
statement and proof.

\begin{thm}\label{t:subfunctor-discrete}
Fix subfunctors $\oS_\tau \subseteq \socct$ for all faces~$\tau$ of a
discrete polyhedral group.  Theorem~\ref{t:discrete-injection} holds
with $\oS$ in place of\/ $\socc$ if and only if\/ $\oS_\tau = \socct$
for all~$\tau$.
\end{thm}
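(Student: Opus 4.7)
The plan is to prove both directions of the equivalence.  The reverse direction is immediate: if $\oS_\tau = \socct$ identically for every face~$\tau$, then Theorem~\ref{t:discrete-injection} holds with $\oS$ in place of~$\socc$ by hypothesis.  For the forward direction I would argue by contrapositive: assuming $\oS_\tau \subsetneq \socct$ as subfunctors for some face~$\tau$, I will construct a non-injective homomorphism $\phi$ of downset-finite modules such that $\oS_{\tau'}\phi$ is injective for every face~$\tau'$, violating part~\ref{i:discrete-soct=>phi} of Theorem~\ref{t:discrete-injection}.  Part~\ref{i:discrete-phi=>soct} requires no attention, since injectivity of~$\phi$ automatically forces injectivity of $\oS_{\tau'}\phi$ for any subfunctor~$\oS_{\tau'}$ of the left-exact functor~$\socp$.

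The first key step is to reduce the failure to a statement about skyscraper modules.  Pick $\cM$ with a homogeneous element $s \in \socct\cM \minus \oS_\tau\cM$; then $s$ is represented, via Definitions~\ref{d:hhom} and~\ref{d:socct}, by a nonzero homomorphism $\psi: \kk[\aa + \tau] \to \cM$ for some $\aa \in \cQ$.  A direct calculation shows that $\socct\kk[\aa+\tau]$ is one-dimensional, concentrated in the single degree $\aa/\tau \in \qzt$.  Naturality of the inclusion $\oS_\tau \subseteq \socct$ then forces the generator of $\socct\kk[\aa+\tau]$ to lie outside~$\oS_\tau\kk[\aa+\tau]$: otherwise its image~$s$ under $\socct\psi$ would land in~$\oS_\tau\cM$.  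Hence $\oS_\tau\kk[\aa+\tau] = 0$.

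To turn this into a counterexample, I first observe that $\kk[\aa+\tau]$ is downset-finite via the natural embedding into the coprincipal downset module $\kk[\aa+\tau-\cQ_+]$ of Example~\ref{e:coprincipal}; this is a well-defined homomorphism precisely because $\mm_\tau = \cQ_+\minus\tau$ is a prime ideal of~$\cQ_+$.  The candidate counterexample is then the zero map $\phi: \kk[\aa+\tau] \to 0$, which is manifestly non-injective.  What remains is to check that $\oS_{\tau'}\kk[\aa+\tau] = 0$ for every face $\tau' \neq \tau$, and since $\oS_{\tau'} \subseteq \socp$ it suffices to verify $\socp\kk[\aa+\tau] = 0$.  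By Proposition~\ref{p:local-vs-global-closed} this follows from $\socc\bigl(\kk[\aa+\tau]/\tau'\bigr) = 0$: when $\tau'\not\subseteq\tau$, any element of $\tau'\minus\tau$ acts as zero on $\kk[\aa+\tau]$ by primality of~$\mm_\tau$, so the localization vanishes; and when $\tau'\subsetneq\tau$, the quotient-restriction is supported on a face of positive dimension in $\cQ/\ZZ\tau'$ and thus has no closed cogenerators.

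The main obstacle is that last case above, which is where the discrete polyhedral hypothesis does its work: quantum-jump arguments (cf.~Remark~\ref{r:quantum}) replace the density theorems of Section~\ref{s:minimality} and Theorem~\ref{t:dense}, since the failure $\oS_\tau \subsetneq \socct$ in the discrete setting produces an honest skyscraper submodule on which $\oS$ vanishes entirely, whereas in the real polyhedral case only a dense subset of cogenerators could be missing.  Hence equality rather than density characterizes essentiality-detecting subfunctors over discrete polyhedral~groups.
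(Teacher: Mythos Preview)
Your proof is correct and follows essentially the same route as the paper's.  The paper also picks a cogenerator~$s$ with $\wt s \in \socct\cM \setminus \oS_\tau\cM$, observes that the cyclic submodule $\langle s\rangle \cong \kk[\aa+\tau]$ has $\oS_\tau\langle s\rangle = 0$ by naturality, and uses $\phi:\langle s\rangle\to 0$ as the counterexample, invoking the fact that $\kk[\tau]$ has only one associated prime to kill $\socp$ for $\tau'\neq\tau$.  Your case split on $\tau'\not\subseteq\tau$ versus $\tau'\subsetneq\tau$ via Proposition~\ref{p:local-vs-global-closed} is a hands-on unpacking of that associated-prime statement, and your explicit check that $\kk[\aa+\tau]$ is downset-finite fills in a detail the paper leaves implicit.
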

\begin{proof}
Every subfunctor of any left-exact functor takes injections to
injections; therefore
Theorem~\ref{t:discrete-injection}.\ref{i:phi=>soct} holds for any
subfunctor of~$\socct$ by Proposition~\ref{p:left-exact-tau-closed}.
The content is that
Theorem~\ref{t:discrete-injection}.\ref{i:soct=>phi} fails as soon as
$\oS_\tau\cM \subsetneq \socct\cM$ for some module~$\cM$ and some
face~$\tau$.  To prove that failure, suppose $\wt s \in \socct\cM
\minus \oS_\tau\cM$ for some closed cogenerator~$s$ of~$\cM$
along~$\tau$.  Then $\<s\> \subseteq \cM$ induces an injection
$\oS_\tau\<s\> \into \oS_\tau\cM$, but by construction the image of
this homomorphism is~$0$, so~$\oS_\tau\<s\> = 0$.  But
$\socct[\tau']\<s\> = 0$ for all $\tau' \neq \tau$ because $\<s\>$ is
abstractly isomorphic to~$\kk[\tau]$, which has no associated primes
other than the kernel of~$\kk[\cQ_+] \onto \kk[\tau]$.  Consequently,
applying~$\oS_{\tau'}$ to the homomorphism $\phi: \<s\> \to 0$ yields
an injection $0 \into 0$ for all faces~$\tau'$ even though $\phi$ is
not injective.
\end{proof}

The analogue of Theorem~\ref{t:essential-submodule} is similarly
simpler.

\begin{thm}\label{t:discrete-essential-submodule}
In any module $\cM$ over a discrete polyhedral group, $\cM'$ is an
essential submodule if and only if\/ $\socct\cM' = \socct\cM$ for all
faces~$\tau$.
\end{thm}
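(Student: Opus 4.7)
The plan is to prove both implications via Theorem~\ref{t:discrete-injection}. The inclusion $\cM' \into \cM$ induces an injection $\socct\cM' \into \socct\cM$ for each face~$\tau$ by Proposition~\ref{p:left-exact-tau-closed}, so the content of $\socct\cM' = \socct\cM$ is surjectivity of this map.

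For the forward direction, suppose $\cM'$ is essential and fix a nonzero homogeneous element $\bar s \in \socct\cM$ represented by a closed cogenerator $s \in \cM_\aa$ along~$\tau$. By Definition~\ref{d:socct}.\ref{i:global-closed-cogen}, the submodule $\<s\> \subseteq \cM$ is isomorphic to $\kk[\aa+\tau]$, and essentiality yields a nonzero intersection $\<s\> \cap \cM'$. Being an intersection of $\cQ$-graded submodules, this intersection is $\cQ$-graded, hence of the form $\kk[\aa+U]$ for a nonempty upset $U \subseteq \tau$. Pick $\bb \in U$ and let~$s'$ denote the image of~$s$ under multiplication by the monomial of $\kk[\tau]$ of degree~$\bb$, so $s' \in \cM'_{\aa+\bb}$. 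Then $\mm_\tau$ still annihilates~$s'$ (as $\cQ_+\minus\tau$ is an ideal of the monoid~$\cQ_+$), and $s' - s$ lies in $I_\tau\cM$ because the monomial minus~$1$ lies in the augmentation ideal~$I_\tau$. Thus $s'$ and~$s$ have identical images in~$\cmt$, and hence in~$\socct\cM$ via the injection of Proposition~\ref{p:local-vs-global-closed}. This makes $s'$ a closed cogenerator of~$\cM'$ along~$\tau$ whose image in~$\socct\cM$ is~$\bar s$, proving surjectivity.

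For the reverse direction, assume $\socct\cM' = \socct\cM$ for every~$\tau$ and suppose for contradiction that some nonzero submodule $\cN \subseteq \cM$ satisfies $\cN \cap \cM' = 0$. Then the composite $\cN \into \cM \onto \cM/\cM'$ is injective, so Theorem~\ref{t:discrete-injection}.\ref{i:discrete-phi=>soct} makes $\socct\cN \into \socct(\cM/\cM')$ injective for every face~$\tau$. Left-exactness applied to $0 \to \cM' \to \cM \to \cM/\cM' \to 0$ identifies $\socct\cM'$ as the kernel of $\socct\cM \to \socct(\cM/\cM')$; by hypothesis this kernel is everything, so the map is zero. The factorization $\socct\cN \to \socct\cM \to \socct(\cM/\cM')$ thus vanishes, and combined with injectivity this forces $\socct\cN = 0$ for every~$\tau$. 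But the divisibility clause of Theorem~\ref{t:discrete-injection} says every nonzero homogeneous element of~$\cN$ divides a closed cogenerator along some face, contradicting $\cN \neq 0$.

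The only genuine subtlety, confined to the forward direction, is producing an element of~$\<s\> \cap \cM'$ whose socle image actually equals~$\bar s$ rather than vanishing or representing some other class. The $\cQ$-grading of the intersection is what makes this automatic: it forces the intersection to consist of monomial multiples of~$s$, each of which maps to~$\bar s$ under the collapse defining~$\cmt$. Without this observation, the intersection could a~priori be spanned by $\kk$-linear combinations of such multiples whose coefficients sum to zero, yielding elements that are killed by the augmentation and therefore vanish in~$\socct\cM$ even though their annihilator is~$\mm_\tau$.
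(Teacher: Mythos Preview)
Your proof is correct and uses essentially the same ideas as the paper, just packaged as the direct statements rather than their contrapositives. The paper argues both directions by producing witnesses: if $\cN\cap\cM'=0$ with $\cN\neq 0$, a cogenerator of~$\cN$ gives a socle class outside~$\socct\cM'$; and conversely a cogenerator~$s$ with $[s]\notin\socct\cM'$ has $\langle s\rangle\cap\cM'=0$. Your forward direction is exactly the contrapositive of the paper's second step, with the grading argument spelled out; your reverse direction replaces the paper's direct witness with a short diagram chase through $\cM/\cM'$ and left-exactness, which is a mild repackaging rather than a different idea. One small simplification: since $s'-s=(x^{\bb}-1)s$ with $\bb\in\tau$ already lies in $I_\tau\cdot\hhom_\cQ(\kk[\tau],\cM)$, you get $[s']=[s]$ in $\socct\cM$ directly from the definition of quotient-restriction, without routing through Proposition~\ref{p:local-vs-global-closed}.
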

\begin{proof}
First assume that $\cM'$ is not an essential submodule, so $\cN \cap
\cM' = 0$ for some nonzero submodule $\cN \subseteq \cM$.  Any closed
cogenerator~$s$ of~$\cN$ along any face~$\tau$ maps to a nonzero
element of $\socct\cM$ that lies outside of~$\socct\cM'$.  Conversely,
if $\socct\cM' \neq \socct\cM$, then any closed cogenerator of~$\cM$
that maps to an element $\socct\cM \minus \socct\cM'$ generates a
nonzero submodule of~$\cM$ whose intersection with~$\cM'$ is~$0$.
\end{proof}

The analogue of Theorem~\ref{t:hull-D} uses slightly modified
definitions but its proof~is~\mbox{easier}.

\begin{defn}\label{d:discrete-minimal-primDecomp-downset}
A primary decomposition (Definition~\ref{d:primDecomp}) $D =
\bigcup_{j=1\!}^k D_j$ of a downset in a discrete polyhedral group is
\emph{minimal} if
\begin{enumerate}
\item%
the downsets~$D_j$ are coprimary for distinct associated faces of~$D$,
and
\item%
the natural map $\socct\kk[D] \to \socct\bigoplus_{j=1}^k \kk[D_j]$ is
an isomorphism for all~faces~$\tau$,
\end{enumerate}
where $\tau$ is \emph{associated} if some element generates an upset
in~$D$ that is a translate of~$\tau$.
\end{defn}

\begin{thm}\label{t:discrete-hull-D}
Every downset $D$ in a discrete polyhedral group has a canonical
minimal primary decomposition $D = \bigcup_\tau D^\tau$ as a union of
coprimary downsets
$$%
  D^\tau
  =
  \bigcup_{\aa_\tau \in \deg\socct\kk[D]} \aa_\tau - \cQ_+,
$$
where $\aa_\tau$ is viewed as an element in~$\qzt$ to write $\aa_\tau
\in \deg\socct\kk[D]$ but $\aa_\tau \subseteq \cQ$ is viewed as a
coset of\/~$\ZZ\tau$ to write $\aa_\tau - \cQ_+$.
\end{thm}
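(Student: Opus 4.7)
The plan is to build the decomposition via the displayed formula and check the four requirements of Definition~\ref{d:discrete-minimal-primDecomp-downset}: (i)~well-definedness of $D^\tau$ and $D^\tau\subseteq D$, (ii)~$D=\bigcup_\tau D^\tau$, (iii)~$\tau$-coprimariness of each $D^\tau$, and (iv)~isomorphism on socles. Canonicity is then automatic, since the construction depends only on $D$ and the face lattice of~$\cQ_+$. The argument parallels Theorem~\ref{t:hull-D} but is simpler, because the density phenomena of Section~\ref{s:minimality} do not arise in the discrete setting.

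For (i), Example~\ref{e:socct} (whose proof is insensitive to real versus discrete polyhedral) identifies $\deg\socct\kk[D]\subseteq\qzt$ with the image of the closed cogenerators of~$D$ along~$\tau$, that is, elements $\aa\in D$ with $(\aa+\cQ_+)\cap D=\aa+\tau$. For any such $\aa$, the inclusion $\aa+\tau\subseteq D$ gives $\aa+\tau-\cQ_+\subseteq D$; and because $-\tau\subseteq-\cQ_+$, this set equals $(\aa+\ZZ\tau)-\cQ_+$, so $\aa_\tau-\cQ_+$ depends only on the coset and is contained in~$D$. For (ii), fix $\bb\in D$ and apply Theorem~\ref{t:discrete-injection} to the cyclic submodule $\<1_\bb\>\subseteq\kk[D]$: its divisibility clause yields a closed cogenerator $\aa$ of~$\kk[D]$ along some face~$\tau$ with $\bb\preceq\aa$, so $\bb\in\aa-\cQ_+\subseteq D^\tau$.

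For (iii), I verify $D^\tau = P_\tau(D^\tau)$ in the sense of Definition~\ref{d:primDecomp}.1. The containment $P_\tau(D^\tau)\subseteq D^\tau$ is formal; for the reverse, I show each translate $\aa_i+\tau$ of a cogenerator representative lies in the local $\tau$-support $\Gamma_\tau((D^\tau)_\tau)$. Each $\aa_i+t$ for $t\in\tau$ already lies in $(D^\tau)_\tau$ since $\aa_i+\tau\subseteq D^\tau$. For global $\tau$-support there, fix a face $\tau'\not\subseteq\tau$ and $t'\in\tau'\setminus\tau$: because $\tau$ is a submonoid, $nt'\notin\tau$ for all $n\geq 1$, so the cogenerator property $(\aa_i+\cQ_+)\cap D=\aa_i+\tau$ forces $\aa_i+t+nt'\notin D\supseteq D^\tau$ for some $n$. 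Thus $\aa_i+\tau\subseteq\Gamma_\tau((D^\tau)_\tau)$, giving $P_\tau(D^\tau)\supseteq\bigcup_i(\aa_i+\tau-\cQ_+)=D^\tau$. The same analysis forces $\socp\kk[D^\tau]=0$ for $\tau'\neq\tau$, since a cogenerator of $D^\tau$ along such $\tau'$ would violate the local $\tau$-support structure just established.

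For (iv), step~(ii) gives an injection $\kk[D]\into\bigoplus_\sigma\kk[D^\sigma]$ which, upon applying $\socct$, reduces to $\socct\kk[D]\to\socct\kk[D^\tau]$ because the other summands vanish by~(iii). A short check via Example~\ref{e:socct} shows the cogenerator cosets of $D^\tau$ along~$\tau$ are exactly $\deg\socct\kk[D]$: any cogenerator $\bb$ of $D^\tau$ along~$\tau$, written $\bb=\aa_j+t-q$ for $q\in\cQ_+$, satisfies $\bb+q=\aa_j+t\in D^\tau$ and hence $q\in\tau$ by the cogenerator property, so $\bb\in\aa_j+\ZZ\tau$. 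Thus both socles share the same degree set in $\qzt$ and the natural map is a degreewise identification of one-dimensional pieces, hence an isomorphism. The main obstacle is the coprimariness step~(iii), which must avoid noetherian machinery since $\deg\socct\kk[D]$ can be infinite in this generality; fortunately, the submonoid rigidity $nt'\notin\tau$ for $t'\in\cQ_+\setminus\tau$ makes this elementary, bypassing the density-and-antichain analysis required by Lemma~\ref{l:antichain} in the real-polyhedral setting.
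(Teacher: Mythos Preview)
Your overall architecture is sound and parallels the paper's, but there is one genuine gap. In step~(iii) you assert that ``the same analysis forces $\socct[\tau']\kk[D^\tau]=0$ for $\tau'\neq\tau$,'' yet what you actually proved is only that the specific elements $\aa_i+t$ lie in $\Gamma_{\!\tau}\bigl((D^\tau)_\tau\bigr)$. That does not by itself rule out some \emph{other} element $\bb\in D^\tau$ being a closed cogenerator along a face $\tau'\neq\tau$: your support computation concerns the $\aa_i+t$, not arbitrary $\bb$. This vanishing is exactly what step~(iv) needs to collapse the target $\bigoplus_\sigma\socct[\tau']\kk[D^\sigma]$ to the single summand $\socct[\tau']\kk[D^{\tau'}]$, so the gap is load-bearing.

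The paper closes this gap by a different route: rather than verifying $D^\tau=P_\tau(D^\tau)$, it observes that every homogeneous $y\in\kk[D^\tau]$ divides some $s_y$ whose degree lies in a coset $\aa_\tau$, and that each such $s_y$ is a closed cogenerator of $D^\tau$ along~$\tau$ (because $D^\tau\subseteq D$ and $(\aa_i+\cQ_+)\cap D=\aa_i+\tau$). This immediately forces $\tau$ to be the only associated face---any cogenerator $\bb$ along $\tau'$ would divide such an $s_y$, and comparing annihilators of $1_\bb$ and $1_{s_y}$ in the indicator module gives $\mm_{\tau'}=\mm_\tau$. The same divisibility shows the $s_y$ generate an essential submodule, so Theorem~\ref{t:discrete-essential-submodule} yields the socle isomorphism directly, bypassing your explicit degree-set comparison in~(iv).

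Your route can be repaired without appealing to noetherian associated-prime machinery. Suppose $\bb$ is a cogenerator of $D^\tau$ along~$\tau'$; write $\bb=\aa_j+t_0-q$ with $t_0\in\tau$, $q\in\cQ_+$. Since $\aa_j+t_0\in(\bb+\cQ_+)\cap D^\tau=\bb+\tau'$, one gets $q\in\tau'$. Since $\aa_j+t_0+\tau=(\aa_j+t_0+\cQ_+)\cap D^\tau\subseteq\bb+\tau'$, one gets $q+\tau\subseteq\tau'$ and hence $\tau\subseteq\tau'$ by the face property. If $\tau\subsetneq\tau'$, pick $s'\in\tau'\setminus\tau$: then $\aa_j+t_0+s'=\bb+(q+s')\in\bb+\tau'\subseteq D^\tau\subseteq D$, yet $(\aa_j+t_0+\cQ_+)\cap D=\aa_j+t_0+\tau$ forces $s'\in\tau$, a contradiction. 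So $\tau'=\tau$. With this in hand, your steps (iii)--(iv) go through; your direct verification that $\deg\socct\kk[D^\tau]=\deg\socct\kk[D]$ is then a legitimate alternative to the paper's essential-submodule argument.
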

\begin{proof}
The downset~$D$ is contained in the union by the final line of
Theorem~\ref{t:discrete-injection}, but the union is contained in~$D$
because every closed cogenerator of~$D$ is an element of~$D$.  It
remains to show that $D^\tau$ is coprimary and that the socle maps are
isomorphisms.

Each nonzero homogeneous element $y \in \kk[D^\tau]$ divides an
element~$s_y$ whose degree lies in some coset~$\aa_\tau$ by
construction.  As $D^\tau \subseteq D$, each such element~$s_y$ is a
closed cogenerator of~$D^\tau$ along~$\tau$.  Therefore $\kk[D^\tau]$
is coprimary, inasmuch as no prime other than the one associated
to~$\kk[\tau]$ can be associated to~$D^\tau$.  The same argument shows
that these elements~$s_y$ generate an essential submodule of~$D^\tau$,
and then Theorem~\ref{t:discrete-essential-submodule} yields the
isomorphism on socles.
\end{proof}

\begin{remark}\label{r:discrete-irred-decomp}
Each of the individual coprincipal downsets in
Theorem~\ref{t:discrete-hull-D} is an \emph{irreducible component}
of~$D$.  Thus the theorem can also be interpreted as saying that every
downset in a discrete polyhedral group has a unique irredundant
irreducible decomposition; the irredundant condition stems from the
socle isomorphisms.
\end{remark}

\begin{defn}\label{d:discrete-minimal-hull}
A downset hull $\cM \to E = \bigoplus_{j=1}^k E_j$
(Definition~\ref{d:downset-hull}) of a module over a discrete
polyhedral group is
\begin{enumerate}
\item%
\emph{coprimary} if $E_j = \kk[D_j]$ is coprimary for all~$j$, so
$D_j$ is a coprimary downset,~and
\item%
\emph{minimal} if the induced map $\socct\cM \to \socct E$ is an
isomorphism for all faces~$\tau$.
\end{enumerate}
\end{defn}

The discrete analogue of Theorem~\ref{t:hull-M} appears to be new.

\begin{thm}\label{t:discrete-hull-M}
Every downset-finite module~$\cM$ over a discrete polyhedral group
admits a minimal coprimary downset hull.
\end{thm}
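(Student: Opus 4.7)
The plan is to start with any finite coprimary downset hull $\cM \into E = \bigoplus_{j=1}^k \kk[D_j]$---which exists because $\cM$ is downset-finite and Theorem~\ref{t:discrete-hull-D} converts the summands of any finite downset hull into $\tau$-coprimary pieces---and then shrink each~$D_j$ to retain only those cogenerators accounted for by~$\socct\cM$. Compared with its real analogue Theorem~\ref{t:hull-M}, the argument is substantially shorter: closed socles are direct sums of skyscrapers whose basis is canonically indexed by cogenerators (Example~\ref{e:socct}), so minimization reduces to a degree-by-degree bookkeeping with no density considerations intervening.

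Explicitly, say each $D_j$ is $\tau_j$-coprimary. Standard primary decomposition over the noetherian monoid algebra~$\kk[\cQ_+]$ gives $\socct\kk[D_j] = 0$ unless $\tau = \tau_j$, so under the injection $\socct\cM \into \socct E = \bigoplus_j \socct\kk[D_j]$ from Theorem~\ref{t:discrete-injection}.\ref{i:discrete-phi=>soct}, the image is supported on the summands with $\tau_j = \tau$. For each~$j$, let $S_j \subseteq \cQ/\ZZ\tau_j$ be the support of the projection of $\socct[\tau_j]\cM$ onto the $j$-th summand of $\socct[\tau_j]E$, and define $D_j' = \bigcup_{\bar\aa \in S_j}(\aa + \tau_j - \cQ_+)$; by Theorem~\ref{t:discrete-hull-D} applied to~$D_j$, the set $D_j'$ is a $\tau_j$-coprimary downset contained in~$D_j$. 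The inclusions $D_j' \subseteq D_j$ yield surjections $\kk[D_j] \onto \kk[D_j']$ and hence a surjection $E \onto E' := \bigoplus_j \kk[D_j']$.

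The main task, which I expect to be the chief obstacle, is to verify that the composite $\cM \into E \onto E'$ is injective and that it induces an isomorphism $\socct\cM \cong \socct E'$ for every face~$\tau$. The socle isomorphism follows because for every~$\tau$ the kernel of $\socct E \to \socct E'$ is spanned by the removed basis degrees, which by construction does not meet the image of~$\socct\cM$; thus $\socct\cM \to \socct E'$ is injective, and conversely every basis degree of~$\socct E'$ corresponds to a retained cogenerator in some~$S_j$ and therefore lies in the image of~$\socct\cM$. Injectivity of $\cM \to E'$ then follows from Theorem~\ref{t:discrete-injection}.\ref{i:discrete-soct=>phi} since $\cM$ is downset-finite and every socle map is injective, making $\cM \into E'$ a minimal coprimary downset hull.
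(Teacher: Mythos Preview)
Your argument has a genuine gap in the surjectivity step.  You claim that every basis element of $\socct E'$ lies in the image of $\socct\cM$, reasoning that each retained degree $\bar\aa \in S_j$ was retained precisely because some element of $\socct[\tau_j]\cM$ projects nontrivially to the $j$-th summand in degree~$\bar\aa$.  But that element may simultaneously project nontrivially to other summands; it need not equal the standard basis vector $e_{j,\bar\aa} \in \socct E'$ supported only in the $j$-th coordinate.  Knowing that each coordinate projection of $\socct\cM$ is individually surjective onto the retained degrees does not make the map to the direct sum surjective.

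A minimal counterexample over $\cQ = \ZZ$: take $\cM = \kk[\ZZ_{\leq 0}]$ with the redundant coprimary hull $\cM \into \kk[\ZZ_{\leq 0}] \oplus \kk[\ZZ_{\leq 0}]$ given by the diagonal $1 \mapsto (1,1)$.  Then $\socct[\{0\}]\cM$ is one-dimensional, its image in $\socct[\{0\}]E \cong \kk^2$ is the diagonal line, and both projections are nonzero at degree~$0$.  Hence $S_1 = S_2 = \{0\}$, your construction gives $D_1' = D_1$ and $D_2' = D_2$, so $E' = E$ and $\socct\cM \to \socct E'$ remains a proper inclusion $\kk \into \kk^2$.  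Your procedure shrinks downsets but never discards a redundant summand.

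This is exactly why the paper's proof of Theorem~\ref{t:hull-M}, which the discrete proof follows verbatim, proceeds by induction on~$k$ via the filtration $\cM' = \ker(\cM \to E_k)$ and $\cM'' = \cM/\cM'$.  The inductive step needs the splitting $\socct\cM \cong \socct\cM' \oplus \socct\cM''$, and establishing surjectivity of $\socct\cM \to \socct\cM''$ is the substantive work (see Remark~\ref{r:filtration}).  In the counterexample above, $\cM' = 0$ and $\cM'' = \cM$, so the induction correctly drops the first summand entirely.  Your one-shot degree-by-degree shrinking cannot detect this kind of redundancy; the filtration is what makes the socle decompose compatibly with the summand structure of the hull.
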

\begin{proof}
The argument follows that of Theorem~\ref{t:hull-M}.  In the course of
the proof, note that the discrete analogue of
Theorem~\ref{t:coprimary} is the definition of associated prime and
that the analogue of Remark~\ref{r:soc-as-k-vect} holds (more easily)
in the discrete polyhedral setting.
\end{proof}

\begin{remark}\label{r:discrete-filtration}
Remark~\ref{r:filtration} holds verbatim over discrete polyhedral
groups.
\end{remark}

Finally, here is the discrete version of minimal primary
decomposition.

\begin{defn}\label{d:discrete-minimal-primary}
A primary decomposition $\cM \into \bigoplus_{i=1}^r \cM/\cM_i$
(Definition~\ref{d:primDecomp'}.\ref{i:primdecomp}) of a module over a
discrete polyhedral group is \emph{minimal} if $\socct\cM \to
\socct\bigoplus_{i=1}^r \cM/\cM_i$ is an isomorphism for all
faces~$\tau$.
\end{defn}

\begin{defn}\label{d:discrete-primary-component}
Given a coprimary downset hull $\cM \into E$ of an arbitrary
downset-finite module~$\cM$ over a discrete polyhedral group, write
$E^\tau$ for the direct sum of all summands of~$E$ that are
$\tau$-coprimary.  The kernel $\cM^\tau$ of the composite homomorphism
$\cM \to E \to E^\tau$ is the \emph{$\tau$-primary component of~$0$}
for this particular downset hull of~$\cM$.
\end{defn}

\begin{thm}\label{t:discrete-minimal-primary}
Every downset-finite module~$\cM$ over a discrete polyhedral group
admits a minimal primary decomposition.  In fact, if $\cM \into E$ is
a coprimary downset hull then $\cM \into \bigoplus_{\tau\!}
\cM/\cM^\tau\!$ is a primary decomposition that is minimal if $\cM
\hspace{-.45pt}\into\hspace{-.45pt}\nolinebreak E$~is~\mbox{minimal}.
\end{thm}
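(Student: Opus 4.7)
The plan is to mirror the proof of Theorem~\ref{t:minimal-primary} verbatim, replacing every invocation of the real-polyhedral theory by its discrete counterpart, which is substantially easier since the density subtleties in Sections~\ref{s:minimality}--\ref{s:density} do not arise. Concretely, I would begin by applying Theorem~\ref{t:discrete-hull-M} to produce a coprimary downset hull $\cM \into E$, and then sort the indecomposable (or at least associated-face-sorted) summands of~$E$ by associated face, writing $E = \bigoplus_\tau E^\tau$ where $E^\tau$ is the direct sum of the $\tau$-coprimary summands. Then define $\cM^\tau = \ker(\cM \to E^\tau)$ exactly as in Definition~\ref{d:discrete-primary-component}.

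Next I would verify the two properties that make $\cM \into \bigoplus_\tau \cM/\cM^\tau$ a primary decomposition. Coprimariness of the quotient: by construction $\cM/\cM^\tau$ embeds in $E^\tau$, which is a direct sum of $\tau$-coprimary downset modules, so $\cM/\cM^\tau$ inherits the property that only the prime corresponding to~$\tau$ can be associated to it (this is the discrete analogue of Theorem~\ref{t:coprimary}, and indeed is just the definition of associated prime in the discrete polyhedral setting, via the identification of associated primes of subquotients with those of the ambient module). Injectivity of the map into the direct sum: the composite $\cM \into E = \bigoplus_\tau E^\tau$ factors through $\bigoplus_\tau \cM/\cM^\tau \subseteq \bigoplus_\tau E^\tau$, so the diagonal map is injective because its post-composition with the inclusion is.

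For the minimality assertion, assume in addition that $\cM \into E$ is minimal in the sense of Definition~\ref{d:discrete-minimal-hull}, so $\socct\cM \to \socct E$ is an isomorphism for every face~$\tau$. I would show that $\socct\cM \to \socct(\cM/\cM^\tau)$ is an isomorphism for every~$\tau$, which together with the vanishing $\socp(\cM/\cM^\tau) = 0$ for $\tau' \neq \tau$ (immediate from the discrete coprimariness just established) forces $\socct\cM \to \socct\bigoplus_\tau \cM/\cM^\tau$ to be an isomorphism as required. The isomorphism at each~$\tau$ follows because $\socct E = \socct E^\tau$ (by the discrete Theorem~\ref{t:coprimary} / associated-prime vanishing, as in the $\soct E = \soct E^\tau$ step used in Theorem~\ref{t:hull-M}), and the hypothesis $\socct\cM \cong \socct E = \socct E^\tau$ factors through the injection $\socct(\cM/\cM^\tau) \into \socct E^\tau$ afforded by left-exactness of $\socct$ (Proposition~\ref{p:left-exact-tau-closed}); an isomorphism that factors through an injection forces the intermediate map to be an isomorphism too.

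There is no real obstacle: the only place that could have required care is the analogue of Theorem~\ref{t:coprimary}, but in the discrete setting this is either the definition of associated prime or an elementary consequence of Theorem~\ref{t:discrete-injection} and the final line of its statement (every homogeneous element divides a closed cogenerator). The entire argument is functorially identical to the proof of Theorem~\ref{t:minimal-primary}, with $\socct$ in place of $\soct$ and with the discrete hull theorem (Theorem~\ref{t:discrete-hull-M}) in place of Theorem~\ref{t:hull-M}; the absence of a nontrivial upper boundary functor and of $\sigma$-neighborhoods means no density hypothesis need be tracked, and the isomorphism-on-socles minimality criterion passes through verbatim.
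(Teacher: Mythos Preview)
Your proposal is correct and follows exactly the approach the paper takes: the paper's own proof is the single line ``Follow the proof of Theorem~\ref{t:minimal-primary},'' and you have spelled out precisely that argument with the appropriate discrete substitutions ($\socct$ for $\soct$, Theorem~\ref{t:discrete-hull-M} for Theorem~\ref{t:hull-M}, and the elementary associated-prime characterization in place of Theorem~\ref{t:coprimary}).
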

\begin{proof}
Follow the proof of Theorem~\ref{t:minimal-primary}.
\end{proof}

\begin{cor}\label{c:discrete-hull-D}
Every indicator subquotient\/ $\kk[S]$ of\/~$\kk[\cQ]$ over a discrete
polyhedral group $\cQ$ has a canonical minimal primary decomposition.
% (That $\kk[S]$ is a subquotient means that $S \subseteq \cQ$ is the
% intersection of an upset and a downset in~$\cQ$.)
\end{cor}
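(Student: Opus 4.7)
The approach mirrors the proof of Corollary~\ref{c:hull-D}, substituting the discrete ingredients of Section~\ref{s:discrete}---notably Theorems~\ref{t:discrete-injection}, \ref{t:discrete-hull-D}, and~\ref{t:discrete-minimal-primary}---for their real-polyhedral counterparts from Sections~\ref{s:hulls}--\ref{s:density}. The goal is to construct a canonical minimal coprimary downset hull of~$\kk[S]$, after which Theorem~\ref{t:discrete-minimal-primary} delivers the minimal primary decomposition, with canonicity inherited from the canonical construction of the hull.

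Writing $S = U \cap D_0$ with $U$ an upset and $D_0$ a downset, one notes that $S$ is order-convex in~$\cQ$ (so one may take $D_0 = S - \cQ_+$), and $\kk[S]$ is an honest submodule of $\kk[D_0]$; this verifies that $\kk[S]$ is downset-finite. The \emph{downset cogenerated by~$S$} should then be defined as $D = \bigcup_\tau \bigcup_{\aa_\tau} (\aa_\tau - \cQ_+)$, where $\tau$ ranges over faces of~$\cQ_+$ and $\aa_\tau$ ranges over $\deg_{\qzt}\socct\kk[S]$, with $\aa_\tau$ viewed as a coset of $\ZZ\tau$ in~$\cQ$ in the inner union. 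By Theorem~\ref{t:discrete-injection}, every element of~$S$ divides a closed cogenerator of~$\kk[S]$ along some face, so $S \subseteq D$; combined with $D \subseteq D_0$ and the order-convexity of~$S$ in~$D_0$, this yields a well-defined $\cQ$-module embedding $\kk[S] \into \kk[D]$.

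The decisive claim is that $\socct\kk[S] \to \socct\kk[D]$ is an isomorphism for every face~$\tau$. Granting this, Theorem~\ref{t:discrete-hull-D} applied to~$D$ produces a canonical minimal coprimary downset hull $\kk[D] \into E$, and the composition $\kk[S] \into \kk[D] \into E$ is then a minimal coprimary downset hull of~$\kk[S]$, whereupon Theorem~\ref{t:discrete-minimal-primary} delivers the canonical minimal primary decomposition $\kk[S] \into \bigoplus_\tau \kk[S]/\kk[S]^\tau$. The main technical obstacle is exactly this socle isomorphism. The inclusion $\deg\socct\kk[S] \subseteq \deg\socct\kk[D]$ uses the face axiom directly: if $\aa \in \cQ$ represents a cogenerator coset of~$\kk[S]$ along~$\tau$ and some $m \in \mm_\tau$ satisfied $\aa + m \in D$, one would reach $\cc \succeq \aa + m$ with $\cc \in S$ and $\cc - \aa \in \tau$, hence $m + q \in \tau$ for $q = \cc - \aa - m \in \cQ_+$, contradicting that $\tau$ is closed under taking summands in~$\cQ_+$. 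The reverse inclusion is the subtler point and requires verifying that every cogenerator of~$\kk[D]$ along~$\tau$ is represented by a genuine cogenerator of~$\kk[S]$ along~$\tau$, which is where the explicit description of~$D$ as a union of coprincipal pieces cut out by the socle data of~$\kk[S]$ must be leveraged.
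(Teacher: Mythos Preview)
Your approach is essentially the paper's: embed $\kk[S]$ in the indicator module of the downset it cogenerates, take the canonical minimal coprimary hull of that downset from Theorem~\ref{t:discrete-hull-D}, and apply Theorem~\ref{t:discrete-minimal-primary}. The paper simply takes $D$ to be ``the downset cogenerated by~$S$'', namely $D_0 = S - \cQ_+$, and asserts that the composite $\kk[S]\into\kk[D_0]\into E$ is minimal.

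Your more elaborate $D$ is in fact equal to~$D_0$: you already have $D\subseteq D_0$, and since $S\subseteq D$ with $D$ a downset, also $D_0=S-\cQ_+\subseteq D$. Once you note $D=D_0$, the ``subtler'' reverse inclusion you flag becomes a direct check rather than something requiring your socle-indexed description. Indeed, for any $\bb\in S$ order-convexity gives $(\bb+\cQ_+)\cap D_0=(\bb+\cQ_+)\cap S$, so closed cogenerators of $\kk[S]$ and $\kk[D_0]$ along~$\tau$ coincide up to $\ZZ\tau$-translation: if $\aa$ cogenerates $\kk[D_0]$ along~$\tau$, pick $\bb\in S$ with $\aa\preceq\bb$; then $\bb\in(\aa+\cQ_+)\cap D_0=\aa+\tau$, and the face property forces $(\bb+\cQ_+)\cap D_0=\bb+\tau$, whence $\bb$ cogenerates $\kk[S]$ along~$\tau$ in the same $\qzt$-degree. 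Alternatively, your description of $D$ as a union indexed by $\deg\socct\kk[S]\subseteq\deg\socct\kk[D]$ combined with the irredundancy in Remark~\ref{r:discrete-irred-decomp} forces equality of these degree sets directly, which is presumably what your final sentence intends.
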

\begin{proof}
The proof of Corollary~\ref{c:hull-D} works verbatim, citing the
discrete analogues of Theorems~\ref{t:hull-D}
and~\ref{t:minimal-primary}, namely Theorems~\ref{t:discrete-hull-D}
and~\ref{t:discrete-minimal-primary}.
\end{proof}

%%%%%%%%%%%%%%%%%%%%%%%%%%%%%%%%%%%%%%%%%%%%%%%%%%%%%%%%%%%%%%%%%%%%%%%%%
\section{Generator functors and tops}\label{s:gen-functors}%%%%%%%%%%%%%%
%%%%%%%%%%%%%%%%%%%%%%%%%%%%%%%%%%%%%%%%%%%%%%%%%%%%%%%%%%%%%%%%%%%%%%%%%

The theory of generators is Matlis dual (Section~\ref{sub:matlis}) to
the theory of cogenerators.  Every result for socles, downsets, and
cogenerators therefore has a dual.  All of these dual statements can
be formulated so as to be straightforward, but sometimes they are less
natural (see Remarks~\ref{r:global-closed-gen}
and~\ref{r:no-local-top}, for example), sometimes they are weaker (see
Remark~\ref{r:dir-vs-inv}), and sometimes there are natural
formulations that must be proved equivalent to the straightforward
dual (see Definition~\ref{d:topr} and Theorem~\ref{t:top=socvee}, for
example).  This section presents those Matlis dual notions that are
used in later sections, along with a few notions or results for which
no direct dual is present in earlier sections, such as
Proposition~\ref{p:surjection-ZZ}, Proposition~\ref{p:surjection-RR},
and Lemma~\ref{l:mrx}.

%%%%%%%%%%%%%%%%%%%%%%%%%%%%%%%%%%%%%%%%%%%%%%%%%%%%%%%%%%%%%%%%%%%%%%%%%
\subsection{Lower boundary functors}\label{sub:lower-boundary}\mbox{}%%%%

\noindent
The following are Matlis dual to Definition~\ref{d:atop-sigma}.
Lemma~\ref{l:natural}, and Definition~\ref{d:upper-boundary}.

\begin{defn}\label{d:beneath-xi}
For a module~$\cM$ over a real polyhedral group~$\cQ$, a face~$\xi$
of~$\cQ_+$, and a degree $\bb \in \cQ$, the \emph{lower boundary
beneath~$\xi$} at~$\bb$ in~$\cM$ is the vector space
$$%
  (\lx\cM)_\bb
  =
  \cM_{\bb+\xi}
  =
  \invlim_{\bb' \in \bb + \xi^\circ} \cM_{\bb'}.
$$
\end{defn}

\begin{lemma}\label{l:natural-dual}
The structure homomorphisms of~$\cM$ as a $\cQ$-module induce natural
homomorphisms $\cM_{\bb+\xi} \to \cM_{\cc+\eta}$ for $\bb \preceq
\cc$ in~$\cQ$ and faces $\xi \subseteq \eta$
of~$\cQ_+$.\hfill$\square$
\end{lemma}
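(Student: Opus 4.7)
The plan is to mirror the proof of Lemma~\ref{l:natural} step for step, swapping the universal property of colimits (direct limits) for the universal property of inverse limits. Since the composite of a $\cQ$-translation and a face-expansion yields every pair $(\bb,\xi) \preceq (\cc,\eta)$ in $\cQ \times \cfq$, I would factor the construction into two pieces: first $\cM_{\bb+\xi} \to \cM_{\cc+\xi}$ for $\bb \preceq \cc$ with the face fixed, and then $\cM_{\cc+\xi} \to \cM_{\cc+\eta}$ for $\xi \subseteq \eta$ with the degree fixed.

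For the first piece, I would invoke the universal property of $\cM_{\cc+\xi} = \invlim_{\cc' \in \cc+\xi^\circ} \cM_{\cc'}$: it suffices to build, for each $\cc' \in \cc+\xi^\circ$, a homomorphism $\cM_{\bb+\xi} \to \cM_{\cc'}$ compatible with the inverse system. The translation $\cc' \mapsto \cc' - (\cc-\bb)$ carries $\cc+\xi^\circ$ bijectively to $\bb+\xi^\circ$, so $\bb' := \cc' - (\cc-\bb)$ lies in $\bb+\xi^\circ$ and satisfies $\bb' \preceq \cc'$ (because $\cc - \bb \in \cQ_+$). The desired morphism is the composite of the projection $\cM_{\bb+\xi} \to \cM_{\bb'}$ with the structure map $\cM_{\bb'} \to \cM_{\cc'}$, and the compatibility axiom for $\cQ$-modules ensures that these morphisms are compatible as $\cc'$ varies.

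For the second piece, I would again use the universal property of an inverse limit, this time $\cM_{\cc+\eta} = \invlim_{\cc''\in\cc+\eta^\circ}\cM_{\cc''}$. Given $\cc'' \in \cc+\eta^\circ$, I must produce a map $\cM_{\cc+\xi} \to \cM_{\cc''}$. Write $\cc'' = \cc + \vv_\eta$ with $\vv_\eta \in \eta^\circ$; because $\xi \subseteq \eta$ places $\eta$ in the open star $\nabx$, Lemma~\ref{l:<<} (applied with $\xi$ in place of $\sigma$) yields $\eta^\circ \subseteq \qnx = \xi^\circ + \cQ_+$. So $\vv_\eta = \vv_\xi + \qq$ with $\vv_\xi \in \xi^\circ$ and $\qq \in \cQ_+$, and taking $\cc' = \cc + \vv_\xi \in \cc+\xi^\circ$ gives a candidate composite $\cM_{\cc+\xi} \to \cM_{\cc'} \to \cM_{\cc''}$.

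The main obstacle is checking that the map produced in the second piece is well defined (independent of the decomposition $\vv_\eta = \vv_\xi + \qq$) and compatible as $\cc''$ varies. I would handle both with the same cofinality trick: any two valid decompositions yield $\cc'_1, \cc'_2 \in \cc+\xi^\circ$, both preceding $\cc''$; choosing any $\cc'_3 \in \cc+\xi^\circ$ with $\cc'_3 \succeq \cc'_1, \cc'_2$ and $\cc'_3 \preceq \cc''$ (such a $\cc'_3$ exists because $\cc+\xi^\circ$ is cofinal in itself and $\xi^\circ \subseteq \cQ_+$), the commutativity axiom of $\cQ$-modules forces the two composites to agree after composing with the projection from $\cM_{\cc+\xi}$. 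Naturality in $\cM$ is then automatic from uniqueness in the universal property of the inverse limit, and the two pieces combine to give the asserted morphism $\cM_{\bb+\xi} \to \cM_{\cc+\eta}$.
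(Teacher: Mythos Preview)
Your overall plan---dualize the proof of Lemma~\ref{l:natural} by swapping direct limits for inverse limits---is exactly what the paper intends; the paper simply marks the lemma with a $\square$ and labels it Matlis dual to Lemma~\ref{l:natural}. Your two-piece factorization and the first piece are fine.

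The gap is in the well-definedness step of the second piece. You assert the existence of $\cc'_3 \in \cc+\xi^\circ$ with $\cc'_3 \succeq \cc'_1,\cc'_2$ and $\cc'_3 \preceq \cc''$, but your justification (``$\cc+\xi^\circ$ is cofinal in itself and $\xi^\circ \subseteq \cQ_+$'') does not address the constraint $\cc'_3 \preceq \cc''$, and in fact such a $\cc'_3$ need not exist when $\cQ_+$ is not simplicial. For a concrete failure, take $\cQ = \RR^3$ with $\cQ_+ = \{(x,y,z) : z \geq |x|+|y|\}$ (the cone over a square), let $\xi$ be the facet $\{(a,b,a+b) : a,b \geq 0\}$ and $\eta = \cQ_+$, and set $\vv_\eta = (0,0,1)$, $\vv_1 = (0.4,0.1,0.5)$, $\vv_2 = (0.1,0.4,0.5)$. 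Both $\vv_i \in \xi^\circ$ satisfy $\vv_i \preceq \vv_\eta$, but any $\vv_3 \in \xi^\circ$ with $\vv_3 \succeq \vv_1,\vv_2$ forces $a_3+b_3 \geq 0.5$ while $\vv_3 \preceq \vv_\eta$ forces $a_3+b_3 \leq 0.5$; equality then forces $\vv_3 = \vv_1 = \vv_2$, a contradiction.

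The fix is painless: use a common \emph{lower} bound instead. The index set $\cc+\xi^\circ$ is codirected (for $\vv_1,\vv_2 \in \xi^\circ$, any sufficiently small $\vv_0 \in \xi^\circ$ has $\vv_i - \vv_0 \in \xi \subseteq \cQ_+$), so pick $\cc'_0 \in \cc+\xi^\circ$ with $\cc'_0 \preceq \cc'_1,\cc'_2$. Then the inverse-limit cone gives $\cM_{\cc+\xi} \to \cM_{\cc'_i} = \cM_{\cc+\xi} \to \cM_{\cc'_0} \to \cM_{\cc'_i}$, and the $\cQ$-module axiom gives $\cM_{\cc'_0} \to \cM_{\cc'_i} \to \cM_{\cc''} = \cM_{\cc'_0} \to \cM_{\cc''}$, so both composites through $\cc'_1$ and $\cc'_2$ equal $\cM_{\cc+\xi} \to \cM_{\cc'_0} \to \cM_{\cc''}$. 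Compatibility as $\cc''$ varies follows by the same trick.
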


\begin{remark}\label{r:semilattice=monoid'}
In contrast with Remark~\ref{r:semilattice=monoid}, the relevant
monoid structure here on the face poset~$\cfq$ of the positive
cone~$\cQ_+$ is dual to the monoid denoted~$\fqo$.  In this case the
monoid axioms use that $\cfq$ is a bounded join semilattice, the
monoid unit being $\{\0\}$.  The induced partial order on~$\cfq$ is
the usual one, with $\xi \preceq \eta$ if~$\xi \subseteq \eta$.
\end{remark}

\begin{defn}\label{d:lower-boundary}
Fix a module~$\cM$ over a real polyhedral group~$\cQ$ and a degree
$\bb \in \cQ$.  The \emph{lower boundary functor} takes~$\cM$ to the
$\cQ \times \cfq$-module $\del\cM$ whose fiber over $\bb \in \cQ$ is
the $\cfq$-module
$$%
  (\del\cM)_\bb
  =
  \bigoplus_{\xi \in \cfq} \cM_{\bb+\xi}
  =
  \bigoplus_{\xi \in \cfq} (\lx\cM)_\bb.
$$
The fiber of $\del\cM$ over $\xi \in \cfq$ is the \emph{lower
boundary~$\lx\cM\!$ of~$\cM$ beneath~$\xi$}.
\end{defn}

\begin{remark}\label{r:dir-vs-inv}
Direct and inverse limits play differently with vector space duality.
Consequently, although the notion of lower boundary functor is
categorically dual to the notion of upper boundary functor, the
duality only coincides unfettered with vector space duality in one
direction, and some results involving tops are necessarily weaker than
the corresponding results for socles; compare
Theorem~\ref{t:injection} with~\ref{t:surjection-RR} and
Example~\ref{e:surjection}, for instance.  To make precise statements
throughout this section on generator functors, starting with
Lemma~\ref{l:lx-vee}, it is necessary to impose a finiteness condition
that is somewhat stronger than $\cQ$-finiteness
(Definition~\ref{d:Q-finite}).
\end{remark}

\begin{defn}\label{d:infinitesimally-Q-finite}
A module~$\cM$ over a real polyhedral group~$\cQ$ is
\emph{infinitesimally $\cQ$-finite} if its lower boundary module
$\del\cM$ is $\cQ$-finite.
\end{defn}

\begin{lemma}\label{l:lx-vee}
If $\xi$ is a face of a real polyhedral group~$\cQ$, then
\begin{enumerate}
\item\label{i:all}%
$\lx(\cM^\vee) = (\dx\cM)^\vee$ for all $\cQ$-modules~$\cM$, and
\item\label{i:inf}%
$(\lx\cM)^\vee = \dx(\cM^\vee)$ if $\cM$ is infinitesimally
$\cQ$-finite
\end{enumerate}
\end{lemma}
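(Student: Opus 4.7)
The plan is to establish (1) by a direct unwinding of definitions, using that vector space dual $\Hom_\kk(-,\kk)$ converts direct limits to inverse limits, and then to derive (2) from (1) by applying it to $\cM^\vee$ and invoking Matlis double duality (Lemma~\ref{l:vee-vee}).

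For (1), I would unwind each side in degree $\bb \in \cQ$. Combining Definition~\ref{d:beneath-xi}, Definition~\ref{d:matlis}, and Definition~\ref{d:atop-sigma} yields
\[
  \bigl(\lx(\cM^\vee)\bigr)_\bb
  = (\cM^\vee)_{\bb+\xi}
  = \invlim_{\bb'\in\bb+\xi^\circ}\Hom_\kk(\cM_{-\bb'},\kk),
\]
while the reindexing $\bb' = -\aa'$ in the colimit defining $(\dx\cM)_{-\bb}$ gives
\[
  \bigl((\dx\cM)^\vee\bigr)_\bb
  = \Hom_\kk\bigl(\dirlim_{\bb'\in\bb+\xi^\circ}\cM_{-\bb'},\kk\bigr).
\]
The standard fact that $\Hom_\kk(-,\kk)$ carries direct limits in the first argument to inverse limits identifies these two expressions. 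Compatibility with the $\cQ$-module structures on both sides is then a diagram chase using the universal properties of the (co)limits together with the naturality supplied by Lemmas~\ref{l:natural} and~\ref{l:natural-dual}.

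For (2), the infinitesimally $\cQ$-finite hypothesis forces $\cM$ itself to be $\cQ$-finite (take $\xi = \{\0\}$ in Definition~\ref{d:beneath-xi}, whose relative interior is a point), so Lemma~\ref{l:vee-vee} gives $(\cM^\vee)^\vee = \cM$. Applying (1) with $\cM^\vee$ in place of $\cM$ produces
\[
  \lx\cM = \lx\bigl((\cM^\vee)^\vee\bigr) = \bigl(\dx(\cM^\vee)\bigr)^\vee.
\]
The key observation is that this display forces $\dx(\cM^\vee)$ to be $\cQ$-finite: the left-hand side is $\cQ$-finite as a summand of the $\cQ$-finite module~$\del\cM$, and on the right $(\lx\cM)_\bb = \Hom_\kk((\dx(\cM^\vee))_{-\bb},\kk)$, so finite-dimensionality of the left-hand vector space forces finite-dimensionality of $(\dx(\cM^\vee))_{-\bb}$. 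A second application of Lemma~\ref{l:vee-vee} then lets me take the Matlis dual of the display to conclude $(\lx\cM)^\vee = \dx(\cM^\vee)$.

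I do not expect any deep obstacle; the main source of friction is bookkeeping with signs and with the indexing sets $\bb+\xi^\circ$ versus $-\bb-\xi^\circ$, making sure the reindexing $\bb' = -\aa'$ is compatible both with the direct/inverse limit structure and with the induced $\cQ$-module structure homomorphisms. The single place where the hypothesis is genuinely used is the extraction of $\cQ$-finiteness of $\dx(\cM^\vee)$ in (2), which is precisely what keeps the double Matlis dual honest.
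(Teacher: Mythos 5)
Your proposal is correct and follows essentially the same route as the paper: part~(1) degree by degree from the fact that the vector space dual of a direct limit is the inverse limit of the duals, and part~(2) by substituting $\cM^\vee$ into~(1), using $\cM = \del^{\{\0\}}\cM$ to get $\cQ$-finiteness of~$\cM$ and hence double duality, and then checking that $\lx\cM$ (and so $\dx(\cM^\vee)$) is $\cQ$-finite before dualizing once more. Your explicit degree-wise justification that finite-dimensionality transfers across the dual is a detail the paper merely asserts, but it is the same argument.
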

\begin{proof}
Degree by degree $\bb \in \cQ$, the first of these
is because the vector space dual of a direct limit is the inverse
limit of the vector space duals.
% \noindent
% Proof: Given $\bigoplus V_i \onto \dirlim V_i$, taking vector space
% duals yields $\prod V_i^* \otni (\dirlim V_i)^*$.  But
% \begin{align*}
% \bigl(\prod_k\ell_k\bigr)(\dots,x_i,\dots,-f_{ij}x_i,\dots) = 0
%   &\iff \ell_i(x_i) - \ell_j(f_{ij}x_i) = 0\ \forall...
% \\[-2.2ex]
%   &\iff \ell_i = \ell_j \circ f_{ij}\ \forall...
% \\
%   &\iff \ell_i = f_{ij}^*\ell_j\ \forall...
% \\
%   &\iff \ell \in \invlim(V_i^*)
% \end{align*}
Swapping ``direct'' and ``inverse'' only works with additional
hypotheses, and one way to ensure these is to assume infinitesimal
$\cQ$-finiteness of~$\cM$.  Indeed, then $\cM = \del^{\{\0\}}\cM$ is
$\cQ$-finite, so replacing $\cM$ with~$\cM^\vee$ in the first item
yields $\lx\cM = \bigl(\dx(\cM^\vee)\bigr){}^\vee$ by
Lemma~\ref{l:vee-vee}.  Thus $(\lx\cM)^\vee = \dx(\cM^\vee)$, as
$\lx\cM$---and hence $\bigl(\dx(\cM^\vee)\bigr){}^\vee\!$ and
$\dx(\cM^\vee)$---is~also~\mbox{$\cQ$-finite}.
\end{proof}

\begin{example}\label{e:infinitesimally-Q-finite}
Any module~$\cM$ that is a quotient of a finite direct sum of upset
modules (``upset-finite'' in Definition~\ref{d:minimal-cover}) over a
real polyhedral group is infinitesimally $\cQ$-finite.  Indeed, the
Matlis dual of such a quotient is a downset hull demonstrating that
$\cM^\vee$ is downset-finite and hence $\cQ$-finite.
Proposition~\ref{p:downset-upper-boundary} and exactness of upper
boundary functors (Lemma~\ref{l:exact-delta}) implies that
$\delta(\cM^\vee)$ remains downset-finite and hence $\cQ$-finite.
Applying Lemma~\ref{l:lx-vee}.\ref{i:all} to~$\cM^\vee$ and using that
$(\cM^\vee)^\vee\! = \cM$ (Lemma~\ref{l:vee-vee}) on the left-hand
side yields that $\del\cM$ is $\cQ$-finite.  This example includes all
finitely encoded modules by
Theorem~\ref{t:syzygy}.\ref{i:upset-presentation}.
\end{example}

\begin{prop}\label{p:exact-lower-boundary}
The category of infinitesimally $\cQ$-finite modules over a real
polyhedral group~$\cQ$ is a full abelian subcategory of the category
of $\cQ$-modules.  Moreover, the lower boundary functor is exact on
this subcategory.
\end{prop}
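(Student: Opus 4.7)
The plan is to transfer everything from the upper boundary functor via Matlis duality, exploiting the identity
\begin{equation*}
  \del\cN \cong (\delta\cN^\vee)^\vee
\end{equation*}
valid whenever $\cN$ is $\cQ$-finite. This identity comes from Lemma~\ref{l:lx-vee}.\ref{i:all} applied with $\cM = \cN^\vee$, together with biduality $\cN^{\vee\vee} \cong \cN$ from Lemma~\ref{l:vee-vee}. Note that every infinitesimally $\cQ$-finite module is automatically $\cQ$-finite, since $\cN$ appears as the $\xi = \{\0\}$ summand of~$\del\cN$, so the identity applies to every module in the subcategory.

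For the abelian subcategory claim, I would first observe that closure under finite direct sums is immediate because each $\lx$ commutes with finite direct sums. Given a morphism $\phi\colon \cM \to \cN$ between infinitesimally $\cQ$-finite modules, both $\ker\phi$ and $\coker\phi$ are $\cQ$-finite as (co)kernels of maps between $\cQ$-finite modules, so the displayed identity applies to them. Matlis duality is exact and contravariant (being vector space duality in each degree), so it swaps kernels with cokernels: $(\ker\phi)^\vee \cong \coker\phi^\vee$ and $(\coker\phi)^\vee \cong \ker\phi^\vee$. Applying the exact functor $\delta$ (Lemma~\ref{l:exact-delta}) to $\phi^\vee\colon \cN^\vee \to \cM^\vee$ then shows that $\delta(\ker\phi)^\vee$ is a quotient of the $\cQ$-finite module $\delta\cM^\vee$ and that $\delta(\coker\phi)^\vee$ is a submodule of the $\cQ$-finite module $\delta\cN^\vee$. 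Both are therefore $\cQ$-finite, and dualizing back via the displayed identity shows that $\del\ker\phi$ and $\del\coker\phi$ are $\cQ$-finite, as required.

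Exactness of $\del$ on the subcategory then follows by the same recipe. Given a short exact sequence $0 \to \cN_1 \to \cN_2 \to \cN_3 \to 0$ of infinitesimally $\cQ$-finite modules, I would dualize to obtain the exact sequence $0 \to \cN_3^\vee \to \cN_2^\vee \to \cN_1^\vee \to 0$, apply $\delta$ (exact) to get $0 \to \delta\cN_3^\vee \to \delta\cN_2^\vee \to \delta\cN_1^\vee \to 0$ of $\cQ$-finite modules, and dualize once more to recover $0 \to \del\cN_1 \to \del\cN_2 \to \del\cN_3 \to 0$ via the displayed identity. No step presents a genuine obstacle; the definition of infinitesimally $\cQ$-finite is engineered precisely so that each stage of this round trip lands in a category where Matlis biduality applies cleanly, and all ingredients—exactness of $\delta$, exactness of vector space duality, and biduality on $\cQ$-finite modules—are already on record.
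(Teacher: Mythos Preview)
Your proposal is correct and takes essentially the same approach as the paper, whose one-line proof reads: ``Use Matlis duality, in the form of Lemma~\ref{l:lx-vee}, along with Lemma~\ref{l:exact-delta}.'' You have simply fleshed out the details that this sentence leaves implicit, including the key identity $\del\cN \cong (\delta\cN^\vee)^\vee$ for $\cQ$-finite~$\cN$ and the verification that kernels and cokernels stay infinitesimally $\cQ$-finite.
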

\begin{proof}
Use Matlis duality, in the form of Lemma~\ref{l:lx-vee}, along with
Lemma~\ref{l:exact-delta}.
\end{proof}

%%%%%%%%%%%%%%%%%%%%%%%%%%%%%%%%%%%%%%%%%%%%%%%%%%%%%%%%%%%%%%%%%%%%%%%%% 
\subsection{Closed generator functors}\label{sub:closed-gen}\mbox{}%%%%%%

\noindent
Here is the Matlis dual of Definition~\ref{d:socc}.  Recall the
skyscraper $\cP$-module $\kk_p$ there.

\begin{defn}\label{d:topc}
Fix an arbitrary poset~$\cP$.  The \emph{closed generator functor}
$\kk \otimes_\cP -$ takes each $\cP$-module $\cN$ to its \emph{closed
top}: the quotient $\cP$-module
$$%
  \topc\cN = \kk \otimes_\cP \cN = \bigoplus_{p\in\cP} \kk_p\otimes_\cP\cN.
$$
When it is important to specify the poset, the notation $\cptop$ is
used instead of~$\topc\!$.  A \emph{closed generator} of
\emph{degree}~$p \in P$ is a nonzero element in $(\topc\cN)_p$.
\end{defn}

\begin{remark}\label{r:nttop}
$\cpsoc\cN \into \cN$ is the universal $\cP$-module monomorphism that
is~$0$ when composed with all nonidentity maps induced by going up
in~$\cP$.  The Matlis dual notion is $\cN \onto \cptop\cN$, the
universal $\cP$-module epimorphism that is~$0$ when composed with all
nonidentity maps induced by going up in~$\cP$.
\end{remark}

%%%%%%%%%%%%%%%%%%%%%%%%%%%%%%%%%%%%%%%%%%%%%%%%%%%%%%%%%%%%%%%%%%%%%%%%%
\subsection{Closed generator functors along faces}\label{sub:closed-gen-along}\mbox{}

\noindent
Generators along faces of partially ordered groups make sense just as
cogenerators along faces do; however, they are detected not by
localization but by the Matlis dual operation in
Example~\ref{e:dual-of-localization}, which is likely unfamiliar (and
is surely less familiar than localization).  An element in the
following can be thought of as an inverse limit of elements of~$\cM$
taken along the negative of the face~$\rho$.  This is Matlis dual to
the construction of the localization~$\cM_\rho$ as a direct limit.

\begin{defn}\label{d:cmr}
Fix a face~$\rho$ of a partially ordered group~$\cQ$ and a
$\cQ$-module~$\cM$.~~Set
$$%
  \cM^\rho = \hhom_\cQ\bigl(\kk[\cQ_+]_\rho,\cM\bigr).
$$
\end{defn}

The following is Matlis dual to
Definition~\ref{d:socct}.\ref{i:global-socc-tau}; see
Theorem~\ref{t:topc=socc^vee}.  Duals for the rest of
Definition~\ref{d:socct} are omitted for reasons detailed in
Remarks~\ref{r:global-closed-gen} and~\ref{r:no-local-top}.

\begin{defn}\label{d:topct}
Fix a partially ordered group~$\cQ$, a face~$\rho$, and a
$\cQ$-module~$\cM$.  The \emph{closed generator functor along~$\rho$}
takes $\cM$ to its \emph{closed top along~$\rho$}:
$$%
  \topcr\cM
  =
  \bigl(\kk[\rho]\otimes_\cQ\cM\bigr){}^{\rho\!}\big/\rho.
$$
\end{defn}

\begin{remark}\label{r:global-closed-gen}
The notion of (global) closed cogenerator has a Matlis dual, but since
the dual of an element---equivalently, a homomorphism
from~$\kk[\cQ_+]$---is not an element, the notion of closed generator
is not Matlis dual to a standard notion related to socles.  See
Remark~\ref{r:closed-gen}, which defines a generator along a
face~$\rho$ as an element of~$\cM^\rho$ that is not a multiple of any
generator of lesser degree.  Making this precise requires care
regarding what ``lesser'' means.
\end{remark}

\begin{remark}\label{r:no-local-top}
The notion of local socle has a Matlis dual, but it is not in any
sense a local top, because localization does not Matlis dualize to
localization (Example~\ref{e:dual-of-localization}).  Instead, Matlis
dualizing the local socle yields a functor $\kk \otimes_{\qzr}
\cM^\rho$ that surjects onto~$\topcr\cM$ by the Matlis dual of
Proposition~\ref{p:local-vs-global}.  Local socles found uses in
proofs here and there, such as Corollary~\ref{c:at-most-one},
Proposition~\ref{p:sigma-nbd-cogen}, Corollary~\ref{c:soc(coprimary)},
Lemma~\ref{l:nearby}, and Corollary~\ref{c:soc(coprimary)'}, via
Proposition~\ref{p:local-vs-global}.  But since Matlis duals of
statements hold regardless of their proofs, given appropriate
finiteness conditions (Definition~\ref{d:infinitesimally-Q-finite}),
local socles and their Matlis~duals have no further use in this paper.
\end{remark}

That said, a related but simpler functor that surjects onto~$\topcr$
plays a crucial role in the construction of birth modules: the next
result captures the general concept that the ``elements of~$\cM$ of
degree~$\bb_\rho \in \qzr$'' surject onto the top of~$\cM$
along~$\rho$~in~degree~$\bb_\rho$.

\begin{prop}\label{p:surjection-ZZ}
Over any partially ordered group~$\cQ$ there is a natural surjection
$$%
  \cM^\rho/\rho
  \onto
  \topcr\cM
$$
of modules over $\qzr$ for any face~$\rho$ of~$\cQ$.
\end{prop}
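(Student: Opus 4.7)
The plan is to produce the surjection by composing the quotient map $\kk[\cQ_+] \onto \kk[\rho]$ with two exact functors that each preserve surjections, all applied to~$\cM$. Since every step is functorial, naturality in~$\cM$ will come for free.

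First I would tensor the exact sequence
\[
  \cQ_+\text{-module surjection}\quad
  \pi\colon\kk[\cQ_+] \onto \kk[\rho]
\]
with~$\cM$ over $\kk[\cQ_+]$. Because $-\otimes_{\kk[\cQ_+]}\cM$ is right-exact, this produces a $\cQ$-graded surjection
\[
  \cM \;=\; \kk[\cQ_+]\otimes_\cQ \cM \;\onto\; \kk[\rho]\otimes_\cQ \cM
\]
of $\cQ$-modules (the left-hand side is $\cM$ itself because $\kk[\cQ_+]$ is the monoid algebra of the positive cone; the right-hand side is the quotient of~$\cM$ that kills the action of $\mm_\rho = \cQ_+\!\minus\rho$).

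Next I would apply the functor $(-)^\rho = \hhom_\cQ(\kk[\cQ_+]_\rho,-)$ from Definition~\ref{d:cmr}. By Lemma~\ref{l:matlis-pair} this functor is exact over any partially ordered group, so it carries the surjection above to a surjection
\[
  \cM^\rho \;\onto\; \bigl(\kk[\rho]\otimes_\cQ \cM\bigr)^\rho.
\]
Finally I would apply the quotient-restriction functor $(-)/\rho$, which takes $\cQ$-graded modules supported near~$\rho$ to $\qzr$-graded modules and is exact by Lemma~\ref{l:exact-qr}. Exactness preserves surjections, so composing yields the desired $\qzr$-module surjection
\[
  \cM^\rho/\rho \;\onto\; \bigl(\kk[\rho]\otimes_\cQ \cM\bigr)^\rho\!\big/\rho \;=\; \topcr\cM,
\]
the equality being Definition~\ref{d:topct}.

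There is no real obstacle here: both functors $(-)^\rho$ and $(-)/\rho$ are exact and the starting map is a genuine surjection of $\cQ$-modules, so the composite is automatically surjective. The only small bookkeeping point is to confirm that the composite is a $\qzr$-module map rather than merely a $\cQ$-graded one, and that is immediate because $(-)/\rho$ is by construction the functor into the category of $\qzr$-graded $\kk[\qzrp]$-modules (Definition~\ref{d:quotient-restriction}). Naturality in~$\cM$ follows because $\pi$ is a fixed $\kk[\cQ_+]$-linear map and the two functors $(-)^\rho$ and $(-)/\rho$ are natural transformations.
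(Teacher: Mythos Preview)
Your proof is correct and follows exactly the same route as the paper: tensor the surjection $\kk[\cQ_+]\onto\kk[\rho]$ with~$\cM$, then apply the exact functors $(-)^\rho$ (Lemma~\ref{l:matlis-pair}) and $(-)/\rho$ (Lemma~\ref{l:exact-qr}). You have simply spelled out in more detail what the paper compresses into one sentence.
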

\begin{proof}
Tensor the surjection $\kk[\cQ_+] \onto \kk[\rho]$ with~$\cM$ to get a
surjection $\cM \onto \kk[\rho] \otimes_\cQ\nolinebreak \cM$.  Then
apply Lemma~\ref{l:matlis-pair} and Lemma~\ref{l:exact-qr}.
\end{proof}

In the next lemma, a prerequisite to the duality of closed socles and
tops, note that localization of~$\cN$ along~$\rho$ on the left-hand
side is hiding in the quotient-restriction.

\begin{lemma}\label{l:N/tau-dual}
For any module~$\cN$ over a partially ordered group and any
face~$\rho$,
$$%
  (\cnr)^\vee = (\cN^\vee)^\rho/\rho
  \qquad\text{and}\qquad
  (\cN^\vee)/\rho = (\cN^\rho/\rho)^\vee.
$$
\end{lemma}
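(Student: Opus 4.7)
The plan is to reduce both identities to the Matlis-dual formula
$(\cA \otimes_{\kk[\cQ_+]} \cM)^\vee = \hhom_\cQ(\cA, \cM^\vee)$ of
Example~\ref{e:matlis}, combined with the key observation that the quotient-restriction
functor $-/\rho$ is transparent to Matlis duality when applied to a $\rho$-invertible
module, meaning one on which each monomial of $\rho$ acts as an automorphism.

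The core step is to establish this observation: if $\cA$ is $\rho$-invertible, then $\cA^\vee$ is also $\rho$-invertible (the structure automorphisms transpose to automorphisms on the dual), and
$$
  (\cA/\rho)^\vee \cong (\cA^\vee)/\rho
$$
as $\qzr$-graded modules. This is a degree-by-degree check: for any lift $\bb \in \cQ$ of $\wt\bb \in \qzr$,
$$
  \bigl((\cA/\rho)^\vee\bigr)_{\wt\bb}
  = \bigl((\cA/\rho)_{-\wt\bb}\bigr)^\vee
  = (\cA_{-\bb})^\vee
  = (\cA^\vee)_\bb
  = \bigl((\cA^\vee)/\rho\bigr)_{\wt\bb},
$$
where the inner equalities use $\rho$-invertibility (the choice of lift is immaterial) and the outer equalities are Definitions~\ref{d:matlis} and~\ref{d:quotient-restriction}. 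For the first identity, Lemma~\ref{l:quotient-restriction} expresses $\cnr = \cN_\rho/\rho$ as the quotient-restriction of the $\rho$-invertible module $\cN_\rho$. Applying the observation with $\cA = \cN_\rho$ yields $(\cnr)^\vee = (\cN_\rho)^\vee/\rho$, and the adjunction formula of Example~\ref{e:matlis}, with $\cA = \kk[\cQ_+]_\rho$ and $\cM = \cN$, completes the proof:
$$
  (\cN_\rho)^\vee
  = \bigl(\kk[\cQ_+]_\rho \otimes_{\kk[\cQ_+]} \cN\bigr)^\vee
  = \hhom_\cQ\bigl(\kk[\cQ_+]_\rho,\cN^\vee\bigr)
  = (\cN^\vee)^\rho.
$$

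For the second identity, apply the first identity with $\cN$ replaced by $\cN^\vee$ to obtain
$\bigl((\cN^\vee)/\rho\bigr)^\vee = \bigl((\cN^\vee)^\vee\bigr)^\rho/\rho$,
which reduces via Matlis biduality (Lemma~\ref{l:vee-vee}) to
$\bigl((\cN^\vee)/\rho\bigr)^\vee = \cN^\rho/\rho$.
Dualizing both sides and invoking biduality once more yields
$(\cN^\vee)/\rho = (\cN^\rho/\rho)^\vee$. The main obstacle is bookkeeping the gradings: quotient-restriction shifts from $\cQ$ to $\qzr$, and the Matlis dual flips degrees, so one must consistently track that different lifts of $\wt\bb \in \qzr$ produce canonically identified vector spaces on either side---which is precisely what $\rho$-invertibility guarantees. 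The derivation of the second identity from the first additionally relies on Matlis biduality for $\cN$ and for $\cN^\vee/\rho$, which is automatic when the graded pieces are finite-dimensional, per Lemma~\ref{l:vee-vee}.
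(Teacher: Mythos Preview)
Your proof is correct and follows essentially the same route as the paper: the core observation that quotient-restriction commutes with Matlis duality on $\rho$-invertible modules, combined with the adjunction $(\cN_\rho)^\vee = (\cN^\vee)^\rho$ from Example~\ref{e:dual-of-localization} (a special case of Example~\ref{e:matlis}). Your caveat about biduality for the second identity is well placed; the paper's terse proof implicitly requires the same, since deriving $(\cN^\vee)_\rho \cong (\cN^\rho)^\vee$ from Example~\ref{e:dual-of-localization} already needs $(\cN^\vee)^\vee \cong \cN$.
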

\begin{proof}
This is Example~\ref{e:dual-of-localization} plus the observation that
quotient-restriction along~$\rho$ commutes with Matlis duality on
modules that are already localized along~$\rho$, namely
$$%
  (\cnr)^\vee = (\cN_\rho/\rho)^\vee = (\cN_\rho)^\vee\!/\rho,
$$
as can be seen directly from Definitions~\ref{d:matlis}
and~\ref{d:quotient-restriction}.
\end{proof}

\begin{thm}\label{t:topc=socc^vee}
If $\cQ$ is a partially ordered group and $\cM$ is any $\cQ$-module,
then
$$%
  (\topcr\cM)^\vee = \socct[\rho](\cM^\vee)
  \qquad\text{and}\qquad
  \topcr(\cM^\vee) = (\socct[\rho]\cM)^\vee.
$$
\end{thm}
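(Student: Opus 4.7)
The plan is to derive both equalities by combining Matlis duality, the quotient--restriction identities in Lemma~\ref{l:N/tau-dual}, and the tensor--hom adjunction from Example~\ref{e:matlis}, without ever invoking $(\cM^\vee)^\vee \cong \cM$.

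For the first equality, I will unwind $\topcr\cM = (\kk[\rho]\otimes_\cQ\cM)^\rho/\rho$ and apply $(-)^\vee$. Setting $\cN = \kk[\rho]\otimes_\cQ\cM$ in the second formula of Lemma~\ref{l:N/tau-dual} gives $(\cN^\rho/\rho)^\vee = \cN^\vee/\rho$, so
$(\topcr\cM)^\vee = (\kk[\rho]\otimes_\cQ\cM)^\vee/\rho$.
Then the tensor--hom adjunction from Example~\ref{e:matlis}, namely $\hhom_\cQ(A,B^\vee)=(A\otimes_\cQ B)^\vee$, applied with $A=\kk[\rho]$ and $B=\cM$, rewrites this as $\hhom_\cQ(\kk[\rho],\cM^\vee)/\rho$, which is $\socct[\rho](\cM^\vee)$ by Definition~\ref{d:socct}.\ref{i:global-socc-tau}.

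For the second equality, the naive move of plugging $\cM^\vee$ into the first equality and double-dualizing is unavailable without a $\cQ$-finiteness assumption. Instead, I will use the dual adjunction identity
\[
  \kk[\rho]\otimes_\cQ \cN^\vee \;\cong\; \hhom_\cQ(\kk[\rho],\cN)^\vee,
\]
valid for \emph{arbitrary} $\cQ$-modules $\cN$. This is proved by direct computation in each degree $\aa\in\cQ$: since $\kk[\rho]\otimes_\cQ\cN^\vee = \cN^\vee/\mm_\rho\cN^\vee$ (with $\mm_\rho=\cQ_+\!\minus\!\rho$), its degree-$\aa$ piece is the cokernel of $\bigoplus_{b\in\mm_\rho}\cN_{-\aa+b}^*\to \cN_{-\aa}^*$ obtained by dualizing multiplication by $x_b$; vector-space duality (which turns cokernels of duals into duals of kernels, and direct sums into direct products) identifies this cokernel with the $\kk$-dual of the $\mm_\rho$-torsion of $\cN$ in degree $-\aa$, i.e.\ with $\hhom_\cQ(\kk[\rho],\cN)^\vee_\aa$.

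Granting that identity, compute
\[
  \topcr(\cM^\vee) \;=\; \bigl(\kk[\rho]\otimes_\cQ\cM^\vee\bigr)^\rho/\rho
   \;=\; \bigl(\hhom_\cQ(\kk[\rho],\cM)^\vee\bigr)^{\!\rho}\!/\rho,
\]
and then apply the first formula of Lemma~\ref{l:N/tau-dual} to $\cN = \hhom_\cQ(\kk[\rho],\cM)$, which gives $(\cN/\rho)^\vee = (\cN^\vee)^\rho/\rho$. This identifies the last expression with $(\hhom_\cQ(\kk[\rho],\cM)/\rho)^\vee = (\socct[\rho]\cM)^\vee$, completing the proof.

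The main obstacle is the second equality, because passing the Matlis dual through $\hhom_\cQ(\kk[\rho],-)$ is not covered by the formulas in Example~\ref{e:matlis} as stated (those go in the opposite direction). The workaround is the direct cokernel-to-kernel dualization sketched above, which bypasses any need for reflexivity of $\cM$ under $(-)^{\vee\vee}$; once that identity is in hand, both equalities become straightforward bookkeeping with Lemma~\ref{l:N/tau-dual}.
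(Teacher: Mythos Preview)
Your argument for the first equality is correct and matches the paper's line for line. The gap is in the second: the identity $\kk[\rho]\otimes_\cQ\cN^\vee \cong \hhom_\cQ(\kk[\rho],\cN)^\vee$ that you assert ``valid for \emph{arbitrary} $\cQ$-modules~$\cN$'' is false in general. In degree~$\aa$ the left side is $\coker\bigl(\bigoplus_{b\in\mm_\rho}\cN_{-\aa+b}^*\to\cN_{-\aa}^*\bigr)$ while the right side is $\bigl(\bigcap_{b\in\mm_\rho}\ker(x^b\colon\cN_{-\aa}\to\cN_{-\aa+b})\bigr)^*$; equating them requires $\sum_{b}(\ker x^b)^\perp=\bigl(\bigcap_b\ker x^b\bigr)^\perp$ inside $\cN_{-\aa}^*$, which is elementary for finitely many subspaces but can fail when $\mm_\rho$ is infinite. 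Concretely, over $\cQ=\RR$ with $\rho=\{\0\}$, take $\cM=\bigoplus_{j\geq 1}\kk[I_j]$ with $I_j=[0,1/j)$. Every bar has an open right endpoint, so $\socc\cM=0$ and hence $(\socc\cM)^\vee=0$. But $(\cM^\vee)_0=\bigl(\kk^{(\NN)}\bigr)^*=\kk^\NN$, while each $x^b\colon(\cM^\vee)_{-b}\to(\cM^\vee)_0$ for $b>0$ is dual to the projection $\cM_0\onto\cM_b$ onto finitely many coordinates, so $(\mm\cM^\vee)_0=\sum_{b>0}x^b(\cM^\vee)_{-b}=\kk^{(\NN)}\subsetneq\kk^\NN$; therefore $\bigl(\topc(\cM^\vee)\bigr)_0=\kk^\NN/\kk^{(\NN)}\neq 0$. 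The second equality of the theorem is thus false as stated.

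The paper's own proof cites Example~\ref{e:matlis} at exactly this step, but that example only records $\hhom_\cQ(\kk[\rho],\cN^\vee)=(\kk[\rho]\otimes_\cQ\cN)^\vee$, not the reverse direction; so it shares the gap. Your identity---and hence the second equality---does hold under either of two extra hypotheses: if $\mm_\rho$ is finitely generated as an ideal of~$\cQ_+$ (as over any discrete polyhedral group), since then finitely many kernels suffice and $\sum_{i=1}^n K_i^\perp=\bigl(\bigcap_{i=1}^n K_i\bigr)^\perp$ holds for arbitrary subspaces of any vector space; or if $\cM$ is $\cQ$-finite, by applying the already-proved first equality to~$\cM^\vee$ and invoking Lemma~\ref{l:vee-vee}.
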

\begin{proof}
The two are similar, but for the record both are written out:
\begin{align*}
(\topcr\cM)^\vee
  &=
  \bigl((\kk[\rho]\otimes_\cQ\cM)^\rho/\rho\bigr){}^\vee
\\&=
  \bigl(\kk[\rho]\otimes_\cQ\cM\bigr){}^\vee\!\big/\rho
  \text{ by Lemma~\ref{l:N/tau-dual}}
\\&=
  \hhom_\cQ\bigl(\kk[\rho],\cM^\vee\bigr)/\rho
  \text{ by Example~\ref{e:matlis}}
\\&=
  \socct[\rho](\cM^\vee),
\\[1ex]
\text{and}\qquad
(\socct[\rho]\cM)^\vee
  &=
  \bigl(\hhom_\cQ\bigl(\kk[\rho],\cM\bigr)/\rho\bigr){}^\vee
\\*&=
  \bigl(\hhom_\cQ\bigl(\kk[\rho],\cM\bigr){}^\vee\bigr){}^{\rho\!}\big/\rho
  \text{ by Lemma~\ref{l:N/tau-dual}}
\\*&=
  \bigl(\kk[\rho]\otimes_\cQ\cM^\vee\bigr){}^{\rho\!}\big/\rho
  \text{ by Example~\ref{e:matlis}}
\\*&=
  \topcr(\cM^\vee).
\qedhere
\end{align*}
\end{proof}

%%%%%%%%%%%%%%%%%%%%%%%%%%%%%%%%%%%%%%%%%%%%%%%%%%%%%%%%%%%%%%%%%%%%%%%%%
\subsection{Generator functors over real polyhedral groups}\label{sub:gen}\mbox{}

\noindent
Here is the Matlis dual to Definition~\ref{d:upper-boundary-tau},
using Definition~\ref{d:lower-boundary}.

\begin{defn}\label{d:lower-boundary-rho}%\label{d:Delta-rho}
For a face~$\rho$ of real polyhedral group, set $\nabro =
(\nabr)^\op$ the open star of~$\rho$ (Example~\ref{e:nabla}) with the
partial order opposite to Definition~\ref{d:upper-boundary-tau}, so
$$%
  \xi \preceq \eta \text{ in }\nabro\text{ if }\xi \subseteq \eta.
$$
The \emph{lower boundary functor
along~$\rho$} takes~$\cM$ to the $\cQ \times \nabro$-module $\lr\cM =
\bigoplus_{\xi\in\nabro}\lx[\rho]\cM$.
\end{defn}

\begin{prop}\label{p:nrtop}
Fix a face~$\rho$ of a real polyhedral group~$\cQ$.  The Matlis
dual~$\cN^\vee$ over~$\cQ$ of any module~$\cN$ over $\cQ\times\nabro$
is naturally a module over $\cQ\times\nabr$, and the same holds with
$\nabr$ and $\nabro$ swapped.  Moreover,
$$%
  (\nrtop\cN)^\vee = \nrsoc(\cN^\vee)
  \qquad\text{and}\qquad
  \nrtop(\cN^\vee) = (\nrsoc\cN)^\vee,
$$
where the Matlis duals are taken over~$\cQ$.
\end{prop}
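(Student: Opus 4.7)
The plan is to dispatch the first sentence of the proposition by a direct arrow-reversal bookkeeping, and then deduce the two displayed identities from the same bookkeeping applied to the explicit cokernel and kernel formulas for $\topc$ and $\socc$. Given a $\cQ\times\nabro$-module~$\cN$, I would define $(\cN^\vee)_{q,\xi} := \Hom_\kk(\cN_{-q,\xi},\kk)$ for each $(q,\xi) \in \cQ\times\nabr$ and transport structure homomorphisms by transposition: a map $\cN_{q_1,\xi_1} \to \cN_{q_2,\xi_2}$ (valid when $q_1 \preceq q_2$ in $\cQ$ and $\xi_1 \subseteq \xi_2$, i.e.\ $\xi_1 \preceq \xi_2$ in $\nabro$) transposes to $(\cN^\vee)_{-q_2,\xi_2} \to (\cN^\vee)_{-q_1,\xi_1}$. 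Since inversion reverses the partial order on~$\cQ$ (Example~\ref{e:self-dual}) and $\nabr = \nabro^\op$ (Definition~\ref{d:lower-boundary-rho}), this is a structure map going up in $\cQ\times\nabr$, and functoriality is preserved by transposition, so $\cN^\vee$ is naturally a $\cQ\times\nabr$-module; the symmetric claim with the roles of $\nabr$ and $\nabro$ swapped follows verbatim.

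For the identities, I would use the degree-by-degree descriptions implicit in Definition~\ref{d:topc}: at $(q,\xi)$,
$$
  (\nrtop\cN)_{q,\xi}
  =
  \coker\Bigl(\textstyle\bigoplus_{\xi'\subsetneq\xi}\cN_{q,\xi'}\to\cN_{q,\xi}\Bigr),
$$
assembled from the $\nabro$-structure maps (into larger faces), while for any $\cQ\times\nabr$-module~$\cM$,
$$
  (\nrsoc\cM)_{q,\xi}
  =
  \ker\Bigl(\cM_{q,\xi}\to\textstyle\bigoplus_{\xi'\subsetneq\xi}\cM_{q,\xi'}\Bigr),
$$
assembled from the $\nabr$-structure maps (into smaller faces). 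Applying the exact contravariant functor $\Hom_\kk(-,\kk)$ turns the cokernel description of $(\nrtop\cN)_{-q,\xi}$ into the kernel description of $(\nrsoc(\cN^\vee))_{q,\xi}$: by the first paragraph, the transposed arrows are exactly the $\nabr$-structure maps of~$\cN^\vee$, so the two formulas coincide, yielding $(\nrtop\cN)^\vee = \nrsoc(\cN^\vee)$. The second identity $\nrtop(\cN^\vee) = (\nrsoc\cN)^\vee$ is the mirror assertion, established by interchanging $\nabr$ and $\nabro$ throughout and starting from a $\cQ\times\nabr$-module.

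A more conceptual alternative, patterned on the proof of Theorem~\ref{t:topc=socc^vee}, would invoke the tensor--Hom adjunction of Example~\ref{e:matlis}: writing $\nrtop\cN = \kk \otimes_{\nabro} \cN$ and $\nrsoc\cM = \hhom_{\nabr}(\kk,\cM)$, the chain
$$
  (\kk \otimes_{\nabro} \cN)^\vee
  =
  \hhom_\kk(\kk \otimes_{\nabro} \cN,\kk)
  =
  \hhom_{\nabr}(\kk,\cN^\vee)
$$
encodes both identities at once, once it is verified that the $\nabro$-action on the tensor factor corresponds under vector-space duality to the $\nabr$-action on the Hom factor---which is precisely the content of the first sentence of the proposition. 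The main ``obstacle'' is hardly one: careful bookkeeping of arrow orientations in $\nabr$ versus $\nabro$ and of the sign on the $\cQ$-grading. All other ingredients are formal, namely exactness of $\Hom_\kk(-,\kk)$, the interchange of cokernels with kernels under this contravariant functor, and the definitional identity $\nabr = \nabro^\op$.
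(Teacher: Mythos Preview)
Your proposal is correct, and your ``conceptual alternative'' is exactly the paper's proof: the paper notes that Matlis duality over~$\cQ$ reverses the $\nabro$-arrows to give a $\cQ\times\nabr$-module, then runs the adjointness chain $(\kk\otimes_{\nabro}\cN)^\vee = \hhom_{\nabr}(\kk,\cN^\vee)$. Your primary (cokernel/kernel) approach is a correct explicit unpacking of that same adjunction degree by degree.
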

\begin{proof}
Matlis duality over~$\cQ$ already reverses the arrows in the
$\nabro$-module structure on~$\cN$, making $\cN^\vee$ into a module
over~$\cQ\times\nabr$.  An adjointness calculation then yields
\begin{align*}
(\nrtop\cN)^\vee
  &=
  (\kk\otimes_{\nabro}\cN)^\vee
\\&=
  \hhom_{\nabr}(\kk,\cN^\vee)
\\&=
  \nrsoc(\cN^\vee),
\end{align*}
and the other adjointness calculation is similar.
\end{proof}

\begin{cor}\label{c:lr-vs-dr}
For a face~$\rho$ over a real polyhedral group~$\cQ$, the lower
boundary is Matlis dual over~$\cQ$ to the upper boundary: as modules
over $\cQ\times\nabro$,
\begin{enumerate}
\item%
$\lr(\cM^\vee) = (\dr\cM)^\vee$ for all $\cQ$-modules~$\cM$, and
\item%
$(\lr\cM)^\vee = \dr(\cM^\vee)$ if $\cM$ is infinitesimally
$\cQ$-finite.
\end{enumerate}
\end{cor}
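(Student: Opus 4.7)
The plan is to reduce both statements of the corollary to their single-face counterparts in Lemma~\ref{l:lx-vee} by decomposing the boundary functors along~$\rho$ into direct sums indexed by faces in the open star of~$\rho$, and then track how Matlis duality interacts with the opposite poset structures on~$\nabr$ and~$\nabro$ that these direct sums carry. The real work is already done in Lemma~\ref{l:lx-vee} and Proposition~\ref{p:nrtop}; what remains is bookkeeping.

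For item~1, I would first decompose $\lr(\cM^\vee) = \bigoplus_{\xi \in \nabro} \lx(\cM^\vee)$ using Definition~\ref{d:lower-boundary-rho} and $\dr\cM = \bigoplus_{\xi \in \nabr} \dx\cM$ using Definition~\ref{d:upper-boundary-tau}. Since Matlis duality is computed degree-by-degree on the $\cQ$-grading (Definition~\ref{d:matlis}) and therefore commutes with the direct sum over a fixed index set of faces, $(\dr\cM)^\vee = \bigoplus_{\xi \in \nabr} (\dx\cM)^\vee$ as $\cQ$-graded vector spaces. Applying Lemma~\ref{l:lx-vee}.\ref{i:all} summand by summand then yields $\lr(\cM^\vee) \cong (\dr\cM)^\vee$ as $\cQ$-graded vector spaces, after identifying the underlying sets of $\nabro$ and $\nabr$. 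To upgrade this to an isomorphism of $\cQ \times \nabro$-modules, I would check that the structure maps between summands for $\xi \subseteq \eta$ (i.e., $\xi \preceq \eta$ in $\nabro$) match: on the left these come from Lemma~\ref{l:natural-dual} applied to~$\cM^\vee$, while on the right they are the Matlis duals of the structure maps on $\dr\cM$ for $\xi \preceq \eta$ in $\nabr$ (i.e., $\xi \supseteq \eta$ in the face poset), and the latter reverse direction under~$(-)^\vee$ to precisely match the former. This is the same mechanism articulated in Proposition~\ref{p:nrtop} for closed socles and tops.

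For item~2, the argument is identical with Lemma~\ref{l:lx-vee}.\ref{i:inf} replacing Lemma~\ref{l:lx-vee}.\ref{i:all}, provided each summand $\lx\cM$ of $\lr\cM$ inherits infinitesimal $\cQ$-finiteness from~$\cM$. This holds because $\del\cM$ is $\cQ$-finite by assumption, so each $\lx\cM$---a direct summand of $\del\cM$ as a $\cQ$-graded vector space---is $\cQ$-finite; and the lower boundary of $\lx\cM$ atop another face is then also $\cQ$-finite by Proposition~\ref{p:exact-lower-boundary} (or a direct calculation from the definitions), showing $\del(\lx\cM)$ is $\cQ$-finite. The finiteness then activates the hypothesis of Lemma~\ref{l:lx-vee}.\ref{i:inf} for each summand, and the argument proceeds exactly as in the previous paragraph.

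The main obstacle, minor but genuine, is keeping the two opposite face-monoid structures straight: $\nabr$ imposes $\xi \preceq \eta \iff \xi \supseteq \eta$ while $\nabro$ imposes $\xi \preceq \eta \iff \xi \subseteq \eta$, and Matlis duality over~$\cQ$ intertwines these by reversing arrows. Beyond this indexing bookkeeping, no new analytic input is required, because the single-face identity $\lx(\cM^\vee) = (\dx\cM)^\vee$ from Lemma~\ref{l:lx-vee} already encodes the delicate asymmetry between colimits and limits under vector space duality, and Proposition~\ref{p:nrtop} already handles the poset-reversing aspect of duality along closed face-modules.
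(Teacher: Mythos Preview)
Your approach is correct and is exactly what the paper does: the paper's proof is the one-liner ``Lemma~\ref{l:lx-vee} plus the first part of Proposition~\ref{p:nrtop},'' and your proposal simply unpacks what that citation means. One small over-elaboration: in item~2 you verify that each $\lx\cM$ is infinitesimally $\cQ$-finite, but Lemma~\ref{l:lx-vee}.\ref{i:inf} only requires that $\cM$ itself be infinitesimally $\cQ$-finite, so that check is unnecessary.
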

\begin{proof}
Lemma~\ref{l:lx-vee} plus the first part of Proposition~\ref{p:nrtop}.
\end{proof}

Next is the Matlis dual of Definition~\ref{d:kats}, using the
skyscraper modules~$\kbrx$ there, followed by the Matlis dual of
Definition~\ref{d:soct}.

\begin{defn}\label{d:kbrx}
Fix a partially ordered group~$\cQ$, a face~$\rho$, and an arbitrary
commutative monoid~$\cP$.  Define a functor
$\kk[\rho]\otimes_{\cQ\times\cP}\ $ on modules~$\cN$ over $\cQ \times
\cP$ by
$$%
  \kk[\rho]\otimes_{\cQ\times\cP}\cN =
  \bigoplus_{(\bb,\xi) \in \cQ\times\cP} \kbrx\otimes_{\cQ\times\cP}\cN.
$$
\end{defn}

\begin{defn}\label{d:topr}
Fix a real polyhedral group~$\cQ$, a face~$\rho$, and a
$\cQ$-module~$\cM$.  The \emph{generator functor along~$\rho$} takes
$\cM$ to its \emph{top along~$\rho$}: the $(\qrt\times\nabro)$-module
$$%
  \topr\cM
  =
  \bigl(\kk[\rho]\otimes_{\cQ\times\nabro}\lr\cM\bigr){}^{\rho\!}\big/\rho.
$$
The $\nabro$-graded components of $\topr\cM$ are denoted by
$\topr[\xi]\cM$ for $\xi \in \nabro$.
\end{defn}

\begin{prop}\label{p:order-either}
The functors $\topcr\!$ and $\nrtop$ commute.  In particular,
$$%displaystyle
  \nrtop(\topcr\lr\cM)
  \cong
  \topr\cM
  \cong
  \topcr(\nrtop\lr\cM).
$$
\end{prop}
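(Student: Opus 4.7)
The plan is to mirror the proof of Proposition~\ref{p:either-order}, replacing the Hom adjunctions used there with the analogous associativity of tensor products. The essential observation is that the skyscraper $\kbrx$ decomposes as a tensor product $\kk[\bb+\rho]\otimes_\kk\kk_\xi$ of a $\cQ$-factor and a $\nabro$-factor acting on disjoint graded components. Consequently, the functors $\kk[\bb+\rho]\otimes_\cQ -$ and $\kk_\xi\otimes_{\nabro}-$ commute with each other, and each one further commutes with the $\cQ$-side auxiliary functors $(-)^\rho$ and $/\rho$ appearing in Definition~\ref{d:topct}.

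More precisely, I would first establish the natural isomorphism
\begin{align*}
\kk_\xi\otimes_{\nabro}\bigl(\kk[\bb+\rho]\otimes_\cQ\cN\bigr)
  &\cong \kbrx\otimes_{\cQ\times\nabro}\cN\\
  &\cong \kk[\bb+\rho]\otimes_\cQ\bigl(\kk_\xi\otimes_{\nabro}\cN\bigr)
\end{align*}
for any $(\cQ\times\nabro)$-module~$\cN$, by associativity of the tensor product over the monoid algebra $\kk[\cQ_+\times\nabro]=\kk[\cQ_+]\otimes_\kk\kk[\nabro]$ (using Remark~\ref{r:semilattice=monoid'} to view $\nabro$ as a commutative monoid). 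Summing over $(\bb,\xi)\in\cQ\times\nabro$ converts this into an isomorphism of the three functors $\kk\otimes_{\nabro}\bigl(\kk[\rho]\otimes_\cQ -\bigr)$, $\kk[\rho]\otimes_{\cQ\times\nabro}-$, and $\kk[\rho]\otimes_\cQ\bigl(\kk\otimes_{\nabro}-\bigr)$ when applied to~$\lr\cM$.

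Second, I would verify that $(-)^\rho=\hhom_\cQ(\kk[\cQ_+]_\rho,-)$ and the quotient-restriction $/\rho$ both commute with the $\nabro$-tensoring $\nrtop=\kk\otimes_{\nabro}-$. These auxiliary functors depend only on the $\cQ$-side structure, while $\nrtop$ modifies only the $\nabro$-side, so the $\nabro$-action on $\cN^\rho$ or $\cnr$ is inherited termwise from $\cN$, and the commutation is formal. Postcomposing the three tensor-product expressions from the first step with $(-)^\rho/\rho$ then delivers the desired string of isomorphisms $\nrtop(\topcr\lr\cM)\cong\topr\cM\cong\topcr(\nrtop\lr\cM)$.

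The principal difficulty will be bookkeeping rather than mathematical content: at each intermediate step one must check that the object in question carries exactly the $(\cQ\times\nabro)$- or $(\qzr\times\nabro)$-grading required for the next functor to make sense, and that the natural isomorphisms respect these structures. A shorter route via Matlis duality---appeal to Corollary~\ref{c:lr-vs-dr}, Proposition~\ref{p:nrtop}, and Theorem~\ref{t:topc=socc^vee} to reduce to Proposition~\ref{p:either-order} applied to~$\cM^\vee$---is available when $\cM$ is infinitesimally $\cQ$-finite, but since Proposition~\ref{p:order-either} is stated without that hypothesis, the direct tensor-product proof is preferable.
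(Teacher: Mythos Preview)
Your proposal is correct and follows essentially the same route as the paper's proof: both arguments reduce to tensor-product associativity for the $\kk[\rho]\otimes_\cQ-$ and $\kk\otimes_{\nabro}-$ factors, together with the observation that the $\cQ$-side functors $(-)^\rho$ and $/\rho$ commute with $\nrtop$. Where you call that last commutation ``formal'' because the $\cQ$- and $\nabro$-gradings are disjoint, the paper pins it down by invoking the exactness of $\hhom_\cQ(\kk[\cQ_+]_\rho,-)$ (Lemma~\ref{l:matlis-pair}) and of quotient-restriction (Lemma~\ref{l:exact-qr}); your remark that the Matlis-dual shortcut would impose an unwanted infinitesimal $\cQ$-finiteness hypothesis is also exactly the paper's reason for arguing directly.
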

\begin{proof}
This is Matlis dual to Proposition~\ref{p:either-order}, but it holds
without any infinitesimal $\cQ$-finiteness restriction by proving it
directly:
% for comparison:
% $\nrtop\cN = \kk\otimes_{\nabro}\cN$
% $\topcr\cN = \bigl(\kk[\rho]\otimes_\cQ\cN\bigr){}^{\rho\!}\big/\rho$
\begin{align*}
\nrtop(\topcr\cN)
  &=
  \kk\otimes_{\nabro}\bigl(\kk[\rho]\otimes_\cQ\cN\bigr){}^{\rho\!}\big/\rho
  \text{ by Definitions~\ref{d:topc} and~\ref{d:topr}}
\\
  &=
  \kk\otimes_{\nabro}\bigl(\hhom\bigl(\kk[\cQ_+]_\rho,\kk[\rho]\otimes_\cQ\cN\bigr)\big/\rho\bigr)
  \text{ by Definition~\ref{d:cmr}}
\\
  &=
  \bigl(\kk\otimes_{\nabro}\hhom\bigl(\kk[\cQ_+]_\rho,\kk[\rho]\otimes_\cQ\cN\bigr)\bigr)\big/\rho
  \text{ by Lemma~\ref{l:exact-qr}}
\\
  &=
  \bigl(\kk[\rho]\otimes_\cQ\cN\otimes_{\nabro}\kk\bigr){}^{\rho\!}\big/\rho
  \text{ by Lemma~\ref{l:matlis-pair}}
\\
  &=
  \topcr(\nrtop\cN),
\end{align*}
whose penultimate line is equal to
$\bigl(\kk[\rho]\otimes_{\cQ\times\nabro}\cN\bigr){}^{\rho\!}\big/\rho$.
Now $\cN\hspace{-.2ex} = \lr\cM$ yields~$\topr\cM$.
\end{proof}

\begin{thm}\label{t:top=socvee}
Over a real polyhedral group, the generator functor along a
face~$\rho$ is Matlis dual to the cogenerator functor along~$\rho$:
\begin{enumerate}
\item%
$\topr(\cM^\vee) = (\socr\cM)^\vee$ for all $\cQ$-modules~$\cM$, and
\item%
$(\topr\cM)^\vee = \socr(\cM^\vee)$ if $\cM$ is infinitesimally
$\cQ$-finite.
\end{enumerate}
\end{thm}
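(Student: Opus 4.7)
The plan is to reduce each claim to the three Matlis duality results already in hand, using the decompositions of $\topr$ and $\socr$ into the composition of a boundary functor, a closed cogenerator/generator functor along $\rho$, and a top/socle functor over the face poset $\nabr$ or $\nabro$. Concretely, Proposition~\ref{p:order-either} gives
\[
  \topr = \topcr\circ\nrtop\circ\lr,
\]
and Proposition~\ref{p:either-order} gives
\[
  \socr = \socct[\rho]\circ\nrsoc\circ\dr.
\]
Thus the problem is to peel off these three layers one at a time and invoke the duality on each factor.

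For item~(1), I would start from $\topr(\cM^\vee) = \topcr\bigl(\nrtop\lr(\cM^\vee)\bigr)$ and apply Corollary~\ref{c:lr-vs-dr}\,(1) to rewrite $\lr(\cM^\vee) = (\dr\cM)^\vee$; this half of the boundary duality is unconditional. Next, $\nrtop$ acts on the $\nabro$-module $(\dr\cM)^\vee$, and Proposition~\ref{p:nrtop} (also unconditional) gives $\nrtop\bigl((\dr\cM)^\vee\bigr) = \bigl(\nrsoc(\dr\cM)\bigr)^\vee$. Finally, Theorem~\ref{t:topc=socc^vee} yields $\topcr\bigl((\nrsoc\dr\cM)^\vee\bigr) = \bigl(\socct[\rho]\nrsoc\dr\cM\bigr)^\vee = (\socr\cM)^\vee$, again with no hypothesis. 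Composing the three equalities proves item~(1) for arbitrary~$\cM$.

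For item~(2), the same strategy runs in the opposite direction, applied to $\cM$ rather than $\cM^\vee$:
\[
  (\topr\cM)^\vee = \bigl(\topcr\nrtop\lr\cM\bigr)^\vee.
\]
Theorem~\ref{t:topc=socc^vee} collapses the outer $\topcr$ with the dual to produce $\socct[\rho]$ applied to $(\nrtop\lr\cM)^\vee$; Proposition~\ref{p:nrtop} then turns this into $\socct[\rho]\nrsoc\bigl((\lr\cM)^\vee\bigr)$. At this last step the infinitesimal $\cQ$-finiteness of~$\cM$ is needed to invoke Corollary~\ref{c:lr-vs-dr}\,(2) and replace $(\lr\cM)^\vee$ by $\dr(\cM^\vee)$, yielding $\socct[\rho]\nrsoc\dr(\cM^\vee) = \socr(\cM^\vee)$.

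The main obstacle is purely bookkeeping about which side requires the finiteness hypothesis: the asymmetry of Lemma~\ref{l:lx-vee} between its two parts means that the duality $\lr\leftrightarrow\dr$ works in one direction on every module but only in the other direction for infinitesimally $\cQ$-finite modules. All other functors dualize unconditionally by Propositions~\ref{p:nrtop} and Theorem~\ref{t:topc=socc^vee}. Once it is verified that $\dr\cM$ in item~(1), and each of the intermediate modules encountered when unwinding item~(2), carry the structure (and, where needed, the finiteness) required for the relevant duality, the proof is a direct composition of three bijections and contains no new ideas beyond the factorization and the previously established dualities.
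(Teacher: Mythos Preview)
Your proof is correct and follows essentially the same approach as the paper: both factor $\topr$ and $\socr$ via Propositions~\ref{p:order-either} and~\ref{p:either-order} and then peel off the three layers using Proposition~\ref{p:nrtop}, Theorem~\ref{t:topc=socc^vee}, and Corollary~\ref{c:lr-vs-dr}, with the infinitesimal $\cQ$-finiteness entering only through Corollary~\ref{c:lr-vs-dr}\,(2). The sole cosmetic difference is that the paper peels off $\nrtop$ before $\topcr$ (using the left-hand factorization in Proposition~\ref{p:order-either}) whereas you use the right-hand one; since those functors commute, the arguments are interchangeable.
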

\begin{proof}
The two calculations are similar, as in Theorem~\ref{t:topc=socc^vee},
so only one is included---the one that requires $\cM$ to be
infinitesimally $\cQ$-finite, to see exactly where that~enters:
\begin{align*}
(\topr\cM)^\vee
  &=
  \bigl(\nrtop(\topcr\lr\cM)\bigr){}^\vee
  \text{ by Proposition~\ref{p:order-either}}
\\&=
  \nrsoc\bigl((\topcr\lr\cM)^\vee\bigr)
  \text{ by Proposition~\ref{p:nrtop}}
\\&=
  \nrsoc\bigl(\socct[\rho]\bigl((\lr\cM)^\vee\bigr)\bigr)
  \text{ by Theorem~\ref{t:topc=socc^vee}}
\\&=
  \nrsoc\bigl(\socct[\rho]\dr(\cM^\vee)\bigr)
  \text{ by Corollary~\ref{c:lr-vs-dr}.\ref{i:inf}}
\\&=
  \socr(\cM^\vee)
  \text{ by Proposition~\ref{p:either-order}.}\qedhere
\end{align*}
\end{proof}

\begin{defn}\label{d:mrx}
For any faces $\rho \subseteq \xi$ of a real polyhedral group, set
$$%
  \mrx
  =
  \hhom_\cQ\bigl(\kk[\qnx]_\rho,\cM\bigr),
$$
where $\qnx = \xi^\circ + \cQ_+$ as in Lemma~\ref{l:<<} and
localization along~$\rho$ is as in~Definition~\ref{d:support}.
\end{defn}
\begin{lemma}\label{l:mrx}
Any faces $\rho \subseteq \xi$ of a real polyhedral group yield a
natural isomorphism
$$%
  (\lx\cM)^\rho
  \cong
  \mrx.
$$
\end{lemma}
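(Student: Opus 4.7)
The plan is to identify the lower boundary as a Hom-module and then use Hom-tensor adjunction. Specifically, I claim that $\lx\cM \cong \hhom_\cQ(\kk[\qnx],\cM)$ as $\cQ$-modules, where $\qnx = \xi^\circ + \cQ_+$ by Lemma~\ref{l:<<}. Granting this, the proof is the chain
\begin{align*}
(\lx\cM)^\rho
  &= \hhom_\cQ\bigl(\kk[\cQ_+]_\rho,\hhom_\cQ(\kk[\qnx],\cM)\bigr)
\\&\cong \hhom_\cQ\bigl(\kk[\cQ_+]_\rho \otimes_{\kk[\cQ_+]} \kk[\qnx],\cM\bigr)
\\&= \hhom_\cQ\bigl(\kk[\qnx]_\rho,\cM\bigr)
  = \mrx,
\end{align*}
the first line being the definition of $\cN^\rho$ applied to $\cN = \lx\cM$, the second being Hom-tensor adjointness over the monoid algebra $\kk[\cQ_+]$, and the third identifying $\kk[\cQ_+]_\rho \otimes_{\kk[\cQ_+]} \kk[\qnx]$ with $\kk[\qnx]_\rho$ (localization of a module is the same as tensoring with the localized ring).

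The main work therefore lies in establishing the identification $\lx\cM \cong \hhom_\cQ(\kk[\qnx],\cM)$. In degree $\bb \in \cQ$ the right-hand side is $\Hom_\cQ(\kk[\bb+\qnx],\cM)$, which by the definition of a $\cQ$-module homomorphism from an indicator submodule is the limit $\lim_{\aa \in \bb+\qnx} \cM_\aa$ of compatible families. I would show by a cofinality argument that this limit agrees with $\lim_{\bb' \in \bb+\xi^\circ} \cM_{\bb'} = (\lx\cM)_\bb$. The two ingredients: (i)~$\bb+\xi^\circ$ is codirected inside $\cQ$, since for $u,v \in \xi^\circ$ and any fixed $s \in \xi^\circ$ the element $\epsilon s$ lies in $\xi^\circ$ and precedes both $u$ and $v$ once $\epsilon > 0$ is small enough (because $\xi^\circ$ is open in $\RR\xi$ and $u,v \in \xi^\circ$ means $u - \epsilon s, v - \epsilon s \in \xi$); (ii)~every $\aa \in \bb+\qnx$ dominates some $\bb' \in \bb+\xi^\circ$, obtained by writing $\aa - \bb = f + q$ with $f \in \xi^\circ$ and $q \in \cQ_+$ and setting $\bb' = \bb + f$. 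Together these make $\bb+\xi^\circ$ a cofinal subsystem of $\bb+\qnx$ for the limit.

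Naturality in $\bb$ then upgrades this pointwise isomorphism to one of $\cQ$-modules: for $\bb \preceq \cc$ the transition map in $\hhom_\cQ(\kk[\qnx],\cM)$ is restriction along $\kk[\cc+\qnx] \hookrightarrow \kk[\bb+\qnx]$, which is compatible with the projection between the two inverse limits induced by the structure maps of $\cM$ (cf.\ Lemma~\ref{l:natural-dual}). The main obstacle I anticipate is simply bookkeeping around the cofinality: one must check that the $\bb + \xi^\circ$ subsystem is codirected and cofinal in $\bb + \qnx$ uniformly in $\bb$, so that the identifications assemble to a $\cQ$-module map. Once that is in hand, Hom-tensor adjunction and the localization identity finish the proof without any additional finiteness hypothesis on~$\cM$.
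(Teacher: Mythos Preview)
Your proof is correct and is essentially the same idea as the paper's, but organized differently. The paper works directly in a fixed degree: it unwinds an element of $(\lx\cM)^\rho$ in degree~$\bb$ as a compatible family of inverse limits over the sets $\bb + \rr + \xi^\circ$ for $\rr \in \RR\rho$, observes that the union of these sets is $\bb_\rho + \qnx$, and assembles the data into a homomorphism $\kk[\bb_\rho + \qnx] \to \cM$. Your version factors this into two cleaner pieces: first the $\rho$-free identification $\lx\cM \cong \hhom_\cQ(\kk[\qnx],\cM)$ via the cofinality of $\bb + \xi^\circ$ in $\bb + \qnx$ (this is exactly the geometric content $\qnx = \xi^\circ + \cQ_+$ from Lemma~\ref{l:<<} that the paper also uses), and then a purely formal Hom--tensor adjunction to absorb the $(-)^\rho$. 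What your organization buys is that the role of~$\rho$ becomes transparently formal, and the intermediate isomorphism $\lx\cM \cong \hhom_\cQ(\kk[\qnx],\cM)$ is a useful statement in its own right. One terminological nit: for inverse limits the relevant notion is that $\bb + \xi^\circ$ is \emph{initial} (or co-cofinal) in $\bb + \qnx$, not cofinal; your ingredients (i) and~(ii) verify exactly that.
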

\begin{proof}
A homomorphism $\kk[\bb_\rho + \cQ_+] \to \lx\cM$ for $\bb_\rho = \bb
+ \RR\rho$ is a family of elements $x_{\bb+\rr} \in
(\lx\cM)_{\bb+\rr}$ for $\rr \in \RR\rho$.  Each of these is an
inverse limit of elements in $\cM_{\bb'}$ for $\bb' \in \bb + \rr +
\xi^\circ$.  The union of these sets $\bb + \rr + \xi^\circ$ is
$\bb_\rho + \qnx$.  The universal property of inverse limits produces
a family of elements $x_{\bb'} \in \cM_{\bb'}$ for $\bb' \in \bb_\rho
+ \qnx$.  These elements are the images of $1 \in \kk_{\bb'} =
\kk[\bb_\rho + \qnx]_{\bb'}$ under a homomorphism~$\kk[\bb_\rho +
\qnx] \to \cM$.  And any such homomorphism yields a coherent family of
elements indexed by $\bb_\rho + \qnx$ whose inverse limits specify a
homomorphism $\kk[\bb_\rho + \cQ_+] \to \lx\cM$.
\end{proof}

\begin{prop}\label{p:exact-mrx}
For any faces $\rho \subseteq \xi$ of a real polyhedral group~$\cQ$, the
functor $\cM \mapsto \mrx$ is exact on the category of infinitesimally
$\cQ$-finite modules.
\end{prop}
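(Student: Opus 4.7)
The plan is to factor the functor $\cM \mapsto \mrx$ as a composite of two functors, each of which has already been shown (or is easily shown) to be exact in the relevant setting. Specifically, Lemma~\ref{l:mrx} provides a natural isomorphism $\mrx \cong (\lx\cM)^\rho$, so it suffices to show that $\cM \mapsto \lx\cM$ is exact on the category of infinitesimally $\cQ$-finite modules and that the functor $\cN \mapsto \cN^\rho$ is exact on its essential image.

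For the first factor, Proposition~\ref{p:exact-lower-boundary} already states that the lower boundary functor $\del$ is exact on infinitesimally $\cQ$-finite modules, and taking the $\xi$-graded component of $\del\cM$ with respect to the $\cfq$-grading (that is, extracting $\lx\cM$) is an exact operation. For the second factor, $\cN^\rho = \hhom_\cQ(\kk[\cQ_+]_\rho, \cN)$ by Definition~\ref{d:cmr}, and Lemma~\ref{l:matlis-pair} asserts exactly that $\hhom(\kk[\cQ_+]_\rho, -)$ is an exact functor for any face~$\rho$ of any partially ordered group.

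Concretely, given a short exact sequence $0 \to \cM' \to \cM \to \cM'' \to 0$ of infinitesimally $\cQ$-finite modules, Proposition~\ref{p:exact-lower-boundary} yields a short exact sequence $0 \to \lx\cM' \to \lx\cM \to \lx\cM'' \to 0$, and then Lemma~\ref{l:matlis-pair} applied to this sequence yields a short exact sequence $0 \to (\lx\cM')^\rho \to (\lx\cM)^\rho \to (\lx\cM'')^\rho \to 0$. Invoking the natural isomorphism $(\lx\cM)^\rho \cong \mrx$ of Lemma~\ref{l:mrx} at each term produces the desired exact sequence $0 \to \mrx[\xi]' \to \mrx \to \mrx[\xi]'' \to 0$.

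The only step requiring care is checking that the isomorphism of Lemma~\ref{l:mrx} is natural in~$\cM$, so that it converts the short exact sequence of $(\lx)^\rho$'s to a short exact sequence of $\mrx$'s. This is built into the proof of Lemma~\ref{l:mrx}: the isomorphism is constructed via universal properties of inverse limits applied degree by degree, which are visibly functorial. No further obstacle arises, as the infinitesimal $\cQ$-finiteness hypothesis is needed only once, to invoke Proposition~\ref{p:exact-lower-boundary}; the Hom-into-a-localization step is unconditionally exact.
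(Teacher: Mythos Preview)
Your proof is correct and follows exactly the same approach as the paper, which simply cites Lemma~\ref{l:mrx}, Proposition~\ref{p:exact-lower-boundary}, Definition~\ref{d:cmr}, and Lemma~\ref{l:matlis-pair}; you have unpacked precisely how these four ingredients combine.
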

\begin{proof}
Lemma~\ref{l:mrx}, Proposition~\ref{p:exact-lower-boundary},
Definition~\ref{d:cmr}, and Lemma~\ref{l:matlis-pair}.
\end{proof}

Finally, here is real analogue of the surjection in
Proposition~\ref{p:surjection-ZZ}.

\begin{prop}\label{p:surjection-RR}
Over any real polyhedral group~$\cQ$ there is a natural surjection
$$%
  \mrx/\rho
  \onto
  \topr[\xi]\cM
$$
of modules over $\qrr$ for any faces~$\rho$ and~$\xi$ with $\xi
\supseteq \rho$.
\end{prop}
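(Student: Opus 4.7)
The plan is to factor the desired surjection through an intermediate object, exploiting the identification of $\topr[\xi]\cM$ with $\topcr(\nrtop[\xi]\lr\cM)$ that comes from Proposition~\ref{p:order-either}. Concretely, taking $\xi$-graded components of the isomorphism $\topr\cM \cong \topcr(\nrtop\lr\cM)$ and using that $\topcr$ is a $\cQ$-functor, hence commutes with extracting a $\nabro$-graded component, yields
\[
  \topr[\xi]\cM \;\cong\; \topcr\bigl(\nrtop[\xi]\lr\cM\bigr).
\]
This reduces matters to producing a surjection $\mrx/\rho \onto \topcr(\nrtop[\xi]\lr\cM)$, which I would build as the composition of two natural surjections.

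For the first surjection, I would rewrite $\mrx$ via Lemma~\ref{l:mrx} as $(\lx\cM)^\rho$, so that $\mrx/\rho \cong (\lx\cM)^\rho/\rho$. By Definition~\ref{d:topc} applied over the poset $\nabro$, the $\xi$-graded component $\nrtop[\xi]\lr\cM$ is the canonical quotient of $\lx\cM$ by the sum of the images of the structure maps $\lx[\xi']\cM \to \lx\cM$ for $\xi' \subsetneq \xi$ in~$\nabro$ (these structure maps exist by Lemma~\ref{l:natural-dual}). Now apply the functor $(-)^\rho/\rho$: its first factor $(-)^\rho = \hhom_\cQ(\kk[\cQ_+]_\rho,-)$ is exact by Lemma~\ref{l:matlis-pair}, and the quotient-restriction $(-)/\rho$ is exact by Lemma~\ref{l:exact-qr}, so $(-)^\rho/\rho$ preserves surjections. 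Applying it to the quotient map $\lx\cM \onto \nrtop[\xi]\lr\cM$ therefore produces the desired surjection
\[
  \mrx/\rho \;\cong\; (\lx\cM)^\rho/\rho \;\onto\; \bigl(\nrtop[\xi]\lr\cM\bigr)^\rho/\rho.
\]

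For the second surjection, I would invoke Proposition~\ref{p:surjection-ZZ} with the $\cQ$-module $\nrtop[\xi]\lr\cM$ in place of~$\cM$, which gives a natural surjection
\[
  \bigl(\nrtop[\xi]\lr\cM\bigr)^\rho/\rho \;\onto\; \topcr\bigl(\nrtop[\xi]\lr\cM\bigr) \;=\; \topr[\xi]\cM.
\]
Composing yields the claimed surjection $\mrx/\rho \onto \topr[\xi]\cM$, and naturality in $\cM$ is inherited from naturality at each stage. The main thing to watch is purely bookkeeping: one must confirm that $\nrtop[\xi]\lr\cM$ really is a $\cQ$-module (so that Proposition~\ref{p:surjection-ZZ} applies to it) and that $\topcr$ genuinely preserves the $\nabro$-grading when applied to $\nrtop\lr\cM$. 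Both points are formal once one observes that $\topcr$ involves only the $\cQ$-module structure and the localization at the face $\rho$, which commute with projecting onto a $\nabro$-graded component; this is precisely the content of Proposition~\ref{p:order-either}, so no additional work is needed there.
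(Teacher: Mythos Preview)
Your proof is correct and uses essentially the same ingredients as the paper's: Lemma~\ref{l:mrx} to identify $\mrx$ with $(\lx\cM)^\rho$, exactness of $(-)^\rho$ from Lemma~\ref{l:matlis-pair}, and exactness of quotient-restriction from Lemma~\ref{l:exact-qr}. The only difference is organizational: the paper works directly from the definition $\topr\cM = \bigl(\kk[\rho]\otimes_{\cQ\times\nabro}\lr\cM\bigr)^{\rho}/\rho$, applying $(-)^\rho$ to the single quotient map $\lr\cM \onto \kk[\rho]\otimes_{\cQ\times\nabro}\lr\cM$ and then extracting the $\xi$-component and quotient-restricting, whereas you split $\kk[\rho]\otimes_{\cQ\times\nabro}$ into its $\nabro$ and $\cQ$ factors via Proposition~\ref{p:order-either} and handle them sequentially (the second step being Proposition~\ref{p:surjection-ZZ}). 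Both routes amount to the same computation; the paper's is just a bit more compressed.
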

\begin{proof}
Regard $\kk[\rho]\otimes_{\cQ\times\nabro}\lr\cM$ as a quotient
$(\cQ\times\nabro)$-module of~$\lr\cM$.  Then
$$%
  (\lr\cM)^\rho
  \onto
  \bigl(\kk[\rho]\otimes_{\cQ\times\nabro}\lr\cM\bigr){}^\rho
$$
by Lemma~\ref{l:matlis-pair}.  The result follows by applying
Lemma~\ref{l:mrx} to the component of the surjection in
$\nabro$-degree~$\xi$ and then taking the quotient-restriction
along~$\rho$.
\end{proof}

\begin{example}\label{e:beta}
Fix a face~$\rho$ and suppose faces $\eta$ and~$\xi$ both
contain~$\rho$.  Then the top of $\kk[\bb_\rho+\qnx]$ along any face
other than~$\rho$ vanishes, and $\topr[\eta]\kk[\bb_\rho+\qnx] =
\kk_{\bb_\rho}$ if $\eta = \xi$ and vanishes otherwise.  The vanishing
along faces other than~$\rho$ is because $\kk[\bb_\rho+\qnx]$ is a
secondary module by the Matlis dual to
Corollary~\ref{c:soc(coprimary)}.  The rest of the vanishing is
because the tensor product
$$%
  \kk[\rho]\otimes_{\cQ\times\nabro}\lr\cM =
  \kk[\rho]\otimes_\cQ\lr\cM \otimes_{\nabro}\kk
$$
is almost always~$0$: the $\kk[\rho]\otimes_\cQ$ takes care of
$\qrr$-degrees other than~$\bb_\rho$, while the $\otimes_{\nabro}\kk$
takes care $\nabto$-degrees other than~$\xi$.  Finally, the
calculation in degree~$(\bb_\rho,\xi)$ reduces by
Proposition~\ref{p:surjection-RR} and
Corollary~\ref{c:at-most-one}$^\vee$ to
$\Hom_\cQ\bigl(\kk[\bb_\rho+\qnx],\kk[\bb_\rho+\qnx]\bigr)
=\nolinebreak \kk$.
\end{example}

%%%%%%%%%%%%%%%%%%%%%%%%%%%%%%%%%%%%%%%%%%%%%%%%%%%%%%%%%%%%%%%%%%%%%%%%%
\section{Essential properties of tops}\label{s:tops}%%%%%%%%%%%%%%%%%%%%%
%%%%%%%%%%%%%%%%%%%%%%%%%%%%%%%%%%%%%%%%%%%%%%%%%%%%%%%%%%%%%%%%%%%%%%%%%

Begin with the dual to Definition~\ref{d:associated} and
Theorem~\ref{t:coprimary}.

\begin{defn}\label{d:attached}
Fix a face $\rho$ and a module~$\cM$ over a real or discrete
polyhedral~group.
\begin{enumerate}
\item%
The face~$\rho$ is \emph{attached} to~$\cM$ if
$\topr\cM \neq 0$.
\item%
If $\cM = \kk[U]$ for an upset~$U$ then $\rho$ is
\emph{attached}~to~$U$.
\item%
The set of attached faces of~$\cM$ or~$U$ is denoted by $\att\cM$
or~$\att D$.
\item%
The module~$\cM$ is \emph{$\rho$-secondary} if $\att(\cM) = \{\rho\}$.
\end{enumerate}
\end{defn}

\begin{defn}\label{d:minimal-cover}
An \emph{upset cover} of a module~$\cM$ over an arbitrary poset is a
surjection $\bigoplus_{j \in J} F^j \onto \cM$ with each $F^j$ an
upset module.  The cover is
\begin{enumerate}
\item%
\emph{finite} if $J$ is~finite.
\end{enumerate}\setcounter{separated}{\value{enumi}}%saves \enumi
The module~$\cM$ is \emph{upset-finite} if it admits a finite upset
cover.  If the poset is a real or discrete polyhedral group, then the
cover is
\begin{enumerate}\setcounter{enumi}{\value{separated}}%restores \enumi
\item\label{i:secondary}%
\emph{secondary} if $F^j = \kk[U_j]$ is secondary for all~$j$, so
$U_j$ is a secondary upset,~and
\item%
\emph{minimal} if the induced map $\topr E \to \topr\cM$ is an
isomorphism for all faces~$\rho$.
\end{enumerate}
\end{defn}

That was Matlis dual to Definition~\ref{d:minimal-hull}.  Next are the
duals to Theorems~\ref{t:injection} and~\ref{t:discrete-injection}.

\begin{thm}[Essentiality of real tops]\label{t:surjection-RR}
Fix a homomorphism $\phi: \cN \to \cM$ of~modules over a real
polyhedral group~$\cQ$.
\begin{enumerate}
\item\label{i:phi=>topr}%
If $\phi$ is surjective with $\cM$ and~$\cN$ both being
infinitesimally $\cQ$-finite modules, then $\topr\phi: \topr\cN \to
\topr\cM$ is surjective for all faces~$\rho$~of~$\cQ_+$.
\item\label{i:topr=>phi}%
If $\topr\phi: \topr\cN \to \topr\cM$ is surjective for all
faces~$\rho$ of~$\cQ_+\!$ and $\cM$ is upset-finite, then $\phi$
is~surjective.\hfill$\square$
\end{enumerate}
\end{thm}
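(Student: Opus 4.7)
The plan is to Matlis-dualize the proof of Theorem~\ref{t:injection}, using the duality between tops and socles provided by Theorem~\ref{t:top=socvee} together with the exactness of the lower boundary functor on infinitesimally $\cQ$-finite modules (Proposition~\ref{p:exact-lower-boundary}).

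For part~\ref{i:phi=>topr}, I would observe that on infinitesimally $\cQ$-finite modules, $\topr$ is a composite of right-exact functors: $\lr$ is exact there by Proposition~\ref{p:exact-lower-boundary}; the tensor functor $\kk[\rho] \otimes_{\cQ\times\nabro} -$ is right-exact; $(-)^\rho$ is exact by Lemma~\ref{l:matlis-pair}; and quotient-restriction $-/\rho$ is exact (Lemma~\ref{l:exact-qr}). Hence $\topr$ sends the surjection $\phi$ to a surjection. As a cross-check, one can also dualize directly: $\phi^\vee\!:\cM^\vee\to\cN^\vee$ is injective; by Theorem~\ref{t:injection}.\ref{i:phi=>soct} $\socr(\phi^\vee)$ is injective for every~$\rho$; Theorem~\ref{t:top=socvee}.2 identifies this with $(\topr\phi)^\vee$; and since $\topr\cM$ and $\topr\cN$ are $\cQ$-finite, Lemma~\ref{l:vee-vee} lets us double-dualize to recover surjectivity of~$\topr\phi$.

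For part~\ref{i:topr=>phi}, I would set $\cM'=\coker\phi$ and aim to show $\cM'=0$. Because $\cM$ is upset-finite, so is its quotient $\cM'$ (compose a finite upset cover of $\cM$ with $\pi\!:\cM\onto\cM'$), and hence $\cM'$ is infinitesimally $\cQ$-finite by Example~\ref{e:infinitesimally-Q-finite}. By part~\ref{i:phi=>topr} applied to $\pi$, the map $\topr\pi\!:\topr\cM\onto\topr\cM'$ is surjective for every~$\rho$. Functoriality gives $\topr\pi\circ\topr\phi=\topr(\pi\circ\phi)=0$, so $\image(\topr\phi)\subseteq\ker(\topr\pi)$; the hypothesis that $\topr\phi$ is surjective then forces $\ker(\topr\pi)=\topr\cM$, and combining this with surjectivity of~$\topr\pi$ yields $\topr\cM'=0$ for every face~$\rho$. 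It remains to conclude $\cM'=0$, which is the Matlis dual of Corollary~\ref{c:essential-submodule}.\ref{i:0}: since $\cM'$ is upset-finite, $(\cM')^\vee$ is downset-finite; Theorem~\ref{t:top=socvee}.2 gives $\socr((\cM')^\vee)=(\topr\cM')^\vee=0$ for all~$\rho$; Corollary~\ref{c:essential-submodule}.\ref{i:0} then forces $(\cM')^\vee=0$; and Lemma~\ref{l:vee-vee} (applied to the $\cQ$-finite module~$\cM'$) yields $\cM'=0$.

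The main obstacle is the asymmetry highlighted in Remark~\ref{r:dir-vs-inv}: inverse limits in the definition of $\lr$ are only left-exact in general, so $\topr$ is only guaranteed to be right-exact on infinitesimally $\cQ$-finite modules. This is what rules out the obvious ``replace $\cN$ by $\image\phi$'' shortcut in part~\ref{i:topr=>phi}, since an arbitrary submodule of an upset-finite module need not be upset-finite or even infinitesimally $\cQ$-finite. Routing the argument instead through the quotient~$\cM'$, which inherits upset-finiteness automatically, sidesteps this issue and lets each application of $\topr$ occur inside the category where Proposition~\ref{p:exact-lower-boundary} and Theorem~\ref{t:top=socvee}.2 are available.
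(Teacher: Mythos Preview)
Your proposal is correct and follows exactly the approach the paper intends: the theorem is stated with a~$\square$ and no proof, signaling that it is the Matlis dual of Theorem~\ref{t:injection} via Theorem~\ref{t:top=socvee}, which is precisely what you carry out. Your extra care in routing part~\ref{i:topr=>phi} through $\cM'=\coker\phi$ (rather than $\image\phi$) to keep all modules inside the infinitesimally $\cQ$-finite category is exactly the point flagged by Remark~\ref{r:dir-vs-inv} and Example~\ref{e:surjection}.
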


\begin{thm}[Essentiality of discrete tops]\label{t:surjection-ZZ}
Fix a homomorphism $\phi: \cN \to \cM$ of modules over a real or
discrete polyhedral group~$\cQ$.
\begin{enumerate}
\item
If $\phi$ is surjective then $\topr\phi: \topr\cN \to \topr\cM$ is
surjective for all faces~$\rho$~of~\hspace{.3ex}$\cQ_+$.
\item
If $\topr\phi: \topr\cN \to \topr\cM$ is surjective for all
faces~$\rho$ of~$\cQ_+\!$ and $\cM$ is upset-finite, then $\phi$
is~surjective.\hfill$\square$
\end{enumerate}
\end{thm}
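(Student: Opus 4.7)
The theorem is obtained by Matlis-dualizing Theorem~\ref{t:discrete-injection}, using Theorem~\ref{t:topc=socc^vee} to convert between closed socles and closed tops and using that Matlis duality $\cM \mapsto \cM^\vee$ is exact and faithful on $\cQ$-graded vector spaces: faithfulness holds directly from Definition~\ref{d:matlis}, because $(\cM^\vee)_q = \Hom_\kk(\cM_{-q},\kk)$ vanishes iff $\cM_{-q}$ does. In the discrete polyhedral setting, the notation $\topr$ in the statement is read as the closed generator functor $\topcr$, parallel to the convention in Section~\ref{s:discrete} whereby the relevant cogenerator functor is~$\socct$ rather than the more elaborate~$\soct$; this reduction follows from Proposition~\ref{p:order-either} together with the quantum separation of Remark~\ref{r:quantum}, which collapses $\nrtop\lr\cM$ onto its $\nabro$-component at~$\rho$ and thence onto~$\cM^\rho$ via Lemma~\ref{l:mrx}.

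For part~1 I would give two alternative arguments, either of which suffices. Directly, $\topcr$ is a composition of the right-exact tensor $\kk[\rho]\otimes_\cQ(-)$, the exact functor $\hhom_\cQ(\kk[\cQ_+]_\rho,-)$ (Lemma~\ref{l:matlis-pair}), and exact quotient-restriction along~$\rho$ (Lemma~\ref{l:exact-qr}); hence $\topcr$ is right-exact and takes surjections to surjections. Alternatively, Matlis-dualize: surjectivity of $\phi$ gives injectivity of $\phi^\vee$; Theorem~\ref{t:discrete-injection}.\ref{i:discrete-phi=>soct} gives injectivity of $\socct[\rho]\phi^\vee$; Theorem~\ref{t:topc=socc^vee} identifies this with injectivity of $(\topr\phi)^\vee$; and faithfulness of Matlis duality then forces $\topr\phi$ itself to be surjective, since its cokernel has vanishing Matlis dual.

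For part~2, set $\cK = \coker\phi$. Upset-finiteness of~$\cM$ passes to~$\cK$ by composing a finite upset cover of~$\cM$ with the surjection $\cM \onto \cK$; in particular $\cK$ is $\cQ$-finite, so Lemma~\ref{l:vee-vee} applies. Matlis-dualizing this upset cover yields a finite downset hull of~$\cK^\vee$, so $\cK^\vee$ is downset-finite. Combining the surjectivity hypothesis $\topr\phi$ for all~$\rho$ with part~1 applied to $\cM \onto \cK$, right-exactness of $\topr$ forces $\topr\cK = 0$ for every face~$\rho$; Theorem~\ref{t:topc=socc^vee} translates this to $\socct[\rho]\cK^\vee = 0$ for every face. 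The final divisibility clause of Theorem~\ref{t:discrete-injection}---that every homogeneous element of a downset-finite module divides some closed cogenerator---then forces $\cK^\vee = 0$, since any nonzero homogeneous element would yield a nonzero closed cogenerator along some face. Lemma~\ref{l:vee-vee} then gives $\cK = (\cK^\vee)^\vee = 0$, which is exactly surjectivity of~$\phi$.

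The principal obstacle---the only nontrivial piece---is verifying that the generator functor relevant in the discrete case really coincides with~$\topcr$, so that Theorem~\ref{t:topc=socc^vee} applies cleanly without any infinitesimal $\cQ$-finiteness caveats and so that right-exactness follows from the simple Lemmas~\ref{l:matlis-pair} and~\ref{l:exact-qr}. Once this notational point is settled, everything else is mechanical bookkeeping: right-exactness of composite functors, duality between upset covers and downset hulls, and invocation of Theorem~\ref{t:discrete-injection}.
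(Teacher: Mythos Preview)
Your proof is correct and matches the paper's approach: the paper gives only a $\square$, implicitly pointing to Matlis duality from Theorem~\ref{t:discrete-injection}, and you have spelled out exactly that dualization (plus a pleasant direct right-exactness argument for part~1 via $\kk[\rho]\otimes_\cQ(-)$, Lemma~\ref{l:matlis-pair}, and Lemma~\ref{l:exact-qr}). Your restriction to the discrete case is the right reading---the phrase ``real or'' in the hypothesis appears to be a slip, since Example~\ref{e:surjection} explicitly contrasts this theorem with Theorem~\ref{t:surjection-RR} on precisely the grounds that no infinitesimal $\cQ$-finiteness is needed here, a claim that would fail over a real polyhedral group.
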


\begin{example}\label{e:surjection}
Some hypothesis is needed in
Theorem~\ref{t:surjection-RR}.\ref{i:phi=>topr}, in contrast to
Theorem~\ref{t:injection}.\ref{i:phi=>soct} or indeed
Theorem~\ref{t:surjection-ZZ}.\ref{i:phi=>topr}.  Let $\cM = \kk[U]$
for the open half-plane $U \subset \RR^2$ above the antidiagonal line
$y = -x$.  Then $\cM$ is $\{\0\}$-secondary, with
$(\top_{\{\0\}}^\xi\!\cM)_\bb \neq 0$ precisely when $\bb$ lies on the
antidiagonal and $\xi$ is the $x$-axis or $y$-axis.  The direct sum
$\bigoplus_{\bb \neq \0} \bigl(\kk[\bb + \qny[x]] \oplus \kk[\bb +
\qny[y]]\bigr)$ surjects onto~$\cM$, but the map on tops fails to hit
any element in $\RR^2$-degree~$\0$.  This kind of behavior might lead
one to wonder: why is the Matlis dual not a counterexample to
Theorem~\ref{t:injection}.\ref{i:phi=>soct}?  Because $\cM^\vee$ does
not possess a well defined map to a direct sum indexed by $\aa \neq
\0$ along the antidiagonal line, only to a direct product.  Any
sequence of points $\vv_k \in -U$ converging to~$\0$ yields a sequence
of elements $z_k \in \cM^\vee$.  The image of the sequence
$\{z_k\}_{k=1}^\infty$ in any particular one (or finite direct sum) of
the downset modules of the form $\kk[\aa - \qny[x]]$ with $\aa \neq
\0$ is eventually~$0$, but in the direct product the sequence
$\{z_k\}_{k=1}^\infty$ survives forever.  The direct limit of the
image sequence witnesses the nonzero socle of the direct product at
the missing~point~$\0$.
\end{example}

\begin{thm}\label{t:cover-M}
Every upset-finite module~$\cM$ over a real or discrete polyhedral
group admits a minimal secondary upset hull.
\end{thm}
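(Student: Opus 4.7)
My strategy is to Matlis-dualize the existing theorems on minimal coprimary downset hulls: Theorem~\ref{t:hull-M} in the real polyhedral case and Theorem~\ref{t:discrete-hull-M} in the discrete polyhedral case. I first observe that an upset-finite module~$\cM$ is automatically infinitesimally $\cQ$-finite by Example~\ref{e:infinitesimally-Q-finite}, so in particular $\cQ$-finite; and Matlis-dualizing any finite upset cover $\bigoplus_{j=1}^k \kk[U_j] \onto \cM$ produces a finite downset hull $\cM^\vee \into \bigoplus_{j=1}^k \kk[-U_j]$ showing that $\cM^\vee$ is downset-finite.

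Next I apply Theorem~\ref{t:hull-M} or Theorem~\ref{t:discrete-hull-M} to~$\cM^\vee$ to obtain a minimal coprimary downset hull $\cM^\vee \into E = \bigoplus_j \kk[D_j]$, and Matlis-dualize to obtain a surjection $E^\vee = \bigoplus_j \kk[-D_j] \onto (\cM^\vee)^\vee = \cM$, with the last equality following from Lemma~\ref{l:vee-vee} because $\cM$ is $\cQ$-finite. Each summand $\kk[-D_j]$ is an upset module. To see that it is secondary, I use Theorem~\ref{t:top=socvee}.(1), which identifies $\topr\kk[-D_j] = \topr(\kk[D_j]^\vee) = (\socr\kk[D_j])^\vee$; the latter vanishes for every face other than the single face to which the coprimary downset module $\kk[D_j]$ is associated.

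For minimality, the downset-hull isomorphism $\soct\cM^\vee \simto \soct E$ holds for every face~$\tau$; exactness of Matlis duality yields an isomorphism $(\soct E)^\vee \simto (\soct\cM^\vee)^\vee$. I identify $(\soct E)^\vee = \topt(E^\vee)$ by direct application of Theorem~\ref{t:top=socvee}.(1) with $E$ in place of~$\cM$ there, and I identify $(\soct\cM^\vee)^\vee = \topt\cM$ by applying the same result with $\cM^\vee$ in place of~$\cM$ and invoking $(\cM^\vee)^\vee = \cM$. Composing these gives the required isomorphism $\topt(E^\vee) \simto \topt\cM$ for every face~$\tau$, which is exactly the minimality condition in Definition~\ref{d:minimal-cover}.

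\textbf{Main obstacle.} The only subtlety is matching the definitions of coprimary and secondary under Matlis duality and tracking which identifications require which finiteness hypothesis. Part~(1) of Theorem~\ref{t:top=socvee} holds without any hypothesis, so the secondary condition and the minimality isomorphism transfer cleanly; the $\cQ$-finiteness of~$\cM$ needed for $(\cM^\vee)^\vee = \cM$ is free from the upset-finite hypothesis via Example~\ref{e:infinitesimally-Q-finite}. No appeal to part~(2) of Theorem~\ref{t:top=socvee} is required, so infinitesimal $\cQ$-finiteness of~$E^\vee$ never has to be verified.
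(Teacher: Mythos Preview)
Your proposal is correct and takes essentially the same approach as the paper: Matlis-dualize Theorems~\ref{t:hull-M} and~\ref{t:discrete-hull-M}, with Example~\ref{e:infinitesimally-Q-finite} supplying the finiteness needed for $(\cM^\vee)^\vee \cong \cM$ and for the top--socle duality identifications. The only minor remark is that in the discrete case the duality you invoke is Theorem~\ref{t:topc=socc^vee} rather than Theorem~\ref{t:top=socvee}, but the argument is verbatim the same with closed tops and socles in place of their real-polyhedral counterparts.
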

\begin{proof}
This is the Matlis dual of Theorems~\ref{t:hull-M}
and~\ref{t:discrete-hull-M}, using
Example~\ref{e:infinitesimally-Q-finite} to allow the results of
Section~\ref{s:gen-functors} to be applied at will.
\end{proof}

The straightforward dualization of primary decomposition in
Sections~\ref{sub:minimal-primary} and~\ref{s:discrete} to secondary
decomposition is omitted.

%%%%%%%%%%%%%%%%%%%%%%%%%%%%%%%%%%%%%%%%%%%%%%%%%%%%%%%%%%%%%%%%%%%%%%%%%
\section{Minimal presentations over discrete or real polyhedral groups}\label{s:min}
%%%%%%%%%%%%%%%%%%%%%%%%%%%%%%%%%%%%%%%%%%%%%%%%%%%%%%%%%%%%%%%%%%%%%%%%%

\begin{defn}\label{d:minimal-presentations}
Fix a module~$\cM$ over a real or discrete polyhedral group.
\begin{enumerate}
\item%
An upset presentation $F_1 \to F_0$ of~$\cM$ is \emph{minimal} if $F_1
\onto \ker(F_0 \to \cM)$ and $F_0 \onto \cM$ are minimal upset covers
(Definition~\ref{d:minimal-cover}).
\item%
An upset resolution $F_\spot$ of~$\cM$ is \emph{minimal} if the upset
presentation $F_1 \to F_0$ and upset covers $F_{i+1} \onto \ker(F_i
\to F_{i-1})$ for all $i \geq 1$ are minimal.
\item%
A downset copresentation $E^0 \to E^1$ of~$\cM$ is \emph{minimal} if
$\coker(\cM \to\nolinebreak E^0) \into E^1$ and $\cM \into E^0$ are
minimal downset hulls (Definition~\ref{d:minimal-hull}).
\item%
A downset resolution $E^\spot$ of~$\cM$ is \emph{minimal} if the
downset copresentation $E^0 \to E^1$ and downset hulls $\coker(E^{i-1}
\to E^i) \into E^{i+1}$ for all $i \geq 1$ are minimal.
\item%
A fringe presentation $F \to E$ of~$\cM$ is \emph{minimal} if it is
the composite of a minimal upset cover of~$\cM$ and a minimal downset
hull of~$\cM$.
\end{enumerate}
\end{defn}

\begin{thm}\label{t:presentations-minimal}
A module~$\cM$ over a real or discrete polyhedral group~$\cQ$ is
finitely encoded if and only if $\cM$ admits
\begin{enumerate}
\item\label{i:fringe'}%
a minimal finite fringe presentation; or
\item\label{i:upset-presentation'}%
a minimal finite upset presentation; or
\item\label{i:downset-copresentation'}%
a minimal finite downset copresentation.
\end{enumerate}
When $\cM$ is semialgebraic over a real polyhedral group~$\cQ$, these
minimal presentations are all semialgebraic.
\end{thm}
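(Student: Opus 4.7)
The plan is to obtain each of the three minimal presentation types by minimizing the finite presentations supplied by Theorem \ref{t:syzygy}. The ``if'' direction is immediate: a minimal finite presentation of any of the three types named is in particular a finite presentation of that type, so $\cM$ is finitely encoded by Theorem \ref{t:syzygy}. For the converse, assume $\cM$ is finitely encoded, hence upset-finite and downset-finite by Theorem \ref{t:syzygy}.

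For a minimal finite upset presentation, first produce a minimal finite upset cover $F_0 \onto \cM$ by starting with any finite upset cover (Theorem \ref{t:syzygy}), secondary-decomposing each summand, and trimming redundancies; this is the Matlis dual of the construction in the proof of Theorem \ref{t:hull-M} (or Theorem \ref{t:discrete-hull-M}), and preserves finiteness precisely because it operates summand-by-summand on a finite cover. The kernel $K_0 = \ker(F_0 \to \cM)$ is finitely encoded by Lemma \ref{l:abelian-category}, hence upset-finite by Theorem \ref{t:syzygy}, so its own minimal finite upset cover $F_1 \onto K_0$ exists by the same argument. The composite $F_1 \to F_0$ is the required minimal finite upset presentation of~$\cM$; the downset copresentation case is exactly dual, invoking Theorem \ref{t:hull-M} or Theorem \ref{t:discrete-hull-M} directly. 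For a minimal finite fringe presentation, compose a minimal finite upset cover $F \onto \cM$ with a minimal finite downset hull $\cM \into E$; the composite $F \to E$ has image isomorphic to~$\cM$ by construction, and the summands of $F$ and $E$ inherit the secondary and coprimary properties required for minimality by Definition \ref{d:minimal-presentations}.5.

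The principal obstacle is confirming that the components $\kk[U_i] \to \kk[D_j]$ of the composite $F \to E$ are connected in the sense of Definition \ref{d:connected-homomorphism}. By Lemma \ref{l:U->D}.\ref{i:kk} it suffices that each secondary upset $U_i$ be lower-connected: in the discrete polyhedral case this is immediate since secondary upsets have the explicit form $\bb + \ZZ\rho + \cQ_+$, any two of whose elements admit common lower bounds in the translate of $\ZZ\rho$; in the real polyhedral case one invokes the Matlis dual of Proposition \ref{p:sigma-nbd-cogen}, which produces within every secondary upset enough of a translate of the attached face $\rho_i$ to furnish lower bounds for any pair of elements. An alternative bypass in the real case realizes the fringe presentation as the pullback from $\ZZ^n$ of a flange presentation, as in the proof of Theorem \ref{t:syzygy}, whose components are connected automatically, followed by a verification that minimalization commutes with pullback along $\iota\circ\pi$. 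The semialgebraic refinement follows because every step above preserves the semialgebraic property: Theorem \ref{t:soct-encoding} and its Matlis dual guarantee that the socles, tops, and associated functorial constructions used to build minimal upset covers and downset hulls all remain semialgebraic throughout.
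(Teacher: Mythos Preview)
Your overall strategy matches the paper's: minimalize the finite upset covers and downset hulls supplied by Theorem~\ref{t:syzygy} using Theorems~\ref{t:hull-M} and~\ref{t:discrete-hull-M} (and their Matlis duals), then assemble the three kinds of presentations from these.  One minor difference is that the paper obtains the minimal upset presentation by Matlis-dualizing a minimal downset copresentation of~$\cM^\vee$ and invoking Theorem~\ref{t:top=socvee}, whereas you build it directly by covering~$\cM$ and then covering the kernel; both routes are fine.

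Where your proof diverges more substantively is in your discussion of connectedness of the fringe-presentation components, an issue the paper's proof simply does not raise.  You are right that this is a genuine concern in light of Remark~\ref{r:augmentation}, but your resolution has a gap: in the discrete case, secondary upsets need not have the simple form $\bb + \ZZ\rho + \cQ_+$; by Matlis-dualizing Theorem~\ref{t:discrete-hull-D}, a $\rho$-secondary upset is in general a union of such coprincipal upsets over an antichain of cosets in~$\qzr$, and such a union need not be lower-connected (compare Example~\ref{e:disconnected-homomorphism}).  Your real-polyhedral argument via the dual of Proposition~\ref{p:sigma-nbd-cogen} does not obviously produce common lower bounds either, and the pullback alternative would require checking that minimalization commutes with $\pi^*$, which is nontrivial (cf.\ Remark~\ref{r:missing-items}).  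The paper sidesteps this by minimalizing a \emph{given} finite fringe presentation rather than composing fresh covers and hulls, though it does not spell out why minimalization preserves connectedness either.
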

\begin{proof}
In both the real and discrete cases, any one of these minimal
presentations is, in particular, finite, so the existence of any of
them implies that $\cM$ is finitely encoded by Theorem~\ref{t:syzygy}.
It is the other direction that requires the theory in
Sections~\ref{s:socle}--\ref{s:gen-functors}.

In the real polyhedral case, any finite downset hull can be
minimalized by Theorem~\ref{t:hull-M} and Remark~\ref{r:filtration}.
The Matlis dual of this statement says that any finite upset cover can
be minimalized, as well.  Composing these from a given finite fringe
presentations yields a minimal finite fringe presentation.  In
addition, the cokernel of any downset hull (minimal or otherwise) of a
finitely encoded module is finitely encoded by
Lemma~\ref{l:abelian-category}, so the cokernel has a minimal finite
downset hull by Theorem~\ref{t:hull-M} again.  That yields a minimal
finite downset copresentation.  The Matlis dual of a minimal finite
downset copresentation of the Matlis dual~$\cM^\vee$ is a minimal
upset presentation of~$\cM$ by Theorem~\ref{t:top=socvee} (which
applies unfettered to finitely encoded modules by
Example~\ref{e:infinitesimally-Q-finite}).

The discrete polyhedral case follows the parallel proof, using
Theorem~\ref{t:discrete-hull-M} and Remark~\ref{r:discrete-filtration}
instead of Theorem~\ref{t:hull-M} and Remark~\ref{r:filtration}.

If $\cM$ is semialgebraic, then the minimalization procedure in
Theorem~\ref{t:hull-M} and Remark~\ref{r:filtration} is semialgebraic
by induction on the number~$k$ of summands there, the base case being
the canonical primary decomposition of a semialgebraic downset in
Theorem~\ref{t:hull-D}, which is semialgebraic by
Theorem~\ref{t:soct-encoding}.
\end{proof}

\begin{remark}\label{r:missing-items}
Comparing Theorem~\ref{t:presentations-minimal} to
Theorem~\ref{t:syzygy}, various items are missing.
\begin{enumerate}
\item%
Theorem~\ref{t:presentations-minimal} makes no claim concerning
whether the presentations can minimalized if an encoding $\pi: \cQ \to
\cP$ has been specified beforehand.  It is a~priori possible that
deleting nonminimal generators of upsets and cogenerators of downsets
could prevent an indicator summand from being constant
on~fibers~of~$\pi$.

\item%
Theorem~\ref{t:presentations-minimal} makes no claim concerning finite
encodings dominating any one of the three presentations there, but as
each of these presentations is finite, existence is already implied by
Theorem~\ref{t:syzygy}, including semialgebraic~\mbox{considerations}.

\item\label{i:res}%
Theorem~\ref{t:presentations-minimal} makes no claim concerning
minimal finite resolutions.  Minimal resolutions of finitely encoded
modules over real or discrete polyhedral groups can be constructed
from scratch by Theorem~\ref{t:hull-M},
Theorem~\ref{t:discrete-hull-M}, and their Matlis duals (in the real
case that is Theorem~\ref{t:cover-M}), but there is no guarantee that
a minimal indicator resolution must terminate after finitely many
steps.
\end{enumerate}
\end{remark}

\begin{remark}\label{r:syzygy}
Remark~\ref{r:missing-items}.\ref{i:res} raises an intriguing point
about indicator resolutions: the bound on the length in
Theorem~\ref{t:syzygy} comes from the order dimension of the encoding
poset, which is more or less unrelated to the dimension of the real or
discrete polyhedral group.  It seems plausible that the geometry of
the polyhedral group asserts control to prevent the lengths from going
too high, just as it does to prevent the cohomological dimension of an
affine semigroup ring from going too high via Ishida complexes to
compute local cohomology \cite[Section~13.3.1]{cca}.  This points to
potential value of developing a derived functor side of the top-socle
/ birth-death / generator-cogenerator story for indicator resolutions
to solve Conjecture~\ref{conj:indicator-dimension-n}, which would be a
true indicator analogue of the Hilbert Syzygy Theorem.
\end{remark}

\begin{defn}\label{d:indicator-dimension}
Fix a poset~$\cQ$ and a $\cQ$-module~$\cM$.
\begin{enumerate}
\item%
The \emph{downset-dimension} of~$\cM$ is the smallest length of a
downset resolution of~$\cM$.

\item%
The \emph{upset-dimension} of~$\cM$ is the smallest length of an upset
resolution of~$\cM$.

\item%
The \emph{indicator-dimension} of~$\cM$ is maximum of its downset- and
upset-dimensions.

\item%
The \emph{indicator-dimension} of~$\cQ$ is the maximum of the
indicator-dimensions of its finitely encoded modules.
\end{enumerate}
\end{defn}

\begin{conj}\label{conj:indicator-dimension-n}
The indicator-dimension of any real or discrete polyhedral group~$\cQ$
equals the rank of~$\cQ$ as a free module (over the field~$\RR$ or
group~$\ZZ$, respectively).
\end{conj}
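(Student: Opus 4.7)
The plan is to split Conjecture~\ref{conj:indicator-dimension-n} into two inequalities and handle them separately, with the upper bound being the substantive content. For the lower bound, I would exhibit a single finitely encoded $\cQ$-module whose minimal upset resolution (and, by Matlis duality, minimal downset resolution) has length exactly~$n$. The natural candidate is the indicator module $\kk[\{\0\}]$ of the origin, viewed as a subquotient of $\kk[\cQ]$. In the discrete polyhedral case with $\cQ_+ = \NN^n$, this is the residue field of $\kk[\NN^n]$ at its maximal monomial ideal, and its minimal free resolution---the Koszul complex on $x_1,\dots,x_n$---has length~$n$, each term being a direct sum of upset modules of the form $\kk[\aa + \NN^n]$. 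For a general discrete polyhedral group, an Ishida-style resolution on the facet localizations of $\kk[\cQ_+]$ (all of which are upset modules by Remark~\ref{r:flat}) still terminates at $n$, since the cone~$\cQ_+$ has real dimension~$n$. In the real polyhedral case, the analogous Ishida-style construction indexed by chains in the face poset~$\cfq$ of maximum length~$n$ yields a minimal resolution of the indicator module at a point of length exactly~$n$.

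For the upper bound, the strategy is to reduce to the finitely determined case over $\ZZ^n$, where Theorem~\ref{t:finitely-determined} already bounds the lengths of minimal flat and injective resolutions by~$n$. Given a finitely encoded $\cQ$-module $\cM$ with encoding $\pi: \cQ \to \cP$ and $\cP$-module $\cH$, the proof of Theorem~\ref{t:syzygy} pushes $\cH$ forward along an embedding $\iota: \cP \into \ZZ^N$ and pulls a finite flat resolution of $\iota_*\cH$ back to~$\cQ$. The naive bound from this construction is~$N$, the order dimension of~$\cP$, which can exceed~$n$; closing this gap is the crux of the conjecture. My approach is to work intrinsically in~$\cQ$: when $\cQ_+ = \NN^n$, regard $\cM$ as a module over $\kk[\NN^n]$ directly and appeal to Hilbert's syzygy bound, using that every flat $\kk[\NN^n]$-module is an upset module (Remark~\ref{r:flat}) so that flat dimension bounds upset-dimension from above. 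For a general discrete or real polyhedral positive cone, the corresponding homological bound must come instead from the geometry of $\cQ_+$ itself.

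The main obstacle, as flagged in Remark~\ref{r:syzygy}, is the absence of a developed derived-functor theory for $\topr$ and $\soct$. To complete the upper bound uniformly, I would define left-derived functors $L_i\topr$ and right-derived functors $R^i\soct$ via the minimal upset and downset resolutions afforded by Theorems~\ref{t:hull-M}, \ref{t:cover-M}, and~\ref{t:presentations-minimal}, and then prove the vanishing $L_i\topr\cM = 0 = R^i\soct\cM$ for $i > n$ by comparison with an Ishida-style complex indexed by chains of faces of~$\cQ_+$. The essentiality results (Theorems~\ref{t:injection} and~\ref{t:surjection-RR}), combined with minimality in the sense of Definition~\ref{d:minimal-presentations}, would translate vanishing of these derived functors into termination of the minimal resolutions at homological degree~$n$. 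I expect the setup and convergence of the Ishida-style complex to be the principal technical hurdle: in the real polyhedral case, the failure of localization to commute with $\socct$ (Remark~\ref{r:soc-vs-supp}) obstructs the classical local cohomology arguments, so one must work throughout with the upper and lower boundary modules $\dt\cM$ and $\lr\cM$ from Sections~\ref{s:socle} and~\ref{s:gen-functors} to recover the required exact sequences and convergent spectral comparisons.
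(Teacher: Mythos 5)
This statement is a \emph{conjecture} in the paper, not a theorem: the paper offers no proof, and Remark~\ref{r:conj} records that it is already open even to exhibit a single module of indicator-dimension as large as~$2$ over~$\RR^2$. So there is no argument of the paper's to compare yours against; the question is only whether your proposal closes the open problem, and it does not.

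The fatal gap is in your lower bound. You propose $\kk[\{\0\}]$ as a witness of upset-dimension~$n$ and cite the Koszul complex, but the Koszul complex is a minimal \emph{flat} resolution, and upset-dimension is measured against the far larger class of all upset modules (Definition~\ref{d:resolutions}), not just principal upsets or localizations of~$\kk[\cQ_+]$. In fact $\kk_\0$ has indicator-dimension exactly~$1$ for every~$n$: the short exact sequence $0 \to \kk[\cQ_+ \minus \{\0\}] \to \kk[\cQ_+] \to \kk_\0 \to 0$ is an upset resolution of length~$1$, since $\cQ_+ \minus \{\0\}$ is an upset, and dually $\kk_\0 = \ker\bigl(\kk[-\cQ_+] \to \kk[-\cQ_+ \minus \{\0\}]\bigr)$ gives a downset resolution of length~$1$. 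The whole difficulty of the conjecture is that kernels of maps between direct sums of upset modules are themselves direct sums of upset modules far more often than kernels of maps between flat modules are flat, so classical length-$n$ lower bounds (Koszul, Ishida, local duality) do not transfer; this is exactly what Remark~\ref{r:conj} is warning about when it says the rank of~$\cQ$ may well be only an upper bound.

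Your upper bound is a program rather than a proof, and you acknowledge the main hurdles yourself. Two specific points: (i) the reduction through an encoding poset $\cP \into \ZZ^N$ gives length at most the order dimension~$N$, which is unrelated to~$n$, and you do not supply a mechanism for replacing $N$ by~$n$; (ii) the appeal to a Hilbert-syzygy-type bound via ``flat dimension bounds upset-dimension'' only applies to modules with finite flat resolutions (essentially the finitely determined ones), whereas general finitely encoded $\RR^n$-modules have no such resolutions, and the derived functors $L_i\topr$, $R^i\soct$ you would need are not constructed in the paper --- Remark~\ref{r:syzygy} flags their development as the missing ingredient. Neither inequality is established, so the conjecture remains open.
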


\begin{remark}\label{r:conj}
It is already open to find a module of indicator-dimension as high
as~$2$ over~$\RR^2$.  It would not be shocking if the rank
of~$\cQ$ were an upper bound instead of an equality for the
indicator-dimension in the conjecture: the use of upset modules
instead of free modules could prevent the final syzygies that, in
finitely generated situations, come from elements supported at the
origin by local duality.
\end{remark}

%%%%%%%%%%%%%%%%%%%%%%%%%%%%%%%%%%%%%%%%%%%%%%%%%%%%%%%%%%%%%%%%%%%%%%%%%
\section{Birth and death posets and modules}\label{s:birth-death}%%%%%%%%
%%%%%%%%%%%%%%%%%%%%%%%%%%%%%%%%%%%%%%%%%%%%%%%%%%%%%%%%%%%%%%%%%%%%%%%%%

%%%%%%%%%%%%%%%%%%%%%%%%%%%%%%%%%%%%%%%%%%%%%%%%%%%%%%%%%%%%%%%%%%%%%%%%%
\subsection{Discrete polyhedral case}\label{sub:birth-death-ZZ}\mbox{}%%%

\noindent
This subsection works in the generality of discrete polyhedral groups
(Definition~\ref{d:discrete-polyhedral}).  Notation and concepts from
Sections~\ref{s:socle} and~\ref{s:gen-functors}, particularly those
surrounding quotient-restriction and closed cogenerators along faces
of polyhedral groups (Section~\ref{sub:socc-along}) as well as closed
generator functors along faces (Section~\ref{sub:closed-gen-along})
are used freely here, without further cross-reference.  For example,
see Definition~\ref{d:cmr} for the meaning of~$\cmr$.  Basic poset
concepts from Section~\ref{s:encoding} are also required, such as
pullbacks in Definition~\ref{d:encoding}.

\begin{defn}\label{d:birth-ZZ}
Fix a module~$\cM$ over a discrete polyhedral group~$\cQ$.
\begin{enumerate}
\item\label{i:Q-poset-ZZ}%
The \emph{birth poset of~$\cQ$} is the disjoint union $\cbq =
\bigcupdot_\rho \qzr = \bigcupdot_\rho \cbq[\rho]$ with
$$%
  (\bb_\rho \in \qzr) \preceq (\brp \in \qzr')
  \iff
  \bb_\rho + \cQ_+ \supseteq \brp + \cQ_+,
$$
where $\bb_\rho \subseteq \cQ$ is viewed as a coset of~$\ZZ\rho$
in~$\cQ$ for the purpose of writing $\bb_\rho + \cQ_+$.
\item\label{i:M-poset-ZZ}%
The \emph{birth poset of~$\cM$} is the subposet $\cbm =
\bigcupdot_\rho \deg\topcr\cM \subseteq \cbq$.  Write
$$%
  \iota_\rho: \deg\topcr\cM \to \qzr
$$
for the $\rho$-component $\cbm[\rho] \into \cbq[\rho]$ of the
inclusion $\cbm \into \cbq$.

\item\label{i:module-ZZ}%
The \emph{birth module} of~$\cM$ is the $\cbm$-graded vector space
$$%
  \Gen\cM
  =
  \bigoplus_\rho \iota_\rho^*
  \bigl(\cmr/\rho\bigr),
$$
so the component in degree $\bb_\rho \!\in\! \cbm[\rho]$ is
$\Gen_{\bb_\rho}\!\cM = (\cmr/\rho)_{\bb_\rho}$.
\end{enumerate}
\end{defn}

\begin{lemma}\label{l:gen-ZZ}
The birth module $\Gen\cM$ is naturally a $\cbm$-module.
\end{lemma}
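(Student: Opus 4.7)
The strategy is to identify each graded component of $\Gen\cM$ with a concrete Hom module, and then produce the structure homomorphisms by precomposition along inclusions of upset modules. Since $\kk[\cQ_+]_\rho = \kk[\cQ_+ + \ZZ\rho]$ is invariant under $\ZZ\rho$-translation, $\cmr = \hhom_\cQ(\kk[\cQ_+]_\rho, \cM)$ naturally carries a $\qzr$-grading with
\[
  (\cmr/\rho)_{\bb_\rho} \;=\; \Hom_\cQ\bigl(\kk[\bb_\rho + \cQ_+],\, \cM\bigr),
\]
where $\bb_\rho + \cQ_+$ is the well-defined $\ZZ\rho$-invariant upset in~$\cQ$ attached to the coset $\bb_\rho$ (independent of the chosen representative). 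This identification converts the problem of giving $\Gen\cM$ structure maps into a problem about morphisms of upset modules.

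Next, for any pair $\bb_\rho \preceq \brp$ in $\cbq$, meaning $\bb_\rho + \cQ_+ \supseteq \brp + \cQ_+$, the containment of upsets gives a $\cQ$-module inclusion $\kk[\brp + \cQ_+] \hookrightarrow \kk[\bb_\rho + \cQ_+]$. Precomposition with this inclusion produces a restriction map
\[
  (\cmr/\rho)_{\bb_\rho} \longrightarrow (\cM^{\rho'}/\rho')_{\brp}.
\]
Together, these maps endow $\bigoplus_\rho \cmr/\rho$ with a $\cbq$-module structure. Pulling back along the inclusion of subposets $\cbm \into \cbq$, which componentwise is exactly $\iota_\rho$, delivers the desired $\cbm$-module structure on $\Gen\cM = \bigoplus_\rho \iota_\rho^*(\cmr/\rho)$.

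Compatibility with composition (and hence associativity of the $\cbm$-action) is immediate from functoriality of $\Hom_\cQ(-,\cM)$: given $\bb_\rho \preceq \brp \preceq \bb''_{\rho''}$ in $\cbq$, the two composable upset inclusions compose to the inclusion $\kk[\bb''_{\rho''} + \cQ_+] \hookrightarrow \kk[\bb_\rho + \cQ_+]$, and the corresponding precomposition maps on Hom modules compose likewise. Reflexive relations yield identity maps. The only potential obstacle is conceptual rather than technical, namely recognizing that mixing cosets $\bb_\rho \in \qzr$ for distinct faces $\rho$ inside a single ambient poset $\cbq$ is coherent; this is justified precisely because the upset $\bb_\rho + \cQ_+ \subseteq \cQ$ depends on $\bb_\rho$ alone and not on any auxiliary face-data, so all of the Hom modules above live in the same category of $\cQ$-modules and the precomposition maps compose in that~category.
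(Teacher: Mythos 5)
Your proof is correct and follows essentially the same route as the paper's: the containment $\bb_\rho + \cQ_+ \supseteq \brp + \cQ_+$ gives restriction (precomposition) of homomorphisms into~$\cM$, and the $\ZZ\rho$-invariance of the upset $\bb_\rho + \cQ_+$ is exactly what makes this descend to the quotient-restriction $\cmr/\rho$. The paper's proof is terser—it does not spell out the associativity check or the identification $(\cmr/\rho)_{\bb_\rho} = \Hom_\cQ(\kk[\bb_\rho+\cQ_+],\cM)$, both of which you verify explicitly and correctly.
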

\begin{proof}
If $\bb_\rho \in \qzr$ and $\brp \in \qzr'$, then $\bb_\rho + \cQ_+
\supseteq \brp + \cQ_+$ implies that any homomorphism $\kk[\bb_\rho +
\cQ_+] \to \cM$ restricts to a homomorphism $\kk[\brp + \cQ_+] \to
\cM$.  Note that $\bb_\rho + \cQ_+$ equals its translate by an element
of~$\ZZ\rho$, so these all yield the same~restriction to $\brp +
\cQ_+$; hence the restriction descends to $\cmr/\rho =
\hhom_\cQ\bigl(\kk[\cQ_+]_\rho,\cM\bigr)\big/\rho$.
\end{proof}

\begin{defn}\label{d:death-ZZ}
Fix a module~$\cM$ over a discrete polyhedral group~$\cQ$.
\begin{enumerate}
\item%
The \emph{death poset of~$\cQ$} is the disjoint union $\cdq =
\bigcupdot_\tau \qzt = \bigcupdot_\tau \cdq[\tau]$ with
$$%
  (\aa_\tau \in \qzt) \preceq (\aa'_{\tau'} \in \qzt')
  \iff
  \aa_\tau - \cQ_+ \subseteq \aa'_{\tau'} - \cQ_+,
$$
where $\aa_\tau \subseteq \cQ$ is viewed as a coset of~$\ZZ\tau$
in~$\cQ$ for the purpose of writing $\aa_\tau + \cQ_+$.
\item%
The \emph{death poset of~$\cM$} is the subposet $\cdm =
\bigcupdot_\tau \cdm[\tau] = \bigcupdot_\tau \deg\socct\cM \subseteq
\cdq$.

\item%
The \emph{death module} of~$\cM$ is
$$%
  \Soc\cM = \prod_{\aa_\tau \in \cdm} (\socct\cM)_{\aa_\tau}
$$
whose factor in degree $\aa_\tau \in \cdq[\tau]$ is also denoted
$\Soc_{\aa_\tau}\!\cM = (\socct\cM)_{\aa_\tau}$.
\end{enumerate}
\end{defn}

\begin{remark}\label{r:poset-of-cogenerators}
The socles and tops are closed because the polyhedral group is
discrete.
\end{remark}

\begin{lemma}\label{l:death-ZZ}
The death module $\Soc\cM$ is naturally a $\cdm$-module and a
$\cdq$-module.
\end{lemma}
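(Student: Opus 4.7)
The plan is to leverage Matlis duality together with the birth-module construction of Definition~\ref{d:birth-ZZ} and Lemma~\ref{l:gen-ZZ}. Inversion $\aa_\tau \mapsto -\aa_\tau$ on $\cQ$ interchanges the downsets $\aa_\tau - \cQ_+$ and the upsets $-\aa_\tau + \cQ_+$, identifying the death poset $\cdq$ with (a reindexing of) the birth poset $\cbq$ of $\cQ$; similarly $\cdm$ corresponds, under this identification, to the birth poset of the Matlis dual~$\cM^\vee$. By Theorem~\ref{t:topc=socc^vee}, the closed socle functor and closed top functor along a face are Matlis dual, so degree by degree the death module $\Soc\cM$ is, up to this identification, the Matlis dual of the birth module $\Gen(\cM^\vee)$. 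Since $\Gen(\cM^\vee)$ is naturally a $\cbm[\cM^\vee]$-module by Lemma~\ref{l:gen-ZZ}, its degreewise Matlis dual $\Soc\cM$ is naturally a $\cdm$-module, and the same argument applied with $\cbq$ in place of $\cbm[\cM^\vee]$ yields the $\cdq$-module structure.

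I would also record an explicit description of the structure maps, which unwinds as follows. For $\aa_\tau \preceq \aa'_{\tau'}$ in $\cdq$, the defining inclusion of coprincipal downsets $\aa_\tau - \cQ_+ \subseteq \aa'_{\tau'} - \cQ_+$ yields, by Remark~\ref{r:injective}, a canonical inclusion of indecomposable injective $\cQ$-modules $\kk[\aa_\tau - \cQ_+] \hookrightarrow \kk[\aa'_{\tau'} - \cQ_+]$. Postcomposition with this inclusion defines a natural transformation $\Hom_\cQ(\cM, \kk[\aa_\tau - \cQ_+]) \to \Hom_\cQ(\cM, \kk[\aa'_{\tau'} - \cQ_+])$, and under the Matlis-dual correspondence this recovers the structure map $(\socct\cM)_{\aa_\tau} \to (\socp[\tau']\cM)_{\aa'_{\tau'}}$. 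Transitivity is automatic from functoriality of postcomposition and transitivity of downset inclusions, and functoriality in~$\cM$ is inherited from that of each $\socct$.

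The $\cdm$-module structure is obtained as the restriction of the $\cdq$-module structure along the subposet inclusion $\cdm \subseteq \cdq$, which is well-defined because $(\socct\cM)_{\aa_\tau} = 0$ for $\aa_\tau \notin \cdm$ by definition of $\cdm$. I anticipate the main verification to be the dictionary between the Matlis-dual birth-module description and the explicit postcomposition-on-Hom description: namely, that the inclusion of indecomposable injectives on the $\Soc$-side corresponds under Matlis duality to the restriction of homomorphisms $\kk[\bb_\rho+\cQ_+]\to\cM^\vee$ appearing in the proof of Lemma~\ref{l:gen-ZZ}. Once this correspondence is in hand, the $\cdq$- and $\cdm$-module axioms of Definition~\ref{d:poset-module} are immediate, with the compatibility of composition reducing formally to the transitivity of the generating inclusions.
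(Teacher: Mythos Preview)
Your approach has two substantive errors and misses the paper's much simpler argument.

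First, the Matlis-duality identification is incorrect. The birth module has components $\Gen_{\bb_\rho}(\cM^\vee) = ((\cM^\vee)^\rho/\rho)_{\bb_\rho}$, which by Lemma~\ref{l:N/tau-dual} equals $((\cM/\rho)_{-\bb_\rho})^*$, the dual of the \emph{entire} graded piece of the quotient-restriction, not merely of its socle. Theorem~\ref{t:topc=socc^vee} does say that $\socct\cM$ and $\topct(\cM^\vee)$ are Matlis dual---so the degree sets match and $\cdm$ indeed corresponds to $\cB_{\cM^\vee}$ under negation---but $\topct(\cM^\vee)$ is only a proper quotient of $(\cM^\vee)^\tau/\tau$ via Proposition~\ref{p:surjection-ZZ}. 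Hence $\Soc\cM$ is not, degree by degree, the Matlis dual of $\Gen(\cM^\vee)$, and Lemma~\ref{l:gen-ZZ} does not transfer to give the structure you want.

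Second, your explicit description misidentifies the factors: $\Hom_\cQ(\cM,\kk[\aa_\tau-\cQ_+])$ is not $(\socct\cM)_{\aa_\tau}$. Take $\tau=\{\0\}$ and $\cM=\kk[\cQ_+]$: the former is the degree-$\0$ piece of an indecomposable injective, which is nonzero whenever $\aa\succeq\0$, whereas $\socc\kk[\cQ_+]=0$. The postcomposition maps you describe are therefore maps between the wrong vector spaces.

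The paper's proof is a single observation (the discrete analogue of Remark~\ref{r:soc-as-k-vect}): each $\socct\cM$ is already a direct sum of skyscrapers over~$\qzt$, because a closed cogenerator along~$\tau$ is by definition annihilated by moving up in any direction outside~$\tau$, and the quotient by~$\tau$ has already collapsed motion along~$\tau$. Hence every nonidentity structure map on $\Soc\cM$ is zero, and a graded vector space with all structure maps zero is trivially a module over any poset indexing its pieces---in particular over both~$\cdm$ and~$\cdq$. No Matlis duality or injective-hull machinery is needed.
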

\begin{proof}
This follows from the (simpler) discrete polyhedral analogue of
Remark~\ref{r:soc-as-k-vect}.
\end{proof}

\begin{remark}\label{r:cq-vs-gq-ZZ}
$\cdq = \bnq^{\,\op}$, in the sense that $\aa_\tau \succeq
\aa'_{\tau'}$ in $\cdq \iff -\aa_\tau \preceq -\aa'_{\tau'}$ in
$\cbq$.
\end{remark}

\begin{remark}\label{r:why-cm}
Given that the poset-module structure on $\Soc\cM$ is trivial, why
bother with it?  The reason is to be able to compare degrees of
elements in~$\Soc\cM$ with degrees of elements in~$\Gen\cM$.  The
partial orders on~$\cbq$ and~$\cdq$ are defined so that if a
localization $F$ of~$\kk[\cQ_+]$ has a nonzero map to an
indecomposable injective $\kk[\aa_\tau - \cQ_+]$, then $F$ has a
nonzero map to all of the indecomposable injectives indexed by
elements above~$\aa_\tau$ in~$\cdq$.  This relation is formalized in
Proposition~\ref{p:birth-union-death-ZZ}.
\end{remark}

\begin{prop}\label{p:birth-union-death-ZZ}
The disjoint union $\cbq \cupdot \cdq$ is partially ordered by
$$%
  \bb_\rho \preceq \aa_\tau
  \iff
  (\bb_\rho + \cQ_+) \cap (\aa_\tau - \cQ_+) \neq \nothing
$$
along with the given partial orders on~$\cbq\!$ and~$\cdq$, if~$\cQ$ is
a discrete polyhedral group.~~\hfill$\square$
\end{prop}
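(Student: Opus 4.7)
The plan is to verify the three partial order axioms on $\cbq \cupdot \cdq$, taking as given that Definitions~\ref{d:birth-ZZ} and~\ref{d:death-ZZ} already endow $\cbq$ and $\cdq$ with partial orders. Reflexivity is inherited from the two summands: there is no cross-relation $x \preceq x$ to check, because the cross-relation is defined only between elements of distinct summands. Antisymmetry across the boundary is automatic for the same reason---the cross-relation points from $\cbq$ to $\cdq$ only, so if $x \preceq y$ and $y \preceq x$ with $x \neq y$, then $x$ and $y$ must lie in a common summand, where antisymmetry is given. Thus essentially all of the content lies in verifying transitivity.

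For transitivity, the only cases beyond those handled inside each of $\cbq$ and $\cdq$ are the two that cross the boundary. First, suppose $\bb_\rho \preceq \brp$ in $\cbq$ and $\brp \preceq \aa_\tau$ by the cross-relation. The $\cbq$-relation supplies $\brp + \cQ_+ \subseteq \bb_\rho + \cQ_+$, so any witness $q \in (\brp + \cQ_+) \cap (\aa_\tau - \cQ_+)$ immediately witnesses $\bb_\rho \preceq \aa_\tau$. Dually, suppose $\bb_\rho \preceq \aa_\tau$ by the cross-relation and $\aa_\tau \preceq \aa'_{\tau'}$ in $\cdq$. Then $\aa_\tau - \cQ_+ \subseteq \aa'_{\tau'} - \cQ_+$ enlarges the set whose intersection with $\bb_\rho + \cQ_+$ is nonempty, yielding $\bb_\rho \preceq \aa'_{\tau'}$. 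These two monotonicity checks, together with transitivity within each summand, exhaust all configurations.

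There is no substantive obstacle; the whole argument is definitional unpacking plus monotonicity of the sets $\bb_\rho + \cQ_+$ and $\aa_\tau - \cQ_+$ in their parameters. The one subsidiary point I would want to record, to underwrite antisymmetry of the internal $\cbq$-order (which is left implicit by Definition~\ref{d:birth-ZZ}), is that the assignment $\bb_\rho \mapsto \bb_\rho + \cQ_+$ is injective on $\cbq$. Writing $\bb_\rho + \cQ_+ = b_0 + (\ZZ\rho + \cQ_+)$, the translation stabilizer of this set equals the unit group of the submonoid $\ZZ\rho + \cQ_+$, which one identifies with $\ZZ\rho$ using that $\cQ_+$ is pointed and $\cQ_+ \cap \ZZ\rho = \rho$ (the defining face property); since distinct faces of a discrete polyhedral cone have distinct $\ZZ$-spans, the face $\rho$ and hence the coset $\bb_\rho$ are recovered. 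The dual statement for $\cdq$ is identical after negation.
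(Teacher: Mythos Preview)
Your argument is correct. The paper gives no proof at all for this proposition---the $\square$ at the end of the statement indicates it is left to the reader---so your verification is strictly more detailed than what the paper supplies; the approach you take (checking transitivity via the monotone containments built into the definitions of the orders on $\cbq$ and $\cdq$) is the obvious and only natural one.
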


\begin{remark}\label{r:partial-compactification-ZZ}
The birth and death posets of~$\cQ$ can be thought of as partial
compactifications of~$\cQ$, in essence the integer points on a toric
variety.  In the case $\cQ = \ZZ^n$, for example, with its standard
positive cone~$\NN^n$, the birth poset can be thought of as the set of
``lattice points'' in a space homeomorphic to the union of the faces
of the unit hypercube $[0,1]^n$ containing the origin in~$\RR^n$.  In
any neighborhood of the origin in~$\RR^n$ this conception is realized
by exponentiating elements of~$\ZZ^n$ and its quotients modulo
coordinate subspaces, setting $e^{-\infty} = 0$.  The death poset
of~$\ZZ^n$ is similarly described, but the relevant set of faces
consists of those containing the point $(1,\dots,1)$ instead of the
origin.  This geometric view of birth and death posets makes the
disjoint union in Proposition~\ref{p:birth-union-death-ZZ}
particularly clear, although it points out that really the union
should not be disjoint but rather should identify the copies of~$\cQ$
sitting inside of~$\cbq$ and~$\cdq$.
\end{remark}

%%%%%%%%%%%%%%%%%%%%%%%%%%%%%%%%%%%%%%%%%%%%%%%%%%%%%%%%%%%%%%%%%%%%%%%%%
\subsection{Real polyhedral case}\label{sub:birth-death}\mbox{}%%%%%%%%%%

\noindent
This subsection works in the generality of real polyhedral groups
(Definition~\ref{d:polyhedral}).  Notation and concepts from
Sections~\ref{s:socle} and~\ref{s:gen-functors}, particularly those
surrounding face posets and cogenerators along faces of real
polyhedral groups (Sections~\ref{sub:tangent} and~\ref{sub:soc-along})
as well as generator functors along faces (Section~\ref{sub:gen}) and
quotient-restriction (Section~\ref{sub:socc-along}) are used freely
here, without further cross-reference.  For example, see
Definition~\ref{d:mrx}, Lemma~\ref{l:mrx}, and
Proposition~\ref{p:surjection-RR} for the meaning of~$\mrx$.  Basic
poset concepts from Section~\ref{s:encoding} are also required, such
as pullbacks in Definition~\ref{d:encoding}.

\begin{defn}\label{d:birth-RR}
Fix a module~$\cM$ over a real polyhedral group~$\cQ$.
\begin{enumerate}
\item\label{i:Q-poset-RR}%
The \emph{birth poset of~$\cQ$} is the disjoint union $\cbq =
\bigcupdot_\rho \,\qrr \times \nabro$ with
$$%
  \brx \preceq \brxp
  \iff
  \bb_\rho + \qnx \supseteq \brp + \qnx['],
$$
where $\bb_\rho \subseteq \cQ$ is viewed as a coset of~$\RR\rho$
in~$\cQ$ for the purpose of writing $\bb_\rho + \qnx$.  Set
$\cbq[\rho] = \qrr \times \nabro$ and $\cbq[\rho,\xi] = \qrr$.
\item\label{i:M-poset-RR}%
The \emph{birth poset of~$\cM$} is the subposet $\cbm =
\bigcupdot_\rho \deg\topr\cM \subseteq \cbq$.  Write
$$%
  \iota_\rho: \deg\topr\cM \to \qrr \times \nabro
  \text{\quad and \quad}
  \iota_\rho^\xi: \deg\topr[\xi]\cM \to \qrr
$$
for the components $\cbm[\rho] \into \cbq[\rho]$ and $\cbm[\rho,\xi]
\into \cbq[\rho,\xi]$ of the inclusion $\cbm \into \cbq$.

\item\label{i:module-RR}%
The \emph{birth module} of~$\cM$ is the $\cbm$-graded vector space
$$%
  \Gen\cM
  =
  \bigoplus_\rho \bigoplus_{\xi\in\nabro} \,(\iota_\rho^\xi)^*
  \bigl(\mrx/\rho\bigr),
$$
so the component in degree $\brx \in \cbm[\rho]$ is
$\Gen_{\bb_\rho}^{\xi}\!\cM = (\mrx/\rho)_{\bb_\rho}$.
\end{enumerate}
\end{defn}

\begin{example}\label{e:birth-RR}
The birth poset~$\cbr$ of the real polyhedral group~$\RR$ is totally
ordered.  Heuristically, it feels like $\RR$ but with each point
doubled and a negative infinity thrown in.  More precisely, let
$-\infty = (\RR/\RR,\RR_+)$ be the unique coset of $\RR = \RR\RR_+$
in~$\RR$.  For each real number~$b$, let $b_\bullet$ be the
pair~$(b,0)$ and $b_\circ$ the pair~$(b,\RR_+)$.  Then $\cbr$ consists
of the point $-\infty$ along with $b_\bullet$ and $b_\circ$ for all $b
\in \RR$, with the total order that has $-\infty < b_\bullet < b_\circ
< b'_\bullet$ if $b < b'$.  More geometrically, $\cbr$ is the set of
positive-pointing rays in~$\RR$ totally ordered by inclusion.  The
rays come in three flavors, namely $(-\infty,\infty) < [b,\infty) <
(b,\infty)$, and the latter is $< [b',\infty)$ if $b < b'$.
\end{example}

\begin{lemma}\label{l:gen}
The birth module $\Gen\cM$ is naturally a $\cbm$-module.
\end{lemma}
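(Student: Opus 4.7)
The plan is to make $\Gen_{\bb_\rho}^\xi\cM = (\mrx/\rho)_{\bb_\rho}$ concrete and then to define the structure maps of a $\cbm$-module as precomposition with inclusions of upset modules. Throughout, the partial order in $\cbm \subseteq \cbq$ is by definition $\brx \preceq \brxp$ iff $\bb_\rho + \qnx \supseteq \brp + \qnx[']$, which is a containment of honest upsets in~$\cQ$.

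First I would identify $(\mrx/\rho)_{\bb_\rho}$ with $\Hom_\cQ\bigl(\kk[\bb_\rho + \qnx],\cM\bigr)$. By Lemma~\ref{l:mrx} and the universal property of inverse limits unpacked in its proof, an element of $\mrx$ in $\cQ$-degree~$\bb$ amounts to a $\cQ$-module homomorphism $\kk[\bb_\rho + \qnx] \to \cM$: the proof of the lemma assembles such a family from the $\RR\rho$-parameter family of elements of $\lx\cM$, and each $\RR\rho$-translate of a given $\bb$ yields the same upset $\bb_\rho + \qnx$ and hence the same module $\kk[\bb_\rho + \qnx]$. Applying the quotient-restriction by~$\rho$ (Definition~\ref{d:quotient-restriction}) identifies homomorphisms differing by $\RR\rho$-translation, which are already equal as maps from the $\RR\rho$-stable upset module $\kk[\bb_\rho + \qnx]$. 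Hence $(\mrx/\rho)_{\bb_\rho} \cong \Hom_\cQ\bigl(\kk[\bb_\rho + \qnx],\cM\bigr)$ naturally in~$\cM$.

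Next, given $\brx \preceq \brxp$ in $\cbm$, the containment $\brp + \qnx['] \subseteq \bb_\rho + \qnx$ exhibits $\kk[\brp + \qnx[']]$ as an upset submodule of $\kk[\bb_\rho + \qnx]$, and precomposition with this inclusion defines
\[
  \Gen_{\bb_\rho}^\xi\cM
  \;\cong\;\Hom_\cQ\bigl(\kk[\bb_\rho + \qnx],\cM\bigr)
  \;\too\;\Hom_\cQ\bigl(\kk[\brp + \qnx[']],\cM\bigr)
  \;\cong\;\Gen_{\brp}^{\xi'}\cM.
\]
This assignment is manifestly functorial: the trivial relation $\brx \preceq \brx$ gives the identity inclusion of upsets and hence the identity on $\Hom$, and a chain $\brx \preceq \brxp \preceq \brxp[2]$ produces the nested inclusions $\brp[2] + \qnx[2] \subseteq \brp + \qnx['] \subseteq \bb_\rho + \qnx$, whose restrictions of homomorphisms compose in the correct order.

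The only nontrivial point, and hence where I would focus the argument, is verifying that the identification $(\mrx/\rho)_{\bb_\rho} \cong \Hom_\cQ(\kk[\bb_\rho + \qnx],\cM)$ is genuinely $\RR\rho$-equivariant on both sides, so that precomposition by the upset inclusion really descends through the quotient by~$\rho$ on the source (and through the quotient by~$\rho'$ on the target, noting $\rho'$ need not coincide with~$\rho$). This is built into the upset $\brp + \qnx[']$ being $\RR\rho'$-stable by definition, so any homomorphism out of $\kk[\brp + \qnx[']]$ is automatically $\RR\rho'$-equivariant; once this has been recorded, functoriality of $\Hom$ in the first argument finishes~the~\mbox{proof}.
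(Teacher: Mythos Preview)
Your proof is correct and follows essentially the same approach as the paper's: both identify $\Gen_{\bb_\rho}^\xi\cM$ with $\Hom_\cQ\bigl(\kk[\bb_\rho+\qnx],\cM\bigr)$ and define the structure maps by restriction along the upset inclusion $\kk[\brp+\qnx[']]\subseteq\kk[\bb_\rho+\qnx]$, then observe that $\RR\rho$-invariance of the upset $\bb_\rho+\qnx$ makes this restriction well defined modulo~$\rho$. You are more explicit about the identification via Lemma~\ref{l:mrx} and about checking functoriality, but the underlying argument is the same.
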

\begin{proof}
If $\brx \preceq \brxp$ then any homomorphism $\kk[\bb_\rho + \qnx]
\to \cM$ restricts to a homomorphism $\kk[\brp + \qnx[']] \to \cM$.
Note that $\bb_\rho + \qnx$ equals its translate by an element
of~$\RR\rho$, so any of these translates yield the same restriction to
$\brp + \qnx[']$; hence the restriction descends to $\mrx/\rho =
\hhom_\cQ\bigl(\kk[\qnx]_\rho,\cM\bigr)\big/\rho$.
\end{proof}

\begin{defn}\label{d:death-RR}
Fix a module~$\cM$ over a real polyhedral group~$\cQ$.
\begin{enumerate}
\item%
The \emph{death poset of~$\cQ$} is the disjoint union $\cdq =
\bigcupdot_\tau \qrt \times \nabt$ with
$$%
  (\aa_\tau,\sigma) \preceq (\aa'_{\tau'},\sigma')
  \iff
  \aa_\tau - \qns \subseteq \aa'_{\tau'} - \qns['],
$$
where $\aa_\tau \subseteq \cQ$ is viewed as a coset of~$\RR\tau$
in~$\cQ$ for the purpose of writing $\aa_\tau + \qns$.
Set
$\cdq[\tau] = \qrt \times \nabt$ and $\cdq[\tau,\sigma] = \qrt$.
\item%
The \emph{death poset of~$\cM$} is the subposet $\cdm =
\bigcupdot_\tau \cdm[\tau] = \bigcupdot_\tau \deg\soct\cM \subseteq
\cdq$.

\item%
The \emph{death module} of~$\cM$ is
$$%
  \Soc\cM
  =
  \prod_{\aa_\tau \in \cdm[\tau]} (\soct\cM)_{\aa_\tau}
  =
  \prod_{(\aa_\tau,\sigma) \in \cdm[\tau,\sigma]} (\soct[\sigma]\cM)_{\aa_\tau}
$$
whose factor in degree $\aa_\tau \in \cdq[\tau]$ is also denoted
$\Soc_{\aa_\tau}\!\cM = (\soct\cM)_{\aa_\tau}$ and whose factor in
degree $(\aa_\tau,\sigma) \in \cdq[\tau,\sigma]$ is also denoted
$\Soc_{\aa_\tau}^\sigma\!\cM = (\soct[\sigma]\cM)_{\aa_\tau}$.
\end{enumerate}
\end{defn}

\begin{example}\label{e:death-RR}
The death poset of the real polyhedral group~$\RR$ is the negative
(or, if you like, Matlis dual) of the birth poset in
Example~\ref{e:birth-RR}, so $\cdr$ has $a'_\bullet <\nolinebreak
a_\circ <\nolinebreak a_\bullet < \infty$ for all $a' < a \in \RR$.
Geometrically, $\cdr$ is the set of negative-pointing rays in~$\RR$
totally ordered by inclusion, meaning $(-\infty,a'] < (-\infty,a) <
(-\infty,a] < (-\infty,\infty)$ when $a' < a$.
\end{example}

\begin{lemma}\label{l:Soc}
The death module $\Soc\cM$ is naturally a module over\/~$\cdm$
and~$\cdq$.
\end{lemma}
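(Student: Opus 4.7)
The plan is to extend the $(\qrt \times \nabt)$-module structure on each socle $\soct\cM$ (already supplied by Definition~\ref{d:soct}.\ref{i:global-soc-tau}) to a full $\cdq$-module structure on $\Soc\cM$; the $\cdm$-module structure then arises by restricting to the support subposet $\cdm \subseteq \cdq$.

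First, I would verify that the partial order on $\cdq$ restricted to a single face component $\cdq[\tau] = \qrt \times \nabt$ agrees with the product partial order used in Definition~\ref{d:soct}. Given $(\aa_\tau, \sigma), (\aa'_\tau, \sigma') \in \cdq[\tau]$, the defining condition $\aa_\tau - \qns \subseteq \aa'_\tau - \qns[']$ unwinds, via Lemma~\ref{l:<<} (so $\qns = \sigma^\circ + \cQ_+$), into the conjunction of $\aa_\tau \preceq \aa'_\tau$ in $\qrt$ and $\sigma \supseteq \sigma'$. The latter is precisely $\sigma \preceq \sigma'$ in $\nabt$ under the opposite-of-inclusion convention from Definition~\ref{d:upper-boundary-tau}. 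Thus $\soct\cM$ with the structure from Definition~\ref{d:soct} is, in restricted degrees, exactly a $\cdq[\tau]$-module.

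Second, for comparable pairs $(\aa_\tau, \sigma) \preceq (\aa'_{\tau'}, \sigma')$ with $\tau \neq \tau'$, I would declare the structure map to be zero. This is forced and consistent: by the (real polyhedral analogue of) Remark~\ref{r:soc-as-k-vect}, each $\soct\cM$ is a direct sum of skyscraper modules graded by $\qrt \times \nabt$, meaning every structure map between distinct graded pieces is already zero and identical on coincident pieces. Extending a direct sum of skyscrapers to a $\cdq$-module by zero maps across face components trivially satisfies composition and associativity — compositions through an intermediate piece in a third face component are zero on both sides and match the direct zero map. Naturality in $\cM$ is inherited from the functoriality of $\soct$ provided by Proposition~\ref{p:left-exact-tau}. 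Restricting the resulting $\cdq$-module structure to $\cdm$ gives the $\cdm$-module structure.

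The principal obstacle lies in the first step: patiently unpacking $\qns$ via Lemma~\ref{l:<<} and confirming that the inclusion of coprincipal downsets decouples into the coordinate-wise orders on $\qrt$ and on $\nabt$ (with its reversed convention). The second step is then essentially automatic from the skyscraper decomposition of socles; one must only note that the ``trivial action'' observation of Remark~\ref{r:soc-as-k-vect} extends across faces in the only possible functorial way, namely by zero.
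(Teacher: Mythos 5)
Your second step is, in substance, the paper's entire proof: Lemma~\ref{l:Soc} is proved there by citing Remark~\ref{r:soc-as-k-vect}, whose content is exactly that $\soct\cM$ is a direct sum of skyscrapers, so every structure map between distinct degrees is zero and any assignment of zero maps to the relations of a larger poset is automatically a module structure, natural in~$\cM$ by functoriality of~$\soct$. The problem is your first step, which contains a false claim: the partial order that Definition~\ref{d:death-RR} places on a single face component $\cdq[\tau] = \qrt \times \nabt$ does \emph{not} decouple into the product of the order on~$\qrt$ with the opposite-inclusion order on~$\nabt$. The conjunction of $\aa_\tau \preceq \aa'_\tau$ in~$\qrt$ and $\sigma \supseteq \sigma'$ does imply $\aa_\tau - \qns \subseteq \aa'_\tau - \qns[']$, but the converse fails. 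Take $\cQ = \RR^2$, $\tau = \{\0\}$, $\sigma = \{\0\}$, and $\sigma' = \cQ_+$: Lemma~\ref{l:<<} gives $\qns = \cQ_+$ and $\qns['] = \qpc$, so $\aa - \cQ_+ \subseteq \aa' - \qpc$ holds whenever $\aa' - \aa \in \qpc$, even though $\{\0\} \not\supseteq \cQ_+$. The restriction of the $\cdq$ order to one face component is therefore strictly finer than the product order, and $\soct\cM$ equipped only with its $(\qrt\times\nabt)$-module structure is not yet ``exactly a $\cdq[\tau]$-module'': structure maps must also be supplied for these extra within-face relations.

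The misstep is harmless, because the mechanism you already deploy for cross-face relations---declare the map zero and observe that commutativity of all composites is automatic for a product of skyscrapers---applies verbatim to the extra within-face relations. Once Remark~\ref{r:soc-as-k-vect} is invoked there is no need to compare the two partial orders at all: every structure map of $\Soc\cM$ over~$\cdq$ between distinct degrees is zero, whatever the provenance of the relation, and restriction to the subposet $\cdm$ is then immediate. Deleting your first step and applying your second step uniformly to all relations of~$\cdq$ recovers the paper's one-line argument.
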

\begin{proof}
This follows from Remark~\ref{r:soc-as-k-vect}.
\end{proof}

\begin{prop}\label{p:birth-union-death}
The disjoint union $\cbq \cupdot \cdq$ is partially ordered by
$$%
  (\bb_\rho,\xi) \preceq (\aa_\tau,\sigma)
  \iff
  (\bb_\rho + \qnx) \cap (\aa_\tau - \qns) \neq \nothing
$$
along with the given partial orders on~$\cbq\!$ and~$\cdq$, if $\cQ$ is
a real polyhedral group.~~\hfill$\square$
\end{prop}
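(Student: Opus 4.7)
The plan is to verify the three partial-order axioms for the combined relation on $\cbq \cupdot \cdq$. Reflexivity reduces immediately to reflexivity within each component, since the cross-relation only ever compares elements of distinct types. Antisymmetry is likewise automatic: the cross-relation is one-directional (from $\cbq$ to $\cdq$ only), so it cannot pair with its reverse to produce any new antisymmetric obstruction across the components. The substantive content is therefore transitivity involving the cross-relation.

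There are precisely two mixed cases. For the first, I will suppose $\brx \preceq \brxp$ holds inside~$\cbq$ and $\brxp \preceq (\aa_\tau,\sigma)$ holds by the cross-relation. The former gives $\bb_\rho + \qnx \supseteq \brp + \qnx[']$ and the latter gives $(\brp + \qnx[']) \cap (\aa_\tau - \qns) \neq \nothing$. Monotonicity of intersection under containment then yields
\[
  (\bb_\rho + \qnx) \cap (\aa_\tau - \qns)
  \supseteq
  (\brp + \qnx[']) \cap (\aa_\tau - \qns)
  \neq \nothing,
\]
so $\brx \preceq (\aa_\tau,\sigma)$ via the cross-relation. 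For the second case, I will suppose $\brx \preceq (\aa_\tau,\sigma)$ by the cross-relation and $(\aa_\tau,\sigma) \preceq (\aa'_{\tau'},\sigma')$ inside~$\cdq$, so that $\aa_\tau - \qns \subseteq \aa'_{\tau'} - \qns[']$. The same monotonicity then gives
\[
  (\bb_\rho + \qnx) \cap (\aa'_{\tau'} - \qns[']) \supseteq
  (\bb_\rho + \qnx) \cap (\aa_\tau - \qns) \neq \nothing,
\]
so $\brx \preceq (\aa'_{\tau'},\sigma')$.

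Transitivity within~$\cbq$ alone, or within~$\cdq$ alone, is already part of the stipulated poset structure on those components, so no further case remains. Since both mixed transitivity cases reduce to the same monotonicity principle---intersections shrink weakly as either intersectand shrinks---there is no genuine obstacle: none of the infinitesimal geometry developed in Sections~\ref{s:socle}--\ref{s:gen-functors} or the shape rigidity of Proposition~\ref{p:shape} is needed, and the argument is purely a bookkeeping exercise in tracking which set contains which. The one mildly conceptual point, worth flagging because it is what makes the gluing painless, is that the cross-relation is defined only between elements of distinct types; this is precisely what renders antisymmetry vacuous across components and lets transitivity work cleanly with the given internal orders on $\cbq$ and~$\cdq$.
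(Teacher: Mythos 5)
Your proof is correct, and it is precisely the routine verification the paper suppresses by stating the proposition with a terminal $\square$ and no proof: the only nontrivial axioms are the two mixed transitivity cases, and both follow from monotonicity of the nonempty-intersection condition under the containments defining the internal orders on $\cbq$ and~$\cdq$, while antisymmetry across components is vacuous because the cross-relation only points from births to deaths. Nothing further is needed.
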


\begin{remark}\label{r:partial-compactification-RR}
The geometric picture in Remark~\ref{r:partial-compactification-ZZ}
remains roughly valid, but now the face-poset factors $\nabro$
and~$\nabt$ come into consideration.  Whereas passing from
$(\bb_\rho,\xi)$ to~$(\brp,\xi)$ for a fixed face~$\xi$ should be
thought of as a macroscopic motion, passing from $(\bb_\rho,\xi)$
to~$(\bb_\rho,\xi')$ should be thought of as an infinitesimal nudge.
Thus every point in the partial compactification becomes
``arithmetically thickened'' in a manner reminiscent of the way an
inertia group on an orbifold keeps track of automorphisms along a
stratum, noting that indeed the nature of the thickening is precisely
dependent on the stratum because along~$\rho$ only infinitesimal
motions to faces $\xi \supseteq \rho$ are allowed.
\end{remark}

%%%%%%%%%%%%%%%%%%%%%%%%%%%%%%%%%%%%%%%%%%%%%%%%%%%%%%%%%%%%%%%%%%%%%%%%%
\section{Death functors and QR codes}\label{s:qr}%%%%%%%%%%%%%%%%%%%%%%%%
%%%%%%%%%%%%%%%%%%%%%%%%%%%%%%%%%%%%%%%%%%%%%%%%%%%%%%%%%%%%%%%%%%%%%%%%%

%%%%%%%%%%%%%%%%%%%%%%%%%%%%%%%%%%%%%%%%%%%%%%%%%%%%%%%%%%%%%%%%%%%%%%%%%
\subsection{Death functors}\label{sub:death}\mbox{}%%%%%%%%%%%%%%%%%%%%%%

\begin{thm}\label{t:death-functor}
Fix a module~$\cM$ over a polyhedral partially ordered group~$\cQ$.
\begin{enumerate}
\item\label{i:death-functor-ZZ}%
If $\cQ$ is discrete and $\aa_\tau \in \cdm$ is a death degree, then
there is a \emph{death functor}
$$%
  \QR_{\aa_\tau}:
  \cmr_{\bb_\rho}
  \to
  \Soc_{\aa_\tau}\!\cM,
$$
natural in $\bb_\rho \in \cbq$, where $\cmr_{\bb_\rho} =
\Hom_\cQ\bigl(\kk[\bb_\rho + \cQ_+],\cM\bigr)$ as in
Definition~\ref{d:cmr}.

\item\label{i:death-functor-RR}%
If $\cQ$ is real and $(\aa_\tau,\sigma) \in \cdm$ is a death degree,
then there is a \emph{death functor}
$$%
  \QR_{\aa_\tau}^\sigma:
  \mrx_{\bb_\rho}
  \to
  \Soc_{\aa_\tau}^\sigma\!\cM,
$$
natural in $(\bb_\rho,\xi) \!\in\! \cbq$,
\hspace{-1pt}where\hspace{-1pt} $\mrx_{\bb_\rho}
\hspace{-1pt}=\hspace{-1pt} \Hom_\cQ\hspace{-.5pt}\bigl(\kk[\bb_\rho +
\qnx],\!\cM\bigr)\!$ as in \mbox{Definition}\,\ref{d:mrx}.
\end{enumerate}
\end{thm}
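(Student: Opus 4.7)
The plan is to construct the death functor $\QR_{\aa_\tau}$ explicitly and verify its naturality. For the discrete case, each $\phi \in \cmr_{\bb_\rho}$ is a homomorphism $\phi: \kk[\bb_\rho + \cQ_+] \to \cM$, and I produce from it a class in $\Soc_{\aa_\tau}\cM = (\socct\cM)_{\aa_\tau}$ as follows. By Proposition~\ref{p:birth-union-death-ZZ}, if $\bb_\rho \not\preceq \aa_\tau$ in $\cbq \cupdot \cdq$, the map must vanish, so set $\QR_{\aa_\tau}(\phi) = 0$. When $\bb_\rho \preceq \aa_\tau$, I pick a representative $\aa$ of $\aa_\tau$ in $\bb_\rho + \cQ_+$---which exists by translating any point of the nonempty intersection $(\bb_\rho + \cQ_+) \cap (\aa_\tau - \cQ_+)$ upward in $\cQ_+$---and define $\QR_{\aa_\tau}(\phi)$ as the image of $\phi(1_\aa) \in \cM_\aa$ in $(\socct\cM)_{\aa_\tau}$. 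Independence of the choice of representative follows from the quotient-by-$\tau$ in the definition $\socct\cM = \hhom_\cQ(\kk[\tau],\cM)/\tau$, which identifies different representatives of the coset $\aa_\tau$; linearity in $\phi$ is immediate.

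For naturality in $\bb_\rho \in \cbq$, a relation $\bb_\rho \preceq \brp$ induces a restriction $\cmr_{\bb_\rho} \to \cM^{\rho'}_{\brp}$ by composition with the inclusion of upset modules $\kk[\brp + \cQ_+] \hookrightarrow \kk[\bb_\rho + \cQ_+]$. When $\brp \preceq \aa_\tau$ also holds, a common representative $\aa$ can be chosen in the smaller upset $\brp + \cQ_+ \subseteq \bb_\rho + \cQ_+$, and the two computed QR values agree because the restriction is transitive: both equal the class of $\phi(1_\aa) \in \cM_\aa$. The real polyhedral case proceeds analogously using $\mrx_{\bb_\rho} = \Hom(\kk[\bb_\rho + \qnx],\cM)$ in place of $\cmr_{\bb_\rho}$, Lemma~\ref{l:mrx}'s identification $\mrx \cong (\lx\cM)^\rho$ to handle the inverse limits in $\qnx = \xi^\circ + \cQ_+$, the refined union poset of Proposition~\ref{p:birth-union-death}, and nadir tracking for $\sigma \in \nabt$ via the upper boundary functor $\dt\cM$; Proposition~\ref{p:either-order} provides the commutation of socle functors needed to project onto the $\sigma$-component $(\soct[\sigma]\cM)_{\aa_\tau}$.

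The main obstacle is to verify that the constructed class genuinely lies in $(\socct\cM)_{\aa_\tau}$ (or $(\soct[\sigma]\cM)_{\aa_\tau}$ in the real case) rather than merely in the ambient quotient-restriction: the evaluation $\phi(1_\aa) \in \cM_\aa$ need not itself be a classical socle element of~$\cM$, since $\cM_\aa$ may properly contain $\Soc_{\aa_\tau}\cM$. Resolving this relies on the left-exactness of $\socct$ (Proposition~\ref{p:left-exact-tau-closed}) applied to the submodule $\image\phi \subseteq \cM$, combined with the hypothesis that $\aa_\tau \in \cdm$ is a death degree, to extract the socle component of $\phi(1_\aa)$ compatibly with the natural structure of $(\socct\cM)_{\aa_\tau}$. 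In the real polyhedral setting, the noncommutativity of localization and socle (Remark~\ref{r:soc-vs-supp}) requires executing the operations in the order dictated by Proposition~\ref{p:either-order}, and the infinitesimal geometry of faces demands that the nadir bookkeeping in $\dt\cM$ be carried through each step so as to preserve the socle structure through the upper boundary limits.
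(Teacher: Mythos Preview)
Your approach and the paper's share the same skeleton---factor through $\image\phi$ and push socle elements forward via left-exactness---but the gap you identify in your third paragraph is real, and your sketched resolution does not close it. The phrase ``the image of $\phi(1_\aa) \in \cM_\aa$ in $(\socct\cM)_{\aa_\tau}$'' is undefined: the closed socle $\socct\cM = \hhom_\cQ(\kk[\tau],\cM)/\tau$ is a \emph{submodule} of the quotient-restriction $\cM/\tau$, not a quotient of it, so there is no natural projection $\cM_\aa \to (\socct\cM)_{\aa_\tau}$. Your proposed fix invokes the hypothesis $\aa_\tau \in \cdm$, but that is a condition on~$\cM$, not on $\image\phi$; it does not force $\phi(1_\aa)$ into the socle. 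Concretely, if $\cM = \kk[D_1]\oplus\kk[D_2]$ with $\aa$ a cogenerator of~$D_1$ but an interior point of~$D_2$, and $\phi$ lands entirely in the second summand, then $\phi(1_\aa)\neq 0$ is simply not a socle element, and there is no canonical ``socle component'' to extract from it.

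The paper avoids this by never attempting to project inside~$\cM$. Instead it observes that $\image\phi = \kk[S]$ for a downset $S \subseteq \bb_\rho + \cQ_+$, so $\kk[S]$ is an indicator subquotient of~$\kk[\cQ]$. The decisive point---which your fix gestures toward but does not name---is that the socle of an indicator module in any fixed degree is at most one-dimensional (Corollary~\ref{c:at-most-one} in the real case; Example~\ref{e:socct} directly in the discrete case), with canonical generator~$1$ when nonzero. So the paper defines $\QR_{\aa_\tau}(\phi)$ by first asking whether $\aa_\tau$ is a death degree of~$\kk[S]$: if not, send $\phi$ to~$0$; if so, take the canonical socle element $s_{\aa_\tau}\in\socct\kk[S]$ and push it to $\socct\cM$ via the injection $\socct\phi_*$ supplied by left-exactness. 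No extraction is needed because in~$\kk[S]$ the element~$1$ either already is the socle element or the socle in that degree vanishes.
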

\begin{proof}
The proof for the discrete case is strictly easier, since it is little
else than the real case with $\xi = \rho$ and $\sigma = \tau$.  The
following real proof is written to work verbatim after changing every
$\qnx$ to~$\cQ_+$, placing a bar over every~$\soct$, and erasing every
$\xi$~and~$\sigma$.

The image of any given homogeneous homomorphism $\phi: \kk[\bb_\rho +
\qnx] \to \cM$ is a quotient~$\kk[S]$ of~$\kk[\bb_\rho + \qnx]$, where
$S \subseteq \bb_\rho + \qnx$ is a downset~of~$\bb_\rho + \qnx$.
Therefore $\phi$ canonically induces an inclusion $\phi_*: \kk[S]
\into\nolinebreak \cM$.  If $\aa_\tau \in \cdm[\tau,\sigma]$ then
either $\aa_\tau \not\in \cds[\tau,\sigma]$ or $(\bb_\rho,\xi) \preceq
(\aa_\tau,\sigma)$ and, for some $\bb \in \bb_\rho + \qnx$, the
element $1 \in \kk[S]_\bb$ divides a cogenerator~$s_\aa$ of~$\kk[S]$
along~$\tau$ with nadir~$\sigma$ in some degree~$\aa$ in the
coset~$\aa_\tau$.  All choices of~$\aa$, regardless of which $\bb \in
\bb_\rho + \qnx$ is used, result in cogenerators~$s_\aa^\sigma$ with
the same image $s_{\aa_\tau}^\sigma \in \soct[\sigma]\kk[S]$ because
of the quotient-restriction modulo~$\tau$ in
Definition~\ref{d:soct}.\ref{i:global-soc-tau}.  The socle
element~$s_{\aa_\tau}^\sigma$ is taken functorially
to~$\soct[\sigma]\cM$ by the map $\soct[\sigma]\phi_*:
\soct[\sigma]\kk[S] \into \soct[\sigma]\cM$ in
Theorem~\ref{t:discrete-injection}.  The desired functor takes $\phi$
to
$$%
\QR_{\aa_\tau}^\sigma\phi = 
\begin{cases}
\soct[\sigma]\phi_*(s_{\aa_\tau}^\sigma) &\text{if }
  (\bb_\rho,\xi) \preceq (\aa_\tau,\sigma)\text{ and }
  \aa_\tau \in \cds[\tau,\sigma]
\\
            0                  &\text{otherwise}.
\end{cases}
$$
The naturality of the death functor is because $(\bb_\rho,\xi) \preceq
(\brp) \preceq (\aa_\tau,\sigma)$ implies that the element $1 \in
\kk[S]_{\bb'}$ for some $\bb' \in \brp + \qnx[']$ also divides one of
the aforementioned closed cogenerators~$s_\aa^\sigma$ of~$\kk[S]$
along~$\tau$ with nadir~$\sigma$ in some degree $\aa \in \aa_\tau$.
\end{proof}

%%%%%%%%%%%%%%%%%%%%%%%%%%%%%%%%%%%%%%%%%%%%%%%%%%%%%%%%%%%%%%%%%%%%%%%%%
\subsection{QR codes}\label{sub:qr}\mbox{}%%%%%%%%%%%%%%%%%%%%%%%%%%%%%%%

\begin{defn}\label{d:qr-code}
The \emph{(functorial) \qrcode} of a $\cQ$-module~$\cM$ is the
homomorphism
$$%
  \QR = \prod_{\aa_\tau \in \cdm} \QR_{\aa_\tau}:
  \Gen\cM \to
  \Soc\cM
$$
induced by Theorem~\ref{t:death-functor}.\ref{i:death-functor-ZZ} if
$\cQ$ is a discrete polyhedral group and
$$%
  \QR = \prod_{(\aa_\tau,\sigma) \in \cdm} \QR_{\aa_\tau}^\sigma:
  \Gen\cM \to
  \Soc\cM
$$
induced by Theorem~\ref{t:death-functor}.\ref{i:death-functor-RR} if
$\cQ$ is a real polyhedral group.
\end{defn}

\begin{remark}\label{r:poset-module-homomorphism-ZZ}
Functoriality of the death map means that $\QR$ is a morphism of
modules over posets, in the sense that if $(\bb_\rho,\xi) \preceq
(\brp,\xi')$ then
the composite homomorphism
$$%
  \Gen_{\bb_\rho}^\xi\!\cM
  \stackrel{\ \ \ \ \ \,}\fillrightmap
  \Gen_{\bb'\!\!_{\rho'}\!}^{\xi'}\cM
  \stackrel{\textstyle\,\QR_{\bb'\!\!_{\rho'}}^{\xi'}\,}\fillrightmap
  \prod_{(\bb'\!\!_{\rho'},\xi') \preceq (\aa_\tau,\sigma)} \Soc_{\aa_\tau}^\sigma\!\cM
$$
is also expressible as the composite homomorphism
$$%
  \Gen_{\bb_\rho}^\xi\!\cM
  \stackrel{\textstyle\,\QR_{\bb_\rho}^\xi\ }\fillrightmap
  \prod_{(\bb_\rho,\xi) \preceq (\aa_\tau,\sigma)} \Soc_{\aa_\tau}^\sigma\!\cM
  \stackrel{\ \ \ \ \ \,}\fillonto
  \prod_{(\bb'\!\!_{\rho'},\xi') \preceq (\aa_\tau,\sigma)}\Soc_{\aa_\tau}^\sigma\!\cM.
$$
\end{remark}

\vbox{
\begin{wrapfigure}{R}{0.2\textwidth}
  \vspace{-2.3ex}
  \hspace{-1ex}
  \(
  \begin{array}{@{}c@{}}
  \includegraphics[height=9mm]{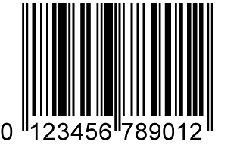}\\[-1ex]\scriptstyle n=1
  \end{array}
  \raisebox{2mm}{$\goesto$}\
  \begin{array}{@{}c@{}}
  \includegraphics[height=9mm]{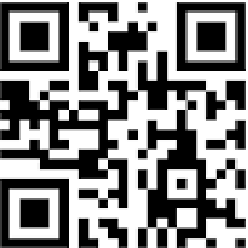}\\[-1ex]\scriptstyle n=2
  \end{array}
  \)
  \hspace{-2ex}
  \vspace{-3ex}
\end{wrapfigure}
\refstepcounter{thm}
\smallskip
\noindent
\textbf{Remark~\arabic{section}.\arabic{thm}.}\label{r:qr-etymology}
%begin{remark}\label{r:qr-etymology}
The letters ``QR'' stand for ``quotient-restriction'', but ``\qrcode''
also refers to the two-dimensional pixelated versions of bar codes
typically parsed by camera-enabled mobile phones.
%end{remark}
}
\smallskip

\begin{thm}\label{t:recover}
Every finitely encoded module over a discrete or real polyhedral group
is recovered functorially from its \qrcode; namely, $\cM$ is the image
of the natural~map
\begin{equation}\label{eq:recover-prod-ZZ}
  \bigoplus_{\bb_\rho\in\cbm} \kk[\bb_\rho + \cQ_+]\otimes_\kk\Gen_{\bb_\rho}\!\cM
  \stackrel{\textstyle\ \widehat\QR\ \,}\fillrightmap
  \prod_{\aa_\tau\in\cdm}\kk[\aa_\tau - \cQ_+]\otimes_\kk\Soc_{\aa_\tau}\!\cM
\end{equation}
if the underlying polyhedral group is discrete, and
\begin{equation}\label{eq:recover-prod-RR}
\bigoplus_{(\bb_\rho,\xi)\in\cbm}\kk[\bb_\rho+\qnx]\otimes_\kk\Gen_{\bb_\rho}^\xi\!\cM
  \stackrel{\textstyle\ \widehat\QR\ \,}\fillrightmap
\prod_{(\aa_\tau,\sigma)\in\cdm}\kk[\aa_\tau-\qns]\otimes_\kk\Soc_{\aa_\tau}^\sigma\!\cM
\end{equation}
if the underlying polyhedral group is real.
\end{thm}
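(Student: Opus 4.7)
The plan is to factor $\widehat\QR$ as a composition $\iota \circ \pi$, where $\pi$ is a surjection from the source of $\widehat\QR$ onto $\cM$ and $\iota$ is an injection of $\cM$ into the target; then the image of $\widehat\QR$ equals $\iota(\pi(\mathrm{source})) = \iota(\cM) \cong \cM$. I describe the real polyhedral case; the discrete case is parallel, replacing $\mrx/\rho$ and Proposition~\ref{p:surjection-RR} with $\cmr/\rho$ and Proposition~\ref{p:surjection-ZZ}, and closed socles and tops along faces throughout. By Theorem~\ref{t:syzygy}, every finitely encoded $\cM$ is both upset-finite and downset-finite, and hence infinitesimally $\cQ$-finite by Example~\ref{e:infinitesimally-Q-finite}, so the full machinery of Sections~\ref{s:socle}--\ref{s:gen-functors} applies.

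To construct $\pi$, observe that by Lemma~\ref{l:mrx} combined with the quotient-restriction in Definition~\ref{d:topr}, each homogeneous element $g \in \Gen_{\bb_\rho}^\xi\cM$ lifts to a $\cQ$-module homomorphism $\kk[\bb_\rho + \qnx] \to \cM$, well defined up to the $\RR\rho$-translation that is killed in the quotient by~$\rho$. Tensor-hom adjunction assembles these lifts into a single $\cQ$-module homomorphism $\pi$ from the source of $\widehat\QR$ into~$\cM$. Example~\ref{e:beta} identifies $\topr[\xi]\kk[\bb_\rho+\qnx]$ as one-dimensional in $(\rho,\xi)$-degree $\bb_\rho$ and zero in all other degrees, so in each $(\rho,\xi,\bb_\rho)$-degree the induced map $\topr[\xi]\pi$ equals the natural surjection $\mrx/\rho \onto \topr[\xi]\cM$ of Proposition~\ref{p:surjection-RR}. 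In particular $\topr\pi$ is surjective for every face~$\rho$, and Theorem~\ref{t:surjection-RR} (essentiality of tops, in the upset-finite setting) then forces $\pi$ itself to be surjective.

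To construct $\iota$, use Theorem~\ref{t:hull-M} together with the canonical primary decomposition of downsets in Theorem~\ref{t:hull-D} and the antichain form of coprimary downsets in Lemma~\ref{l:antichain}: each basis element of $\Soc_{\aa_\tau}^\sigma\cM$---represented by a cogenerator of $\cM$ along~$\tau$ with nadir~$\sigma$ in some lift $\aa \in \aa_\tau$---produces a canonical $\cQ$-module homomorphism $\cM \to \kk[\aa + \tau - \qns]$, noting that $\aa + \tau - \qns$ is invariant under $\RR\tau$-shift and therefore well-defined for the coset~$\aa_\tau$. Corollary~\ref{c:at-most-one} ensures that these homomorphisms are indexed bijectively by a basis of $\Soc_{\aa_\tau}^\sigma\cM$, so assembling them produces a single $\cQ$-module homomorphism $\iota: \cM \to \prod_{(\aa_\tau,\sigma) \in \cdm} \kk[\aa_\tau - \qns] \otimes_\kk \Soc_{\aa_\tau}^\sigma\cM$ for which $\soct[\sigma]\iota$ restricted to each $\aa_\tau$-component is the identity on $\Soc_{\aa_\tau}^\sigma\cM$. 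Thus $\soct\iota$ is injective for every face~$\tau$, and Theorem~\ref{t:injection} (essentiality of socles, in the downset-finite setting) then forces $\iota$ itself to be injective.

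Finally, I would verify $\widehat\QR = \iota \circ \pi$, which combined with the first two steps gives $\image(\widehat\QR) = \iota(\cM) \cong \cM$; functoriality is inherited from the manifest naturality of $\pi$ and~$\iota$ in~$\cM$. This last identification is the main obstacle: by the construction in Theorem~\ref{t:death-functor}, the $(\aa_\tau,\sigma)$-coordinate of $\QR$ applied to $g \in \Gen_{\bb_\rho}^\xi\cM$ is the image in $\soct[\sigma]\cM$ of the unique cogenerator of the submodule $\pi(\kk[\bb_\rho+\qnx] \otimes g) \subseteq \cM$ along~$\tau$ with nadir~$\sigma$ in class~$\aa_\tau$ and divisible by the canonical generator---which by Corollary~\ref{c:at-most-one} is precisely the corresponding coordinate of $\iota(\pi(1 \otimes g))$. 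Upgrading this degree-by-degree matching of socle coordinates to an equality of $\cQ$-module homomorphisms into the product target, while tracking how the $\kk[\bb_\rho + \qnx]$-factor on the source interacts with the $\kk[\aa_\tau - \qns]$-factor on the target, is the one delicate bookkeeping point.
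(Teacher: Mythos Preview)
Your overall strategy---factor $\widehat\QR$ through~$\cM$ and invoke essentiality of tops (Theorem~\ref{t:surjection-RR}) for surjectivity of $\pi$ and essentiality of socles (Theorem~\ref{t:injection}) for injectivity of $\iota$---is exactly the paper's, and your construction of~$\pi$ matches.

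The divergence is in how you produce~$\iota$. You build $\iota$ \emph{independently} of~$\widehat\QR$, via a minimal downset hull (Theorems~\ref{t:hull-M} and~\ref{t:hull-D}), a basis of each $\Soc_{\aa_\tau}^\sigma\cM$, and cogenerators representing the basis vectors. This is not canonical: a cogenerator of~$\cM$ does not by itself determine a homomorphism $\cM \to \kk[\aa_\tau - \qns]$---you need the ambient downset hull to get that map, and different hulls give different maps. So the ``canonical'' in your description is not justified, functoriality becomes unclear, and you are then forced into the ``delicate bookkeeping'' of verifying $\widehat\QR = \iota \circ \pi$ for this particular choice of~$\iota$, which you flag but do not resolve.

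The paper sidesteps this entirely. Instead of constructing $\iota$ from scratch, it observes directly from the definition of the death functor (Theorem~\ref{t:death-functor}) that any element of the source mapping to~$0$ in~$\cM$ maps to~$0$ under~$\widehat\QR$: the death functor at $(\aa_\tau,\sigma)$ sends $\phi$ to a socle element of the \emph{image} $\kk[S]\subseteq\cM$ of~$\phi$, so if that image is zero there is nothing to send. Hence $\widehat\QR$ factors through~$\cM$ automatically, and the induced map \emph{is}~$\iota$; there is no separate construction and no matching to verify. Injectivity of this induced map then follows because its projection to each factor recovers the corresponding socle piece by construction. Your detour through downset hulls is unnecessary and is precisely what creates the unresolved obstacle at the end.
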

\begin{proof}
As with Theorem~\ref{t:death-functor}, the proof is the same, mutatis
mutandis, for the discrete case instead of the real case, so only the
real case is presented explicitly.

The death functor
$\Gen_{\bb_\rho}^\xi\!\cM\to\Soc_{\aa_\tau}^\sigma\!\cM$ naturally
specifies a~%
\mbox{homomorphism} $\kk[\bb_\rho + \qnx] \otimes_\kk
\Gen_{\bb_\rho}^\xi\!\cM \to \kk[\aa_\tau - \qns] \otimes_\kk
\Soc_{\aa_\tau}^\sigma\!\cM$, and hence the natural map~$\wh\QR$, via
the homomorphism $\kk[\bb_\rho + \qnx] \to \kk[\aa_\tau - \qns]$
defined by $1 \in \kk$ if $(\bb_\rho,\xi) \preceq (\aa_\tau,\sigma)$
and~$0$ otherwise.  Using the inclusion $\cQ = \cQ/\{\0\} \times
\{\cQ_+\} \subseteq \cbq$ to make sense of the relevant direct limits,
$\Gen\cM$ pushes forward to a $\cQ$-module
$$%
  \Gen^\cQ\!\cM
  =
  \bigoplus_{\aa\in\cQ}\ \dirlim_{(\bb_\rho,\xi)\preceq\aa}\Gen_{\bb_\rho}^\xi\!\cM
$$
universal among $\cQ$-modules with a map from~$\Gen\cM$.  The
map~$\wh\QR$ factors through $\Gen^\cQ\!\cM$ by functoriality in
Theorem~\ref{t:death-functor}; write~$\wh\QR^{\,\cQ\!}$ for the
induced map on $\Gen^\cQ\!\cM$.

The direct sum in Eq.~(\ref{eq:recover-prod-RR}) maps surjectively
onto~$\cM$ by Theorem~\ref{t:surjection-RR}.\ref{i:topr=>phi} (or
Theorem~\ref{t:surjection-ZZ}.\ref{i:topr=>phi} in the discrete case).
Universality of the pushforward forces this surjection to factor
through $\Gen^\cQ\!\cM$.  On the other hand, $\wh\QR^{\,\cQ\!}$
factors through this surjection $\Gen^\cQ\!\cM \onto \cM$ by
construction of the death functor in Theorem~\ref{t:death-functor}:
every element of $\Gen\cM$ that maps to~$0$ in~$\cM$ certainly maps
to~$0$ in~$\Soc\cM$.  But the induced map from~$\cM$ to the product in
Eq.~(\ref{eq:recover-prod-RR}) is injective by
Theorem~\ref{t:injection} (or Theorem~\ref{t:discrete-injection} in
the discrete case) because it induces an injection on socles by
construction.  (Note: the socle of the product in
Eq.~(\ref{eq:recover-prod-RR}) along a face could a~priori be much
bigger than that of~$\cM$, but composing the map from~$\cM$ with the
projection to any factor of the product shows that the induced map on
socles must at least be injective.)
\end{proof}

\begin{remark}\label{r:qr-code}
Keeping track of entire graded components of~$\cM$
(technically:~$\mrx/\rho$) feels wasteful compared to keeping track
merely of vector spaces~$\topr[\xi]\cM$, whose graded components are
best viewed as vector spaces that count minimal generators.  This
waste feels unpleasant, but it is required to make the QR~morphism
functorial.  And perhaps it shouldn't feel so bad: writing down
$\topr[\xi]\cM$ as the quotient that it naturally is requires knowing
$\Gen_{\bb_\rho}^\xi\cM$ as well as the kernel of its surjection
to~$\topr[\xi]\cM$ (see Theorem~\ref{t:life-top} for more about that),
so it is not horrible to request that the filtered object serve as the
data structure instead of the associated graded object.

In finitely generated cases---or better, finitely determined ones from
Section~\ref{s:ZZn}---the QR~code is a functorial homomorphism between
vector spaces of finite dimension that encapsulates all of a given
module.  As such, it is similar to flange presentation
(Definition~\ref{d:flange}).  The latter, however, sacrifices
functoriality for efficiency by choosing~bases.
\end{remark}

\begin{remark}\label{r:unwind}
It is worth unwinding the definitions that lead to the reconstruction
homomorphism~$\wh\QR$ featured in Theorem~\ref{t:recover}.  Over a
discrete polyhedral group, $\Gen_{\bb_\rho}\!\cM =\nolinebreak
(\cmr/\rho)_{\bb_\rho}$ from
Definition~\ref{d:birth-ZZ}.\ref{i:module-ZZ} and $\cM^\rho =
\hhom_\cQ\bigl(\kk[\cQ_+]_\rho,\cM\bigr)$ from Definition~\ref{d:cmr},
so the direct summands in Eq.\,(\ref{eq:recover-prod-ZZ}) are
\begin{equation}\label{eq:life-ZZ}
  \kk[\bb_\rho+\cQ_+]\otimes_\kk\Hom_\cQ\bigl(\kk[\bb_\rho+\cQ_+],\cM\bigr).
\end{equation}
Similarly, over a real polyhedral group, $\Gen_{\bb_\rho}^{\xi}\!\cM =
(\mrx/\rho)_{\bb_\rho}$ from
Definition~\ref{d:birth-RR}.\ref{i:module-RR} and $\mrx =
\hhom_\cQ\hspace{-.1ex}\bigl(\kk[\qnx]_\rho,\hspace{-.1ex}\cM\bigr)$
from Definition\,\ref{d:mrx}, so the summands
in~Eq.\hspace{.2ex}(\ref{eq:recover-prod-RR})~are
\begin{equation}\label{eq:life-RR}
  \kk[\bb_\rho+\qnx]\otimes_\kk\Hom_\cQ\bigl(\kk[\bb_\rho+\qnx],\cM\bigr).
\end{equation}
Note the lack of underlines on the $\Hom$ functors: these are single
graded pieces of modules, not entire graded modules.
\end{remark}

\begin{defn}\label{d:life-module}
The \emph{life module} of~$\cM$ over a discrete or real polyhedral
group~$\cQ$ is
$$%
  \life\cM
  =
  \bigoplus_{\bb_\rho\in\cbm} \kk[\bb_\rho + \cQ_+]\otimes_\kk\Gen_{\bb_\rho}\!\cM
  \quad\text{or}\quad
  \life\cM
  =\hspace{-1ex}
  \bigoplus_{(\bb_\rho,\xi)\in\cbm}\hspace{-1ex}
  \kk[\bb_\rho+\qnx]\otimes_\kk\Gen_{\bb_\rho}^\xi\!\cM,
$$
according to whether $\cQ$ is discrete or real, respectively.
\end{defn}

\begin{remark}\label{r:life}
The image of the reconstruction homomorphism $\wh\QR$ in
Theorem~\ref{t:recover} is naturally isomorphic to~$\cM$, so using the
notion of life module, $\wh\QR$ induces a natural surjection $\life\cM
\onto \cM$, also denoted~$\wh\QR$ by abuse of notation, that is easy
to describe explicitly: the displayed expressions in
Remark~\ref{r:unwind} map to~$\cM$ by $y \otimes\nolinebreak \phi
\mapsto\nolinebreak \phi(y)$.
\end{remark}

%%%%%%%%%%%%%%%%%%%%%%%%%%%%%%%%%%%%%%%%%%%%%%%%%%%%%%%%%%%%%%%%%%%%%%%%%
\section{Elder morphisms}\label{s:elder}%%%%%%%%%%%%%%%%%%%%%%%%%%%%%%%%%
%%%%%%%%%%%%%%%%%%%%%%%%%%%%%%%%%%%%%%%%%%%%%%%%%%%%%%%%%%%%%%%%%%%%%%%%%

\begin{defn}\label{d:life-prec-beta}
If $\beta \in \cbq$ is any element of the birth poset of~$\cQ$, then
define $\life_{\prec\beta}\cM$ by taking the direct sum in
Definition~\ref{d:life-module} over birth poset elements strictly
preceding~$\beta$, and similarly define $\life_{\preceq\beta}\cM$ by
taking the direct sum over birth poset elements weakly
preceding~$\beta$.
\end{defn}

\begin{conv}\label{conv:top}
For any birth degree $\beta \in \cbq$ over a real or polyhedral
group~$\cQ$,~set
\begin{align*}
\top_\beta\cM
  &=
  \begin{cases}
  (\topcr\cM){}_{\bb_\rho}
    & \text{if }\cQ\text{ is discrete and }\beta = \bb_\rho
  \\[.5ex]
  (\topr[\xi]\cM)_{\bb_\rho}
    & \text{if }\cQ\text{ is real and }\beta = (\bb_\rho,\xi)
  \end{cases}
\\[1ex]
\kk[\beta + \cqb]
  &=
  \begin{cases}
  \kk[\bb_\rho + \cQ_+]\otimes_\kk\Gen_{\bb_\rho}\!\cM
    & \text{if }\cQ\text{ is discrete and }\beta = \bb_\rho
  \\[.5ex]
  \kk[\bb_\rho+\qnx]\otimes_\kk\Gen_{\bb_\rho}^\xi\!\cM
    & \text{if }\cQ\text{ is real and }\beta = (\bb_\rho,\xi).
  \end{cases}
\end{align*}
Parallel notations work for $\Gen_\beta\cM$ and $\life_\beta\cM =
\kk[\beta+\cqb]\otimes_\kk\Gen_\beta\cM$, as well as dually for
$\soc_\alpha\cM$ and $\Soc_\alpha\cM$ when $\alpha \in \cdq$.
\end{conv}

\begin{defn}\label{d:elder-module}
Fix a module~$\cM$ over a real or discrete polyhedral group~$\cQ$ and
a birth degree $\beta \in \cbq$.  (Thus $\beta$ has the
form~$\bb_\rho$ if $\cQ$ is discrete and $(\bb_\rho,\xi)$ if $\cQ$ is
real.)
\begin{enumerate}
\item%
The \emph{elder submodule} of~$\cM$ at~$\beta$ is the submodule
$$%
  \cM_{\prec\beta} = \wh\QR(\life_{\prec\beta}\!\cM) \subseteq \cM.
$$

\item%
The \emph{extant submodule} of~$\cM$ at~$\beta$ is the submodule
$$%
  \cM_{\preceq\beta} = \wh\QR(\life_{\preceq\beta}\!\cM) \subseteq \cM.
$$

\item%
The \emph{elder quotient} of~$\cM$ at~$\beta$ is
$\cM_{\preceq\beta}/\cM_{\prec\beta}$.
\end{enumerate}
\end{defn}

\begin{example}\label{e:RR-elder}
When $\cQ = \RR$ and~$\cM$ is expressed as a direct sum of
\emph{bars}---indicator modules for intervals that on each end may be
open, closed, or infinite---the elder submodule~$\cM_{\prec\beta}$ is
the submodule of~$\cM$ that is the direct sum of those bars whose left
endpoints occur strictly earlier than~$\beta$.  (See
Example~\ref{e:birth-RR} for the explicit description of~$\cbr$ as a
totally ordered set.)  The extant submodule of~$\cM$ is the direct sum
of those bars whose left endpoints occur at~$\beta$ or earlier.  The
elder quotient is the direct sum of those bars whose left endpoints
occur exactly at~$\beta$.  This description has been phrased in terms
of a direct sum decomposition of~$\cM$ specified by the bar code
of~$\cM$, but one of the main points of the theory is that the result
is functorial: the direct sum of bars with fixed left endpoint is
canonically a subquotient of~$\cM$.
\end{example}

\begin{thm}\label{t:life-top}
If~$\cM$ is upset-finite over a discrete or real polyhedral
group~$\cQ$ then
$$%
  \Gen(\cM_{\preceq\beta}/\cM_{\prec\beta})
  =
  \top_\beta(\cM_{\preceq\beta}/\cM_{\prec\beta})
  =
  \top_\beta\cM
$$
for any birth degree $\beta \in \cbq$, and
$\top_\gamma(\cM_{\preceq\beta}/\cM_{\prec\beta}) = 0$ if $\gamma \neq
\beta$.
\end{thm}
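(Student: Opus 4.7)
The plan is to set $\cN = \cM_{\preceq\beta}/\cM_{\prec\beta}$ and prove three claims in turn:
(A) $\top_\gamma \cN = 0$ for every $\gamma \neq \beta$;
(B) $\Gen_\beta \cN \cong \top_\beta \cN$;
(C) $\top_\beta \cN \cong \top_\beta \cM$.
Once (A) is established, $\cbm[\cN] \subseteq \{\beta\}$, so the birth module from Definitions~\ref{d:birth-ZZ} and~\ref{d:birth-RR} collapses to $\Gen\cN = \Gen_\beta\cN$, and combining with (B) and (C) yields the theorem's chain of isomorphisms.

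For (A), observe that $\cN$ is a quotient of $\life_\beta\cM = \kk[\beta+\cqb] \otimes_\kk \Gen_\beta\cM$, a direct sum of copies of a single upset module whose top vanishes outside degree $\beta$ by Example~\ref{e:beta}. Right-exactness of $\top_\gamma$—which follows from exactness of the lower boundary on infinitesimally $\cQ$-finite modules (Proposition~\ref{p:exact-lower-boundary}), exactness of $\hhom_\cQ(\kk[\cQ_+]_\rho,-)$ (Lemma~\ref{l:matlis-pair}), and Lemma~\ref{l:exact-qr}—propagates this vanishing to $\cN$. The same pattern applied to $\cM_{\prec\beta}$, which is a quotient of $\life_{\prec\beta}\cM = \bigoplus_{\gamma\prec\beta}\life_\gamma\cM$ with each summand having $\top_\beta = 0$, yields the side fact $\top_\beta \cM_{\prec\beta} = 0$ used below.

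For (B), apply $\Gen_\beta$ (exact by Proposition~\ref{p:exact-mrx} combined with Lemma~\ref{l:exact-qr}) and the right-exact $\top_\beta$ to the short exact sequence $0 \to K \to \life_\beta\cM \to \cN \to 0$, where $K$ is the kernel. The final calculation in Example~\ref{e:beta}, which computes $\Gen_\beta\kk[\beta+\cqb]$ and $\top_\beta\kk[\beta+\cqb]$ both to be $\kk$, makes the middle vertical $\Gen_\beta\life_\beta\cM \to \top_\beta\life_\beta\cM$ an isomorphism. A short four-term diagram chase across the resulting commutative ladder then forces the right vertical $\Gen_\beta\cN \to \top_\beta\cN$ to be an isomorphism as well.

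For (C), apply $\top_\beta$ to $0 \to \cM_{\prec\beta} \to \cM_{\preceq\beta} \to \cN \to 0$ and use $\top_\beta\cM_{\prec\beta} = 0$ to get $\top_\beta\cM_{\preceq\beta} \cong \top_\beta\cN$. To identify $\top_\beta\cM_{\preceq\beta}$ with $\top_\beta\cM$, set $Q = \cM/\cM_{\preceq\beta}$ and show $\Gen_\beta Q = 0$: any homomorphism $\kk[\beta+\cqb] \to Q$ lifts through $\cM$ by exactness of $\hhom_\cQ(\kk[\qnx]_\rho,-)$ from Lemma~\ref{l:matlis-pair}, and every such lift has image in $\cM_{\preceq\beta}$ directly from the definition $\cM_{\preceq\beta} = \wh\QR(\life_{\preceq\beta}\cM)$, forcing the original map to vanish. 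Exactness of $\Gen_\beta$ on $0 \to \cM_{\preceq\beta} \to \cM \to Q \to 0$ then gives $\Gen_\beta\cM_{\preceq\beta} \cong \Gen_\beta\cM$, and re-running the diagram-chase template of (B), now applied to the inclusion $\cM_{\preceq\beta} \hookrightarrow \cM$ with this identification as the middle vertical isomorphism, produces $\top_\beta\cM_{\preceq\beta} \cong \top_\beta\cM$. The main technical obstacle will be ensuring the kernels of the various instances of $\Gen_\beta \to \top_\beta$ align properly under the middle verticals; in the real polyhedral case this means tracking both the $\mm_\rho$-multiples contribution (where $\mm_\rho\cM \subseteq \cM_{\prec\beta}$ by a direct birth-degree comparison in $\cbq$) and the lower $\nabro$-component contribution from $\lx[\xi']\cM$ for $\xi' \subsetneq \xi$, whose birth degree $(\bb_\rho,\xi')$ strictly precedes $\beta$ in $\cbq$ by the computation in Example~\ref{e:beta}. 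Infinitesimal $\cQ$-finiteness for every module in the argument is secured by the upset-finiteness hypothesis through Example~\ref{e:infinitesimally-Q-finite} and Proposition~\ref{p:exact-lower-boundary}.
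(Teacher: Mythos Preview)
Your overall architecture is sound and, in fact, cleaner than the paper's: splitting into (A), (B), (C) makes the dependencies transparent, and your (A) and (B) go through as written.  The paper essentially runs your (B) argument in its final paragraph (surjecting $\Gen_\beta\life_\beta\cM \onto \Gen_\beta\cN$ via Proposition~\ref{p:exact-mrx}, then using $\Gen_\beta\life_\beta\cM = \top_\beta\life_\beta\cM$ and Proposition~\ref{p:surjection-RR} to squeeze out the isomorphism).  For the vanishing (A) and the right-hand equality (C), the paper instead opens with the assertion that $\top_\gamma\wh\QR\colon \top_\gamma\life\cM \to \top_\gamma\cM$ is an isomorphism ``by construction''; since Example~\ref{e:beta} gives $\top_\beta\life\cM = \Gen_\beta\cM$, that assertion amounts to $\Gen_\beta\cM = \top_\beta\cM$, which is false in general (take $\cM = \kk[\ZZ\times\NN]\oplus\kk[\ZZ\times\NN](0,-1)$ over $\ZZ^2$ with $\rho$ the $x$-axis and $\beta = 1_\rho$: then $\Gen_\beta\cM = \kk^2$ while $\top_\beta\cM = \kk$).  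So your more careful route through (C) is not merely different---it is addressing a genuine gap in the paper's argument.

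That said, your (C) has its own gap at exactly the spot you flag.  The parenthetical claim ``$\mm_\rho\cM \subseteq \cM_{\prec\beta}$'' is false as a containment of submodules: with $\cM = \kk[\ZZ\times\NN]$, $\rho$ the $x$-axis, and $\beta = 0_\rho$, one has $\mm_\rho\cM = y\cM \neq 0$ but $\cM_{\prec\beta} = 0$ (the only $\gamma\in\cbq$ with $\gamma\prec\beta$ lie along~$\rho$ at lower $y$-degree or along~$\cQ_+$, and neither occurs in~$\cbm$ here).  What you actually need is the weaker statement that the natural inclusion $(\cM_{\prec\beta})^\rho/\rho \hookrightarrow \cM^\rho/\rho$ identifies $\Gen_\beta\cM_{\prec\beta}$ with the kernel $K_2$ of $\Gen_\beta\cM \to \top_\beta\cM$.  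One containment is easy (factor through $\top_\beta\cM_{\prec\beta}=0$).  For the other, exactness of $(-)^\rho$ lets you write any $\phi\in K_2$ as $\sum_i x^{q_i}\psi_i$ with $q_i$ among generators of~$\mm_\rho$ and $\psi_i \in \Gen_{(\bb-q_i)_\rho}\cM$; a short computation with the face property of~$\rho$ shows $(\bb-q_i)_\rho \prec \beta$ in~$\cbq$.  The remaining issue---which neither your sketch nor the paper's proof resolves---is that $(\bb-q_i)_\rho$ need not lie in~$\cbm$, so $\psi_i$ is not manifestly a summand of $\life_{\prec\beta}\cM$.  You need an auxiliary argument (for instance, showing that the image of any $\psi\in\Gen_\gamma\cM$ with $\gamma\prec\beta$ lands in $\cM_{\prec\beta}$ regardless of whether $\gamma\in\cbm$, perhaps by an inductive or Nakayama-style reduction using $\top_\gamma\cM = 0$ for $\gamma\notin\cbm$) to close this loop.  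The analogous $\nabro$-contribution in the real case has the same shape: your birth-degree comparison $(\bb_\rho,\xi')\prec(\bb_\rho,\xi)$ is correct, but again $(\bb_\rho,\xi')$ need not lie in~$\cbm$.
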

\begin{proof}
The surjection $\wh\QR: \life\cM \onto \cM$ induces a map
$\top_\gamma\wh\QR: \top_\gamma\life\cM \to \top_\gamma\cM$ that is an
isomorphism for all~$\gamma \in \cbq$ by construction, using
Example~\ref{e:beta} to calculate $\top_\gamma\life\cM$ (at least in
the real case; the easier discrete analogue of Example~\ref{e:beta} is
true and applied without further comment).  As the map $\life_\beta
\to \cM_{\preceq\beta}/\cM_{\prec\beta}$ is a finite upset cover, it
induces a surjection $\topr\life_\beta\cM \to
\topr(\cM_{\preceq\beta}/\cM_{\prec\beta})$ for all faces~$\rho$ by
Theorems~\ref{t:surjection-RR} and~\ref{t:surjection-ZZ}.
Example~\ref{e:beta} therefore implies all of the claimed vanishing as
well as $\top_\beta\cM = \top_\beta\life_\beta\cM \onto
\top_\beta(\cM_{\preceq\beta}/\cM_{\prec\beta})$.  The only thing that
could a~priori ruin the claimed right-hand equality is if the
composite $\life_{\prec\beta} \to \life_{\preceq\beta} \to
\cM_{\preceq\beta}/\cM_{\prec\beta}$ managed to affect~$\top_\beta$,
but $\top_\beta$ vanishes on $\life_{\prec\beta}\cM$ by
Example~\ref{e:beta} again.
% no exactness of top is used here except as cited in
% Theorems~\ref{t:surjection-RR} and~\ref{t:surjection-ZZ}.  This last
% bit, about the vanishing, works for any functor, regardless of
% exactness: if the map is zero then it's zero and that's it.
Using Proposition~\ref{p:exact-mrx} to produce a surjection
$\Gen_\beta\life_\beta\cM \onto
\Gen_\beta(\cM_{\preceq\beta}/\cM_{\prec\beta})$, the remaining
claimed equality follows by Proposition~\ref{p:surjection-RR} from
$\Gen_\beta\life_\beta\cM = \top_\beta\life_\beta\cM$, given that
$\top_\beta\life_\beta\cM \simto
\top_\beta(\cM_{\preceq\beta}/\cM_{\prec\beta})$ is an isomorphism.
\end{proof}

\begin{example}\label{e:RR-elder'}
In the situation of Example~\ref{e:RR-elder}, the theorem says what is
expected: the entire degree~$\beta$ piece of the elder quotient
$\cM_{\preceq\beta}/\cM_{\prec\beta}$ equals the vector space spanned
by the left endpoints of bars that sit at~$\beta$.
\end{example}

\begin{defn}\label{d:elder-morphism}
The \emph{elder morphism} of an upset-finite module over a real or
discrete polyhedral group is the functorial \qrcode
$$%
  \QR^\beta: \top_\beta\cM \to \Soc(\cM_{\preceq\beta}/\cM_{\prec\beta})
$$
of $\cM_{\preceq\beta}/\cM_{\prec\beta}$ afforded by
Theorem~\ref{t:life-top} and Definition~\ref{d:qr-code}.
\end{defn}

\begin{remark}\label{r:elder-upset-finite}
Defining an elder morphism only requires that the module~$\cM$ be
upset-finite, but no claims are made about whether the definition is
reasonable unless $\cM$ is finitely encoded, so that information about
$\cM$, as well as its elder and extant submodules, can be recovered
from their \qrcode s.
\end{remark}

\begin{remark}\label{r:elder-rule}
The elder morphism could have been defined instead to have the target
$\Soc(\cM/\cM_{\prec\beta})$, because of the natural inclusion
$\Soc(\cM_{\preceq\beta}/\cM_{\prec\beta}) \subseteq
\Soc(\cM/\cM_{\prec\beta})$ from Theorems~\ref{t:injection}
and~\ref{t:discrete-injection}.  And linguistically it is useful to
speak of classes or elements dying in a quotient of~$\cM$ rather than
in a quotient of~$\cM_{\preceq\beta}$, even though the class in
question of course persists in the submodule
$\cM_{\preceq\beta}/\cM_{\prec\beta}$ of the
module~$\cM/\cM_{\prec\beta}$ if it was born at~$\beta$.
\end{remark}

\begin{remark}\label{r:prior-work-functors}
Crawley-Boevey constructed a functor from $\RR$-indexed persistence
modules to the category of vector spaces which counts number of
intervals of a given type \cite{crawley-boevey}.  Cochoy and Oudot
extended this functorial construction to special \mbox{$2$-parameter}
persistence modules satisfying an ``exact diamond condition'', which
guarantees thin summands \cite{cochoy-oudot}.  Elder morphisms can be
thought of as multiparameter generalizations: given a birth parameter
$\beta \in \cbq$, the elder morphism functorially isolates all
possible \emph{elder-death} parameters for classes born at~$\beta$ in
the form of the death poset~$\cde$ of the elder quotient at~$\beta$.
The number of ``bars'' born at~$\beta$ that die an elder-death by
joining the elder submodule at $\alpha \in \cde$ is categorified by
functorial vector spaces, namely tops and socles.  The elder morphism
then associates a functorial linear birth-to-death map (a~\qrcode)
between those vector spaces.
\end{remark}

%%%%%%%%%%%%%%%%%%%%%%%%%%%%%%%%%%%%%%%%%%%%%%%%%%%%%%%%%%%%%%%%%%%%%%%%%
\section{Generators}\label{s:generators}%%%%%%%%%%%%%%%%%%%%%%%%%%%%%%%%%
%%%%%%%%%%%%%%%%%%%%%%%%%%%%%%%%%%%%%%%%%%%%%%%%%%%%%%%%%%%%%%%%%%%%%%%%%

\begin{remark}\label{r:what-is-a-generator?}
Generators of graded modules are homogeneous elements that do not lie
in the submodule generated in any lower degree.  That is, roughly
speaking, how generators are defined here, as well, but with two key
enhancements in the context of finitely encoded modules over
polyhedral groups~$\cQ$.  First, generators are only elements of
modules in the discrete case, although even then they are more
properly thought of as homomorphisms from localizations
$\kk[\cQ_+]_\rho$ along faces.  In the real case, generators are
homomorphisms from ``open localizations''~$\kk[\qnx]_\rho$ along
faces.  These should be thought of as rather slight generalizations of
the usual notion of element, which is a homomorphism from the free
module $\kk[\cQ_+]$ itself.  The second enhancement makes precise the
notion of submodule generated in lower degree, given that generators
need not be elements---and even in the discrete case should be thought
of as equivalence classes of elements under translation along a face.
The elder submodule fills that role.
\end{remark}

\begin{defn}\label{d:generator-RR}
Fix a module~$\cM$ over a polyhedral partially ordered group~$\cQ$.
\begin{enumerate}
\item%
If $\cQ$ is discrete then a \emph{generator} of~$\cM$ of \emph{degree
$\bb \in \cQ$} \emph{along a face~$\rho$} is an element in $\cmr_\bb$
that lies outside of the elder submodule $\cM_{\prec\bb_\rho}$, where
$\bb_\rho = \bb+\ZZ\rho \in \qzr$.

\item%
If $\cQ$ is real then a \emph{generator} of~$\cM$ of \emph{degree $\bb
\in \cQ$} \emph{along a face~$\rho$} with \emph{nadir $\xi \in
\nabro$} is an element in~$\mrx_\bb$ that lies outside of the elder
submodule $\cM_{\prec(\bb_\rho,\xi)}$.
\end{enumerate}
\end{defn}

\begin{remark}\label{r:generator-RR}
Some recall of the objects, including their notation and relations
between them, is in order.  In the discrete case, $\cM^\rho =
\hhom_\cQ\bigl(\kk[\cQ_+]_\rho,\cM\bigr)$ from Definition~\ref{d:cmr},
and the condition of lying outside of the elder submodule means that
the image of a degree~$\bb$ generator along~$\rho$ maps to a nonzero
element in $\topcr\cM$, necessarily of degree~$\bb_\rho$, under the
surjection in Proposition~\ref{p:surjection-ZZ}.

Similarly, in the real case, $\mrx =
\hhom_\cQ\bigl(\kk[\qnx]_\rho,\cM\bigr) =
\hhom_\cQ\bigl(\kk[\cQ_+]_\rho,\lx\cM\bigr)$ as in
Definition~\ref{d:mrx} and Lemma~\ref{l:mrx}.  The condition of lying
outside of the elder submodule means that the generator in question
maps to a nonzero element of~$\topr[\xi]\cM$, again necessarily of
degree~$\bb_\rho$, by Proposition~\ref{p:surjection-RR}.

This Remark is intended to lay bare a duality between generators and
cogenerators; compare
Definition~\ref{d:socct}.\ref{i:global-closed-cogen} for the discrete
case and Definition~\ref{d:soct}.\ref{i:global-cogen} for the real
case.
\end{remark}

\begin{remark}\label{r:closed-gen}
A \emph{closed generator} could be defined as an honest element
of~$\cM$, namely the image of $1 \in \kk[\cQ_+]_\rho$, assuming either
the discrete case or that the nadir of the generator in question
is~$\rho$ in the real case.  This is analogy with closed cogenerators
in Definition~\ref{d:socct}.\ref{i:global-closed-cogen}, which are
similarly honest elements of modules.
\end{remark}

%%%%%%%%%%%%%%%%%%%%%%%%%%%%%%%%%%%%%%%%%%%%%%%%%%%%%%%%%%%%%%%%%%%%%%%%%
\section{Functorial bar codes}\label{s:barcodes}%%%%%%%%%%%%%%%%%%%%%%%%%
%%%%%%%%%%%%%%%%%%%%%%%%%%%%%%%%%%%%%%%%%%%%%%%%%%%%%%%%%%%%%%%%%%%%%%%%%

The notation from Convention~\ref{conv:top} is in effect throughout
this section.

\begin{lemma}\label{l:gr}
If $\cM$ is a module over a real or discrete polyhedral group~$\cQ$
and $\alpha \in \cdq$, then $\soc_\alpha\cM$ has a canonical filtration
by the poset~$\cbq$:
$$%
  (\soc_\alpha\cM)_{\preceq\beta} = \soc_\alpha(\cM_{\preceq\beta}).
$$
\end{lemma}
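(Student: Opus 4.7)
The plan is to verify that the formula $(\soc_\alpha\cM)_{\preceq\beta} = \soc_\alpha(\cM_{\preceq\beta})$ defines a bona fide $\cbq$-indexed filtration of $\soc_\alpha\cM$. This requires two things: that each $\soc_\alpha(\cM_{\preceq\beta})$ embeds canonically as a subspace of $\soc_\alpha\cM$, and that these subspaces are monotone in $\beta$ under the partial order of~$\cbq$. Both will follow from essentiality of socles, which converts module-level inclusions into inclusions on socles.

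For the first point, $\cM_{\preceq\beta} = \wh\QR(\life_{\preceq\beta}\cM)$ is by Definition~\ref{d:elder-module} a submodule of~$\cM$. The essentiality theorems for socles---Theorem~\ref{t:injection}.\ref{i:phi=>soct} over a real polyhedral group and Theorem~\ref{t:discrete-injection}.\ref{i:discrete-phi=>soct} over a discrete one---applied to the inclusion $\cM_{\preceq\beta} \hookrightarrow \cM$ produce an injection $\soc_\alpha(\cM_{\preceq\beta}) \hookrightarrow \soc_\alpha\cM$. This identification is canonical, depending only on the defining inclusion and the socle functor.

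For the second point, suppose $\beta \preceq \beta'$ in~$\cbq$. By Definition~\ref{d:life-prec-beta}, $\life_{\preceq\beta}\cM$ is a direct summand of $\life_{\preceq\beta'}\cM$, the index set in the former being a subset of that in the latter. Applying~$\wh\QR$ yields $\cM_{\preceq\beta} \subseteq \cM_{\preceq\beta'}$ as submodules of~$\cM$. The chain of inclusions $\cM_{\preceq\beta} \hookrightarrow \cM_{\preceq\beta'} \hookrightarrow \cM$, combined with left-exactness of the relevant socle functor (Proposition~\ref{p:left-exact-tau} in the real case, Proposition~\ref{p:left-exact-tau-closed} in the discrete case), yields compatible injections on socles. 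Consequently $\soc_\alpha(\cM_{\preceq\beta}) \subseteq \soc_\alpha(\cM_{\preceq\beta'})$ as subspaces of~$\soc_\alpha\cM$, which is the required monotonicity.

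There is no serious obstacle: the essentiality results do all the work by translating submodule containments into socle containments, and monotonicity of elder submodules in~$\beta$ is immediate from unwinding definitions. The only point worth flagging is that the argument must be applied uniformly across both the discrete and real polyhedral settings, invoking the correct essentiality statement in each case; this is purely notational rather than substantive.
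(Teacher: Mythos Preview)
Your proof is correct and follows essentially the same approach as the paper: both establish that $\beta \preceq \beta'$ implies $\cM_{\preceq\beta} \subseteq \cM_{\preceq\beta'}$ and then invoke the essentiality of socles (Theorem~\ref{t:injection} in the real case, Theorem~\ref{t:discrete-injection} in the discrete case) to convert this into the socle inclusion. Your version is slightly more explicit in separating the embedding of each $\soc_\alpha(\cM_{\preceq\beta})$ into $\soc_\alpha\cM$ from the monotonicity step, but the substance is identical.
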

\begin{proof}
If $\beta \preceq \beta'$ then $\cM_{\preceq\beta} \subseteq
\cM_{\preceq\beta'}$, so $\soc_\alpha(\cM_{\preceq\beta}) \subseteq
\soc_\alpha(\cM_{\preceq\beta'})$ by Theorem~\ref{t:injection}.
\end{proof}

\begin{remark}\label{r:gr}
The equality in the lemma allows the parentheses to be removed: both
sides are $\soc_\alpha\cM_{\preceq\beta}$.  It is perhaps simpler to
think of filtering $\soc_\alpha\cM$ by the subposet $\cbm \subseteq
\cbq$ instead, as this is a finite filtration (in every case of
computational interest) containing all of the interesting information
about the filtration by~$\cbq$.
\end{remark}

\begin{thm}\label{t:elder-projection}
Fix a module~$\cM$ over the partially ordered group $\cQ = \RR$
or~$\ZZ$.  For all birth parameters $\beta \in \cbq$ and death
parameters $\alpha \in \cdq$ there is a canonical isomorphism
$$%
  \soc_\alpha(\cM_{\preceq\beta}/\cM_{\prec\beta})
  \simto
  \soc_\alpha\cM_{\preceq\beta}/\soc_\alpha\cM_{\prec\beta}
  =
  \gr_\beta\soc_\alpha\cM.
$$
\end{thm}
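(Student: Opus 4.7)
The plan is to apply $\soc_\alpha$ to the short exact sequence
$$0 \to \cM_{\prec\beta} \to \cM_{\preceq\beta} \to \cM_{\preceq\beta}/\cM_{\prec\beta} \to 0.$$
By Proposition~\ref{p:left-exact-tau} (or Proposition~\ref{p:left-exact-tau-closed} in the discrete case), combined with the exactness of the upper boundary functor $\delta^\sigma$ from Lemma~\ref{l:exact-delta}, the cogenerator functor $\soc_\alpha$ is left-exact. This produces the canonical injection
$$\gr_\beta\soc_\alpha\cM \;=\; \soc_\alpha\cM_{\preceq\beta}\big/\soc_\alpha\cM_{\prec\beta} \;\hookrightarrow\; \soc_\alpha(\cM_{\preceq\beta}/\cM_{\prec\beta}),$$
and the content of the theorem is that this map is surjective.

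For surjectivity I would implement the \emph{elder projection} advertised in the discussion following Example~\ref{e:elder}. Given a homogeneous class $\bar s \in \soc_\alpha(\cM_{\preceq\beta}/\cM_{\prec\beta})$, first lift $\bar s$ to an element $\tilde s$ of the relevant upper boundary of $\cM_{\preceq\beta}$ at degree $\alpha$; such a lift exists because the projection is surjective and $\delta^\sigma$ is exact. The socle condition on $\bar s$ says that the pushforwards of $\tilde s$ along the unique positive direction of $\cQ$ transverse to the associated face land inside the upper boundary of the elder submodule $\cM_{\prec\beta}$. The plan is to build an elder correction $e$ of the same type and in the same degree, living inside $\cM_{\prec\beta}$, whose pushforwards agree with those of $\tilde s$; then $\tilde s - e$ is annihilated under pushforward, hence represents a socle class of $\cM_{\preceq\beta}$ that projects to~$\bar s$, witnessing surjectivity.

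The existence of $e$ relies on the one-dimensional hypothesis: for $\cQ = \RR$ or $\ZZ$ the birth poset $\cbq$ is totally ordered (Example~\ref{e:birth-RR} and the discrete analogue in Definition~\ref{d:birth-ZZ}), so $\cM_{\prec\beta} = \wh\QR(\life_{\prec\beta}\cM)$ is the image of upset contributions indexed by a chain strictly below~$\beta$. Because there is only one positive ray along which to push, the pushforward family of $\tilde s$ sitting in $\cM_{\prec\beta}$ above $\alpha$ pulls back through the structure maps of $\life_{\prec\beta}\cM$ to a class $e$ at degree $\alpha$. The hard part will be uniformly producing $e$ across the three types of socle elements---closed right endpoint ($\tau = \{0\}$, nadir $\{0\}$), open right endpoint ($\tau = \{0\}$, nadir $\cQ_+$), and infinite right endpoint ($\tau = \cQ_+$)---since in the latter two cases $\tilde s$ lives in an auxiliary direct- or inverse-limit module and the pullback giving $e$ must be interpreted inside the corresponding upper boundary. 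The single-parameter assumption is indispensable: as emphasized following Example~\ref{e:elder}, in higher dimensions an extant element can suffer elder-death on genuinely different elder elements when pushed along independent directions, so no single correction simultaneously kills all pushforwards; the totally ordered chain of earlier births over $\RR$ or $\ZZ$ is exactly what lets $e$ be constructed.
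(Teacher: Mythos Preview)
Your proposal is correct and follows the same elder-projection idea as the paper, but the paper's execution is more direct in two ways worth noting.  First, rather than routing through $\life_{\prec\beta}\cM$ to build the correction~$e$, the paper works entirely inside~$\cM_{\prec\beta}$: it defines $\phi_\alpha:(\cM_{\preceq\beta})_{a-\sigma}\to(\cM_{\preceq\beta})_{a+\xi}$ (where $\xi$ is the complementary face to~$\sigma$), lets $\psi_\alpha$ be its restriction to $(\cM_{\prec\beta})_{a-\sigma}$, and sets $\omega_\alpha(s)=\tilde s-\psi_\alpha^{-1}\!\circ\phi_\alpha(\tilde s)$, observing that the preimage is well defined modulo $\soc_\alpha\cM_{\prec\beta}$ by the very definition of socle.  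This both constructs and verifies the map at once; your surjectivity argument is essentially building this same~$\omega_\alpha$ as an inverse to the left-exactness injection.  Second, the paper treats the case $\alpha=\infty$ (i.e.\ $\tau=\cQ_+$) separately and trivially via exactness of quotient-restriction along~$\cQ_+$ (Lemma~\ref{l:exact-qr}), since there is no ``pushforward'' direction transverse to the full cone; your outline should also carve this case out rather than folding it into the three-endpoint discussion.
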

\begin{proof}
When $\cQ = \RR$ there are two cases: in the notation of
Example~\ref{e:death-RR}, either $\alpha = \infty$ or not.  First
assume not, so $\alpha = (a,\sigma)$ with $\sigma = \{0\}$ if $\alpha
= a_\bullet$ and $\sigma = \RR_+$ if $\alpha = a_\circ$.  Suppose $s
\in \soc_\alpha(\cM_{\preceq\beta}/\cM_{\prec\beta})$.  Choose an
arbitrary lift $\wt s \in (\cM_{\preceq\beta})_{a-\sigma}$ of~$s$,
with notation as in Definition~\ref{d:atop-sigma}.  The homomorphisms
in Lemmas~\ref{l:natural} and~\ref{l:natural-dual} induce a natural
map $ \phi_\alpha: (\cM_{\preceq\beta})_{a-\sigma} \to
(\cM_{\preceq\beta})_{a+\xi}, $
where $\xi = \RR_+$ if $\sigma = \{0\}$ (i.e.,~if~$\alpha =
a_\bullet$) and $\xi = \{0\}$ if $\sigma = \RR_+$ (i.e., if $\alpha =
a_\circ$).  The hypothesis on~$s$ means that $\phi_\alpha(\wt s)$ lies
not merely in $(\cM_{\preceq\beta})_{a+\xi}$ but in
$(\cM_{\prec\beta})_{a+\xi}$.  Let $\psi_\alpha$ be the restriction
of~$\phi_\alpha$ to $(\cM_{\prec\beta})_{a-\sigma}$.  The preimage
of~$\phi_\alpha(\wt s)$ in $(\cM_{\prec\beta})_{a-\sigma}$
under~$\psi_\alpha$ is well defined modulo
$\soc_\alpha\cM_{\prec\beta}$ by definition of socle.  The desired
isomorphism is
$$%
 \omega_\alpha: s \mapsto \wt s-\psi_\alpha^{-1}\circ\phi_\alpha(\wt s).
$$
It is well defined as an element of
$(\cM_{\preceq\beta})_{a-\sigma}/\soc_\alpha\cM_{\prec\beta}$ because
if $y \in (\cM_{\prec\beta})_{a-\sigma}$ then
$\psi_\alpha^{-1}\circ\phi_\alpha(\wt s) = y$ modulo
$\soc_\alpha\cM_{\prec\beta}$.  And although $\omega_\alpha$ a~priori
takes~$\wt s$ to an arbitrary element of
$(\cM_{\preceq\beta})_{a-\sigma}/\soc_\alpha\cM_{\prec\beta}$, in fact
this element is annihilated by~$\phi_\alpha$ by construction and hence
lies in $\soc_\alpha\cM_{\preceq\beta}/\soc_\alpha\cM_{\prec\beta}$.
That $\omega_\alpha$ is surjective follows because an element
of~$\soc_\alpha\cM_{\preceq\beta}$ automatically provides an element
of $\soc_\alpha(\cM_{\preceq\beta}/\cM_{\prec\beta})$ and remains
unmodified by~$\omega_\alpha$.  That $\omega_\alpha$ is injective
follows because if $s$ is nonzero then $\wt s$ lies outside of
$(\cM_{\prec\beta})_{a-\sigma}$, whereas $\omega_\alpha$ modifies $\wt
s$ by an element of~$(\cM_{\prec\beta})_{a-\sigma}$.

The above argument for $\cQ = \ZZ$ is more elementary: every $\alpha =
(a,\sigma)$ is simply $\alpha = a$, every subscript $a - \sigma$
should simply be~$a$, and every subscript $a + \xi$ should be $a + 1$.
The natural map $\phi_a: (\cM_{\preceq\beta})_a \to
(\cM_{\preceq\beta})_{a+1}$ requires no lemmas to justify its
existence.

The case $\alpha = \infty$ is much easier, by virtue of being
completely general: over any partially ordered group~$\cQ$, the socle
along~$\cQ_+$ is $\hhom_\cQ\bigl(\kk[\cQ_+],\cM\bigr)\big/\cQ_+ =
\cM/\cQ_+$, so the isomorphism in question is by exactness of
quotient-restriction (Lemma~\ref{l:exact-qr}).
\end{proof}

\begin{remark}\label{r:second-isom-thm}
The isomorphism in the theorem can, if justified properly, be seen as
an instance of the second isomorphism theorem: if $\cN$ has submodules
$K$ and~$\cN'$, and $K' = K \cap \cN'$, then $\ker\bigl(\cN/\cN' \to
\cN/(\cN' + K)\bigr) = (\cN' + K)/\cN' \simto K/K'$.  The relevant
ambient module is $\cN = (\cM_{\preceq\beta})_{a-\sigma}$, with
submodules $K = \soc_\alpha\cM_{\preceq\beta}$ and~$\cN' =\nolinebreak
(\cM_{\prec\beta})_{a-\sigma}$.
\end{remark}

\begin{defn}\label{d:barcode}
Fix a module~$\cM$ over the real polyhedral group~$\RR$.
\begin{enumerate}
\item%
The \emph{total top} of~$\cM$ is $\Top\cM =
\bigoplus_{\beta\in\cbm}\top_\beta\cM$.

\item%
The \emph{graded total socle} of~$\cM$ is $\gr\Soc\cM =
\prod_{\alpha\in\cdm}\gr\soc_\alpha\cM$.

\item%
The \emph{functorial bar code} of~$\cM$ is the linear map
$$%
  \Top\cM \to \gr\Soc\cM
$$
obtained by composing the elder morphism of~$\cM$ in
Definition~\ref{d:elder-morphism} with the \emph{elder projection} in
Theorem~\ref{t:elder-projection}.
\end{enumerate}
\end{defn}

\begin{thm}\label{t:barcode}
Fix a module~$\cM$ over the real polyhedral group~$\RR$.  In any
decomposition of~$\cM$ as a direct sum of indicator subquotients
of\/~$\kk[\RR]$---that is, indicator modules on intervals---the left
endpoints of the intervals form a basis for the source vector
space~$\Top\cM$ and the right endpoints form a basis for the target
vector space~$\gr\Soc\cM$.
\end{thm}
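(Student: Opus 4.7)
The plan is to reduce to the single-interval case via additivity of both functors, and then verify the one-dimensional statements flavor-by-flavor for the nine types of interval in~$\RR$ (bounded or infinite on each side, with each boundary point either included or excluded). For each such~$I$, the left endpoint $\beta_I \in \cbr$ and right endpoint $\alpha_I \in \cdr$ are read off from Examples~\ref{e:birth-RR} and~\ref{e:death-RR}: closed endpoints produce $\bullet$-degrees of nadir~$\{\0\}$, open endpoints produce $\circ$-degrees of nadir~$\RR_+$, and one-sided infinite endpoints produce the special elements $-\infty$ or~$\infty$. The goal of the single-interval analysis is to show that $\Top\kk[I]$ is one-dimensional at~$\beta_I$ and that $\gr\Soc\kk[I]$ is one-dimensional at~$(\beta_I,\alpha_I)$; additivity then assembles these into the desired basis statement.

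For additivity, I would verify that each ingredient used to build $\Top$ and $\gr\Soc$ commutes with the given direct sum decomposition $\cM = \bigoplus_i \kk[I_i]$. On the top side, the lower-boundary functors $\lx$ and their aggregate $\lr$ commute with this particular direct sum because the inverse limit in Definition~\ref{d:beneath-xi} stabilizes on each interval summand; tensor product with~$\kk[\rho]$ and quotient-restriction modulo~$\rho$ are exact and additive; and Hom from~$\kk[\cQ_+]_\rho$ in~$(-)^\rho$ commutes with direct sums because the relevant source modules are locally finitely generated over~$\kk[\RR_+]$. On the socle side, $\soc_\alpha$ commutes with direct sums by the parallel argument using~$\dt$ and $\hhom_\cQ(\kk[\tau],-)$, and $\Gen$ commutes by Lemma~\ref{l:mrx}; hence so do the life module and its filtration $\life_{\prec\beta}$, $\life_{\preceq\beta}$. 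Pushing through $\wh\QR$ gives $\cM_{\prec\beta} = \bigoplus_i (\kk[I_i])_{\prec\beta}$ and $\cM_{\preceq\beta} = \bigoplus_i (\kk[I_i])_{\preceq\beta}$, and the quotient $\gr_\beta\soc_\alpha$ then distributes across the decomposition.

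For a single interval~$\kk[I]$, I would compute $\top_\beta\kk[I] = \topcr(\nrtop\lr\kk[I])$ via Proposition~\ref{p:order-either}. Direct calculation shows $\lx\kk[I]$ is again an indicator module for the interval obtained from~$I$ by adjoining right-limit points in directions along~$\xi^\circ$, and after applying~$\nrtop$ followed by~$\topcr$ the surviving contribution concentrates at the unique pair~$(\rho,\xi)$ matching the topological type of the left endpoint of~$I$, producing~$\kk$ exactly in degree~$\beta_I$. The socle side is parallel: by Proposition~\ref{p:downset-upper-boundary}, $\delta^\sigma\kk[I] = \kk[\delta^\sigma I]$ is the indicator quotient for a closed-off version of~$I$; Example~\ref{e:soc-Rn-downset} combined with Proposition~\ref{p:either-order} then shows $\soc_\alpha\kk[I]$ is one-dimensional at~$\alpha_I$ and zero otherwise. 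Because $\kk[I]$ has a unique degree in~$\cbm$, namely~$\beta_I$, its life module and elder filtration collapse: $\kk[I]_{\prec\beta}$ and $\kk[I]_{\preceq\beta}$ each equal either~$0$ or all of~$\kk[I]$, so $\gr_\beta\soc_\alpha\kk[I] = \soc_\alpha\kk[I]$ precisely when $\beta = \beta_I$.

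The main obstacle is the case analysis for the top and socle computations: correctly matching the face data~$(\rho,\xi)$ for births and~$(\tau,\sigma)$ for deaths to each of the nine interval flavors, and carrying out the Hom and limit calculations for each. Each individual case is routine given the framework of Sections~\ref{s:socle}--\ref{s:gen-functors}, but the bookkeeping must be done carefully so that, for example, the open right endpoint~$c$ of~$[a,c)$ produces the socle element~$c_\circ = (c,\RR_+)$ via~$\delta^{\RR_+}$ rather than being missed because $c \notin [a,c)$. Once the casework is complete, additivity assembles the single-interval calculations into the desired basis statement.
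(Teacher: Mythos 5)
Your proposal is correct and follows essentially the same route as the paper's (very terse) proof: reduce to a single interval summand by additivity/functoriality of the constituent functors, then read off the unique birth and death degrees of each interval flavor from the descriptions of~$\cbr$ and~$\cdr$ in Examples~\ref{e:birth-RR} and~\ref{e:death-RR}. Your version simply fills in the casework and the additivity verifications that the paper leaves implicit.
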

\begin{proof}
Functoriality of bar codes (see Definitions~\ref{d:elder-morphism}
and~\ref{d:barcode}) reduces to the case of a single indicator
subquotient.  That case falls under Examples~\ref{e:birth-RR}
and~\ref{e:death-RR}.
\end{proof}

%%%%%%%%%%%%%%%%%%%%%%%%%%%%%%%%%%%%%%%%%%%%%%%%%%%%%%%%%%%%%%%%%%%%%%%%%
\addtocontents{toc}{\protect\setcounter{tocdepth}{1}}%%%%%%%%%%%%%%%%%%%%
\section{Future directions}\label{s:future}%%%%%%%%%%%%%%%%%%%%%%%%%%%%%%
%%%%%%%%%%%%%%%%%%%%%%%%%%%%%%%%%%%%%%%%%%%%%%%%%%%%%%%%%%%%%%%%%%%%%%%%%

%%%%%%%%%%%%%%%%%%%%%%%%%%%%%%%%%%%%%%%%%%%%%%%%%%%%%%%%%%%%%%%%%%%%%%%%%
\subsection{Fly wing implementation}\label{sub:fly-wing}\mbox{}%%%%%%%%%%

\noindent
The plan for statistical analysis of the fly wing dataset is to
summarize wings using biparameter persistence and then to
statistically analyze the \emph{rank function} $\RR^2 \times \RR^2 \to
\NN$ defined by $(\aa \preceq \bb) \mapsto \mathrm{rank}(H_\aa \to
H_\bb)$ \cite{multiparamPH,cagliari-difabio-ferri2010}.  In more
detail, the plan is: fly wing $\stackrel 1\goesto$ 2D persistence
module $\stackrel 2\goesto$ encoding by monomial matrix $\stackrel
3\goesto$ rank function.  There is potential loss at steps~1 and~3; in
contrast, step~2 is lossless by Theorem~\ref{t:syzygy}.

The question of loss in step~1 is one of statistical sufficiency.  It
is plausible that such loss does not occur locally---deforming a fly
wing should yield a nontrivial deformation of its biparameter
persistence module (Example~\ref{e:fly-wing-filtration})---although
perhaps globally it might.  Proofs of such statements would likely
rely on \qrcode s, since the different persistence modules in question
ought to have different birth or death posets.

The question of loss in step~3 amounts to a question about moduli: is
the variation in fly-wing modules solely from the placements of birth
upsets and death downsets in a fringe presentation
(Definition~\ref{d:monomial-matrix-fr}), or can the scalar entries
vary continuously?  There is reason to believe that continuous
variation of scalar entries does not occur---because the homology is
dimension~$0$ or codimension~$1$ in~$\RR^2$, both of which come with
canonical bases---but formulating such a statement precisely requires
theoretical development.

Doing statistics with rank functions requires effective data
structures for them.  This is a general problem, not limited to the
fly wing case although well exemplified by it.  Rank functions from an
$n$-parameter filtration are nonnegative integer-valued functions
on~$\RR^{2n}$ whose domains of constancy are---in the fly wing case,
as in many data science applications---semialgebraic.  Fringe
presentations and surrounding objects are suited for such functions.
Once rank functions are effectively represented, statistical analysis
can proceed largely as in the ordinary single-parameter case
\cite{robins-turner2016}.

%%%%%%%%%%%%%%%%%%%%%%%%%%%%%%%%%%%%%%%%%%%%%%%%%%%%%%%%%%%%%%%%%%%%%%%%%
\subsection{Computation}\label{sub:computation}\mbox{}%%%%%%%%%%%%%%%%%%%

\noindent
The plan for analysis of the fly wing dataset, in the persistence
setup from Example~\ref{e:fly-wing-filtration}, requires various
aspects of the theory to be made more explicitly algorithmic.  These
and other tasks are relevant in general, not merely for fly wings or
$n = 2$ parameters.
\begin{enumerate}
\item%
Compute a fringe presentation from given spline data.  This should
involve real semialgebraic geometry along with careful bookkeeping of
planar graph topology.

\item%
Compute rank functions from any of the data structures for
multiparameter persistence in Theorem~\ref{t:syzygy}.  Naive
algorithms and data structures are easy enough to write down when a
finite encoding is given as in Definition~\ref{d:encoding}, but
these---the data structures as well as the algorithms---are likely
suboptimal.

\item%
Define and devise algorithms to compute data structures for \qrcode s.
% Distinguishing between fly wings in particular means distinguishing
% between their \qrcode s.
This is almost analogous to computing flange presentations of finitely
determined modules, in that \qrcode s record the degrees of
generators, the degrees of cogenerators, and a linear map, just as
flange presentations do.  The differences are that the source vector
spaces in \qrcode s are (morally) entire graded pieces of the module
and the linear maps are filtered through these.

\item%
Any data structure for \qrcode s is automatically a data structure for
a single elder morphism, which after all is the \qrcode\ of a
subquotient, but it could be important for statistical purposes to
specify all elder morphisms at once.  The problem is that the source
and especially target vector spaces vary depending on the birth
parameter.  Thus the \emph{total elder morphism} is a family of elder
morphisms fibered over the set of parameters.  It suffices to fiber
over the birth~poset.

\item%
Algorithmically compute the equivalences in the syzygy theorems
(Theorems~\ref{t:syzygy} and~\ref{t:presentations-minimal}) over real
and discrete polyhedral groups.  Compute \qrcode s and elder morphisms
from fringe presentations or any of the other hallmarks of finitely
encoded modules.

\item%
Evaluate cogenerator functors
% (Definition~\ref{sub:closed-gen})
on downset modules
% (Example~\ref{e:indicator})
in such a way that the cohomology can be effectively calculated.
Dually, evaluate generator functors on upset modules.  These
operations are linchpins of the computational theory.  For example,
the socle of an $\RR^n$-module is extracted from a downset
copresentation
% $E^0 \to E^1$
by applying cogenerator functors to it, which yields a categorified
semialgebraic subset of~$\RR^n$ mapping to another categorified
semialgebraic subset of~$\RR^n$.  The kernel is the desired socle.
Part of the problem is to make these categorifications precise, in the
language of (say) constructible sheaves.
\end{enumerate}
Minimal flange presentations as well as injective or flat resolutions
of finitely determined $\ZZ^n$-modules are computable algorithmically
starting from generators and relations in the finitely generated case
\cite{injAlg}, and consequently for finitely determined modules using
the categorical equivalences between finitely generated and finitely
determined modules \cite[Definition~2.7, Table~1, and
Theorem~2.11]{alexdual}.  Note that some software exists for
multiparameter persistence of finitely generated $\ZZ^n$-modules; see
\cite{lesnick-wright2015} and \cite{topcat}, the latter being a
library for multiparameter persistent homology.
% by Oliver G\"afvert, KTH Royal Institute of Technology, Sweden

% future: alleviate problems with interleaving distance
% 
% consider citing [Magnus Botnan, Michael Lesnick, H. Bjerkevik] and
% [Oudot, Cochoy, Botnan]: interleaving distances on certain categories
% of $\RR^n$-modules
% 
% This is something to do, but it is a problem well known to the
% field; unless we can pinpoint a contribution to that problem---or
% how one should think about it---in this paper, let's leave it alone.

%%%%%%%%%%%%%%%%%%%%%%%%%%%%%%%%%%%%%%%%%%%%%%%%%%%%%%%%%%%%%%%%%%%%%%%%%
\subsection{Homological algebra}\label{sub:hom-alg}\mbox{}%%%%%%%%%%%%%%%

\subsubsection{Stability}
With discrete parameters,
% ``The need for a stabilisation procedure comes from the fact that
``algebraic invariants of multiparameter persistence modules such as
minimal number of generators, Betti tables, Hilbert polynomials
etc.~tend to change drastically when the initial data is altered even
slightly.  That is why the classical commutative algebra invariants
are not useful for data analysis.  For data analysis we need stable
invariants'' \cite{gafvert-chacholski2017}.  Real multiparameter
persistence provides a counterpoint: even though functorial \qrcode s
and elder morphisms are algebraic in nature, continuity of the
poset~$\RR^n$ allows them to move only slightly when the input data
are perturbed.  For example, wiggling a fly wing should nudge only the
upsets and downsets in a fringe presentation; it should not, in some
generic sense, alter the validity of a given finite encoding poset, as
long as one is willing to perturb the encoding poset morphism so as to
nudge the fibers.  That means the homological algebra of the perturbed
module~$\cM'$ is pulled back from the same homological algebra over
the encoding poset that governs the homological algebra of~$\cM$.
This needs to be made precise, particularly the word ``generic'' and
the concept of ``perturbation''---of data and modules as well as poset
morphisms---but the salient point is that when the scalar matrix in a
fringe presentation is fixed and only the upsets and downsets move,
the Hilbert function and rank function vary in
predictable~geometric~ways.  This is therefore a call for stability
theorems in real multiparameter~persistence.

The motion of Betti numbers in the discrete case corresponds in the
real case, at least in homological degree~$0$, to motion of the birth
poset---or, more accurately, motion of the constructible function on
the birth poset whose values are dimensions of generator functors
(Section~\ref{s:gen-functors}).  These are more subtle than Hilbert or
rank functions but should still enjoy some kind of upper
semicontinuity, in the finitely encoded context, along the lines of
upper semicontinuity in flat families
% \cite[Theorem~8.37]{cca}
in the discrete, finitely generated context.

%vspace{-.5ex}
\subsubsection{Indicator resolutions}
Regarding Conjecture~\ref{conj:indicator-dimension-n}, although
minimal upset and downset resolutions exist, it is unclear whether
they must terminate after finitely many steps.  And although finite
upset and downset resolutions exist---this much is part of the largely
unrestricted syzygy theorem over an arbitrary poset~$\cQ$---no uniform
bound on their length is known when~$\cQ$ has quotients with unbounded
order~dimension.

% compare ``no-moduli'' problem to [CSM14 (arXiv:1409.7936),
% Proposition~3.5], which concerns modules that are
% vector-space-ifications of filtrations of (finite) sets
% \comment{checked; doesn't seem to work...}

%vspace{-.5ex}
\subsubsection{Block decomposition of exact modules}%enlargethispage{.55ex}
The purpose of the papers by Crawley-Boevey \cite{crawley-boevey} and
by Cochoy and Oudot \cite{cochoy-oudot}, discussed in
Remark~\ref{r:prior-work-functors}, is bar-code decomposition of
$\RR$-finite modules over~$\RR$ and so-called ``exact'' $\RR^2$-finite
modules over~$\RR^2$, respectively.  The exactness condition specifies
that if $y \in \cM_\aa$ is the image of an element $y_i \in
\cM_{\aa-\epsilon_i\ee_i}$ for every choice of~$i$, then some class
$y' \in \cM_{\aa'}$, for $\aa' = \aa - \sum_i \epsilon_i\ee_i$,
satisfies $y' \mapsto y$.  Thinking in the context of arbitrary~$n$,
this condition ought to enable the proof of
Theorem~\ref{t:elder-projection} to be generalized to arbitrary~$n$;
see Example~\ref{e:elder} and the paragraph preceding it.
% Proof idea: Suppose an elder quotient socle element $s$ of
% degree~$\alpha$ maps infinitesimally along each axis~$i$ to an elder
% submodule element~$s_i$.  These elements all map to a ``join
% element'' in a degree that is the join of the degrees of the~$s_i$.
% (Technically, all these infinitesimal motions will be expressed by
% inverse limits.)
% 
% As $s_i$ is an elder submodule element, it is the image of some
% element of degree $\prec\beta$.  On the other hand, the join element
% is, too.  The idea would be to ``copy'' the situation near~$\alpha$
% to a situation behind~$\beta$---or at least a situation whose meet
% is behind~$\beta$---to produce an element to subtract from~$\wt s$.
% Problems to overcome:
%  - There's no guarantee that the $s_i$ map to a common element until
%    they're pushed all the way to the join element; in any lesser
%    degree, their common images can be off by something that dies
%    before getting to the join.
%  - When the picture is copied back to behind~$\beta$, the relative
%    positions of the~$s_i$ might change.

%%%%%%%%%%%%%%%%%%%%%%%%%%%%%%%%%%%%%%%%%%%%%%%%%%%%%%%%%%%%%%%%%%%%%%%%%
\subsection{Geometry of birth and death}\label{sub:geom-qr}\mbox{}%%%%%%%

\noindent
Birth and death posets of modules over real polyhedral groups should
inherit topologies from Definitions~\ref{d:sigma-neighborhood}
and~\ref{d:nearby}.  The death module should be a sheaf over the death
poset, and the \qrcode\ should take the birth module $\Gen\cM$ to
sections of the death module~$\Soc\cM$ in a well behaved---probably
constructible---manner.  Functorial \qrcode s are defined without this
extra topological nuance, but these extra structures should be
imposable on it.  Moreover, although these topological structures are
not required for recovery of a module from its \qrcode, they should
play a central role in the characterization of maps $\Gen\cM \to
\Soc\cM$ that arise as \qrcode s of finitely encoded modules.  Even in
the discrete case this characterization is not easy: any filtered map
from $\Gen\cM$ to $\Soc\cM$ ought to define a module with specified
tops and socles, but it is not a~priori clear what conditions on the
filtered map guarantee that the module thus defined is finitely
encoded.

%%%%%%%%%%%%%%%%%%%%%%%%%%%%%%%%%%%%%%%%%%%%%%%%%%%%%%%%%%%%%%%%%%%%%%%%%
\subsection{Multiscale intersection homology}\label{sub:PIH}\mbox{}%%%%%%

\noindent
The view of the particular form of multiparameter persistence in
Example~\ref{e:fly-wing-filtration} as a manifestation of persistent
intersection homology (Remark~\ref{r:biparameterPH}) has yet to be
formalized.  Once it is, properties such as stability and suitability
for data analysis in general remain to be investigated.

%%%%%%%%%%%%%%%%%%%%%%%%%%%%%%%%%%%%%%%%%%%%%%%%%%%%%%%%%%%%%%%%%%%%%%%%%
\addtocontents{toc}{\protect\setcounter{tocdepth}{2}}%%%%%%%%%%%%%%%%%%%%
%%%%%%%%%%%%%%%%%%%%%%%%%%%%%%%%%%%%%%%%%%%%%%%%%%%%
%%%%%%%%%%%%%%%%%%%%%%%%%%%%%%%%%%%%%%%%%%%%%%%%%%%%%%%%%%%%%%%%%%%%%%%%%

\addtocontents{toc}{\vspace{-1ex}}%%%%%%%%%%%%%%%%%%%%

%%%%%%%%%%%%%%%%%%%%%%%%%%%%%%%%%%%%%%%%%%%%%%%%%%%%%%%%%%%%%%%%%%%%%%%%%
\end{document}